\address{Department of Mathematics, Kyoto
University, Kyoto, Japan } \email{fukaya@math.kyoto-u.ac.jp}
\address{Department of Mathematics, University of
Wisconsin, Madison, WI, USA \& \newline \indent Korea Institute for
Advanced Study, Seoul, Korea} \email{oh@math.wisc.edu}
\address{Graduate School of Mathematics,
Nagoya University, Nagoya, Japan } \email{ohta@math.nagoya-u.ac.jp}
\address{Department of Mathematics,
Hokkaido University, Sapporo, Japan }
\email{ono@math.sci.hokudai.ac.jp}
\begin{document}
\quad \vskip1.375truein

\def\E{\ifmmode{\mathbb E}\else{$\mathbb E$}\fi} 
\def\N{\ifmmode{\mathbb N}\else{$\mathbb N$}\fi} 
\def\R{\ifmmode{\mathbb R}\else{$\mathbb R$}\fi} 
\def\Q{\ifmmode{\mathbb Q}\else{$\mathbb Q$}\fi} 
\def\C{\ifmmode{\mathbb C}\else{$\mathbb C$}\fi} 
\def\H{\ifmmode{\mathbb H}\else{$\mathbb H$}\fi} 
\def\Z{\ifmmode{\mathbb Z}\else{$\mathbb Z$}\fi} 
\def\P{\ifmmode{\mathbb P}\else{$\mathbb P$}\fi} 
\def\T{\ifmmode{\mathbb T}\else{$\mathbb T$}\fi} 
\def\SS{\ifmmode{\mathbb S}\else{$\mathbb S$}\fi} 
\def\DD{\ifmmode{\mathbb D}\else{$\mathbb D$}\fi} 
\def\K{\ifmmode{\mathbb K}\else{$\mathbb K$}\fi}

\renewcommand{\theequation}{\thesection.\arabic{equation}}

\renewcommand{\a}{\alpha}
\renewcommand{\b}{\beta}
\renewcommand{\d}{\delta}
\newcommand{\D}{\Delta}
\newcommand{\e}{\varepsilon}
\newcommand{\g}{\gamma}
\newcommand{\G}{\Gamma}
\newcommand{\la}{\lambda}
\newcommand{\La}{\Lambda}
\newcommand{\var}{\varphi}
\newcommand{\s}{\sigma}
\newcommand{\Sig}{\Sigma}
\renewcommand{\t}{\tau}
\renewcommand{\th}{\theta}
\renewcommand{\O}{\Omega}
\renewcommand{\o}{\omega}
\newcommand{\z}{\zeta}
\newcommand{\del}{\partial}

\newcommand{\ben}{\begin{enumerate}}
\newcommand{\een}{\end{enumerate}}
\newcommand{\be}{\begin{equation}}
\newcommand{\ee}{\end{equation}}
\newcommand{\bea}{\begin{eqnarray}}
\newcommand{\eea}{\end{eqnarray}}
\newcommand{\beastar}{\begin{eqnarray*}}
\newcommand{\eeastar}{\end{eqnarray*}}
\newcommand{\bc}{\begin{center}}
\newcommand{\ec}{\end{center}}

\newcommand{\IR}{\mbox{I \hspace{-0.2cm}R}}
\newcommand{\IN}{\mbox{I \hspace{-0.2cm}N}}

\theoremstyle{theorem}
\newtheorem{thm}{Theorem}[section]
\newtheorem{cor}[thm]{Corollary}
\newtheorem{lem}[thm]{Lemma}
\newtheorem{sublem}[thm]{Sublemma}
\newtheorem{prop}[thm]{Proposition}
\newtheorem{ax}[thm]{Axiom}
\newtheorem{conj}[thm]{Conjecture}

\theoremstyle{definition}
\newtheorem{defn}[thm]{Definition}
\newtheorem{rem}[thm]{Remark}
\newtheorem{ques}[thm]{Question}
\newtheorem{cons}[thm]{\rm\bfseries{Construction}}
\newtheorem{exm}[thm]{Example}
\newtheorem{conds}[thm]{Condition}

\newtheorem{notation}[thm]{\rm\bfseries{Notation}}

\newtheorem*{thm*}{Theorem}

\numberwithin{equation}{section}

\hsize=5.0truein \hoffset=.25truein \vsize=8.375truein
\voffset=.15truein
\def\R{{\mathbb R}}
\def\osc{{\hbox{\rm osc}}}
\def\Crit{{\hbox{Crit}}}
\def\tr{{\hbox{\bf tr}}}
\def\Tr{{\hbox{\bf Tr}}}
\def\DTr{{\hbox{\bf DTr}}}

\def\E{{\mathbb E}}
\def\Z{{\mathbb Z}}
\def\C{{\mathbb C}}
\def\R{{\mathbb R}}
\def\P{{\mathbb P}}
\def\ralim{{\mathrel{\mathop\rightarrow}}}
\def\N{{\mathbb N}}
\def\oh{{\mathcal O}}
\def\11{{\mathbb I}}

\def\xbar{{\widetilde x}}
\def\ybar{{\widetilde y}}
\def\ubar{{\widetilde u}}
\def\Jbar{{\widetilde J}}
\def\pxo{{ (X_0) }}
\def\n{{\noindent}}
\def\olim{\mathop{\overline{\rm lim}}}
\def\ulim{\mathop{\underline{\rm lim}}}
\def\hat{{\widehat}}
\def\delbar{{\overline \partial}}
\def\dudtau{{\frac{\del u}{\del \tau}}}
\def\dudt{{\frac{\del u}{\del t}}}

\def\V{\mathbb{V}}
\def\C{\mathbb{C}}
\def\Z{\mathbb{Z}}
\def\A{\mathbb{A}}
\def\T{\mathbb{T}}
\def\L{\mathbb{L}}
\def\D{\mathbb{D}}
\def\Q{\mathbb{Q}}

\def\E{\ifmmode{\mathbb E}\else{$\mathbb E$}\fi} 
\def\N{\ifmmode{\mathbb N}\else{$\mathbb N$}\fi} 
\def\R{\ifmmode{\mathbb R}\else{$\mathbb R$}\fi} 
\def\Q{\ifmmode{\mathbb Q}\else{$\mathbb Q$}\fi} 
\def\C{\ifmmode{\mathbb C}\else{$\mathbb C$}\fi} 
\def\H{\ifmmode{\mathbb H}\else{$\mathbb H$}\fi} 
\def\Z{\ifmmode{\mathbb Z}\else{$\mathbb Z$}\fi} 
\def\P{\ifmmode{\mathbb P}\else{$\mathbb P$}\fi} 
\def\SS{\ifmmode{\mathbb S}\else{$\mathbb S$}\fi} 
\def\DD{\ifmmode{\mathbb D}\else{$\mathbb D$}\fi} 

\def\R{{\mathbb R}}
\def\osc{{\hbox{\rm osc}}}
\def\Crit{{\hbox{Crit}}}
\def\E{{\mathbb E}}
\def\Z{{\mathbb Z}}
\def\C{{\mathbb C}}
\def\R{{\mathbb R}}
\def\M{{\mathbb M}}
\def\ralim{{\mathrel{\mathop\rightarrow}}}
\def\N{{\mathbb N}}
\def\TT{{\mathcal T}}
\def\UU{{\mathcal U}}
\def\VV{{\mathcal V}}

\def\xbar{{\widetilde x}}
\def\ybar{{\widetilde y}}
\def\ubar{{\widetilde u}}
\def\Jbar{{J}}
\def\pxo{{ (X_0) }}

\def\olim{\mathop{\overline{\rm lim}}}
\def\ulim{\mathop{\underline{\rm lim}}}
\def\hat{{\widehat}}
\def\delbar{{\overline \partial}}

\def\a{\alpha}
\def\b{\beta}
\def\c{\chi}
\def\d{\delta} \def\D{\Delta}
\def\e{\varepsilon} \def\ep{\epsilon}
\def\f{\phi} \def\F{\Phi}
\def\g{\gamma} \def\G{\Gamma}
\def\k{\kappa}
\def\l{\lambda} \def\La{\Lambda}
\def\m{\mu}
\def\n{\nu}
\def\r{\rho}
\def\vr{\varrho}
\def\o{\omega} \def\O{\Omega}
\def\p{\psi}
\def\s{\sigma} \def\S{\Sigma}
\def\th{\theta} \def\vt{\vartheta}
\def\t{\tau}
\def\w{\varphi}
\def\x{\xi}
\def\z{\zeta}
\def\U{\Upsilon}
\def\CA{{\mathcal A}}
\def\CB{{\mathcal B}}
\def\CC{{\mathcal C}}
\def\CD{{\mathcal D}}
\def\CE{{\mathcal E}}
\def\CF{{\mathcal F}}
\def\CG{{\mathcal G}}
\def\CH{{\mathcal H}}
\def\CI{{\mathcal I}}
\def\CJ{{\mathcal J}}
\def\CK{{\mathcal K}}
\def\CL{{\mathcal L}}
\def\CM{{\mathcal M}}
\def\CN{{\mathcal N}}
\def\CO{{\mathcal O}}
\def\CP{{\mathcal P}}
\def\CQ{{\mathcal Q}}
\def\CR{{\mathcal R}}
\def\CP{{\mathcal P}}
\def\CS{{\mathcal S}}
\def\CT{{\mathcal T}}
\def\CU{{\mathcal U}}
\def\CV{{\mathcal V}}
\def\CW{{\mathcal W}}
\def\CX{{\mathcal X}}
\def\CY{{\mathcal Y}}
\def\CZ{{\mathcal Z}}
\def\EJ{\mathfrak{J}}
\def\EM{\mathfrak{M}}
\def\ES{\mathfrak{S}}
\def\BB{\mathib{B}}
\def\BC{\mathib{C}}
\def\BH{\mathib{H}}

\def\rd{\partial}
\def\grad#1{\,\nabla\!_{{#1}}\,}
\def\gradd#1#2{\,\nabla\!_{{#1}}\nabla\!_{{#2}}\,}
\def\om#1#2{\omega^{#1}{}_{#2}}
\def\vev#1{\langle #1 \rangle}
\def\darr#1{\raise1.5ex\hbox{$\leftrightarrow$}
\mkern-16.5mu #1}
\def\Ha{{1\over2}}
\def\ha{{\textstyle{1\over2}}}
\def\fr#1#2{{\textstyle{#1\over#2}}}
\def\Fr#1#2{{#1\over#2}}
\def\rf#1{\fr{\rd}{\rd #1}}
\def\rF#1{\Fr{\rd}{\rd #1}}
\def\df#1{\fr{\d}{\d #1}}
\def\dF#1{\Fr{\d}{\d #1}}
\def\DDF#1#2#3{\Fr{\d^2 #1}{\d #2\d #3}}
\def\DDDF#1#2#3#4{\Fr{\d^3 #1}{\d #2\d #3\d #4}}
\def\ddF#1#2#3{\Fr{\d^n#1}{\d#2\cdots\d#3}}
\def\fs#1{#1\!\!\!/\,} 
\def\Fs#1{#1\!\!\!\!/\,} 
\def\roughly#1{\raise.3ex\hbox{$#1$\kern-.75em
\lower1ex\hbox{$\sim$}}}
\def\ato#1{{\buildrel #1\over\longrightarrow}}
\def\up#1#2{{\buildrel #1\over #2}}
\def\opname#1{\mathop{\kern0pt{\rm #1}}\nolimits}
\def\Re{\opname{Re}}
\def\Im{\opname{Im}}
\def\End{\opname{End}}
\def\dim{\opname{dim}}
\def\vol{\opname{vol}}
\def\Int{\opname{Int}}

\def\group#1{\opname{#1}}
\def\SU{\group{SU}}
\def\U{\group{U}}
\def\SO{\group{SO}}
\def\pr{\prime}
\def\CPr{{\prime\prime}}
\def\bs{\mathib{s}}
\def\supp{\operatorname{supp}}
\def\doub{\operatorname{doub}}
\def\Dev{\operatorname{Dev}}
\def\Tan{\operatorname{Tan}}
\def\barCal{\overline{\operatorname{Cal}}}
\def\leng{\operatorname{leng}}
\def\Per{\operatorname{Per}}
\def\End{\operatorname{End}}
\def\Reeb{\operatorname{Reeb}}
\def\Aut{\operatorname{Aut}}
\def\dist{\operatorname{dist}}
\def\Index{\operatorname{Index}}

\def\mq{\mathfrak{q}}
\def\mp{\mathfrak{p}}
\def\mH{\mathfrak{H}}
\def\mh{\mathfrak{h}}
\def\ma{\mathfrak{a}}
\def\ms{\mathfrak{s}}
\def\mm{\mathfrak{m}}
\def\mn{\mathfrak{n}}
\def\mz{\mathfrak{z}}
\def\mw{\mathfrak{w}}
\def\Hoch{{\tt Hoch}}
\def\mt{\mathfrak{t}}
\def\ml{\mathfrak{l}}
\def\mT{\mathfrak{T}}
\def\mL{\mathfrak{L}}
\def\mg{\mathfrak{g}}
\def\md{\mathfrak{d}}
\def\mr{\mathfrak{r}}

\title[Lagrangian Floer theory on compact toric manifolds I]
{Lagrangian Floer theory on compact toric manifolds I}

\author[K. Fukaya, Y.-G. Oh, H. Ohta, K.
Ono]{Kenji Fukaya, Yong-Geun Oh, Hiroshi Ohta, Kaoru Ono}
\thanks{KF is supported partially by JSPS Grant-in-Aid for Scientific Research
No.18104001 and Global COE Program G08, YO by US NSF grant \# 0503954, HO by JSPS Grant-in-Aid
for Scientific Research No.19340017, and KO by JSPS Grant-in-Aid for
Scientific Research, Nos. 17654009 and 18340014}

\begin{abstract} The present authors introduced the
notion of \emph{weakly unobstructed} Lagrangian submanifolds and
constructed their \emph{potential function} $\mathfrak{PO}$ purely
in terms of $A$-model data in \cite{fooo06}. In this paper, we carry
out explicit calculations involving $\mathfrak{PO}$ on toric
manifolds and study the relationship between this class of
Lagrangian submanifolds with the earlier work of Givental
\cite{givental1} which advocates that quantum cohomology ring is
isomorphic to the Jacobian ring of a certain function, called the
Landau-Ginzburg superpotential. Combining this study with the
results from \cite{fooo06}, we also apply the study to various
examples to illustrate its implications to symplectic topology of
Lagrangian fibers of toric manifolds. In particular we relate it to
Hamiltonian displacement property of Lagrangian fibers and to
Entov-Polterovich's symplectic quasi-states.
\end{abstract}

\date{Sep. 3, 2009}

\keywords{Floer cohomology, toric manifolds, weakly unobstructed
Lagrangian submanifolds, potential function, Jacobian ring, balanced
Lagrangian fibers, quantum cohomology}

\maketitle \tableofcontents
\section{Introduction}

Floer theory of Lagrangian submanifolds plays an important role
in symplectic geometry since Floer's invention \cite{floer:morse}
of the Floer cohomology and subsequent generalization to the class of
\emph{monotone} Lagrangian submanifolds \cite{oh:monotone}. After the introduction of
$A_{\infty}$ structure in Floer theory \cite{fukaya:category} and Kontsevich's homological
mirror symmetry proposal \cite{konts:hms}, it has also played
an essential role in a formulation of mirror symmetry in string theory.

In \cite{fooo00}, we have analyzed the anomaly $\del^2 \neq 0$ and
developed an obstruction theory for the definition of Floer cohomology
and introduced the class of \emph{unobstructed} Lagrangian
submanifolds for which one can deform Floer's original definition of
the `boundary' map by a suitable bounding cochain denoted by $b$.
Expanding the discussion in section 7 \cite{fooo00} and motivated
by the work of Cho-Oh \cite{cho-oh}, we also introduced the notion
of \emph{weakly unobstructed} Lagrangian submanifolds in Chapter 3
\cite{fooo06} which turns out to be the right class of Lagrangian
submanifolds to look at in relation to the mirror symmetry of Fano
toric $A$-model and Landau-Ginzburg $B$-model proposed by physicists
(see \cite{hori}, \cite{hori-vafa}). In the present paper, we study
the relationship between this class of Lagrangian submanifolds with
the earlier work of Givental \cite{givental1} which advocates that
quantum cohomology ring is isomorphic to the Jacobian ring of a
certain function, which is called the Landau-Ginzburg
superpotential. Combining this study with the results from
\cite{fooo06}, we also apply this study to symplectic topology of
Lagrangian fibers of toric manifolds.
\par
While appearance of bounding cochains is natural in the point of view of
deformation theory, explicit computation thereof has not been carried out.
One of the main purposes of the present paper is to perform this
calculation in the case of fibers of toric manifolds and
draw its various applications.
Especially we show that each fiber $L(u)$ at $u \in \mathfrak t^*$
is weakly unobstructed for \emph{any} toric manifold $\pi: X \to \mathfrak t^*$
(see Proposition \ref{unobstruct}), and
then show that the set of the pairs $(L(u),b)$
of a fiber $L(u)$ and a weak bounding cochain $b$ with nontrivial
Floer cohomology can be calculated from the
quantum cohomology of the ambient toric manifold, at least in the Fano case.
Namely the set of such pairs $(L(u),b)$
is identified with the set of ring homomorphisms
from quantum cohomology to the relevant Novikov ring.
We also show by a variational analysis
that for any compact toric manifold there exists at least one pair of
$(u,b)$'s for which the Floer cohomology of $(L(u),b)$ is nontrivial.
\par
We call a Lagrangian fiber (that is, a $T^n$-orbit) {\it balanced},
roughly speaking, if its Floer cohomology is nontrivial.
(See Definition \ref{def:balanced} for its precise definition.)
The main result of this paper is summarized as follows.
\begin{enumerate}
\item When $X$ is a compact Fano toric manifold, we give a
method to locate all the balanced fibers.
\par
\item Even when $X$ is not Fano, we can still apply the same
method to obtain a finite set of Lagrangian fibers. We prove this
set coincides with the set of balanced Lagrangian fibers
under certain nondegeneracy condition. This condition can
be easily checked, when a toric manifold is given.
\end{enumerate}

Now more precise statement of the main results are in order.

Let $X$ be an $n$ dimensional smooth compact toric manifold.
We fix a $T^n$-equivariant K\"ahler form on $X$ and
let $\pi : X \to \mathfrak t^*\cong (\R^n)^*$ be the moment map.
The image $P = \pi(X) \subset (\R^n)^*$ is called the {\it moment polytope}.
For $u \in \text{\rm Int} \,P$, we denote $L(u) = \pi^{-1}(u)$. The fiber $L(u)$
is a Lagrangian torus which is an orbit of the $T^n$ action.
(See section 2. We refer readers to,
for example, \cite{Aud91}, \cite{fulton} for the details on toric manifolds.)
We study the Floer cohomology defined in \cite{fooo06}. According to
\cite{fooo00,fooo06}, we need an extra data, the bounding cochain,
to make the definition of Floer cohomology more flexible to allow
more general class of Lagrangian submanifolds.
In the current context of Lagrangian torus fibers in toric manifolds,
we use \emph{weak bounding cochains}.
Denote by $\CM_{\text{\rm weak}}(L(u);\Lambda_0)$
the moduli space of (weak) bounding cochains
for a weakly unobstructed Lagrangian submanifold $L(u)$.
(See the end of section \ref{sec:statement}.)

In this situation we first show that
each element in $H^1(L(u);\Lambda_{0})$ gives rise to a weak
bounding cochain, i.e., there is a natural embedding
\begin{equation}\label{eq:H1andM}
H^1(L(u);\Lambda_{0}) \hookrightarrow \mathcal M_{\text{\rm weak}}(L(u);\Lambda_0).
\end{equation}
(See Proposition \ref{unobstruct}.)
Here we use the universal Novikov ring
\be
\Lambda = \left\{ \sum_{i=1}^{\infty} a_i T^{\lambda_i} \, \Big\vert \,
a_i \in \Q, \lambda_i \in \R, \lim_{i\to\infty} \lambda_i = \infty\right\}
\ee
where $T$ is a formal parameter. (We do not use the grading
parameter $e$ used in \cite{fooo06} since it will not play much role
in this paper.) Then $\Lambda_0$ is a subring of $\Lambda$ defined by
\be
\Lambda_0 =\left\{\sum_{i=1}^{\infty} a_i T^{\lambda_i}
\in \Lambda \, \Big\vert \,
\lambda_i \ge 0\right\}.
\ee
We also use another subring
\be
\Lambda_+ = \left\{ \sum_{i=1}^{\infty} a_i T^{\lambda_i}
\in \Lambda \, \Big\vert \, \lambda_i > 0\right\}.
\ee
We note $\Lambda$ is the field of fractions of $\Lambda_0$
and $\Lambda_0$ is a local ring with maximal ideal $\Lambda_+$.
Here we take the universal Novikov ring over $\Q$ but we
also use universal Novikov ring over $\C$ or other ring $R$ which we
denote $\Lambda^{\C}$, $\Lambda^R$, respectively.
(In case $R$ does not contain $\Q$, Floer cohomology over
$\Lambda^R$ is defined only in Fano case.)

\begin{rem}
If we strictly follow the way taken in \cite{fooo06}, we only get
the embedding $H^1(L(u);\Lambda_{+}) \hookrightarrow \mathcal M_{\text{\rm
weak}}(L(u))$, not (\ref{eq:H1andM}).
Here
$$\mathcal M_{\text{\rm
weak}}(L(u)) = \mathcal M_{\text{\rm
weak}}(L(u);\Lambda_+)
$$
is defined in \cite{fooo06}. (We note that $\mathcal M_{\text{\rm
weak}}(L(u);\Lambda_+)\ne \mathcal M_{\text{\rm
weak}}(L(u);\Lambda_0)$ where the right hand side is defined at the
end of section \ref{sec:statement}.)
\par
However we can modify the
definition of weak unobstructedness so that (\ref{eq:H1andM})
follows, using the idea of Cho \cite{cho07}. See section
\ref{sec:flat}.
\par
Hereafter we use the symbol $b$ for an element of $\mathcal M_{\text{\rm
weak}}(L(u);\Lambda_+)$ and $\frak x$ for an element of $\mathcal M_{\text{\rm
weak}}(L(u);\Lambda_0)$.
\end{rem}
\par
We next consider the quantum cohomology ring $QH(X;\Lambda)$
with the universal Novikov ring $\Lambda$ as a coefficient ring.
(See section \ref{sec:milquan}.) It is a commutative ring for the
toric case, since $QH(X;\Lambda)$ is generated by
even degree cohomology classes.
\begin{defn}\label{def:MlagAHhom}
\begin{enumerate}
\item We define the set
$
\text{\rm Spec}(QH(X;\Lambda))(\Lambda^{\C})
$
to be the set of $\Lambda$ algebra homomorphisms
$
\varphi : QH(X;\Lambda) \to \Lambda^{\C}
$.
(In other
words it is the set of all $\Lambda^{\C}$ valued points of
the scheme $\text{\rm Spec}(QH(X;\Lambda))$.
\par
\item We next denote by
$
\mathfrak M(\mathfrak{Lag}(X))
$
the set of all pairs $(\mathfrak x,u)$, $u \in \operatorname{Int} P$, $\mathfrak x \in
H^1(L(u);\Lambda_{0}^{\C})/H^1(L(u);2\pi\sqrt{-1}\Z)$ such that
$$
HF((L(u),\mathfrak x),(L(u),\mathfrak x);\Lambda^{\C}) \ne \{0\}.
$$
\end{enumerate}
\end{defn}
\begin{thm}\label{lageqgeopt0}
If $X$ is a Fano toric manifold then
$$
\text{\rm Spec}(QH(X;\Lambda))(\Lambda^{\C}) \cong \mathfrak M(\mathfrak{Lag}(X)).
$$
\par
If $QH(X;\Lambda)$ is semi-simple in addition, we have
\begin{equation}\label{eq:betti}
\sum_d \text{\rm rank}_{\Q} H_d(X;\Q) = \#\left(\mathfrak M(\mathfrak{Lag}(X))\right).
\end{equation}
\end{thm}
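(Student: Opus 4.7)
The plan is to identify both $\mathrm{Spec}(QH(X;\Lambda))(\Lambda^{\C})$ and $\mathfrak{M}(\mathfrak{Lag}(X))$ with the common set of $\Lambda^{\C}$-valued critical points of the potential function $\mathfrak{PO}$, regarded as a Laurent polynomial in suitable multiplicative coordinates. The two main ingredients are the explicit Cho--Oh formula for $\mathfrak{PO}$ in the Fano case and the Batyrev--Givental identification of $QH(X;\Lambda)$ with the Jacobian ring of $\mathfrak{PO}$.

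First, in the Fano case Cho--Oh's classification of holomorphic discs with boundary on $L(u)$ shows that only Maslov index $2$ discs contribute to $\mathfrak{PO}$, and these are in bijection with the facets of $P = \{u : \ell_i(u)\ge 0,\ i=1,\dots,m\}$. Fixing a basis of $H^1(L(u);\Z)$, one obtains
\begin{equation*}
\mathfrak{PO}(u,\mathfrak x) = \sum_{i=1}^m T^{\ell_i(u)}\exp\langle v_i,\mathfrak x\rangle,
\end{equation*}
with $v_i\in \Z^n$ the primitive inward normal to the $i$-th facet. By \cite{fooo06}, together with the extension to $\Lambda_0$-valued $\mathfrak x$ mentioned in the preceding remark, $HF((L(u),\mathfrak x),(L(u),\mathfrak x);\Lambda^{\C})$ is nonzero iff $\mathfrak x$ is a critical point of $\mathfrak{PO}(u,\,\cdot\,)$ on $H^1(L(u);\Lambda_0^{\C})/H^1(L(u);2\pi\sqrt{-1}\Z)$. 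So $\mathfrak{M}(\mathfrak{Lag}(X))$ is identified with the set of such critical pairs $(u,\mathfrak x)$.

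Next, the substitution $y_j = T^{u_j}\exp(\mathfrak x_j)$ realizes our parameter space as the open subset of $(\Lambda^{\C,\times})^n$ whose vector of $T$-adic valuations lies in $\mathrm{Int}\,P$, and converts $\mathfrak{PO}$ into the Laurent polynomial $W(y) = \sum_i T^{a_i}y^{v_i}$, where $\ell_i(u) = \langle v_i,u\rangle + a_i$. The critical equations become $\partial W/\partial y_j = 0$. A tropical, Newton-polytope argument shows that in the Fano case every critical point of $W$ in $(\Lambda^{\C,\times})^n$ automatically has valuation vector in $\mathrm{Int}\,P$, so $\mathfrak{M}(\mathfrak{Lag}(X))$ is in bijection with the full $\Lambda^{\C}$-valued critical locus of $W$. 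The Batyrev--Givental theorem \cite{givental1}, established in the Novikov-ring setting in section \ref{sec:milquan}, identifies
\begin{equation*}
QH(X;\Lambda) \cong \Lambda[y_1^{\pm 1},\dots,y_n^{\pm 1}]\big/\big(\partial W/\partial y_1,\dots,\partial W/\partial y_n\big),
\end{equation*}
so $\mathrm{Spec}(QH(X;\Lambda))(\Lambda^{\C})$ is exactly the same critical locus, proving the first assertion.

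For the second assertion, if $QH(X;\Lambda)$ is semi-simple then after base-change to $\Lambda^{\C}$ it splits as a product of copies of $\Lambda^{\C}$, each contributing one $\Lambda^{\C}$-valued point; hence $\#\mathfrak{M}(\mathfrak{Lag}(X)) = \dim_{\Lambda}QH(X;\Lambda)$, which in turn equals $\sum_d \mathrm{rank}_{\Q}H_d(X;\Q)$ because the cohomology of a compact toric manifold is concentrated in even degrees. The main obstacle in this plan is the Batyrev--Givental presentation itself: one must verify over the Novikov ring $\Lambda$, rather than over $\C$, that the quantum Stanley--Reisner and linear relations of $QH(X;\Lambda)$ coincide with the partial derivatives of the Cho--Oh potential $W$. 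The tropical control of critical points inside $\mathrm{Int}\,P$ is a secondary, more elementary point.
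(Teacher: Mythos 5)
Your overall architecture is the same as the paper's: identify $\mathfrak M(\mathfrak{Lag}(X))$ with $\text{\rm Crit}(\mathfrak{PO})$ via the nonvanishing criterion for Floer cohomology (Theorem \ref{homologynonzero}), identify $QH(X;\Lambda)$ with the Jacobian ring of the Cho--Oh potential over the Novikov ring (Theorem \ref{QHequalMilnor}, Proposition \ref{nonfanoadded}), and match $\Lambda^{\C}$-points of the Jacobian ring with critical points by reading off $u$ from the $T$-adic valuations of $\varphi(y_j)$ (Theorem \ref{thm:geometricpt}, Lemma \ref{lemma:chooseu}). Your assessment that the delicate foundational point is establishing the Batyrev presentation over $\Lambda$ rather than over $\C$ is also consistent with how the paper proceeds in section \ref{sec:milquan}.

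There is, however, one genuine gap: the step you dismiss as ``a secondary, more elementary point'' --- that every critical point of $W$ in $(\Lambda^{\C,\times})^n$ has valuation vector in $\text{\rm Int}\,P$ --- is precisely part (3) of Theorem \ref{lageqgeopt}, and it is not a formal consequence of the Newton polytope of $W$. The analogous statement for $\mathfrak{PO}_0$ is \emph{false} for non-Fano $X$ (Example \ref{ex:Hilexa}: for the Hirzebruch surface $F_n$, $n\ge 3$, some critical points have valuation vector outside $P$), and the Newton polytope $\operatorname{Conv}\{v_i\}$ looks the same in the Fano and non-Fano cases; the Fano condition only enters through a convexity property of the coefficients $T^{-\lambda_i}$. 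So any ``tropical'' proof would have to use that convexity in an essential way, and you do not supply such an argument. The paper instead proves this step through quantum cohomology (Theorem \ref{thm:mismplus}): using $QH^{\omega}(X;\Lambda)=QH(X;\Lambda)$ in the Fano case, the eigenvalue $\mathfrak w_i$ of quantum multiplication by $z_i$ on each indecomposable factor satisfies $\mathfrak w_i \in \Lambda_+^{\C}$ because $z_i\cup_Q z\equiv z_i\cup z \mod \Lambda_+^{\C}$ and $z_i$ is nilpotent for the classical cup product; then $\ell_i(u)=\mathfrak v_T(\mathfrak w_i)>0$ forces $u\in\text{\rm Int}\,P$. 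Without this step (or a genuine replacement for it) your identification of $\mathfrak M(\mathfrak{Lag}(X))$ with the full critical locus, and hence with $\text{\rm Spec}(QH(X;\Lambda))(\Lambda^{\C})$, is incomplete.
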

We remark that a commutative ring that is a finite dimensional vector space over a field
(e.g., $\Lambda$ in our case) is semi-simple
if and only if it does not contain any nilpotent element. We also remark that a compact
toric manifold is Fano if and only if every nontrivial holomorphic sphere has
positive Chern number.
\par
We believe that (\ref{eq:betti}) still holds in the non-Fano case
but are unable to prove it at the time of writing this paper.
We however can prove that
there exists a fiber $L(u)$ whose Floer cohomology is nontrivial, by a method different
from the proof of Theorem \ref{lageqgeopt0}.
Due to some technical reason, we can only prove the following slightly weaker
statement.
\begin{thm}\label{exitnonvani}
Assume the K\"ahler form $\omega$
of $X$ is rational. Then, there exists $u \in \operatorname{Int} P$ such that for
any ${\CN}\in \R_+$ there exists $\mathfrak x \in H^1(L(u);\Lambda^{\R}_0)$ with
$$
HF((L(u),\mathfrak x),(L(u),\mathfrak x);\Lambda^{\R}_0/(T^{\CN})) \cong H(T^n;{\R}) \otimes_{\R} \Lambda_0^{\R}/(T^{\CN}).
$$
\end{thm}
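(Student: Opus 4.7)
The plan is to reduce the statement to finding a critical point of the potential function $\mathfrak{PO}_u$ modulo $T^\CN$, and then to construct such a critical point by combining a variational choice of $u$ with an inductive $T$-adic approximation that uses the rationality of $\omega$.

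First, I would invoke the formalism of \cite{fooo06}. Since Proposition \ref{unobstruct} tells us that every $L(u)$ is weakly unobstructed and every $\mathfrak x \in H^1(L(u);\Lambda_0^\R)$ defines a weak bounding cochain, the potential function $\mathfrak{PO}_u: H^1(L(u);\Lambda_0^\R)\to \Lambda_0^\R$ is defined. The leading part of the Floer coboundary at $\mathfrak x$ is the derivative $d\mathfrak{PO}_u(\mathfrak x)$ (contracted against $H^1$-cycles), so whenever $\mathfrak x$ satisfies $d\mathfrak{PO}_u(\mathfrak x)\equiv 0 \pmod{T^\CN}$, the Floer cohomology $HF((L(u),\mathfrak x),(L(u),\mathfrak x);\Lambda_0^\R/(T^\CN))$ agrees with the full classical cohomology $H(T^n;\R)\otimes_\R \Lambda_0^\R/(T^\CN)$. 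Thus the theorem reduces to producing a single $u\in\operatorname{Int} P$ together with, for each $\CN$, an $\mathfrak x$ solving the critical-point equation of $\mathfrak{PO}_u$ to order $T^\CN$.

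Second, I would analyse the leading term of $\mathfrak{PO}_u$ coming from Maslov index $2$ disks, which by Cho--Oh are in bijection with the facets of $P$ and contribute
$$\mathfrak{PO}_{u,0}(\mathfrak x) = \sum_{i=1}^m T^{\ell_i(u)}\, e^{\langle \mathfrak x, v_i \rangle},$$
where $\ell_i(u)=\langle u,v_i\rangle-\lambda_i$ are the defining affine functions of $P$ with inward normals $v_i\in\Z^n$. Here is the variational step: the concave piecewise affine function $\mathfrak v(u) := \min_i \ell_i(u)$ vanishes on $\partial P$ and is positive on $\operatorname{Int} P$, so it attains its maximum at some $u_0\in\operatorname{Int} P$. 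At $u_0$, the active set $I(u_0):=\{i:\ell_i(u_0)=\mathfrak v(u_0)\}$ has the property that $0$ lies in the relative interior of the convex hull of $\{v_i\}_{i\in I(u_0)}$. This ``balancing'' is precisely the condition for the sub-sum $\sum_{i\in I(u_0)} e^{\langle \mathfrak x,v_i\rangle}$ to admit a critical point in $\mathfrak x$, yielding a seed solution to $d\mathfrak{PO}_{u_0,0}\equiv 0$ modulo a slightly higher power of $T$ than $T^{\mathfrak v(u_0)}$. I would then lift this seed inductively: rationality of $\omega$ forces every symplectic area $\omega\cap \beta$ of a disk with boundary on $L(u_0)$ to lie in $\frac{1}{N_0}\Z_{\geq 0}$ for some fixed $N_0$, so $\mathfrak{PO}_{u_0}$ has only finitely many nonzero $T$-coefficients modulo any $T^\CN$, and a Newton-type iteration in the $T$-adic topology corrects $\mathfrak x$ one power of $T^{1/N_0}$ at a time, the correction at each step being determined by a linear equation involving the Hessian of the leading term restricted to the span of $\{v_i\}_{i\in I(u_0)}$, on which it is positive definite by strict convexity of $\sum_{i\in I(u_0)} e^{\langle \mathfrak x,v_i\rangle}$.

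The main obstacle is that the full Hessian of $\mathfrak{PO}_{u_0}$ need not be non-degenerate transverse to the span of the active normals, and non-Fano contributions from disks of Maslov index $\geq 4$ can land at the same $T$-valuations as the leading facet monomials and alter the linear equations at each inductive stage. This is exactly why the theorem is phrased modulo $T^\CN$ rather than over the whole $\Lambda_0^\R$: one never has to invert the full Hessian, only to solve a finite truncation at each stage. Rationality makes each such truncation involve only finitely many disk classes, and the balancing condition at $u_0$ supplies invertibility on the only subspace that matters; the induction therefore terminates after finitely many steps and produces an $\mathfrak x\in H^1(L(u_0);\Lambda_0^\R)$ with $d\mathfrak{PO}_{u_0}(\mathfrak x)\equiv 0 \pmod{T^\CN}$, which by the first step yields the claimed isomorphism.
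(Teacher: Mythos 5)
Your first reduction (critical points of $\mathfrak{PO}^u$ modulo $T^{\CN}$ give full Floer cohomology modulo $T^{\CN}$) is the same as the paper's, via Theorem \ref{homologynonzero}; the only caveat is that for $\mathfrak x\in H^1(L(u);\Lambda_0^{\R})$ with leading coefficients $\mathfrak y_{i,0}\ne 1$ one must twist by a non-unitary flat line bundle as in section \ref{sec:flat}, which your write-up elides but which is a known device. The genuine gaps are in the construction of the critical point.

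First, the variational step is too weak. Maximizing $\mathfrak v(u)=\min_i\ell_i(u)$ generally produces a positive-dimensional face $P_1$, not a point (e.g.\ for $S^2(a/2)\times S^2(b/2)$ with $a<b$ the maximizer set is the segment $\{a/2\}\times[a/2,b-a/2]$). At a point of $P_1$ the active normals $\{v_i\}_{i\in I(u_0)}$ span only a proper subspace $A_1^\perp\subset N_\R$, and $0$ lies in the relative interior of their convex hull only inside that subspace. Your seed solution therefore solves $\partial\mathfrak{PO}^{u_0}/\partial y_j\equiv 0$ only in the $A_1^\perp$ directions; in the complementary directions the equation is governed by facets with strictly larger $\ell_i(u_0)$, and for a generic point of $P_1$ it has no solution with $\mathfrak y_j\in\Lambda_0\setminus\Lambda_+$. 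This is exactly why the paper iterates the minimization (Proposition \ref{prop:bsSdef}): one defines $s_{k+1}=\inf\{\ell_i>S_k\}$ on $P_k$, takes $P_{k+1}$ to be its maximizer set, and continues until $P_K$ is a single point $u_0$; the resulting filtration $A_1^\perp\subset\cdots\subset A_K^\perp=N_\R$ produces a hierarchy of leading term equations (\ref{37.161}), one block for each $T$-valuation $S_k$, each solved by minimizing a sum of exponentials on the corresponding quotient (Lemma \ref{37.162}, using Corollary \ref{Col:37.148}). Your claim that the balancing at a single level ``supplies invertibility on the only subspace that matters'' is false: all $n$ directions matter, and the transverse ones are controlled only by the higher levels of this hierarchy.

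Second, the lifting step. A Newton iteration correcting one power of $T^{1/N_0}$ at a time requires the relevant Hessian to be invertible, i.e.\ strong nondegeneracy of the leading term solution. The minimizers produced by the convexity argument need only be weakly nondegenerate (or worse), and the paper explicitly treats the strongly nondegenerate case as a separate, later result (Theorem \ref{thm:elliminate}) with an extra hypothesis. In the proof of Proposition \ref{existcrit} the paper instead uses rationality to rescale $T$ so that the truncated critical point equations become genuine polynomial equations, observes by a degree argument (Lemma \ref{37.171}) that the solution variety $\mathfrak X$ has nonempty fibers over small $q=T$, and then applies the curve selection lemma to extract a real-analytic branch through the leading-term solution at $q=0$, whose Taylor expansion gives $\mathfrak y\in(\Lambda_0^{\R}\setminus\Lambda_+^{\R})^n$ solving the equation modulo $T^{\CN}$. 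Without either strong nondegeneracy or this algebraic-geometric substitute, your induction does not close.
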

We suspect that the rationality assumption in Theorem \ref{exitnonvani} can
be removed. It is also likely that we can prove $\mathfrak
M(\mathfrak{Lag}(X))$ is nonempty but its proof at the moment is a bit cumbersome
to write down. We can however derive the following
theorem from Theorem \ref{exitnonvani}, without rationality
assumption.
\begin{thm}\label{toric-intersect}
Let $X$ be an $n$ dimensional compact toric manifold. There exists $u_0 \in \text{\rm Int} P$
such that the following holds for any Hamiltonian diffeomorphism $\psi : X \to X$,
\begin{equation}
\psi(L(u_0)) \cap L(u_0) \ne \emptyset.
\label{eq:Lu_0}\end{equation}
If $\psi(L(u_0))$ is transversal to $L(u_0)$ in addition, then
\label{eq:transLu_0}
\begin{equation}\#(\psi(L(u_0)) \cap L(u_0)) \ge 2^n.\label{eq:Lu_1}\end{equation}
\end{thm}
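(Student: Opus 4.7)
The plan is to derive the theorem from Theorem \ref{exitnonvani} in two stages: first I prove the statement when $\omega$ is rational via a standard Floer-theoretic displacement argument, then I remove the rationality hypothesis by approximating $\omega$ with rational K\"ahler forms.

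Suppose first that $\omega$ is rational, and let $u_0 \in \text{Int }P$ be the point furnished by Theorem \ref{exitnonvani}. Given a Hamiltonian diffeomorphism $\psi$ with Hofer displacement energy $e(\psi) > 0$, choose $\CN > e(\psi)$ and let $\mathfrak x \in H^1(L(u_0);\Lambda_0^{\R})$ be the weak bounding cochain provided by Theorem \ref{exitnonvani} for this $\CN$. The standard continuation / energy estimate for Lagrangian Floer cohomology, in the weakly unobstructed form developed in \cite{fooo00,fooo06}, yields an isomorphism
$$
HF\bigl((L(u_0),\mathfrak x),(\psi(L(u_0)),\psi_*\mathfrak x);\Lambda_0^{\R}/(T^{\CN})\bigr)
\;\cong\;
HF\bigl((L(u_0),\mathfrak x),(L(u_0),\mathfrak x);\Lambda_0^{\R}/(T^{\CN})\bigr),
$$
since the truncation level $\CN$ exceeds the energy needed to move intersection points through the isotopy. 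By Theorem \ref{exitnonvani}, the right-hand side is isomorphic to $H(T^n;\R)\otimes \Lambda_0^{\R}/(T^{\CN})$ and in particular nonzero, so $\psi(L(u_0)) \cap L(u_0)$ cannot be empty: otherwise the Floer complex defining the left-hand side would itself vanish. This proves (\ref{eq:Lu_0}). If in addition $\psi(L(u_0)) \pitchfork L(u_0)$, then the Floer complex on the left is freely generated over $\Lambda_0^{\R}/(T^{\CN})$ by $\psi(L(u_0)) \cap L(u_0)$, and the minimal-generator rank of its cohomology, namely $2^n$, bounds $\#(\psi(L(u_0)) \cap L(u_0))$ from below, giving (\ref{eq:Lu_1}).

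For arbitrary $\omega$, approximate it in $C^\infty$ by a sequence $\omega_k$ of rational $T^n$-invariant K\"ahler forms. Applying the previous paragraph to each $\omega_k$ produces a torus orbit $O_k \subset X$ (the fiber over some $u_0^{(k)} \in \text{Int }P_k$ for the $\omega_k$-moment map) that is non-displaceable by every $\omega_k$-Hamiltonian diffeomorphism and that meets every transverse image in at least $2^n$ points. By compactness of $X/T^n$, a subsequence of $(O_k)$ converges in the Hausdorff topology to a $T^n$-orbit $O_\infty$; set $u_0 := \mu_\omega(O_\infty)$. For any $\omega$-Hamiltonian $\psi = \psi_H^1$, let $\psi_k$ denote the time-one map of the same function $H$ computed with respect to $\omega_k$; since the Hamiltonian vector field depends smoothly on the symplectic form, $\psi_k$ is $\omega_k$-Hamiltonian and $\psi_k \to \psi$ in $C^\infty$. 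If $\psi(L(u_0)) \cap L(u_0) = \emptyset$ then compactness forces $\psi_k(O_k) \cap O_k = \emptyset$ for $k$ large, contradicting the rational case, which proves (\ref{eq:Lu_0}). The transverse count (\ref{eq:Lu_1}) follows analogously, because transversality is $C^1$-open and the number of transverse intersections is locally constant under $C^1$-small perturbations.

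The principal obstacle is to guarantee that the limit orbit $O_\infty$ is genuinely $n$-dimensional, so that $u_0 \in \text{Int }P$ rather than on $\partial P$ where $L(u_0)$ would be a lower-dimensional torus unsuitable for Floer theory. This requires a uniform interior estimate on the $u_0^{(k)}$: one must rule out that $\text{dist}(u_0^{(k)},\partial P_k) \to 0$. I expect this to follow from the structure of the leading-order potential function, whose blow-up near $\partial P$ forces its critical points to stay in a compact subset of $\text{Int }P_k$ uniformly in $k$. A secondary concern is careful verification of the continuation isomorphism modulo $T^{\CN}$ in the weakly unobstructed setting, but this is a routine adaptation of the energy estimates in \cite{fooo00,fooo06}.
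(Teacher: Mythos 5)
Your overall strategy coincides with the paper's: handle the rational case via Theorem \ref{exitnonvani} together with the displacement estimate of Theorem J of \cite{fooo06} (your ``continuation isomorphism mod $T^{\CN}$'' is exactly what that theorem packages, via the torsion decomposition $HF \cong \Lambda_0^{\oplus a}\oplus\bigoplus\Lambda_0/(T^{c(i)})$ with $c(i)\ge\CN$ and $a+2b\ge 2^n$), and then pass to general $\omega$ by rational approximation. But the step you yourself flag as ``the principal obstacle'' is a genuine gap, and the fix you anticipate is not the one that works. The potential function does not blow up near $\partial P$; what is true is that $\mathfrak{PO}_0^u$ has no critical point in $(\Lambda_0\setminus\Lambda_+)^n$ for $u\in\partial P$, and this gives no uniform interior bound on the points $u_0^{(k)}$ as $\omega_k$ varies. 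Moreover, Proposition \ref{existcrit} does not produce critical points at arbitrary interior points: it produces them only at the specific point $u_0=P_K$ obtained from the iterated min-max of the $\ell_i$ in Proposition \ref{prop:bsSdef}, and that combinatorial structure (the sets $I_k$, the strata $P_k$, their dimensions) \emph{jumps} under generic perturbation of the K\"ahler class --- the paper points out that this already happens for the one-point blow-up of $\C P^2$ at $\alpha=1/3$. A generic rational approximation $\omega_k\to\omega$ therefore need not have its distinguished points $u_0^{(k)}$ converging to an interior point with the required properties.

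The paper's resolution reverses the logic: $u_0$ is fixed \emph{first} as the point $P_K$ of the variational construction for $\omega$ itself (interior by construction, since $P_{k+1}\subset\operatorname{Int}P_k$), and then Proposition \ref{prop:approx} / Lemma \ref{inductiongeneric} shows that $\omega$ can be approximated by \emph{rational} forms chosen inside a nonempty rational affine subspace $\mathfrak P_K$ of the K\"ahler cone along which $\dim P_l^{\omega'}$ and $I_l^{\omega'}$ are preserved, so that $s_k^{\omega_h}\to s_k$, $S_k^{\omega_h}\to S_k$ and $u_0^{\omega_h}\to u_0$. This is exactly the content of the notion of ``balanced fiber'' (Definition \ref{def:balanced}) and of Propositions \ref{prof:bal} and \ref{prof:bal2}. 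A secondary point you gloss over: since $\mathfrak x$ lies in $H^1(L(u_0);\Lambda_0^{\R})$ rather than $\Lambda_+$, the Hamiltonian-isotopy invariance of the deformed Floer cohomology is not a direct quotation of \cite{fooo06} but requires the twisting by non-unitary flat line bundles carried out in section \ref{sec:flat} (Lemmas \ref{sqarzero} and \ref{invariance}); calling this ``routine'' understates it, though it is indeed established in the paper.
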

\par
Theorem \ref{toric-intersect} will be proved in section \ref{sec:floerhom}.
\par
We would like to point out that (\ref{eq:Lu_0}) can be derived from
a more general intersection result, Theorem 2.1
\cite{entov-pol06}, obtained by Entov-Polterovich with a different method using a very interesting
notion of partial symplectic quasi-state constructed out of the spectral
invariants defined in \cite{SchM}, \cite{oh:alan}. (See also
\cite{viterbo}, \cite{oh:cag1} for similar constructions in the
context of exact Lagrangian submanifolds.)

\begin{rem}
Strictly speaking, Theorem 2.1 \cite{entov-pol06} is stated under
the assumption that $X$ is semi-positive and $\omega$ is rational
because the theory of spectral invariant was developed in
\cite{oh:alan} under these conditions. The rationality assumption
has been removed in \cite{oh:minimax}, \cite{usher} and the
semi-positivity assumption of $\omega$ removed by Usher
\cite{usher}. Thus the spectral invariant satisfying all the
properties listed in \cite{entov-pol06} section 5 is now established
for an arbitrary compact symplectic manifold. By the argument of
\cite{entov-pol06} section 7, this implies the existence of a
partial symplectic quasi-state. Therefore the proof of Theorem 2.1
\cite{entov-pol06} goes through without these assumptions
(semi-positivity and rationality) and hence it implies
(\ref{eq:Lu_0}). (See the introduction of \cite{usher}.) But the
result (\ref{eq:Lu_1}) is new.
\end{rem}

Our proof of Theorem \ref{toric-intersect} gives an explicit way of
locating $u_0$, as we show in section \ref{sec:var}. (The method of
\cite{entov-pol06} is indirect and does not provide a way of finding
such $u_0$. See \cite{entov-pol07I}. Below, we will make some
remarks concerning Entov-Polterovich's approach in the perspective
of homological mirror symmetry.) In various explicit examples we can
find more than one element $u_0$ that have the properties stated in
this theorem. Following terminology employed in \cite{cho-oh}, we
call any such torus fiber $L(u_0)$ as in Theorem
\ref{exitnonvani} a \emph{balanced} Lagrangian torus fiber.
(See Definition \ref{def:balanced} for its precise definition.)

A criterion for $L(u_0)$ to be balanced, for the case $\frak x = 0$, is
provided by Cho-Oh \cite{cho-oh} and Cho \cite{cho07} under the Fano
condition. Our proofs of Theorems \ref{exitnonvani},
\ref{eq:transLu_0} are much based on this criterion, and on the idea
of Cho \cite{cho07} of twisting \emph{non-unitary} complex line
bundles in the construction of Floer boundary operator. This
criterion in turn specializes to the one predicted by physicists
\cite{hori-vafa}, \cite{hori}, which relates the location of $u_0$
to the critical points of the Landau-Ginzburg superpotential.

A precise description of balanced Lagrangian fibers including
the data of bounding cochains involves the notion of a \emph{potential function} :
In \cite{fooo06}, the authors have introduced a function
$$
\mathfrak{PO}^L: \CM_{\text{\rm weak}}(L) \to \Lambda_0
$$
for an arbitrary weakly unobstructed Lagrangian submanifold $L \subset (X,\omega)$.
By varying the function $\mathfrak{PO}^L$ over $L \in \{\pi^{-1}(u) \mid u \in \text{Int} \,P\}$,
we obtain the potential function
\be\label{eq:generalPO}
\mathfrak{PO}: \bigcup_{L \in \{\pi^{-1}(u) \mid u \in \text{Int} \,P\}}
\CM_{\text{\rm weak}}(L) \to \Lambda_0.
\ee
This function is constructed purely in terms of $A$-model data of
the general symplectic manifold $(X,\omega)$ \emph{without} using
mirror symmetry.
\par
For a toric $(X,\omega)$, the restriction of $\mathfrak{PO}$ to
$H^1(L(u);\Lambda_+)$ (see (\ref{eq:H1andM})) can be made explicit
when combined with the analysis of holomorphic discs attached to
torus fibers of toric manifolds carried out in \cite{cho-oh}, at
least in the Fano case. (In the non-Fano case we can make it
explicit modulo `higher order terms'.) This function extends to
$H^1(L(u);\Lambda_0)$.
\begin{rem}
In \cite{entov-pol07II} some relationships between quantum
cohomology, quasi-state, spectral invariant and displacement of
Lagrangian submanifolds are discussed : Consider an idempotent
$\text{\bf i}$ of quantum cohomology. The (asymptotic) spectral
invariants associated to $\text{\bf i}$ gives rise to a
partial symplectic quasi-state
via the procedure concocted in \cite{entov-pol07II}, which in turn
detects non-displaceability of certain Lagrangian submanifolds. (The
assumption of \cite{entov-pol06} is weaker than ours.)
\par
In the current context of toric manifolds, we could also relate them
to Floer cohomology and mirror symmetry in the following way :
Quantum cohomology is decomposed into indecomposable factors. (See
Proposition \ref{thm:factorize}.) Let $\text{\bf i}$ be the
idempotent corresponding to one of the indecomposable factors.
Let $L=L(u(1,\text{\bf i}))$ be a Lagrangian torus fiber whose
non-displaceability is detected by the partial symplectic quasi-state obtained from
$\text{\bf i}$. We conjecture that Floer cohomology
$HF(L(u(1,\text{\bf i}),\mathfrak x),(L(u(1,\text{\bf i}),\mathfrak x)))$ is nontrivial
for some $\mathfrak x$. (See Remark \ref{conj:dispfloer}.) This bounding
cochain $\mathfrak x$ in turn is shown to be a critical point of the potential
function $\mathfrak{PO}$ defined in \cite{fooo06}.
\par
On the other hand, $\text{\bf i}$ also determines a homomorphism
$\varphi_{\text{\bf i}} : QH(X;\Lambda) \to \Lambda$. It corresponds
to some Lagrangian fiber $L(u(2,\text{\bf i}))$ by Theorem
\ref{lageqgeopt0}. Then this will imply via Theorem
\ref{homologynonzero} that the fiber $L(u(2,\text{\bf i}))$ is
non-displaceable.
\par
We conjecture that $u(1,\text{\bf i}) = u(2,\text{\bf i})$.
We remark that $u(2,\text{\bf i})$ is explicitly calculable.
Hence in view of the way $u(1,\text{\bf i})$ is found in
\cite{entov-pol06}, $u(1,\text{\bf i}) = u(2,\text{\bf i})$
will give some information on the asymptotic behavior of the
spectral invariant associated with $\text{\bf i}$.
\end{rem}

We fix a basis of the Lie algebra $\mathfrak t$ of $T^n$ which
induces a basis of $\mathfrak t^*$ and hence a coordinate of the
moment polytope $P \subset \mathfrak t^*$. This in turn induces a
basis of $H^1(L(u);\Lambda_{0})$ for each $u \in \operatorname{Int}
P$ and so identification $H^1(L(u);\Lambda_{0}) \cong
(\Lambda_{0})^n$. We then regard the potential function as a
function
$$
\mathfrak{PO}(x_1,\cdots,x_n;u_1,\cdots,u_n) : (\Lambda_{0})^n \times
\operatorname{Int} P \to \Lambda_{0}
$$
and prove in Theorem \ref{homologynonzero} that
Floer cohomology $HF((L(u),\mathfrak x),(L(u),\mathfrak x);\Lambda)$ with
$\mathfrak x = (\mathfrak x_1,\cdots, \mathfrak x_n)$,
$u = (u_1,\cdots, u_n)$ is nontrivial if and only if $(\mathfrak x,u)$ satisfies
\begin{equation}\label{eq:POcrit}
\frac{\partial\mathfrak{PO}}{\partial x_i}(\mathfrak x;u) = 0, \quad i=1,\cdots,n.
\end{equation}
To study (\ref{eq:POcrit}), it is useful to change the variables $x_i$ to
$$
y_i = e^{x_i}.
$$
In these variables we can write potential function as a sum
$$
\mathfrak{PO}(x_1,\cdots,x_n;u_1,\cdots,u_n) = \sum T^{c_i(u)} P_i(y_1,\cdots,y_n)
$$
where $P_i$ are Laurent polynomials which do not depend on $u$, and
$c_i(u)$ are positive real valued functions. When
$X$ is Fano, we can express the right hand side as a finite sum. (See
Theorem \ref{potential}.)

We define a function $\mathfrak{PO}^u$ of $y_i$'s by
$$
\mathfrak{PO}^u(y_1,\cdots,y_n) = \mathfrak{PO}(x_1,\cdots,x_n;u_1,\cdots,u_n)
$$
as a Laurent polynomial of $n$ variables with coefficient in
$\Lambda$. We denote the set of Laurent polynomials by
$$
\Lambda[y_1,\cdots,y_n,y_1^{-1},\cdots,y_n^{-1}]
$$
and consider its ideal generated by
the partial derivatives of $\mathfrak{PO}^u$. Namely
$$
\left(\frac{\partial\mathfrak{PO}^u}{\partial y_i}; i=1,\cdots,n\right).
$$
\begin{defn}
We call the quotient ring
$$
Jac(\mathfrak{PO}^u) =
\frac{\Lambda[y_1,\cdots,y_n,y_1^{-1},\cdots,y_n^{-1}]}
{\left(\frac{\partial\mathfrak{PO}^u}{\partial y_i};
i=1,\cdots,n\right)}
$$
the {\it Jacobian ring} of $\mathfrak{PO}^u$.
\end{defn}
We will prove that the Jacobian ring is independent of the choice of
$u$ up to isomorphism (see the end of section \ref{sec:milquan}) and
so we will just write $Jac(\mathfrak{PO})$ for
$Jac(\mathfrak{PO^u})$ when there is no danger of confusion..
\par

\begin{thm}\label{QHequalMilnor}
If $X$ is Fano, there exists a $\Lambda$ algebra isomorphism
$$
\psi_u : QH(X;\Lambda) \to Jac(\mathfrak{PO})
$$
from quantum cohomology ring to the Jacobian ring such that
$$
\psi_u(c_1(X)) = \mathfrak{PO}^u.
$$
\end{thm}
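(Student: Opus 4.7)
The plan is to match Batyrev's explicit presentation of $QH(X;\Lambda)$ for a Fano toric manifold with the Jacobian ring of $\mathfrak{PO}^u$ through the map that sends each toric prime divisor to the monomial recording the symplectic area and boundary class of the corresponding Maslov index $2$ holomorphic disc.

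First I would fix the two presentations. On the $A$-model side, Batyrev's theorem (adapted to the universal Novikov ring $\Lambda$) yields
\[
QH(X;\Lambda) \;\cong\; \Lambda[z_1,\ldots,z_m]\big/\bigl(\mathcal L + \mathcal{SR}_\omega\bigr),
\]
where $z_j$ corresponds to the toric prime divisor $D_j$ with inward primitive normal $v_j \in \mathfrak{t}_\Z$, the ideal $\mathcal L$ is generated by the linear relations $\sum_j \langle v_j,\xi\rangle z_j$ for $\xi$ in the dual lattice, and $\mathcal{SR}_\omega$ is the quantum Stanley--Reisner ideal, an explicit $T^{\omega(\beta)}$-deformation of the classical Stanley--Reisner relations (a finite sum in the Fano regime). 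On the $B$-model side, the Cho--Oh classification of Maslov index $2$ holomorphic discs bounding $L(u)$ \cite{cho-oh}, together with the Fano hypothesis forcing every disc contribution to $\mathfrak{PO}^u$ to come from such a disc, produces the closed form
\[
\mathfrak{PO}^u(y) \;=\; \sum_{j=1}^m T^{\ell_j(u)} y^{v_j}, \qquad \ell_j(u) = \langle v_j,u\rangle - \lambda_j.
\]

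The geometry then dictates the homomorphism: set $\psi_u(z_j) = T^{\ell_j(u)} y^{v_j}$ and extend $\Lambda$-algebra linearly into $\Lambda[y_1,\ldots,y_n,y_1^{-1},\ldots,y_n^{-1}]$. Well-definedness requires two checks. For the linear relations, specializing to $\xi = e_i$ gives
\[
\psi_u\!\left(\sum_j \langle v_j,e_i\rangle z_j\right) = \sum_j \langle v_j,e_i\rangle T^{\ell_j(u)} y^{v_j} = y_i\,\frac{\partial \mathfrak{PO}^u}{\partial y_i},
\]
which lies in the Jacobian ideal because $y_i$ is a unit. For the quantum Stanley--Reisner relations, each primitive collection $\{v_{j_1},\ldots,v_{j_k}\}$ with lattice identity $\sum_i v_{j_i} = \sum_l c_l v_l$ produces a deformed relation $z_{j_1}\cdots z_{j_k} = T^{\omega(\beta)} \prod_l z_l^{c_l}$, where $\beta$ is the associated curve class; applying $\psi_u$, both sides collapse to the same Laurent monomial in $y$ by the moment-map identity $\omega(\beta) = \sum_i \ell_{j_i}(u) - \sum_l c_l \ell_l(u)$, so the relation maps to $0$.

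The main obstacle is in this Stanley--Reisner step: one must verify that Batyrev's Fano presentation matches, term by term and with the correct Novikov weights, the monomial identities imposed in the Jacobian target. The required geometric input beyond combinatorics is that the Fano condition prevents both higher Maslov index discs and sphere bubbling contributions to $\mathfrak{PO}^u$, so that only Batyrev's relations need to be checked. Granting this, $\psi_u$ becomes a well-defined $\Lambda$-algebra homomorphism $QH(X;\Lambda) \to Jac(\mathfrak{PO}^u)$, and both sides are finite-dimensional $\Lambda$-vector spaces of the same dimension $\sum_d \dim_\Q H^d(X;\Q)$ --- the $A$-side by the quantum-to-classical module isomorphism, the $B$-side by a Kouchnirenko-type dimension computation for the Laurent polynomial $\mathfrak{PO}^u$ over the Newton polytope $P$. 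Surjectivity of $\psi_u$, established by checking that the images of a monomial basis of $QH(X;\Lambda)$ span the target, then promotes the map to an isomorphism by the rank match. The identity $\psi_u(c_1(X)) = \mathfrak{PO}^u$ is immediate from $c_1(X) = \sum_j D_j$ and the definition of $\psi_u$.
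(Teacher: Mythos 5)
Your setup --- Batyrev's presentation on one side, the Cho--Oh formula for $\mathfrak{PO}^u$ on the other, the assignment $z_j \mapsto T^{\ell_j(u)}y^{v_j}$, and the two well-definedness checks (linear relations land in the Jacobian ideal because $y_i$ is a unit; quantum Stanley--Reisner relations die by the area identity $\omega(\beta)=\sum_i\ell_{j_i}(u)-\sum_l c_l\ell_l(u)$) --- is exactly the paper's. The gap is in how you promote $\psi_u$ to an isomorphism. You propose a dimension count: $\dim_\Lambda Jac(\mathfrak{PO}^u)$ equals $\sum_d\dim H^d(X;\Q)$ ``by a Kouchnirenko-type computation.'' But Kouchnirenko's theorem computes the dimension of the Jacobian ring as the normalized volume of the Newton polytope only under a non-degeneracy hypothesis on the coefficients, and $\mathfrak{PO}^u$ has very specific, non-generic coefficients $T^{\ell_j(u)}$; the paper's own examples (e.g.\ the two-point blow-up at $\beta=(1-\alpha)/2$) exhibit degenerate leading term equations with non-isolated critical loci, so this genericity cannot be assumed. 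Indeed the paper invokes Kushnirenko only to prove \emph{generic} semi-simplicity, and derives finite-dimensionality of $Jac(\mathfrak{PO}_0)$ as a \emph{consequence} of the isomorphism with $QH^\omega(X;\Lambda)$, so your route risks circularity. Your surjectivity step (``images of a monomial basis span'') is also only asserted: the real issue is producing $y_i^{\pm1}$ in the image, which is not a triviality.

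What the paper does instead avoids any dimension count. It first proves that $z_j\mapsto\overline z_j(u)$ induces an \emph{isomorphism} $\Lambda[z_1,\dots,z_m]/SR_\omega(X)\to\Lambda[y_1,\dots,y_n,y_1^{-1},\dots,y_n^{-1}]$ by constructing an explicit inverse. This requires three non-trivial inputs you would need to supply: (i) every binomial relation $r(A)$ attached to an element $A$ of the kernel of $\partial:\pi_2(X,L(u))\to\pi_1(L(u))$ lies in $SR_\omega(X)$, not just those indexed by primitive collections --- proved by induction on the energy $E(A)$, using completeness of the fan to find a primitive collection inside one side of $A$; (ii) each $z_i$ is invertible modulo $SR_\omega(X)$, using compactness of $X$ to write $-v_i$ as a nonnegative combination of generators of some cone; (iii) after reordering so that $v_1,\dots,v_n$ form a unimodular basis, one solves $y_i=T^{-c_i(u)}\prod_j\overline z_j^{\,v^{i,j}}$ and defines the inverse map $\widehat\phi_u$. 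Once this isomorphism onto the full Laurent polynomial ring is in hand, the observation that the linear relation ideal maps \emph{onto} (not merely into) the Jacobian ideal --- via $\sum_i v_{i,j}\overline z_i=y_j\,\partial\mathfrak{PO}^u_0/\partial y_j$ and invertibility of $y_j$ --- immediately gives the isomorphism of quotients. You should replace the Kouchnirenko/surjectivity argument with this presentation-level matching, or else prove non-degeneracy of $\mathfrak{PO}^u$ over $\Lambda$ separately, which is not done (and not needed) in the paper.
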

Theorem \ref{QHequalMilnor} (or Theorem \ref{lageqgeopt} below)
enables us to explicitly determine all the pairs $(\mathfrak x,u)$ with
$HF((L(u),\mathfrak x),(L(u),\mathfrak x);\Lambda) \ne 0$ out of the quantum cohomology
of $X$. More specifically Batyrev's presentation of quantum
cohomology in terms of the Jacobian ring plays an essential role for
this purpose. We will explain how this is done in sections
\ref{sec:exa2}, \ref{sec:exam2}.
\begin{rem}
\begin{enumerate}
\item
The idea that quantum cohomology ring coincides with the Jacobian ring
begins with a celebrated paper by Givental. (See Theorem 5 (1) \cite{givental1}.)
There it was claimed also that the $D$ module
defined by an oscillatory integral with the superpotential as its kernel
is isomorphic to $S^1$-equivariant Floer cohomology of periodic Hamiltonian system.
When one takes its WKB limit, the former becomes the ring of functions on its characteristic variety,
which is nothing but the Jacobian ring. The latter becomes the (small) quantum cohomology ring
under the same limit. \emph{Assuming} the Ansatz that quantum cohomology can be
calculated by fixed point localization, these claims
are proved in a subsequent paper \cite{givental2}
for, at least, toric Fano manifolds.
Then the required fixed point localization is made rigorous later in \cite{grapan}.
See also Iritani \cite{iritani1}.
\par
In physics literature, it has been advocated that Landau-Ginzburg
model of superpotential (that is, the potential function
$\mathfrak{PO}$ in our situation) calculates quantum cohomology of
$X$. A precise mathematical statement thereof is our Theorem
\ref{QHequalMilnor}. (See for example p. 473 \cite{hori-vafa2}.)
\par
Our main new idea entering in the proof of Theorem \ref{lageqgeopt0}
other than those already in \cite{fooo06} is the way how we
combine them to extract information on Lagrangian submanifolds.
In fact Theorem \ref{QHequalMilnor} itself easily follows if we use
the claim made by Batyrev that quantum cohomology of toric Fano
manifold is a quotient of polynomial ring by the ideal of relations, called
quantum Stanley-Reisner relation and linear relation. (This claim is
now well established.) We include this simple derivation in section
\ref{sec:milquan} for completeness' sake, since it is essential to
take the Novikov ring $\Lambda$ as the coefficient ring in our
applications the version of which does not seem to be proven in the
literature in the form that can be easily quoted.
\item
The proof of Theorem \ref{QHequalMilnor} given in this paper does not contain a
serious study of pseudo-holomorphic spheres. The argument which we
outline in Remark \ref{rem:qcanbeused} is based on open-closed
Gromov-Witten theory, and different from other various methods that have
been used to calculate Gromov-Witten invariant in the literature. In
particular this argument does not use the method of fixed point localization. We
will present this conceptual proof of Theorem \ref{QHequalMilnor} in a sequel to
this paper.
\item
The isomorphism in Theorem \ref{QHequalMilnor} may be regarded as a particular
case of the conjectural relation between quantum cohomology and
Hochschild cohomology of Fukaya category. See Remark  \ref{rem:qcanbeused}.
\item In this paper, we only involve small quantum cohomology ring but
we can also include big quantum cohomology ring. Then we expect Theorem \ref{QHequalMilnor}
can be enhanced to establish a relationship between
the Frobenius structure of the deformation theory of quantum cohomology
(see, for example, \cite{Manin:qhm})
and that of Landau-Ginzburg model (which is due to K. Saito \cite{Sai83}).
This statement (and Theorems \ref{lageqgeopt0}, \ref{QHequalMilnor}) can be
regarded as a version of mirror symmetry between the toric A-model
and the Landau-Ginzburg $B$-model.
In various literature on mirror symmetry, such as \cite{abouz}, \cite{AKO04}, \cite{Ued06},
the B-model is dealt for Fano or toric manifolds in which the derived category of coherent sheaves
is studied while the A-model is dealt for Landau-Ginzburg $A$-models
where the directed $A_{\infty}$ category of Seidel \cite{seidel:book} is studied.
\par
\item In \cite{Aur07}, Auroux discussed a mirror symmetry
between the $A$-model side of toric manifolds and the $B$-model
side of Landau-Ginzburg models. The discussion of \cite{Aur07} uses
Floer cohomology with $\C$-coefficients. In this paper
we use Floer cohomology over the Novikov ring which is more suitable for the applications to
symplectic topology.
\par
\item Even when $X$ is not necessarily Fano we can still prove a similar
isomorphism
\begin{equation}\label{nonfanoQHJAC}
\psi_u : QH^{\omega}(X;\Lambda) \cong Jac(\mathfrak{PO}_0)
\end{equation}
where the left hand side is the Batyrev quantum cohomology ring
(see Definition \ref{def:batyrevqcr})
and the right hand side is the Jacobian ring of some function
$\mathfrak{PO}_0$ : it coincides with the actual potential function
$\mathfrak{PO}$ `up to higher order terms'.
(See (\ref{def:PO0}).)
In the Fano case $\mathfrak{PO}_0 = \mathfrak{PO}$.
(\ref{nonfanoQHJAC}) is Proposition \ref{nonfanoadded}.
\item
During the final stage of writing this article, a paper
\cite{chan-leung} by Chan and Leung was posted in the Archive
which studies the above isomorphism via SYZ transformations.
They give a proof of this isomorphism for the case where
$X$ is a product of projective spaces and with
the coefficient ring $\C$, not with Novikov ring.
Leung presented their result \cite{chan-leung}
in a conference held in Kyoto University in January 2008 where
the first named author also presented the content of this paper.
\end{enumerate}
\end{rem}

From our definition, it follows that the leading order potential function
$\mathfrak{PO}_0$ (see (\ref{def:PO0})) can be extended to the whole
product $(\Lambda^\C_0)^n\times \R^n$ in a way that they are
invariant under the translations by elements in $(2\pi
\sqrt{-1}\Z)^n \subseteq (\Lambda^\C_0)^n$. Hence we may regard $\mathfrak{PO}_0$
as a function defined on
$$
\left(\Lambda^\C_0/(2\pi \sqrt{-1}\Z)\right)^n \times \R^n
\cong \left(\Lambda^\C_0/(2\pi \sqrt{-1}\Z)\right)^n \times \R^n.
$$
In the non-Fano case, the function $\mathfrak{PO}$
is invariant under the translations by elements in $(2\pi
\sqrt{-1}\Z)^n \subseteq (\Lambda^\C_0)^n$ but may not extend to
$\Lambda^\C_0/(2\pi \sqrt{-1}\Z))^n \times \R^n$. This is because the infinite sum
appearing in the right hand side of (\ref{eq:weakPO}) may
not converge in non-Archimedean topology for $u \notin \mbox{Int}\,P$.
\begin{defn}\label{frakM}
We denote by
$$
\text{\rm Crit}(\mathfrak{PO}_0), \quad (\mbox{respectively }\,
\text{\rm Crit}(\mathfrak{PO}))
$$
the subset of pairs
$$
(\mathfrak x,u)\in (\Lambda_{0}^{\C}/(2\pi\sqrt{-1}\Z))^n \times \R^n,
\quad (\mbox{respectively }\,
(\mathfrak x,u)\in (\Lambda_{0}^{\C}/(2\pi\sqrt{-1}\Z))^n \times \mbox{Int}\,P)
$$
satisfying the equation
$$
\frac{\partial\mathfrak{PO}_0}{\partial x_i}(\mathfrak x;u) = 0, \quad
\left(\mbox{respectively }\, \frac{\partial\mathfrak{PO}}{\partial
x_i}(\mathfrak x;u) = 0\right)
$$
$i = 1,\cdots,n$.
\end{defn}
\par
We define $\mathfrak M(\mathfrak{Lag}(X))$ in Definition \ref{def:MlagAHhom}.
(We use the same definition in non-Fano case.)
In view of Theorem \ref{lageqgeopt} (2) below, we also introduce the subset
$$
\mathfrak M_0(\mathfrak{Lag}(X))=
\{(\mathfrak x,u)\in \text {\rm Crit}(\mathfrak{PO}_0) \mid u \in \text{\rm Int}\,P\}.
$$
\par
We also remark $\mathfrak{PO}_0 = \mathfrak{PO}$ in case $X$ is
Fano. The following is a
more precise form of Theorem \ref{lageqgeopt0}.
\begin{thm}\label{lageqgeopt}
\begin{enumerate}
\item There exists a bijection :
$$
\text{\rm Spec}(QH^{\omega}(X;\Lambda))(\Lambda^{\C}) \cong
\text{\rm Crit}(\mathfrak{PO}_0).
$$
\item There exists a bijection :
$$
\mathfrak M(\mathfrak{Lag}(X)) \cong
\text {\rm Crit}(\mathfrak{PO}).
$$
\item If $X$ is Fano and $\frak x$ is a critical point of $\frak{PO}_0^u$ then
$u \in \mbox{\rm Int}\, P$.
\item If $QH^{\omega}(X;\Lambda)$ is semi-simple, then
$$
\sum_d \text{\rm rank}_{\Lambda} QH^{\omega}(X;\Lambda) = \#
\left(\text{\rm Crit}(\mathfrak{PO}_0)\right).
$$
\end{enumerate}
\end{thm}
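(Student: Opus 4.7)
The plan is to reduce each of the four statements to Theorem \ref{homologynonzero} and to the Jacobian presentation of quantum cohomology given by Theorem \ref{QHequalMilnor} (in the Fano case) and its non-Fano extension \eqref{nonfanoQHJAC} (in general), together with a translation between the additive coordinates $\mathfrak{x}_i$ on $\Lambda_0^\C/(2\pi\sqrt{-1}\Z)$ and the multiplicative coordinates $y_i = e^{\mathfrak{x}_i}$ in which $\mathfrak{PO}_0$ is a Laurent expansion.

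Part (2) is immediate: by Theorem \ref{homologynonzero} the condition $HF((L(u),\mathfrak{x}),(L(u),\mathfrak{x});\Lambda)\ne 0$ defining $\mathfrak{M}(\mathfrak{Lag}(X))$ coincides with $(\partial \mathfrak{PO}/\partial x_i)(\mathfrak{x};u)=0$ for all $i$, so the bijection is the identity map on pairs $(\mathfrak{x},u)$ with $u \in \operatorname{Int} P$. For part (1), fix an auxiliary $u_0 \in \operatorname{Int} P$ and apply \eqref{nonfanoQHJAC} to identify $\Lambda^\C$-points of $\text{\rm Spec}(QH^\omega(X;\Lambda))$ with $\Lambda$-algebra homomorphisms $\varphi : Jac(\mathfrak{PO}_0^{u_0}) \to \Lambda^\C$. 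Such a $\varphi$ is determined by the units $\eta_i := \varphi(y_i) \in (\Lambda^\C)^\times$ subject only to the relations $(\partial\mathfrak{PO}_0^{u_0}/\partial y_i)(\eta) = 0$. Every unit in $\Lambda^\C$ admits a unique decomposition $\eta_i = T^{a_i}\, e^{\mathfrak{x}_i}$ with $a_i \in \R$ and $\mathfrak{x}_i \in \Lambda_0^\C$ well-defined modulo $2\pi\sqrt{-1}\Z$. Setting $u := u_0 + a$ converts the relation in $y$ into the critical equation $(\partial \mathfrak{PO}_0/\partial x_i)(\mathfrak{x};u)=0$, because the $u$-dependence of $\mathfrak{PO}_0$ enters the monomials in $y$ only through their $T$-weights. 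The inverse map is the substitution $y_i = T^{u_i-u_{0,i}} e^{\mathfrak{x}_i}$.

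For part (3), the Fano hypothesis makes $\mathfrak{PO}_0 = \mathfrak{PO}$ a \emph{finite} Laurent polynomial $\sum_j T^{\ell_j(u)} y^{v_j}$, where the $v_j \in \Z^n$ are the primitive inward normals to the facets of $P$ and $\ell_j$ are their affine defining functions (so $u \in \operatorname{Int} P$ iff $\ell_j(u) > 0$ for all $j$). If $u \notin \operatorname{Int} P$ then $m := \min_j \ell_j(u) \le 0$; restricting to the set $J$ on which this minimum is attained, the leading $T$-valuation part of $\partial\mathfrak{PO}/\partial x_i$ equals $\sum_{j \in J} v_{j,i}\, e^{\langle v_j,\mathfrak{x}_0\rangle}$, where $\mathfrak{x}_0 \in \C^n$ is the leading coefficient of $\mathfrak{x}$. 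Since the $v_j$ for $j \in J$ are inward normals to facets attaining a common supporting value of $P$, they lie in a proper pointed cone in $\R^n$, and a convex-geometric non-cancellation argument (in the same spirit as the variational analysis of section \ref{sec:var}) shows that no $\mathfrak{x}_0$ can solve all $n$ leading equations simultaneously. Part (4) is then a standard algebraic count: a semi-simple commutative $\Lambda$-algebra $A$ of finite $\Lambda$-dimension decomposes as $\prod_\alpha K_\alpha$ with $K_\alpha/\Lambda$ finite field extensions, and since $\Lambda^\C$ is algebraically closed one has $\#\text{\rm Hom}_{\Lambda\text{-alg}}(A,\Lambda^\C) = \sum_\alpha [K_\alpha:\Lambda] = \dim_\Lambda A$, which combined with (1) yields the cardinality identity.

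The main obstacle, I expect, is the bookkeeping in part (1) in the non-Fano setting: one must verify that the substitution $y_i = T^{u_i-u_{0,i}} e^{\mathfrak{x}_i}$ is genuinely compatible with the infinite non-archimedean series defining $\mathfrak{PO}_0$, so that the critical point equations in $y$ and in $(\mathfrak{x},u)$ correspond term-by-term, and that the resulting bijection is independent of the auxiliary choice of $u_0$. A secondary but nontrivial point is the convex-geometric non-cancellation used in part (3), which must be handled carefully because the exponentials $e^{\langle v_j,\mathfrak{x}_0\rangle}$ are complex-valued rather than positive-real-valued; the cone-pointedness of $\{v_j : j \in J\}$ together with the fact that these units are nonzero is what ultimately rules out the relation.
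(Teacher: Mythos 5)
Parts (1), (2) and (4) of your proposal follow essentially the paper's own route: (1) is the combination of Proposition \ref{nonfanoadded} with the valuation argument of Lemma \ref{lemma:chooseu} and Theorem \ref{thm:geometricpt} (your decomposition $\eta_i=T^{a_i}e^{\mathfrak x_i}$ is exactly the normalization $\mathfrak v_T(\varphi(y_j(u)))=0$ that pins down $u$), (2) is read off from Theorem \ref{homologynonzero} together with the converse direction recorded in Remark \ref{balancerem2}, and (4) is the standard count for a semi-simple finite-dimensional commutative algebra over the algebraically closed field $\Lambda^{\C}$, as in Proposition \ref{thm:factorize}. One small remark on (1): the compatibility worry you raise about "infinite non-archimedean series" is vacuous for $\mathfrak{PO}_0$, which is by definition the finite sum (\ref{def:PO0}); the convergence issue only arises for $\mathfrak{PO}$ itself.

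Part (3), however, has a genuine gap. Your argument uses only that $\mathfrak{PO}_0$ is a finite Laurent polynomial $\sum_j T^{\ell_j(u)}y^{v_j}$ and that the normals indexed by the set $J$ of minimizers of $\ell_j(u)$ lie in a pointed cone. Both of these facts hold for \emph{every} compact smooth toric manifold, Fano or not, since $\mathfrak{PO}_0$ is built purely from the moment polytope. But the conclusion is false without the Fano hypothesis: Example \ref{ex:Hilexa} (the Hirzebruch surface $F_n$, $n\ge 3$, Case 3-2) exhibits a critical point of $\mathfrak{PO}_0$ in $(\Lambda_0\setminus\Lambda_+)^n$ whose associated $u$ lies outside the moment polytope. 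So no argument of the shape you describe can work. Concretely, the claimed "non-cancellation" fails because the weights $e^{\langle v_j,\mathfrak x_0\rangle}$ are arbitrary nonzero complex numbers of the constrained form $y^{v_j}$: for instance the pointed configuration $v_{j_1}=(1,0)$, $v_{j_2}=(0,1)$, $v_{j_3}=(1,1)$ admits the solution $y_1=y_2=-1$ of $\sum_{j}v_{j,i}\,y^{v_j}=0$, $i=1,2$. (A further complication you elide is that the $n$ derivatives $\partial\mathfrak{PO}_0/\partial x_i$ need not all have the same leading $T$-valuation, which is why the paper's leading term equations in sections \ref{sec:var} and \ref{sec:ellim} are organized by the filtration $S_1<S_2<\cdots$ and the subspaces $A_k^{\perp}$.) The paper's proof of (3) is Theorem \ref{thm:mismplus}, and the Fano hypothesis enters there in an essential and non-convex-geometric way: one uses $QH^{\omega}(X;\Lambda)=QH(X;\Lambda)$ and the fact that $z_i\cup_Q z\equiv z_i\cup z \mod \Lambda_+^{\C}$, so that nilpotency of the classical cup product forces the eigenvalue $\mathfrak w_i$ of quantum multiplication by $z_i$ to satisfy $\mathfrak v_T(\mathfrak w_i)>0$, i.e. $\ell_i(u)>0$ for all $i$. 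To repair your proof you would either have to reproduce this quantum-cohomology positivity or isolate the specific combinatorial property of Fano fans that rules out such cancellations; neither is present in the proposal.
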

\begin{rem}
(1) Theorem \ref{lageqgeopt} (3) does not hold in non-Fano case. We give a
counter example (Example \ref{ex:Hilexa}) in section \ref{sec:exam2}. In fact,
in the case of Example \ref{ex:Hilexa} some of the critical points of
$\frak{PO}_0$ correspond to a point $u \in \R^n$ which lies outside
the moment polytope.
\par
(2) In the Fano case Theorem \ref{lageqgeopt} (3) implies
$$
\frak M(\frak{Lag}(X)) = \mbox{Crit}(\frak{PO}) = \mbox{Crit}(\frak{PO}_0).
$$
\end{rem}
We would like to point out that $\mathfrak{PO}_0$ is explicitly
computable. But we do not know the explicit form of $\mathfrak{PO}$.
However we can show that elements of $\text {\rm Crit}(\mathfrak{PO}_0)$
and of $\text {\rm Crit}(\mathfrak{PO})$
can be
naturally related to each other under a mild nondegeneracy
condition. (Theorem \ref{thm:elliminate}.) So we can use
$\mathfrak{PO}_0$ in place of $\mathfrak{PO}$ in most of the cases.
For example we can use it to prove that the following :
\begin{thm}\label{thm:kblowup}
For any $k$, there exists a K\"ahler form on $X(k)$, the $k$ points
blow up of $\C P^2$, that is toric and has exactly $k+1$ balanced fibers.
\end{thm}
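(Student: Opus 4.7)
By Theorem~\ref{lageqgeopt}(2), balanced fibers of $X(k)$ correspond to the $u$-projections of $\mbox{Crit}(\mathfrak{PO})$, and under Morse non-degeneracy Theorem~\ref{thm:elliminate} gives a bijection $\mbox{Crit}(\mathfrak{PO}_0) \cong \mbox{Crit}(\mathfrak{PO})$. Thus it suffices to exhibit a toric K\"ahler form on $X(k)$ for which the explicit Laurent polynomial $\mathfrak{PO}_0$ has Morse critical points, all in $\mbox{Int}\, P$, projecting to exactly $k+1$ distinct $u$-values.

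The plan is to realize $X(k)$ as an iterated toric blow-up of $\C P^2$ at torus-fixed points with hierarchical blow-up parameters $\e_1 \gg \e_2 \gg \cdots \gg \e_k > 0$ taken sufficiently small. The moment polytope then has $k+3$ facets with inward primitive normals $v_1,\ldots,v_{k+3}$ and defining functions $\ell_j(u) \ge 0$, and
$$
\mathfrak{PO}_0(x;u) = \sum_{j=1}^{k+3} T^{\ell_j(u)} \exp\langle v_j, x\rangle .
$$
Setting $y_i = e^{x_i}$, the critical equations $y_i\, \partial\mathfrak{PO}_0/\partial y_i = 0$ are sums of monomials $T^{\ell_j(u)}y^{v_j}$ weighted by $v_j^{(i)}$, and a solution over the Novikov ring demands that in each coordinate equation the minimum $T$-valuation $\ell_j(u)$ be attained by at least two monomials whose leading coefficients cancel. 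Under the hierarchy the valid tropical balance configurations are precisely: (i) the three original $\C P^2$-facets $v_1, v_2, v_3$ balancing at $u=(1/3,1/3)$, reducing the leading equations to $y_1^2 y_2 = y_1 y_2^2 = 1$ and yielding the three cube roots of unity as $x$-solutions; (ii) for each blow-up $j$, with cone $\langle v_a, v_b\rangle$ replaced by $v_c = v_a + v_b$, the balance $\ell_a = \ell_b = \ell_c$ at a single new $u_j$ at scale $\e_j$, with unique $x$-solution determined by $y^{v_a} = y^{v_b} = -y^{v_c}$ together with the linear independence of $v_a, v_b$. This yields $3+k$ critical points distributed over $k+1$ distinct $u$-values, all lying in $\mbox{Int}\, P$.

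Non-degeneracy of the Hessian follows from the leading-order balanced monomials: at the central point one recovers the Morse Hessian of the $\C P^2$ potential, while at each blow-up point the linear independence of $v_a, v_b$ produces a non-degenerate $2 \times 2$ Hessian; the hierarchy $\e_j \ll \e_{j-1}$ ensures that subdominant corrections remain perturbative. Theorem~\ref{thm:elliminate} and Theorem~\ref{lageqgeopt}(2) then identify the $k+1$ distinct $u$-projections with the balanced fibers of $X(k)$. The main technical obstacle is the inductive control of the tropical enumeration as $k$ grows: one must verify that the hierarchy excludes every spurious balance configuration inside $P$ and that non-degeneracy persists at every critical point. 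Both are arranged by taking each $\e_j$ so much smaller than $\e_{j-1}$ that the $j$-th blow-up decouples cleanly from all previously accounted-for configurations, so that the local analysis at each new blown-up vertex is isolated from the earlier ones.
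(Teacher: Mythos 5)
Your proposal is correct and follows essentially the same route as the paper: iterated toric blow-ups at fixed points with hierarchically small parameters $\e_1 \gg \cdots \gg \e_k$, the observation that each blow-up adds one term $T^{\ell_a+\ell_b-\e_j}y^{v_a+v_b}$ to $\mathfrak{PO}_0$ without disturbing the leading term equations at the previously found points, the local leading term equation $1+y_1'=1+y_2'=0$ at the new point where $\ell_a=\ell_b=\e_j$, and the lift of these strongly nondegenerate solutions to genuine critical points via Theorem~\ref{thm:elliminate}. The paper organizes this as an explicit induction on $k$ (Lemma~\ref{lem:bupproj}), but the content is identical.
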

See Definition \ref{def:balanced} for the definition of balanced fibers.
Balanced fiber satisfies the conclusions (\ref{eq:Lu_0}), (\ref{eq:Lu_1}) of Theorem
\ref{toric-intersect}. We prove Theorem \ref{thm:kblowup} in section \ref{sec:ellim}.
\begin{rem}
The cardinality of $\frak x \in H^1(L(u);\Lambda^{\C}_0)/H^1(L(u);2\pi\sqrt{-1}\Z)$ with nonvanishing
Floer cohomology is an invariant of Lagrangian submanifold $L(u)$.
This is a consequence of \cite{fooo06} Theorem G
(= \cite{fooo06pre} Theorem G).
\end{rem}
\par
The organization of this paper is now in order.
In section \ref{sec:toric} we gather some basic facts on toric manifolds
and fix our notations. Section \ref{sec:floerreview} is a brief review of Lagrangian
Floer theory of \cite{fooo00, fooo06}.
In section \ref{sec:statement}, we describe our main results on the
potential function $\frak{PO}$ and on its relation to the
Floer cohomology. We illustrate these theorems by several examples and derive
their consequences in sections \ref{sec:example} -
\ref{sec:ellim}. We postpone their proofs until sections \ref{sec:calcpot} -
\ref{sec:floerhom}.
\par
In section \ref{sec:example}, we illustrate explicit
calculations involving the potential functions in such examples as
$\C P^n$, $S^2 \times S^2$ and the two points blow up of $\C P^2$.
We also discuss a relationship between displacement energy of
Lagrangian submanifold (see Definition \ref{dispenergy}) and Floer cohomology.
In sections \ref{sec:milquan} and \ref{sec:exa2}, we prove the results that
mainly apply to the Fano case. Especially we prove Theorems
\ref{lageqgeopt0}, \ref{QHequalMilnor} and \ref{lageqgeopt} in these sections.
Section \ref{sec:exa2} contains some applications of Theorem \ref{QHequalMilnor}
especially to the case of monotone torus fibers and to the $\Q$-structure of
quantum cohomology ring.
In section \ref{sec:exam2} we first illustrate usage of (the proof of)
Theorem \ref{lageqgeopt0} to locate balanced fibers by the example of
one point blow up of $\C P^2$.
We then turn to the study of non-Fano cases and discuss
Hirzebruch surfaces. Section \ref{sec:exam2} also contains some discussion
on the semi-simplicity of quantum cohomology.
\par
In sections \ref{sec:var} and \ref{sec:ellim}, we prove the results
that can be used in all toric cases, whether they are Fano or not.
In section \ref{sec:var}, using variational analysis, we prove
existence of a critical point of the potential function, which is an
important step towards the proof of Theorems \ref{exitnonvani} and
\ref{toric-intersect}. Using the arguments of this section, we can
locate a balanced fiber in any compact toric manifold, explicitly
solving simple linear equalities and inequalities finitely many
times. In section \ref{sec:ellim}, we prove that we can find the
solution of (\ref{eq:POcrit}) by studying its reduction to $\C =
\Lambda_0^{\C}/\Lambda_+^{\C}$, which we call the leading term
equation. This result is purely algebraic. It implies that our
method of locating balanced fibers, which is used in the proof of
Theorem \ref{lageqgeopt0}, can be also used in the non-Fano case
under certain nondegeneracy condition. We apply this method to prove
Theorem \ref{thm:kblowup}. We discuss an example of blow up of $\C
P^n$ along the high dimensional blow up center $\C P^m$ in that
section. We also give several other examples and demonstrate various
interesting phenomena which occur in Lagrangian Floer theory. For
example we provide a sequence $((X,\omega_i),L_i)$ of pairs that
have nonzero Floer cohomology for some choice of bounding cochains,
while its limit $((X,\omega),L)$ has vanishing Floer cohomology
for any choice of bounding cochain (Example \ref{counterexamples}).
We also provide an example of
Lagrangian submanifold $L$ such that it has a nonzero Floer
cohomology over $\Lambda^{\C}$ for some choice of bounding cochain,
but vanishing Floer cohomology for any choice of bounding cochain
over the field $\Lambda^F$ with a field $F$ of characteristic $3$
(Example \ref{torsioncount}).
\par
In section \ref{sec:calcpot}, we review the results
on the moduli space of holomorphic discs from \cite{cho-oh}
which are used in the calculation of the potential function. We
rewrite them in the form that can be used for the purpose of
this paper. We also discuss the non-Fano case in this section.
(Our result is less explicit in the non-Fano case,
but still can be used to explicitly locate balanced fibers
in most of the cases.)
In section \ref{sec:flat}, we use the idea of Cho \cite{cho07} to deform
Floer cohomology by an element from $H^1(L(u);\Lambda_0)$ rather than
from $H^1(L(u);\Lambda_+)$. This enhancement is crucial to obtain an optimal
result about the non-displacement of Lagrangian fibers.
In section \ref{sec:floerhom} we use those results to calculate
Floer cohomology and complete the proof of Theorems \ref{exitnonvani}
and \ref{toric-intersect} etc.
\par
We attempted to make this paper largely independent of
our book \cite{fooo00,fooo06} as much as possible and also
to make the relationship of the contents of the paper with the
general story transparent. Here are a few examples :
\begin{enumerate}
\item Our definition of potential
function for the fibers of toric manifolds in this paper is given in a way independent of
that of \cite{fooo06} except the statement on the existence of
compatible Kuranishi structures and multi-sections
on the moduli space of pseudo-holomorphic discs which provides a rigorous definition
of Floer cohomology of single Lagrangian fiber. Such details
are provided in section 3.1 \cite{fooo06} (= section 29 \cite{fooo06pre}).

\item Similarly the definition of $A_\infty$ algebra in this paper on the
Lagrangian fiber of toric manifolds is also independent of the book except the process
going from $A_{n,K}$ structure to $A_\infty$ structure, which we refer to
section 7.2 \cite{fooo06} (= section 30 \cite{fooo06pre}).
However, for all the applications in this paper, only existence of $A_{n,K}$ structures is
needed.

\item The property of the Floer cohomology $HF(L,L)$ detecting Lagrangian intersection of
$L$ with its Hamiltonian deformation relies on the fact that Floer cohomology
of the pair is independent under the Hamiltonian isotopy. This independence is established in
\cite{fooo06}. In the toric case, its alternative proof based on the de Rham version is given in
\cite{fooo08} in a more general form than we need in this paper.
\end{enumerate}

The authors would like to thank H. Iritani and D. McDuff
for helpful discussions.
They would also like to thank the referee for various helpful comments.

\section{Compact toric manifolds}
\label{sec:toric}

In this section, we summarize basic facts on the toric manifolds and
set-up our notations to be consistent with those in \cite{cho-oh}, which
in turn closely follow those in Batyrev \cite{batyrev:qcrtm92}
and M. Audin \cite{Aud91}.

\subsection{Complex structure}
\label{subsec:complex}

In order to obtain an $n$-dimensional compact toric manifold $X$, we
need a combinatorial object $\Sigma$, a {\em complete fan of
regular cones}, in an $n$-dimensional vector space over $\R$.

Let $N$ be the lattice $\Z^n$, and let $M=Hom_\Z(N,\Z)$ be the
dual lattice of rank $n$. Let $N_{\R} = N \otimes \R$ and
$M_{\R} = M \otimes \R$.
\begin{defn}
A convex subset $\sigma \subset N_{\R}$ is called a regular
$k$-dimensional cone $(k\geq 1)$ if there exists $k$ linearly
independent elements $v_1,\cdots,v_k \in N$ such that
$$ \sigma = \{a_1v_1 + \cdots + a_k v_k \mid a_i \in \R, a_i \geq
0\},$$ and the set $\{v_1,\cdots,v_k\}$ is a subset of some
$\Z$-basis of $N$. In this case, we call $v_1,\cdots,v_k \in N$
the integral generators of $\sigma$.
\end{defn}
\begin{defn}
A regular cone $\sigma'$ is called a {\em face} of a regular cone
$\sigma$ (we write $\sigma' \prec \sigma$) if the set of integral
generators of $\sigma'$ is a subset of the set of integral
generators of $\sigma$.
\end{defn}
\begin{defn}
A finite system $\Sigma = {\sigma_1,\cdots,\sigma_s}$ of regular
cones in $N_{\R}$ is called a {\em complete $n$-dimensional fan}
of regular cones, if the following conditions are satisfied.
\begin{enumerate}
\item if $\sigma \in \Sigma$ and $\sigma' \prec \sigma$, then $\sigma'
\in \Sigma$;

\item if $\sigma, \sigma'$ are in $\Sigma$, then $\sigma' \cap
\sigma \prec \sigma$ and $\sigma' \cap \sigma \prec \sigma'$;

\item $N_{\R} = \sigma_1 \cup \cdots \cup \sigma_s$.
\end{enumerate}
\end{defn}
The set of all $k$-dimensional cones in $\Sigma$ will be denoted
by $\Sigma^{(k)}.$

\begin{defn}\label{prim}
Let $\Sigma$ be a complete $n$-dimensional fan of regular cones.
Denote by $G(\Sigma) = \{v_1,\cdots,v_m\}$ the set of all
generators of 1-dimensional cones in $\Sigma$ ($m=$ Card
$\Sigma^{(1)}$). We call a subset $\CP =
\{v_{i_1},\cdots,v_{i_p}\} \subset G(\Sigma)$ a {\it primitive
collection} if $\{v_{i_1},\cdots,v_{i_p}\}$ does not generate
$p$-dimensional cone in $\Sigma$, while for all $k \, (0 \leq k <
p)$ each $k$-element subset of $\CP$ generates a $k$-dimensional
cone in $\Sigma$.
\end{defn}

\begin{defn}
Let $\C^m$ be an $m$-dimensional affine space over $\C$ with the
set of coordinates $z_1,\cdots,z_m$ which are in the one-to-one
correspondence $z_i \leftrightarrow v_i$ with elements of
$G(\Sigma)$. Let $\CP = \{v_{i_1},\cdots,v_{i_p}\}$ be a primitive
collection in $G(\Sigma)$. Denote by $\A(\CP)$ the
$(m-p)$-dimensional affine subspace in $\C^n$ defined by the
equations
$$z_{i_1}= \cdots=z_{i_p}=0.$$
\end{defn}

Since every primitive collection $\CP$ has at least two elements,
the codimension of $\A(\CP)$ is at least 2.

\begin{defn}\label{homo}
Define the closed algebraic subset $Z(\Sigma)$ in $\C^m$ as follows
$$Z(\Sigma) = \bigcup_{\CP} \A(\CP),$$
where $\CP$ runs over all primitive collections in $G(\Sigma)$.
Put
$$U(\Sigma) = \C^m \setminus Z(\Sigma).$$
\end{defn}

\begin{defn}
Let $\K$ be the subgroup in $\Z^m$ consisting of all lattice vectors
$\lambda = (\lambda_1,\cdots,\lambda_m)$ such that
$$\lambda_1 v_1 + \cdots + \lambda_m v_m =0.$$
\end{defn}
Obviously $\K$ is isomorphic to $\Z^{m-n}$ and we have the exact
sequence:
\begin{equation}\label{kexact}
0 \to \K \to \Z^m \stackrel{\pi}{\to} \Z^n \to 0,
\end{equation}
where
the map $\pi$ sends the basis vectors $e_i$ to $v_i$ for
$i=1,\cdots,m$.

\begin{defn}
Let $\Sigma$ be a complete $n$-dimensional fan of regular cones.
Define $D(\Sigma)$ to be the connected commutative subgroup in
$(\C^*)^m$ generated by all one-parameter subgroups
$$a_{\lambda} : \C^* \to (\C^*)^m,$$
$$ t \mapsto (t^{\lambda_1},\cdots,t^{\lambda_m})$$
where $\lambda = (\lambda_1, \cdots, \lambda_m) \in \K$.
\end{defn}

It is easy to see from the definition that $D(\Sigma)$ acts freely
on $U(\Sigma)$. Now we are ready to give a definition of the
compact toric manifold $X_{\Sigma}$ associated with a complete
$n$-dimensional fan of regular cones $\Sigma$.

\begin{defn}
Let $\Sigma$ be a complete $n$-dimensional fan of regular cones.
Then the quotient
$$X_{\Sigma} = U(\Sigma)/D(\Sigma)$$
is called the {\em compact toric manifold associated with $\Sigma$}.
\end{defn}

There exists a simple open coverings of $U(\Sigma)$ by affine
algebraic varieties.

\begin{prop}
Let $\sigma$ be a $k$-dimensional cone in $\Sigma$ generated by
$\{v_{i_1},\cdots,v_{i_k}\}.$ Define the open subset $U(\sigma)
\subset \C^m$ as
$$ U(\sigma) = \{(z_1,\cdots,z_m) \in \C^m \mid z_j \neq 0
\;\;\textrm{for all}\; j \notin \{i_1,\cdots,i_k\}\}.$$
Then the open sets $U(\sigma)$ have the following properties:
\begin{enumerate}
\item $$U(\Sigma) = \bigcup_{\sigma \in \Sigma} U(\sigma);$$
\item if $\sigma \prec \sigma'$, then $U(\sigma) \subset U(\sigma')$;
\item for any two cone $\sigma_1,\sigma_2 \in \Sigma$,
one has $U(\sigma_1) \cap U(\sigma_2) = U(\sigma_1 \cap \sigma_2)$;
in particular,
$$ U(\Sigma) = \sum_{\sigma \in \Sigma^{(n)}} U(\sigma).$$
\end{enumerate}
\end{prop}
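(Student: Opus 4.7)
The plan is to verify the three properties in sequence, with the main combinatorial content concentrated in (1).

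Property (2) is immediate from the definitions. If $\sigma \prec \sigma'$, then by Definition of face the integral generators of $\sigma$ form a subset of those of $\sigma'$. Writing $I_\sigma = \{i_1,\dots,i_k\}$ for the index set of the generators of $\sigma$, we have $I_\sigma \subset I_{\sigma'}$, hence the complement $\{1,\dots,m\} \setminus I_{\sigma'} \subset \{1,\dots,m\} \setminus I_\sigma$, so the non-vanishing condition defining $U(\sigma')$ is weaker, giving $U(\sigma) \subset U(\sigma')$. Property (3) is similarly formal: a point lies in $U(\sigma_1) \cap U(\sigma_2)$ iff $z_j \ne 0$ for every $j \notin I_{\sigma_1}$ and every $j \notin I_{\sigma_2}$, i.e.\ for every $j \notin I_{\sigma_1} \cap I_{\sigma_2} = I_{\sigma_1 \cap \sigma_2}$, which is exactly the defining condition for $U(\sigma_1 \cap \sigma_2)$.

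For property (1), the plan is to encode each point $z \in \C^m$ by its vanishing set $I(z) := \{i : z_i = 0\}$ and reformulate both sides in terms of $I(z)$. On one hand, $z \in U(\Sigma) = \C^m \setminus Z(\Sigma)$ iff $I(z)$ contains no primitive collection as a subset. On the other hand, $z \in U(\sigma)$ iff $I(z) \subset I_\sigma$, so $z \in \bigcup_\sigma U(\sigma)$ iff the collection $\{v_i : i \in I(z)\}$ is contained in the generators of some cone of $\Sigma$, equivalently iff it itself generates a cone in $\Sigma$ (using that every subset of the generators of a cone in a fan of regular cones generates a face, which is again in $\Sigma$). Thus (1) reduces to the assertion: $\{v_i : i \in I(z)\}$ generates a cone in $\Sigma$ if and only if $I(z)$ contains no primitive collection. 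The forward direction is tautological from the definition of ``primitive''. For the converse, argue by contradiction: if $\{v_i : i \in I(z)\}$ does \emph{not} generate a cone, pick a subset $J \subset I(z)$ of minimal cardinality such that $\{v_i : i \in J\}$ fails to generate a cone; by minimality every proper subset of $J$ generates a cone of $\Sigma$, so $\{v_i : i \in J\}$ is by Definition \ref{prim} a primitive collection sitting inside $I(z)$, contradiction.

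The last assertion $U(\Sigma) = \bigcup_{\sigma \in \Sigma^{(n)}} U(\sigma)$ then follows by combining (1) and (2) with the fact that completeness of $\Sigma$ together with the regularity of its cones forces every cone to be a face of some $n$-dimensional cone; hence each $U(\sigma)$ is already contained in $U(\sigma')$ for some top-dimensional $\sigma'$. I expect the main (and really the only) non-formal step to be the converse direction in the reformulation for (1), namely the minimality-of-$J$ argument identifying a primitive collection; everything else is bookkeeping with index sets.
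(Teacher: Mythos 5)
Your proof is correct. The paper itself gives no proof of this proposition --- it is standard toric-geometry background quoted from Batyrev and Audin --- and your argument (reduction of (1) to ``$I(z)$ contains no primitive collection iff $\{v_i : i\in I(z)\}$ generates a cone,'' with the minimal-failing-subset $J$ producing a primitive collection) is exactly the standard one; the only point you pass over quickly is the identification $I_{\sigma_1\cap\sigma_2}=I_{\sigma_1}\cap I_{\sigma_2}$ in (3), which does use regularity (linear independence of the generators) but is routine.
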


\begin{prop}\label{homocord}
Let $\sigma$ be an $n$-dimensional cone in $\Sigma^{(n)}$
generated by $\{v_{i_1},\cdots,v_{i_n}\}$, which spans the lattice
$M$. We denote the dual $\Z$-basis of the lattice $N$ by
$\{u_{i_1},\cdots,u_{i_n}\}$. i.e.
\begin{equation}
\langle v_{i_k},u_{i_l} \rangle = \delta_{k,l}
\end{equation}
where $\langle \cdot,\cdot \rangle $ is the canonical pairing
between lattices $N$ and $M$.

Then the affine open subset $U(\sigma)$ is isomorphic to
$\C^n \times (\C^*)^{m-n}$, the action of
$D(\Sigma)$ on $U(\sigma)$ is free, and the space of
$D(\Sigma)$-orbits is isomorphic to the affine
space $U_{\sigma} = \C^n$ whose coordinate functions
$y_1^\sigma,\cdots,y_n^\sigma$ are $n$ Laurent monomials
in $z_1,\cdots,z_m$:
\begin{equation}\label{homocoordeq}
\begin{cases}
y_1^\sigma = z_1^{\langle v_1,u_{i_1} \rangle }\cdots
z_m^{\langle v_m,u_{i_1} \rangle }\\
\qquad \vdots\\
y_n^\sigma = z_1^{\langle v_1,u_{i_n} \rangle }\cdots
z_m^{\langle v_m,u_{i_n} \rangle }
\end{cases}
\end{equation}

\end{prop}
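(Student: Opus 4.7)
The first claim $U(\sigma) \cong \C^n \times (\C^*)^{m-n}$ is immediate from the definition: on $U(\sigma)$ the coordinates $z_{i_1},\ldots,z_{i_n}$ range freely over $\C$, while $z_j$ for $j \notin I := \{i_1,\ldots,i_n\}$ are constrained to lie in $\C^*$.

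For the other two claims, the key input is a preferred splitting of the exact sequence (\ref{kexact}) afforded by the chosen cone $\sigma$. Since $\{v_{i_1},\ldots,v_{i_n}\}$ is a $\Z$-basis of $N$ (the cone being regular and $n$-dimensional), one has $v_j = \sum_k \langle v_j, u_{i_k}\rangle v_{i_k}$ for each $j \notin I$, so the lattice vector
\[
\lambda^{(j)} := e_j - \sum_{k=1}^n \langle v_j, u_{i_k}\rangle \, e_{i_k}
\]
lies in $\K$, and the family $\{\lambda^{(j)}\}_{j\notin I}$ is readily checked to be a $\Z$-basis of $\K$ (any $\lambda \in \K$ whose components outside $I$ all vanish lies in $\Z^{I}$, but such an element can be in $\K$ only when it is zero). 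Consequently every element of $D(\Sigma)$ is uniquely a product $\prod_{j\notin I} a_{\lambda^{(j)}}(s_j)$ with $(s_j) \in (\C^*)^{m-n}$, and its action on $(z_1,\ldots,z_m)$ reads
\[
z_j \longmapsto s_j z_j \quad (j \notin I), \qquad z_{i_k} \longmapsto z_{i_k} \prod_{j \notin I} s_j^{-\langle v_j, u_{i_k}\rangle}.
\]
Freeness on $U(\sigma)$ is then immediate from the first formula together with $z_j \neq 0$: fixing any point forces $s_j = 1$ for every $j \notin I$.

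To identify the quotient I would perform the biregular change of coordinates on $U(\sigma) = \C^n \times (\C^*)^{m-n}$ replacing $(z_{i_1},\ldots,z_{i_n})$ by $(y_1^\sigma,\ldots,y_n^\sigma)$ as defined in (\ref{homocoordeq}); this is legitimate because on $U(\sigma)$ the expression $y_k^\sigma = z_{i_k} \prod_{j \notin I} z_j^{\langle v_j, u_{i_k}\rangle}$ depends linearly on $z_{i_k}$ with a unit coefficient. A direct calculation, using $\langle v_{i_{k'}}, u_{i_k}\rangle = \delta_{k'k}$, shows that the $s_j$-factors picked up by the $z_j$'s under the $D(\Sigma)$-action precisely cancel those picked up by the $z_{i_k}$-factor, so each $y_k^\sigma$ is $D(\Sigma)$-invariant. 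Hence in the new coordinates $D(\Sigma) \cong (\C^*)^{m-n}$ acts solely by translation on the second $(\C^*)^{m-n}$-factor, and the orbit space is precisely $\C^n$ with global coordinates $y_1^\sigma,\ldots,y_n^\sigma$. The main obstacle throughout is purely bookkeeping: one must pin down the splitting of (\ref{kexact}) compatible with $\sigma$ and track the exponents correctly, after which freeness and the invariance computation both reduce to short linear-algebraic verifications requiring no further geometric input.
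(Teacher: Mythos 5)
Your proof is correct. The paper states Proposition \ref{homocord} without proof, as part of the standard toric background imported from Batyrev and Audin, so there is no argument in the text to compare against; your route — splitting the sequence (\ref{kexact}) via the basis $\lambda^{(j)} = e_j - \sum_k \langle v_j, u_{i_k}\rangle e_{i_k}$ of $\K$, reading off the $D(\Sigma)$-action in these coordinates, and observing that the monomials $y_k^\sigma$ are exactly the invariants — is the standard one and all the verifications (freeness, invariance, the biregular coordinate change) check out. (Incidentally, you have silently corrected the paper's typographical swap of $M$ and $N$ in the statement: the $v_{i_k}$ span $N$ and the dual basis $u_{i_k}$ lives in $M$.)
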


The last statement yields a general formula for the local affine
coordinates $y_1^\sigma, \cdots,y_n^\sigma$ of a point
$p \in U_{\sigma}$ as functions of its
``homogeneous coordinates'' $z_1,\cdots,z_m$.

\subsection{Symplectic structure}
\label{subsec:forms}

In the last subsection, we associated a compact manifold $X_{\Sigma}$
to a fan $\Sigma$. In this subsection, we review the construction of
symplectic (K\"ahler) manifold associated to a convex polytope
$P$.

Let $M$ be a dual lattice, we consider a convex polytope $P$ in $M_{\R}$
defined by
\begin{equation}\label{eq:PinMR}
\{u \in M_{\R} \mid \langle u,v_j \rangle \geq \lambda_j
\;\textrm{for}\; j=1,\cdots,m\}
\end{equation}
where $\langle \cdot,\cdot \rangle $ is a dot product of $M_{\R}
\cong \R^n$. Namely, $v_j$'s are inward normal vectors to the
codimension 1 faces of the polytope $P$. We associate to it a fan
in the lattice $N$ as follows: With any face $\Gamma$ of $P$, fix
a point $u_0$ in the (relative) interior of $\Gamma$ and define
$$\sigma_{\Gamma} = \bigcup_{r \geq 0} r \cdot (P-u_0).$$
The associated fan is the family $\Sigma(P)$ of dual convex cones
\begin{eqnarray}
\check{\sigma}_{\Gamma} &=&\{ x\in N_\R \mid \langle y,x \rangle
\geq 0 \;\;\forall y \in
\sigma_{\Gamma} \} \\
&=&\{ x\in N_\R \mid \langle u,x \rangle \leq \langle p,x
\rangle \;\;\forall p \in P, u \in \Gamma \}
\end{eqnarray}
where $\langle \cdot,\cdot \rangle $ is dual pairing $M_\R$ and
$N_\R$. Hence we obtain a compact toric manifold $X_{\Sigma(P)}$
associated to a fan $\Sigma(P)$.

Now we define a symplectic (K\"ahler) form on $X_{\Sigma(P)}$ as
follows. Recall the exact sequence :
$$ 0 \to \K \stackrel{i}\to \Z^m \stackrel{\pi}{\to} \Z^n \to 0.$$
It induces another exact sequence :
$$ 0 \to K \to \R^m/\Z^m \to \R^n/\Z^n \to 0.$$
Denote by $k$ the Lie algebra of the real torus $K$. Then we have
the exact sequence of Lie algebras:
$$ 0 \to k \to \R^m \stackrel{\pi}{\to} \R^n \to 0.$$
And we have the dual of above exact sequence:
$$ 0 \to (\R^n)^* \to (\R^m)^* \stackrel{i^*}{\to} k^* \to 0.$$

Now, consider $\C^m$ with symplectic form $\frac{i}{2} \sum dz_k \wedge
d\overline{z}_k$.
The standard action $T^n$ on $\C^n$ is hamiltonian with moment map
\begin{equation}
\mu(z_1,\cdots,z_m) = \frac{1}{2}(|z_1|^2,\cdots,|z_m|^2).
\end{equation}

For the moment map $\mu_K$ of the $K$ action is then given by
$$
\mu_K=i^* \circ \mu : \C^m \to k^*.
$$
If we choose a $\Z$-basis
of $\K \subset \Z^m$ as
$$ Q_1 = (Q_{11},\cdots,Q_{m1}),\cdots, Q_k= (Q_{1k},\cdots,Q_{mk})$$
and $\{q^1, \cdots, q^k\}$ be its dual basis of $\K^*$. Then the
map $i^*$ is given by the matrix $Q^t$ and so we have
\begin{equation}
\mu_K(z_1,\cdots,z_m) = \frac{1}{2} \left(\sum_{j=1}^m
Q_{j1}|z_j|^2,\cdots,\sum_{j=1}^m Q_{jk}|z_j|^2\right) \in \R^k \cong
k^*
\end{equation}
in the coordinates associated to the basis $\{q^1, \cdots, q^k\}$.
We denote again by $\mu_K$ the restriction of $\mu_K$ on
$U(\Sigma) \subset \C^m$.

\begin{prop}[See Audin \cite{Aud91}]
Then for any $r = (r_1, \cdots, r_{m-n}) \in \mu_K(U(\Sigma))
\subset k^* $, we have a diffeomorphism
\begin{equation}
\mu_K^{-1}(r)/K \cong U(\Sigma)/D(\Sigma) = X_{\Sigma}
\end{equation}
And for each (regular) value of $r \in k^*$, we can associate a
symplectic form $\omega_P$ on the manifold $X_\Sigma$ by
symplectic reduction \cite{marwein}.
\end{prop}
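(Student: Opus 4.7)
The plan is to deduce both claims from the standard Kempf-Ness correspondence between the symplectic and the GIT quotient of the linear $K$-action on $\C^m$, restricted to $U(\Sigma)$. First I would observe that the complex subgroup $D(\Sigma) \subset (\C^*)^m$ is precisely the complexification $K_\C = K \cdot \exp(ik)$ of the real torus $K$: the one-parameter subgroup $a_\lambda$ for $\lambda \in \K$ lies in $K$ when $t \in S^1$ and in $\exp(ik)$ when $t \in \R_{>0}$. The inclusion $\mu_K^{-1}(r) \hookrightarrow U(\Sigma)$ is $K$-equivariant and induces a natural smooth map
\[
\Phi : \mu_K^{-1}(r)/K \longrightarrow U(\Sigma)/D(\Sigma) = X_\Sigma,
\]
and the content of the proposition is that $\Phi$ is a diffeomorphism.

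For the bijectivity of $\Phi$, I would use a convexity argument in the style of Kempf-Ness. For fixed $z \in U(\Sigma)$, parametrize the $\exp(ik)$-slice of the $K_\C$-orbit of $z$ by $q \in k$, where $k \hookrightarrow \R^m$ via (\ref{kexact}), and consider
\[
g_z(q) \;=\; \tfrac{1}{2}\sum_{j=1}^m e^{q_j}|z_j|^2 \;-\; \langle r, q\rangle.
\]
A direct computation shows that the derivative of $g_z$ at $q$ in the direction $q' \in k$ equals $\langle \mu_K(\exp(q)\cdot z) - r, q'\rangle$ and its Hessian is the symmetric bilinear form $(q',q'') \mapsto \sum_j q'_j q''_j e^{q_j}|z_j|^2$ on $k$. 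This form is positive semidefinite and is in fact positive definite precisely when no primitive collection of coordinates of $z$ vanishes simultaneously, i.e.\ precisely when $z \in U(\Sigma) = \C^m \setminus Z(\Sigma)$. The main obstacle is then coercivity of $g_z$ on $k$; this follows from the same combinatorial input combined with $r \in \mu_K(U(\Sigma))$, and is the only genuinely toric-geometric step in the argument. Once coercivity is in place, $g_z$ has a unique minimum on $k$, this minimum lies in $\mu_K^{-1}(r) \cap K_\C \cdot z$, giving surjectivity of $\Phi$, and strict convexity of $g_z$ yields injectivity modulo the residual $K$-action.

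Regularity of $r$ makes $K$ act locally freely on $\mu_K^{-1}(r)$, so the symplectic quotient is a smooth manifold, and $\Phi$, being bijective with everywhere injective differential, is a diffeomorphism. Marsden-Weinstein reduction \cite{marwein} applied to the standard K\"ahler form $\omega_{\mathrm{std}} = \tfrac{i}{2}\sum dz_k \wedge d\bar z_k$ and the moment map $\mu_K$ then produces a unique symplectic form $\omega_P$ on $\mu_K^{-1}(r)/K \cong X_\Sigma$ characterized by $\pi_K^*\omega_P = \iota^*\omega_{\mathrm{std}}$, where $\iota : \mu_K^{-1}(r) \hookrightarrow U(\Sigma)$ is the inclusion and $\pi_K : \mu_K^{-1}(r) \to X_\Sigma$ is the quotient. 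The residual $T^n = T^m/K$-action is automatically Hamiltonian, and matching the level $r$ with the affine constants $\lambda_j$ of (\ref{eq:PinMR}) via (\ref{kexact}) identifies the $T^n$-moment polytope of $\omega_P$ with $P$; this last identification, while not part of the stated proposition, justifies the notation $\omega_P$ and will be used implicitly throughout the rest of the paper.
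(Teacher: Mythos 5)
The paper does not actually prove this proposition: it is imported from Audin \cite{Aud91}, with the reduction statement from \cite{marwein}, so you are supplying an argument where the text supplies only a citation. Your Kempf--Ness route is the standard one and its skeleton is right: $D(\Sigma)=K_{\C}$, the first variation of $g_z$ is $\langle\mu_K(\exp(q)\cdot z)-r,q'\rangle$, and strict convexity of $g_z$ for $z\in U(\Sigma)$ holds because the vectors $v_j$ with $z_j=0$ then span a cone of the regular fan $\Sigma$, hence are linearly independent, so no nonzero element of $k$ is supported on $\{j\mid z_j=0\}$. Two small inaccuracies: your ``precisely when'' overstates the converse (the Hessian can remain positive definite at points of $Z(\Sigma)$, e.g.\ on a Hirzebruch surface $F_n$, $n\ge 1$, at points with $z_1=z_3=0\neq z_2,z_4$, since no nonzero relation among the $v_i$ is supported on $\{1,3\}$; you only use the direction you need, so this is harmless), and you should quote \emph{freeness}, not local freeness, of the $K$-action on the level set --- it follows from the freeness of the $D(\Sigma)$-action on $U(\Sigma)$ recorded just before the proposition, and local freeness alone would only give an orbifold.

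The genuine gap sits exactly at the step you yourself flag as the crux. Coercivity of $g_z$ for every $z\in U(\Sigma)$ --- equivalently, surjectivity of $\Phi$ --- does \emph{not} follow from $r\in\mu_K(U(\Sigma))$; in fact the proposition is false as literally stated whenever the secondary fan has more than one chamber. For the one-point blow-up of $\C P^2$ one computes $\mu_K(z)=\tfrac12\bigl(|z_1|^2+|z_3|^2+|z_4|^2,\ |z_2|^2+|z_4|^2\bigr)=(s,t)$ with $Z(\Sigma)=\{z_1=z_3=0\}\cup\{z_2=z_4=0\}$; any $r=(s,t)$ with $0<s<t$ lies in $\mu_K(U(\Sigma))$ because $\mu_K^{-1}(r)\cap U(\Sigma)$ is open and dense in $\mu_K^{-1}(r)$, yet the level set meets $Z(\Sigma)$, so $\mu_K^{-1}(r)\cap U(\Sigma)$ is noncompact and its $K$-quotient cannot be the compact $X_\Sigma$; correspondingly $g_z$ fails to attain its minimum for those $z$ whose $D(\Sigma)$-orbit misses the level set. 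The hypothesis you actually need is that $r$ lies in the chamber attached to $\Sigma$, equivalently $\mu_K^{-1}(r)\cap Z(\Sigma)=\emptyset$ --- this is what the explicit choice $r_a=-\sum_j Q_{ja}\lambda_j$ made immediately after the proposition guarantees, and it is the intended content of ``(regular) value''. With that hypothesis the ray-wise dichotomy underlying coercivity (for every nonzero $q'\in k$, either $q'_j>0$ for some $j$ with $z_j\neq 0$, or $\langle r,q'\rangle<0$) can be verified, and the rest of your outline goes through; as written, though, the asserted justification for the key step is insufficient and should be repaired by making the chamber condition on $r$ explicit.
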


To obtain the original polytope $P$ that we started with, we need
to choose $r$ as follows: Consider $\lambda_j$ for $j=1,\cdots,m$
which we used to define our polytope $P$ by the set of
inequalities $\langle u,v_j \rangle \geq \lambda_j$. Then, for
each $a=1,\cdots,m-n$, let
$$r_a = -\sum_{j=1}^m Q_{ja} \lambda_j.$$

Then we have
$$\mu_K^{-1}(r_1, \cdots, r_{m-n})/K \cong
X_{\Sigma(P)}$$ and for the residual $T^n \cong T^m/K$ action on
$X_{\Sigma(P)}$, and for its moment map $\pi$, we have
$$\pi(X_{\Sigma(P)}) = P.
$$
Using Delzant's theorem \cite{delzant}, one can reconstruct the
symplectic form out of the polytope $P$ (up to $T^n$-equivariant
symplectic diffeomorphisms). In fact, Guillemin \cite{Gu} proved the
following explicit closed formula for the $T^n$-invariant K\"ahler
form associated to the canonical complex structure on $X =
X_\Sigma(P)$

\begin{thm}[Guillemin]\label{gullemin}
Let $P$, $X_{\Sigma(P)}$, $\omega_P$ be as above and
$$
\pi: X_{\Sigma(P)} \to (\R^m/k)^*\cong (\R^n)^*
$$
be the associated moment map. Define the functions on
$(\R^n)^*$
\begin{eqnarray}
\ell_i(u) & = &\langle u, v_i \rangle - \lambda_i \, \mbox{ for }\,
i = 1, \cdots, m \label{eq:elli}\\
\ell_\infty(u) & = &\sum_{i=1}^m\langle u, v_i \rangle =\left\langle u,
\sum_{i=1}^m v_i \right\rangle \nonumber.
\end{eqnarray}
Then we have
\begin{equation}\label{eq:omegaP}
\omega_P =
\sqrt{-1}\partial\overline{\partial}\left(\pi^*\Big(\sum_{i=1}^m\lambda_i
(\log \ell_i) + \ell_\infty\Big)\right)
\end{equation}
on $\text{\rm Int}\,P$.
\end{thm}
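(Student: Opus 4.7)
The plan is to verify (\ref{eq:omegaP}) on the open dense orbit $X_0 := \pi^{-1}(\operatorname{Int} P)$ and then extend to all of $X$ by continuity of $T^n$-invariant closed $(1,1)$-forms. On $X_0$ the $T^n$-action is free, $\pi$ is a principal $T^n$-bundle, each $\ell_i(u)$ is strictly positive, and both sides of (\ref{eq:omegaP}) are smooth.

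First, I would use the symplectic reduction description $X = \mu_K^{-1}(r)/K$ to see how the standard K\"ahler data on $\C^m$ descend. The potential $\Phi_{\mathrm{std}}(z) = \tfrac{1}{2}\sum_j |z_j|^2$ satisfies $\omega_{\mathrm{std}} = \sqrt{-1}\,\partial\overline{\partial}\,\Phi_{\mathrm{std}}$. A point $[z]$ over $u \in \operatorname{Int} P$ in the symplectic quotient has a representative with $\tfrac{1}{2}|z_j|^2 = \ell_j(u)$ for every $j$, by the moment map constraint $\mu_K(z)=r$ together with the residual $T^n$-equivariance; consequently $\Phi_{\mathrm{std}}|_{\mu_K^{-1}(r)} = \pi^*\ell_\infty$. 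This already accounts for the linear term $\ell_\infty$ in (\ref{eq:omegaP}); the logarithmic contributions must come from the non-triviality of the principal $K$-bundle $\mu_K^{-1}(r) \to X$.

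Second, I would invoke the Legendre-transform description of $T^n$-invariant K\"ahler metrics on $X_0$. Such a metric is encoded by a strictly convex symplectic potential $g:\operatorname{Int} P \to \R$, and the corresponding complex K\"ahler potential is its Legendre dual $G(u) := \sum_k u_k (\del g/\del u_k) - g(u)$; in the holomorphic coordinates $y_i^\sigma = \exp(\del g/\del u_i + \sqrt{-1}\,\theta_i)$ induced by Proposition \ref{homocord}, one has $\omega_P = \sqrt{-1}\,\partial\overline{\partial}(\pi^* G)$. For the Delzant metric arising from symplectic reduction, Guillemin's canonical choice is $g(u) = \sum_i \ell_i(u)\log \ell_i(u)$, whose logarithmic blow-up at each facet $\{\ell_i = 0\}$ is exactly what is needed for the metric to extend smoothly across the corresponding toric divisor. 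A direct computation using $\ell_i(u) + \lambda_i = \langle u, v_i\rangle$ then gives
\[
G(u) = \sum_i (\ell_i+\lambda_i)(\log\ell_i + 1) - \sum_i \ell_i\log\ell_i = \sum_i \lambda_i \log \ell_i(u) + \ell_\infty(u),
\]
matching the right hand side of (\ref{eq:omegaP}) up to an additive constant annihilated by $\partial\overline{\partial}$.

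The main obstacle is the explicit identification of the holomorphic coordinate $y_i^\sigma$ built from the toric combinatorics in Proposition \ref{homocord} with the Legendre-dual coordinate $\exp(\del g/\del u_i + \sqrt{-1}\,\theta_i)$ produced by the symplectic potential $g$. Only this identification pins down the affine ambiguity in $G$ and singles out the specific K\"ahler form $\omega_P$ of the Delzant construction (rather than some cohomologous one). Once it is established on each maximal-cone chart $U(\sigma)$, (\ref{eq:omegaP}) holds on $X_0$, and hence on all of $X$ by continuity.
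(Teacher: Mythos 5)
First, a point of reference: the paper does not prove Theorem \ref{gullemin} at all --- it is quoted from Guillemin's paper [Gu] as a known closed formula --- so there is no in-paper argument to measure your proposal against. Your outline is the standard symplectic-potential (Guillemin--Abreu) route, and the Legendre computation at the end is correct: with $g=\sum_i\ell_i\log\ell_i$ one gets $G=\sum_i\lambda_i\log\ell_i+\ell_\infty$ exactly, the constants $\sum_i\lambda_i$ cancelling outright.

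There is, however, a genuine gap, and it is the one you flag yourself: the assertion that the reduced K\"ahler structure has symplectic potential $g=\sum_i\ell_i\log\ell_i$ --- equivalently, that the toric holomorphic coordinates $y^\sigma_k$ of Proposition \ref{homocord} agree up to multiplicative constants with $\exp(\partial g/\partial u_k+\sqrt{-1}\,\theta_k)$ --- is the entire content of the theorem, and the justification you offer for it (the logarithmic boundary behaviour needed for the metric to extend smoothly across the toric divisors) cannot single it out. In Abreu's description, $g$ and $g+h$ for any smooth $h$ on $P$ preserving convexity have identical boundary singularities and both define smooth invariant K\"ahler metrics on all of $X$, but different ones; so regularity at $\partial P$ determines $g$ only modulo exactly the ambiguity you need to resolve. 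The missing step is nonetheless a finite computation that your own first paragraph already sets up: the choice $r_a=-\sum_jQ_{ja}\lambda_j$ forces $\tfrac12|z_j|^2=\ell_j(u)$ for every $j$ on $\mu_K^{-1}(r)$, so the monomial formula \eqref{homocoordeq} gives
$$
\log\vert y^\sigma_k\vert=\sum_{j}\langle v_j,u_{i_k}\rangle\log\vert z_j\vert
=\tfrac12\sum_{j}\langle v_j,u_{i_k}\rangle\log\ell_j(u)+\mathrm{const},
$$
which is the gradient of the Guillemin potential in the basis dual to $\{v_{i_1},\cdots,v_{i_n}\}$, up to constants and an overall factor fixed by the normalization of $\omega_{\mathrm{std}}$; carrying this through yields \eqref{eq:omegaP}. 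Note also that your first paragraph's observation that $\Phi_{\mathrm{std}}$ restricts to $\pi^*\ell_\infty$ on the level set is only heuristic --- a K\"ahler potential does not descend through symplectic reduction by restriction --- so nothing there can carry any weight; the whole proof rests on the coordinate identification, which must actually be performed.
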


The affine functions $\ell_i$ will play an important role in our
description of potential function as in \cite{cho-oh} since they
also measure symplectic areas $\omega(\beta_i)$ of the canonical generators $\beta_i$
of $H_2(X,L(u);\Z)$. More precisely we have
\be\label{eq:area}
\omega(\beta_i) = 2 \pi \ell_i(u)
\ee
(see Theorem 8.1 \cite{cho-oh}). We also recall
\be\label{eq:P} P = \{u \in M_\R \mid \ell_i(u) \geq 0, \, i = 1,
\cdots, m \} \ee by definition (\ref{eq:PinMR}).

\section{Deformation theory of filtered $A_\infty$-algebras}
\label{sec:floerreview}

In this section, we provide a quick summary of the
deformation and obstruction theory
of Lagrangian Floer cohomology developed in \cite{fooo00,fooo06} for readers' convenience. We also refer readers to
the third named author's survey paper \cite{ohta01} for a more detailed review, and
refer to \cite{fooo06}
for complete details of the proofs of the results
described in this section.

We start our discussion with the classical unfiltered $A_\infty$ algebra.
Let $C$ be a graded $R$-module where $R$ is the coefficient ring.
We denote by $C[1]$ its suspension defined by $C[1]^k = C^{k+1}$.
Define the {\it bar complex} $B(C[1])$ by
$$
B_k(C[1]) = \underbrace{C[1]\otimes \cdots \otimes C[1]}_{k}, \quad B(C[1]) =
\bigoplus_{k=0}^\infty B_k(C[1]).
$$
Here $B_0(C[1]) = R$ by definition.  $B(C[1])$
has the structure of {\it graded coalgebra}.

\begin{defn} The structure of {\it $A_\infty$ algebra} is a
sequence of $R$ module homomorphisms
$$
\mathfrak m_k: B_k(C[1]) \to C[1], \quad k = 1, 2, \cdots,
$$
of degree +1 such that the coderivation
$\widehat d = \sum_{k=1}^\infty \widehat{\mathfrak m}_k$
satisfies $\widehat d \,\widehat d= 0$, which
is called the \emph{$A_\infty$-relation}.
Here we denote by $\widehat{\mathfrak
m}_k: B(C[1]) \to B(C[1])$ the unique extension of $\mathfrak m_k$
as a coderivation on $B(C[1])$, that is
\begin{equation}\label{hatmk}
\widehat{\mathfrak m}_k(x_1 \otimes \cdots \otimes x_n) =
\sum_{i=1}^{k-i+1} (-1)^{*}
x_1 \otimes \cdots \otimes \frak m_k(x_i,\cdots,x_{i+k-1})
\otimes \cdots \otimes x_n
\end{equation}
where $* = \deg x_1+\cdots+\deg x_{i-1}+i-1$.
\end{defn}
The relation $\widehat d \,\widehat d= 0$ can be written as
$$
\sum_{k=1}^{n-1}\sum_{i=1}^{k-i+1} (-1)^{*} \frak m_{n-k+1}(
x_1 \otimes \cdots \otimes \frak m_k(x_i,\cdots,x_{i+k-1})
\otimes \cdots \otimes x_n) = 0,
$$
where $*$ is the same as above.
In particular, we have $\mathfrak m_1
\mathfrak m_1 = 0$ and so it defines a complex $(C,\mathfrak m_1)$.
\par
A {\it weak} (or {\it curved}) $A_\infty$-algebra is defined in the same way, except
that it also includes the $\mathfrak m_0$-term
$
\mathfrak m_0: R \to B(C[1]).
$
The first two terms of the $A_\infty$ relation for a weak
$L_\infty$ algebra are given as
$$
\mathfrak m_1(\mathfrak m_0(1))  = 0 \label{eq:m1m0=0},
\quad
\mathfrak m_1\mathfrak m_1 (x) + (-1)^{\deg x+1}\mathfrak m_2(x, \mathfrak m_0(1)) +
\mathfrak m_2(\mathfrak m_0(1), x) = 0. \label{eq:m0m1}
$$
In particular, for the case of weak $A_\infty$ algebras, $\mathfrak
m_1$ will not satisfy boundary property, i.e., $\mathfrak m_1\mathfrak m_1
\neq 0$ in general.

We now recall the notion of \emph{unit} in $A_\infty$ algebra.

\begin{defn} An element ${\bf e} \in C^0 = C[1]^{-1}$ is called a unit
if it satisfies
\begin{enumerate}
\item $\mathfrak m_{k+1}(x_1, \cdots,{\bf e}, \cdots, x_k) = 0$ for $k \geq 2$ or $k=0$.
\item $\mathfrak m_2({\bf e}, x) = (-1)^{\deg x}\mathfrak m_2(x,{\bf e}) = x$ for all $x$.
\end{enumerate}
\end{defn}

Combining this definition of unit and \eqref{hatmk}, we have the
following immediate lemma.

\begin{lem} Consider an $A_\infty$ algebra $(C[1],\mathfrak m)$
over a ground ring $R$ for which
$\mathfrak m_0(1) = \lambda {\bf e}$ for some $\lambda \in R$.
Then $\mathfrak m_1 \mathfrak m_1 = 0$.
\end{lem}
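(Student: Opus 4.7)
The plan is to use the second $A_\infty$-relation displayed just above the lemma, namely
\[
\mathfrak m_1\mathfrak m_1(x) + (-1)^{\deg x+1}\mathfrak m_2(x,\mathfrak m_0(1)) + \mathfrak m_2(\mathfrak m_0(1),x) = 0,
\]
and to substitute the hypothesis $\mathfrak m_0(1)=\lambda\mathbf e$ directly into this identity. Linearity of $\mathfrak m_2$ in each variable lets us pull the scalar $\lambda$ out of both $\mathfrak m_2$ terms.

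Next I would invoke the two defining properties of the unit $\mathbf e$ from the preceding definition, which give $\mathfrak m_2(\mathbf e,x)=x$ and $\mathfrak m_2(x,\mathbf e)=(-1)^{\deg x}x$. Plugging these in yields
\[
\mathfrak m_1\mathfrak m_1(x) \;+\; (-1)^{\deg x+1}\lambda(-1)^{\deg x}x \;+\; \lambda x \;=\; 0.
\]
Since $(-1)^{\deg x+1}(-1)^{\deg x}=(-1)^{2\deg x+1}=-1$, the two $\lambda x$ terms cancel, and we are left with $\mathfrak m_1\mathfrak m_1(x)=0$ for every homogeneous $x$, hence for all $x \in C[1]$ by linearity.

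There is no real obstacle here: the entire argument is a one-line consequence of the $n=2$ case of the $A_\infty$-relation together with the unit axiom, once one has correctly tracked the Koszul signs coming from the suspension grading. The only place to be slightly careful is the sign bookkeeping $(-1)^{\deg x+1}\cdot(-1)^{\deg x}=-1$, which is what produces the cancellation; everything else is routine. No further structural facts about $(C[1],\mathfrak m)$ beyond those already stated in this section are needed.
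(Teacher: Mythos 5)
Your proof is correct and is exactly the argument the paper intends: the lemma is stated as an ``immediate'' consequence of combining the unit axioms with the displayed $A_\infty$-relation $\mathfrak m_1\mathfrak m_1(x)+(-1)^{\deg x+1}\mathfrak m_2(x,\mathfrak m_0(1))+\mathfrak m_2(\mathfrak m_0(1),x)=0$, and your substitution of $\mathfrak m_0(1)=\lambda\mathbf e$ together with the sign bookkeeping $(-1)^{\deg x+1}(-1)^{\deg x}=-1$ is precisely that verification.
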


Now we explain the notion of the \emph{filtered $A_{\infty}$ algebra}.
We define the {\it universal Novikov ring} $\Lambda_{0,nov}$ by
$$
\Lambda_{0,nov}
= \left.\left\{ \sum_{i=1}^{\infty} a_i T^{\lambda_i}e^{n_i}
\,\,\right\vert\,\, a_i \in R, n_i \in \Z, \lambda_i \in \R_{\ge 0},
\lim_{i\to\infty} \lambda_i = \infty\right\}.
$$
This is a graded ring by defining $\deg T = 0$, $\deg e = 2$.
Let $\Lambda_{0,nov}^+$ be its maximal ideal
which consists of the elements
$\sum_{i=1}^{\infty} a_i T^{\lambda_i}e^{n_i}$ with $\lambda_i >0$.
\par
Let $\bigoplus_{m\in \Z}C^m$ be the free graded $\Lambda_{0,nov}$
module over the basis  $\{\bold v_{i}\}$. We define a filtration
$\bigoplus_{m\in \Z}F^{\lambda}C^m$ on it such that
$\{T^{\lambda}\bold v_{i}\}$ is a free basis of $\bigoplus_{m\in
\Z}F^{\lambda}C^m$. Here $\lambda \in \R$, $\lambda \ge 0$. We call
this filtration the {\it energy filtration}. (Our algebra
$\bigoplus_{m\in \Z}C^m$ may not be finitely generated. So we need
to take completion.) We denote by $C$ the completion of $\bigoplus
_{m\in \Z}C^m$ with respect to the energy filtration. The filtration
induces a natural non-Archimedean topology on $C$.
\par
A {\it filtered $A_\infty$ algebra} $(C,\frak m)$ is a weak $A_\infty$
algebra such that $A_\infty$ operators  $\frak m$ have the following properties :
\begin{enumerate}
\item $\frak m_k$ respect the energy filtration,
\item $\frak m_0(1) \in F^{\lambda}C^1$ with $\lambda >0$,
\item
The reduction
$\frak m_k \mod \Lambda_{0,nov}^+ : B_k\overline C[1] \otimes R[e,e^{-1}] \to
\overline C  \otimes  R[e,e^{-1}]$ does not contain $e$.
More precisely speaking, it has the form
$\overline{\frak m}_k \otimes\,_RR[e,e^{-1}]$
where $\overline{\frak m}_k : B_k\overline C[1] \to \overline C$
is an $R$ module homomorphism.
(Here $\overline C$ is the free $R$ module over the basis $\bold v_{i}$.)
\end{enumerate}
(See Definition 3.2.20 \cite{fooo06} = Definition 7.20 \cite{fooo06pre}.)
\begin{rem}
\begin{enumerate}
\item
In \cite{fooo06} we assume $\frak m_0 =0$ for (unfiltered) $A_{\infty}$
algebra. On the other hand, $\frak m_0 =0$ is not assumed for
filtered $A_{\infty}$ algebra. Filtered $A_{\infty}$ algebra
satisfying $\frak m_0 =0$ is called to be {\it strict}.
\par
\item In this section, to be consistent with the
exposition given in \cite{fooo06}, we use the Novikov ring $\Lambda_{0,nov}$ which
includes the variable $e$. In \cite{fooo06}, the
variable $e$ is used so that the operations $\frak m_k$ become to have
degree one for all $k$ (with respect to the shifted degree.)
But for the applications of this paper,
it is enough to use the $\Z_2$-grading and so
encoding the degree with a formal parameter is not necessary.
Therefore we will use the ring $\Lambda_0$
in other sections, which does not contain $e$.
An advantage of using the ring $\Lambda_0$ is that it is a local ring
while $\Lambda_{0,nov}$ is not. This makes it easier to use some standard results
from commutative algebra in later sections.
We would like to remark that as a $\Z_2$ graded complex
a $\Z$ graded complex over $\Lambda_{0,nov}$ is
equivalent to the complex over $\Lambda_0$.
\end{enumerate}
\end{rem}
Next we explain how one can deform the given filtered $A_\infty$ algebra $(C,\mathfrak m)$ by
an element $b \in F^{\lambda}C[1]^0$ with $\lambda > 0$,
by re-defining the $A_\infty$ operators as
$$
\mathfrak m_k^b(x_1,\cdots, x_k) = \mathfrak m(e^b,x_1, e^b,x_2, e^b,x_3, \cdots,
x_k,e^b).
$$
This defines a new weak $A_\infty$-algebra for arbitrary $b$.
\par
Here we
simplify notations by writing
$
e^b = 1 + b + b\otimes b + \cdots + b \otimes \cdots \otimes b +\cdots.
$
Note that each summand in this infinite sum has degree 0 in $C[1]$.
When the ground ring is $\Lambda_{0,nov}$, the infinite sum
will converge in the non-Archimedean topology since
$b \in F^{\lambda}C[1]^0$ with $\lambda > 0$

\begin{prop} For $A_\infty$ algebra $(C,\mathfrak m_k^b)$,
$\mathfrak m_0^b \equiv 0 \mod \Lambda_{0,nov}\{\bf e\}$ if and only if $b$ satisfies
\begin{equation}\label{eq:MCe}
\sum_{k=0}^\infty\frak m_k(b,\cdots, b)\equiv 0 \mod \Lambda_{0,nov}\{\bf e\}.
\end{equation}
\end{prop}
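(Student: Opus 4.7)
The plan is to prove this by directly unwinding the definition of $\mathfrak m_0^b$. Recall the deformation formula
$$
\mathfrak m_k^b(x_1,\ldots,x_k) = \mathfrak m(e^b, x_1, e^b, x_2, \ldots, x_k, e^b)
$$
where $e^b = 1 + b + b\otimes b + b \otimes b \otimes b + \cdots \in B(C[1])$. Specializing to $k = 0$, no $x_i$'s are inserted, so the definition collapses to
$$
\mathfrak m_0^b = \mathfrak m(e^b).
$$
First I would confirm that this infinite expression is well-defined as an element of $C[1]$: since $b \in F^{\lambda}C[1]^0$ with $\lambda > 0$, we have $b^{\otimes n} \in F^{n\lambda} B(C[1])$, and because each $\mathfrak m_k$ preserves the energy filtration, the images $\mathfrak m_k(b,\ldots,b)$ lie in $F^{k\lambda}C[1]$. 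Hence the sum converges in the non-Archimedean topology on $C$.

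Next I would expand $\mathfrak m = \sum_{k \ge 0} \mathfrak m_k$ applied to $e^b$. Since $\mathfrak m_k$ is the component of $\mathfrak m$ that is nonzero only on $B_k(C[1])$, applying $\mathfrak m_k$ to $e^b = \sum_n b^{\otimes n}$ picks out precisely the summand $b^{\otimes k}$. Therefore
$$
\mathfrak m_0^b = \sum_{k=0}^\infty \mathfrak m_k(b, \ldots, b),
$$
with the $k=0$ term understood as $\mathfrak m_0(1)$. This identity is the heart of the proposition.

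Given the equality of these two expressions, the stated equivalence reduces to a tautology: $\mathfrak m_0^b \equiv 0 \bmod \Lambda_{0,nov}\{\mathbf e\}$ if and only if $\sum_{k=0}^\infty \mathfrak m_k(b,\ldots,b) \equiv 0 \bmod \Lambda_{0,nov}\{\mathbf e\}$, since both sides of the congruence are literally the same element of $C[1]$. There is no real obstacle to this proof; the only subtlety is the convergence check mentioned above, which is automatic from the positivity of the filtration level of $b$ and the defining compatibility of the filtered $A_\infty$ operations with the energy filtration. The content of the proposition lies not in its proof but in its interpretation: it identifies the weak Maurer--Cartan equation \eqref{eq:MCe} as exactly the condition for $b$ to kill the curvature of the deformed structure modulo the unit, thereby legitimizing $b$ as a weak bounding cochain.
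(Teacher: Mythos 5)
Your proof is correct and follows exactly the route the paper intends: specializing the defining formula $\mathfrak m_k^b(x_1,\ldots,x_k)=\mathfrak m(e^b,x_1,\ldots,x_k,e^b)$ to $k=0$ gives $\mathfrak m_0^b=\mathfrak m(e^b)=\sum_{k\ge 0}\mathfrak m_k(b,\ldots,b)$, after which the claimed equivalence is immediate. The convergence check via $b\in F^{\lambda}C[1]^0$ with $\lambda>0$ and filtration-preservation of the $\mathfrak m_k$ is precisely the point the paper itself flags, so nothing is missing.
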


We call the equation \eqref{eq:MCe} the \emph{$A_\infty$ Maurer-Cartan equation}.

\begin{defn}\label{boundchain}
Let $(C,\frak m)$ be a filtered $A_\infty$ algebra in general and
$BC[1]$ be its bar-complex. An element $b \in F^{\lambda}C[1]^0$ ($\lambda>0$) is called
a \emph{weak bounding cochain} if it satisfies the equation  (\ref{eq:MCe}). If the $b$ satisfies the strict equation
$$
\sum_{k=0}^\infty\frak m_k(b,\cdots, b) = 0
$$
we call it a (strict) \emph{bounding cochain}.
\end{defn}

From now on, we will also call a weak bounding cochain just a bounding cochain
since we will mainly concern weak bounding cochains.
In general a given $A_\infty$ algebra may or may not have a solution
to (\ref{eq:MCe}).

\begin{defn}\label{unobstructed}
A filtered $A_\infty$-algebra is called \emph{weakly unobstructed} if the equation
(\ref{eq:MCe}) has a solution  $b \in F^{\lambda}C[1]^0$
with $\lambda > 0$.
\end{defn}
One can define a notion of gauge equivalence between two
bounding cochains as described in section 4.3 \cite{fooo06} (=section 16 \cite{fooo06pre}).

The way how a filtered $A_\infty$ algebra is attached to a Lagrangian
submanifold $L \subset (M,\omega)$ arises as an $A_\infty$ deformation of
the classical singular cochain complex including the instanton
contributions. In particular, when there is no instanton contribution
as in the case $\pi_2(M,L) = 0$, it will reduce to an $A_\infty$ deformation
of the singular cohomology in the chain level including all possible
higher Massey products.

We now describe the basic $A_\infty$ operators $\mathfrak m_k$
in the context of $A_\infty$ algebra of Lagrangian submanifolds.
For a given compatible almost complex structure $J$, consider the moduli
space $\CM_{k+1}(\beta;L)$ of stable maps of genus zero. It is a
compactification of
$$
\{ (w, (z_0,z_1, \cdots,z_k)) \mid
\overline \partial_J w = 0, \, z_i \in \partial D^2, \, [w] = \beta
\,\,\, \mbox{in}\, \pi_2(M,L) \}/\sim
$$
where $\sim$ is the conformal reparameterization of the disc $D^2$. The
expected dimension of this space is given by
$
n+ \mu(\beta) - 3 + (k+1) = n+\mu(\beta) + k-2
$.
\par
Now given $k$ singular chains
$
[P_1,f_1], \cdots,[P_k,f_k] \in C_*(L)
$
of $L$, we put the cohomological grading
$\mbox{deg} P_i = n - \dim P_i$ and regard
the chain complex $C_*(L)$ as the cochain complex
$C^{\dim L - *}(L)$. We
consider the fiber product
$$
ev_0: \CM_{k+1}(\beta;L) \times_{(ev_1, \cdots, ev_k)}(P_1 \times
\cdots \times P_k) \to L,
$$
where $ev_i([w, (z_0,z_1, \cdots,z_k)]) = w(z_i)$.
\par
A simple calculation shows that we have the expected degree
$$
\mbox{deg}\left[\CM_{k+1}(\beta;L) \times_{(ev_1, \cdots, ev_k)}(P_1
\times \cdots\times P_k),
ev_0\right] = \sum_{j=1}^n(\mbox{deg} P_j -1) + 2- \mu(\beta).
$$
For each given $\beta \in \pi_2(M,L)$ and $k = 0, \cdots$, we
define
\begin{equation}\label{defmbeta}
\mathfrak m_{k,\beta}(P_1, \cdots, P_k)
= \left[\CM_{k+1}(\beta;L) \times_{(ev_1, \cdots, ev_k)}(P_1 \times \cdots \times P_k),
ev_0\right]
\end{equation}
and $\mathfrak m_k = \sum_{\beta \in \pi_2(M,L)} \mathfrak m_{k,\beta}
\cdot T^{\omega(\beta)} e^{\mu(\beta)/2}$.
\par
Now we denote by $C[1]$
the completion of a \emph{suitably chosen countably generated}
(singular) chain
complex with $\Lambda_{0,nov}$ as
its coefficients with respect to the non-Archimedean topology.
(We regard $C[1]$ as a {\it cochain} complex.) Then
by choosing a system of multivalued perturbations
of the right hand side of (\ref{defmbeta})
and a triangulation of its zero sets, the map
$
\mathfrak m_k : B_k(C[1]) \to C[1]
$
is defined, has degree 1 and continuous with respect to non-Archimedean topology.
We extend $\mathfrak m_k$ as a coderivation
$\widehat{\mathfrak m}_k: BC[1] \to BC[1]$
by (\ref{hatmk}).
Finally we take the sum
\begin{equation}\label{hatdfromm}
\widehat d = \sum_{k=0}^\infty \widehat{\frak m}_k : BC[1] \to BC[1].
\end{equation}
A main theorem proven in \cite{fooo00,fooo06} then is the following
coboundary property

\begin{thm}\label{algebra}
{\rm (Theorem 3.5.11 \cite{fooo06} = Theorem 10.11 \cite{fooo06pre})} Let $L$ be an arbitrary compact relatively
spin Lagrangian submanifold of an arbitrary tame symplectic manifold
$(M,\omega)$. The coderivation $\widehat d$ is a continuous map that
satisfies the $A_\infty$ relation $\widehat d \widehat d = 0$.
\end{thm}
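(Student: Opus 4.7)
The plan is to deduce the $A_\infty$ relation from a description of the codimension-one boundary of the compactified moduli space $\overline{\CM}_{k+1}(\beta;L)$ of stable $J$-holomorphic discs with $k+1$ boundary marked points. Gromov-type compactness gives that $\overline{\CM}_{k+1}(\beta;L)$ is Hausdorff and compact, and its codimension-one stratum consists precisely of two-component configurations
\[
\overline{\CM}_{k_1+1}(\beta_1;L) \,{}_{ev_0}\!\times_{ev_i}\, \overline{\CM}_{k_2+1}(\beta_2;L),
\]
where $\beta_1+\beta_2=\beta$, $k_1+k_2=k+1$, and $i$ is a boundary marked point on the first component at which the second is attached. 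Sphere bubbling contributes only in codimension $\ge 2$ and can therefore be ignored at the level of the $A_\infty$ relation.

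Because these moduli spaces are in general not cut out transversely, I would first equip each $\overline{\CM}_{k+1}(\beta;L)$ with a Kuranishi structure with boundary and corners in the sense of \cite{fooo00,fooo06}, and then choose a system of multisections $\mathfrak{s}_{k,\beta}$ together with triangulations of the zero sets, compatible under the boundary identifications above and under forgetful/gluing maps. The construction has to be done simultaneously for all $(k,\beta)$ by induction on the pair $(\omega(\beta),k)$, so that the restriction of the perturbation on $\overline{\CM}_{k+1}(\beta;L)$ to its boundary agrees with the fiber product of the previously chosen perturbations. Orientations on these Kuranishi spaces, and compatibility of orientations under the boundary gluing, require the relative spin hypothesis on $L$; this is where the assumption is used in a serious way.

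With such a compatible system in hand, the operators $\mathfrak{m}_{k,\beta}$ of (\ref{defmbeta}) are well-defined chain-level maps on a countably generated singular subcomplex $C \subset S_\ast(L;\La_{0,\mathrm{nov}})$ that is chosen large enough to be closed under the fiber product constructions (this is the step where one must carefully pick the chain model; it is arranged in section 7.2 of \cite{fooo06}). The identity $\widehat d \,\widehat d = 0$ is then verified by computing, for each $\beta$ and $k$, the chain-level boundary
\[
\del\bigl(\CM_{k+1}(\beta;L)\times_{L^k}(P_1\times\cdots\times P_k)\bigr)
\]
via Stokes' theorem applied to the perturbed zero set. The boundary decomposes according to the stratification above, giving a sum of fiber products of lower-energy/lower-arity moduli spaces, which is exactly the sum of terms appearing in $\bigl(\widehat d \widehat d\bigr)_{k,\beta}$; one also picks up the internal differential $\mathfrak m_{1,0} = \pm \del$ from the singular boundary of the chains $P_i$. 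Matching signs, as carried out in Chapter 8 of \cite{fooo06} using the relative spin orientation, yields the $A_\infty$ relation term by term in $T^{\omega(\beta)} e^{\mu(\beta)/2}$.

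The main obstacle is the simultaneous construction of Kuranishi structures and compatible multisections on the infinite family $\{\overline{\CM}_{k+1}(\beta;L)\}_{k,\beta}$ subject to the boundary and forgetful compatibilities, along with coherent orientations; once that bookkeeping is in place, continuity of $\widehat d$ in the non-Archimedean topology is automatic from the energy filtration (each coefficient of a fixed power of $T$ involves only finitely many $\beta$ by Gromov compactness), and the $A_\infty$ relation is essentially Stokes' theorem. A technically convenient reduction, used in \cite{fooo06}, is to first produce only an $A_{n,K}$ structure for each finite $(n,K)$ and then pass to an honest $A_\infty$ structure by a homological algebra limit argument; for the applications in this paper only the $A_{n,K}$ version is actually required.
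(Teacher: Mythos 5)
Your proposal is correct and follows essentially the same route as the source: the paper itself gives no proof of this theorem but cites it from \cite{fooo00,fooo06}, and the argument there is exactly the one you outline — Kuranishi structures with corners on the $\CM_{k+1}(\beta;L)$, a system of multisections chosen inductively on $(\omega(\beta),k)$ compatible with the boundary fiber-product decomposition, coherent orientations from the relative spin structure, Stokes' theorem for the $A_\infty$ relation, and the $A_{n,K}$-to-$A_\infty$ reduction to handle the infinitely many $\beta$. The same scheme reappears in section \ref{sec:calcpot} of this paper in the $T^n$-equivariant de Rham setting (Lemma \ref{37.184} and formula (\ref{Ainftyformula})), confirming your reading.
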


The $A_\infty$ algebra associated to $L$ in this way has
the \emph{homotopy unit}, not a \emph{unit}.
In general a filtered $A_\infty$ algebra with homotopy unit canonically induces
another filtered \emph{unital} $A_\infty$-algebra called a \emph{canonical model}
of the given filtered $A_\infty$ algebra. In the geometric context of $A_\infty$
algebra associated to a Lagrangian submanifold $L \subset M$ of
a general symplectic manifold $(M,\omega)$, the canonical model
is defined on the cohomology group $H^*(L;\Lambda_{0,nov})$.
We refer to \cite{fooo-model08} for a quick explanation of
construction by summing over trees of this canonical model.

Once the $A_\infty$ algebra is attached to each Lagrangian submanifold $L$,
we then construct a filtered \emph{$A_\infty$ bimodule} $C(L,L')$ for the transversal pair
of Lagrangian submanifolds $L$ and $L'$.
Here $C(L,L')$ is the free $\Lambda_{0,nov}$ module such that the set of its
basis is identified with $L \cap L'$.
The filtered $A_\infty$ bimodule structure is by definition is a family of operators
$$
\mathfrak n_{k_1,k_2}: B_{k_1}(C(L)[1])\,\,
\widehat{\otimes}_{\Lambda_{0,nov}} \,\, C(L,L')
\,\,\widehat{\otimes}_{\Lambda_{0,nov}} \,\, B_{k_1}(C(L')[1]) \to C(L,L')
$$
for $k_1,k_2\ge 0$.
(Here
$\widehat{\otimes}_{\Lambda_{0,nov}}$ is the completion of
algebraic tensor product.)
Let us briefly describe the definition of $\mathfrak n_{k_1,k_2}$.
A typical element of the tensor product
$B_{k_1}(C(L)[1])\,\,\widehat{\otimes}_{\Lambda_{0,nov}} \,\, C(L,L')
\,\,\widehat{\otimes}_{\Lambda_{0,nov}} \,\, B_{k_2}(C(L')[1])$
has the form
$$
P_{1,1} \otimes \cdots, \otimes P_{1,k_1} \otimes \langle p \rangle \otimes
P_{2,1} \otimes \cdots \otimes P_{2,k_2}
$$
with $p \in L \cap L'$. Then the image $\mathfrak n_{k_1,k_2}$ thereof is given by
$$
\sum_{q, B}T^{\omega(B)}e^{\mu(B)/2}\# \left(\CM(p,q;B;P_{1,1},\cdots,P_{1,k_1};
P_{2,1},\cdots,P_{2,k_2})\right)\langle q \rangle.
$$
Here $B$ denotes homotopy class of Floer trajectories connecting $p$ and $q$,
the summation is taken over all $(q,B)$ with
$$
\text{vir.dim } \CM(p,q;B;P_{1,1},\cdots,P_{1,k_1};
P_{2,1},\cdots,P_{2,k_2}) = 0,
$$
and $\# \left(\CM(p,q;B;P_{1,1},\cdots,P_{1,k_1};P_{2,1},\cdots,P_{2,k_2})\right)$ is
the `number' of elements in
the `zero' dimensional moduli space $\CM(p,q;B;P_{1,1},\cdots,P_{1,k_1};
P_{2,1},\cdots,P_{2,k_2})$. Here the moduli space $\CM(p,q;B;P_{1,1},\cdots,P_{1,k_1};
P_{2,1},\cdots,P_{2,k_2})$ is the Floer moduli space
$
\CM(p,q;B)
$
cut-down by intersecting with the given chains $P_{1,i} \subset L$
and $P_{2,j} \subset L'$.

\begin{thm} {\rm (Theorem 3.7.21 \cite{fooo06} = Theorem 12.21 \cite{fooo06pre})} Let $(L,L')$ be an arbitrary relatively spin pair
of compact Lagrangian submanifolds.
Then the family $\{\mathfrak n_{k_1,k_2}\}$ define
a left $(C(L),\mathfrak m)$ and right $(C(L'),\mathfrak m')$ filtered
$A_\infty$-bimodule structure on $C(L,L')$.
\end{thm}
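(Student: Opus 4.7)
The plan is to derive the filtered $A_\infty$-bimodule relations from the analysis of the codimension-one boundary of the Floer moduli spaces $\CM(p,q;B;P_{1,1},\ldots,P_{1,k_1};P_{2,1},\ldots,P_{2,k_2})$ exactly as the unital $A_\infty$-relations of Theorem \ref{algebra} are derived from the boundary of the disc moduli $\CM_{k+1}(\beta;L)$. First I would fix the virtual dimension to be one, set up a Kuranishi structure on the compactified Floer moduli $\overline{\CM}(p,q;B)$ compatibly with the Kuranishi structures on $\overline{\CM}_{k+1}(\beta;L)$ and $\overline{\CM}_{k+1}(\beta';L')$ already used to define $\frak m$ and $\frak m'$, and then cut down by the chains $P_{1,i}\subset L$ and $P_{2,j}\subset L'$ via the evaluation maps at the boundary marked points on the two boundary components of the strip.

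Next I would enumerate the boundary strata of the one-dimensional component. They come in three geometric types: (a) splitting of the Floer strip at an intermediate intersection point $r\in L\cap L'$, giving a product $\CM(p,r;B_1;\ldots)\times \CM(r,q;B_2;\ldots)$ with $B_1+B_2=B$ and the chains $P_{1,i}$, $P_{2,j}$ distributed in order between the two factors; (b) bubbling of a pseudo-holomorphic disc off the $L$-boundary of the strip, absorbing a consecutive block $P_{1,i},\ldots,P_{1,i+\ell-1}$, which produces a product of an $\frak m_\ell$-type factor on $L$ with a reduced Floer strip; (c) the symmetric bubbling on the $L'$-boundary producing an $\frak m'_\ell$-type factor. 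Summing the contributions of (a), (b), (c) (together with the chain-level boundaries $\partial P_{\bullet,\bullet}$ coming from the singular chain differential, which are subsumed into $\frak m_1$ and $\frak m'_1$) exactly reproduces the $A_\infty$-bimodule equation
\[
\sum \pm\,\frak n_{\ast,\ast}\bigl(\ldots\otimes \frak m_\ell(\ldots)\otimes\ldots\otimes\langle p\rangle\otimes\ldots\bigr)+\sum\pm\,\frak n\bigl(\ldots\otimes \frak n(\ldots)\otimes\ldots\bigr)+\sum\pm\,\frak n\bigl(\ldots\otimes\langle p\rangle\otimes\ldots\otimes \frak m'_\ell(\ldots)\otimes\ldots\bigr)=0.
\]
To turn these set-theoretic identifications into chain-level equalities one has to construct a \emph{system} of multisections on all moduli spaces $\overline{\CM}(p,q;B;\vec P_1;\vec P_2)$, $\overline{\CM}_{k+1}(\beta;L)$, $\overline{\CM}_{k+1}(\beta';L')$ simultaneously, compatible with all the gluing maps that identify the three types of boundary strata above; this is done inductively on the filtration by energy $\omega(B)+\omega(\beta)$ and on the number of marked points, by extending multisections from boundary to interior using the relative version of the construction already invoked for Theorem \ref{algebra}.

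The hard part will be two bookkeeping issues that occur together. First, the \emph{orientation/sign} problem: one must show that the boundary orientations of $\overline{\CM}(p,q;B;\vec P_1;\vec P_2)$ induced by the three types of degeneration match the Koszul signs appearing in the $A_\infty$-bimodule relation. This is where the relatively spin hypothesis on the pair $(L,L')$ enters: it provides coherent orientations on the moduli spaces of Floer strips as well as on the disc moduli for both $L$ and $L'$, with the gluing compatibilities needed to cancel boundary contributions in pairs. Second, the \emph{countability/energy-filtration} issue: one must choose the singular chain complex $C(L,L')$ to be countably generated, with $\frak n_{k_1,k_2}$ continuous in the non-Archimedean topology, so that the sum over $B$ indexed by $T^{\omega(B)}e^{\mu(B)/2}$ converges and respects the energy filtration; this is handled by Gromov compactness, which guarantees only finitely many $B$ contribute below each energy level. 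Once the signs are verified and the multisections chosen compatibly, reading off the codimension-one boundary of the one-dimensional perturbed moduli spaces yields the desired $A_\infty$-bimodule relations, and the theorem follows.
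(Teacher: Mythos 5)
Your outline is essentially the proof given in \cite{fooo06} (the present paper only quotes Theorem 3.7.21 without reproving it): the bimodule relations are read off from the three types of codimension-one degenerations of the Floer strip moduli (splitting at an intermediate intersection point, disc bubbling on the $L$-side, disc bubbling on the $L'$-side), with compatible Kuranishi structures and multisections built inductively on energy and marked points, coherent orientations supplied by the relative spin structure of the pair, and convergence handled by the energy filtration. The one point you gloss over --- which this paper itself flags in its introduction as a genuine step --- is that the inductive system of compatible multisections can only be constructed below a fixed energy level $E$, so one first obtains $A_{n,K}$-bimodule structures and must then promote them to an honest filtered $A_\infty$-bimodule by the homological algebra of section 7.2 of \cite{fooo06}.
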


What this theorem means is explained below as Proposition \ref{bimodule}.
\par
Let $B(C(L)[1])\,\widehat{\otimes}_{\Lambda_{0,nov}} \, C(L,L')
\,\widehat{\otimes}_{\Lambda_{0,nov}} \, B(C(L')[1])$
be the completion of the direct sum of
$
B_{k_1}(C(L)[1])\,\widehat{\otimes}_{\Lambda_{0,nov}} \, C(L,L')
\,\widehat{\otimes}_{\Lambda_{0,nov}} \, B_{k_2}(C(L')[1])$
over $k_1\ge 0$, $k_2\ge 0$.
We define
the boundary operator $\widehat d$ on it by using the maps $\mathfrak n_{k_1,k_2}$
and $\frak m_k$, $\frak m'_k$ as follows :
$$\aligned
&\widehat d((x_1 \otimes \cdots \otimes x_n) \otimes x \otimes
(x'_1 \otimes \cdots \otimes x'_m)) \\
&=
\widehat d(x_1 \otimes \cdots \otimes x_n)
\otimes  x \otimes
(x'_1 \otimes \cdots \otimes x'_m) \\
&\quad + (-1)^{\deg x_1 + \cdots + \deg x_n + \deg x + n+1}
(x_1 \otimes \cdots \otimes x_n)
\otimes  x \otimes
\widehat d(x'_1 \otimes \cdots \otimes x'_m) \\
&\quad + \sum_{k_1\le n}\sum_{k_2\le m}
(-1)^{\deg x_1 + \cdots + \deg x_{n-k_1} + n-k_1} (x_1 \otimes \cdots  \otimes x_{n-k_1}) \\
&\hskip2.3cm \otimes \frak n_{k_1,k_2}((x_{n-k_1+1} \otimes
\cdots \otimes x_n) \otimes x \otimes (x'_1 \otimes \cdots \otimes x'_{k_2}))
\\
&\hskip2.3cm \otimes
(x'_{k_2+1} \otimes \cdots \otimes x'_m).
\endaligned$$
Here $\widehat d$ in the second and the third lines are
induced from $\frak m$ and $\frak m'$ by Formula (\ref{hatdfromm}) respectively.
\begin{prop}\label{bimodule} The map $\widehat d$ satisfies
$\widehat d \, \widehat d = 0$.
\end{prop}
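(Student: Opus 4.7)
The plan is to deduce the identity $\widehat d\circ\widehat d=0$ from a Stokes-type argument applied to the one dimensional components of the Floer moduli spaces $\CM(p,q;B;P_{1,1},\dots,P_{1,k_1};P_{2,1},\dots,P_{2,k_2})$, using as input the $A_\infty$ relations for $(C(L),\frak m)$ and $(C(L'),\frak m')$ already supplied by Theorem \ref{algebra}. Equivalently, one proves that the family $\{\frak n_{k_1,k_2}\}$ satisfies the $A_\infty$ bimodule relations over the two algebras, and the translation of these relations into $\widehat d\circ\widehat d=0$ is a purely algebraic rewriting that uses the coalgebra structure of $B(C(L)[1])\,\widehat\otimes\, C(L,L')\,\widehat\otimes\, B(C(L')[1])$ together with the Koszul sign convention already built into the definition of $\widehat d$.

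First I would expand $\widehat d\circ\widehat d$ applied to a generic tensor $(x_1\otimes\cdots\otimes x_n)\otimes\langle p\rangle\otimes(x'_1\otimes\cdots\otimes x'_m)$ according to the three-term defining formula for $\widehat d$. The contributions in which $\widehat d$ acts twice within the left bar complex assemble into $(\widehat{\frak m}\widehat{\frak m})\otimes\langle p\rangle\otimes(\cdots)$ and vanish by the $A_\infty$ relation for $\frak m$; symmetrically, the purely right-bar iterations vanish by the $A_\infty$ relation for $\frak m'$. The remaining contributions, each of which involves at least one application of some $\frak n_{k_1,k_2}$, split into three types that I would match, one for one and with signs, to the three kinds of codimension one boundary strata of $\CM(p,q;B;\vec P_1;\vec P_2)$: namely, (i) compositions of the form $\frak n_{*,*}\circ \frak n_{*,*}$, corresponding to breaking of the Floer trajectory at an intermediate intersection $p'\in L\cap L'$; (ii) terms of the shape $\frak n_{*,*}(\dots,\frak m_\ell(\dots),\dots\,;\langle p\rangle;\cdots)$, corresponding to bubbling off of a $J$-holomorphic disc with boundary on $L$ at one of the $L$-side marked points; and (iii) the mirror terms for disc bubbling on $L'$. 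Since the signed count of the boundary of a compact oriented 1-manifold vanishes, the total of (i), (ii) and (iii) is zero, and this yields the remaining identity.

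The main obstacle is to make the matching in the previous paragraph rigorous in the presence of bubbling phenomena that occur in real codimension one. This requires constructing the moduli spaces $\CM(p,q;B;\vec P_1;\vec P_2)$ as spaces with Kuranishi structure and equipping them with a coherent system of multisections, chosen so that the restriction of the perturbation to each codimension one stratum is induced by the perturbation data already used to define $\frak m_k$, $\frak m'_k$ and the lower dimensional $\frak n_{k_1,k_2}$. Signs must similarly be tracked coherently, using the relative spin structures on $L$ and $L'$ to orient each stratum and to reconcile the fiber product orientations on the boundary with the Koszul signs in the formula for $\widehat d$. Once these technical ingredients, supplied by Chapter 3 of \cite{fooo06}, are in place, the cobordism argument sketched above is essentially bookkeeping and completes the proof.
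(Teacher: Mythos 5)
Your argument is correct and is essentially the standard one: the paper itself gives no proof of this proposition, treating it as the content of the cited Theorem 3.7.21 of \cite{fooo06} (the $A_\infty$-bimodule relations for $\{\frak n_{k_1,k_2}\}$ are by definition equivalent to $\widehat d\,\widehat d=0$), and the cobordism/Stokes argument you sketch — matching strip breaking and disc bubbling on $L$ and on $L'$ to the three families of terms in $\widehat d\,\widehat d$, with compatible Kuranishi perturbations and orientations from the relative spin structures — is exactly the proof carried out there.
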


The $A_\infty$ bimodule structure,
which define a boundary operator on the bar complex, induces an
operator $\delta = \frak n_{0,0}$ on a much smaller,
ordinary free
$\Lambda_{0,nov}$ module $C(L,L')$ generated by the intersections
$L \cap L'$. However the boundary property of this Floer's `boundary' map $\delta$
again meets obstruction coming from the obstructions cycles of either $L$, $L'$ or of both.
We need to deform $\delta$ using suitable bounding cochains of $L, \, L'$.
\par
In the case where both $L, \, L'$ are weakly unobstructed, we can carry out this
deformation of $\mathfrak n$ using weak bounding chains $b$
and $b'$ of fibered $A_{\infty}$ algebras associated to $L$ and $L'$ respectively, in
a way similar to $\frak m^b$. Namely we define
$\delta_{b,b'} : C(L,L') \to C(L,L')$ by
$$
\delta_{b,b'}(x) =
\sum_{k_1,k_2} \frak n_{k_1,k_2} (b^{\otimes k_1} \otimes
x \otimes b^{\prime \otimes k_2}) =  \mathfrak{\widehat n}(e^{b},x,e^{b'}).
$$
We can generalize the story to the case where $L$ has clean intersection
with $L'$, especially to the case $L=L'$.
In the case $L=L'$ we have
$\frak n_{k_1,k_2} = \frak m_{k_1+k_2+1}$. So
in this case, we have
$
\delta_{b,b'}(x) = \frak m(e^b,x,e^{b'}).
$
\par
In general $\delta_{b,b'}$ does not satisfy
the equation $\delta_{b,b'}\delta_{b,b'} = 0$.
It turns out that there is an elegant condition for
$\delta_{b,b'}\delta_{b,b'} = 0$ to hold
in terms of the \emph{potential function} introduced in \cite{fooo06},
which we explain in the next section.
In the case $\delta_{b,b'}\delta_{b,b'} = 0$ we define
Floer cohomology by
$$
HF((L,b),(L',b');\Lambda_{0,nov}) = \mbox{Ker}\,\delta_{b,b'}/\mbox{Im}
\,\delta_{b,b'}.
$$
Let $\Lambda_{nov}$ be the field of fraction of $\Lambda_{0,nov}$.
We define $HF((L,b),(L',b');\Lambda_{nov})$ by extending the
coefficient ring $\Lambda_{0,nov}$ to $\Lambda_{nov}$. Then $HF((L,b),(L',b');\Lambda_{nov})$ is invariant under the
Hamiltonian isotopies of $L$ and $L'$. Therefore we can use it
to obtain the following result about non-displacement of
Lagrangian submanifolds.
\begin{thm}{\rm (Theorem G \cite{fooo06} = Theorem G \cite{fooo06pre})}\label{displace1}
Assume $\delta_{b,b'}\delta_{b,b'} = 0$. Let
$\psi : X \to X$ be a Hamiltonian diffeomorphism
such that $\psi(L)$ is transversal to $L'$. Then we have
$$
\# (\psi(L) \cap L')
\ge \text{\rm rank}_{\Lambda_{nov}}HF((L,b),(L',b');\Lambda_{nov}).
$$
\end{thm}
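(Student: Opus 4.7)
The plan is to leverage the Hamiltonian isotopy invariance of $HF((L,b),(L',b');\Lambda_{nov})$ stated in the excerpt just above the theorem. The argument reduces the non-displacement estimate to the easy observation that, for a pair of transversally intersecting Lagrangians, the underlying Floer cochain complex is free of rank equal to the number of intersection points, so the cohomology it computes has at most that rank over $\Lambda_{nov}$.

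First, I would apply the hypothesis that $\psi$ is a Hamiltonian diffeomorphism and use the naturality of the filtered $A_\infty$ algebra constructions under symplectomorphisms. Pushing $L$ forward to $\psi(L)$ produces a filtered $A_\infty$ isomorphism between the algebras of $L$ and $\psi(L)$, and under this isomorphism the weak bounding cochain $b \in \CM_{\text{\rm weak}}(L)$ transports to a weak bounding cochain $\psi_\ast(b) \in \CM_{\text{\rm weak}}(\psi(L))$. Combining this with the invariance result (Theorem G of \cite{fooo06}) yields the isomorphism
$$
HF((L,b),(L',b');\Lambda_{nov}) \;\cong\; HF((\psi(L),\psi_\ast(b)),(L',b');\Lambda_{nov}).
$$
It is at this stage that the hypothesis $\delta_{b,b'}\delta_{b,b'}=0$ is needed in order that the left-hand side (and hence its invariant form) be defined.

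Second, because $\psi(L)$ is transversal to $L'$ by assumption, the intersection $\psi(L)\cap L'$ is a finite set, and the Floer module $C(\psi(L),L')$ is by definition the free $\Lambda_{0,nov}$-module on this finite set. Consequently
$$
C(\psi(L),L')\otimes_{\Lambda_{0,nov}}\Lambda_{nov}
$$
is a free $\Lambda_{nov}$-module of rank exactly $\#(\psi(L)\cap L')$. The deformed Floer coboundary $\delta_{\psi_\ast(b),b'}=\widehat{\frak n}(e^{\psi_\ast(b)},\cdot,e^{b'})$ squares to zero by the transported version of the hypothesis, so its cohomology $HF((\psi(L),\psi_\ast(b)),(L',b');\Lambda_{nov})$ is a subquotient of this free module. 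Since $\Lambda_{nov}$ is a field (the field of fractions of $\Lambda_{0,nov}$), the rank of any subquotient is bounded above by the rank of the ambient free module, giving
$$
\text{\rm rank}_{\Lambda_{nov}}HF((L,b),(L',b');\Lambda_{nov}) \;\le\; \#(\psi(L)\cap L').
$$

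The main obstacle, or rather the piece of work that is essentially hidden, is the Hamiltonian isotopy invariance used in the first step: this is nontrivial because moving $L$ by a Hamiltonian isotopy changes the moduli spaces of holomorphic strips defining $\frak n_{k_1,k_2}$, and one must construct a chain-level homotopy equivalence between the two bimodule structures as well as verify that weak bounding cochains transport coherently. However this invariance is precisely what is supplied by Theorem G of \cite{fooo06}, so for the purposes of the present paper the argument above is essentially a one-line deduction from that result combined with the linear-algebra bound.
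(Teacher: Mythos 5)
Your argument is correct and is essentially the same deduction the paper has in mind: the paper quotes this statement from \cite{fooo06} without proof, and the sentence immediately preceding it ("$HF((L,b),(L',b');\Lambda_{nov})$ is invariant under the Hamiltonian isotopies of $L$ and $L'$") together with the observation that the cohomology is a subquotient of the free $\Lambda_{nov}$-module on $\psi(L)\cap L'$ is exactly your two-step argument. The only caveat is one of attribution rather than mathematics: the invariance you invoke is not a separate input to Theorem G but the substantive content behind it (established in \cite{fooo06} via homotopy equivalences of filtered $A_\infty$ bimodules, and reproved in the de Rham setting in \cite{fooo08}), so your "one-line deduction" is the correct reading of how the displacement bound follows from that invariance.
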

The Floer cohomology $HF((L,b),(L',b');\Lambda_{0,nov})$
with coefficient $\Lambda_{0,nov}$ is not
invariant under the Hamiltonian isotopy. We however can prove the
following Theorem \ref{displace2}. There exists an integer $a$ and positive numbers
$\lambda_i$ ($i=1,\cdots,b$) such that
$$
HF((L,b),(L',b');\Lambda_{0,nov})
= \Lambda_{0,nov}^{\oplus a} \oplus \bigoplus_{i=1}^b
(\Lambda_{0,nov}/T^{\lambda_i}\Lambda_{0,nov}).
$$
(See Theorem 6.1.20 \cite{fooo06} (= Theorem 24.20 \cite{fooo06pre}).)
Let $\psi : X \to X$ be a Hamiltonian diffeomorphism and
$\Vert \psi\Vert$ is its Hofer distance (see \cite{hofer}) from the
identity map.
\begin{thm}{\rm (Theorem J \cite{fooo06} = Theorem J \cite{fooo06pre})}\label{displace2}
If $\psi(L)$ is transversal to $L'$, we have an inequality
$$
\# (\psi(L) \cap L')
\ge
a + 2\,\#\{ i \mid \lambda_i \ge \Vert \psi\Vert\}.
$$
\end{thm}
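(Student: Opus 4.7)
The plan is to combine the structure theorem for $HF((L,b),(L',b');\Lambda_{0,nov})$ stated just before the theorem (Theorem 6.1.20 of \cite{fooo06}) with an energy-filtered comparison between the Floer complexes of $(L,L')$ and $(\psi(L),L')$, and then to translate the resulting lower bound on the cohomology of the latter into a lower bound on the number of its chain generators.

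First, I would fix a normalized Hamiltonian $H$ with time-one map $\psi$ whose Hofer length $\int_0^1 \osc H_t\, dt$ is arbitrarily close to $\Vert \psi\Vert$, and construct continuation chain maps $\Phi\colon CF((L,b),(L',b')) \to CF((\psi(L),\psi_*b),(L',b'))$ and $\Psi$ in the reverse direction by counting moving-boundary Floer strips governed by $H$ and its time-reverse. A standard energy estimate shows that $\Phi$ and $\Psi$ each shift the energy filtration by at most $\Vert\psi\Vert/2 + \varepsilon$ for any $\varepsilon>0$, and that both compositions $\Psi\Phi$ and $\Phi\Psi$ are chain-homotopic to multiplication by $T^{\Vert\psi\Vert+\varepsilon}$. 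Throughout this step one must carry the weak bounding cochains $(b, \psi_*b, b')$ through all the moving-boundary moduli, handled by the filtered $A_\infty$-bimodule formalism reviewed in section \ref{sec:floerreview}.

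Second, I would apply the structure theorem to both Floer cohomologies. Writing
$$HF((L,b),(L',b');\Lambda_{0,nov}) \cong \Lambda_{0,nov}^{\oplus a}\oplus\bigoplus_{i}\Lambda_{0,nov}/T^{\lambda_i}\Lambda_{0,nov},$$
and similarly for the pair $(\psi(L), L')$, the $T^{\Vert\psi\Vert}$-compatibility from the previous step yields two consequences. The free ranks of the two cohomologies coincide, because inverting $T$ recovers $HF(\,\cdot\,;\Lambda_{nov})$, which is Hamiltonian invariant by Theorem \ref{displace1}. Moreover, since multiplication by $T^{\Vert\psi\Vert}$ is nonzero on $\Lambda_{0,nov}/T^{\lambda_i}\Lambda_{0,nov}$ precisely when $\lambda_i \ge \Vert\psi\Vert$, every such torsion summand of $HF((L,b),(L',b'))$ must persist to a nontrivial torsion summand of $HF((\psi(L),\psi_*b),(L',b'))$ under the interleaving produced by $\Phi$ and $\Psi$.

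Finally, the Floer chain module $CF((\psi(L),\psi_*b),(L',b'))$ is freely generated over $\Lambda_{0,nov}$ by $\psi(L)\cap L'$ under the transversality hypothesis, so its rank equals $\#(\psi(L)\cap L')$. An elementary argument over the valuation ring $\Lambda_{0,nov}$ shows that any finitely generated free chain complex whose cohomology decomposes as $\Lambda_{0,nov}^{\oplus a'}\oplus\bigoplus_j \Lambda_{0,nov}/T^{\mu_j}$ must have rank at least $a' + 2\cdot\#\{j\}$, since each free summand requires one chain generator while each torsion summand requires at least two (a class and a preimage of $T^{\mu_j}$ times it under the differential). Combining this with $a'=a$ and $\#\{j\}\ge \#\{i:\lambda_i\ge\Vert\psi\Vert\}$ from the second step yields the claimed bound. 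The main obstacle is the second step: matching the filtration shift of $\Psi\Phi$ to exactly $\Vert\psi\Vert$ after passing to the infimum over generating Hamiltonians, and consistently extending the bounding-cochain deformations to all intermediate moving-boundary moduli, which demands the kind of precise energy estimates used to establish Hamiltonian invariance in \cite{fooo06}.
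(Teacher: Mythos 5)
The paper itself does not prove this theorem: it is quoted verbatim as Theorem J of \cite{fooo06}, so there is no in-paper argument to compare against. Measured against the cited proof, your sketch has the right architecture and is essentially the standard one: (i) continuation maps $\Phi,\Psi$ whose composition is chain homotopic to $T^{\Vert H\Vert}\cdot\mathrm{id}$; (ii) preservation of the free rank via invariance of $HF(\cdot;\Lambda_{nov})$ after inverting $T$, together with persistence of large-exponent torsion summands (this step needs the Nakayama-type counting lemma you gesture at --- if $gf=T^{E}$ on finitely generated $\Lambda_{0,nov}$-modules then $T^{E}\mathrm{Tors}(M)$ is a quotient of a submodule of $\mathrm{Tors}(N)$, so the minimal number of generators of $\mathrm{Tors}(N)$ dominates $\#\{i\mid \lambda_i>E\}$); and (iii) the rank count $\mathrm{rank}\,C\ge a'+2b'$ for a finitely generated free complex over the valuation ring, which follows from $\mathrm{rank}\,C=\mathrm{rank}\ker d+\mathrm{rank}\,\mathrm{im}\,d$ and $b'\le \mathrm{rank}\,\mathrm{im}\,d$. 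The assertion that each of $\Phi,\Psi$ shifts the filtration by $\Vert\psi\Vert/2+\varepsilon$ is unnecessary and only true after normalizing $H$; all that is used is that the total shift of the composition is $\Vert H\Vert$.

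The one genuine soft spot is the threshold. Your step (ii) shows that a summand $\Lambda_{0,nov}/T^{\lambda_i}\Lambda_{0,nov}$ survives only when multiplication by $T^{\Vert H\Vert}$ is nonzero on it, i.e.\ when $\lambda_i>\Vert H\Vert$ strictly; if $\lambda_i=\Vert H\Vert$ then $\Psi\Phi(x)=T^{\Vert H\Vert}x=0$ and you cannot conclude $\Phi(x)\neq 0$. Optimizing over Hamiltonians generating $\psi$ therefore yields $\#\{i\mid\lambda_i>\Vert\psi\Vert\}$, which falls short of the stated $\#\{i\mid\lambda_i\ge\Vert\psi\Vert\}$ precisely when some $\lambda_i$ equals $\Vert\psi\Vert$. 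This boundary case requires a separate argument (or the bound must be read with strict inequality); as written, your interleaving step does not deliver it, and you should either supply that refinement or note explicitly that the non-strict version is taken from \cite{fooo06}.
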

In later sections, we will apply Theorems \ref{displace1},\ref{displace2}
to study non-displacement of Lagrangian fibers
of toric manifolds.

\section{Potential function}
\label{sec:statement}

The $A_\infty$ structure defined on a countably generated
chain complex $C(L,\Lambda_0)$ itself explained in the previous section
is not suitable for explicit calculations as in our study of toric
manifolds. For this computational purpose, we work with the filtered
$A_\infty$ structure on the canonical model defined on $H(L;\Lambda_0)$
which has a \emph{finite} rank over $\Lambda_0$. Furthermore
this has a strict unit ${\bf e}$ given by the dual of the
fundamental class $PD([L])$. Recall $C(L,\Lambda_0)$ itself
has only a homotopy-unit.

An element $b \in H^1(L;\Lambda_{+})$ is called a \emph{weak bounding cochain}
if it satisfies the $A_\infty$ Maurer-Cartan equation
\be\label{eq:MC}
\sum_{k=0}^\infty \mathfrak m_k(b,\cdots, b) \equiv 0 \mod PD([L])
\ee
where $\{\mathfrak m_k\}_{k =0}^\infty$ is the $A_\infty$ operators
associated to $L$. $[L] \in H_n(L)$ is the fundamental class, and
$PD([L]) \in H^0(L)$ is its Poincar\'e dual.
We denote by $\widehat{\CM}_{\text{weak}}(L)$
the set of weak bounding cochains of $L$.
We say $L$ is {\it weakly unobstructed}
if $\widehat{\CM}_{\text{weak}}(L) \neq \emptyset$.
The moduli space $\CM_{\text{weak}}(L)$ is then defined to be the
quotient space of $\widehat{\CM}_{\text{weak}}(L)$ by suitable gauge
equivalence. (See chapter 3 and 4 \cite{fooo06} for more explanations.)

\begin{lem}[Lemma 3.6.32 \cite{fooo06} = Lemma 11.32 \cite{fooo06pre}]
If $b \in \widehat{\CM}_{\text{\rm weak}}(L)$ then
$\delta_{b,b}\circ
\delta_{b,b} = 0$, where $\delta_{b,b}$ is the deformed Floer
operator defined by
$$
\delta_{b,b}(x) = \mathfrak m_1^b(x) = :\sum_{k,\ell \ge 0} \mathfrak
m_{k+\ell +1}(b^{\otimes k},x,b^{\otimes \ell}).
$$
\end{lem}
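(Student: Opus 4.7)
The plan is to exploit the fact that deformation of a filtered $A_\infty$ algebra by an element $b \in F^\lambda C[1]^0$ with $\lambda > 0$ produces another weak $A_\infty$ algebra $(C,\{\mathfrak m_k^b\})$, and then use the unit axiom to kill the curvature contribution in the quadratic $A_\infty$ relation for $\mathfrak m_1^b$.

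First, I would recall (or verify directly) that the collection of maps $\mathfrak m_k^b(x_1,\ldots,x_k) = \widehat{\mathfrak m}(e^b, x_1, e^b, x_2, \ldots, x_k, e^b)$ satisfies the weak $A_\infty$ relations. This is a standard formal consequence of the coalgebra identity $\Delta(e^b) = e^b \otimes e^b$: the associated coderivation $\widehat{\mathfrak m^b}$ is conjugate (in the appropriate sense) to $\widehat{\mathfrak m}$, so $\widehat{\mathfrak m^b}\circ \widehat{\mathfrak m^b} = 0$ follows from $\widehat{\mathfrak m}\circ \widehat{\mathfrak m} = 0$. Convergence of the sums defining $\mathfrak m_k^b$ is automatic in the non-Archimedean topology since $b \in \Lambda_+$.

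Second, the weak bounding cochain condition (\ref{eq:MC}) is precisely the statement that the curvature of the deformed algebra,
$$
\mathfrak m_0^b(1) = \sum_{k=0}^\infty \mathfrak m_k(b,\ldots,b),
$$
is a scalar multiple $c(b)\cdot \mathbf{e}$ of the strict unit $\mathbf{e}=PD([L])$ on the canonical model $H(L;\Lambda_0)$. The second $A_\infty$ relation for $(\mathfrak m_k^b)$ reads
$$
\mathfrak m_1^b \mathfrak m_1^b(x) + (-1)^{\deg x+1}\mathfrak m_2^b(x,\mathfrak m_0^b(1)) + \mathfrak m_2^b(\mathfrak m_0^b(1),x) = 0.
$$
Substituting $\mathfrak m_0^b(1)=c(b)\cdot \mathbf{e}$ and invoking the unit axioms $\mathfrak m_2^b(\mathbf{e},x) = x$, $\mathfrak m_2^b(x,\mathbf{e}) = (-1)^{\deg x}x$ yields
$$
\mathfrak m_1^b \mathfrak m_1^b(x) + (-1)^{\deg x+1}(-1)^{\deg x}c(b)\,x + c(b)\,x = \mathfrak m_1^b\mathfrak m_1^b(x) = 0,
$$
as claimed, since $\delta_{b,b} = \mathfrak m_1^b$ by definition.

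The only real subtlety is the use of the unit axioms for the deformed $\mathfrak m_2^b$. This requires that $\mathbf{e}$ remain a \emph{strict} unit for every $\mathfrak m_k^b$, which in turn rests on property (1) of the unit definition, namely $\mathfrak m_{k+1}(\ldots,\mathbf{e},\ldots) = 0$ for $k\ge 2$: all terms in $\mathfrak m_k^b(\ldots,\mathbf{e},\ldots)$ with at least one $b$-entry on either side of $\mathbf{e}$ vanish, so $\mathfrak m_k^b$ inherits unitality from $\mathfrak m_k$. This is exactly why the argument is carried out on the canonical model of $H(L;\Lambda_0)$ (which has a strict unit) rather than on the chain-level algebra $C(L;\Lambda_0)$ (which only carries a homotopy unit), as emphasized at the start of the section.
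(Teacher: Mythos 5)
Your proof is correct and is exactly the intended argument: the paper quotes this lemma from \cite{fooo06} without reproving it, and the proof there is precisely the one you give — deform by $b$, observe that the weak Maurer--Cartan equation forces $\mathfrak m_0^b(1)$ to be a multiple of the strict unit $PD([L])$ on the canonical model, and then the two curvature terms in the quadratic $A_\infty$ relation for $\mathfrak m_1^b$ cancel by the unit axioms. Your sign check and your remark about why the strict unit of the canonical model (rather than the homotopy unit at chain level) is needed are both on point.
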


For $b \in \widehat{\CM}_{\text{weak}}(L)$, we define
$$
HF((L;b),(L;b)) = \frac{\operatorname{Ker}(\delta_{b,b}:C \to C)}
{\operatorname{Im}(\delta_{b,b}:C \to C)},
$$
where $C$ is an appropriate subcomplex of the singular chain complex of $L$.
When $L$ is weakly unobstructed i.e., $\widehat{\CM}_{\text{weak}}(C) \neq \emptyset$,
we define a function
$$
\mathfrak {PO} : \widehat{\CM}_{\text{weak}}(C) \to \Lambda_+
$$
by the equation
$$
\mathfrak {m}(e^b)=\mathfrak {PO}(b)\cdot PD([L]).
$$
This is the \emph{potential function} introduced in \cite{fooo06}.

\begin{thm}[Proposition 3.7.17 \cite{fooo06} = Proposition 12.17 \cite{fooo06pre}] For each $b \in \widehat{\CM}_{\text{\rm weak}}(L)$ and
$b' \in \widehat{\CM}_{weak}(L')$, the map
$\delta_{b,b'}$ defines a continuous map
$
\delta_{b,b'} : CF(L,L') \to CF(L,L')
$
that satisfies $\delta_{b,b'}\delta_{b,b'} = 0$, provided
\be\label{eq:POb1=POb2}
\mathfrak {PO}(b) = \mathfrak {PO}(b').
\ee
\end{thm}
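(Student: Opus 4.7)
The plan is to derive the identity $\delta_{b,b'}^2 = (\mathfrak{PO}(b')-\mathfrak{PO}(b))\cdot\mathrm{id}$ from the $A_\infty$-bimodule relation $\widehat d\,\widehat d=0$ of Proposition \ref{bimodule}, so that the equality $\mathfrak{PO}(b)=\mathfrak{PO}(b')$ immediately forces $\delta_{b,b'}^2=0$.

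First I would apply $\widehat d\,\widehat d=0$ to the group-like element $e^b\otimes x\otimes e^{b'}$ and extract the component lying in $CF(L,L')$. Using the coderivation formula one has
$$
\widehat{\mathfrak m}(e^b)\;=\;e^b\otimes\mathfrak m(e^b)\otimes e^b\;=\;\mathfrak{PO}(b)\cdot e^b\otimes\mathbf e_L\otimes e^b,
$$
where the second equality uses the weak Maurer--Cartan equation $\mathfrak m(e^b)=\mathfrak{PO}(b)\cdot PD([L])$, and similarly on the $L'$-side with $\mathbf e_{L'}=PD([L'])$. Using the group-like property $\Delta(e^b)=e^b\otimes e^b$ to regroup the tensor factors in the three types of terms of the bimodule relation, the identity projected to $CF(L,L')$ becomes
\begin{align*}
0 \;=\;& \widehat{\mathfrak n}\bigl(e^b;\,\widehat{\mathfrak n}(e^b;x;e^{b'});\,e^{b'}\bigr) \\
& + \mathfrak{PO}(b)\cdot\widehat{\mathfrak n}\bigl(e^b\otimes\mathbf e_L\otimes e^b;\,x;\,e^{b'}\bigr) \\
& -\, \mathfrak{PO}(b')\cdot\widehat{\mathfrak n}\bigl(e^b;\,x;\,e^{b'}\otimes\mathbf e_{L'}\otimes e^{b'}\bigr),
\end{align*}
whose first line is exactly $\delta_{b,b'}^2(x)$.

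Second, I would invoke the strict unit property of the canonical model: on $H(L;\Lambda_0)$ and $H(L';\Lambda_0)$ the classes $\mathbf e_L$, $\mathbf e_{L'}$ are strict units, and the induced bimodule operations satisfy $\mathfrak n_{k_1,k_2}(\ldots,\mathbf e_L,\ldots;x;\ldots)=0$ except in the trivial case $(k_1,k_2)=(1,0)$ with $\mathbf e_L$ the sole left input, where the value is $\pm x$, and symmetrically for $\mathbf e_{L'}$. Expanding
$$
\widehat{\mathfrak n}(e^b\otimes\mathbf e_L\otimes e^b;\,x;\,e^{b'}) \;=\; \sum_{i,j,k}\mathfrak n_{i+j+1,k}\bigl(b^{\otimes i},\mathbf e_L,b^{\otimes j};\,x;\,(b')^{\otimes k}\bigr)
$$
and applying this vanishing collapses the sum to the single summand $(i,j,k)=(0,0,0)$, which equals $\pm x$; similarly for the $\mathbf e_{L'}$ insertion. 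Substituting yields $\delta_{b,b'}^2(x)=(\mathfrak{PO}(b')-\mathfrak{PO}(b))\cdot x$, so the hypothesis produces the boundary property.

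For continuity, each $\mathfrak n_{k_1,k_2}$ respects the energy filtration, and since $b\in F^{\lambda}C[1]^0$, $b'\in F^{\lambda'}C[1]^0$ with $\lambda,\lambda'>0$, the $(k_1,k_2)$-summand of $\delta_{b,b'}=\sum_{k_1,k_2}\mathfrak n_{k_1,k_2}(b^{\otimes k_1};\,\cdot\,;(b')^{\otimes k_2})$ has valuation at least $k_1\lambda+k_2\lambda'$, which tends to $\infty$ as $k_1+k_2\to\infty$. Hence the sum converges in the non-Archimedean topology on $\mathrm{Hom}(CF(L,L'),CF(L,L'))$ and defines a continuous operator. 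The step I expect to require the most care is the unit-insertion collapse together with the sign bookkeeping: verifying the precise form of the unit axiom for the canonical $A_\infty$-bimodule and tracking the Koszul signs from the shifted degrees so that the two unit contributions produce $\mathfrak{PO}(b')-\mathfrak{PO}(b)$ with the correct overall sign. Once this identity is in place, the remainder is a direct manipulation of the coderivation extension and Proposition \ref{bimodule}.
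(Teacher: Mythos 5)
Your argument is correct, but it is not the route the paper itself takes. For this statement the paper simply cites Proposition 3.7.17 of \cite{fooo06}; the place where it actually proves the key identity $\delta_{b,b'}\circ\delta_{b,b'}=(\mathfrak{PO}(b')-\mathfrak{PO}(b))\cdot\mathrm{id}$ is Lemma \ref{sqarzero} in section \ref{sec:flat}, and there the proof is geometric rather than algebraic: one classifies the codimension-one boundary of the index-$2$ Floer moduli space ${\mathcal M}(L^{(1)},L^{(0)};p,q;B)$ into strip breaking and disc bubbling off either Lagrangian (Lemma \ref{boundaryN}), writes the resulting relation $0=\sum n(B')n(B'')+\sum c_{\beta'}\delta_{B''}-\sum c_{\beta''}\delta_{B'}$, and identifies the two bubbling sums with $\mathfrak{PO}(b(0))\cdot\mathrm{id}$ and $\mathfrak{PO}(b(1))\cdot\mathrm{id}$ via the constants $c_\beta$ of Definition \ref{cbeta}. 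Your derivation instead takes Proposition \ref{bimodule} as the input and extracts the identity formally from $\widehat d\,\widehat d=0$ applied to $e^b\otimes x\otimes e^{b'}$, using group-likeness, the weak Maurer--Cartan equation, and strict unitality; this is essentially the argument of the cited book. The trade-off is that your route is cleaner but hides all the substance in the hypothesis that the bimodule operations $\mathfrak n_{k_1,k_2}$ on the canonical models are \emph{strictly} unital with respect to $PD([L])$ and $PD([L'])$ — the geometric construction only yields homotopy units, and upgrading them is nontrivial work done in \cite{fooo06} — whereas the paper's geometric version avoids invoking strict units and, more importantly, generalizes directly to the non-unitary flat-bundle twist and to the explicit computation of the bubbling contributions via $c_\beta$, which is what the applications in sections \ref{sec:flat} and \ref{sec:floerhom} require. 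Both arguments are sound; your identification of the unit-insertion collapse and the sign bookkeeping as the delicate point is exactly right.
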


Therefore for each pair $(b,b')$ of $b \in \widehat{\CM}_{\text{\rm weak}}(L)$ and
$b' \in \widehat{\CM}_{\text{\rm weak}}(L')$ that satisfy \eqref{eq:POb1=POb2},
we define the \emph{$(b,b')$-Floer cohomology} of the pair $(L,L')$ by
$$
HF((L,b), (L',b');\Lambda_{nov}) = \frac{\mbox{Ker} \delta_{b,b'}}
{\mbox{\rm Im } \delta_{b,b'}}.
$$
\medskip

Now in the rest of this section we state the main results concerning the
detailed structure of the potential function
for the case of Lagrangian fibers of toric manifolds.

For the later analysis of examples, we recall from \cite{fooo00,fooo06}
that $\mathfrak m_k$ is further decomposed into
$$
\mathfrak m_k = \sum_{\beta \in \pi_2(M,L)} \mathfrak m_{k,\beta} \otimes T^{\omega(\beta)}e^{\mu(\beta)/2}.
$$
Here $\mu$ is the Maslov index.
\par
Firstly we remove the grading parameter $e$ from
the ground ring. Secondly to eliminate
many appearance of $2\pi$ in front of the affine function $\ell_i$
in the exponents of the parameter $T$ later
in this paper, we redefine $T$ as $T^{2\pi}$. Under this arrangement, we get the
formal power series expansion
\be\label{eq:newmk}
\mathfrak m_k = \sum_{\beta \in \pi_2(M,L)} \mathfrak m_{k,\beta} \otimes T^{\omega(\beta)/2\pi}
\ee
which we will use throughout the paper.
\par
Now we restrict to the case of toric manifold. Let $X = X_\Sigma$ be
associated a complete regular fan $\Sigma$
(in other words $\Sigma$ is the normal fan of $X$), and $\pi: X \to \mathfrak t^*$ be the moment map of
the action of the torus $T^n \cong T^m/K$. We make the identifications
$$
\mathfrak t = Lie(T^n) \cong N_\R^n \cong \R^n, \, \mathfrak t^* \cong M_\R
\cong (\R^n)^*.
$$
We will exclusively use $N_\R$ and $M_\R$ to be consistent with the
standard notations in toric geometry instead of $\mathfrak t$ (or
$\R^n$) and $\mathfrak t^*$ (or $(\R^n)^*$) as much as possible.

Denote the image of $\pi: X \to M_\R$ by $P \subset M_\R$ which is
the moment polytope of the $T^n$ action on $X$.

We will prove the following in section \ref{sec:calcpot}.

\begin{prop}\label{unobstruct} For any $u \in \text{\rm Int} P$, the fiber
$L(u)$ is weakly unobstructed.
Moreover we have the canonical inclusion
$$
H^1(L(u);\Lambda_{+}) \hookrightarrow \CM_{\text{\rm weak}}(L(u)).
$$
\end{prop}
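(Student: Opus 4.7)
The plan is to work on the canonical $T^n$-equivariant model of the filtered $A_\infty$ algebra of $L(u)$, and to establish the assertion by combining $T^n$-equivariance of the moduli spaces of holomorphic discs with a degree count together with the positivity of Maslov indices for toric fibers. Concretely, I must show that every $b \in H^1(L(u);\Lambda_+)$ satisfies
\[
\sum_{k=0}^\infty \mathfrak m_k(b,\ldots,b) \in \Lambda_+ \cdot PD([L(u)]),
\]
and that the resulting map into $\mathcal M_{\text{weak}}(L(u))$ is injective modulo gauge equivalence.

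First I would exploit $T^n$-equivariance. Since $L(u)$ is a $T^n$-orbit, the action of $T^n$ on $X$ lifts to a $T^n$-action on each moduli space $\CM_{k+1}(\beta;L(u))$ making all evaluation maps equivariant. Following the setup in \cite{fooo06}, one chooses the Kuranishi structures and multisections $T^n$-equivariantly, so the operators $\mathfrak m_{k,\beta}$ take $T^n$-invariant cochains to $T^n$-invariant cochains. Representing $b\in H^1(L(u);\Lambda_+)$ by a $T^n$-invariant $1$-form (which we may always do on the torus $L(u) = T^n$), we see that each $\mathfrak m_{k,\beta}(b,\ldots,b)$ is $T^n$-invariant. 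Since $T^n$ acts by translations on itself, a $T^n$-invariant cochain on $L(u)$ of degree $0$ is necessarily a scalar multiple of the constant function $PD([L(u)])$.

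Next I carry out the degree count. For $b$ of cohomological degree $1$, dimension counting for the fiber product
\[
\CM_{k+1}(\beta;L(u))\times_{(ev_1,\ldots,ev_k)}(b,\ldots,b)
\]
yields an output cochain on $L(u)$ of degree $2-\mu(\beta)$. By Cho-Oh \cite{cho-oh} and positivity of intersection with the toric divisors, every non-constant $J$-holomorphic disc bounded by $L(u)$ has Maslov index at least $2$; this positivity is stable under the $T^n$-equivariant Kuranishi perturbations used in \cite{fooo06}. Consequently, each $\mathfrak m_{k,\beta}(b,\ldots,b)$ of non-zero energy has non-positive cochain degree and is $T^n$-invariant, so by the previous paragraph it lies in $\Lambda_+ \cdot PD([L(u)])$. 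The constant-disc contribution vanishes on the canonical model, and the $\mathfrak m_0$-term (the case $k=0$) is likewise a multiple of $PD([L(u)])$ by the same argument. Summing over $(k,\beta)$ gives the weak Maurer-Cartan equation, proving that $L(u)$ is weakly unobstructed and that every $b\in H^1(L(u);\Lambda_+)$ is a weak bounding cochain. This defines a natural map $H^1(L(u);\Lambda_+)\to \mathcal M_{\text{weak}}(L(u))$.

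Finally I would verify injectivity. Two weak bounding cochains are gauge equivalent if and only if they are connected by a solution to the deformation equation of \cite{fooo06} Section 4.3; at leading order this equation reduces, on the canonical model, to the addition of an $\mathfrak m_1$-boundary. But the same degree and $T^n$-invariance arguments force $\mathfrak m_1$ to send any element of $H^1$ into $\Lambda_+\cdot PD([L(u)])$, hence never back into $H^1$, so the induced gauge action on $H^1(L(u);\Lambda_+)$ is trivial and distinct cohomology classes give distinct gauge equivalence classes. The main technical obstacle is the construction of the $T^n$-equivariant Kuranishi structures with compatible multisections, and ensuring that Maslov index positivity persists under the perturbations used in the non-Fano situation; for this I would invoke the careful equivariant transversality scheme of \cite{fooo06} (and its toric refinement used in the later sections of the present paper).
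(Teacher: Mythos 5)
Your proposal follows essentially the same route as the paper's proof in section \ref{sec:calcpot}: a $T^n$-equivariant Kuranishi structure and multisection (Lemma \ref{37.184}), the degree count showing that only $\mu(\beta)=2$ classes contribute to $\mathfrak m_k(b,\dots,b)$ so that the output lands in $H^0(L(u))\cdot\Lambda$ (Lemma \ref{37.189}), and convergence of the resulting series because $b$ has coefficients in $\Lambda_+$. The one place where your wording papers over the real content is the claim that Maslov-index positivity of holomorphic discs ``is stable under the $T^n$-equivariant Kuranishi perturbations'': in the non-Fano case the compactified moduli space $\mathcal M_1^{\text{\rm main}}(L(u),\beta)$ with $\mu(\beta)\le 0$ is in general \emph{non-empty} (a Maslov index $2$ disc with multiply covered sphere bubbles of negative Chern number), so positivity for genuine discs alone proves nothing about the perturbed count. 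What actually kills these classes is Corollary \ref{37.187}: the perturbed zero set carries a free $T^n$-action with $T^n$-equivariant submersive evaluation map, hence has dimension at least $n$ if non-empty, contradicting the virtual dimension $n+\mu(\beta)-2<n$. Your final injectivity paragraph addresses a point the paper explicitly states without proof (triviality of gauge equivalence on $H^1$), which is fine but not needed for the statement as used in the paper.
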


Choose an integral basis $\text{\bf e}^*_i$ of $N$ and $\text{\bf
e}_i$ be its dual basis on $M$. With this choice made, we identify
$M_\R$ with $\R^n$ as long as its meaning is obvious from the
context. Identifying $H_1(T^n;\Z)$ with $N \cong \Z^n$ via $T^n =
\R^n/\Z^n$, we regard $\text{\bf e}_i$ as a basis of $H^1(L(u);\Z)$.
The following immediately follows from definition.

\begin{lem}\label{37.90}
We write $\pi = (\pi_1,\cdots,\pi_n) : X \to M_\R$ using the
coordinate of $M_\R$ associated to the basis $\text{\bf e}_i$. Let
$S^1_i \subset T^n$ be the subgroup whose orbit represents
$\text{\bf e}^*_i \in H_1(T^n;\Z)$. Then $\pi_i$ is proportional to
the moment map of $S^1_i$ action on $X$.
\end{lem}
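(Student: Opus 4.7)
The plan is to reduce this to the standard fact that if a torus $T$ acts on a symplectic manifold $(X,\omega)$ with moment map $\mu: X\to \mathfrak{t}^*$, then for any $\xi \in \mathfrak{t}$ the moment map for the one–parameter subgroup generated by $\xi$ is obtained by pairing $\mu$ with $\xi$, up to a normalization depending on whether one parametrizes the circle by $\R/\Z$ or $\R/2\pi\Z$.

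First, I would unwind definitions. By hypothesis $\pi : X \to M_\R$ is the moment map for the full $T^n$-action, where we use $M_\R \cong \mathfrak{t}^*$ via $T^n = \R^n/\Z^n$. Writing $\pi = (\pi_1,\ldots,\pi_n)$ in the coordinates dual to $\mathbf{e}_1,\ldots,\mathbf{e}_n$ means that for each $x \in X$,
\[
\pi(x) = \sum_{i=1}^n \pi_i(x)\,\mathbf{e}_i \in M_\R,
\]
so $\pi_i(x) = \langle \pi(x),\mathbf{e}^*_i\rangle$ under the canonical pairing $\langle\cdot,\cdot\rangle: M_\R \times N_\R \to \R$.

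Second, I would identify the generator of $S^1_i$. Since $\mathbf{e}^*_i \in N = H_1(T^n;\Z)$ and $S^1_i$ is by definition the circle subgroup whose orbit represents $\mathbf{e}^*_i$, the Lie-algebra generator of $S^1_i$ inside $\mathfrak{t} = N_\R$ is precisely $\mathbf{e}^*_i$ (again up to the $\R/\Z$ versus $\R/2\pi\Z$ convention, which only affects an overall constant). Denoting the fundamental vector field of $\mathbf{e}^*_i$ by $X_i$, the defining property of the moment map $\pi$ gives
\[
d\langle \pi,\mathbf{e}^*_i\rangle = \iota_{X_i}\omega.
\]
This means $\langle \pi,\mathbf{e}^*_i\rangle = \pi_i$ is, by definition, a Hamiltonian function generating the $S^1_i$-action.

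Third, I would conclude. The moment map of the $S^1_i$-action is, by the general principle above, a scalar multiple of $\pi_i$, the scalar being the proportionality constant between the circle parameter used in the $T^n = \R^n/\Z^n$ convention and the circle parameter used in the $S^1 = \R/2\pi\Z$ convention (this is where the word ``proportional'' in the statement comes in). There is no serious obstacle in this lemma; the only subtle point is keeping track of the normalization factor, and this is why the statement only claims proportionality rather than equality.
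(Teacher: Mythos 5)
Your proof is correct and is exactly the argument the paper has in mind: the paper gives no proof at all, stating only that the lemma "immediately follows from definition," and the standard fact you invoke — that the Hamiltonian for a one-parameter subgroup is the pairing of the full moment map with its Lie-algebra generator — is precisely that definition-chase. The bookkeeping of the $\R/\Z$ versus $\R/2\pi\Z$ normalization, which you correctly identify as the source of the word "proportional," is the only content.
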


Let
$$
b = \sum x_i \text{\bf e}_i \in H^1(L(u);\Lambda_{+}) \subset
{\CM}_{\text{\rm weak}}(L(u)).
$$
We study the potential function
$$
\mathfrak{PO}: H^1(L(u);\Lambda_{+}) \to \Lambda_{+}.
$$
Once a choice of the family of bases $\{\text{\bf e}_i\}$ on
$H^1(L(u);\Z)$ for $u \in \text{\rm Int}\,P$ is made as above
starting from a basis on $N$ , then we can regard this function as a
function of $(x_1,\cdots,x_n) \in (\Lambda_{+})^n$ and
$(u_1,\cdots,u_n) \in P \subset M_\R$. We denote its value by
$\mathfrak{PO}(x;u) = \mathfrak{PO}(x_1, \cdots,x_n;u_1,
\cdots,u_n)$. We put
$$
y_i = e^{x_i} = \sum_{k=0}^{\infty}
\frac{x_i^k}{k!} \in \Lambda_{0}.
$$
Let
$$
\partial P = \bigcup_{i=1}^m \partial_iP
$$
be the decomposition of the boundary of the moment polytope into its
faces of codimension one. ($\partial_iP$ is a polygon in an $n-1$
dimensional affine subspace of $M_\R$.)

Let $\ell_i$ be the affine functions
$$
\ell_i(u) = \langle u, v_i \rangle - \lambda_i \, \mbox{ for }\,
i = 1, \cdots, m
$$
appearing in Theorem \ref{gullemin}. Then the followings hold
from construction :
\begin{enumerate}
\item $\ell_i \equiv 0$ on $\partial_iP$.
\item
$ P = \{u \in M_\R \mid \ell_i(u) \ge 0, i = 1,\cdots,m\}. $
\item The coordinates of the vectors $v_i=(v_{i,1}, \cdots, v_{i,n})$ satisfy
\begin{equation}\label{eq:vij}
v_{i,j} = \frac{\partial \ell_i}{\partial u_j}
\end{equation}
and are all integers.
\end{enumerate}

\begin{thm}\label{potential}
Let $L(u) \subset X$ be as in Theorem $\ref{toric-intersect}$
and $\ell_i$ be as above.
Suppose $X$ is Fano. Then we can take the canonical model
of $A_\infty$ structure of $L(u)$ over $u \in \text{\rm Int}\,P$
so that the potential function restricted to
$$
\bigcup_{u \in \text{\rm Int }P} H^1(L(u);\Lambda_{+}) \cong
(\Lambda_+)^n \times \text{\rm Int}\,P
$$
has the form
\bea\label{eq:POx;u} \mathfrak{PO}(x;u)
& = & \sum_{i=1}^m y_1^{v_{i,1}}\cdots y_n^{v_{i,n}}T^{\ell_i(u)} \label{eq:POy;u}\\
& = & \sum_{i=1}^m e^{\langle v_i,x \rangle} T^{\ell_i(u)}
\eea
where
$(x;u) = (x_1,\cdots,x_n;u_1,\cdots,u_n)$ and $v_{i,j}$ is as in
$(\ref{eq:vij})$.
\end{thm}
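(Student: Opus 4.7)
The plan is to reduce Theorem \ref{potential} to an explicit computation of the $A_\infty$-operators $\mathfrak{m}_{k,\beta}$ evaluated on $b^{\otimes k}$, exploiting the Cho--Oh classification of holomorphic discs bounding Lagrangian toric fibers \cite{cho-oh} together with the Fano hypothesis to tame the relevant moduli spaces.

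First, by definition of $\mathfrak{PO}$, only pairs $(k,\beta)$ for which $\mathfrak{m}_{k,\beta}(b^{\otimes k})$ is a multiple of $PD([L])$ can contribute. Since $b\in H^1(L(u);\Lambda_+)$ has cohomological degree $1$, the degree of $\mathfrak{m}_{k,\beta}(b^{\otimes k})$ equals $2-\mu(\beta)$, forcing $\mu(\beta)=2$. The Fano condition then restricts the relevant classes to the $m$ basic disc classes $\beta_1,\ldots,\beta_m$ of Cho--Oh, each corresponding to a facet of $P$, with $\mu(\beta_i)=2$, $\partial\beta_i=v_i$ under the identification $H_1(L(u);\Z)\cong N$, and $\omega(\beta_i)=2\pi\ell_i(u)$ by (\ref{eq:area}). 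Sphere bubbling is excluded because in the Fano case every non-constant sphere has positive Chern number, so any stable Maslov index $2$ disc configuration with boundary on $L(u)$ must consist of a single smooth disc.

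Next, I would compute $\sum_{k\ge 0}\mathfrak{m}_{k,\beta_i}(b^{\otimes k})$ for each basic class $\beta_i$. Cho--Oh show that $\CM_1(\beta_i;L(u))$ is diffeomorphic to $L(u)$ under $ev_0$, and the forgetful map $\CM_{k+1}(\beta_i;L(u))\to\CM_1(\beta_i;L(u))$ realises $\CM_{k+1}(\beta_i;L(u))$ as an $L(u)$-bundle whose fibre is the $k$-simplex parametrising cyclically ordered boundary marked points. Because $\partial\beta_i=v_i$, the pullback of a class $\xi\in H^1(L(u);\R)$ by each $ev_j$ $(1\le j\le k)$ contributes the constant $\langle v_i,\xi\rangle$ after fibrewise integration. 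Applying this with $\xi=b=\sum_\ell x_\ell\mathbf{e}_\ell$ and integrating over the $k$-simplex yields
\begin{equation*}
\mathfrak{m}_{k,\beta_i}(b^{\otimes k}) \;=\; \frac{\langle v_i,x\rangle^k}{k!}\,PD([L]),
\end{equation*}
the factor $1/k!$ being the volume of the simplex of cyclically ordered points on $\partial D^2$.

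Summing over $k$ produces $e^{\langle v_i,x\rangle}\,PD([L])$ for each $i$; combined with the Novikov weight $T^{\omega(\beta_i)/2\pi}=T^{\ell_i(u)}$ from (\ref{eq:newmk}) and summed over $i=1,\ldots,m$, this gives (\ref{eq:POy;u}). The main technical obstacle is that the identity must hold on the \emph{canonical model} rather than the chain-level $A_\infty$ algebra: one must choose a $T^n$-equivariant system of multisections (as in Chapter 3 of \cite{fooo06}) ensuring that every $\CM_{k+1}(\beta_i;L(u))$ is regular, that the canonical homotopy transfer preserves the form of each basic-disc contribution, and that the strict unit $PD([L])$ is respected throughout. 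Orientations and signs must also be tracked carefully; the $T^n$-equivariance of the almost complex structure guarantees that each basic disc contributes with the canonical positive sign, while Proposition \ref{unobstruct} ensures that $b\in H^1(L(u);\Lambda_+)$ really lies in $\widehat{\CM}_{\text{\rm weak}}(L(u))$ so that the resulting sum is indeed the value $\mathfrak{PO}(b)$.
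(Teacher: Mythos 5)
Your proposal is correct and follows essentially the same route as the paper: degree considerations reduce to Maslov index $2$ classes, the Fano condition together with the Cho--Oh decomposition $\beta=\sum k_i\beta_i+\sum\alpha_j$ leaves only the basic classes $\beta_i$, and the identification of $\CM_{k+1}^{\text{\rm main}}(L(u),\beta_i)$ as a bundle over $\CM_1^{\text{\rm main}}(L(u),\beta_i)\cong L(u)$ with fibre the (compactified) simplex of volume $1/k!$ yields $\mathfrak m_{k,\beta_i}(b^{\otimes k})=\frac{1}{k!}\langle v_i,x\rangle^k\,PD([L])$, which sums to the exponential. The technical points you flag (the $T^n$-equivariant multisections forcing submersivity of $ev_0$, the unit, and orientations) are exactly the ones the paper resolves in section \ref{sec:calcpot} via Lemmas \ref{37.184} and \ref{37.189}.
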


Theorem \ref{potential} is a minor improvement of a result from
\cite{cho-oh} (see (15.1) of \cite{cho-oh}) and \cite{cho07} : The
case considered in \cite{cho-oh} corresponds to the case where $y_i
\in U(1) \subset \{ z \in \C \mid \vert z\vert = 1\}$ and the case
in \cite{cho07} corresponds to the one where $y_i \in \C \setminus
\{0\}$. The difference of Theorem \ref{potential} from the ones
thereof is that $y_i$ is allowed to contain $T$, the formal
parameter of the universal Novikov ring encoding the energy.

For the non-Fano case, we prove the following slightly weaker
statement. The proof will be given in section \ref{sec:calcpot}.

\begin{thm}\label{weakpotential}
Let $X$ be an arbitrary toric manifold and $L(u)$ be as above.
Then there exist $c_j \in \Q$, $e_j^i \in \Z_{\ge 0}$ and $\rho_j > 0$,
such that $\sum_{i=1}^m e_j^i > 0$ and
\bea\label{eq:weakPO}
\mathfrak{PO}(x_1,\cdots,x_n;u_1,\cdots,u_n)
& - & \sum_{i=1}^m y_1^{v_{i,1}}\cdots
y_n^{v_{i,n}}T^{\ell_i(u)} \nonumber \\
&= &
\sum_{j=1} c_j
y_1^{v'_{j,1}}\cdots y_n^{v'_{j,n}}T^{\ell'_j(u)+\rho_j},
\eea
where
$$
v'_{j,k} = \sum_{i=1}^m e_j^iv_{i,k}, \quad \ell'_j = \sum_{i=1}^m e_j^i \ell_i.
$$
If there are infinitely many non-zero $c_j$'s, we have
$$
\lim_{j\to\infty} \ell'_j(u) + \rho_{j} = \infty.
$$
Moreover $\rho_j = [\omega]\cap \alpha_j$ for some $\alpha_j \in \pi_2(X)$
with nonpositive first Chern number $c_1(X) \cap [\alpha_j]$.
\end{thm}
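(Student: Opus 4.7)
The plan is to identify every homotopy class $\beta \in \pi_2(X, L(u))$ whose moduli $\CM_1(\beta; L(u))$ can contribute to $\mathfrak{PO}$ and to compute the corresponding term. By the definition $\mathfrak{m}(e^b) = \mathfrak{PO}(b)\cdot PD([L(u)])$ combined with the decomposition $\mathfrak{m}_k = \sum_\beta \mathfrak{m}_{k,\beta}\, T^{\omega(\beta)/(2\pi)}$ from (\ref{eq:newmk}), only classes with $\mu(\beta) = 2$ contribute to the top component $PD([L(u)])$ when $b \in H^1(L(u);\Lambda_+)$; each such class contributes a term $n(\beta)\, e^{\langle \partial\beta, x \rangle}\, T^{\omega(\beta)/(2\pi)}$, where $n(\beta) \in \Q$ is the virtual degree of $ev_0 : \CM_1(\beta; L(u)) \to L(u)$ computed within the Kuranishi/multisection framework of \cite{fooo00,fooo06}.

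To classify these classes, the exact sequence $0 \to \pi_2(X) \to \pi_2(X, L(u)) \to \pi_1(L(u)) \to 0$, which holds since $\pi_2(L(u)) = 0$ and $\pi_1(X) = 0$, together with the identifications $\partial \beta_i = v_i$ and the fact that $\{v_i\}$ generates $N = \pi_1(L(u))$, produces a decomposition $\beta = \sum_i e_i \beta_i + \alpha$ with $e_i \in \Z$ and $\alpha \in \pi_2(X)$. Intersection positivity of each disc and sphere component of a stable holomorphic representative of $\beta$ against the $T^n$-invariant prime divisors $D_i$, combined with the Cho-Oh classification of $T^n$-equivariant holomorphic stable discs with boundary on $L(u)$ from \cite{cho-oh}, forces $e_i \geq 0$ and $\alpha$ effective whenever $\CM_1(\beta; L(u)) \neq \emptyset$. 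Applying the Maslov and area formulas $\mu(\beta) = 2 \sum_i e_i + 2\, c_1(X)\cdot \alpha$ and $\omega(\beta)/(2\pi) = \sum_i e_i \ell_i(u) + [\omega]\cap\alpha$ (in the normalization of (\ref{eq:newmk})), the condition $\mu(\beta) = 2$ reduces to $\sum_i e_i + c_1(X)\cdot \alpha = 1$.

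The case $\alpha = 0$ forces $\sum_i e_i = 1$, hence $\beta = \beta_i$ for a unique $i$, and by \cite{cho-oh} $n(\beta_i) = 1$, producing the subtracted Fano contribution $\sum_i y_1^{v_{i,1}} \cdots y_n^{v_{i,n}} T^{\ell_i(u)}$. Every remaining class has $\alpha \neq 0$, and the Maslov constraint then imposes $c_1(X)\cdot \alpha \leq 0$ together with $\sum_i e_i \geq 1$, matching the asserted Chern-class condition with $\rho_j := [\omega]\cap \alpha_j > 0$ and the monomial $y_1^{v'_{j,1}} \cdots y_n^{v'_{j,n}}$ determined by $v'_{j,k} = \sum_i e_j^i v_{i,k}$ and $\ell'_j = \sum_i e_j^i \ell_i$. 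The convergence statement $\lim_j (\ell'_j(u) + \rho_j) = \infty$ along an infinite family of non-zero $c_j$'s follows from Gromov compactness, since for any energy bound $E$ only finitely many classes $\beta$ with $\omega(\beta) \leq E$ support a stable holomorphic representative. The principal technical hurdle is to make the virtual count $n(\beta) \in \Q$ precise in the non-Fano regime, where $\CM_1(\beta; L(u))$ need not be generically transverse, and to promote the decomposition $\beta = \sum_i e_j^i \beta_i + \alpha_j$ from the level of $\pi_2(X, L(u))$ to the level of the stable-map compactification with the prescribed signs; both points are handled via the Kuranishi-theoretic framework of \cite{fooo06} and are carried out in section \ref{sec:calcpot}.
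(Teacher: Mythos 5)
Your proposal is correct and follows essentially the same route as the paper: classify the contributing classes via the Cho--Oh structure theorem ($\beta=\sum_i k_i\beta_i+\sum_j\alpha_j$ with $k_i\ge 0$, at least one $k_i>0$, and $\alpha_j$ spherical), reduce to $\mu(\beta)=2$ by the degree count together with the vanishing of the perturbed moduli spaces for $\mu\le 0$, compute each contribution as $c_\beta\,e^{\langle\partial\beta,x\rangle}T^{\omega(\beta)/2\pi}$, and get the convergence of exponents from Gromov compactness. The technical hurdle you flag — defining $c_\beta\in\Q$ and killing the negative-Maslov contributions in the non-Fano case — is exactly what the paper resolves with its $T^n$-equivariant Kuranishi perturbations making $ev_0$ submersive and compatible with the forgetful maps.
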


We note that although $\mathfrak{PO}$ is defined
originally on $(\Lambda_+^{\C})^n \times P$, Theorems \ref{potential} and \ref{weakpotential}
imply that $\mathfrak{PO}$ extends to a function on
$(\Lambda_0^{\C})^n \times P$
in general, and
to one on $(\Lambda_0^\C)^n \times M_\R$ for the Fano case. Furthermore these theorems also imply the periodicity
of $\mathfrak{PO}$ in $x_i$'s,
\begin{equation}\label{eq:periodicx}
\mathfrak{PO}(x_1,\cdots,x_i+2\pi\sqrt{-1},\cdots,x_n;u)
= \mathfrak{PO}(x_1,\cdots,x_n;u).
\end{equation}
\par
We write
\begin{equation}\label{def:PO0}
\mathfrak{PO}_0 = \sum_{i=1}^m y_1^{v_{i,1}}\cdots
y_n^{v_{i,n}}T^{\ell_i(u)}
\end{equation}
to distinguish it from $\mathfrak{PO}$. We call $\mathfrak{PO}_0$
the {\it leading order potential function}.
\par
We will concern the existence of the bounding cochain $\mathfrak x$ for which the Floer cohomology
$HF((L(u),\mathfrak x),(L(u),\mathfrak x))$ is not zero, and prove that critical points of
the $\mathfrak{PO}^u$ (as a function of $y_1,\cdots,y_n$)
have this property. (Theorem \ref{homologynonzero}.)

This leads us to study the equation
\begin{equation}\label{formula:critical}
\frac{\partial \mathfrak{PO}^u}{\partial y_k}(\mathfrak y_1,\cdots,\mathfrak y_n)
= 0,\qquad k=1,\cdots, n,
\end{equation}
where $\mathfrak y_i \in \Lambda_{0}\setminus \Lambda_+$.
\par
We regard $\mathfrak{PO}^u$ as either a function of $x_i$ or of $y_i$.
Since the variable ($x_i$ or $y_i$) is clear from situation, we do not mention it occasionally.

\begin{prop}\label{existcrit}
We assume that the coordinates of the vertices of $P$ are rational.
Then
there exists $u_0 \in \text{\rm Int} P \cap \Q^n$ such that for
each $\CN$ there exists $\mathfrak y_1,\cdots,\mathfrak y_n \in \Lambda_{0} \setminus \Lambda_+$ satisfying :
\begin{equation}\label{formula:criticalweak}
\frac{\partial \mathfrak{PO}^{u_0}}{\partial y_k}(\mathfrak y_1,\cdots,\mathfrak y_n)
\equiv 0, \mod T^{\CN} \qquad k=1,\cdots, n.
\end{equation}
\par
Moreover there exists $\mathfrak y'_1,\cdots,\mathfrak y'_n \in \Lambda_{0} \setminus \Lambda_+$
such that
\begin{equation}\label{formula:critical0}
\frac{\partial \mathfrak{PO}_0^{u_0}}{\partial y_k}(\mathfrak y'_1,\cdots,\mathfrak y'_n)
= 0,\qquad k=1,\cdots, n.
\end{equation}
\end{prop}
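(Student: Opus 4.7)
The plan is to combine a variational argument on the concave piecewise-linear function $f(u) := \min_{1 \leq i \leq m} \ell_i(u)$ with a non-Archimedean Newton iteration on $\Lambda_0^{\R}$. Since the critical equations $\partial \mathfrak{PO}_0^{u_0}/\partial y_k = 0$ involve only the explicit sum (\ref{def:PO0}), their reduction modulo successive powers of $T$ is governed by the values $\ell_i(u_0)$, and a solution in the leading order exists precisely when those terms of minimal $T$-valuation can be arranged to cancel.

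For the variational step, I would first observe that $f:P\to\R$ is concave and piecewise-linear, vanishes on $\partial P$ (since $\ell_i \equiv 0$ on $\partial_i P$), and is strictly positive on $\text{Int}\,P$; hence its maximum is attained in the interior. Because $v_i \in \Z^n$ and $\lambda_i \in \Q$ under the rationality hypothesis, the maximum locus of $f$ is a rational polytope with rational extreme points, so I may take $u_0 \in \text{Int}\,P \cap \Q^n$ to be such an extreme point, chosen so that the active set $I := \{i : \ell_i(u_0) = f(u_0)\}$ admits a strict positive linear dependence $\sum_{i \in I} a_i v_i = 0$ with $a_i > 0$ and so that $\{v_i\}_{i \in I}$ spans $\R^n$. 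The KKT condition for the interior maximum of $f$ yields $0 \in \mathrm{conv}\{v_i\}_{i \in I}$; strict positivity is obtained by shrinking $I$ to the support of the multipliers if necessary, while the spanning condition is secured by exploiting extremality of $u_0$ (any null direction of $\{v_i\}_{i\in I}$ within the maximizing set could be used to reduce the active set further).

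For the leading-order equation, with $\lambda_0 := f(u_0)$ and the substitution $y_k = e^{x_k}$, I would write
\begin{equation*}
\mathfrak{PO}_0^{u_0} = T^{\lambda_0} g_I(x) + \sum_{i \notin I} T^{\ell_i(u_0)} e^{\langle v_i, x \rangle}, \qquad g_I(x) := \sum_{i \in I} e^{\langle v_i, x \rangle},
\end{equation*}
with every exponent in the second sum strictly greater than $\lambda_0$. Because $0 \in \mathrm{int}\,\mathrm{conv}\{v_i\}_{i \in I}$, the function $g_I$ is strictly convex and coercive on $\R^n$, hence attains a unique global minimum at some $x^{(0)} \in \R^n$; its Hessian $H_{jk} = \sum_{i \in I} v_{i,j} v_{i,k} e^{\langle v_i, x^{(0)} \rangle}$ is positive definite by spanning. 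Setting $y_k^{(0)} := e^{x_k^{(0)}} \in \R_{>0} \subset \Lambda_0^{\R} \setminus \Lambda_+^{\R}$ yields a leading-order solution of $\sum_{i \in I} v_{i,k} (y^{(0)})^{v_i} = 0$.

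The final step is a $T$-adic lifting via the implicit function theorem. Writing $\mathfrak y' = y^{(0)} + w$ with $w \in (\Lambda_+^{\R})^n$, the value $\partial \mathfrak{PO}_0^{u_0}/\partial y_k(y^{(0)})$ has $T$-valuation strictly greater than $\lambda_0$ (by the leading-order equation), while the Jacobian at $w=0$ has leading term $T^{\lambda_0}\,\mathrm{diag}(y^{(0)})^{-1} H \mathrm{diag}(y^{(0)})^{-1}$, invertible after dividing by $T^{\lambda_0}$; $T$-adic Newton iteration therefore converges uniquely to $\mathfrak y' \in (\Lambda_0^{\R} \setminus \Lambda_+^{\R})^n$ solving (\ref{formula:critical0}). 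For (\ref{formula:criticalweak}), Theorem \ref{weakpotential} implies that $\mathfrak{PO}^{u_0} - \mathfrak{PO}_0^{u_0}$ is a $T$-adically convergent sum of monomials with valuations $\ell'_j(u_0) + \rho_j \to \infty$, so modulo $T^{\CN}$ only finitely many additional terms perturb the equation and the same Newton iteration (truncated at level $\CN$) produces the required $\mathfrak y$. The principal obstacle is the combinatorial-geometric step of the second paragraph: showing that a rational extreme maximizer of $f$ can be chosen whose active set simultaneously supports a strict positive dependence and spans $\R^n$; this is the content of the variational analysis to be developed in section \ref{sec:var}.
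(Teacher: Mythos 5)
Your overall strategy (maximize a piecewise-linear concave function built from the $\ell_i$, solve a leading-order equation by convexity, then lift $T$-adically) is the right family of ideas, but the combinatorial claim on which everything rests --- that a rational extreme maximizer $u_0$ of $f(u)=\min_i\ell_i(u)$ can be chosen so that the active set $I=\{i:\ell_i(u_0)=f(u_0)\}$ simultaneously carries a strict positive dependence $\sum_{i\in I}a_iv_i=0$, $a_i>0$, \emph{and} spans $\R^n$ --- is false in general, and the proposition fails to follow from a single minimization. Take $X=S^2(\frac a2)\times S^2(\frac b2)$ with $a<b$, so $P=[0,a]\times[0,b]$ (this is Example \ref{s2s2}). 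The maximum locus of $f$ is the segment $\{a/2\}\times[a/2,\,b-a/2]$. At its extreme point $(a/2,a/2)$ the active normals are $(1,0),(0,1),(-1,0)$: any nonnegative dependence forces the coefficient of $(0,1)$ to vanish, and shrinking $I$ to the support of the multipliers leaves $\{(1,0),(-1,0)\}$, which does not span; correspondingly $g_I(x)=e^{x_1}+e^{-x_1}+e^{x_2}$ is not coercive and has no critical point. At interior points of the segment the active set is $\{(1,0),(-1,0)\}$ outright. So no choice of $u_0$ in the maximum locus satisfies your hypotheses, yet the balanced fiber exists (at $(a/2,b/2)$). The paper's Proposition \ref{prop:bsSdef} exists precisely to repair this: one iterates the minimization, producing a decreasing chain of polytopes $P\supset P_1\supset\cdots\supset P_K=\{u_0\}$ and a filtration $A_1^\perp\subset\cdots\subset A_K^\perp=N_\R$ of the normal directions, with Corollary \ref{Col:37.148} supplying the positive-spanning property only level by level. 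The leading term equation then splits into $K$ groups with \emph{different} $T$-valuations $S_1<\cdots<S_K$, solved inductively by the coercivity argument of Lemma \ref{37.162}.

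This multi-level degeneracy also breaks your lifting step: the Jacobian of the full leading-order system is not of the form $T^{\lambda_0}(\text{invertible})$ but is block-triangular with blocks of strictly increasing valuation, so it is not invertible over $\Lambda_0/\Lambda_+$ after dividing by a single power of $T$, and the naive Newton iteration does not close up. Your Hessian argument would only go through under the strong nondegeneracy hypothesis of Theorem \ref{thm:elliminate} (where the paper does run essentially your induction, in Lemma \ref{37.164}); Proposition \ref{existcrit} makes no such hypothesis. The paper instead replaces Hensel lifting by a topological argument: Lemma \ref{37.171} shows the multi-level gradient map has degree $1$ on the boundary of a suitable polydisc, Lemma \ref{37.175} deduces nonemptiness of the solution variety for small numerical $q=T$, and the curve selection lemma then produces a real-analytic branch whose Puiseux expansion is the desired element of $(\Lambda_0^\R\setminus\Lambda_+^\R)^n$ --- this is also where the rationality of the vertices (hence of $u_0$ and of the exponents) is genuinely used, to make the truncated equations polynomial.
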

We will prove Proposition \ref{existcrit} in section \ref{sec:var} .
\begin{rem}\label{rationalrem}
\begin{enumerate}
\item $u_0$ is independent of $\CN$. But $\mathfrak y_i$ may depend on $\CN$.
(We believe it does not depend on $\CN$, but are unable to prove it
at the time of writing this paper.)
\par
\item If $[\omega] \in H^2(X;\R)$ is contained in $H^2(X;\Q)$ then we may
choose $P$ so that its vertices are rational.
\par
\item We believe that rationality of the vertices of $P$ is superfluous.
We also believe there exists not only a solution of (\ref{formula:criticalweak}) or of
(\ref{formula:critical0}) but also of
(\ref{formula:critical}). However then the proof seems to become more cumbersome.
Since we can reduce the general
case to the rational case by approximation in most of the applications,
we will be content to prove the above weaker statement in this paper.
\end{enumerate}
\end{rem}
We put
$$
\mathfrak x_i = \log \mathfrak y_i \in \Lambda_{0}
$$
and write
\begin{equation}\label{frakxdef}
\mathfrak x = \sum_i \mathfrak x_i \text{\bf e}_i \in H^1(L(u_0);\Lambda_{0}).
\end{equation}
Since $\mathfrak y_i \in \Lambda_0 \setminus \Lambda_+$, $\log \mathfrak y_i$ is
well-defined (by using non-Archimedean topology on $\Lambda_{0}$)
and is contained in $\Lambda_{0}$.
\par
We remark that $\frak x_i$ is determined from $\frak y_i$ up to
addtion by an element of $2\pi\sqrt{-1}\Z$. It will follow from
(\ref{eq:periodicx}) that changing $\frak x_i$ by an element of
$2\pi\sqrt{-1}\Z$ does not change corresponding Floer cohomology. So
we take for example $\text{\rm Im}\,\frak x_i \in [0,2\pi)$. (See
also Definition \ref{def:MlagAHhom} (2).)
\par
Let $\mathfrak y_{i,0} \in \C \setminus \{0\}$ be
the zero-order term of $\mathfrak y_{i}$ i.e., the complex number such that
$$
\mathfrak y_{i} - \mathfrak y_{i,0} \equiv 0 \mod \Lambda_{+}^{\C}.
$$
If we put an additional assumption that
$\mathfrak y_{i,0} = 1$ for $i=1,\cdots,n$, then $\frak x$ lies in
$$
H^1(L(u_0);\Lambda_{+}) \subset H^1(L(u_0);\Lambda_0).
$$
Therefore Proposition \ref{unobstruct} implies the Floer cohomology
$
HF((L(u_0),\frak x),(L(u_0),\frak x);\Lambda_{0})
$
is defined. Then (\ref{formula:criticalweak}) combined with the argument
from \cite{cho-oh} (see section \ref{sec:floerhom}) imply
\begin{equation}\label{Floernonzero}
HF((L(u_0),\mathfrak x),(L(u_0),\mathfrak x);\Lambda_{0}^{\C}/(T^{\CN}))
\cong H(T^n;\Lambda_{0}^{\C}/(T^{\CN})).
\end{equation}
\par
We now consider the case when $\mathfrak y_{i,0} \ne 1$ for some $i$.
In this case, we follow the idea of Cho \cite{cho07} of
twisting the Floer cohomology of $L(u)$ by {\it non-unitary}
flat line bundle and proceed as follows :
\par
We define $\rho : H_1(L(u);\Z) \to \C\setminus \{0\}$ by
\begin{equation}\label{repdef}
\rho(\text{\bf e}_i^*) = \mathfrak y_{i,0}.
\end{equation}
Let $\mathfrak L_{\rho}$ be the flat complex line bundle on $L(u)$
whose holonomy representation is $\rho$.
We use $\rho$ to twist the operator $\mathfrak m_k$ in the same way as
\cite{fukaya:family}, \cite{cho07} to obtain a filtered
$A_{\infty}$ algebra, which we write $(H(L(u);\Lambda_0),\mathfrak m^{{\rho}})$.
It is weakly unobstructed and
${\mathcal M}_{\text{\rm weak}}(H(L(u);\Lambda_0),\mathfrak m^{{\rho}}) \supseteq
H^1(L(u);\Lambda_{+})$. (See section \ref{sec:flat}.)
\par
We deform the filtered $A_{\infty}$
structure $\mathfrak m^{{\rho}}$ to ${\mathfrak m^{{\rho},b}}$ using
$b \in H^1(L(u);\Lambda_{+})$ for which
$\mathfrak m^{\rho,b}_1\mathfrak m^{\rho,b}_1 = 0$ holds. Denote by
$HF((L(u_0),{\rho},b),(L(u_0),{\rho},b);\Lambda_{0}^{\C})$ the cohomology of
$\mathfrak m^{{\rho},b}_1$.
We denote the potential function of $(H(L(u);\Lambda_0),\mathfrak m^{{\rho}})$
by
$$
\mathfrak{PO}^u_{\rho} : H^1(L(u);\Lambda_{+}) \to \Lambda_{+}
$$
which is defined in the same way as $\mathfrak{PO}^u$ by using $\mathfrak m^\rho$
instead of $\mathfrak m$.
\par
Let $\frak x$ be as in (\ref{frakxdef}) and put
\begin{equation}\label{bdef}
\frak x_{i,0} = \log \frak y_{i,0},
\qquad
b = \sum (\frak x_i - \frak x_{i,0})\text{\bf e}_i \in H^1(L(u);\Lambda_{+}).
\end{equation}
From the way how the definition goes, we can easily prove
\begin{lem}\label{37.104}
$
\mathfrak{PO}^u_{\rho}(b) = \mathfrak{PO}^u(\frak x ).
$
\end{lem}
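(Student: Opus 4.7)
The plan is to compare the two potential functions disc-class by disc-class, using the explicit formula from Theorem \ref{potential} (and Theorem \ref{weakpotential} in the non-Fano case) together with the definition of the $\rho$-twisted $A_\infty$ structure. Recall that each term in $\mathfrak{PO}^u(\mathfrak x)$ arises from a moduli space $\CM_1(\beta;L(u))$ of holomorphic discs bounding $L(u)$ in class $\beta\in\pi_2(X,L(u))$; the exponential factor comes from inserting $e^{\mathfrak x}$ into the boundary marked points and using the fact that the integral $\int_{\partial\beta}\mathfrak x$ equals $\langle v(\beta),\mathfrak x\rangle$, where $v(\beta)=\partial\beta\in H_1(L(u);\Z)\cong N$. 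For the basic disc classes $\beta_i$ one has $v(\beta_i)=v_i$, and in general, following Theorem \ref{weakpotential}, the boundary class of any contributing disc is an integer combination $\sum_i e^i v_i$ of the $v_i$'s.

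First I would record the effect of twisting $\mathfrak m_k$ by $\mathfrak L_\rho$, following Cho \cite{cho07} and \cite{fukaya:family}: each moduli space $\CM_{k+1}(\beta;L(u))$ contributes to $\mathfrak m^\rho_{k,\beta}$ with an additional weight equal to $\rho(\partial\beta)$, namely the holonomy of the flat bundle around the boundary loop of any representative disc. By (\ref{repdef}) and multiplicativity of $\rho$ we obtain
\begin{equation*}
\rho(\partial\beta)=\prod_{j=1}^n \mathfrak y_{j,0}^{\,v(\beta)_j}=e^{\langle v(\beta),\mathfrak x_0\rangle},
\qquad \mathfrak x_0:=\sum_j \mathfrak x_{j,0}\text{\bf e}_j.
\end{equation*}
Summing over $\beta$ and using the formula for $\mathfrak{PO}^u$ with $\mathfrak m$ replaced by $\mathfrak m^\rho$ then yields, in the Fano case,
\begin{equation*}
\mathfrak{PO}^u_\rho(b)=\sum_{i=1}^m e^{\langle v_i,\mathfrak x_0\rangle}\,e^{\langle v_i,b\rangle}\,T^{\ell_i(u)},
\end{equation*}
and the analogous identity with the extra correction terms of (\ref{eq:weakPO}) in the non-Fano case, the correction terms receiving the weight $e^{\langle v'_j,\mathfrak x_0\rangle}$.

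Next I would combine the two exponentials using $\mathfrak x=\mathfrak x_0+b$, which is immediate from $b=\sum(\mathfrak x_i-\mathfrak x_{i,0})\text{\bf e}_i$. Since $\langle v_i,\mathfrak x_0\rangle+\langle v_i,b\rangle=\langle v_i,\mathfrak x\rangle$, each term becomes $e^{\langle v_i,\mathfrak x\rangle}T^{\ell_i(u)}$, which is exactly the corresponding term in $\mathfrak{PO}^u(\mathfrak x)$ given by Theorem \ref{potential}. The same termwise matching works for the correction terms of Theorem \ref{weakpotential} using $v'_j=\sum e^i_j v_i$, so the two series agree term by term; convergence in the non-Archimedean topology is already guaranteed by Theorem \ref{weakpotential}.

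The main point to verify carefully is the boundary-holonomy formula $\rho(\partial\beta)=e^{\langle v(\beta),\mathfrak x_0\rangle}$ and its compatibility with the $A_\infty$ operations as deformed by $\mathfrak L_\rho$; this is essentially the content of \cite{cho07} adapted to the universal Novikov coefficient ring, but one must check that the argument goes through with $\mathfrak y_{j,0}\in\C^\times$ rather than $U(1)$. Once that is established the rest is a bookkeeping identification of exponents, and no further disc count or transversality argument is needed.
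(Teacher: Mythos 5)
Your proposal is correct and follows essentially the same route as the paper: the paper also reduces the identity to the observation that the twist multiplies each contribution of a class $\beta$ by $\rho(\partial\beta)=\prod_j\mathfrak y_{j,0}^{v(\beta)_j}=e^{\langle\partial\beta,\mathfrak x_0\rangle}$ (using Theorem \ref{37.177} (5) to write $\partial\beta=\sum c_i\partial\beta_i$), and then combines exponents via $\mathfrak x=\mathfrak x_0+b$ together with the formula $\mathfrak m^{can}_{k,\beta}(b,\cdots,b)=\frac{c_\beta}{k!}(\partial\beta\cap b)^k\, PD([L])$ of Lemma \ref{37.189}. The only cosmetic difference is that the paper carries out the bookkeeping at the level of the operations $\mathfrak m^{\rho,can}_{k,\beta}$ rather than of the summed potential function, and the weight $\rho(\partial\beta)$ is there by definition of the twisted structure, so the compatibility you flag as the "main point to verify" requires no further checking.
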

We note from the remark right after Theorem \ref{weakpotential} that $\mathfrak{PO}^u$
has been extended to a function on $(\Lambda_0^\C)^n$ and hence
the right hand side of the identity in this lemma has a well-defined meaning.
Lemma \ref{37.104} will be proved in section \ref{sec:floerhom}.
\par
Now we have :
\begin{thm}\label{homologynonzero}
Let $\mathfrak x_{i}$, $\mathfrak y_i = e^{\mathfrak x_i}$, and $\rho$ satisfy $(\ref{formula:critical})$, $(\ref{frakxdef})$ and
$(\ref{repdef})$. Let $\frak x_{i,0}$ and $b$ be as in $(\ref{bdef})$.
Then we have
\begin{equation}\label{eq:homoiso}
HF((L(u_0),{\rho},b),(L(u_0),{\rho},b),\Lambda_{0}^{\C})
\cong H(T^n;\Lambda_{0}^{\C}).
\end{equation}
If $(\ref{formula:criticalweak})$, $(\ref{frakxdef})$, $(\ref{repdef})$
and $(\ref{bdef})$ are satisfied instead then we have
\begin{equation}\label{eq:homoisomodN}
HF((L(u_0),{\rho},b),(L(u_0),{\rho},b),\Lambda_{0}^{\C}/(T^{\CN}))
\cong H(T^n;\Lambda_{0}^{\C}/(T^{\CN})).
\end{equation}
\end{thm}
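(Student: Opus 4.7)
The plan is to show that when $(\rho, b)$ corresponds, via Lemma \ref{37.104} and (\ref{bdef}), to a critical point of the potential function $\mathfrak{PO}^{u_0}$, the Floer differential $\mathfrak{m}_1^{\rho,b}$ on the canonical model on $H(L(u_0);\Lambda_0^{\C})$ vanishes identically, so that Floer cohomology coincides with the classical cohomology of $L(u_0) \cong T^n$.

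The first step is to translate the critical point condition. Writing $x_i = \log y_i$, equation (\ref{formula:critical}) is equivalent to $\partial \mathfrak{PO}^{u_0}/\partial x_i\,(\mathfrak{x}) = 0$ for every $i$. Lemma \ref{37.104} then gives
\begin{equation*}
\frac{\partial \mathfrak{PO}^{u_0}_{\rho}}{\partial x_i}(b) = 0, \qquad i = 1, \ldots, n.
\end{equation*}
For the weak version (\ref{formula:criticalweak}), the analogous identity holds modulo $T^{\CN}$.

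The technical heart of the argument is to establish the explicit formula
\begin{equation*}
\mathfrak{m}_1^{\rho,b}(\mathbf{e}_i) \;=\; \frac{\partial \mathfrak{PO}^{u_0}_{\rho}}{\partial x_i}(b) \cdot PD([L(u_0)])
\end{equation*}
on the basis $\mathbf{e}_i$ of $H^1(L(u_0);\Z)$, and to check that $\mathfrak{m}_1^{\rho,b}$ extends on the canonical model $\Lambda^\ast H^1(L(u_0);\Lambda_0^{\C})$ as a graded derivation that therefore vanishes on \emph{all} of $H^\ast(L(u_0))$ once each partial derivative vanishes. The displayed formula is read off from the very same moduli-theoretic input that produces Theorem \ref{potential}: a Maslov index $2$ disc of class $\beta_i$ contributes $T^{\ell_i(u_0)}$ from its area, a factor $y_1^{v_{i,1}}\cdots y_n^{v_{i,n}} = e^{\langle v_i, \mathfrak{x}\rangle}$ from the combination of $\rho$-holonomy (via (\ref{repdef})) and arbitrarily many boundary insertions of $b$ (controlled by the toric divisor axiom of Cho-Oh), and an additional factor $v_{i,k}$ when one of the boundary marked points carries $\mathbf{e}_k$, which is precisely $\partial/\partial x_k$ of the corresponding monomial in $\mathfrak{PO}^{u_0}_{\rho}$. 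Cho's twist construction \cite{cho07}, together with the canonical model construction by sum over trees as in \cite{fooo-model08}, then promotes this $H^1$-formula to the required derivation on the exterior algebra.

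Granting this, the Fano case gives (\ref{eq:homoiso}) immediately: $\mathfrak{m}_1^{\rho,b} \equiv 0$, and the canonical model is just $H^\ast(T^n; \Lambda_0^{\C})$. For the non-Fano case, Theorem \ref{weakpotential} contributes extra terms of strictly positive energy $\rho_j > 0$; after reducing modulo $T^{\CN}$, only finitely many of these survive, and the hypothesis (\ref{formula:criticalweak}) is exactly what is needed to conclude $\mathfrak{m}_1^{\rho,b} \equiv 0 \bmod T^{\CN}$, giving (\ref{eq:homoisomodN}). The main obstacle is the derivation formula itself: one must verify not only the action on $H^1$ but also that no unexpected contributions from higher Maslov index discs, or from discs with several boundary insertions lying in higher cohomological degree, appear in the canonical model. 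This is essentially a dimension count on $\CM_{k+1}(\beta; L(u_0))$ of the type already used in \cite{cho-oh}, adapted to the $\rho$-twisted and $b$-deformed setting; routing it through the canonical model rather than the full singular chain complex makes the combinatorics manageable because the Maslov 2 classes $\beta_i$ of Cho-Oh are the only ones producing contributions of the correct degree into $PD([L(u_0)])$.
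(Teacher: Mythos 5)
Your first two steps coincide with the paper's own argument: Lemma \ref{37.104} converts the critical point equation for $\mathfrak{PO}^{u_0}$ into one for $\mathfrak{PO}^{u_0}_{\rho}$, and the identity expressing $\mathfrak m_1^{\rho,can,b}(\text{\bf e}_i)$ as $\partial\mathfrak{PO}^{u_0}_{\rho}/\partial x_i\,(b)$ times $PD([L(u_0)])$ (the paper's (\ref{calcm1b}) and (\ref{37.198})) gives the vanishing of the differential on $H^1$. The gap is in your third step, the passage from $H^1$ to all of $H^*(L(u_0))$. You assert that $\mathfrak m_1^{\rho,b}$ is a graded derivation and that the absence of contributions from classes with $\mu(\beta)\ge 4$ is ``essentially a dimension count''. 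Neither holds as stated: $\mathfrak m_{1,\beta}^{\rho,can,b}$ lowers degree by $\mu(\beta)-1$, so for $n\ge 3$ a Maslov index $4$ contribution applied to a class of degree $d$ lands in degree $d-3$, which is a nonzero group for many $d$; no dimension count kills these terms. Indeed the paper explicitly cautions in Remark \ref{37.200} that its proof does \emph{not} show $\mathfrak m_{k,\beta}=0$ for $\mu(\beta)\ge 4$. The dimension count suffices only when $n=2$, which is exactly how Proposition \ref{eq:m1b} is handled.

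What the paper does instead (Lemma \ref{37.199}) is an algebraic induction on the pair $(\deg\text{\bf f},\mu(\beta)/2)$, ordered lexicographically with the Maslov index dominant. Writing $\text{\bf f}=\text{\bf f}_1\cup\text{\bf f}_2$ with both factors of positive degree, the $A_\infty$-relation on the canonical model (together with $\mathfrak m_{1,\beta_0}^{\rho,can,b}=0$ there) expresses $\mathfrak m_{1,\beta}^{\rho,can,b}(\text{\bf f}_1\cup\text{\bf f}_2)$ as a sum of terms $\mathfrak m^{\rho,can,b}_{2,\beta_1}(\mathfrak m^{\rho,can,b}_{1,\beta_2}(\text{\bf f}_i),\,\cdot\,)$, which vanish by induction because $\deg\text{\bf f}_i<\deg\text{\bf f}$ and $\mu(\beta_2)\le\mu(\beta)$, plus terms $\mathfrak m^{\rho,can,b}_{1,\beta_1}(\mathfrak m_{2,\beta_2}(\text{\bf f}_1,\text{\bf f}_2))$ with $\beta_2\ne 0$, which vanish because $\mu(\beta_1)<\mu(\beta)$. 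Your derivation heuristic can be made rigorous along these lines: it is the $A_\infty$-relation together with weak unobstructedness and unitality that makes $\mathfrak m_1^{\rho,can,b}$ a derivation of the \emph{deformed} product $\mathfrak m_2^{\rho,can,b}$, and one then needs that $H^*(T^n;\Lambda_0)$ is multiplicatively generated in degree one for that product (its reduction mod $\Lambda_+$ is the classical cup product, so a Nakayama argument over the local ring $\Lambda_0$ applies). But the justification cannot be routed through moduli space dimensions. Once this step is repaired, your treatment of the Fano case and of the reduction mod $T^{\CN}$ is fine.
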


Theorem \ref{homologynonzero} is proved in section \ref{sec:floerhom}. Using this we prove
Theorem \ref{toric-intersect} in section \ref{sec:floerhom}.
More precisely, we will also discuss the following two points in that section :
\begin{enumerate}
\item We need to study the case where $\omega$ is not necessarily
rational.
\item We only have (\ref{eq:homoisomodN}) instead of (\ref{eq:homoiso}).
\end{enumerate}

Next we give
\begin{defn}\label{def:balanced}
Let $(X,\omega)$ be a smooth compact toric manifold, $P$ be its
moment polytope. We say that a fiber $L(u_0)$ at $u_0 \in P$ is {\it
balanced} if there exists a sequence $\omega_i$, $u_i$ such that
\begin{enumerate}
\item $\omega_i$ is a $T^n$ invariant K\"ahler structure on $X$ such that
$\lim_{i\to\infty} \omega_i = \omega$.
\par
\item $u_i$ is in the interior of the moment polytope $P_i$ of $(X,\omega_i)$. We
make an appropriate choice of moment polytope $P_i$ so that they
converge to $P$. Then $\lim_{i\to\infty} u_i = u_0$.
\item For each ${\CN}$, there exist a sufficiently large $i$ and
$\mathfrak x_{i,{\CN}} \in H^1(L(u_i);\Lambda^{\C}_0)$ such that
$$
HF((L(u_i),\mathfrak x_{i,{\CN}}),(L(u_i),\mathfrak x_{i,{\CN}});\Lambda^{\C}/(T^{{\CN}})) \cong
H(T^n;\C) \otimes \Lambda^{\C}/(T^{{\CN}}).
$$
\end{enumerate}
\par
We say that $L(u_0)$ is {\it strongly balanced} if there exists
$\mathfrak x
\in H^1(L;\Lambda^{\C}_0)$ such that
$HF((L(u_0),\mathfrak x),(L(u_0),\mathfrak x);\Lambda^{\C}_0) \cong H(T^n;\Q) \otimes
\Lambda^{\C}_0$.
\end{defn}
Obviously `strongly balanced' implies `balanced'.
The converse is not true in general. See Example \ref{counterexamples}.
We also refer readers to Remark \ref{balancerem2} for other characterizations of being
balanced (or strongly balanced).

Theorem \ref{homologynonzero} implies that $L(u_0)$ in Proposition
\ref{existcrit} is balanced. (Proposition \ref{prof:bal}.)
We will prove the next following intersection result in section \ref{sec:floerhom}.
Theorem \ref{toric-intersect} will then be a consequence of
Propositions \ref{existcrit} and \ref{prof:bal2}.

\begin{prop}\label{prof:bal2}
If $L(u_0)$ is a balanced Lagrangian fiber then the following
holds for any Hamiltonian diffeomorphism $\psi : X \to X$.
\begin{equation}
\psi(L(u_0)) \cap L(u_0) \ne \emptyset.
\label{eq:Lu_02}\end{equation}
If $\psi(L(u_0))$ is transversal to $L(u_0)$ in addition, then
\begin{equation}\#(\psi(L(u_0)) \cap L(u_0)) \ge 2^n.\label{eq:Lu_12}\end{equation}
\end{prop}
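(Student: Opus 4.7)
The plan is to derive Proposition \ref{prof:bal2} from Theorem \ref{displace2} (Theorem J of \cite{fooo06}) applied to the approximating toric data $(X,\omega_i, L(u_i),\frak x_{i,\CN})$ furnished by the balanced-fiber hypothesis, after transporting the given $\psi$ to a Hamiltonian diffeomorphism $\psi_i$ of $(X,\omega_i)$. First, unpacking the definition of balanced, for every $\CN>0$ there is $i=i(\CN)$ and a weak bounding cochain $\frak x_{i,\CN}$ such that $HF((L(u_i),\frak x_{i,\CN}),(L(u_i),\frak x_{i,\CN});\Lambda_0^{\C}/(T^{\CN}))$ is isomorphic to $H(T^n;\C)\otimes \Lambda_0^{\C}/(T^{\CN})$, free of rank $2^n$ over $\Lambda_0^{\C}/(T^{\CN})$. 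Writing the self-Floer cohomology over $\Lambda_0^{\C}$ in its canonical form $(\Lambda_0^{\C})^{\oplus a_i}\oplus\bigoplus_j \Lambda_0^{\C}/(T^{\lambda_{i,j}})$ guaranteed by Theorem 6.1.20 of \cite{fooo06} and reducing modulo $T^{\CN}$, freeness of the quotient forces $\lambda_{i,j}\ge \CN$ for every torsion summand present, together with the relation $a_i+\#\{j\}=2^n$.

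Next I transport $\psi$ to $(X,\omega_i)$: since $\omega_i\to\omega$ through $T^n$-invariant K\"ahler forms on the fixed manifold $X$, an equivariant Moser argument (with any cohomology drift absorbed by a vanishing correction) produces $T^n$-equivariant diffeomorphisms $\phi_i:(X,\omega_i)\to(X,\omega)$ converging smoothly to the identity, so $\psi_i:=\phi_i^{-1}\circ\psi\circ\phi_i$ is Hamiltonian for $\omega_i$ with $\|\psi_i\|_{\omega_i}\to\|\psi\|_\omega$, and $\phi_i(L(u_i))\to L(u_0)$. Transversality of $\psi(L(u_0))$ to $L(u_0)$ therefore transfers to transversality of $\psi_i(L(u_i))$ to $L(u_i)$ for $i$ large, with equal intersection counts. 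Now choose $\CN$ larger than $\sup_i \|\psi_i\|_{\omega_i}$ and pick $i$ large enough for the balanced structure above to apply. Theorem \ref{displace2} then yields
\[
\#\left(\psi_i(L(u_i))\cap L(u_i)\right)\;\ge\;a_i+2\,\#\{j:\lambda_{i,j}\ge\|\psi_i\|_{\omega_i}\}\;=\;a_i+2(2^n-a_i)\;=\;2^{n+1}-a_i\;\ge\;2^n,
\]
using $a_i\le 2^n$ together with $\lambda_{i,j}\ge \CN>\|\psi_i\|_{\omega_i}$, so (\ref{eq:Lu_12}) follows. For (\ref{eq:Lu_02}), an empty intersection $\psi(L(u_0))\cap L(u_0)=\emptyset$ gives finite $\|\psi\|_\omega$; after an arbitrarily small additional Hamiltonian perturbation achieving transversality one has $\psi_i(L(u_i))\cap L(u_i)=\emptyset$ for $i$ large, contradicting the displayed inequality.

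The main technical obstacle lies in the Moser-transport step: the forms $\omega_i$ and $\omega$ need not share a cohomology class, so the equivariant transport $\phi_i$ must be engineered with uniform $C^0$-control on fibers and with Hofer norms that converge appropriately. Once this is arranged, the rest is bookkeeping: the balanced hypothesis at scale $\CN$ concentrates all self-Floer torsion above $\CN$, which is exactly the threshold at which Theorem \ref{displace2} converts Floer torsion into genuine intersection points.
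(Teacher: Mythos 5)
Your overall strategy coincides with the paper's: unpack the balanced condition, transfer $\psi$ to the approximating K\"ahler structures $(X,\omega_i)$, put the self-Floer cohomology over $\Lambda_0^{\C}$ in the canonical form $\Lambda_0^{\oplus a}\oplus\bigoplus_j\Lambda_0/(T^{\lambda_j})$ with all torsion exponents $\ge \CN$, and invoke Theorem \ref{displace2}. The bookkeeping in your last display is sound (whether one gets $a+b=2^n$ or $a+2b=2^n$ from the universal coefficient argument, the conclusion $a+2\,\#\{j:\lambda_j\ge\Vert\psi_i\Vert\}\ge 2^n$ follows either way).

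The genuine gap is exactly the step you flag as "the main technical obstacle" and then assume away: the transport of $\psi$ via an equivariant Moser isotopy $\phi_i:(X,\omega_i)\to(X,\omega)$. When $[\omega_i]\ne[\omega]$ in $H^2(X;\R)$ — which is the typical situation here, since the $\omega_i$ are rational approximations of a possibly irrational $\omega$ — no such symplectomorphism need exist at all (Moser requires a path of \emph{cohomologous} forms, and for non-cohomologous forms even the symplectomorphism type can differ), so the "vanishing correction absorbing the cohomology drift" cannot be engineered. The paper avoids this entirely and more cheaply: fix once and for all a time-dependent Hamiltonian $H_t$ generating $\psi$ for $\omega$, and let $\psi_i$ be the time-one flow of the $\omega_i$-Hamiltonian vector field of the \emph{same} function $H_t$. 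Then $\Vert\psi_i\Vert_{\omega_i}\le\int_0^1(\max H_t-\min H_t)\,dt$ uniformly in $i$, and since $\omega_i\to\omega$ and $L(u_i)\to L(u_0)$ one has $\psi_i\to\psi$ in $C^\infty$, so displacement (resp. a transversal intersection count $<2^n$) for $\psi$ and $L(u_0)$ forces the same for $\psi_i$ and $L(u_i)$ for large $i$, and then choosing $2\pi\CN>\int_0^1(\max H_t-\min H_t)\,dt$ gives the contradiction with Theorem \ref{displace2}. With that replacement your argument closes; as written, it does not.
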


Denoting $\mathfrak x = b + \sum \mathfrak x_{i,0} \text{\bf e}_i$,
we sometime write $HF((L(u_0),\mathfrak x),(L(u_0),\mathfrak x),\Lambda_{0})$
for $HF((L(u_0),{\rho},b),(L(u_0),{\rho},b),\Lambda_{0})$ from now on.
We also define
$$
\mathcal M_{\text{\rm weak}}(L(u);\Lambda_0) : =
\{(\rho,b) \mid \rho : \pi_1L(u) \to \C \setminus \{0\},
b \in \mathcal M_{\text{\rm weak}}(H(L(u)),\frak m^{\rho})\}.
$$
Namely it is the set of pairs $(\rho,b)$ where $\rho$ is a
holonomy of a flat $\C$ bundle over $L(u)$ and
$b \in H(L(u);\Lambda_+)$ is a weak bounding cochain of the filtered $A_{\infty}$
algebra associated to $L(u)$ and twisted by $\rho$.
With this definition of $\mathcal M_{\text{\rm weak}}(L(u);\Lambda_0)$, we have :
$$
H^1(L(u);\Lambda_0) \subseteq \mathcal M_{\text{\rm weak}}(L(u);\Lambda_0).
$$

\vskip 1.00cm

\section{Examples}
\label{sec:example}

In this section, we discuss various examples of toric manifolds
which illustrate the results of section \ref{sec:statement}.

\begin{exm}
Consider $X=S^2$ with standard symplectic form with area $2\pi$. The
moment polytope of the standard $S^1$-action by rotations along an
axis becomes $P = [0,1]$ after a suitable translation. We have
$\ell_1(u) = u$, $\ell_2(u) = 1-u$ and
$$
\mathfrak{PO}(x;u) = e^x T^u + e^{-x}T^{1-u}
= y T^u + y^{-1}T^{1-u}.
$$
The zero of
$$
\frac{\partial\mathfrak {PO}^u}{\partial y} = T^u - y^{-2} T^{1-u}
$$
is
$
\mathfrak y = \pm T^{(1-2u)/2}.
$
If $u\ne 1/2$ then
$$
\log \mathfrak y = \frac{1-2u}{2} \log(\pm T)
$$
is not an element of universal Novikov ring. In particular, there is
no critical point in $\Lambda^{\C}_{0} \setminus \Lambda^{\C}_{+}$.
\par
If $u=1/2$ then $\mathfrak y = \pm 1$. The case
$\mathfrak y= 1$ corresponds to $\mathfrak x=0$.
We have
$$
HF((L(1/2),0),(L(1/2),0);\Lambda_{0})
\cong H(S^1;\Lambda^{\C}_{0}).
$$
The other case $\mathfrak y=-1$, corresponds to
a nontrivial flat bundle on $S^1$.
\end{exm}
\begin{exm}
We consider $X = \C P^n$. Then
$$
P = \{(u_1,\cdots,u_n) \mid 0\le u_i, u_1+\cdots+u_n \le 1\},
$$
is a simplex. We have
$$
\mathfrak{PO}(x_1,\cdots,x_n;u_1,\cdots,u_n)
= \sum_{i=1}^n e^{x_i} T^{u_i} + e^{-\sum x_i}T^{1 -\sum u_i}.
$$
We put
$
u = u_0 = \left(\frac{1}{n+1}, \cdots, \frac{1}{n+1}\right).
$
Then
$$
\mathfrak{PO}^{u_0} = (y_1 + \cdots + y_n + y_1^{-1}y_2^{-1}\cdots y_n^{-1})
T^{1/(n+1)}.
$$
Solutions of the equation (\ref{formula:critical}) are given by
$$
\mathfrak y_1 = \cdots = \mathfrak y_{n} = e^{2\pi k\sqrt{-1}/{(n+1)}},
\quad k=0,\cdots,n.
$$
Hence the conclusion of Theorem \ref{toric-intersect} holds for our torus.
The case $k=0$ corresponds to $b=0$.
The other
cases correspond to appropriate flat bundles on $T^n$.
\end{exm}
\begin{rem}
The critical values of the potential function is
$(n+1) e^{2\pi\sqrt{-1}k/(n+1)}$, $k=0,\cdots,n$.
\par
We consider the quantum cohomology ring
$$
QH(\C P^n;\Lambda_{0}) \cong \Lambda_{0}[z,T]/(z^{n+1} -
T).
$$
The first Chern class $c_1$ is $(n+1)z$. The eigenvalues of the
operator
$$
c: QH(\C P^n) \to QH(\C P^n), \alpha \mapsto c_1 \cup_Q \alpha
$$
are $(n+1) e^{2\pi\sqrt{-1}k/(n+1)}$, $k=0,\cdots,n$.
It coincides with the set of critical values.
\par
Kontsevich announced this statement at the homological mirror
symmetry conference at Vienna 2006. (According to some physicists,
this statement is known to them before.) See \cite{Aur07}. In our
situation of Lagrangian fiber of compact toric manifolds, we can prove it by using Theorem \ref{QHequalMilnor}.
\end{rem}
In the rest of this subsection, we discuss 2 dimensional examples.
\par
Let $\text{\bf e}_1$, $\text{\bf e}_2$ be the basis of $H^1(T^2;\Z)$
as in Lemma \ref{37.90}. We put $\text{\bf e}_{12} = \text{\bf e}_1
\cup \text{\bf e}_2 \in H^2(T^2;\Z)$. Let $\text{\bf e}_{\emptyset}$
be the standard basis of $H^0(T^2;\Z) \cong \Z$. The proof of the
following proposition will be postponed until section
\ref{sec:floerhom}.

\begin{prop}\label{eq:m1b}
Let $\mathfrak x = \sum \frak x_i \text{\bf e}_i \in H^1(L(u);\Lambda_{0})$, $\frak y_i = e^{\frak x_i}$.
Then the boundary operator $\mathfrak m_1^{\mathfrak x}$
is given as follows :
\begin{equation}\label{spectredif}
\left\{\aligned
\mathfrak m_1^{\mathfrak x}(\text{\bf e}_i)
&= \frac{\partial \mathfrak{PO}^u}{\partial y_i}(\mathfrak y) \text{\bf
e}_{\emptyset}, \\
\mathfrak m_1^{\mathfrak x}(\text{\bf e}_{12}) &= \frac{\partial
\mathfrak{PO}^u}{\partial y_1}(\mathfrak y) \text{\bf e}_{2} -
\frac{\partial \mathfrak{PO}^u}{\partial y_2}(\mathfrak y)
\text{\bf e}_{1}, \\
\mathfrak m_1^{\mathfrak x}(\text{\bf e}_{\emptyset}) &= 0.
\endaligned\right.
\end{equation}
\end{prop}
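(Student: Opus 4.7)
The plan is to compute $\mathfrak m_1^{\mathfrak x}$ on each basis element by expanding it as a sum over disc classes $\beta \in \pi_2(X, L(u))$ and pairing with the Cho--Oh classification of holomorphic discs attached to the toric fiber. Since $\mathfrak x_i \in \Lambda_0$ need not lie in $\Lambda_+$, I first decompose $\mathfrak x = b + \sum \mathfrak x_{i,0}\,\text{\bf e}_i$ with $b \in H^1(L(u);\Lambda_+)$ and $\mathfrak x_{i,0} \in \Lambda_0 \setminus \Lambda_+$, and interpret $\mathfrak m_1^{\mathfrak x}$ via the flat bundle twisting from \cite{cho07}: the non-unit part of $\mathfrak y_i = e^{\mathfrak x_i}$ is absorbed into a $\C \setminus \{0\}$-valued holonomy $\rho$ (see the discussion preceding Theorem \ref{homologynonzero}), so that the problem reduces, via Lemma \ref{37.104}, to computing $\mathfrak m^{\rho, b}_1$ with $b \in H^1(L;\Lambda_+)$. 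With this reduction the sum
$$\mathfrak m_1^{\mathfrak x}(h) = \sum_{\beta} T^{\omega(\beta)/2\pi}\, \rho(\partial\beta) \sum_{k_1, k_2 \ge 0} \mathfrak m_{k_1 + k_2 + 1, \beta}(b^{\otimes k_1}, h, b^{\otimes k_2})$$
converges in the non-Archimedean topology.

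First I verify that $\mathfrak m_1^{\mathfrak x}(\text{\bf e}_\emptyset) = 0$, which is immediate from the strict unit property of ${\bf e}_\emptyset$ on the canonical model. For $\mathfrak m_1^{\mathfrak x}(\text{\bf e}_j)$, the key input is the explicit description of $\CM_{k+1}(\beta; L(u))$ reviewed in section \ref{sec:calcpot}: each disc class contributing to Theorem \ref{potential} (or \ref{weakpotential}) carries a boundary class $\partial\beta = \sum_k v_{\beta,k}\,\text{\bf e}^*_k$, and the combinatorics of its $k+1$ boundary marked points pair against the degree-$1$ inputs $b$ and $\text{\bf e}_j$ to produce a scalar multiple of $\text{\bf e}_\emptyset$. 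Concretely, the contribution of $\beta$ to $\mathfrak{PO}^u$ is $T^{\omega(\beta)/2\pi}\, \mathfrak y_1^{v_{\beta,1}} \mathfrak y_2^{v_{\beta,2}}$, while the contribution of the same $\beta$ to $\mathfrak m_1^{\mathfrak x}(\text{\bf e}_j)$ is the identical expression multiplied by $v_{\beta,j}/\mathfrak y_j$, namely precisely the $\partial/\partial y_j$-derivative of that summand evaluated at $\mathfrak y$. Summing over $\beta$ gives the first formula in \eqref{spectredif}.

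For $\mathfrak m_1^{\mathfrak x}(\text{\bf e}_{12})$ the input has top degree, so the output must lie in $H^1$. The most efficient route is to use the cyclic symmetry of the $A_\infty$-operations on the canonical model of toric torus orbits together with the $A_\infty$-Maurer--Cartan relation satisfied by $\mathfrak x$, to obtain a Koszul-type identity expressing $\mathfrak m_1^{\mathfrak x}(\text{\bf e}_{12})$ in terms of the already computed $\mathfrak m_1^{\mathfrak x}(\text{\bf e}_j)$; the Koszul sign convention on $\Lambda^* H^1(T^2)$ then accounts for the antisymmetry in the stated expression $(\partial_{y_1}\mathfrak{PO}^u)\,\text{\bf e}_2 - (\partial_{y_2}\mathfrak{PO}^u)\,\text{\bf e}_1$. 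The main obstacle is to track signs and combinatorial factors correctly when passing from moduli-space contributions to formal derivatives, especially in the non-Fano case where Theorem \ref{weakpotential} produces extra higher-order terms in $\mathfrak{PO}^u$: one must verify term by term that the corresponding extra disc classes assemble in $\mathfrak m_1^{\mathfrak x}$ into exactly the derivatives of those higher-order summands. These bookkeeping details are naturally the province of section \ref{sec:floerhom}, where the proof is carried out in full.
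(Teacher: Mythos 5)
Your reduction to the twisted pair $(\rho,b)$ via Lemma \ref{37.104}, the unitality argument for $\mathfrak m_1^{\mathfrak x}(\text{\bf e}_{\emptyset})=0$, and the term-by-term identification of the contribution of each disc class to $\mathfrak m_1^{\mathfrak x}(\text{\bf e}_j)$ with the derivative of the corresponding summand of $\mathfrak{PO}^u$ all match the paper's argument, which packages the last point as the identity $\frac{\partial}{\partial x^+_i}\mathfrak{PO}^{u}_{\rho}(b(x^+))\big\vert_{x(x^+)=\mathfrak x}=\mathfrak m_1^{\rho,can,b}(\text{\bf e}_i)\cap[L(u)]$ obtained by formally differentiating the series $\sum_\beta \frac{c_\beta}{k!}(\partial\beta\cap b)^kT^{\omega(\beta)/2\pi}$.

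The genuine gap is in your treatment of $\mathfrak m_1^{\mathfrak x}(\text{\bf e}_{12})$. You invoke ``cyclic symmetry of the $A_\infty$-operations on the canonical model,'' but nothing in this paper provides that. The perturbations here are only required to be compatible with the forgetful map $\mathfrak{forget}_0$ that keeps the $0$-th boundary marked point (Lemma \ref{37.184} (3)); they are not compatible with cyclic permutation of all $k+1$ marked points, and constructing a cyclically symmetric system of multisections is a separate, delicate matter not carried out here. So the Koszul-type identity you want --- which in effect is the statement $\langle\mathfrak m_1^{\mathfrak x}(\text{\bf e}_{12}),\text{\bf e}_j\rangle=\pm\langle\text{\bf e}_{12},\mathfrak m_1^{\mathfrak x}(\text{\bf e}_j)\rangle$ for the Poincar\'e pairing --- has no justification in the present setup. (The Maurer--Cartan relation for $\mathfrak x$ does not substitute for it.) What the paper does instead is: (i) observe that when $\dim L(u)=2$ a dimension count shows $\mathfrak m_{1,\beta}^{\rho,can,b}=0$ for $\mu(\beta)\ge 4$, so only Maslov index $2$ classes contribute on \emph{any} input; and (ii) for $\mu(\beta)=2$ use the product structure $\mathcal M^{\text{\rm main}}_{k+1}(L(u),\beta)^{\frak s}\cong\mathcal M^{\text{\rm main}}_{1}(L(u),\beta)^{\frak s}\times\widehat C_k$ together with the explicit evaluation maps $ev_i(\mathfrak u;t_1,\cdots,t_k)=[t_i\partial\beta]\cdot ev(\mathfrak u)$ to compute $\mathfrak m_{k,\beta}(b,\cdots,\text{\bf e}_{12},\cdots,b)$ directly: the insertion of $\text{\bf e}_{12}$ produces the interior product $\iota_{\partial\beta}\text{\bf e}_{12}=v_{\beta,1}\text{\bf e}_2-v_{\beta,2}\text{\bf e}_1$ multiplied by the same scalar $\frac{c_\beta}{(k-1)!}(\partial\beta\cap b)^{k-1}T^{\omega(\beta)/2\pi}$ that appears in the potential function, and summing over $\beta$ and $k$ yields the antisymmetric expression in (\ref{spectredif}). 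You should replace the cyclic-symmetry step by this direct fiber-integration computation (or else supply a cyclically compatible perturbation scheme, which is a substantial additional construction).
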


We remark that we do not use the grading parameter $e$, which was introduced in \cite{fooo06}.
So the boundary operator $\mathfrak m_1^{\mathfrak x}$ is of degree $-1$ rather than
$+1$. (Note we are using cohomology notation.) In other words,
our Floer cohomology is only of $\Z_2$ graded.
\par
With (\ref{spectredif}) in our disposal, we examine various
examples.

\begin{exm}
We consider $M = \C P^2$ again. We put $u_1 = \epsilon + 1/3$,
$u_2 = 1/3$. ($\epsilon >0$.)
Using (\ref{spectredif}) we can easily find the following isomorphism
for the Floer cohomology with $\Lambda_0$ coefficients :
$$
HF^{odd}((L(u),0),(L(u),0)) \cong
HF^{even}((L(u),0),(L(u),0)) \cong \Lambda_{0}/(T^{1/3-\epsilon}).
$$
Let us apply Theorem J \cite{fooo06}
(= Theorem \ref{displace2}) in this situation.
(See also Theorem \ref{thm:eLuEu} below.)
We consider
a Hamiltonian diffeomorphism $\psi : \C P^2 \to \C P^2$.
We denote by $\Vert\psi\Vert$ the Hofer distance of $\psi$
from identity. Then we have
$$
\#(\psi(L(u)) \cap L(u)) \ge 4
$$
if $\Vert\psi\Vert < 2\pi(\frac{1}{3}-\epsilon)$ and $\psi(L(u))$ is transversal
to $L(u)$. We remark $\omega \cap [\C P^1] = 2\pi$ by (\ref{eq:area}).
\par
We remark that this fact was already proved by Chekanov \cite{che:duke}.
(Actually the basic geometric idea behind our proof is the
same as Chekanov's.)
\end{exm}
\begin{exm}\label{s2s2}
Let $M = S^2(\frac{a}{2}) \times S^2(\frac{b}{2})$, where
$S^2(r)$ denotes the 2-sphere with radius $r$.
We assume $a < b$.\par
Then $B = [0,a] \times [0,b]$ and we have :
$$
\mathfrak{PO}(x_1,x_2;u_1,u_2) = y_1T^{u_1} + y_2T^{u_2} +
y_1^{-1}T^{a-u_1} +y_2^{-1}T^{b-u_2}.
$$
Let us take $u_1 = a/2$, $u_2 = b/2$. Then
$$
\frac{\partial\mathfrak{PO}^u}{\partial y_1} = (1-y_1^{-2})T^{a/2},
\quad
\frac{\partial\mathfrak{PO}^u}{\partial y_2} = (1-y_2^{-2})T^{b/2}.
$$
Therefore $y_1 = \pm 1$ $y_2 = \pm 1$ are solutions of
(\ref{formula:critical}). Hence we can apply Theorem \ref{homologynonzero} to our torus.
\par
We next put $u_1 = a/2$, $a < 2u_2 < b$. Then
$$
\frac{\partial\mathfrak{PO}^u}{\partial y_1} = (1-y_1^{-2})T^{a/2},
\quad
\frac{\partial\mathfrak{PO}^u}{\partial y_2} = T^{u_2}-y_2^{-2}T^{b-u_2}.
$$
We put $y_1 = y_2 = 1$. Then $\frac{\partial\mathfrak{PO}^u}{\partial y_1}=0$,
$\frac{\partial\mathfrak{PO}^u}{\partial y_2} \ne 0$.
We find that
$$
HF^{odd}((L(u),0),(L(u),0)) \cong
HF^{even}((L(u),0),(L(u),0)) \cong \Lambda_{0}/(T^{u_2}).
$$
Let $\psi : \C P^2 \to \C P^2$ be a Hamiltonian diffeomorphism.
Then, Theorem J \cite{fooo06} (= Theorem \ref{displace2}) implies that
$$
\#(\psi(L(u)) \cap L(u)) \ge 4
$$
if $\Vert\psi\Vert <2\pi u_2$ and $\psi(L(u))$ is transversal
to $L(u)$. Note there exists a pseudo-holomorphic disc with
symplectic area $\pi a$ ($<2\pi u_2$). Hence our result
improves a result from \cite{che:duke}.
\end{exm}
\begin{exm}\label{37.116}
Let $X$ be the two-point blow up of $\C P^2$.
We may take its K\"ahler form so that the moment polytope
is given by
$$
P = \{(u_1,u_2) \mid -1 \le u_1 \le 1,
-1 \le u_2 \le 1, u_1+u_2 \le 1+\alpha\},
$$
where $-1<\alpha<1$ depends on the choice of K\"ahler form.
We have
\begin{equation}\aligned
\mathfrak{PO}(x_1,x_2;u_1,u_2)
= y_1T^{1+u_1} &+y_2T^{1+u_2} +y_1^{-1}T^{1-u_1} \\
&+
y_2^{-1}T^{1-u_2} +y_1^{-1}y_2^{-1}T^{1+\alpha-u_1-u_2}.
\endaligned\end{equation}
Note $X$ is Fano in our case.
\par\medskip
\noindent(Case 1: $\alpha =0$).
\par
In this case $X$ is monotone.
We put $u_0 = (0,0)$.
$L(u_0)$ is a monotone Lagrangian submanifold.
We have
$$
\frac{\partial\mathfrak{PO}^{u_0}}{\partial y_1} = (1-y_1^{-2}-
y_1^{-2}y_2^{-1})T,
\quad
\frac{\partial\mathfrak{PO}^{u_0}}{\partial y_2} = (1-y_2^{-2}-
y_1^{-1}y_2^{-2})T.
$$
The solutions of (\ref{formula:critical}) are given by $y_2 = \frac{1}{y_1^2-1}$,
$y_1^5 + y_1^4 - 2y_1^3 - 2y_1^2 + 1 = 0$ in $\C$.
(There are 5 solutions.)
\par\medskip
\noindent(Case 2: $\alpha >0$).
\par
We put $u_0 = (0,0)$. Then
$$
\frac{\partial\mathfrak{PO}^{u_0}}{\partial y_1} = (1-y_1^{-2})T-
y_1^{-2}y_2^{-1}T^{1+\alpha},
\quad
\frac{\partial\mathfrak{PO}^{u_0}}{\partial y_2} = (1-y_2^{-2})T-
y_1^{-1}y_2^{-2}T^{1+\alpha}.
$$
We consider, for example, the case $y_1 = y_2 = \tau$.
Then (\ref{formula:critical}) becomes
\begin{equation}\label{thirdegreeeq}
\tau^3 - \tau - T^{\alpha} = 0.
\end{equation}
The solution of (\ref{thirdegreeeq}) with $\tau \equiv 1 \mod \Lambda_{+}$
is given by
$$
\tau = 1 + \frac{1}{2} T^{\alpha} - \frac{3}{8} T^{2\alpha}
+ \frac{1}{2} T^{3\alpha} + \sum_{k=4}^{\infty} c_k T^{k\alpha}.
$$
Let us put
${\mathfrak x} = x_1\text{\bf e}_1 + x_2\text{\bf e}_2$ with
$$
x_1 = x_2 = \log \left(
1 + \frac{1}{2} T^{\alpha} - \frac{3}{8} T^{2\alpha}
+ \frac{1}{2} T^{3\alpha} + \cdots \right)
\in \Lambda_{+}.
$$
Then by Theorem \ref{homologynonzero} we have
\begin{equation}
HF((L(u_0),{\mathfrak x}),(L(u_0),{\mathfrak x});\Lambda_{0})
\cong H(T^2;\Lambda_{0}).
\nonumber\end{equation}
\par
We like to point out that in this example it is essential to deform
Floer cohomology using an element ${\mathfrak x}$ of $H^1(L(u_0);\Lambda_{+})$
containing the formal parameter $T$ to obtain nonzero Floer cohomology.
\par
At $u_0$, there are actually 4 solutions such that
$$
(y_1,y_2) \equiv (1,1),(1,-1),(-1,1),(-1,-1) \mod \Lambda_{+},
$$
respectively.
\par
In the current case there is another point $u'_0 = (\alpha,\alpha)
\in P$ at which $L(u'_0)$ is balanced \footnote{Using the method of
spectral invariants and symplectic quasi-states, Entov and
Polterovich discovered some non-displaceable Lagrangian fiber which
was not covered by the criterion given in \cite{cho-oh} (see section
9 \cite{entov-pol06}). Recently this example, among others, was
explained by Cho \cite{cho07} via Lagrangian Floer cohomology
twisted by non-unitary line bundles.}. In fact at $u'_0 =
(\alpha,\alpha)$ the equation (\ref{thirdegreeeq}) becomes
$$
0 =
-(y_1^{-2}y_2^{-1}+y_1^{-2})T^{1-\alpha} + T^{1+\alpha},
\quad
0 = -(y_1^{-1}y_2^{-2}+y_2^{-2})T^{1-\alpha} + T^{1+\alpha}.
$$
we put $\tau = y_1 = y_2$ to obtain
$$
\tau^3T^{2\alpha} - \tau - 1 = 0
$$
This equation has a unique solution with $\tau \equiv -1 \mod \Lambda_{+}$.
(The other solution is $T^{2\alpha}\tau^3 \equiv 1 \mod \Lambda_{+}$,
for which Theorem \ref{homologynonzero} is not applicable.)
\par
The total number of the solutions $({\mathfrak x},u)$ is $5$.
\par\medskip
\noindent(Case 3: $\alpha <0$).
\par
\par
We first consider $u_0 = (0,0)$. Then
$$
\frac{\partial\mathfrak{PO}^{u_0}}{\partial y_1} =
-
y_1^{-2}y_2^{-1}T^{1+\alpha} + (1-y_1^{-2})T,
\quad
\frac{\partial\mathfrak{PO}^{u_0}}{\partial y_2} =
-
y_1^{-1}y_2^{-2}T^{1+\alpha}+(1-y_2^{-2})T.
$$
We assume $y_i$ satisfies (\ref{formula:critical}). It is then easy to see that $y_{1}^{-1}\equiv 0$, or
$y_{2}^{-1} \equiv 0 \mod \Lambda_{+}$.
In other words, there is no $(y_1,y_2)$ to which we can
apply Theorem \ref{homologynonzero}. Actually it is easy to find
a Hamiltonian diffeomorphism $\psi : X \to X$
such that $\psi(L(u_0)) \cap L(u_0) = \emptyset$.
\par
We next take $u'_0 = (\alpha/3,\alpha/3)$.
Then
$$\aligned
\frac{\partial\mathfrak{PO}^{u'_0}}{\partial y_1} &=
(1-y_1^{-2}y_2^{-1})T^{1+\alpha/3} - y_1^{-2}T^{1-\alpha/3},
\\
\frac{\partial\mathfrak{PO}^{u'_0}}{\partial y_2} &=
(1-y_1^{-1}y_2^{-2})T^{1+\alpha/3} - y_2^{-2}T^{1-\alpha/3}.
\endaligned$$
By putting $y_1 = y_2 = \tau$ for example, (\ref{formula:critical}) becomes
\begin{equation}\label{anotherthird}
\tau^3 - T^{-2\alpha/3} \tau - 1 = 0.
\end{equation}
Let us put
${\mathfrak x} = x_1\text{\bf e}_1 + x_2\text{\bf e}_2$ with
$$
x_1 = x_2 = \log\tau = \log \left(
1 + \frac{1}{3} T^{-2\alpha/3} - \frac{1}{81} T^{-6\alpha/3} + \cdots
\right)
\in \Lambda_{+},
$$
where $\tau$ solves (\ref{anotherthird}).
Theorem \ref{homologynonzero} is applicable.
(There are actually 3 solutions of (\ref{formula:critical}) corresponding to the 3 solutions
of (\ref{anotherthird}).)
\par
There are two more points $u = (\alpha+1,\alpha), (\alpha,\alpha+1)$
where (\ref{formula:critical}) has a solution in
$(\Lambda_{0} \setminus \Lambda_{+})$.
Each $u$ has one solution $b$.
\par
Thus the total number of the pair $({\mathfrak x},u)$ is again 5.
We remark
$$
5 = \sum \text{\rm rank}\, H^k(X;\Q).
$$
This is not just a coincidence but an example of general phenomenon
stated as in Theorem \ref{lageqgeopt0}.
\par\medskip
We remark that as $\alpha \to 1$ our $X$ blows down to
$S^2(1) \times S^2(1)$. On the other hand, as
$\alpha \to -1$ our $X$ blows down
to $\C P^2$. The situation of the case $\alpha > 0$ can be regarded
as a perturbation of the situation of $S^2(1) \times S^2(1)$,
by the effect of exceptional curve corresponding to the
segment $u_1 + u_2 = 1 +\alpha$.
The situation of the case $\alpha < 0$ can be regarded as a
perturbation of the situation of $\C P^2$ by the
effect of the two exceptional curves corresponding to the segments
$u_1 = 1$ and $u_2 = 1$. An interesting phase change occurs
at $\alpha = 0$.
\end{exm}
We remark that $H^2(X;\R)$ is two dimensional. So there are
actually two parameter family of symplectic structures.
We study two points blow up of $\C P^2$ more in Example \ref{counterexamples}.
\par
The discussion of this section strongly suggests that Lagrangian
Floer theory (Theorems G, J \cite{fooo06} =
Theorems \ref{displace1}, \ref{displace2}) gives the optimal result
for the study of non-displacement of Lagrangian fibers in toric
manifolds.
\begin{rem}\label{conj:dispfloer}
Let $X$ be a compact toric manifold and $L(u) = \pi^{-1}(u)$, $u \in
\text{\rm Int} P$.
We consider the following two conditions :
\begin{enumerate}
\item There exists no Hamiltonian diffeomorphism $\psi : X
\to X$ such that
$
\psi(L(u)) \cap L(u) = \emptyset.
$
\par
\item
$L(u)$ is balanced.
\end{enumerate}
$(2) \Rightarrow (1)$ follows from
Proposition \ref{prof:bal2}. In many cases $(1) \Rightarrow (2)$
can be proved by the method of \cite{mcd}. However there is a case
$(1) \Rightarrow (2)$ does not follow, as we will
mention in Remark \ref{rembulk} and will prove
in a sequel of this series of papers, using
the bulk deformation of Lagrangian Floer cohomology.
We conjecture that after including this wider
class of Floer cohomology, we can detect all the
non-displacable Lagrangian fibers in toric manifolds,
by Floer cohomology.
\end{rem}
\par
Using the argument employed in Example \ref{s2s2} we can
discuss the relationship between the Hofer distance and displacement.
First we introduce some notations for this purpose. We denote by
$Ham(X,\omega)$ the group of Hamiltonian diffeomorphisms of $(X,\omega)$.
For a time-dependent Hamiltonian $H: [0,1] \times X \to \R$, we denote by
$\phi_H^t$ the time $t$-map of Hamilton's equation $\dot x = X_H(t,x)$.
The Hofer norm of $\psi \in Ham(X,\omega)$ is defined to be
$$
\|\psi\| = \inf_{H; \phi_H^1=\psi} \int_0^1 (\max H_t - \min H_t)\, dt.
$$
(See \cite{hofer}.)

\begin{defn}\label{dispenergy}
Let $Y \subset X$.
We define the {\it displacement energy} $e(Y) \in [0,\infty]$ by
$$
e(Y) : = \inf \{ \Vert \psi\Vert \mid \psi \in Ham(X,\omega), \,
\psi(Y) \cap \overline Y = \emptyset\}.
$$
We put $e(Y) = \infty$ if there exists no $\psi \in Ham(X,\omega)$
with $\psi(Y) \cap \overline Y = \emptyset$.
\end{defn}
Let us consider $\mathfrak{PO}(y_1,\cdots,y_n;u_1,\cdots,u_n)
: \Lambda_{0}^n \times P
\to
\Lambda_+$ as in Theorem \ref{potential}.
\begin{defn}
We define the number $\mathfrak E(u) \in (0,\infty]$
as the supremum of all $\lambda$ such that
there exists $\mathfrak y_1,\cdots,\mathfrak y_n \in
(\Lambda_{0} \setminus \Lambda_{0}^{+})^n$ satisfying
\begin{equation}\label{modequ}
\frac{\partial\mathfrak{PO}}{\partial y_i}(\mathfrak y_1,\cdots,\mathfrak y_n
; u) \equiv 0 \mod T^\lambda
\end{equation}
for $i=1,\cdots,n$. (Here we consider universal Novikov ring with
$\C$-coefficients.)
We call $\mathfrak E(u)$ the \emph{$\mathfrak{PO}$-threshold} of
the fiber $L(u)$.
We put
$$
\overline{\mathfrak E}(u)
= \limsup_{\omega_i \to \omega, u_i \to u} {\mathfrak E}(u_i).
$$
Here limsup is taken over all sequences $\omega_i$ and $u_i$ such that
$\omega_i$ is a sequence of $T^n$ invariant symplectic structures
on $X$ with $\lim_{i\to\infty}\omega_i = \omega$ and
$u_i$ is a sequence of points of moment polytopes $P_i$ of
$(X,\omega_i)$ such that $P_i$ converges to $P$ and
$u_i$ converges to $u$.
\end{defn}
Clearly $\overline{\mathfrak E}(u)
\ge {\mathfrak E}(u)$.
We will give an example where $\overline{\mathfrak E}(u)
\ne {\mathfrak E}(u)$ in Example \ref{counterexamples} (\ref{Ecounterexa}).
\begin{thm}\label{thm:eLuEu}
For any compact toric manifold $X$ and $L(u) = \pi^{-1}(u)$, $u \in
\text{\rm Int} P$, we have
\begin{equation}\label{displaceineq}
e(L(u)) \ge 2\pi \overline{\mathfrak E}(u).
\end{equation}
\end{thm}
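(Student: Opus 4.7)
The plan is to establish the pointwise bound $e(L(u_0)) \ge 2\pi \mathfrak E(u_0)$ for each fixed compact toric $(X,\omega)$ and $u_0 \in \Int P$ by combining Theorem \ref{homologynonzero} with the structural decomposition of Floer cohomology over $\Lambda_0$ and Theorem \ref{displace2} (Theorem J of \cite{fooo06}), and then to pass to the limit along approximating sequences of $T^n$-invariant K\"ahler structures to obtain the full bound in terms of $\overline{\mathfrak E}(u)$.

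For the pointwise bound, fix any $\lambda < \mathfrak E(u_0)$. By definition of $\mathfrak E$, there exist $(\mathfrak y_1,\ldots,\mathfrak y_n) \in (\Lambda_0^{\C}\setminus \Lambda_+^{\C})^n$ satisfying $\partial \mathfrak{PO}^{u_0}/\partial y_k \equiv 0 \mod T^\lambda$ for $k=1,\ldots,n$. Forming the pair $(\rho, b)$ from $(\mathfrak y_1,\ldots,\mathfrak y_n)$ via (\ref{repdef}) and (\ref{bdef}), Theorem \ref{homologynonzero} yields
\[
HF((L(u_0),\rho,b),(L(u_0),\rho,b); \Lambda_0^{\C}/(T^\lambda)) \cong H(T^n;\C)\otimes \Lambda_0^{\C}/(T^\lambda).
\]
On the other hand, the structure theorem for Floer cohomology over $\Lambda_0$ (Theorem 6.1.20 of \cite{fooo06}, recalled between Theorems \ref{displace1} and \ref{displace2}) provides a decomposition
\[
HF((L(u_0),\rho,b),(L(u_0),\rho,b); \Lambda_0^{\C}) \cong (\Lambda_0^{\C})^{\oplus a} \oplus \bigoplus_{i=1}^{N} \Lambda_0^{\C}/(T^{\lambda_i}).
\]
Reducing modulo $T^\lambda$ and comparing with the previous isomorphism --- a free module of rank $2^n$ over $\Lambda_0^{\C}/(T^\lambda)$ --- forces $\lambda_i \ge \lambda$ for every $i$ and $a + N = 2^n$, since a summand $\Lambda_0^{\C}/(T^{\lambda_i})$ with $\lambda_i < \lambda$ would fail to become free of rank one under the reduction.

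Now invoke Theorem \ref{displace2} for any $\psi \in \mathrm{Ham}(X,\omega)$ such that $\psi(L(u_0))$ is transverse to $L(u_0)$ and $\|\psi\| < 2\pi \lambda$. Because of the rescaling $T\mapsto T^{2\pi}$ introduced in (\ref{eq:newmk}), the relevant comparison is $\lambda_i \ge \|\psi\|/(2\pi)$, which holds for every $i$ since $\lambda_i \ge \lambda > \|\psi\|/(2\pi)$; hence
\[
\#(\psi(L(u_0)) \cap L(u_0)) \ge a + 2N = 2^{n+1} - a \ge 2^n > 0,
\]
and $\psi$ cannot displace $L(u_0)$. Removing the transversality hypothesis by a $C^\infty$-small Hamiltonian perturbation costing arbitrarily little Hofer energy (displacement is a $C^0$-open condition, so any such perturbation still displaces if $\psi$ does), and letting $\lambda \nearrow \mathfrak E(u_0)$, yields $e(L(u_0)) \ge 2\pi \mathfrak E(u_0)$.

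For the approximation step, choose sequences $\omega_i\to\omega$ of $T^n$-invariant K\"ahler forms with moment polytopes $P_i\to P$ and points $u_i \in \Int P_i$ with $u_i\to u$ such that $\mathfrak E(u_i)\to\overline{\mathfrak E}(u)$, which is possible by the definition of the limsup. The pointwise step applied to each $(X,\omega_i)$ gives $e_{\omega_i}(L_{\omega_i}(u_i)) \ge 2\pi\mathfrak E(u_i)$, so the desired inequality follows once one verifies the upper semicontinuity $\limsup_i e_{\omega_i}(L_{\omega_i}(u_i)) \le e_\omega(L(u))$: given any time-dependent Hamiltonian $H_t$ whose time-one map $\phi_{H,\omega}^1$ displaces $L(u)$ with $\int_0^1 \osc(H_t)\,dt$ close to $e_\omega(L(u))$, the flow $\phi_{H,\omega_i}^1$ generated by the same $H_t$ with respect to $\omega_i$ is $C^0$-close to $\phi_{H,\omega}^1$ (continuous dependence of Hamilton's equation on $\omega$), has Hofer norm at most the same integral, and $L_{\omega_i}(u_i)$ converges to $L(u)$ in Hausdorff distance, so the displacement property persists for large $i$. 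The main technical obstacle is precisely this approximation step --- $C^0$-continuity of Hamiltonian flows under $C^\infty$-perturbation of $\omega$ together with Hausdorff continuity of moment-map fibers under Delzant families --- both standard but requiring careful verification in the $T^n$-equivariant setting.
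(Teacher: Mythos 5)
Your proof is correct and follows essentially the same route as the paper, whose proof of Theorem \ref{thm:eLuEu} is declared to be ``the same as the proof of Proposition \ref{prof:bal2}'': approximate $(\omega,u)$ by $(\omega_i,u_i)$ realizing the limsup, apply Theorem \ref{homologynonzero} modulo $T^{\CN}$, pass to the $\Lambda_0$-structure decomposition, and invoke Theorem \ref{displace2} (Theorem J) with the factor $2\pi$ coming from the rescaling $T\mapsto T^{2\pi}$. The only imprecision is that literal ``reduction mod $T^\lambda$'' of the decomposition omits the Tor term of the universal coefficient theorem (the paper's count is $a+2b\ge 2^n$ with all $c(i)\ge\CN$, rather than your $a+N=2^n$), but either bookkeeping yields $a+2N\ge 2^n>0$, so the conclusion is unaffected.
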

We will prove Theorem \ref{thm:eLuEu} in section \ref{sec:floerhom}.

\begin{rem}\label{diseneconj}
The equality in $(\ref{displaceineq})$ holds in various examples.
However there are cases that the equality in $(\ref{displaceineq})$
does not hold. The situation is the same as Remark \ref{conj:dispfloer}.
\end{rem}
\par
We like to remark $\mathfrak E(u)$, $\overline{\mathfrak E}(u)$ can be calculated
in most of the cases once
the toric manifold $X$ is given explicitly.
In fact the leading order potential function $\frak{PO}_0$ is explicitly
calculated by Theorem \ref{potential}. We can then find the
maximal value $\lambda$ for which the polynomial equations
$$
\frac{\partial {\frak{PO}}_0}{\partial y_i}(u;\frak y_1,\cdots,\frak
y_n) \equiv 0 \mod T^\lambda
$$
has a solution $\frak y_i \in \Lambda_0 \setminus \Lambda_+$.
In a weakly degenerate case this value of $\lambda$ for $\frak{PO}(u;\cdots)$ is the same as
the value for $\frak{PO}_0(u;\cdots)$.
(See section \ref{sec:ellim}.)
\begin{rem}
Appearance of a new family of pseudo-holomorphic discs with Maslov
index 2 after blow up, which we observed in Examples \ref{37.116}
can be related to the operator $\mathfrak q$ that we introduced in
section 3.8 \cite{fooo06} (=
section 13 \cite{fooo06pre})  in the following way.
\par
We denote by $\mathcal M^{\text{\rm main}}_{k+1,l}(\beta)$ the moduli space of stable
maps $f : (\Sigma,\partial\Sigma) \to (X,L)$ from bordered Riemann
surface $\Sigma$ of genus zero with $l$ interior and $k+1$ boundary
marked points and in homology class $\beta$. (See section 3
\cite{fooo00} (= subsection 2.1.2 \cite{fooo06}). The symbol $\text{main}$ means that we require the boundary marked points to respect the
cyclic order of $\partial\Sigma$.) Let us consider the case when Maslov index of
$\beta$ is $2n$.
More precisely we take the following class $\beta$.
We use notation introduced at the beginning of section \ref{sec:calcpot}.
We put $\beta = \beta_{i_1}+\cdots+\beta_{i_n}$,
where $\partial_{i_1} P \cap \cdots \cap \partial_{i_n} P = \overline{p}$ is an vertex of $P$.
We assume $[f] \in \mathcal M^{\text{\rm main}}_{0+1,1}(\beta)$ and
$\mathcal M^{\text{\rm main}}_{0+1,1}(\beta)$ is Fredholm regular at $f$. The virtual
dimension of $\mathcal M^{\text{\rm main}}_{0+1,1}(\beta)$ is $3n$.
Let us take the unique point $p \in X$ such that $\pi(p) = \overline{p}$.
$p$ is the $T^n$ fixed point.
We assume moreover $f(0) = p$. We blow up $X$ at
a point $p = f(0) \in X$
and obtain $\widehat{X}$. Let $[E] \in H_{2n-2}(\widehat{X})$ be the homology
class of the exceptional divisor $E = \pi^{-1}(p)$. Now $f$ induces
a map $\widehat{f} : (\Sigma;\partial \Sigma) \to (\widehat{X},L)$. The
Maslov index of the homology class $[\widehat f] \in H_2(\widehat{X},L)$
becomes $2$. We put $\widehat{\beta} = [\widehat{f}]$.
\par
Since $p$ is a fixed point of $T^n$ action,
a $T^n$-invariant perturbation lifts to a perturbation of the moduli space
$\mathcal M^{\text{\rm main}}_{0+1,0}(\widehat{\beta})$.
Then any $T^n$-orbit of the moduli space $\mathcal M^{\text{\rm main}}_{0+1,0}(X;\beta)$ of holomorphic
discs passing through $p$ corresponds to the $T^n$-orbit of
$\mathcal M^{\text{\rm main}}_{0+1,0}(\widehat X;\widehat{\beta})$
and vice versa. Namely we have an isomorphism
\begin{equation}\label{blowup}
\mathcal M^{\text{\rm main}}_{0+1,1}(\beta) \,\,{}_{ev}\times_{X} \{p\}
\cong
\mathcal M^{\text{\rm main}}_{0+1,0}(\widehat{\beta}).
\end{equation}
Here $ev$ in the left hand side is the evaluation map at the
interior marked point. (Actually we need to work out analytic detail
of gluing construction etc.. It seems very likely that we can do it
in the same way as the argument of Chapter 10 \cite{fooo06pre}. See
also \cite{LiRuan}.)
\par
Using (\ref{blowup}) we may prove :
$$
\mathfrak q_{1,k;\beta}(PD([p]);b,\cdots,b) = \mathfrak m_{k,\widehat{\beta}}(b,\cdots,b),
$$
where
$$
\mathfrak q_{1,k;\beta}(Q;P_1,\cdots,P_k)
= ev_{0*}\left(
\mathcal M^{\text{\rm main}}_{k+1,1}(\beta) \times_{(X \times L^k)} (Q\times P_1\times \cdots \times P_k) \right)
$$
is defined in section 3.8 \cite{fooo06} (=section 13 \cite{fooo06pre}).
(Here $Q$ is a chain in $X$ and $P_i$ are chains in $L(u)$, and
$ev_0 :\mathcal M^{\text{\rm main}}_{k+1,1}(\beta) \to X$ is the evaluation map at the
$0$-th boundary marked point. In the right hand side, we take fiber
product over $X \times L^k$.)
This is an example of a blow-up formula in Lagrangian Floer
theory.
\end{rem}
\section{Quantum cohomology and Jacobian ring}
\label{sec:milquan}

In this section, we prove Theorem \ref{QHequalMilnor}.
Let $\mathfrak{PO}_0$ be the leading order potential function.
(Recall if $X$ is Fano, we have $\mathfrak{PO}_0 = \mathfrak{PO}$.)
We define the monomial
\begin{equation}\label{formdef:overzi}
\overline z_i(u) = y_1^{v_{i,1}}\cdots
y_n^{v_{i,n}}T^{\ell_i(u)}
\in \Lambda_{0}[y_1,\cdots,y_n,y^{-1}_1,\cdots,y^{-1}_n].
\end{equation}
Compare this with (\ref{homocoordeq}). It is also suggestive to
write $\overline z_i$ also as
\be\label{eq:ziexvi}
\overline z_i(u) = e^{\langle x, v_i \rangle} T^{\ell_i(u)}, \, x =
(x_1,\cdots, x_n), \, y_i = e^{x_i}.
\ee
By definition we have
\begin{eqnarray}
\mathfrak{PO}_0^u &=& \sum_{i=1}^m \overline z_i(u) \label{poz}\\
y_j\frac{\partial \overline z_i}{\partial y_j} &=& v_{i,j} \overline z_i(u).
\label{euler}
\end{eqnarray}
The following is a restatement of Theorem \ref{QHequalMilnor}.
Let $z_i \in H^2(X;\Z)$ be the Poincar\'e dual of the
divisor $\pi^{-1}(\partial_iP)$.
\begin{thm}\label{QHequalMilnor2}
If $X$ is Fano, there exists an isomorphism
$$
\psi_u : QH(X;\Lambda) \cong Jac(\mathfrak{PO})
$$
such that $\psi_u(z_i) = \overline z_i(u)$.
\end{thm}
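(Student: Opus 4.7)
The strategy is to use Batyrev's presentation of the quantum cohomology ring of a smooth Fano toric manifold. This says that
\begin{equation*}
QH(X;\Lambda) \;\cong\; \Lambda[z_1,\ldots,z_m]\big/\bigl(I_{SR}^{qu}+I_{lin}\bigr),
\end{equation*}
where $I_{lin}$ is the ideal generated by the linear relations $\sum_{i=1}^{m}\langle \xi,v_i\rangle\,z_i$ for $\xi\in M_{\R}$, and $I_{SR}^{qu}$ is the quantum Stanley--Reisner ideal generated by one deformed monomial relation for each primitive collection $\mathcal P=\{v_{i_1},\ldots,v_{i_p}\}$. The version I need has coefficients in the Novikov ring $\Lambda$ rather than $\C$, so part of the work is to record that the usual proof of Batyrev's presentation goes through verbatim once one tracks $T$-powers by symplectic area: this is precisely the point the paper flags as missing from the literature in a directly quotable form. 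Granting this presentation, the theorem reduces to comparing two explicit presentations of commutative $\Lambda$-algebras.

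First I would define the $\Lambda$-algebra homomorphism $\Lambda[z_1,\ldots,z_m]\to\Lambda[y_1^{\pm1},\ldots,y_n^{\pm1}]$ by $z_i\mapsto \overline z_i(u)$ and check that it descends to $\psi_u$. For the linear relations, the Euler identity \eqref{euler} combined with \eqref{poz} gives
\begin{equation*}
\sum_{i=1}^m v_{i,j}\,\overline z_i(u) \;=\; \sum_{i=1}^m y_j\frac{\partial \overline z_i}{\partial y_j}
\;=\; y_j\frac{\partial \mathfrak{PO}^u}{\partial y_j},
\end{equation*}
which lies in the Jacobian ideal, so the image of $\sum_i\langle \xi,v_i\rangle z_i$ is zero for every $\xi$. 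For the quantum Stanley--Reisner relations, given a primitive collection $\{v_{i_1},\ldots,v_{i_p}\}$ with the unique relation $v_{i_1}+\cdots+v_{i_p}=\sum_k a_k v_{j_k}$ (where the $v_{j_k}$ span a cone and $a_k\ge 0$), Batyrev's quantum relation reads $z_{i_1}\cdots z_{i_p}=T^{c}\prod_k z_{j_k}^{a_k}$, with $c=\sum_l \ell_{i_l}(u)-\sum_k a_k\ell_{j_k}(u)$ independent of $u$ (it equals the symplectic area of the corresponding rational curve, via \eqref{eq:area}). A direct computation using $\overline z_i(u)=y^{v_i}T^{\ell_i(u)}$ shows the identical relation holds among the $\overline z_i(u)$, so $\psi_u$ is well defined.

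Next I would verify that $\psi_u$ is surjective. By Proposition \ref{homocord}, for each maximal cone $\sigma\in\Sigma^{(n)}$ with integral generators $v_{i_1},\ldots,v_{i_n}$ forming a $\Z$-basis of $N$, the homogeneous-coordinate formula \eqref{homocoordeq} expresses each $y_j^{\pm1}$ as an explicit Laurent monomial in $z_{i_1},\ldots,z_{i_n}$. Substituting $\overline z_{i_k}(u)$ for $z_{i_k}$, the $T$-exponents cancel by the defining dual-basis identity, so each $y_j^{\pm1}$ is a Laurent monomial in the $\overline z_{i_k}(u)$; hence $\psi_u$ is surjective onto $Jac(\mathfrak{PO})$. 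For injectivity I would argue by rank: both sides are free $\Lambda$-modules of rank $\sum_d\operatorname{rank}_{\Q}H_d(X;\Q)$, which on the quantum-cohomology side is clear and on the Jacobian-ring side follows because, for a generic (hence for every) $u\in\operatorname{Int}P$, the Laurent polynomial $\mathfrak{PO}^u$ has only isolated critical points and one can exhibit an explicit $\Lambda$-basis of $Jac(\mathfrak{PO})$ via a monomial ordering on the $\overline z_i(u)$'s modulo the relations produced above. A surjection between free $\Lambda$-modules of the same finite rank is an isomorphism. Finally, the identity $\psi_u(c_1(X))=\mathfrak{PO}^u$ is immediate, since $c_1(X)=\sum_i z_i$ in $QH(X;\Lambda)$ for a toric manifold and $\sum_i \overline z_i(u)=\mathfrak{PO}^u$ by \eqref{poz}.

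The main technical obstacle is the first step: giving a self-contained derivation of Batyrev's presentation with $\Lambda$-coefficients, since existing references state it over $\C$ with numerical choices of Kähler parameters absorbed. The rank comparison on the Jacobian side, and in particular checking that no spurious relations appear beyond $I_{SR}^{qu}+I_{lin}$, is the other point where care is needed; here the Fano hypothesis enters crucially, because it forces $\mathfrak{PO}=\mathfrak{PO}_0$ to be a finite Laurent polynomial, so that both the Stanley--Reisner monomials and the partial derivatives are genuinely polynomial and elementary commutative algebra suffices.
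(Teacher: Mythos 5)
Your overall strategy (Batyrev's presentation over $\Lambda$, the map $z_i\mapsto \overline z_i(u)$, the Euler identity for the linear relations, and the direct verification of the quantum Stanley--Reisner relations among the $\overline z_i(u)$) matches the paper's proof of Proposition \ref{nonfanoadded}, and your reduction of the Fano case to that proposition via Theorem \ref{thm:batyrev} is also the paper's route. The problem is your injectivity step. You propose to deduce injectivity from a rank count, asserting that $\dim_\Lambda Jac(\mathfrak{PO})=\sum_d\operatorname{rank}_{\Q}H_d(X;\Q)$ because ``$\mathfrak{PO}^u$ has only isolated critical points and one can exhibit an explicit $\Lambda$-basis via a monomial ordering.'' This is not a proof: computing the dimension of the Jacobian ring of a Laurent polynomial from its Newton polytope is a Kushnirenko-type statement that requires a nondegeneracy hypothesis on the coefficients, which is not automatic for the specific coefficients $T^{\ell_i(u)}$; and the equality of this dimension with the total Betti number is essentially one of the \emph{consequences} the paper extracts from the theorem (Theorem \ref{lageqgeopt} (4)), so invoking it here is circular unless you supply an independent argument. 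Even the finite-dimensionality of $Jac(\mathfrak{PO})$ over $\Lambda$ is obtained in the paper as a corollary of the isomorphism, not as an input.

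The paper avoids any dimension count. It first proves (Proposition \ref{prop:psiwelldefined}) that $z_i\mapsto\overline z_i(u)$ induces an isomorphism $\Lambda[z_1,\ldots,z_m]/SR_{\omega}(X)\cong\Lambda[y_1^{\pm1},\ldots,y_n^{\pm1}]$ \emph{before} dividing by the linear relations, by constructing an explicit inverse $\widehat\phi_u$. This requires two ingredients you do not supply: Lemma \ref{lem:pcolgee}, showing that \emph{every} relation $r(A)$ coming from the kernel of $\pi_2(X;L(u))\to\pi_1(L(u))$ — not just the primitive-collection generators — already lies in $SR_{\omega}(X)$ (proved by an induction on energy using $\ell_i(u_0)>0$), and Corollary \ref{lem:zinvert}, that each $z_i$ is invertible modulo $SR_{\omega}(X)$ (using completeness of the fan to write $-v_i$ in some cone). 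Your surjectivity argument via \eqref{homocoordeq} implicitly needs this invertibility too, since the monomials expressing $y_j^{\pm1}$ involve negative powers of the $z_{i_k}$. Once the pre-quotient isomorphism is in hand, Lemma \ref{lem:critical} shows the linear ideal maps \emph{onto} (not merely into) the Jacobian ideal — here one uses that the $y_j$ are units, so $y_j\,\partial\mathfrak{PO}_0^u/\partial y_j$ and $\partial\mathfrak{PO}_0^u/\partial y_j$ generate the same ideal — and the isomorphism of quotients is then automatic. I would recommend replacing your rank argument with this equality-of-ideals argument; as written, your proof establishes only that $\psi_u$ is a well-defined surjection.
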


Since $c_1(X) = \sum_{i=1}^m z_i$ (see \cite{fulton}) and
$\mathfrak{PO}_0^u = \sum_{i=1}^m \overline z_i(u)$ by definition,
Theorem \ref{QHequalMilnor} follows from Theorem
\ref{QHequalMilnor2}.
\par

In the rest of this section, we prove Theorem \ref{QHequalMilnor2}. We
remark that $z_i$ ($i=1,\cdots,m$) generates the quantum cohomology
ring $QH(X;\Lambda)$ as a $\Lambda$-algebra (see Theorem
\ref{thm:batyrev} below). Therefore it is enough to prove that the
assignment $\tilde{\psi}_u(z_i) = \overline z_i(u)$ extends to a
homomorphism $ \tilde{\psi}_u : \Lambda[z_1,\cdots,z_m] \to
Jac(\mathfrak{PO}_0^u)$ that induces an isomorphism in
$QH(X;\Lambda)$. In other words, it suffices to show that the
relations among the generators in $\Lambda[z_1,\cdots,z_m]$ and in
$Jac(\mathfrak{PO}_0^u)$ are mapped to each other under the
assignment $\tilde{\psi}_u(z_i) = \overline z_i(u)$. To establish
this correspondence, we now review Batyrev's description of the
relations among $z_i$'s.
\par
We first clarify the definition of quantum cohomology ring over the
universal Novikov rings $\Lambda_{0}$ and $\Lambda$. Let
$(X,\omega)$ be a symplectic manifold and $\alpha \in \pi_2(X)$. Let
$\mathcal M_3(\alpha)$ be the moduli space of stable map with homology class
$\alpha$ of genus
$0$ with $3$ marked points. Let $ ev : \mathcal M_3(\alpha) \to X^3
$ be the evaluation map. We can define the virtual fundamental class
$ ev_*[\mathcal M_3(\alpha)] \in H_d(X^3;\Q) $ where $ d =
2(\dim_{\C} X + c_1(X)\cap \alpha). $ Let $a_i \in H^*(X;\Q)$. We
define $ a_1\cup_Q a_2 \in H^*(X;\Lambda_{0}) $ by the following
formula.
\begin{equation}\label{qcohdef}
\langle a_1\cup_Q a_2, a_3\rangle
= \sum_{\alpha} T^{\omega\cap \alpha/2\pi}
ev_*[\mathcal M_3(\alpha)] \cap (a_1\times a_2\times a_3).
\end{equation}
Here $\langle \cdot, \cdot \rangle$ is the Poincar\'e duality.
Extending this linearly we obtain the quantum product
$$
\cup_Q : H(X;\Lambda_{0}) \otimes
H(X;\Lambda_{0}) \to H(X;\Lambda_{0}).
$$
Extending the coefficient ring further to $\Lambda$, we obtain
the (small) quantum cohomology ring $QH(X;\Lambda)$.
\par
Now we specialize to the case of compact toric manifolds and
review Batyrev's presentation of quantum cohomology ring.
We consider the exact sequence
\begin{equation}
0 \longrightarrow \pi_2(X) \longrightarrow \pi_2(X;L(u))
\longrightarrow \pi_1(L(u)) \longrightarrow 0.
\label{exseq:homtpyeq}\end{equation} We note $\pi_2(X;L(u)) \cong
\Z^m$ and choose its basis adapted to this exact sequence as follows
: Consider the divisor $\pi^{-1}(\partial_iP)$ and take a small disc
transversal to it. Each such disc gives rise to an element
\begin{equation}
[\beta_i] \in H_2(X;\pi^{-1}(\text{\rm Int} P)) \cong H_2(X;L(u))
\cong \pi_2(X,L(u)). \label{def:betai}
\end{equation}
The set of $[\beta_i]$ with $i=1,\cdots,m$ forms a basis of
$\pi_2(X;L(u)) \cong \Z^m$. The boundary map $[\beta] \mapsto
[\partial \beta] : \pi_2(X;L(u)) \to \pi_1(L(u))$ is identified with
the corresponding map $H_2(X;L(u)) \to H_1(L(u))$. Using the basis
chosen in Lemma \ref{37.90} on $H_1(L(u))$ we identify $H_1(L(u))
\cong \Z^n$. Then this homomorphism maps $[\beta_i]$ to
\begin{equation}
[\partial\beta_i] \cong v_i = (v_{i,1},\cdots,v_{i,n}),
\label{form:connecting}\end{equation} where $v_{i,j}$ is as in
(\ref{eq:vij}). By the exactness of (\ref{exseq:homtpyeq}), we have
an isomorphism
\begin{equation}
H_2(X) \cong \{ \beta \in H_2(X;L(u)) \mid [\partial \beta] = 0 \}.
\label{def:pi2}\end{equation}

\begin{lem}\label{omegacap}
We have
\begin{equation}
\omega \cap \left[\sum k_i\beta_i\right] = 2\pi \sum k_i \ell_i(u).
\label{form:symparea}\end{equation}
If $\left[\sum k_i\partial\beta_i\right] = 0$ then
\be\label{eq:kidliduj}
\sum k_i \frac{d\ell_i}{du_j} = 0.
\ee
In particular, the right hand side of $(\ref{form:symparea})$ is independent of $u$.
\end{lem}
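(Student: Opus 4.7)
The plan is to reduce both statements directly to identities already recorded in the paper, namely the area formula (\ref{eq:area}), the identification (\ref{form:connecting}) of the boundary map on the basis $\{[\beta_i]\}$, and the formula (\ref{eq:vij}) expressing the normal vectors $v_i$ as gradients of the affine functions $\ell_i$.

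For the first identity (\ref{form:symparea}), I would simply pair $\omega$ linearly with the class $\sum k_i \beta_i \in H_2(X;L(u))$ and invoke (\ref{eq:area}): since $\omega(\beta_i) = 2\pi\ell_i(u)$, bilinearity of the cap product yields
$$
\omega \cap \Bigl[\sum k_i\beta_i\Bigr] \;=\; \sum k_i\,\omega(\beta_i) \;=\; 2\pi \sum k_i \ell_i(u).
$$
This step is essentially a one-line computation; no obstacle is expected.

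For the second identity (\ref{eq:kidliduj}), I would use (\ref{form:connecting}) to write $[\partial\beta_i] = v_i = (v_{i,1},\dots,v_{i,n})$ in the chosen basis of $H_1(L(u))\cong\Z^n$. The vanishing hypothesis $[\sum k_i\partial\beta_i]=0$ therefore amounts to $\sum_i k_i v_{i,j}=0$ for every $j=1,\dots,n$. Substituting (\ref{eq:vij}), i.e.\ $v_{i,j}=\partial\ell_i/\partial u_j$, gives the desired relation $\sum k_i\,d\ell_i/du_j=0$.

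Finally, the $u$-independence of the right-hand side of (\ref{form:symparea}) is obtained by differentiating: since $\ell_i$ is affine with $\partial\ell_i/\partial u_j = v_{i,j}$, the previous step gives $\partial_{u_j}\bigl(\sum k_i\ell_i(u)\bigr) = \sum k_i v_{i,j} = 0$. (Conceptually this is consistent with (\ref{def:pi2}): under the hypothesis, $\sum k_i\beta_i$ lifts to a class in $H_2(X)$, so $\omega$ pairs with it independently of the fiber $L(u)$ used to represent it.) No step poses a serious difficulty; the whole lemma is a bookkeeping consequence of the earlier identifications.
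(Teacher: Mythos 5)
Your proof is correct and follows essentially the same route as the paper: linearity plus the area formula (\ref{eq:area}) for the first identity, and the identification $[\partial\beta_i]=v_i$ together with $v_{i,j}=\partial\ell_i/\partial u_j$ for the second. The paper phrases the second step via the explicit form $\ell_i(u)=\langle u,v_i\rangle-\lambda_i$ from Theorem \ref{gullemin}, which is the same content as your appeal to (\ref{form:connecting}) and (\ref{eq:vij}).
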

\begin{proof} (\ref{form:symparea}) follows from the area formula
(\ref{eq:area}),
$
\omega(\beta_i) = 2\pi \ell_i(u).
$
On the other hand if
$\left[\sum k_i\partial\beta_i\right] = 0$, we have
$$
\sum_{i=1}^m k_i v_i = 0.
$$
By the definition of $\ell_i$, $\ell_i(u) = \langle u,v_i \rangle -
\lambda_i$, from Theorem \ref{gullemin}, this equation is precisely
(\ref{eq:kidliduj}) and hence the proof.
\end{proof}
\par
Let $\mathcal P \subset \{1,\cdots,m\}$ be a primitive collection
(see Definition \ref{prim}). There exists a unique subset
$\mathcal P' \subset \{1,\cdots,m\}$ such that
$\sum_{i\in \mathcal P} v_i$ lies in the interior of the
cone spanned by $\{v_{i'} \mid i' \in \mathcal P'\}$, which is a member of the fan $\Sigma$.
(Since $X$ is compact, we can choose such $\mathcal P'$. See section 2.4 \cite{fulton}.)
We write
\begin{equation}
\sum_{i\in \mathcal P} v_i = \sum_{i' \in \mathcal P'} k_{i'}v_{i'}.
\label{qsrprep}
\end{equation}
Since $X$ is assumed to be nonsingular $k_{i'}$ are all positive integers.
(See p.29 of \cite{fulton}.)
We put
\begin{equation}
\omega(\mathcal P) = \sum_{i\in \mathcal P} \ell_i(u) - \sum_{i' \in \mathcal P'}k_{i'}\ell_{i'}(u).
\label{def:symparea2}\end{equation}
It follows from (\ref{form:symparea}) that $2\pi \omega(\mathcal P)$ is the
symplectic area of the homotopy class
\begin{equation}
\beta(\mathcal P) = \sum_{i\in \mathcal P}\beta_i - \sum_{i' \in \mathcal P'} k_{i'}\beta_{i'} \in \pi_2(X).
\label{def:betaP}
\end{equation}

\begin{lem}\label{omegaP>0} Let $\mathcal P$ be any primitive collection. Then
$\omega(\mathcal P) > 0$.
\end{lem}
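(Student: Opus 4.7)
The plan is to exploit the fact that $\omega(\mathcal{P})$ is the $\omega$-pairing of a class in $\pi_2(X)$ and hence, by Lemma \ref{omegacap}, independent of $u$, and then to evaluate it at a judiciously chosen vertex of the moment polytope $P$. Substituting $\ell_i(u) = \langle u, v_i\rangle - \lambda_i$ into (\ref{def:symparea2}) and using the relation $\sum_{i\in\mathcal{P}} v_i = \sum_{i'\in\mathcal{P}'} k_{i'}v_{i'}$ from (\ref{qsrprep}), the $u$-dependent terms cancel and one finds
\[
\omega(\mathcal{P}) = \sum_{i'\in\mathcal{P}'} k_{i'}\lambda_{i'} - \sum_{i\in\mathcal{P}}\lambda_i,
\]
consistent with Lemma \ref{omegacap}. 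This permits us to evaluate $\omega(\mathcal{P})$ at any point of $\overline{P}$.

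Next, the cone spanned by $\{v_{i'} : i'\in\mathcal{P}'\}$ belongs to $\Sigma$ by construction, and since $X$ is compact this cone is a face of some maximal cone $\sigma \in \Sigma^{(n)}$. Let $I \subset \{1,\dots,m\}$ with $|I|=n$ index the integral generators of $\sigma$, so that $\mathcal{P}' \subseteq I$, and let $\bar p$ be the vertex of $P$ dual to $\sigma$. Since $X$ is smooth, $P$ is Delzant and hence simple, so $\bar p$ lies on exactly the facets $\partial_j P$ with $j\in I$; equivalently $\ell_j(\bar p) = 0$ for $j\in I$ and $\ell_j(\bar p) > 0$ for $j\notin I$. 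In particular $\ell_{i'}(\bar p) = 0$ for every $i'\in\mathcal{P}'$, and thus
\[
\omega(\mathcal{P}) = \sum_{i\in\mathcal{P}} \ell_i(\bar p).
\]

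To conclude, I invoke the defining property of a primitive collection: the elements of $\mathcal{P}$ do not generate any cone of $\Sigma$, whereas every subset of $I$ does. Hence $\mathcal{P} \not\subseteq I$, so there is some $i_0\in\mathcal{P}$ with $i_0\notin I$, for which $\ell_{i_0}(\bar p) > 0$. All other terms $\ell_i(\bar p)$ are nonnegative since $\bar p \in \overline{P}$, so the sum is strictly positive, giving $\omega(\mathcal{P}) > 0$.

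The only real point to be careful about is the simplicity of $P$ at $\bar p$, i.e.\ the strict inequality $\ell_j(\bar p) > 0$ for $j\notin I$; this is the standard Delzant property of smooth compact toric polytopes, and aside from this the argument is purely combinatorial.
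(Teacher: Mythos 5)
Your proof is correct and follows essentially the same route as the paper: both evaluate $\omega(\mathcal P)$, which is $u$-independent by Lemma \ref{omegacap}, at a point where all $\ell_{i'}$ with $i'\in\mathcal P'$ vanish, reducing to $\omega(\mathcal P)=\sum_{i\in\mathcal P}\ell_i$ there, and then use primitivity of $\mathcal P$ to find an index with $\ell_i>0$. The only cosmetic difference is that you specialize to a vertex of $P$ and invoke the Delzant/simplicity property explicitly, whereas the paper works at an arbitrary point of $\bigcap_{i'\in\mathcal P'}\partial_{i'}P$ and leaves that last combinatorial step implicit.
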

\begin{proof} Since the cone spanned by $\{v_{i'} \vert i' \in {\mathcal P}'\}$
belongs to the fan $\Sigma$, we have
\begin{equation}
\bigcap_{i' \in \mathcal P'} \pi^{-1}(\partial_{i'} P) \ne \emptyset.
\nonumber
\end{equation} Then for any $u' \in \bigcap_{i' \in \mathcal
P'} \partial_{i'} P$, we have $\ell_{i'}(u') = 0$.
By the choice of $k_{i'}$ in (\ref{qsrprep}), we have
$\partial \beta(\mathcal P) = 0$. Therefore by Lemma \ref{omegacap} and
by the continuity of the right hand side of (\ref{def:symparea2})
we can evaluate $\omega(\mathcal P)$ at a point $u'
\in \bigcap_{i' \in \mathcal P'} \pi^{-1}(\partial_{i'} P)$.
Then we obtain
$$
\omega(\mathcal P) = \sum_{i\in \mathcal P} \ell_i(u').
$$
Since $\mathcal P$ is a primitive collection, and in particular
does not form a member of the fan, there must be
an element $v_i \in \mathcal P$ such that $\ell_i(u') > 0$ and so
$\omega(\mathcal P) > 0$. This finishes the proof.
\end{proof}

Now we associate formal variables, $z_1,\cdots,z_m$, to $v_1, \cdots, v_m$,
respectively.
\begin{defn}[Batyrev \cite{batyrev:qcrtm92}]
\begin{enumerate}
\item The {\it quantum Stanley-Reisner ideal} $SR_{\omega}(X)$ is the
ideal generated by
\begin{equation}
z(\mathcal P) = \prod_{i\in\mathcal P} z_{i} - T^{\omega(\mathcal P)} \prod_{i'\in \mathcal P'}
z_{i'}^{k_{i'}}
\label{def:generatprqsr}\end{equation}
in the polynomial ring $\Lambda\left[z_1,\cdots,z_m\right]$.
Here $\mathcal P$ runs over all primitive collections.
\par
\item We denote by $P(X)$ the ideal generated by
\begin{equation}
\sum_{i=1}^m v_{i,j} z_i
\label{def:linrelidea}\end{equation}
for $j=1,\cdots,n$. In this paper we call $P(X)$ the {\it linear relation ideal}.
\par
\item
We call the quotient
\begin{equation}
QH^{\omega}(X;\Lambda) = \frac{\Lambda\left[z_1,\cdots,z_m\right]}{(P(X) + SR_{\omega}(X))}
\label{form:batyrevqcr}\end{equation}
the {\it Batyrev quantum cohomology ring}.
\end{enumerate}
\label{def:batyrevqcr}\end{defn}
\begin{rem}
We do not take closure of our ideal $P(X) + SR_{\omega}(X)$ here. See Proposition \ref{prop:closuerideal}.
\end{rem}
\begin{thm}[Batyrev \cite{batyrev:qcrtm92}, Givental \cite{givental2}]
If $X$ is Fano there exists a ring isomorphism from $QH^{\omega}(X;\Lambda)$
to the quantum cohomology ring $QH(X;\Lambda)$ of $X$
such that $z_i$ is sent to the Poincar\'e dual to $\pi^{-1}(\partial_iP)$.
\label{thm:batyrev}\end{thm}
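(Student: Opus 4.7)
The plan is to derive this from the established results of Batyrev and Givental, being careful about how the Novikov parameter $T$ keeps track of symplectic areas. My first step is to define a $\Lambda$-algebra homomorphism
$\Phi : \Lambda[z_1,\ldots,z_m] \to QH(X;\Lambda)$ sending $z_i$ to the Poincar\'e dual of the toric divisor $D_i = \pi^{-1}(\partial_i P)$, and then to verify that $\Phi$ kills both $P(X)$ and $SR_\omega(X)$.

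For the linear relations, I would argue as follows. Each character $u_j \in M$ pulls back to a rational function on $X$ whose principal divisor is $\sum_i \langle u_j, v_i\rangle D_i = \sum_i v_{i,j} D_i$, giving $\sum_i v_{i,j}[D_i] = 0$ already in $H^2(X;\Z)$. Because the quantum cup product agrees with the classical cup product on the degree-two part modulo terms of positive $T$-weight, and because the Fano condition $c_1(X)\cap\beta>0$ on every nontrivial effective class forces the potential quantum corrections to these degree-two relations to vanish for degree reasons, the linear relations pass unchanged to $QH(X;\Lambda)$.

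For the quantum Stanley--Reisner relations, for each primitive collection $\mathcal P = \{i_1,\ldots,i_p\}$ the classical product $z_{i_1}\cup\cdots\cup z_{i_p}$ vanishes, since the divisors $D_{i_1},\ldots,D_{i_p}$ have empty intersection by definition of \emph{primitive}. The quantum product then receives corrections of the form $\sum_\beta T^{\omega\cap\beta/2\pi}\langle z_{i_1},\ldots,z_{i_p},\cdot\rangle_\beta$, and Batyrev's analysis (made rigorous via the fixed-point localization of \cite{grapan}) isolates the single class $\beta(\mathcal P)\in H_2(X;\Z)$ defined in (\ref{def:betaP}) as the sole nontrivial contributor in the Fano case, producing exactly the correction $T^{\omega(\mathcal P)}\prod_{i'\in\mathcal P'} z_{i'}^{k_{i'}}$ thanks to Lemma \ref{omegacap}. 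Here the Fano condition is essential: it prevents the appearance of infinitely many correction terms coming from classes with $c_1(X)\cap\beta\leq 0$, which would spoil the finiteness of the presentation. Thus $\Phi$ factors through $QH^\omega(X;\Lambda)$.

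To show that the induced map is an isomorphism, I would verify surjectivity and then match ranks. Surjectivity is classical: the classes $[D_1],\ldots,[D_m]$ generate $H^*(X;\Q)$ as a ring for any smooth compact toric manifold, and since $QH(X;\Lambda)$ is a free $\Lambda$-module on any $\Q$-basis of $H^*(X;\Q)$, the same classes generate $QH(X;\Lambda)$ as a $\Lambda$-algebra. For injectivity one compares ranks: Batyrev's original combinatorial computation shows that $QH^\omega(X;\Lambda)$ is a free $\Lambda$-module of rank $\sum_d \mathrm{rank}_\Q H_d(X;\Q)$, matching $QH(X;\Lambda)$. The main obstacle is the enumerative step identifying the quantum corrections to the Stanley--Reisner relations with the explicit exponent $T^{\omega(\mathcal P)}$ and the cohomological tail $\prod_{i'}z_{i'}^{k_{i'}}$; once this is in hand, the rest is bookkeeping. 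Care must be taken with the convention (\ref{eq:newmk}) to match the Novikov exponent $\omega\cap\beta/2\pi$ with Batyrev's conventions and with our area formula $\omega\cap\beta_i = 2\pi\ell_i(u)$ from (\ref{eq:area}).
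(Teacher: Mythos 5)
Your proposal is correct and follows essentially the same route as the paper: both treat the quantum Stanley--Reisner relations for the divisor classes (Proposition \ref{prop:qsrholds}) as the quoted geometric input, observe that the linear relations hold already classically, and then use the Fano condition to control the degrees of the quantum corrections in order to compare $QH^{\omega}(X;\Lambda)$ with $QH(X;\Lambda)$. The only difference is at the last step, where you conclude injectivity by matching $\Lambda$-dimensions against Batyrev's combinatorial rank computation, whereas the paper runs the degree filtration argument directly over $\Lambda$ and checks that $h$ induces an isomorphism of associated graded rings (both isomorphic to the classical cohomology ring) --- the paper's variant has the small advantage of not needing to import the rank of the Batyrev ring over the Novikov field as a separate citation.
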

The main geometric part of the proof of Theorem \ref{thm:batyrev} is the following.
\begin{prop}\label{prop:qsrholds}
The Poincar\'e dual to $\pi^{-1}(\partial_iP)$ satisfy the quantum
Stanley-Reisner relation.
\end{prop}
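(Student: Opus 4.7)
My plan is to verify the relation
\[
\prod_{i \in \mathcal{P}} z_i \;=\; T^{\omega(\mathcal{P})} \prod_{i' \in \mathcal{P}'} z_{i'}^{k_{i'}}
\]
in $QH(X;\Lambda)$, where both products are iterated quantum cup products. First, the classical cup product on the left-hand side vanishes: by the defining property of a primitive collection (Definition \ref{prim}), the divisors $\pi^{-1}(\partial_i P)$ for $i \in \mathcal{P}$ do not share a common face of $P$, so $\bigcap_{i \in \mathcal{P}} \pi^{-1}(\partial_i P) = \emptyset$ and hence $\prod_{i\in\mathcal P} z_i = 0$ in $H^*(X;\mathbb{Q})$. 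Consequently every contribution to the quantum product must come from genuine pseudo-holomorphic spheres in some nonzero effective class $\alpha \in H_2(X;\mathbb{Z})$, weighted by $T^{\omega\cap\alpha/2\pi}$.

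Next I would determine which effective $\alpha$ can contribute. Expanding the iterated quantum product of the divisor classes $z_{i_j}$ via associativity and the definition (\ref{qcohdef}), the relevant invariants are Gromov--Witten invariants of the form $\langle z_{i_1}, \ldots, z_{i_p}, a\rangle_{\alpha}$ paired with test classes $a$. By the divisor axiom, a class $\alpha$ contributes only if $\alpha \cdot z_{i_j} \neq 0$ for each $j$, and effectivity forces $\alpha \cdot z_{i_j} > 0$ for all $j \in \mathcal{P}$. The fan combinatorics combined with the identity (\ref{qsrprep}) single out $\beta(\mathcal{P}) \in H_2(X;\mathbb{Z})$ from (\ref{def:betaP}) as the minimal effective class with these intersection properties; by Lemma \ref{omegaP>0} its symplectic area is $2\pi\,\omega(\mathcal{P}) > 0$. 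I would then compute the contribution of $\beta(\mathcal{P})$ by a virtual dimension count on $\mathcal{M}_{p+1}(\beta(\mathcal{P}))$: using $c_1(X) = \sum_i z_i$ together with the defining relation for $\beta(\mathcal{P})$, one finds $c_1(X)\cap \beta(\mathcal{P}) = p - \sum_{i'} k_{i'}$, which is exactly the numerical condition making $T^{\omega(\mathcal P)}\prod_{i' \in \mathcal{P}'} z_{i'}^{k_{i'}}$ lie in the correct cohomological degree. The torus-equivariant structure of the stable maps representing $\beta(\mathcal{P})$ (which form a toric family of rational curves sweeping out the stratum dual to $\mathcal{P}'$) should then identify the invariant with the classical intersection number $\int_X \bigl(\prod_{i'} z_{i'}^{k_{i'}}\bigr) \cup a$ for any test class $a$, giving the right-hand side of the claimed relation.

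The main obstacle will be to rule out contributions from other nonzero effective classes $\alpha \neq \beta(\mathcal{P})$ that still meet every $z_{i}$ ($i \in \mathcal{P}$) positively. Here the Fano hypothesis is indispensable: it guarantees $c_1(X)\cap \alpha > 0$ for every nonzero effective $\alpha$, and together with a virtual dimension analysis and the decomposition of $\alpha$ in the Mori cone of $X$, it prevents additional irreducible contributions. One must also check that iterated bubbling cannot produce spurious cross-terms beyond those absorbed into lower-order quantum corrections via associativity. Organizing this dimension and combinatorial bookkeeping in the generality of all compact toric Fano manifolds, carefully tracking the relation $\sum_{i \in \mathcal{P}} v_i = \sum_{i' \in \mathcal{P}'} k_{i'} v_{i'}$ and the structure of the fan $\Sigma$, is the technical heart of the argument.
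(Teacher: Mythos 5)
The first thing to note is that the paper contains no proof of this proposition to compare yours against: immediately after the statement the authors write that they do not prove Proposition \ref{prop:qsrholds} in this paper, and instead quote it from the literature (Batyrev's claim, established by Givental via fixed point localization, with alternative proofs by Cieliebak--Salamon via vortex equations and by McDuff--Tolman via the Seidel representation; see Remarks \ref{rem:qcanbeused} and \ref{CSMT}). What you have written is therefore an outline of a direct Gromov--Witten computation, closest in spirit to Givental's route, and it must be judged on its own.

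As a proof it has genuine gaps at exactly the points you defer. First, the claim that effectivity forces $\alpha \cdot z_{i_j} > 0$ for all $j \in \mathcal P$ is false: an effective curve class can meet a toric divisor negatively when its representatives lie inside that divisor, so the divisor axiom only eliminates classes with $\alpha \cdot z_{i_j} = 0$, and the set of potentially contributing classes is larger than you assert. Second, the iterated small quantum product $z_{i_1}\cup_Q\cdots\cup_Q z_{i_p}$ expands as a sum over chains of three-point invariants with intermediate cohomology insertions, not as a single $(p+1)$-point invariant $\langle z_{i_1},\ldots,z_{i_p},a\rangle_\alpha$; reducing one to the other (and absorbing the cross-terms you mention) is real bookkeeping that is not done. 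Third, and most seriously, the identification of $\beta(\mathcal P)$ as the unique contributing class and the evaluation of its Gromov--Witten invariant --- your ``should then identify the invariant with the classical intersection number'' --- \emph{is} the content of the proposition: every known proof invests its substance precisely there, via localization, the Seidel element, or the vortex equations, and a virtual dimension count together with an appeal to the torus action does not substitute for it. (Your dimension count $c_1(X)\cap\beta(\mathcal P) = p - \sum_{i'} k_{i'}$ is correct, but it only shows the candidate term has the right degree.) In short, you have correctly located the difficulties but not resolved any of them.
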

We do not prove Proposition \ref{prop:qsrholds} in this paper.
See Remarks \ref{rem:qcanbeused} and \ref{CSMT}.
However since our choice of the coefficient ring is
different from other literature, we explain here for reader's convenience how
Theorem \ref{thm:batyrev} follows from Proposition \ref{prop:qsrholds}.
\par
Proposition \ref{prop:qsrholds} implies that we can define a ring homomorphism
$h : QH^{\omega}(X;\Lambda) \to QH(X;\Lambda)$
by sending $z_i$ to $PD(\pi^{-1}(\partial_iP))$. Let
$
F^kQH(X;\Lambda)
$
be the direct sum of elements of degree $\le k$.
Let $F^kQH^{\omega}(X;\Lambda)$ be the submodule generated by
the polynomial of degree at most $k/2$ on $z_i$.
Clearly $h(F^kQH^{\omega}(X;\Lambda)) \subset F^kQH(X;\Lambda)$.
\par
Since $X$ is Fano, it follows that,
$$
x\cup_Q y - x\cup y \in F^{\deg x + \deg y -2}QH(X;\Lambda).
$$
We also recall the cohomology ring
$H(X;\Q)$ is obtained by putting $T=0$ in
quantum Stanley-Reisner relation. Moreover
we find that the second product of the right hand side of
(\ref{def:generatprqsr}) has degree strictly
smaller than the first since $X$ is Fano.
\par
Therefore the graded ring
$$
gr(QH(X;\Lambda)) = \bigoplus_{k} F^k(QH(X;\Lambda))/F^{k-1}(QH(X;\Lambda)),
$$
is isomorphic to the (usual) cohomology ring (with $\Lambda$ coefficient) as a ring.
The same holds for $QH^{\omega}(X;\Lambda)$.
It follows that $h$ is an isomorphism.
\qed\par
\medskip
In the rest of this section, we will prove the following Proposition
\ref{nonfanoadded}. Theorem \ref{QHequalMilnor2} follows immediately
from Proposition \ref{nonfanoadded} and Theorem \ref{thm:batyrev}.

\begin{prop}\label{nonfanoadded}
There exists an isomorphism :
$$
\psi_u : QH^{\omega}(X;\Lambda) \cong Jac(\mathfrak{PO}_0^u)
$$
such that $\psi_u(z_i) = \overline z_i(u)$.
\end{prop}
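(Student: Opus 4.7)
The plan is to define $\tilde\psi_u\colon \Lambda[Z_1,\ldots,Z_m]\to Jac(\mathfrak{PO}_0^u)$ by $Z_i\mapsto\overline z_i(u)$ and show that it descends modulo $P(X)+SR_\omega(X)$ to the desired isomorphism. Well-definedness after descent reduces to two identities. For the linear relation generator $\sum_i v_{i,j}Z_i$, the Euler-type identity (\ref{euler}) gives
$$\sum_{i=1}^m v_{i,j}\overline z_i(u)=\sum_i y_j\frac{\partial\overline z_i}{\partial y_j}=y_j\frac{\partial\mathfrak{PO}_0^u}{\partial y_j},$$
which lies in the Jacobian ideal since $y_j$ is a unit in $\Lambda[y_1^{\pm},\ldots,y_n^{\pm}]$. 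For the quantum Stanley--Reisner generator (\ref{def:generatprqsr}), the identity
$$\prod_{i\in\mathcal{P}}\overline z_i(u)=T^{\omega(\mathcal{P})}\prod_{i'\in\mathcal{P}'}\overline z_{i'}(u)^{k_{i'}}$$
holds as an equality of Laurent monomials in $\Lambda[y^{\pm}]$: the $y$-exponents match by (\ref{qsrprep}) and the $T$-exponents by (\ref{def:symparea2}). Hence $\tilde\psi_u$ descends to $\psi_u$.

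Next I will prove surjectivity of $\psi_u$. Smoothness of $X$ furnishes a maximal cone $\sigma\in\Sigma^{(n)}$ whose primitive generators $v_{i_1},\ldots,v_{i_n}$ form a $\Z$-basis of $N$, and consequently the monomials $\overline z_{i_1},\ldots,\overline z_{i_n}$ are related to $y_1,\ldots,y_n$ by a unimodular substitution up to $\Lambda^\times$-factors; this writes each $y_j^{\pm 1}$ as a signed Laurent product in the $\overline z_{i_k}$'s times a power of $T$. To transport such formulas back to the source, I will verify that each $Z_i$ becomes a unit in $\Lambda[Z]/(P(X)+SR_\omega(X))$: for any fixed $i$, completeness of the fan produces a primitive collection $\mathcal{P}\ni i$, and (\ref{def:generatprqsr}) exhibits $Z_i\prod_{j\in\mathcal{P}\setminus\{i\}}Z_j$ as equal to $T^{\omega(\mathcal{P})}\prod_{i'\in\mathcal{P}'}Z_{i'}^{k_{i'}}$; since $\mathcal{P}\cap\mathcal{P}'=\emptyset$, iterating this procedure inverts all $Z_i$'s (using that $T$ is a unit of $\Lambda$). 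Combining these two facts places every $y_j^{\pm 1}$ in the image of $\psi_u$.

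For injectivity, I will factor $\tilde\psi_u$ through $\Lambda[Z_1,\ldots,Z_m]/SR_\omega(X)$ and identify this quotient with the subring $R:=\Lambda[\overline z_1(u),\ldots,\overline z_m(u)]\subseteq\Lambda[y^{\pm}]$. A combinatorial argument standard in toric geometry (the $T$-deformed Cox presentation following \cite{batyrev:qcrtm92}) shows that the induced map is an isomorphism, i.e., every algebraic relation among the $\overline z_i(u)$'s is a $\Lambda$-linear consequence of (\ref{def:generatprqsr}). Granting this, the computation in the well-definedness step identifies the image of $P(X)$ in $R$ with the ideal generated by $y_j\partial_{y_j}\mathfrak{PO}_0^u$, and the unit property for the $Z_i$'s then promotes this to the full localization $\Lambda[y^{\pm}]$, giving $\ker\psi_u\subseteq P(X)+SR_\omega(X)$. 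The main obstacle is the structural identification $\Lambda[Z]/SR_\omega(X)\cong R$ together with the unit property for $Z_i$; both depend essentially on smoothness and completeness of the fan, and on retaining the $T^{\omega(\mathcal{P})}$-corrections in $SR_\omega(X)$, and constitute the algebraic core where the combinatorial content of Batyrev's Theorem \ref{thm:batyrev} enters.
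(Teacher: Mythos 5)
Your overall architecture coincides with the paper's: verify the two families of relations (the Euler identity (\ref{euler}) for $P(X)$, the monomial identity for $SR_{\omega}(X)$), identify $\Lambda[z_1,\cdots,z_m]/SR_{\omega}(X)$ with a Laurent ring, and then match the image of the linear-relation ideal with the Jacobian ideal. The well-definedness step is correct and is exactly the paper's computation in Proposition \ref{prop:psiwelldefined} and Lemma \ref{lem:critical}.

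However, the two places you yourself flag as ``the algebraic core'' contain genuine gaps rather than routine verifications. First, your argument that each $Z_i$ becomes a unit modulo $SR_{\omega}(X)$ is circular: the relation $Z_i\prod_{j\in\mathcal P\setminus\{i\}}Z_j = T^{\omega(\mathcal P)}\prod_{i'\in\mathcal P'}Z_{i'}^{k_{i'}}$ exhibits $Z_i$ as invertible only once the $Z_{i'}$ with $i'\in\mathcal P'$ are already known to be invertible (unless $\mathcal P'=\emptyset$), and ``iterating this procedure'' has no termination guarantee --- disjointness of $\mathcal P$ and $\mathcal P'$ does not prevent the iteration from cycling back to $i$. (It is also not immediate that every index belongs to some primitive collection.) The paper instead uses completeness of the fan to write $-v_i=\sum_{j\in I}k_jv_j$ with $k_j\ge 0$ over a single cone, so that $z_i\prod_{j\in I}z_j^{k_j}\equiv T^{c} \mod SR_{\omega}(X)$ with $T^c$ a unit of $\Lambda$; but this binomial is \emph{not} among the defining generators of $SR_{\omega}(X)$, so one must first prove that every binomial $r(A)$ with $A\in\ker(pr)$ lies in $SR_{\omega}(X)$ --- this is Lemma \ref{lem:pcolgee}. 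Second, the ``standard combinatorial argument'' you defer for injectivity (that all algebraic relations among the $\overline z_i(u)$ are $\Lambda$-linear consequences of the quantum Stanley--Reisner generators) is precisely the same Lemma \ref{lem:pcolgee}, which the paper proves by a non-trivial induction on $E(A)=\sum_{i\in I}a_i\ell_i(u_0)+\sum_{j\in J}b_j\ell_j(u_0)$: one of the two supports of $A$ must contain a primitive collection, and substituting the corresponding generator strictly decreases $E$, using $\omega(\mathcal P)>0$ (Lemma \ref{omegaP>0}) and $\ell_h(u_0)>0$ for $u_0\in\operatorname{Int}P$. Without this lemma both your invertibility step and your injectivity step are unsupported, so the proof is incomplete as written.
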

We remark that we do {\it not} assume that $X$ is Fano in
Proposition \ref{nonfanoadded}. We also remark that for our main
purpose to calculate $\mathfrak M_0(\mathfrak{Lag}(X))$, Proposition
\ref{nonfanoadded} suffices. Proposition \ref{nonfanoadded} is a
rather simple algebraic result whose proof does not require study of
pseudo-holomorphic discs or spheres.

\begin{proof}[Proof of Proposition \ref{nonfanoadded}]

We start with the following proposition.

\begin{prop}
\label{prop:psiwelldefined}
The assignment
\begin{equation}\label{formuladef:psitilde}
\widehat{\psi}_u(z_i) =\overline z_i(u)
\end{equation}
induces a well-defined ring isomorphism
\begin{equation}\label{formula:psitilde}
\widehat{\psi}_u : \frac{\Lambda\left[z_1,\cdots,z_m\right]}{SR_{\omega}(X)} \to
\Lambda[y_1,\cdots,y_n,y_1^{-1},\cdots,y_n^{-1}].
\end{equation}
\end{prop}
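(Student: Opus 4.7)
The plan is to exploit a natural $\Z^n$-grading present on both sides. Give the polynomial ring $\Lambda[z_1,\ldots,z_m]$ the grading that places $z_i$ in weight $v_i \in N \cong \Z^n$; by (\ref{qsrprep}) the generators $z(\mathcal{P})$ of $SR_\omega(X)$ are homogeneous (both monomials have weight $\sum_{i\in\mathcal{P}} v_i = \sum_{i'} k_{i'}v_{i'}$), so this grading descends to the quotient. On the target, $y^v$ has weight $v$, and the assignment $\widehat{\psi}_u(z_i) = \overline{z}_i(u) = y^{v_i} T^{\ell_i(u)}$ preserves the grading. The proposition will follow from three steps: (i) $\widehat{\psi}_u$ annihilates $SR_\omega(X)$; (ii) $\widehat{\psi}_u$ is surjective; (iii) $\widehat{\psi}_u$ is injective.

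For (i), direct calculation gives $\widehat{\psi}_u(\prod_{i\in\mathcal{P}}z_i) = y^{\sum_\mathcal{P} v_i}\, T^{\sum_\mathcal{P}\ell_i(u)}$ and $\widehat{\psi}_u(T^{\omega(\mathcal{P})}\prod_{i'}z_{i'}^{k_{i'}}) = T^{\omega(\mathcal{P})}\, y^{\sum k_{i'}v_{i'}}\, T^{\sum k_{i'}\ell_{i'}(u)}$; these coincide by (\ref{qsrprep}) for the $y$-exponents and by the definition (\ref{def:symparea2}) of $\omega(\mathcal{P})$ for the $T$-exponents. For (ii), since $T$ is invertible in $\Lambda$, the image contains every $y^{v_i}$; the completeness of $\Sigma$ together with the unimodularity of each maximal cone (smoothness of $X$) guarantees that every $w \in \Z^n$ is a nonnegative integer combination of the $v_i$'s. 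Applying this to $\pm e_j$ for each standard basis vector yields all $y^{\pm e_j}$, hence $\widehat{\psi}_u$ is surjective.

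The main content is (iii). By the grading compatibility, injectivity reduces to proving that each weight-$v$ component of $\Lambda[z_1,\ldots,z_m]/SR_\omega(X)$ is one-dimensional over $\Lambda$; concretely, that whenever $\alpha,\beta\in\Z_{\geq 0}^m$ satisfy $\sum\alpha_i v_i = \sum \beta_i v_i$, one has
$$
z^\alpha \equiv T^{\sum (\alpha_i - \beta_i)\ell_i(u)}\, z^\beta \pmod{SR_\omega(X)}.
$$
I would prove this by reducing each monomial to a canonical form. If $\mathrm{supp}(\alpha)$ does not generate a cone of $\Sigma$, then it contains a primitive collection $\mathcal{P}$ by Definition \ref{prim}; since then $\prod_{i\in\mathcal{P}}z_i$ divides $z^\alpha$, the relation $z(\mathcal{P})\in SR_\omega(X)$ gives $z^\alpha \equiv T^{\omega(\mathcal{P})} z^{\alpha'}$ modulo $SR_\omega(X)$, where $\alpha' = \alpha - \mathbf{1}_\mathcal{P} + \sum_{i'}k_{i'}\mathbf{1}_{i'}$. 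A direct computation using (\ref{def:symparea2}) shows $\sum\alpha'_i\ell_i(u) = \sum\alpha_i\ell_i(u) - \omega(\mathcal{P})$; since this quantity is nonnegative for $u\in P$ and $\omega(\mathcal{P}) > 0$ by Lemma \ref{omegaP>0} (with a uniform positive lower bound, as there are only finitely many primitive collections), the iterated reduction must terminate. The endpoint $\alpha_{\mathrm{can}}$, characterized by $\mathrm{supp}(\alpha_{\mathrm{can}})$ generating a cone of $\Sigma$, is then the unique nonnegative integer expansion of $\sum\alpha_i v_i$ in the minimal cone of $\Sigma$ containing this vector, again by unimodularity. The accumulated $T$-exponent along any reduction path telescopes to $\sum\alpha_i\ell_i(u) - \sum\alpha_{\mathrm{can},i}\ell_i(u)$, which depends only on $\alpha$ (not on the path), giving $z^\alpha \equiv T^{\sum(\alpha_i - \alpha_{\mathrm{can},i})\ell_i(u)}z^{\alpha_{\mathrm{can}}}$ and similarly for $\beta$; the displayed equivalence follows by comparing. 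The main obstacle is verifying termination and path-independence of the $T$-exponent, both of which are controlled by the monotonicity of the invariant $\sum\alpha_i\ell_i(u)$ together with the uniform positivity of $\omega(\mathcal{P})$.
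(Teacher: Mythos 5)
Your proof is correct, and its engine coincides with the paper's: the assertion that any two monomials $z^\alpha$, $z^\beta$ of the same $\Z^n$-weight are congruent modulo $SR_\omega(X)$ up to an explicit power of $T$ is precisely the paper's Lemma \ref{lem:pcolgee} (there phrased as $r(A) \in SR_\omega(X)$ for $A$ in the kernel of the boundary map), and your proof of it --- reduce via primitive collections, with termination controlled by the decreasing invariant $\sum_i \alpha_i\ell_i(u)$ and the positivity of $\omega(\mathcal P)$ from Lemma \ref{omegaP>0} --- is the paper's induction on $E(A)$ in different clothing. Where you genuinely diverge is in how bijectivity is concluded. The paper first proves each $z_i$ is invertible in the quotient (Corollary \ref{lem:zinvert}; this is your surjectivity step), then builds an explicit inverse $\widehat\phi_u$ by choosing $n$ of the $v_i$ forming a unimodular basis (Condition \ref{cond:invertible}), setting $\widehat\phi_u(y_i^{\pm1})$ via the inverse matrix $(v^{i,j})$, and verifying $\widehat\phi_u\circ\widehat\psi_u=\mathrm{id}$ with the valuation $\mathfrak v_u$ and grading $\rho$. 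You instead use the $\Z^n$-grading to reduce injectivity to one-dimensionality of each graded piece of the source, which follows from the monomial-reduction lemma; this avoids Condition \ref{cond:invertible} and the explicit inverse altogether, whereas the paper's construction pays for itself later because the explicit formula expressing $y_i$ in terms of the $\overline z_j$ is reused (e.g.\ in Lemma \ref{lemma:chooseu}). One point you should make fully explicit: the endpoint $\alpha_{\mathrm{can}}$ of your reduction is path-independent because a strictly positive combination of the generators of a regular cone lies in the relative interior of that cone, so the cone, and hence (by linear independence of its generators) the expansion, is determined by the weight vector alone; with that noted, the telescoping of the $T$-exponent and the injectivity argument go through.
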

\begin{proof}
Let $\mathcal P$ be a primitive collection and
$\mathcal P'$, $k_{i'}$ be as in (\ref{qsrprep}). We calculate
\be\label{eq:prodbzrzi}
\prod_{i\in\mathcal P} \overline z_{i}(u)
= \prod_{i\in \mathcal P}y_1^{v_{i,1}}\cdots
y_n^{v_{i,n}}T^{\ell_i(u)}
\ee
by (\ref{formdef:overzi}).
On the other hand,
$$\aligned
\prod_{i'\in \mathcal P'}
\overline z_{i'}^{k_{i'}}(u)
&= \prod_{i'\in \mathcal P'} y_1^{k_{i'}v_{i',1}}\cdots
y_n^{k_{i'}v_{i',n}}T^{k_{i'}\ell_{i'}(u)} \\
&= \prod_{i\in \mathcal P}y_1^{v_{i,1}}\cdots
y_n^{v_{i,n}}
\prod_{i'\in \mathcal P'}T^{k_{i'}\ell_{i'}(u)}
\endaligned$$
by (\ref{qsrprep}). Moreover
$$
\sum_{i\in \mathcal P} \ell_i(u) - \sum_{i\in \mathcal P'} k_{i'}\ell_{i'}(u)
= \omega(\mathcal P)
$$
by (\ref{def:symparea2}). Therefore
$$
\prod_{i\in\mathcal P} \overline z_{i}(u) = T^{\omega(\mathcal P)} \prod_{i'\in \mathcal P'}
\overline z_{i'}^{k_{i'}}(u)
$$
in $\Lambda[y_1,\cdots,y_n,y_1^{-1},\cdots,y_n^{-1}]$.
In other words, (\ref{formuladef:psitilde}) defines a well-defined ring homomorphism
(\ref{formula:psitilde}).
\par
We now prove that $\widehat\psi_u$ is an isomorphism. Let
$$
pr : \Z^m \cong \pi_2(X;L(u)) \longrightarrow \Z^n \cong \pi_1(L(u))
$$
be the homomorphism induced by the boundary map
$
pr([\beta]) = [\partial \beta].
$
(See (\ref{kexact}).)
We remark $pr(c_1, \dots, c_m)=(d_1, \dots, d_n)$ with $d_j = \sum_i c_i v_{i,j}$. Let $A=\sum_ic_i \beta_i$ be an element in the kernel of $pr$.
We write it as
$$
\sum_{i\in I} a_i \beta_i - \sum_{j\in J} b_{j}\beta_{j}
$$
where $a_i$ $b_{j}$ are positive and $I \cap J = \emptyset$.
We define
\be\label{eq:rA}
r(A) = \prod_{i\in I} z_i^{a_i} - T^{\sum_{i} a_i\ell_i(u) - \sum_{j}
b_{j}\ell_{j}(u)}\prod_{j\in J} z_{j}^{b_{j}}.
\ee
We remark that a generator of quantum Stanley-Reisner ideal corresponds to
$r(A)$ for which $I$ is a primitive collection $\mathcal P$ and $J = \mathcal P'$.
We also remark that the case $I = \emptyset$ or $J=\emptyset$ is included.
\begin{lem}\label{lem:pcolgee}
$$
r(A) \in SR_{\omega}(X).
$$
\end{lem}
\begin{proof}
This lemma is proved in \cite{batyrev:qcrtm92}. We include its proof
(which is different from one in \cite{batyrev:qcrtm92}) here for reader's convenience.
We prove the lemma by an induction over the values
$$
E(A) = \sum_{i\in I} a_i \ell_i(u_0) + \sum_{j\in J} b_{j} \ell_{j}(u_0).
$$
Here we fix a point $u_0\in \text{Int} P$ during the proof of Lemma \ref{lem:pcolgee}.
\par
Since $I\cap J = \emptyset$, at least one of $\{ v_i \mid i\in I\}$, $\{ v_j \mid j\in J\}$
can not span a cone that is a member of the fan $\Sigma$. Without loss of
generality, we assume that $\{ v_i \mid i\in I\}$ does not span such a cone.
Then it contains a subset $\mathcal P \subset I$ that is a primitive collection.
We take $\mathcal P'$, $k_{i'}$ as in (\ref{qsrprep}) and define
\be\label{eq:Z}
Z = \prod_{i\in I} z_i^{a_i} - T^{\omega(\mathcal P)}\prod_{i\in I\setminus \mathcal P} z_i^{a_i}
\prod_{i\in \mathcal P} z_i^{a_i-1} \prod_{i''\in\mathcal P'} z_{i''}^{k_{i''}}.
\ee
Then $Z$ lies in $SR_{\omega}(X)$ by construction. We recall from
Lemma \ref{omegacap} that the values
$$
\sum_{i\in \mathcal P} \ell_i(u) - \sum_{i\in \mathcal P'}k_i\ell_{i}(u) = \omega(\mathcal P)
$$
are independent of $u$ and positive. By the definitions (\ref{eq:rA}), (\ref{eq:Z})
of $r(A)$ and $Z$, we can express
$$
r(A) - Z = T^{\omega(\mathcal P) + c} \left(\prod_{h\in K} z_h^{n_h}\right) r(B)
$$
for an appropriate $B$ in the kernel of $pr$ and a constant $c$.
Moreover we have
$$
E(B) + 2\sum_{h \in K} n_h\ell_{h}(u_0) + \omega(\mathcal P) = E(A).
$$
Since $u_0 \in \text{Int}\,P$ it follows that $\ell_h(u_0) > 0$ which
in turn gives rise to $E(B) < E(A)$. The induction hypothesis
then implies $r(B) \in SR_{\omega}(X)$.
The proof of the lemma is now complete.
\end{proof}
\begin{cor}\label{lem:zinvert} $z_i$ is invertible in
$$
\frac{\Lambda\left[z_1,\cdots,z_m\right]}{SR_{\omega}(X)}.
$$
\end{cor}
\begin{proof}
Since $X$ is compact, the vector $-v_i$
is in some cone spanned by $v_j$ ($j\in I$). Namely
$$
-v_i = \sum_{j\in I} k_j v_j
$$
where $k_j$ are nonnegative integers. Then
$$
T^{\ell_i(u) + \sum_j k_j\ell_j(u)} = z_i \prod_{j\in I} z_j^{k_j} \mod SR_\omega(X)
$$
by Lemma \ref{lem:pcolgee}. Since $T^{\ell_i(u) + \sum_j k_j\ell_j(u)}$ is
invertible in the field $\Lambda$, it follows that
$\prod_{j\in I} z_j^{k_j}$ defines the inverse of $z_i$ in the
quotient ring.
\end{proof}

We recall from Lemma \ref{omegacap} that $\ell_i(u) + \sum_j k_j\ell_j(u)$
is independent of $u$. We {\it define}
\begin{equation}\label{def;z-1}
z_i^{-1} = T^{-\ell_i(u) - \sum_j k_j\ell_j(u)}\prod_{j\in I} z_j^{k_j}.
\end{equation}
(Note we have not yet proved that
${\Lambda\left[z_1,\cdots,z_m\right]}/{SR_{\omega}(X)}$
is an integral domain. This will follow later when we prove Proposition \ref{prop:psiwelldefined}.)

Since $v_1,\cdots,v_m$ generates the lattice $\Z^n$, we can always assume
the following by changing the order of $v_i$, if necessary.

\begin{conds}\label{cond:invertible}
The determinant of the $n\times n$ matrix $(v_{i,j})_{i,j=1,\cdots,n}$ is $\pm 1$.
\end{conds}

Let $(v^{i,j})$ be the inverse matrix of $(v_{i,j})$. Namely $
\sum_j v^{i,j}v_{j,k} = \delta_{i,k}. $ Condition
\ref{cond:invertible} implies that each $v^{i,j}$ is an integer.
Inverting the matrix $(v_{i,j})$, we obtain
\begin{equation}\label{eq:ycomesz}
y_i = T^{-c_i(u)}\prod_{j=1}^n \overline z_i^{v^{i,j}}
\end{equation}
from (\ref{eq:prodbzrzi}) where $ c_i(u) = \sum v^{i,j}\ell_j(u). $
We define using Corollary \ref{lem:zinvert}
$$
\widehat{\phi}_u(y_i^{\pm 1}) = T^{- \pm c_i(u)}\prod_{j=1}^n z_j^{\pm v^{i,j}} \in
\frac{\Lambda\left[z_1,\cdots,z_m\right]}{SR_{\omega}(X)}.
$$
More precisely, we plug (\ref{def;z-1}) here if $\pm v^{i,j}$ is negative.
\par
The identity
$
\widehat{\psi}_u \circ \widehat{\phi}_u = id
$
is a consequence of (\ref{eq:ycomesz}). We next calculate
$
(\widehat{\phi}_u\circ \widehat{\psi}_u)(z_{h}) =
\widehat{\phi}_u(\overline z_h(u))
$
and prove
$$
(\widehat{\phi}_u\circ \widehat{\psi}_u)(z_{h})
= T^{\ell_h(u)} \widehat{\phi}_u(y_1^{v_{h,1}}\cdots y_n^{v_{h,n}})
= T^{e(h;u)}\prod_{j=1}^n z_j^{m_j},
$$
where $m_j \ge 0$ and
\begin{equation}\label{equ:enevec}
v_h = \sum m_j v_j, \qquad \ell_h(u) = e(h;u) + \sum m_j \ell_j(u) :
\end{equation}
To see (\ref{equ:enevec}),
we consider any {\it monomial} $Z$ of $y_i,z_i,\overline z_i,T^{\alpha}$.
We define its multiplicative valuation $\mathfrak v_u(Z) \in \R$ by putting
$$
\mathfrak v_u(y_i) = 0, \quad \mathfrak v_u(z_i) = \mathfrak v_u(\overline z_i) = \ell_i(u),
\quad \mathfrak v_u(T^{\alpha}) = \alpha.
$$
We also define a (multiplicative) grading $\rho(Z) \in \Z^n$ by
$$
\rho(y_i) = \text{\bf e}_i, \quad \rho(z_i) = \rho(\overline z_i) = v_i,
\quad \rho(T^{\alpha}) = 0.
$$
and by $\rho(ZZ') = \rho(Z) + \rho(Z')$.
We remark that $\mathfrak v_u$ and $\rho$ are consistent with (\ref{formdef:overzi}).
We next observe that both
$\mathfrak v_u$ and $\rho$ are preserved by
$\widehat{\psi}_u$, $\widehat{\phi}_u$ and by (\ref{def;z-1}).
This implies (\ref{equ:enevec}).
\par
Now we use Lemma \ref{lem:pcolgee} and (\ref{equ:enevec}) to conclude
$$
z_h - T^{e(h;u)}\prod_{j=1}^n z_j^{m_j} \in SR_{\omega}(X).
$$
The proof of Proposition \ref{prop:psiwelldefined} is now complete.
\end{proof}

Next we prove

\begin{lem}\label{lem:critical} Let $P(X)$ be the linear relation
ideal defined in Definition \ref{def:batyrevqcr}. Then
$$
\widehat{\psi}_u (P(X)) = \left(\frac{\partial
\mathfrak{PO}_0^u}{\partial y_i}; i =1, \cdots, n \right).
$$
\end{lem}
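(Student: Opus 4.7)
The proof is a direct computation using the Euler-type identity $(\ref{euler})$ together with the definition $\mathfrak{PO}_0^u = \sum_{i=1}^m \overline z_i(u)$ from $(\ref{poz})$. The plan is to apply $\widehat{\psi}_u$ to the generators $\sum_{i=1}^m v_{i,j} z_i$ of $P(X)$ and recognize the image as $y_j \cdot \partial \mathfrak{PO}_0^u/\partial y_j$, then invoke invertibility of $y_j$ in the Laurent polynomial ring to identify the ideals.

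More explicitly, I would first compute, for each $j = 1,\ldots,n$,
\[
\widehat{\psi}_u\!\left(\sum_{i=1}^m v_{i,j} z_i\right) = \sum_{i=1}^m v_{i,j}\, \overline z_i(u).
\]
Then I would use the Euler relation $y_j \frac{\partial \overline z_i}{\partial y_j} = v_{i,j}\, \overline z_i(u)$ from $(\ref{euler})$ to rewrite the right-hand side as
\[
\sum_{i=1}^m y_j \frac{\partial \overline z_i(u)}{\partial y_j} = y_j \frac{\partial}{\partial y_j}\!\left(\sum_{i=1}^m \overline z_i(u)\right) = y_j \frac{\partial \mathfrak{PO}_0^u}{\partial y_j}.
\]

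Since $y_j$ is a unit in $\Lambda[y_1,\ldots,y_n,y_1^{-1},\ldots,y_n^{-1}]$, multiplication by $y_j$ is an automorphism of the ring and in particular preserves ideals. Hence the ideal generated by $\{y_j \, \partial \mathfrak{PO}_0^u/\partial y_j\}_{j=1}^n$ coincides with the ideal generated by $\{\partial \mathfrak{PO}_0^u/\partial y_j\}_{j=1}^n$. Combining this with the fact that $\widehat{\psi}_u$ is a ring isomorphism onto its image (so it maps the ideal generated by the $\sum_i v_{i,j} z_i$ onto the ideal generated by their images) gives
\[
\widehat{\psi}_u(P(X)) = \left(\frac{\partial \mathfrak{PO}_0^u}{\partial y_j};\; j = 1,\ldots,n\right),
\]
as desired.

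No step is really hard; the only thing to be careful about is the role of $y_j$ being invertible, which is what makes the passage from the $y_j\, \partial_{y_j}\mathfrak{PO}_0^u$ to $\partial_{y_j}\mathfrak{PO}_0^u$ an equality of ideals rather than just an inclusion. This is automatic here since we are working over the Laurent polynomial ring, not the polynomial ring.
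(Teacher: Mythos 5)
Your proof is correct and is essentially identical to the paper's: both apply $\widehat{\psi}_u$ to the generators $\sum_i v_{i,j} z_i$, invoke the Euler identity $(\ref{euler})$ to identify the image with $y_j\,\partial \mathfrak{PO}_0^u/\partial y_j$, and use invertibility of $y_j$ in the Laurent polynomial ring to conclude equality of ideals. No gaps.
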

\begin{proof}
Let $\sum_{i=1}^m v_{i,j} z_i$ be in $P(X)$.
Then we have
$$
\widehat{\psi}_u\left(\sum_{i=1}^m v_{i,j} z_i\right)
= \sum_{i=1}^m v_{i,j} \overline z_i
= \sum_{i=1}^m y_j\frac{\partial \overline z_i}{\partial y_j}
= y_j\frac{\partial \mathfrak{PO}_0^u}{\partial y_j}
$$
by (\ref{formdef:overzi}) and (\ref{euler}). Since $y_j$'s are
invertible in $\Lambda[y_1,\cdots,y_n,y_1^{-1},\cdots,y^{-1}_n]$,
this identity implies the lemma.
\end{proof}

The proof of Theorem \ref{QHequalMilnor2}
and of Proposition \ref{nonfanoadded} is now complete.
\end{proof}
\begin{rem}\label{5.7rational}
Proposition \ref{nonfanoadded} holds over $\Lambda^R$ coefficient
for arbitrary commutative ring $R$ with unit. The proof is the same.
\end{rem}
\par\medskip
We define
$$
\psi_{u',u} : Jac(\mathfrak{PO}_0^u) \to Jac(\mathfrak{PO}_0^{u'})
$$
by
\begin{equation}\label{eq:psizibar}
\psi_{u',u}(\overline z_i(u)) = \overline z_i(u')= T^{\ell_i(u') - \ell_i(u)} \overline z_i(u).
\end{equation}
It is an isomorphism. We have
$$
\psi_{u',u} \circ {\psi}_u= {\psi}_{u'}.
$$
The well-definedness $\psi_{u',u}$ is proved from this formula or by checking directly.
\par
In case no confusion can occur, we identify
$Jac(\mathfrak{PO}_0^u)$, $Jac(\mathfrak{PO}_0^{u'})$ by
$\psi_{u',u}$ and denote them by $Jac(\mathfrak{PO}_0)$. Since
$\psi_{u',u}(\overline z_i(u)) = \overline z_i(u')$ we write them
$\overline z_i$ when we regard it as an element of
$Jac(\mathfrak{PO}_0)$. Note $\psi_{u',u}(y_i) \ne y_i$. In case we
regard $y_i \in Jac(\mathfrak{PO}_0^u)$ as an element of
$Jac(\mathfrak{PO}_0)$ we write it as $y_i(u):= \psi_{u}(y_i)$.
\begin{rem}\label{rem:qcanbeused}
The above proof of Theorem \ref{QHequalMilnor2} uses Batyrev's
presentation of quantum cohomology ring and is not likely
generalized beyond the case of compact toric manifolds. (In fact the
proof is purely algebraic and do {\it not} contain serious study of
pseudo-holomorphic curve, except Proposition \ref{prop:qsrholds},
which we quote without proof and Theorem \ref{potential}, which
is a minor improvement of an earlier result of \cite{cho-oh}.) There is an alternative way of
constructing the ring homomorphism $\psi_u$ which is less
computational. (This will give a new proof of Proposition
\ref{prop:qsrholds}.) We will give this conceptual proof in a sequel
to this paper.
\par
We use the operations
$$
\mathfrak q_{1,k;\beta} : H(X;\Q)[2] \otimes B_kH(L(u);\Q)[1]
\to H(L(u);\Q)[1]
$$
which was introduced by the authors in 
section 3.8 \cite{fooo06} (= section 13 \cite{fooo06pre}).
Using the class $z_i \in H^2(X;\Z)$ the Poincar\'e dual to
$\pi^{-1}(\partial_iP)$ we put
\begin{equation}\label{for:useqtodefine}
\psi_u(z_i) = \sum_k\sum_{i=1}^m T^{\beta_i\cap
\omega/2\pi}\int_{L(u)}\mathfrak q_{1,k;\beta_i} (z_i \otimes
x^{\otimes k}).
\end{equation}
Here we put $x = \sum x_i \text{\bf e}_i$ and
the right hand side is a formal power series of $x_i$ with coefficients
in $\Lambda$.
\par
Using the description of the moduli space defining the operators
$\mathfrak q_{1,k;\beta}$
(See section \ref{sec:calcpot}.)
it is easy to see that the right hand side of (\ref{for:useqtodefine}) coincides
with the definition of $\overline z_i$ in the current case, when $X$ is Fano toric.
Extending the expression (\ref{for:useqtodefine}) to an arbitrary homology class
$z$ of arbitrary degree we obtain
\begin{equation}\label{for:useqtodefine2}
\psi_u(z) = \sum_k\sum_{\beta;\mu(\beta) = \deg z} T^{\beta \cap
\omega/2\pi}\int_{L(u)}\mathfrak q_{1,k;\beta}(z \otimes x^{\otimes
k}).
\end{equation}
Since $\mu(\beta) = \deg z$, $\mathfrak q_{1,k;\beta}(z \otimes
x^{\otimes k}) \in H^n(L(u);\Q)$. One can prove that
(\ref{for:useqtodefine2}) defines a ring homomorphism from quantum
cohomology to the Jacobian ring $Jac(\mathfrak{PO}^u)$. We may
regard $Jac(\mathfrak{PO}^u)$ as the moduli space of deformations of
Floer theories of Lagrangian fibers of $X$. (Note the Jacobian ring
parameterizes deformations of a holomorphic function up to an
appropriate equivalence. In our case the equivalence is the right
equivalence, that is, the coordinate change of the {\it domain}.)
\par
Thus (\ref{for:useqtodefine2}) is a particular case of the ring homomorphism
$$
QH(X) \to HH(\mathfrak{Lag}(X))
$$
where $HH(\mathfrak{Lag}(X))$ is the Hochschild cohomology of Fukaya category
of $X$.
(We remark that Hochschild cohomology parameterizes deformations
of $A_{\infty}$ category.)
Existence of such a homomorphism is a folk theorem (See \cite{konts:hms}) which is verified by various
people in various favorable situations. (See for example \cite{Aur07}.) It is conjectured to be an isomorphism
under certain conditions by various people including P. Seidel and M. Kontsevich.
\par
This point of view is suitable for generalizing our story to more
general $X$ (to non-Fano toric manifolds, for example) and also for
including big quantum cohomology group into our story. (We will then
also need to use the operators $\mathfrak q_{\ell,k}$ mentioned
above for $\ell\ge 2$.)
\par
These points will be discussed in subsequent papers in this series of papers.
In this paper we follow more elementary approach exploiting
the known calculation of quantum cohomology of toric manifolds,
although it is less conceptual.
\end{rem}

\begin{rem}\label{CSMT}
There are two other approaches towards a proof of Proposition \ref{prop:qsrholds}
besides the fixed point localization. One is written by Cieliebak
and Salamon \cite{ciel-sal} which uses vortex equations (gauged
sigma model) and the other is written by McDuff and Tolman
\cite{mc-tol} which uses Seidel's result \cite{seidel:auto}.
\end{rem}

\section{Localization of quantum cohomology ring at moment polytope}
\label{sec:exa2}

In this section, we discuss applications of Theorem \ref{QHequalMilnor}.
In particular, we prove Theorem \ref{lageqgeopt}.
(Note Theorem \ref{lageqgeopt0} is a consequence of Theorem \ref{lageqgeopt}.)
The next theorem and Theorem \ref{QHequalMilnor} immediately imply (1) of Theorem \ref{lageqgeopt}.

\begin{thm}\label{thm:geometricpt}
There exists a bijection
$$
\text{\rm Crit}(\mathfrak{PO}_0) \cong
Hom(Jac(\mathfrak{PO}_0);\Lambda^{\C}).
$$
\end{thm}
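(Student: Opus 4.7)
The plan is to construct explicit inverse maps using the data $\overline z_i(u) = e^{\langle x,v_i\rangle}T^{\ell_i(u)}$ and the identification from Lemma \ref{lem:critical}, which equates the Jacobian ideal with the image of the linear relation ideal $P(X)$. Without loss of generality, I assume Condition \ref{cond:invertible} and I fix once and for all a base point $u_0 \in \R^n$, so that $Jac(\mathfrak{PO}_0)$ is realized as $Jac(\mathfrak{PO}_0^{u_0})$.

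\emph{Forward map.} Given $(\mathfrak x,u) \in \mathrm{Crit}(\mathfrak{PO}_0)$, define
$$
\varphi_{\mathfrak x,u}\colon \Lambda[y_1^{\pm},\dots,y_n^{\pm}] \longrightarrow \Lambda^{\C},\qquad \varphi_{\mathfrak x,u}(y_j) = T^{u_j-u_{0,j}}\,e^{\mathfrak x_j}.
$$
Here $e^{\mathfrak x_j}\in(\Lambda_0^{\C})^{\times}$ is well-defined because $\mathfrak x_j\in\Lambda_0^{\C}$, and the periodicity in $2\pi\sqrt{-1}\Z$ shows the definition is independent of the chosen representative of $\mathfrak x_j$. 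A direct computation gives
$$
\varphi_{\mathfrak x,u}(\overline z_i(u_0))
   = e^{\langle\mathfrak x,v_i\rangle}\,T^{\ell_i(u_0)+\sum_j v_{i,j}(u_j-u_{0,j})}
   = e^{\langle\mathfrak x,v_i\rangle}\,T^{\ell_i(u)}.
$$
Therefore
$$
\varphi_{\mathfrak x,u}\!\left(y_j\,\tfrac{\partial \mathfrak{PO}_0^{u_0}}{\partial y_j}\right)
   = \sum_{i=1}^m v_{i,j}\,\varphi_{\mathfrak x,u}(\overline z_i(u_0))
   = \sum_{i=1}^m v_{i,j}\,e^{\langle\mathfrak x,v_i\rangle}T^{\ell_i(u)}
   = \tfrac{\partial \mathfrak{PO}_0}{\partial x_j}(\mathfrak x;u)=0.
$$
Since $y_j$ is a unit, this kills the Jacobian ideal and $\varphi_{\mathfrak x,u}$ descends to a $\Lambda$-algebra map $Jac(\mathfrak{PO}_0^{u_0})\to\Lambda^{\C}$.

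\emph{Inverse map.} Given a $\Lambda$-algebra homomorphism $\varphi\colon Jac(\mathfrak{PO}_0^{u_0})\to\Lambda^{\C}$, the element $\varphi(y_j)$ is a unit in $\Lambda^{\C}$, so it has a well-defined $T$-valuation $\nu_j\in\R$, and $T^{-\nu_j}\varphi(y_j)$ has valuation zero with nonzero leading coefficient in $\C^{\times}$. Set
$$
u_j := u_{0,j}+\nu_j,\qquad \mathfrak x_j := \log\!\bigl(T^{-\nu_j}\varphi(y_j)\bigr),
$$
where the logarithm is defined as $\log c_j + \log(1+\,\text{positive valuation part})$, well-defined in $\Lambda_0^{\C}/(2\pi\sqrt{-1}\Z)$ because the series for $\log(1+\cdot)$ converges non-Archimedeanly. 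The same calculation as above yields
$\varphi(\overline z_i(u_0)) = e^{\langle\mathfrak x,v_i\rangle}T^{\ell_i(u)}$, and then the vanishing of $\varphi$ on the Jacobian ideal (which by Lemma \ref{lem:critical} is $\widehat\psi_{u_0}(P(X))$) translates verbatim into $\partial\mathfrak{PO}_0/\partial x_j(\mathfrak x;u)=0$. Hence $(\mathfrak x,u)\in\mathrm{Crit}(\mathfrak{PO}_0)$.

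\emph{Bijection and independence of $u_0$.} The two constructions are manifestly inverse to each other: starting from $\varphi$, the recipe recovers $\nu_j = \mathfrak v_T(\varphi(y_j))$ and the valuation-zero unit part of $\varphi(y_j)$, which together determine $\varphi(y_j)$ uniquely; starting from $(\mathfrak x,u)$, the valuation $\nu_j$ of $\varphi_{\mathfrak x,u}(y_j) = T^{u_j-u_{0,j}}e^{\mathfrak x_j}$ is exactly $u_j-u_{0,j}$, and the $\log$ of its valuation-zero part is $\mathfrak x_j$ modulo $2\pi\sqrt{-1}\Z$. Independence of the bijection on the auxiliary choice of $u_0$ follows because passing to another base point $u_0'$ produces precisely the isomorphism $\psi_{u_0',u_0}$ of (\ref{eq:psizibar}), under which the maps $\varphi_{\mathfrak x,u}$ transform in the expected way.

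The only mildly delicate point is the bookkeeping around the logarithm: one must verify that the values land in $\Lambda_0^{\C}/(2\pi\sqrt{-1}\Z)$ rather than in some larger quotient, and that the ambiguity in $\log$ is exactly the quotient already built into the definition of $\mathrm{Crit}(\mathfrak{PO}_0)$. This is precisely why the target space of $\mathfrak x$ in Definition \ref{frakM} was set up with this quotient.
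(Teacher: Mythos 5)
Your proposal is correct and follows essentially the same route as the paper: the paper's Lemma \ref{lemma:chooseu} recovers $u$ as the unique point where $\mathfrak v_T(\varphi(y_j(u)))=0$ (your $u_j=u_{0,j}+\nu_j$ is the fixed-base-point version of the same statement, via $y_j(u)=T^{u_{0,j}-u_j}y_j(u_0)$), and then $\mathfrak x$ is obtained by taking the non-Archimedean logarithm of the valuation-zero unit, with the critical-point equation matched to the Jacobian ideal through (\ref{euler}). The only presentational difference is that you verify the forward map explicitly on the ideal generators, which the paper leaves as "straightforward to check."
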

Here the right hand side is the set of unital $\Lambda^{\C}$-algebra homomorphisms.

We start with the following definition

\begin{defn}\label{def:valuation}
For an element $x \in \Lambda \setminus \{0\}$, we define its valuation
$\mathfrak v_T(x)$ as the unique number $\lambda \in \R$ such that
$
T^{-\lambda}x \in \Lambda_{0} \setminus \Lambda_{+}.
$
\end{defn}
\par\smallskip
We note that
$\mathfrak v_T$ is multiplicative non-Archimedean valuation, i.e., satisfies
\beastar
\mathfrak v_T(x + y) &\ge& \min (\mathfrak v_T(x) ,\mathfrak v_T(y)), \\
\mathfrak v_T(x y) &=& \mathfrak v_T(x) + \mathfrak v_T(y).
\eeastar

\begin{lem}\label{lemma:chooseu}
For any $\varphi \in Hom(Jac(\mathfrak{PO}_0);\Lambda^{\C})$ there
exists a unique $u\in M_\R$ such that
\begin{equation}
\mathfrak v_T(\varphi(y_j(u))) = 0
\label{ynorm}\end{equation}
for all $j=1,\cdots,n$.
\end{lem}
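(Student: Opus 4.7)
The plan is to express each generator $y_j(u) \in Jac(\mathfrak{PO}_0)$ explicitly in terms of the $u$-independent elements $\overline z_1, \ldots, \overline z_n$ and an explicit power of $T$, and then translate the normalization condition (\ref{ynorm}) into a solvable linear system in $u$. After reordering the $v_i$ if necessary, Condition \ref{cond:invertible} ensures that the $n \times n$ integer matrix $(v_{i,j})_{1 \le i,j \le n}$ has determinant $\pm 1$; let $(v^{i,j})$ denote its integral inverse. Then formula (\ref{eq:ycomesz}) established during the proof of Proposition \ref{nonfanoadded} yields the identity
$$
y_i(u) \;=\; T^{-c_i(u)} \prod_{j=1}^n \overline z_j^{\,v^{i,j}}, \qquad c_i(u) = \sum_{j=1}^n v^{i,j} \ell_j(u)
$$
in $Jac(\mathfrak{PO}_0)$, where now $\overline z_j$ is regarded as an element of $Jac(\mathfrak{PO}_0)$ independent of $u$ in the sense explained in (\ref{eq:psizibar}).

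Since $\overline z_j$ is a unit in $Jac(\mathfrak{PO}_0)$ by Corollary \ref{lem:zinvert} and $\Lambda^{\C}$ is a field (being the field of fractions of $\Lambda_0^{\C}$), each $\varphi(\overline z_j) \in \Lambda^{\C}$ is nonzero, so the real numbers $a_j := \mathfrak v_T(\varphi(\overline z_j))$ are well-defined for $j=1,\ldots,n$. Applying $\varphi$ to the above identity and using that $\mathfrak v_T$ is a (multiplicative) non-Archimedean valuation gives
$$
\mathfrak v_T\!\bigl(\varphi(y_i(u))\bigr) \;=\; -c_i(u) + \sum_{j=1}^n v^{i,j} a_j \;=\; \sum_{j=1}^n v^{i,j}\bigl(a_j - \ell_j(u)\bigr).
$$
Since $(v^{i,j})$ is invertible over $\Z$, the condition that $\mathfrak v_T(\varphi(y_i(u))) = 0$ for all $i=1,\ldots,n$ is equivalent to the affine linear system $\ell_j(u) = a_j$, $j=1,\ldots,n$. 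By Condition \ref{cond:invertible} the vectors $v_1,\ldots,v_n$ form a $\Z$-basis of $N$, so by (\ref{eq:vij}) the affine functions $\ell_1,\ldots,\ell_n$ give an affine isomorphism $M_\R \to \R^n$; hence this system has a unique solution $u \in M_\R$, establishing both existence and uniqueness.

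A final consistency check is that the answer does not depend on which $n$ of the $v_i$ were selected to realize Condition \ref{cond:invertible}. Given $i > n$, write $v_i = \sum_{j=1}^n m_{ij} v_j$ with $m_{ij} \in \Z$. Then an application of Lemma \ref{lem:pcolgee} (as in the proof of Corollary \ref{lem:zinvert}) produces a relation forcing
$\overline z_i = T^{\,\ell_i(u) - \sum_j m_{ij}\ell_j(u)} \prod_{j=1}^n \overline z_j^{\,m_{ij}}$
in $Jac(\mathfrak{PO}_0)$; combined with $\ell_j(u)=a_j$ for $j \le n$, this automatically yields $\mathfrak v_T(\varphi(\overline z_i)) = \ell_i(u)$. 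The main (mild) obstacle is simply to make sure the valuation $\mathfrak v_T$ interacts cleanly with the identifications $\psi_{u',u}$, which is settled by using the invariant expression via the $\overline z_j$'s; beyond that, the argument reduces to inverting an integral matrix.
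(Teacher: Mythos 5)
Your proof is correct and is essentially the paper's argument run through the inverse matrix: the paper computes $\mathfrak v_T(\varphi(\overline z_i)) = \ell_i(u) + \sum_j v_{i,j}\,\mathfrak v_T(\varphi(y_j(u)))$ directly from (\ref{formdef:overzi}) and then invokes invertibility of $(v_{i,j})$ to reduce (\ref{ynorm}) to the uniquely solvable affine system $\ell_i(u) = \mathfrak v_T(\varphi(\overline z_i))$, exactly as you do via (\ref{eq:ycomesz}). Your closing consistency check corresponds to the paper's remark immediately after the proof that (\ref{form:normofzi2}) then holds automatically for $i = n+1,\cdots,m$.
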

\begin{proof}
We still assume Condition \ref{cond:invertible}. By definition
(\ref{formdef:overzi}) of $\overline z_i$, homomorphism property of
$\varphi$ and multiplicative property of valuation, we obtain
\begin{equation}\label{form:normofzi}
\mathfrak v_T(\varphi(\overline z_i)) = \ell_i(u) + \sum_{j=1}^n v_{i,j} \mathfrak v_T(\varphi(y_j(u))),
\end{equation}
for $i = 1,\cdots,m$. On the other hand,
since $\ell_i(u) = \langle u, v_i \rangle - \lambda_i$
and $(v_{i,j})_{i,j=1,\cdots,n}$ is invertible, there is
a unique $u$ that satisfies
\begin{equation}\label{form:normofzi2}
\mathfrak v_T(\varphi(\overline z_i)) = \ell_i(u)
\end{equation}
for $i=1,\cdots,n$. But by the invertibility of
$(v_{i,j})_{i,j=1,\cdots,n}$ and (\ref{form:normofzi}), this is
equivalent to (\ref{ynorm}) and hence the proof.
\end{proof}

We remark that obviously by the above proof the formula (\ref{form:normofzi2})
automatically holds for $i=n+1,\cdots,m$ and $u$ in Lemma \ref{lemma:chooseu} as well.
\par

\begin{proof}[Proof of Theorem \ref{thm:geometricpt}]
Consider the maps
$$
\Psi_1(\varphi) = \sum_{i=1}^n
(\log \varphi(y_i(u))) \text{\bf e}_i \in H^1(L(u);\Lambda_{0}), \qquad
\Psi_2(\varphi) = u \in M_\R$$
where $u$ is obtained as in Lemma \ref{lemma:chooseu}.
Since $y_i(u) \in \Lambda_{0}\setminus \Lambda_{+}$ it follows that
we can define its logarithm on $\Lambda_{0}$ as a convergent power series
with respect to the non-Archimedean norm.
\par
Set $(\mathfrak x,u) = (\Psi_1(\varphi),\Psi_2(\varphi))$. Since $\varphi$ is
a ring homomorphism from $Jac(\mathfrak{PO}_0) \cong
Jac(\mathfrak{PO}_0^u)$ it follows from the definition of the
Jacobian ring that
$$
\frac{\partial \mathfrak{PO}_0^u}{\partial y_i}(\mathfrak x) = 0.
$$
Therefore by Theorem \ref{homologynonzero},
$HF((L(u),\mathfrak x),(L(u),\mathfrak x);\Lambda) \ne 0$. We have thus defined
$$
\Psi : Hom(Jac(\mathfrak{PO}_0);\Lambda^{\C}) \to \text{\rm Crit}(\mathfrak{PO}_0).
$$
Let $(\mathfrak x,u) \in \text{\rm Crit}(\mathfrak{PO}_0)$. We put
$\mathfrak x = \sum \mathfrak x_i \text{\bf e}_i$. We define a
homomorphism $\varphi
: Jac(\mathfrak{PO}_0) \to \Lambda$ by assigning
$$
\varphi(y_i(u)) = e^{\mathfrak x_i}.
$$
It is straightforward to check that $\varphi$ is well-defined. Then
we define $\Phi(\mathfrak x,u): = \varphi$. It easily follows from definition
that $\Phi$ is an inverse to $\Psi$. The proof of Theorem
\ref{thm:geometricpt} is complete.
\end{proof}

We next work with the (Batyrev) quantum cohomology side.

\begin{defn}\label{defn:opeartor} For each $z_i$,
we define a $\Lambda$-linear map
$
\widehat z_i : QH^{\omega}(X;\Lambda^{\C}) \to QH^{\omega}(X;\Lambda^{\C})
$
by
$
\widehat z_i(z) = z_i \cup_Q z,
$
where $\cup_Q$ is the product in $QH^{\omega}(X;\Lambda^{\C})$.
\end{defn}
Since $QH^{\omega}(X;\Lambda)$ is generated by even degree elements it
follows that it is commutative. Therefore we have
\begin{equation}
\widehat z_i \circ \widehat z_j = \widehat z_j \circ \widehat z_i.
\label{formula:commutez}\end{equation}
\begin{defn}\label{def;weight}
For $\mathfrak w = (\mathfrak w_1,\cdots,\mathfrak w_n) \in (\Lambda^{\C})^n$
we put
$$
QH^{\omega}(X;\mathfrak w)
= \{x \in QH^{\omega}(X;\Lambda^{\C}) \mid (\widehat z_i- \mathfrak w_i)^N x = 0 \quad \text{for $i = 1,\cdots,n$ and large $N$.}\}
$$
We say that $\mathfrak w$ is a {\it weight} of $QH^\omega(X)$
if $QH^{\omega}(X;\mathfrak w) $ is nonzero.
We denote by $W(X;\omega)$ the set of weights of $QH^{\omega}(X)$ .
\end{defn}
We remark that $\mathfrak w_i \ne 0$ since $z_i$ is invertible. (Corollary \ref{lem:zinvert}.)
\begin{rem}
Since $z_i$, $i=1,\cdots,n$ generates $QH^{\omega}(X;\Lambda)$ by Condition
\ref{cond:invertible}, we have the following.
For each $\mathfrak w = (\mathfrak w_1,\cdots,\mathfrak w_n)$, there exists $\mathfrak w_{n+1}, \cdots, \mathfrak w_{m}$ depending only on $\frak w$ such that
$
(\widehat z_i- \mathfrak w_i)^N x = 0
$
also holds for $i=n+1,\cdots,m$, if $N$ is sufficiently large and $x \in QH^{\omega}(X;\mathfrak w)$.
\end{rem}
\begin{prop}\label{thm:factorize}
\begin{enumerate}
\item There exists a factorization of the ring
$$
QH^{\omega}(X;\Lambda^{\C}) \cong \prod_{\mathfrak w \in
W(X;\omega)} QH^{\omega}(X;\mathfrak w).
$$
\par
\item
There exists a bijection
$$
W(X;\omega) \cong Hom(QH^{\omega}(X;\Lambda);\Lambda^{\C}).
$$
\par
\item
$QH^{\omega}(X;\mathfrak w)$ is a local ring and
$(1)$ is the factorization to indecomposables.
\end{enumerate}
\end{prop}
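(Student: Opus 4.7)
The plan is to exhibit the factorization via simultaneous generalized eigenspace decomposition for the commuting family of multiplication operators $\widehat{z}_1, \cdots, \widehat{z}_n$ on the finite-dimensional $\Lambda^{\C}$-algebra $QH^{\omega}(X;\Lambda^{\C})$. By Theorem \ref{thm:batyrev} its $\Lambda^{\C}$-rank equals $\sum_d \text{\rm rank}_{\Q} H_d(X;\Q) < \infty$. The operators $\widehat{z}_i$ commute by (\ref{formula:commutez}), and the universal Novikov field $\Lambda^{\C}$ is algebraically closed, so the characteristic polynomial of each $\widehat{z}_i$ splits over $\Lambda^{\C}$. Standard linear algebra then yields a simultaneous generalized eigenspace decomposition
$$
QH^{\omega}(X;\Lambda^{\C}) = \bigoplus_{\mathfrak w} QH^{\omega}(X;\mathfrak w),
$$
the sum being taken over those tuples $\mathfrak w$ where the space is nonzero, i.e.\ over $W(X;\omega)$.

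Next, I would upgrade this to a product decomposition of rings. Each summand $QH^{\omega}(X;\mathfrak w)$ is by construction preserved by every operator $\widehat{z}_i$ for $i=1,\cdots,n$; by Condition \ref{cond:invertible} combined with Corollary \ref{lem:zinvert}, these $z_1,\cdots,z_n$ generate the whole Batyrev ring as a $\Lambda^{\C}$-algebra (the remaining $z_{n+1},\cdots,z_m$ are monomials in them up to powers of $T$). Thus each summand is an ideal, the decomposition is multiplicative, and (1) follows. For (2), a $\Lambda^{\C}$-algebra homomorphism $\varphi$ produces the weight $\mathfrak w = (\varphi(z_1),\cdots,\varphi(z_n))$; since $\varphi$ must be nonzero on exactly one factor in the decomposition of (1), this factor is $QH^{\omega}(X;\mathfrak w)$. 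Conversely, given $\mathfrak w \in W(X;\omega)$, project to $QH^{\omega}(X;\mathfrak w)$ and mod out the nilpotent ideal generated by $\{\widehat{z}_i - \mathfrak w_i\}$; the quotient is generated by scalars $\mathfrak w_i$ over $\Lambda^{\C}$ and so equals $\Lambda^{\C}$, giving the desired $\varphi$. These assignments are manifestly inverse.

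For (3), on $QH^{\omega}(X;\mathfrak w)$ each element $z_i - \mathfrak w_i$ is nilpotent, hence the ideal $\mathfrak m_{\mathfrak w} = (z_1 - \mathfrak w_1,\cdots,z_n - \mathfrak w_n)$ lies in the nilradical; since the quotient by $\mathfrak m_{\mathfrak w}$ is $\Lambda^{\C}$ (as the $z_i$ act by scalars $\mathfrak w_i$ modulo $\mathfrak m_{\mathfrak w}$), the ideal $\mathfrak m_{\mathfrak w}$ is maximal and equals the nilradical. Thus $QH^{\omega}(X;\mathfrak w)$ is a finite-dimensional local $\Lambda^{\C}$-algebra, in particular indecomposable, and (1) is the decomposition into indecomposables.

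The main obstacle is the invocation of algebraic closedness of $\Lambda^{\C}$, which is needed both to split characteristic polynomials simultaneously and to identify the residue field at each maximal ideal with $\Lambda^{\C}$. This is a standard Newton-polygon fact about the universal Novikov field but must be cited or proved explicitly; once it is in hand the proposition is pure commutative/linear algebra applied to a finite-dimensional commutative algebra over an algebraically closed field, where the simultaneous generalized eigenspace decomposition coincides with the factorization into local rings at the $\Lambda^{\C}$-valued points of $\mathrm{Spec}$.
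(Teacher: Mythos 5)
Your proof is correct and follows essentially the same route as the paper's: simultaneous generalized eigenspace decomposition of the commuting operators $\widehat z_i$ over the algebraically closed field $\Lambda^{\C}$ (Lemma \ref{algclos}), the observation that the decomposition is multiplicative, identification of weights with algebra homomorphisms via $\mathfrak w_i = \varphi(z_i)$ and the fact that $z_1,\cdots,z_n$ generate, and locality of each factor from uniqueness of the residue map. The only caveat is that you cite Theorem \ref{thm:batyrev} for finite-dimensionality, which is a Fano-only statement, whereas the proposition is also used in the non-Fano case where finiteness of $\operatorname{rank}_{\Lambda}QH^{\omega}(X;\Lambda)$ should instead be drawn from the isomorphism with the Jacobian ring (Proposition \ref{nonfanoadded}).
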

\begin{proof}
Existence of decomposition (1) as a $\Lambda^{\C}$-vector space is a standard linear algebra,
using the fact that $\Lambda^{\C}$ is an algebraically closed field.
(We will prove this fact in appendix as Lemma \ref{algclos}.)
If $z \in QH^{\omega}(X;\mathfrak w)$ and $z' \in QH^{\omega}(X;\mathfrak w')$
then
$$\aligned
(z_i- \mathfrak w_i)^N \cup_Q (z \cup_Q z') &= ((z_i- \mathfrak w_i)^N \cup_Q z) \cup_Q z' = 0, \\
(z_i- \mathfrak w'_i)^N \cup_Q (z \cup_Q z') &= ((z_i- \mathfrak w'_i)^N \cup_Q z') \cup_Q z = 0.
\endaligned$$
Therefore $z\cup_Q z' \in QH^{\omega}(X;\mathfrak w) \cap QH^{\omega}(X;\mathfrak w')$.
This implies that the decomposition (1) is a ring factorization.
\par
Let $\varphi : QH^{\omega}(X;\mathfrak w) \to \Lambda^{\C}$ be a
unital $\Lambda^{\C}$ algebra homomorphism. It induces a
homomorphism $QH^{\omega}(X;\Lambda) \to \Lambda^{\C}$ by (1). We
denote this ring homomorphism by the same letter $\varphi$. Let $z
\in QH^{\omega}(X;\mathfrak w)$ be an element such that $\varphi(z)
\ne 0$. Then we have
$$
(\varphi(z_i) - \mathfrak w_i)^N\varphi(z) = \varphi((z_i - \mathfrak w_i)^N \cup_Q z) = 0.
$$
Therefore
\begin{equation}\label{form:misvarphiz}
\mathfrak w_i = \varphi(z_i).
\end{equation}
Since $z_i$ generates $QH^{\omega}(X;\Lambda)$, it follows from
(\ref{form:misvarphiz}) that there is a unique $\Lambda^{\C}$
algebra homomorphism $: QH^{\omega}(X;\mathfrak w) \to
\Lambda^{\C}$. (2) follows.
\par
Since $QH^{\omega}(X;\mathfrak w)$ is a finite dimensional
$\Lambda^{\C}$ algebra and $\Lambda^{\C}$ is algebraically closed,
we have an isomorphism
\begin{equation}\label{form;nilradical}
\frac{QH^{\omega}(X;\mathfrak w)}{\text{rad}} \cong (\Lambda^{\C})^k
\end{equation}
for some $k$. (Here $\text{rad} = \{ z \in QH^{\omega}(X;\mathfrak
w) \mid z^N = 0 \quad \text{for some $N$.}\}$) Since there is a
unique unital $\Lambda^{\C}$-algebra homomorphism $:
QH^{\omega}(X;\mathfrak w) \to \Lambda^{\C}$, it follows that $k=1$.
Namely ${QH^{\omega}(X;\mathfrak w)}$ is a local ring.

It also implies that
${QH^{\omega}(X;\mathfrak w)}$ is indecomposable.
\end{proof}
The result up to here also works for the non-Fano case. But the next
theorem will require the fact that $X$ is Fano since we use the
equality $QH^{\omega}(X;\Lambda) \cong QH(X;\Lambda)$.
\begin{thm}\label{thm:mismplus}
If $X$ is Fano then $\mbox{\rm Crit}(\frak{PO}_0) = \mathfrak M(\mathfrak{Lag}(X))$.
\end{thm}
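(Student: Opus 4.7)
The plan is to establish the two inclusions separately. First, since $X$ is Fano, Theorem~\ref{weakpotential} gives $\mathfrak{PO}_0 = \mathfrak{PO}$, and by Theorem~\ref{lageqgeopt}~(3) every critical point of $\mathfrak{PO}_0$ has its second coordinate in $\operatorname{Int} P$. Thus both $\operatorname{Crit}(\mathfrak{PO}_0)$ and $\mathfrak M(\mathfrak{Lag}(X))$ live inside the common ambient space $(\Lambda_0^\C/(2\pi\sqrt{-1}\Z))^n \times \operatorname{Int} P$, and the task reduces to set-theoretic containment in both directions.

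For $\operatorname{Crit}(\mathfrak{PO}_0) \subseteq \mathfrak M(\mathfrak{Lag}(X))$ I would simply feed each critical point into Theorem~\ref{homologynonzero}. Given $(\mathfrak x, u)$ with $\partial\mathfrak{PO}^u/\partial y_i(e^{\mathfrak x}) = 0$ for all $i$, put $\mathfrak y_i = e^{\mathfrak x_i}$, split off the leading complex number $\mathfrak y_{i,0} \in \C^*$, and take $\rho(\text{\bf e}_i^*) = \mathfrak y_{i,0}$ together with $b = \sum_i (\mathfrak x_i - \log \mathfrak y_{i,0}) \text{\bf e}_i \in H^1(L(u);\Lambda_+^\C)$. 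Theorem~\ref{homologynonzero} then yields an isomorphism $HF((L(u),\rho,b),(L(u),\rho,b); \Lambda_0^\C) \cong H(T^n; \Lambda_0^\C)$, which remains nonzero after inverting $T$; hence $(\mathfrak x,u) \in \mathfrak M(\mathfrak{Lag}(X))$.

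The reverse inclusion $\mathfrak M(\mathfrak{Lag}(X)) \subseteq \operatorname{Crit}(\mathfrak{PO}_0)$ is the substantive step. My plan is to establish the $n$-dimensional generalization of Proposition~\ref{eq:m1b}: on the canonical model $H^*(L(u);\Lambda_0^\C) \cong \bigwedge^*\langle \text{\bf e}_1,\ldots,\text{\bf e}_n\rangle$, the deformed operator $\mathfrak m_1^{\mathfrak x}$ (built with the data $(\rho,b)$ extracted from $\mathfrak x$ as above) acts as the Koszul contraction
\[
\mathfrak m_1^{\mathfrak x}(\text{\bf e}_{i_1}\wedge\cdots\wedge\text{\bf e}_{i_k}) = \sum_{j=1}^k (-1)^{j-1}\, \frac{\partial\mathfrak{PO}^u}{\partial y_{i_j}}(e^{\mathfrak x})\, \text{\bf e}_{i_1}\wedge\cdots\widehat{\text{\bf e}_{i_j}}\cdots\wedge\text{\bf e}_{i_k}.
\]
If $(\mathfrak x,u)$ is not a critical point, then at least one value $\partial\mathfrak{PO}^u/\partial y_j(e^{\mathfrak x})$ is nonzero and therefore a unit in the field $\Lambda^\C$, making the Koszul complex over $\Lambda^\C$ acyclic; consequently $HF((L(u),\mathfrak x),(L(u),\mathfrak x);\Lambda^\C) = 0$, contradicting $(\mathfrak x,u) \in \mathfrak M(\mathfrak{Lag}(X))$.

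The main obstacle is the higher-dimensional Koszul formula for $\mathfrak m_1^{\mathfrak x}$. I would derive it from the $T^n$-equivariant description of the Maslov-index-two disc moduli spaces in \cite{cho-oh} (cf.\ Theorem~\ref{potential}) combined with the non-unitary line-bundle twisting developed in Section~\ref{sec:flat}, with Proposition~\ref{eq:m1b} in dimension two and the cup-product structure on $H^*(T^n;\C)$ serving as cross-checks. Once that structural formula is in hand, the remaining argument is elementary linear algebra over the field $\Lambda^\C$.
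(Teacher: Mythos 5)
There is a genuine gap, and it sits in your very first paragraph. You reduce the problem to a two-way containment inside $(\Lambda_0^{\C}/(2\pi\sqrt{-1}\Z))^n\times\operatorname{Int}P$ by citing Theorem \ref{lageqgeopt} (3) to place $\operatorname{Crit}(\mathfrak{PO}_0)$ over $\operatorname{Int}P$. But in the paper's logical structure Theorem \ref{lageqgeopt} (3) \emph{is} Theorem \ref{thm:mismplus} — the statement you are asked to prove — so this is circular. The substantive content of the present theorem is exactly the claim you assume: since $\mathfrak{PO}_0$ is a finite Laurent sum it extends to all of $M_\R$, and $\operatorname{Crit}(\mathfrak{PO}_0)$ is by Definition \ref{frakM} taken over all of $\R^n$, so one must rule out critical points lying over $u\notin\operatorname{Int}P$. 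This is not automatic from $\mathfrak{PO}_0=\mathfrak{PO}$ (which only identifies the two functions where $\mathfrak{PO}$ converges, namely over $\operatorname{Int}P$), and it genuinely fails without the Fano hypothesis: Example \ref{ex:Hilexa} (Hirzebruch surfaces $F_n$, $n\ge 3$) exhibits critical points of $\mathfrak{PO}_0$ whose $u$ lies outside the moment polytope. Your two inclusions, as written, only establish $\mathfrak M(\mathfrak{Lag}(X))=\{(\mathfrak x,u)\in\operatorname{Crit}(\mathfrak{PO}_0)\mid u\in\operatorname{Int}P\}=\mathfrak M_0(\mathfrak{Lag}(X))$, which is weaker than the asserted equality with all of $\operatorname{Crit}(\mathfrak{PO}_0)$.

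The missing argument is the one the paper actually supplies, and it goes through quantum cohomology rather than Floer theory. Via Theorem \ref{thm:geometricpt} and Proposition \ref{thm:factorize} (2), a critical point $(\mathfrak x,u)$ corresponds to a weight $\mathfrak w\in W(X;\omega)$ with $\mathfrak v_T(\mathfrak w_i)=\ell_i(u)$ by (\ref{form:normofzi2}) and (\ref{form:misvarphiz}). Since $X$ is Fano one has $QH^{\omega}(X;\Lambda)=QH(X;\Lambda)$ and $z_i\cup_Q z\equiv z_i\cup z \mod \Lambda_+^{\C}$; picking a generalized eigenvector $z$ normalized in $(H(X;\C)\otimes\Lambda_0^{\C})\setminus(H(X;\C)\otimes\Lambda_+^{\C})$ and using nilpotency of the degree-two class $z_i$ under the classical cup product, one gets $\mathfrak w_i^n z\equiv 0\mod\Lambda_+^{\C}$, hence $\mathfrak v_T(\mathfrak w_i)>0$, i.e. $\ell_i(u)>0$ for every $i$ and $u\in\operatorname{Int}P$. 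Your remaining two paragraphs are fine in spirit (they amount to Theorem \ref{lageqgeopt} (2)); note only that for the reverse inclusion the full Koszul formula is not needed — the degree-one identity (\ref{calcm1b}), $\mathfrak m_1^{\mathfrak x}(\text{\bf e}_i)=\frac{\partial\mathfrak{PO}^u}{\partial x_i}(\mathfrak x)\cdot PD([L(u)])$, already kills the unit class when some derivative is a unit of $\Lambda^{\C}$, forcing $HF=0$ as in Remark \ref{balancerem2}.
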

\begin{proof}
Let $\mathfrak w$ be a weight. We take $z \in QH^{\omega}(X;\mathfrak w) \subset H(X;\Lambda^{\C})
\cong H(X;\C) \otimes \Lambda^{\C}$.
We may take $z$ so that
$$
z \in (H(X;\C) \otimes \Lambda_{0}^{\C})\,\, \setminus (H(X;\C) \otimes \Lambda_{+}^{\C}).
$$
Since
$$
z_i \cup_Q z \equiv z_i \cup z \mod \Lambda_{+}^{\C},
$$
where $\cup$ is the classical cup product.
(We use $QH^{\omega}(X;\Lambda) = QH(X;\Lambda)$ here.)
It follows that
$$
\mathfrak w_i^nz = (\widehat z_i)^n(z) = (z_i)^n \cup_Q z
\equiv (z_i)^n \cup z \mod \Lambda_{+}^{\C}.
$$
Therefore $\mathfrak w_i \in \Lambda_{+}^{\C}$ as $(z_i)^n \cup z = 0$.
(\ref{form:normofzi2}) and (\ref{form:misvarphiz}) then imply
$$
\ell_i(u) = \mathfrak v_T(\mathfrak w_i) > 0.
$$
Namely $u \in \text{Int} P$.
\end{proof}
We are now ready to complete the proof of Theorem \ref{lageqgeopt}.
(1) is Theorem \ref{thm:geometricpt}. (2)
is a consequence of Theorem \ref{homologynonzero}. (3) is Theorem
\ref{thm:mismplus}. If $QH^{\omega}(X;\Lambda^{\C})$ is semi-simple,
then (\ref{form;nilradical}) and $k=1$ there implies
\begin{equation}\label{form;decompsimplecase}
QH^{\omega}(X;\Lambda^{\C}) \cong (\Lambda^{\C})^{\# W(X;\omega)}
\end{equation}
as a $\Lambda^{\C}$ algebra. (4) follows from
(\ref{form;decompsimplecase}), Proposition \ref{thm:factorize} (2),
and Theorem \ref{thm:geometricpt}. The proof of Theorem
\ref{lageqgeopt} is complete. \qed\par\medskip
We next explain the
factorization in Proposition \ref{thm:factorize} (1) from the point
of view of Jacobian ring. Let $(\mathfrak x,u) \in \text{\rm Crit}(\mathfrak{PO}_0)$.
\begin{defn}\label{def:locjac}
We consider the ideal generated by
\begin{equation}\label{jacidealgene}
\frac{\partial }{\partial w_i}\mathfrak{PO}_0^u(\frak y_1+w_1,\cdots,\frak y_n+w_n)
\end{equation}
$i=1,\cdots,n$, in the ring $\Lambda[[w_1,\cdots,w_n]]$ of formal
power series where $\mathfrak x = \sum \mathfrak x_i \text{\bf e}_i$
and $\mathfrak y_i =
e^{\mathfrak x_i}$. We denote its quotient ring by $Jac(\mathfrak{PO}_0 ;
\mathfrak x,u)$.
\end{defn}
\begin{prop}
\begin{enumerate}
\item There is a direct product decomposition :
$$
Jac(\mathfrak{PO}_0) \cong \prod_{(\mathfrak x,u) \in \text{\rm Crit}(\mathfrak{PO}_0)} Jac(\mathfrak{PO}_0 ; \mathfrak x,u),
$$
as a ring.
\par
\item If $(\mathfrak x,u) \in \text{\rm Crit}(\mathfrak{PO}_0)$ corresponds to
$\mathfrak w \in W(X;\omega)$ via the isomorphism given in
Proposition $\ref{thm:factorize}$ $(2)$ and Theorem
$\ref{thm:geometricpt}$, then $\psi_u$ induces an isomorphism
$$
\psi_u : QH^{\omega}(X;\mathfrak w) \cong Jac(\mathfrak{PO}_0 ; \mathfrak x,u).
$$
\par
\item $Jac(\mathfrak{PO}_0 ; \mathfrak x,u)$ is one dimensional (over $\Lambda$) if and only if
the Hessian
$$
\left(\frac{\partial^2 \mathfrak{PO}_0^u}{\partial y_i\partial y_j}\right)_{i,j=1,\cdots,n}
$$
is invertible over $\Lambda$ at $\mathfrak x$.
\end{enumerate}
\end{prop}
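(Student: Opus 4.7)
The plan is to derive parts (1) and (2) in tandem by transporting the factorization $QH^{\omega}(X;\Lambda^{\C}) \cong \prod_{\mathfrak w \in W(X;\omega)} QH^{\omega}(X;\mathfrak w)$ of Proposition \ref{thm:factorize}(1) across the algebra isomorphism $\psi_u : QH^{\omega}(X;\Lambda^{\C}) \to Jac(\mathfrak{PO}_0^u)$ from Proposition \ref{nonfanoadded}, and then identifying each image factor with the local Jacobian ring of Definition \ref{def:locjac}. Since $Jac(\mathfrak{PO}_0^u)$ is finite-dimensional over $\Lambda^{\C}$ (inherited from $QH^{\omega}(X;\Lambda)$), it is Artinian and decomposes canonically as $\prod_{\mathfrak n} Jac(\mathfrak{PO}_0^u)_{\mathfrak n}$ over its maximal ideals. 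Reading the proof of Theorem \ref{thm:geometricpt} backwards, each maximal ideal has the form $\mathfrak n_{(\mathfrak x,u)} = (y_1 - \mathfrak y_1, \ldots, y_n - \mathfrak y_n)$ with $\mathfrak y_i = e^{\mathfrak x_i}$, for a unique critical point $(\mathfrak x,u) \in \operatorname{Crit}(\mathfrak{PO}_0)$.

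Next I would identify the local factor $Jac(\mathfrak{PO}_0^u)_{\mathfrak n_{(\mathfrak x,u)}}$ with $Jac(\mathfrak{PO}_0;\mathfrak x,u)$. Because the ring is Artinian, this localization equals the $\mathfrak n_{(\mathfrak x,u)}$-adic completion. Under the substitution $w_i := y_i - \mathfrak y_i$, valid because $\mathfrak y_i \in \Lambda_0 \setminus \Lambda_+$ is a unit and therefore $y_i^{-1}$ admits a $\mathfrak m$-adically convergent power series expansion in $w_i$, the completion of the ambient Laurent polynomial ring becomes $\Lambda[[w_1,\ldots,w_n]]$, and the chain rule identifies the relation ideal $(\partial\mathfrak{PO}_0^u/\partial y_i)$ with $(\partial\mathfrak{PO}_0^u(\mathfrak y+w)/\partial w_i)$; this yields $Jac(\mathfrak{PO}_0;\mathfrak x,u)$ and proves (1). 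For (2), the isomorphism $\psi_u$ intertwines $\widehat z_\alpha$ with multiplication by $\overline z_\alpha = \prod_j y_j^{v_{\alpha,j}} T^{\ell_\alpha(u)}$, and on $QH^{\omega}(X;\mathfrak w)$ the operator $\widehat z_\alpha - \mathfrak w_\alpha$ is nilpotent, so $\psi_u(QH^{\omega}(X;\mathfrak w))$ lies in the local factor at the critical point where $\overline z_\alpha$ takes value $\mathfrak w_\alpha$; by formula (\ref{form:misvarphiz}) in the proof of Proposition \ref{thm:factorize}, this is precisely $(\mathfrak x,u)$. Comparing total $\Lambda$-dimensions of the two decompositions forces each such inclusion to be an equality.

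For (3), I would work in the local Noetherian ring $R = \Lambda[[w_1,\ldots,w_n]]$ with maximal ideal $\mathfrak m = (w_1,\ldots,w_n)$ and residue field $\Lambda$. Setting $f_i := \partial\mathfrak{PO}_0^u(\mathfrak y + w)/\partial w_i$, the critical point condition $(\mathfrak x,u) \in \operatorname{Crit}(\mathfrak{PO}_0)$ gives $f_i \in \mathfrak m$, and the linear part of its Taylor expansion is $\sum_j (\partial^2\mathfrak{PO}_0^u/\partial y_i\partial y_j)(\mathfrak y)\, w_j$. Thus the image of $(f_1,\ldots,f_n)$ in $\mathfrak m/\mathfrak m^2$ equals the column span of the Hessian matrix $H(\mathfrak x) = (\partial^2\mathfrak{PO}_0^u/\partial y_i\partial y_j)(\mathfrak y)$. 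Nakayama's lemma applied to the finitely generated $R$-module $\mathfrak m/(f_1,\ldots,f_n)$ then shows that $(f_1,\ldots,f_n) = \mathfrak m$ if and only if the $f_i$ span $\mathfrak m/\mathfrak m^2$, i.e., iff $H(\mathfrak x)$ is invertible over $\Lambda$. In the invertible case $Jac(\mathfrak{PO}_0;\mathfrak x,u) = R/\mathfrak m = \Lambda$ is one-dimensional; in the non-invertible case the quotient surjects onto $\Lambda$ with nonzero kernel $\mathfrak m/(f_i)$, and its $\Lambda$-dimension strictly exceeds one.

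The main obstacle I anticipate is the algebraic identification in paragraph 2: verifying that localization coincides with $\mathfrak n$-adic completion for the Artinian $\Lambda$-algebra $Jac(\mathfrak{PO}_0^u)$ and that this completion is exactly Definition \ref{def:locjac}. Because $\Lambda$ is itself non-Archimedean, one must carry both the $T$-adic topology on $\Lambda$ and the $\mathfrak m$-adic topology on $R$ along in parallel and confirm that the power series inversion of $y_i$ does not conflict with the finiteness inherited from $QH^{\omega}$; none of these steps is deep individually, but all must be knit together carefully before the clean Nakayama argument in (3) can be invoked.
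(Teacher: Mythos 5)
Your proposal is correct and follows essentially the same route as the paper: decompose the finite-dimensional (Artinian) algebra $Jac(\mathfrak{PO}_0)$ into its localizations at the maximal ideals $(y_i-\mathfrak y_i)$ indexed by critical points, identify each local factor with $Jac(\mathfrak{PO}_0;\mathfrak x,u)$ via $w_i = y_i - \mathfrak y_i$, match factors with the $QH^{\omega}(X;\mathfrak w)$ using the eigenvalues of multiplication by $\overline z_i$, and read off (3) from the image of the relation ideal in $\mathfrak m/\mathfrak m^2$ together with Nakayama. The only difference is that you spell out the localization-equals-completion step and the dimension count that the paper leaves implicit.
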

\begin{proof}
We put $\frak x = \sum \frak x_i \text{\bf e}_i$ and
$\frak y_i = e^{\frak x_i}$.
Let $\mathfrak m(\mathfrak x,u)$ be the ideal generated by $y_i - \frak y_i$, in
the ring :
$$
Jac(\mathfrak{PO}_0) = \frac{\Lambda[y_1^{\pm},\cdots,y_n^{\pm}]}{(y_i \frac{\partial \frak{PO}_0^u}{\partial y_i})}.
$$ Since $Jac(\mathfrak{PO}_0)$ is finite
dimensional over $\Lambda$ it follows that
$$
Jac(\mathfrak{PO}_0) \cong \prod_{(\mathfrak x,u) \in \text{\rm Crit}(\mathfrak{PO}_0)} Jac(\mathfrak{PO}_0)_{\mathfrak m(\mathfrak x,u)},
$$
where $Jac(\mathfrak{PO}_0)_{\mathfrak m(\mathfrak x,u)}$ is the localization of the ring $Jac(\mathfrak{PO}_0)$ at $\mathfrak m(\mathfrak x,u)$.
Using finite dimensionality of $Jac(\mathfrak{PO}_0)$ again we have
an isomorphism
$Jac(\mathfrak{PO}_0)_{\mathfrak m(\mathfrak x,u)} \cong Jac(\mathfrak{PO}_0 ; \mathfrak x,u)$
which sends $y_i - \frak y_i$ to $w_i$. Here $\frak x = \sum_i \frak x_i \text{\bf e}_i$
and $\frak y_i = e^{\frak x_i}$
(1) follows.
\par
Now we prove (2). If $z \in QH^{\omega}(X;\mathfrak w)$ then $(\widehat z_i -
\mathfrak w_i)^N z = 0$. Let $\pi_{\mathfrak x,u} : Jac(\mathfrak{PO}_0) \to
Jac(\mathfrak{PO}_0 ; \mathfrak x,u)$ be the projection. We then have
\begin{equation}\label{eq:compeigen}
(T^{\ell_i(u)} \mathfrak y_1^{v_{i,1}}\cdots \mathfrak y_n^{v_{i,n}} - \mathfrak w_i)^N
\pi_{\mathfrak x,u}(\psi_u(z)) = 0.
\end{equation}
We remark that
\begin{equation}\label{eq:yandyprime}
\mathfrak w_i = T^{\ell_i(u')} \mathfrak y_1^{\prime v_{i,1}} \cdots
\mathfrak y_n^{\prime v_{i,n}}
\end{equation}
if $\mathfrak w_i$ corresponds $(\frak x',u')$ and $\mathfrak y'_i$ are exponential
of the coordinates of $\frak x'$. We define the operator $\widehat{\mathfrak y}_i :
Jac(\mathfrak{PO}_0 ; \frak x,u) \to Jac(\mathfrak{PO}_0 ; \frak x,u)$ by
$$\widehat{\mathfrak y}_i(c) = {\mathfrak y}_i c.
$$
By definition of $Jac(\mathfrak{PO}_0 ; \mathfrak x,u)$ the eigenvalue of
$\widehat{\mathfrak y}_i$ is $\mathfrak y_i$.
Therefore (\ref{eq:compeigen}) and (\ref{eq:yandyprime}) imply that
$\pi_{\mathfrak x,u}(\psi_u(z)) = 0$ unless $(\mathfrak x,u) = (\mathfrak x',u')$. (2) follows.
\par
Let us prove
(3). We first remark $\Lambda = \Lambda^{\C}$ is an algebraically closed field
(Lemma \ref{algclos}).
Therefore $\dim_{\Lambda} Jac(\mathfrak{PO}_0 ; \mathfrak x,u) = 1$
if and only if the ideal generated by (\ref{jacidealgene}) (for $i = 1,\cdots,n$)
is the maximal ideal $\frak m = (w_1,\cdots,w_n)$.
We remark that $\frak m/\frak m^2 = \Lambda^n$ and elements
(\ref{jacidealgene}) reduces to
$$
\left(\frac{\partial^2 \mathfrak{PO}_0^u}{\partial y_i\partial y_j}\right)_{j=1,\cdots,n}
\in \Lambda^n,
$$
modulo $\frak m^2$.
(3) follows easily.
\end{proof}

We recall that a symplectic manifold $(X,\omega)$ is said to be
(spherically) monotone if there exists $\lambda >0$ such that
$c_1(X) \cap \alpha = \lambda \,[\omega] \cap \alpha$ for all
$\alpha \in \pi_2(X)$. Lagrangian submanifold $L$ of $(X,\omega)$ is
said to be monotone if there exists $\lambda >0$ such that
$\mu(\beta) = \lambda\omega(\beta)$ for any $\beta \in \pi_2(X,L)$.
(Here $\mu$ is the Maslov index.) In the monotone case we have the
following :

\begin{thm}\label{thm:monotone}
If $X$ is a monotone compact toric manifold then there exists
a unique $u_0$ such that
$$
\mathfrak M(\mathfrak{Lag}(X)) \subset \Lambda  \times\{u_0\}
$$
i.e., whenever $(\mathfrak x,u) \in \mathfrak M(\mathfrak{Lag}(X))$, $u = u_0$.
Moreover $L(u_0)$ is monotone.
\end{thm}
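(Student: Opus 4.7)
The plan is to exploit the fact that for a monotone toric manifold the Batyrev presentation of quantum cohomology becomes $T$-free after rescaling the generators by an appropriate power of the Novikov variable. This will force every $\Lambda^{\C}$-valued ring homomorphism of $QH^{\omega}(X;\Lambda^{\C})$ to land, after rescaling, inside $\C$, which pins down $u$.

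\emph{Step 1.} First I will produce $u_0$ and verify it gives a monotone fiber. Applying the monotonicity $c_1(X) = \lambda[\omega]$ on $\pi_2(X)$ to the generating classes $\beta(\mathcal{P}) = \sum_{i\in\mathcal{P}}\beta_i - \sum_{i'\in\mathcal{P}'}k_{i'}\beta_{i'}$ and combining with $c_1 \cap \beta(\mathcal{P}) = |\mathcal{P}|-\sum k_{i'} =: c_1(\mathcal{P})$ and $\omega(\beta(\mathcal{P})) = 2\pi \omega(\mathcal{P})$ yields $\omega(\mathcal{P}) = \lambda_0 c_1(\mathcal{P})$ for every primitive collection, where $\lambda_0 := 1/(2\pi\lambda) > 0$. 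This is precisely the consistency condition needed for the linear system $\ell_1(u) = \cdots = \ell_m(u) = \lambda_0$ to admit a solution, unique because $v_1,\ldots,v_m$ positively span $\R^n$. Monotonicity of $L(u_0)$ then follows because every basic disc $\beta_i$ has symplectic area $\omega(\beta_i) = 2\pi\lambda_0$ and Maslov index $2$, independent of $i$.

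\emph{Step 2.} Next I will rewrite $QH^{\omega}(X;\Lambda)$ without $T$. Setting $s := T^{\lambda_0} \in \Lambda$ (a unit) and $\tilde z_i := s^{-1}z_i$, the identity $|\mathcal{P}| = c_1(\mathcal{P}) + \sum k_{i'}$ combined with $\omega(\mathcal{P}) = \lambda_0 c_1(\mathcal{P})$ shows that each generator of $SR_{\omega}(X)$ from \eqref{def:generatprqsr} equals $s^{|\mathcal{P}|}\bigl(\prod_{i\in\mathcal{P}}\tilde z_i - \prod \tilde z_{i'}^{k_{i'}}\bigr)$ and each generator of $P(X)$ equals $s\sum_i v_{i,j}\tilde z_i$. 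Dividing by the units $s^{|\mathcal{P}|}$ and $s$ yields a $\Lambda$-algebra isomorphism $QH^{\omega}(X;\Lambda) \cong R \otimes_{\C} \Lambda$, where $R := \C[\tilde z_1,\ldots,\tilde z_m] / \bigl(\sum_i v_{i,j}\tilde z_i,\; \prod_{i\in\mathcal{P}}\tilde z_i - \prod \tilde z_{i'}^{k_{i'}}\bigr)$, and $R$ is finite-dimensional over $\C$ since $QH^{\omega}$ has finite rank over $\Lambda$.

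\emph{Step 3.} Finally I will conclude. By Theorem \ref{lageqgeopt0}, any $(\mathfrak{x},u) \in \mathfrak{M}(\mathfrak{Lag}(X))$ yields a $\Lambda^{\C}$-algebra homomorphism $\varphi : QH^{\omega}(X;\Lambda^{\C}) \to \Lambda^{\C}$ with $\varphi(z_i) = \overline z_i(u) = \mathfrak{y}^{v_i}T^{\ell_i(u)}$ and $\mathfrak{v}_T(\mathfrak{y}_j) = 0$. Its restriction to $R \subset R \otimes_{\C} \Lambda^{\C}$ is a $\C$-algebra homomorphism into $\Lambda^{\C}$; since $R$ is finite-dimensional over $\C$, $\C$ is algebraically closed, and $\Lambda^{\C}$ is a field, such a homomorphism must factor through the reduction $R/\operatorname{rad}(R) \cong \C^{k}$, so $\varphi(\tilde z_i) \in \C$ for every $i$. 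By Corollary \ref{lem:zinvert} each $\tilde z_i$ is invertible in $QH^{\omega}$, forcing $\varphi(\tilde z_i) \in \C^{\times}$ and therefore $\mathfrak{v}_T(\varphi(\tilde z_i)) = 0$. On the other hand $\mathfrak{v}_T(\varphi(\tilde z_i)) = \mathfrak{v}_T(T^{-\lambda_0}\varphi(z_i)) = \ell_i(u) - \lambda_0$, so $\ell_i(u) = \lambda_0$ for every $i$, and the uniqueness established in Step 1 gives $u = u_0$. The most delicate point is Step 2, where one must verify that the rescaling $z_i \mapsto s^{-1}z_i$ identifies $P(X) + SR_{\omega}(X) \subset \Lambda[z_1,\ldots,z_m]$ with the extension of scalars of a purely $\C$-defined ideal; this is exactly where the monotonicity identity $\omega(\mathcal{P}) = \lambda_0 c_1(\mathcal{P})$ is used.
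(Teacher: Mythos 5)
Your proof is correct, but the mechanism you use for the key step (pinning down $\mathfrak v_T(\varphi(z_i))$) is genuinely different from the paper's. The paper stays with the geometric quantum product: it observes that in the monotone case the assignment $\mathfrak v_{\deg}(T^{1/2\pi})=2\lambda$, $\mathfrak v_{\deg}(x)=\deg x$ makes $\cup_Q$ graded, so $\mathfrak v_{\deg}$ is a multiplicative valuation on $QH(X;\Lambda)$; applying it to a generalized eigenvector $x$ with $z_i\cup_Q x=\mathfrak w_i x$ (to leading order) gives $2\lambda\,\mathfrak v_T(\mathfrak w_i)+\mathfrak v_{\deg}(x)=2+\mathfrak v_{\deg}(x)$, hence $\ell_i(u)=\mathfrak v_T(\mathfrak w_i)$ is the same constant for every $i$ and every weight. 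You instead work with the Batyrev presentation: the identity $\omega(\mathcal P)=\lambda_0\,c_1(\mathcal P)$ lets you rescale $z_i\mapsto T^{-\lambda_0}z_i$ so that $P(X)+SR_\omega(X)$ descends to $\C$, giving $QH^{\omega}(X;\Lambda)\cong R\otimes_{\C}\Lambda$ with $R$ a finite-dimensional $\C$-algebra, whence every $\Lambda^{\C}$-point takes values in $\C^{\times}$ on the rescaled generators and $\ell_i(u)=\lambda_0$ follows from (\ref{form:normofzi2}). Both arguments are valuation-theoretic consequences of monotonicity; the paper's needs only the grading of quantum cohomology (not the full presentation), while yours yields the stronger structural fact that $QH^{\omega}$ is a base change from $\C$ in the monotone case, which for instance also explains Proposition \ref{prop:galois}-type Galois invariance. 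Two small remarks: your Step 1 existence argument implicitly uses that the primitive relations span $\ker(\Z^m\to\Z^n)$ over $\Q$ (Batyrev's description of the Mori cone for projective $X$); this can be avoided either by running the consistency check against arbitrary relations $\sum c_iv_i=0$ via Lemma \ref{omegacap}, or by deducing existence of $u_0$ from nonemptiness of $\mathfrak M(\mathfrak{Lag}(X))$ together with your Step 3. Also keep an eye on the factor of $2\pi$ in the normalization of $\lambda_0$ versus the paper's $\ell_i(u_0)=1/\lambda$; this is harmless but worth making consistent with (\ref{eq:area}).
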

\begin{rem}
Related results are discussed in \cite{entov-pol06}.
\end{rem}
\begin{proof}
Since $X$ is Fano, we have $QH^{\omega}(X;\Lambda) = QH(X;\Lambda)$.
We assume $c_1(X) \cap \alpha = \lambda \,[\omega] \cap \alpha$ with $\lambda >0$.
Let $\cup_{\alpha}$ be the contribution to the moduli space of
pseudo-holomorphic curve of homology class $\alpha \in H_2(X;\Z)$ in the
quantum cup product. (See (\ref{qcohdef}).) We have a decomposition :
$$
x \cup_Q y = x \cup y + \sum_{\alpha \in \pi_2(X)\setminus \{0\}}
T^{\alpha\cap [\omega]/2\pi} x \cup_{\alpha} y.
$$
Then
\begin{equation}\label{eq:degpres}
\deg(x \cup_{\alpha} y) = \deg x + \deg y - 2 c_1(X) \cap \alpha
= \deg x + \deg y - 2 \lambda \alpha \cap [\omega] .
\end{equation}
We define
$$
\mathfrak v_{\deg}(T^{1/2\pi}) = 2\lambda, \quad \mathfrak v_{\deg}(x)
= \deg x \quad (\text{for $x \in H(X;\Q)$}).
$$
$\mathfrak v_{\deg}$ is a multiplicative non-Archimedean valuation on $QH(X;\Lambda)$ such that
$
\mathfrak v_{\deg}(a\cup_Q b) = \mathfrak v_{\deg}(a) + \mathfrak v_{\deg}(b),
$
by virtue of (\ref{eq:degpres}). Moreover for $c \in \Lambda$ and $a\in QH(X;\Lambda)$
we have
$
\mathfrak v_{\deg}(ca) = 2\lambda\mathfrak v_T(c) + \mathfrak v_{\deg}(a).
$
Now let $\mathfrak w$ be a
weight and
$x \in QH^{\omega}(X;\mathfrak w)$. Since $\mathfrak v_{\deg}(z_i) = 2$ it follows that
$$
2\lambda\mathfrak v_T(\mathfrak w_i) + \mathfrak v_{\deg}(x) = \mathfrak v_{\deg}(z_i x) = 2 + \mathfrak v_{\deg}(x).
$$
Therefore if $(\mathfrak x,u)$ corresponds to $\mathfrak w$ then
$\ell_i(u) = \mathfrak v_T(\mathfrak w_i) = 1/\lambda$.
Namely $u$ is independent of $\mathfrak w$.
We denote it by $u_0$.
\par
For $\beta_i \in H_2(X,L(u_0))$ ($i=1,\cdots,m$) given by (\ref{def:betai}),
we have $\omega(\beta_i) = 2\pi\ell_i(u_0) = 2\pi/\lambda$.
Hence $\mu(\beta_i) = \lambda \omega(\beta_i)/\pi$. Since $\beta_i$ generates $H_2(X,L(u_0))$,
it follows that $L(u_0)$ is monotone, as required.
\end{proof}

So far we have studied Floer cohomology with
$\Lambda^{\C}$-coefficients. We next consider the case of
$\Lambda^F$ coefficient where $F$ is a finite Galois extension of
$\Q$. We choose $F$ so that each of the weight $\mathfrak w$ lies in
$(\Lambda^F_0)^n$. (Since every finite extension of $\Lambda^{\Q}$
is contained in such $\Lambda^F$ we can always find such an $F$.
See appendix.)
Then we have a decomposition
\begin{equation}
QH^{\omega}(X;\Lambda^F) \cong
\prod_{\mathfrak w \in W(X;\omega)} QH^{\omega}(X;\mathfrak w;F).
\end{equation}
It follows that the Galois group $Gal(F/\Q)$ acts on $W(X;\omega)$.
It induces a $Gal(F/\Q)$ action on $\text{\rm Crit}(\mathfrak{PO}_0)$.
(We use Remark \ref{5.7rational} here.) We write it as $(\mathfrak
x,u) \mapsto (\sigma(\mathfrak x),\sigma(u))$. We remark the
following:
\begin{prop}\label{prop:galois}
\begin{enumerate}
\item $\sigma(u) = u$.
\par
\item We write by $y_i(\mathfrak x)$ the exponential of the coordinates of $\mathfrak x$.
Then $y_i(\mathfrak x) \in \Lambda^F$ and $y_i(\sigma(\mathfrak x)) = \sigma(y_i(\mathfrak x))$.
\par
\item If $QH^{\omega}(X;\Lambda^{\Q})$ is indecomposable, there exists
$u_0$ such that whenever $(\mathfrak x,u) \in \text{\rm Crit}(\mathfrak{PO}_0))$, $u = u_0$.
\end{enumerate}
\end{prop}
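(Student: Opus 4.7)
The plan is to reduce all three assertions to the explicit bijection of Theorem \ref{thm:geometricpt} between $\text{\rm Crit}(\mathfrak{PO}_0)$ and $\text{\rm Hom}(Jac(\mathfrak{PO}_0);\Lambda^F)$, translated via $\psi_u$ into the identification of $W(X;\omega)$ with $\text{\rm Hom}(QH^\omega(X;\Lambda^F);\Lambda^F)$ given by Proposition \ref{thm:factorize}(2). Under this translation the Galois action on $W(X;\omega)$ is simply post-composition of algebra homomorphisms, $\sigma\cdot\varphi = \sigma\circ\varphi$. The key general input is that every $\sigma\in Gal(F/\Q)$ acts on $\Lambda^F$ by a $\Q$-algebra automorphism and preserves the valuation $\mathfrak v_T$; this follows from the uniqueness (up to the Galois action) of the extension of the valuation $\mathfrak v_T$ on $\Lambda^\Q$ to its finite extension $\Lambda^F$, together with the fact that $\sigma$ permutes the extensions of $\mathfrak v_T$ and so must fix it.

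For (1), recall from the proof of Lemma \ref{lemma:chooseu} that the point $u$ corresponding to a weight $\mathfrak w\in W(X;\omega)$ is characterized by $\ell_i(u)=\mathfrak v_T(\mathfrak w_i)$ for $i=1,\dots,n$; by Condition \ref{cond:invertible} these $n$ affine equations determine $u$ uniquely. Applying $\sigma$ to $\mathfrak w$ and using the Galois-invariance of $\mathfrak v_T$ gives $\mathfrak v_T(\sigma(\mathfrak w_i))=\mathfrak v_T(\mathfrak w_i)$, so $\sigma(u)$ satisfies the same system and hence equals $u$.

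For (2), by (1) the coordinate $u$ is fixed, so the element $y_i(u)\in Jac(\mathfrak{PO}_0)$ is itself Galois-invariant. Writing $\varphi=\varphi_{(\mathfrak x,u)}$ for the homomorphism attached to $(\mathfrak x,u)$, we have $y_i(\mathfrak x)=\varphi(y_i(u))$ by construction of $\Psi_1$ in the proof of Theorem \ref{thm:geometricpt}, so
$$
y_i(\sigma(\mathfrak x))=(\sigma\circ\varphi)(y_i(u))=\sigma(\varphi(y_i(u)))=\sigma(y_i(\mathfrak x)).
$$
To see $y_i(\mathfrak x)\in\Lambda^F$, invert the $n\times n$ block of the matrix $(v_{i,j})$ provided by Condition \ref{cond:invertible}: this expresses each $y_j(u)$ as an integer monomial in $T^{\pm 1}$ and the $\overline z_i$, so $y_j(\mathfrak x)=\varphi(y_j(u))$ is a corresponding integer monomial in $T^{\pm 1}$ and the weight components $\mathfrak w_i=\varphi(\overline z_i)\in\Lambda^F_0$, hence lies in $\Lambda^F$.

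For (3), the Galois-invariants of the decomposition of Proposition \ref{thm:factorize} recover the factorization of $QH^\omega(X;\Lambda^\Q)$ into indecomposable blocks: each $Gal(F/\Q)$-orbit of weights $\mathfrak w$ assembles, by Galois descent, into one $\Lambda^\Q$-indecomposable factor. Hence the hypothesis that $QH^\omega(X;\Lambda^\Q)$ is indecomposable forces $Gal(F/\Q)$ to act transitively on $W(X;\omega)$; combined with (1) this means every weight is attached to the same $u_0$. The only step that is not essentially formal is the Galois-invariance of $\mathfrak v_T$ used in (1), which I would treat as the main (minor) obstacle and address by invoking the uniqueness of the valuation extension to the algebraic extension $\Lambda^F/\Lambda^\Q$; everything else follows by transport of structure and from results already established in the paper.
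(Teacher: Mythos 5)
Your proof is correct and follows the same route as the paper's (very terse) proof, whose entire content is the chain $\ell_i(\sigma(u)) = \mathfrak v_T(\sigma\mathfrak w_i) = \mathfrak v_T(\mathfrak w_i) = \ell_i(u)$ together with the invertibility of $(v_{i,j})_{i,j\le n}$; parts (2) and (3) are dispatched there in one sentence by exactly the transport-of-structure and Galois-descent arguments you spell out. The only remark worth making is that the Galois-invariance of $\mathfrak v_T$ needs no appeal to uniqueness of valuation extensions: $\sigma$ acts on $\Lambda^F$ coefficient-wise, $\sum a_i T^{\lambda_i}\mapsto\sum \sigma(a_i)T^{\lambda_i}$, which visibly preserves the valuation.
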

\begin{proof}
Let $\mathfrak w_i(\mathfrak x)$ corresponds to $(\mathfrak x,u)$. Then
$$
\ell_i(\sigma(u)) = \mathfrak v_T(\mathfrak w_i(\sigma(\mathfrak x,u)))
= \mathfrak v_T(\sigma\mathfrak w_i(\mathfrak x,u))
= \mathfrak v_T(\mathfrak w_i(\mathfrak x,u)) = \ell_i(u).
$$
(1) follows.
(2) follows from the definition and (1). (3) is a consequence of (1).
\end{proof}
A monotone blow up of $\C P^2$ (at one point) gives an
example where the assumption of Proposition \ref{prop:galois} (3) is
satisfied.
\par
It seems interesting to observe that the ring $QH(X;\Lambda^{\Q})$ jumps
sometimes when we deform symplectic structure of $X$.
The point where this jump occurs is closely related to the
point where the number of balanced Lagrangian fibers jumps.
In the case of Example \ref{37.116} we have
$$
QH(X;\Lambda^{\Q})
\cong
\begin{cases}
(\Lambda^{\Q})^5 & \alpha > 0,\\
(\Lambda^{\Q})^3 \times \Lambda^{\Q(\sqrt{-3})} & \alpha < 0, \\
\Lambda^{\Q(\sqrt{5})} \times \Lambda^{F} & \alpha =0,
\end{cases}
$$
where
$F = \Q [x]/(x^3-x-1).
$
We remark that $x^5+x^4-2x^3-2x^2+1 = (x^2+x-1)(x^3-x-1)$.
\par
We also refer readers to Example \ref{hidimexa} for further example.
\begin{rem}
In sections \ref{sec:calcpot} - \ref{sec:floerhom}, we will use
de Rham cohomology of Lagrangian submanifold to define and study
Floer cohomology. As a consequence, our results on Floer
cohomology is proved over $\Lambda^{\R}_0$ or $\Lambda^\C_0$ but not
over $\Lambda^{\Q}_0$ or $\Lambda^{F}_0$.
(The authors believe that those results can be also proved over $\Lambda^{\Q}_0$
by using the singular cohomology version developed in \cite{fooo06},
although the detail of their proofs could be more complicated.)
\par
On the other hand, Proposition \ref{nonfanoadded} and Theorem
\ref{QHequalMilnor2} are proved over $\Lambda^{\Q}_0$.
Therefore the discussion on quantum cohomology here
works over $\Lambda^{F}_0$.
\par
We also remark that, though Proposition \ref{prop:galois} (3) is
related to Floer cohomology, its proof given above does {\it not}
use Floer cohomology over $\Lambda^{\Q}_0$ but uses only
Floer cohomology over $\Lambda^{\C}_0$ and quantum cohomology
over $\Lambda^{\Q}_0$.
In fact, the proof above implies the following :
If $u \in \text{\rm Int} P$ and $\mathfrak x \in H^1(L(u);\Lambda^{\C}_0)$
satisfy
$$
\frac{\partial \frak{PO}^u_0}{\partial x_i}(\frak x) = 0
$$
then $u=u_0$. This is because
$$Jac(\frak{PO}^u_0;\Lambda^{\C}_0) = Jac(\frak{PO}^u_0;\Lambda^{\Q}_0)\otimes_{\Lambda^{\Q}_0}
\Lambda^{\C}_0$$ and $$Jac(\frak{PO}^u_0;\Lambda^{\Q}_0) \cong QH(X;\Lambda^{\Q}).$$
\end{rem}

\section{Further examples and remarks}
\label{sec:exam2}

In this section we show how we can use the argument of the last 2
sections to illustrate calculations of $\mathfrak
M(\mathfrak{Lag}(X))$ in examples.
\begin{exm}\label{exa:onepointblow}
We consider one point blow up $X$ of $\C P^2$. We choose its
K\"ahler form so that the moment polytope is
$$
P = \left\{ (u_1,u_2) \mid 0 \le u_1,u_2, \,\, u_1+u_2 \le 1, \,\, u_2 \le 1-\alpha\right\},
$$
$0 < \alpha <1$. The potential function is
$$
\mathfrak{PO} = y_1T^{u_1} + y_2 T^{u_2} + (y_1y_2)^{-1} T^{1-u_1-u_2} + y_2^{-1}T^{1-\alpha-u_2}.
$$
We put $\overline z_1 = y_1T^{u_1}$, $\overline z_2 = y_2 T^{u_2}$,
$\overline z_3 =(y_1y_2)^{-1} T^{1-u_1-u_2}$, $\overline z_4 =y_2^{-1}T^{1-\alpha-u_2}$.
\par
The quantum Stanley-Reisner relation is
\begin{equation}\label{eq:qsr}
\overline z_1\overline z_3 = \overline z_4 T^{\alpha},
\quad \overline z_2\overline z_4 = T^{1-\alpha},
\end{equation}
and linear relation is
\begin{equation}\label{eq:lin}
\overline z_1 - \overline z_3 = 0, \quad \overline z_2 - \overline z_3 - \overline z_4 = 0.
\end{equation}
We put $X = \overline z_1$ and $Y = \overline z_2$ and solve (\ref{eq:qsr}), (\ref{eq:lin}).
We obtain
\begin{equation}\label{eq:forX}
X^3(T^{\alpha} + X) = T^{1+\alpha},
\end{equation}
with $Y = X + T^{-\alpha} X^2$. We consider valuations of both sides
of (\ref{eq:forX}).
There are three different cases to consider.
\par\medskip
\noindent{\bf Case 1;} $\mathfrak v_T(X) > \alpha$ : (\ref{eq:forX})
implies $3\mathfrak v_T(X) + \alpha = 1+\alpha$. Namely $\mathfrak
v_T(X) = 1/3$. So $\alpha < 1/3$. Moreover $\mathfrak v_T(Y) = 1/3$.
We have $u_1 = \mathfrak v_T(X) = 1/3$, $u_2 = \mathfrak v_T(Y) =
1/3$. (See Lemma \ref{lemma:chooseu}.) Writing $X = a_1 T^{1/3} +
a_2 T^\lambda + $ higher order terms with $\lambda > \frac{1}{3}$
and substituting this into (\ref{eq:forX}), we obtain $a_1^3 = 1$
which has 3 simple roots. Each of them corresponds to the solution
for $\mathfrak x$ by Hensel's lemma (see Proposition 3 in p 144
\cite{BGR}, for example). (It also follows from Theorem
\ref{thm:elliminate} in section \ref{sec:ellim}.)
\par\smallskip
\noindent{\bf Case 2;} $\mathfrak v_T(X) < \alpha$ : By taking the
valuation of (\ref{eq:forX}) we obtain $u_1 = \mathfrak v_T(X) =
(1+\alpha)/4$. Hence $\alpha > 1/3$. Moreover $u_2 = \mathfrak
v_T(Y) = (1-\alpha)/2$. In the same way as Case 1, we can check that there are four solutions.
\par\smallskip
\noindent{\bf Case 3;} $\mathfrak v_T(X) = \alpha$ :
We put $X = a_1 T^{\alpha} + a_2 T^{\lambda} +$
higher order terms where $\lambda > \alpha$.
\noindent(Case 3-1: $a_1 \ne -1$) :
By taking valuation of (\ref{eq:forX}), we obtain $u_1 = \mathfrak v_T(X) = 1/3$.
Then $\alpha = 1/3$ and $u_2 = \mathfrak v_T(Y) = 1/3$. (\ref{eq:forX}) becomes
\begin{equation}
a_1^4 + a_1^3 - 1 = 0.
\label{eq;thridordera}\end{equation}
(In this case $X = a_1T^{\alpha}$ has no higher term.) There are four solutions. We remark that
(\ref{eq;thridordera}) is irreducible over $\Q$, since it is so
over $\Z_2$. Namely the assumption of
Proposition \ref{prop:galois} (3) is satisfied.
Actually $X$ is monotone in the case $\alpha =1/3$. Hence the
same conclusion (uniqueness of $u$) follows from Theorem \ref{thm:monotone}
also.
\par
\noindent(Case 3-2: $a_1 = -1$) :
By taking valuation of (\ref{eq:forX}), we obtain $\lambda = 1-2\alpha$.
$\lambda>\alpha$ implies $\alpha < 1/3$. $u_2 = \mathfrak v_T(Y) = 1-2\alpha$.
($u_1 = \mathfrak v_T(X) = \alpha$.) There is one solution.
\par\medskip
In summary, if $\alpha < 1/3$ there are two choices of $u =
(\alpha,1-2\alpha), (1/3,1/3)$. On the other hand the numbers of choices of $\mathfrak x$ are
$1$ and $3$ respectively.
\par
If $\alpha \ge 1/3$ there is the unique choice $u =
((1+\alpha)/4,(1-\alpha)/2)$. The number of choices of $\mathfrak x$ is $4$.
\end{exm}
We next study a non-Fano case. We will study Hirzebruch surface
$F_n$. Note $F_1$ is one point blow up of $\C P^2$ which we have
already studied. We leave the case $F_2$ to the reader.

\begin{exm}\label{ex:Hilexa}
We consider Hirzebruch surface $F_n$, $n\ge 3$. We take its
K\"ahler form so that the moment polytope is
$$
P = \left\{ (u_1,u_2) \mid 0 \le u_1,u_2, \,\, u_1+nu_2 \le n, \,\, u_2 \le 1-\alpha\right\},
$$
$0 < \alpha <1$. The leading order potential function is
$$
\mathfrak{PO}_0 = y_1T^{u_1} + y_2 T^{u_2} + y_1^{-1}y_2^{-n} T^{n-u_1-nu_2} + y_2^{-1}T^{1-\alpha-u_2}.
$$
We put $\overline z_1 = y_1T^{u_1}$, $\overline z_2 = y_2 T^{u_2}$,
$\overline z_3 =y_1^{-1}y_2^{-n} T^{n-u_1-nu_2}$, $\overline z_4 =y_2^{-1}T^{1-\alpha-u_2}$.
\par
The quantum Stanley-Reisner relation and linear relation gives
\begin{eqnarray}
\overline z_1\overline z_3 = \overline z_4^n T^{n\alpha},
&\quad \overline z_2\overline z_4 = T^{1-\alpha},
\label{eq:qsrhil}\\
\overline z_1 - \overline z_3 = 0, &\quad \overline z_2 - n\overline z_3 - \overline z_4 = 0.
\label{eq:linhil}\end{eqnarray}
Let us assume $n$ is odd. We put
$$
\overline z_1 = Z^n, \qquad \overline z_4 = Z^2 T^{-\alpha}.
$$
(In case $n = 2n'$ is even we put $\overline z_1 = Z^{n'}$, $\overline z_4 =
\pm Z T^{-\alpha}. $ The rest of the argument are similar and is
omitted.) Then $\overline z_2 = T^{-\alpha}Z^2 + nZ^n$ and
\begin{equation}\label{eq;nordhilt}
Z^4 (n Z^{n-2} + T^{-\alpha}) = T.
\end{equation}
\par\smallskip
\noindent
{\bf Case 1 ;} $(n-2)\mathfrak v_T(Z) > -\alpha$ :
In the first case, we have $\mathfrak v_T(Z) = (\alpha+1)/4$. (Then
$(n-2)\mathfrak v_T(Z) > -\alpha$ is automatically satisfied.)
Therefore $u_1 = \mathfrak v_T(z_1) = n(\alpha+1)/4$, $u_2 = \mathfrak
v_T(z_2) = (1-\alpha)/2$. We also can check that there are $4$
solutions. We remark that we are using $\mathfrak{PO}_0$ in place of
$\mathfrak{PO}$. However we can use Corollary \ref{cor:POPO0}
to prove the following lemma.
This lemma in particular implies that $L(n(\alpha+1)/4,(1-\alpha)/2)$ is balanced
which was already shown above in Example \ref{exa:onepointblow} for the case $n=1$.

\begin{lem}\label{lem:hilellim}
Let $y^{(i)} \in \Lambda_0 \times \Lambda_0$ ($i=1,\cdots,4$) be
critical points $\frak{PO}_0^u$ for
$u = (n(\alpha+1)/4,(1-\alpha)/2)$. Then there exists
$y^{(i) \prime} \in \Lambda_0 \times \Lambda_0$ which is a critical
point of $\frak{PO}^u$ and $y^{(i)} \equiv y^{(i) \prime} \mod \Lambda_+$.
\end{lem}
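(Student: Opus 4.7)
The plan is to lift each critical point $y^{(i)}$ of $\mathfrak{PO}_0^u$ at $u = (n(\alpha+1)/4,(1-\alpha)/2)$ to a critical point $y^{(i)\prime}$ of the full potential $\mathfrak{PO}^u$ via a non-Archimedean implicit function (Hensel-type) argument, once suitable non-degeneracy is verified. The required control on the difference $\mathfrak{PO}^u - \mathfrak{PO}_0^u$ is exactly what Corollary \ref{cor:POPO0} provides, together with the structure of the non-Fano correction terms supplied by Theorem \ref{weakpotential}.

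First I would make the leading term equation explicit. Under the Case 1 ansatz $\overline z_1 = Z^n$, $\overline z_4 = Z^2 T^{-\alpha}$ (with $n$ odd), the critical-point equation $Z^4(nZ^{n-2} + T^{-\alpha}) = T$ has, in the regime $(n-2)\mathfrak v_T(Z) > -\alpha$, the unique valuation $\mathfrak v_T(Z) = (\alpha+1)/4$. Rescaling $Z_0 := T^{-(\alpha+1)/4}Z$, the equation reduces modulo $\Lambda_+$ to the \emph{leading term equation} $Z_0^4 = 1$, which has four simple roots $Z_0 \in \{\pm 1, \pm\sqrt{-1}\}$. These are the four critical points $y^{(i)}$ of $\mathfrak{PO}_0^u$ in Case 1, obtained via the leading term machinery of Theorem \ref{thm:elliminate}.

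Next I would verify that each $y^{(i)}$ is \emph{non-degenerate} as a critical point of $\mathfrak{PO}_0^u$, i.e.\ that the Hessian matrix $\bigl(\partial^2\mathfrak{PO}_0^u/\partial x_i \partial x_j\bigr)(y^{(i)})$ is invertible over $\Lambda_0$. Since the roots of $Z_0^4 = 1$ are simple, the leading term of this Hessian (after rescaling $Z$) is a nonzero scalar multiple of the Hessian of $Z_0^4$, which is $12\,Z_0^2 \neq 0$ at each root; the $y_2$-direction contributes another invertible leading factor coming from the quadratic dependence of $\mathfrak{PO}_0^u$ on $y_2^{-1}$ near the critical point. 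Hence the full $2\times 2$ Hessian has invertible leading term.

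Finally, Corollary \ref{cor:POPO0} (combined with Theorem \ref{weakpotential}) expresses
\[
\mathfrak{PO}^u - \mathfrak{PO}_0^u = \sum_j c_j\, y_1^{v'_{j,1}} y_2^{v'_{j,2}} T^{\ell'_j(u)+\rho_j}
\]
as an infinite sum whose terms have $T$-valuations strictly larger (by some $\rho_j > 0$) than the corresponding terms of $\mathfrak{PO}_0^u$. Applying a standard non-Archimedean Newton iteration to the system $\partial \mathfrak{PO}^u/\partial x_i = 0$ near $y^{(i)}$, using the invertibility of the Hessian established in the previous step and the strict positivity of $\rho_j$, yields a unique solution $y^{(i)\prime}$ with $y^{(i)\prime} \equiv y^{(i)} \pmod{\Lambda_+}$. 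The main obstacle is checking the Hessian non-degeneracy rigorously in the $(y_1,y_2)$ coordinates rather than in the $Z$-variable; this requires tracing the substitution $\overline z_1 = Z^n$, $\overline z_4 = Z^2 T^{-\alpha}$ carefully to make sure no unintended degeneracy is introduced by the change of variables, but since the change of variables is invertible over $\Lambda_0$ near each $y^{(i)}$ with $Z_0 \neq 0$, this is a bookkeeping exercise.
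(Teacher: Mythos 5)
Your overall architecture is the paper's: identify the four leading-order solutions, check nondegeneracy, and invoke Corollary \ref{cor:POPO0} (equivalently Theorem \ref{thm:elliminate}). The paper's own proof is essentially a one-liner along these lines: at $u_0=(n(\alpha+1)/4,(1-\alpha)/2)$ one computes $\mathfrak{PO}_0^{u_0}=(y_2+y_2^{-1})T^{(1-\alpha)/2}+(y_1+y_1^{-1}y_2^{-n})T^{n(\alpha+1)/4}$, reads off the two-block leading term equation $1-y_2^{-2}=0$, $1-y_1^{-2}y_2^{-n}=0$ with the four solutions $(\pm 1,1)$, $(\pm\sqrt{-1},-1)$, observes these are strongly nondegenerate, and cites Corollary \ref{cor:POPO0}.

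Two corrections to your execution of the middle step. First, the hypothesis of Corollary \ref{cor:POPO0}(1) is strong nondegeneracy of the leading term equation in the sense of Definition \ref{def:lteq}(3), i.e.\ invertibility of the block Hessians $\bigl(\sum_{j'}\partial^2 Y(k,j')/\partial y_{k,j_1}\partial y_{k,j_2}\bigr)$ for each level $k$ separately --- here two $1\times 1$ blocks, $2y_2^{-3}$ and $2y_1^{-3}y_2^{-n}$, both visibly nonzero at all four solutions. This is not the same as invertibility of the full Hessian of $\mathfrak{PO}_0^{u_0}$ over $\Lambda$: that Hessian has entries at the two different valuations $S_1<S_2$, so a naive Newton iteration needs control of the valuation of its inverse, and the block-triangular inductive correction of Lemma \ref{37.164} is exactly what replaces it. You should verify the paper's block condition rather than run your own Newton scheme, which in any case duplicates what Corollary \ref{cor:POPO0} already packages. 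Second, your justification of nondegeneracy via ``the roots of $Z_0^4=1$ are simple, hence the Hessian is a nonzero multiple of $12Z_0^2$'' conflates distinct notions: simplicity of a root of $P$ means $P'\ne 0$, not $P''\ne 0$; the variable $Z$ is an auxiliary Jacobian-ring variable obtained after using the linear relations, not a coordinate on $H^1(L(u);\Lambda_0)$, so ``the Hessian of $Z_0^4$'' is not the Hessian of the potential in any direction; and the detour through the quantum Stanley--Reisner presentation is unnecessary when the leading term equation can be read off directly from $\mathfrak{PO}_0^{u_0}$. None of this affects the truth of the statement, but as written the nondegeneracy step would not stand.
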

We will prove Lemma \ref{lem:hilellim} in section \ref{sec:ellim}.
\par\smallskip
\noindent{\bf Case 2 ;} $(n-2)\mathfrak v_T(Z) < -\alpha$ :
We have $v_T(Z) = 1/(n+2)$. This can never occur
since $1/(n+2) > 0 > -\alpha/(n-2)$.
\par\smallskip
\noindent{\bf Case 3 ;} $(n-2)\mathfrak v_T(Z) = -\alpha$ :
We put $Z = a_1 T^{-\alpha/(n-2)} + a_2 T^{\lambda} + $ higher
order term.
\par\smallskip
\noindent(Case 3-1 : $na_1^{n-2} \ne -1$) :
Then $\mathfrak v_T(Z) = (\alpha+1)/4$. Since $(\alpha+1)/4 \ne -\alpha/(n-2)$,
this case never occur.
\par\smallskip
\noindent(Case 3-2 : $na_1^{n-2} = -1$) :
We have
$
4 v_T(Z) + (n-3)v_T(Z) + \lambda = 1.
$
Therefore
$$
\lambda = \frac{n-2 + (n+1)\alpha}{n-2}.
$$
We have
$$
u_1 = v_T(z_1) = - \frac{n\alpha}{n-2}, \qquad
u_2 = v_T(z_2) = 1 - \alpha - v_T(z_4) = \frac{n-2+2\alpha}{n-2}.
$$
Thus $(u_1,u_2)$ is {\it not} in the moment polytope.
\end{exm}
In Example \ref{ex:Hilexa}, we have
$$
\mathfrak M(\mathfrak{Lag}(X)) = \mathfrak M_0(\mathfrak{Lag}(X)) \ne \text{\rm Crit}(\mathfrak{PO}_0).
$$
On the other hand,
the order of $\mathfrak M(\mathfrak{Lag}(X))$ is $4$ and is equal to the sum of Betti numbers.
\begin{rem}\label{conj:ranknumber}
In a sequel of this series of papers, we will prove the equality
$$
\sum_d \text{\rm rank}\, H_d(X;\Q) = \# (\mathfrak
M(\mathfrak{Lag}(X)))
$$
for any compact toric manifold  $X$ (which is not necessarily Fano) such that
$QH(X;\Lambda)$ is semi-simple.
If we count the right hand side with multiplicity, the same equality
holds without assuming semi-simplicity.
\end{rem}

We next discuss the version of the above story where we substitute some explicit
number into the formal variable $T$.
Let $u\in \mbox{Int}P$. We define a Laurent polynomial
$$
\mathfrak{PO}^u_{0,T=t} \in \C[y_1,\cdots,y_n,y_1^{-1},\cdots,y_n^{-1}]
$$
by substituting a complex number $t\in \C \setminus \{0\}$.
In the same way we define the algebra $QH^{\omega}(X;T=t;\C)$ over $\C$ by substituting $T=t$ in
the quantum Stanley-Reisner relation.
The argument of section \ref{sec:milquan} goes through to show
\begin{equation}\label{eq:finiteT}
QH^{\omega}(X;T=t;\C) \cong Jac(\mathfrak{PO}^u_{0,T=t}).
\end{equation}
In particular the right hand side is independent of $u$ up to an isomorphism.
Here the $\C$-algebra in the right hand side of (\ref{eq:finiteT})
is the quotient of the polynomial ring
$\C[y_1,\cdots,y_n,y_1^{-1},\cdots,y_n^{-1}]$ by the ideal generated by
$\partial \mathfrak{PO}_{0,T=t}/\partial
y_i$. ($i=1,\cdots,n$.)
\par
We remark that right hand side of (\ref{eq:finiteT}) is always nonzero,
for small $t$, by Proposition \ref{existcrit}.
It follows that the equation
\begin{equation}\label{eq:finiteTcrieq}
\frac{\partial \mathfrak{PO}^u_{0,T=t}}{\partial y_i} = 0
\end{equation}
has a solution $y_i\ne 0$ for {\it any} $u$. Namely, as far as the
Floer cohomology after $T=t$ substituted, there {\it always} exists $b \in
H^1(X;\C)$ with nonvanishing Floer cohomology
$
HF((L(u),b),(L(u),b);\C)
$
for any $u \in \mbox{Int} P$.
Since the
version of Floer cohomology after substituting $T=t$ is {\it not}
invariant under the Hamiltonian isotopy, this is not useful for the
application to symplectic topology. (Compare this with section 14.2
\cite{cho-oh}.)
\par
The relation between the set of solutions of (\ref{eq:finiteTcrieq}) and that
of (\ref{formula:critical}) is stated as follows : Let
$(y^{(c)}_1(t;u),\cdots,y^{(c)}_n(t;u))$ be a branch of the
solutions of (\ref{eq:finiteTcrieq}) for $t \ne 0$ where
$c$ is an integer with $1 \leq c \leq l$ for some $l \in \N$.
We can easily
show that it is a holomorphic function of $t$ on $\C \setminus
\R_{-}$. We consider its behavior as $t\to 0$. For generic $u$ the
limit either diverges or converges to $0$. However if
$(\frak x,u) \in \mathfrak M_0(\mathfrak{Lag}(X))$ and
$\mathfrak x = \sum x_i
\text{\bf e}_i$ then there
is some $c$ such that
$$
\lim_{t\to 0} y^{(c)}_i(t;u) \in \C \setminus \{0\}
\quad
\text{and that}
\quad
y^{(c)}_i(t;u) = e^{x_i(t)}.
$$
\par
To prove this claim it suffices to show that if $(\mathfrak x,u) \in
\text {\rm Crit}(\mathfrak{PO}_0)$ and $\mathfrak x = \sum x_i \text{\bf
e}_i$, $y_i = e^{x_i}$, $y_i = \sum_j y_{ij} T^{\lambda_{ij}}$ then
$ \sum_j y_{ij} t^{\lambda_{ij}} $ converges for $0 < \vert t\vert <
\epsilon$. (Here $\epsilon$ is sufficiently small positive number.)
This follows from the following Lemma \ref{convclo}. Let
$\Lambda_0^{conv}$ be the ring
$$
\left\{ \sum_i a_{i} T^{\lambda_{i}} \in \Lambda_0^{\C} \,\left\vert\,
\exists \epsilon > 0 \,\,\text{such that}\,\, \sum_i \vert a_{i}\vert \vert t\vert^{\lambda_{i}} \,\,
\text{\rm converges for $\vert t\vert < \epsilon$.}\right\}\right.
$$
and $\Lambda^{conv}$ be its field of fractions.
We put $\Lambda_+^{conv} = \Lambda_0^{conv} \cap \Lambda_+$.
\begin{lem}\label{convclo}
The field $\Lambda^{conv}$ is algebraically closed.
\end{lem}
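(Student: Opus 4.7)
The plan is to adapt the classical proof that convergent Puiseux series form an algebraically closed field to our setting, where the exponents are real rather than rational. Since $\Lambda^{\C}$ is algebraically closed by Lemma~\ref{algclos}, any monic polynomial $P(X) \in \Lambda^{conv}[X]$ of positive degree factors completely as $\prod_{i=1}^n(X-\alpha_i)$ over $\Lambda^{\C}$; the content of the lemma is that each root $\alpha_i$ in fact lies in $\Lambda^{conv}$. After clearing denominators I would assume $P(X) \in \Lambda_0^{conv}[X]$ is monic, and after replacing $P$ by its squarefree part I may assume it is separable.

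The first main step is a Newton-polygon factorization with respect to the non-Archimedean valuation $\mathfrak v_T$. The slopes of the Newton polygon of $P$ determine the valuations of the roots, and grouping roots of a common valuation yields a factorization $P = \prod_\mu P_\mu$ in $\Lambda^{\C}[X]$ where every root of $P_\mu$ has valuation $-\mu$. I would prove that this factorization descends to $\Lambda^{conv}[X]$ by an analytic Hensel-type lemma: if a polynomial with coefficients in $\Lambda_0^{conv}$ admits a coprime factorization modulo a sufficiently high power of $T$, then the factorization lifts to one in $\Lambda_0^{conv}[X]$. This is established by Newton iteration, where at each stage the corrections $(\Delta P_1,\Delta P_2)$ are the unique solutions to a B\'ezout-type linear system of fixed size. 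The key point is that the iteration converges simultaneously $T$-adically and with a uniform radius of convergence in the analytic variable $t$, because the denominators that appear are bounded below by the resultant of the initial approximate factorization, which is a nonzero element of $\Lambda_0^{conv}$.

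This reduces matters to the case where all roots of $P$ have the same valuation. Substituting $X = T^{-\mu} Y$ and rescaling, I may further assume all roots lie in $\Lambda_0^{conv} \setminus \Lambda_+^{conv}$. Reducing modulo $\Lambda_+^{conv}$ gives $\bar P(Y) \in \C[Y]$, whose roots are the ``leading terms'' of the roots of $P$. If $\bar P$ is separable, each simple root $c \in \C$ satisfies $\bar P(c)=0$ and $\bar P'(c)\neq 0$, and the classical analytic implicit function theorem in the single complex variable $t$ lifts $c$ uniquely to a convergent series $Y(T) \in \Lambda_0^{conv}$ with $P(Y(T)) = 0$. By uniqueness of Hensel lifting inside $\Lambda^{\C}$, this $Y(T)$ coincides with the root of $P$ in $\Lambda^{\C}$ whose leading term is $c$. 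When $\bar P$ has a multiple root $c$ of multiplicity $k$, I substitute $Y = c + T^{\nu} Z$ with $\nu$ the smallest positive slope of the residual Newton polygon; this produces a new monic polynomial in $Z$ of degree $k$ with coefficients in $\Lambda_0^{conv}$, and I iterate.

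The main obstacle is to ensure that the iteration terminates and that the resulting series is genuinely analytically convergent, not merely convergent in the $T$-adic topology. A priori, the successive slopes $\mu, \nu, \dots$ could accumulate, producing a formal Novikov series with no analytic radius of convergence. This does not occur because at each stage either the Newton polygon has more than one slope (in which case the coprime Hensel lifting above strictly splits the polynomial and one passes to factors of smaller degree), or the residual polynomial $\bar P$ is separable (in which case the analytic implicit function theorem terminates the procedure in one more step). Since $\deg P = n$, only finitely many such reductions can occur before a separable residual is reached. A careful bookkeeping of the radii of convergence through this finite procedure—tracking, at each substitution $Y \mapsto c + T^{\nu} Z$, how the radius of the new coefficient ring depends on the previous one—yields a uniform positive radius for the final root, which is therefore an element of $\Lambda^{conv}$, completing the proof.
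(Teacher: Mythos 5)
Your strategy is the classical Newton--Puiseux route and is genuinely different from the paper's, which is far shorter: the paper observes that, by the proof of Lemma \ref{algclos}, it suffices to show $\Lambda_0^{conv}$ is Henselian, and it obtains Hensel's lemma for $\Lambda_0^{conv}$ by writing $a_i = a_{i,0} + a_{i,+}$, forming the universal polynomial $\widetilde P$ over $\C[Z_0,\cdots,Z_{n-1}]$, invoking the Henselian property of the \emph{convergent} power series ring $\C\{Z_0,\cdots,Z_{n-1}\}$, and then specializing $Z_i \mapsto a_{i,+}$ along the continuous ring homomorphism $\C\{Z_0,\cdots,Z_{n-1}\} \to \Lambda_0^{conv}$. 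All the analytic convergence is thereby delegated to a standard fact about convergent power series in finitely many variables, and no Newton polygons, radius bookkeeping, or root-by-root analysis is needed.

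As written, your argument has two genuine gaps. First, the termination dichotomy is false: after recentering, the residual polynomial can perfectly well be $(Z-c)^k$ with $k\ge 2$, i.e.\ the Newton polygon has a single slope \emph{and} the residual is inseparable with a single root, so neither of your two alternatives applies and the degree does not drop; the iteration can continue for several steps (and, with real exponents, one must rule out that the successive slopes accumulate below a finite bound). The paper's Lemma \ref{algclos} avoids this by the normalization $a_{n-1}=0$ together with the choice of the rescaling exponent, which forces the reduction to have at least two distinct roots at every stage, so the coprime factorization is always nontrivial and induction on degree terminates; termination can alternatively be rescued using separability of $P$ and discreteness of the exponents of its roots in $\Lambda^{\C}$, but some such argument must be supplied. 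Second, the appeal to ``the classical analytic implicit function theorem in the single complex variable $t$'' does not apply: elements of $\Lambda_0^{conv}$ have arbitrary real exponents and are not holomorphic functions of $t$, so the single-variable theorem neither applies to the coefficients nor would it produce an output known to be a generalized series of the required form. What one actually needs is the implicit function theorem (or Hensel's lemma) in auxiliary variables standing for the positive-valuation parts of the coefficients, followed by substitution of convergent elements of $\Lambda_+^{conv}$ --- which is exactly the device the paper uses, and which would also replace your unproven ``Hensel-type lemma with uniform radius'' for lifting coprime factorizations.
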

We will prove Lemma \ref{convclo} in section \ref{sec:appendix}.
\par\medskip
We go back to the discussion on the difference between two sets $\mathfrak M_0(\mathfrak{Lag}(X))$
and $\text{\rm Crit}(\mathfrak{PO}_0)$. (See Definition \ref{frakM} for its
definition.)
The rest of this section owes much to the discussion with
H. Iritani and also to his papers \cite{iritani1}, \cite{iritani2}.
The results we describe below will not be used in the other part of this paper.
\par
We recall that we did {\it not} take closure of the
ideal $(P(X) + SR_{\omega}(X))$ in section \ref{sec:milquan}. This
is actually the reason why we have $\mathfrak M_0(\mathfrak{Lag}(X))
\ne \text{\rm Crit}(\mathfrak{PO}_0)$. More
precisely we have the following Proposition \ref{prop:closuerideal}.
\par
We consider the polynomial ring $\Lambda[z_1,\cdots,z_m]$.
We define its norm $\Vert \cdot \Vert$ so that
$$
\left\Vert\sum_{\vec i} a_{\vec i} z_1^{i_1}\cdots z_m^{i_m}
\right\Vert = \exp\left(- \inf_{\vec i} \mathfrak v_T(a_{\vec
i})\right).
$$
We take the closure of the ideal
$(P(X) + SR_{\omega}(X))$ with respect to this norm
and denote it by $\text{\rm Clos}(P(X) + SR_{\omega}(X))$.
We put
\begin{equation}\label{eq:closure}
\overline{QH}^{\omega}(X;\Lambda)
=
\frac{\Lambda[z_1,\cdots,z_m]}{\text{\rm Clos}(P(X) + SR_{\omega}(X))}.
\end{equation}
Let $W^{\text{\rm geo}}(X;\omega)$ be the set of all weight such that
the corresponding $(\mathfrak x,u)$ satisfies $u \in \text{\rm Int}\,P$.
We remark that $\mathfrak w \in W^{\text{\rm geo}}(X;\omega)$ if and only if
$\mathfrak v_T(\mathfrak w_i) > 0$ for all $i$.

\begin{prop}[Iritani]\label{prop:closuerideal}
There exists an isomorphism
$$
\overline{QH}^{\omega}(X;\Lambda^{\C}) \cong \prod_{\mathfrak w \in W^{\text{\rm geo}}(X;\omega)}
QH^{\omega}(X;\mathfrak w).
$$
\end{prop}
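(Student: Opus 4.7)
The plan is to interpret $\overline{QH}^{\omega}(X;\Lambda^{\C})$ as the quotient of $QH^{\omega}(X;\Lambda^{\C})$ by the null-space of a natural quotient seminorm, and then transfer the factorization from Proposition \ref{thm:factorize} through this quotient.

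First I would note that the Gauss norm $\Vert\cdot\Vert$ on $\Lambda^{\C}[z_1,\ldots,z_m]$ descends to a quotient seminorm $|\cdot|$ on $QH^{\omega}(X;\Lambda^{\C})$ via the natural surjection, and that $\overline{QH}^{\omega}(X;\Lambda^{\C}) = QH^{\omega}(X;\Lambda^{\C})/N$ where $N := \{x : |x| = 0\}$. Equivalently, after completing the polynomial ring to the Tate algebra $\Lambda^{\C}\langle z_1,\ldots,z_m\rangle$, we have $\overline{QH}^{\omega}(X;\Lambda^{\C}) = \Lambda^{\C}\langle z_1,\ldots,z_m\rangle/\overline{P(X)+SR_\omega(X)}$. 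Since each local factor $QH^{\omega}(X;\mathfrak w)$ from Proposition \ref{thm:factorize}(3) is a local Artinian $\Lambda^{\C}$-algebra, its orthogonal idempotent $e_{\mathfrak w}$ is either contained in $N$ (so the factor vanishes in the quotient) or lies outside $N$ (so the whole factor injects, since $N$ would otherwise contain the unit of this local summand). Hence the proof reduces to showing $e_{\mathfrak w}\in N$ iff $\mathfrak w\notin W^{\text{\rm geo}}(X;\omega)$.

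For the easy direction, when $\mathfrak w \in W^{\text{\rm geo}}$ we have $|\mathfrak w_i| < 1$ for all $i$, so the substitution $z_i \mapsto \mathfrak w_i$ (followed by projection to the residue field $\Lambda^{\C}$ of the local ring $QH^\omega(X;\mathfrak w)$) extends by continuity from $\Lambda^{\C}[z_1,\ldots,z_m]$ to the Tate algebra, furnishing a continuous $\Lambda^{\C}$-algebra homomorphism that is nonzero on $e_{\mathfrak w}$; this forces $e_{\mathfrak w}\notin N$. For the harder direction, when $\mathfrak w \notin W^{\text{\rm geo}}$ some index $j$ has $\mathfrak v_T(\mathfrak w_j) \le 0$. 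In the case $\mathfrak v_T(\mathfrak w_j) < 0$, I would exhibit polynomials $F_N(z_j) = \mathfrak w_j^{-N}z_j^N G_N(z_j)$ of bounded degree, where $G_N$ is chosen so that $1-F_N$ vanishes at $z_j=\mathfrak w_j$ to the nilpotency order of $z_j-\mathfrak w_j$ on $QH^{\omega}(X;\mathfrak w)$. Then $F_N(z_j)\cdot E_{\mathfrak w}\equiv E_{\mathfrak w}$ modulo $P(X)+SR_\omega(X)$, while $\Vert F_N\Vert\le |\mathfrak w_j|^{-N}\Vert G_N\Vert$ tends to zero (as $\Vert G_N\Vert$ grows only polynomially in $N$ while $|\mathfrak w_j|^{-N}$ decays exponentially), yielding $e_{\mathfrak w}\in N$.

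The main obstacle is the borderline case $\mathfrak v_T(\mathfrak w_j)=0$ (i.e., the corresponding $u$ lies in $\partial_jP$), where the exponential decay above fails. I would handle this by showing that such boundary weights cannot actually occur for a compact toric manifold: combining the quantum Stanley-Reisner relations (\ref{def:generatprqsr}) with Lemma \ref{omegaP>0} and the invertibility expression \eqref{def;z-1} applied to the compactness of $X$ (which forces $-v_j$ to lie in some cone of the fan), one derives a contradiction with the weight-defining system $\mathfrak w_i = T^{\ell_i(u)}\mathfrak y^{v_i}$ admitting a solution $\mathfrak y \in (\Lambda_0\setminus\Lambda_+)^n$. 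Once this dichotomy is established, the product decomposition from Proposition \ref{thm:factorize}(1) descends to $\overline{QH}^{\omega}$, yielding the claimed isomorphism.
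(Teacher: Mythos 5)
Your overall architecture matches the paper's: split the weights $\mathfrak w$ according to the sign of $\mathfrak v_T(\mathfrak w_i)$, kill the factors with some $\mathfrak v_T(\mathfrak w_j)<0$ by producing representatives of Gauss norm tending to zero, keep the factors with all valuations positive, and rule out the borderline case $\mathfrak v_T(\mathfrak w_j)=0$. Your treatment of the negative-valuation case is essentially the paper's argument (and arguably cleaner, since killing the idempotent $e_{\mathfrak w}$ kills the whole factor at once, whereas the paper descends through the filtration by powers of $\widehat z_i-\mathfrak w_i$). But there is a genuine gap in the positive-valuation case. Your dichotomy ``either $e_{\mathfrak w}\in N$, so the factor vanishes, or $e_{\mathfrak w}\notin N$, so the whole factor injects'' is false: $e_{\mathfrak w}N$ is an ideal of the local Artinian ring $QH^{\omega}(X;\mathfrak w)$, and such a ring has plenty of nonzero proper ideals (the powers of its maximal ideal), so $e_{\mathfrak w}\notin N$ does not force $e_{\mathfrak w}N=0$. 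Correspondingly, the continuous character $z_i\mapsto\mathfrak w_i$ you construct kills every nilpotent of the local factor, so it can only certify that the idempotent survives, never that the nilradical does. Since the proposition asserts that the \emph{full} local rings $QH^{\omega}(X;\mathfrak w)$ appear in $\overline{QH}^{\omega}$, this is exactly the content that is missing. The paper closes this by replacing your one-dimensional character with the regular representation $\varphi:\Lambda[z_1,\cdots,z_m]\to \operatorname{Hom}_{\Lambda}(QH^{\omega}(X;\mathfrak w),QH^{\omega}(X;\mathfrak w))$: in a suitable basis $\varphi(z_i)$ is upper triangular with diagonal entries $\mathfrak w_i\in\Lambda_+$ and off-diagonal entries $0$ or $1$, hence has matrix entries in $\Lambda_0$, so $\varphi$ is bounded for the Gauss norm, annihilates $\text{\rm Clos}(P(X)+SR_{\omega}(X))$, and therefore factors through $\overline{QH}^{\omega}$; faithfulness of the regular representation then gives injectivity of $QH^{\omega}(X;\mathfrak w)\to\overline{QH}^{\omega}$.

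A secondary weakness is the borderline case. Your proposed route (quantum Stanley--Reisner relations plus Lemma \ref{omegaP>0} plus the invertibility formula \eqref{def;z-1}) does not obviously produce a contradiction: the relation $\mathfrak v_T(\mathfrak w_j)+\sum_k m_k\mathfrak v_T(\mathfrak w_k)=\omega(\alpha)/2\pi>0$ coming from $-v_j=\sum m_kv_k$ is perfectly consistent with $\mathfrak v_T(\mathfrak w_j)=0$ as long as the other valuations are positive. The paper's argument is different and does work: a weight with all $\mathfrak v_T(\mathfrak w_i)\ge 0$ and some $=0$ corresponds under Lemma \ref{lemma:chooseu} to a critical point of $\mathfrak{PO}_0^u$ in $(\Lambda_0\setminus\Lambda_+)^n$ with $u\in\partial P$; reducing modulo $\Lambda_+$, the leading term of $\mathfrak{PO}_0^u$ is $\sum_{i\in I}y_1^{v_{i,1}}\cdots y_n^{v_{i,n}}$ where $\{v_i\mid i\in I\}$ is part of a $\Z$-basis of $\Z^n$ (smoothness of $X$), and after the corresponding monomial change of coordinates this becomes $\sum_{i\in I'}y'_i$, which has no critical point with all coordinates in $\C^*$. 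You should substitute this (or an equivalent) argument for your sketch.
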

\begin{proof}
Let $\mathfrak w \in W(X;\omega) \setminus W^{\text{\rm geo}}(X;\omega)$.
We first assume $\mathfrak v_T(\mathfrak w_i) = - \lambda < 0$.
(The case $\mathfrak v_T(\mathfrak w_i) = 0$ will be discussed at the end of the proof.)
\par
Then, there exists $f \in \Lambda_0 \setminus \Lambda_+$ such that
$T^{\lambda}f \mathfrak w_i = 1$.
Let $x \in QH^{\omega}(X;\mathfrak w)$.
We assume $x \ne 0$. We take $k$ such that $(\widehat z_i-\frak w_i)^k x \ne 0$,
$(\widehat z_i-\frak w_i)^{k+1} x = 0$ and replace $x$ by $(\widehat z_i-\frak w_i)^k x$.
We then have $T^{\lambda}f \widehat z_i x = x$. Since
$\lim_{N\to\infty}\Vert (fz_iT^{\lambda})^N\Vert = 0$, it follows
that $x=0$ in $\overline{QH}^{\omega}(X;\Lambda^{\C})$.
This is a contradiction.
\par
We next assume $\mathfrak v_T(\mathfrak w_i) > 0$ for all $i$.
We consider the homomorphism
$$
\varphi : \Lambda[z_1,\cdots,z_m] \to Hom_{\Lambda}(QH^{\omega}(X;\mathfrak w),QH^{\omega}(X;\mathfrak w)),
$$
defined by
$$
\varphi(z_i)(x) = z_i\cup_Q x.
$$
We have $\varphi(P(X) + SR_{\omega}(X)) = 0$.
We may choose the basis of
$QH^{\omega}(X;\mathfrak w)$ so that $\varphi(z_i)$ is upper triangular matrix
whose diagonal entries are all $\mathfrak w_i$ and whose off diagonal entries are all $0$ or $1$.
We use it and $\mathfrak v_T(\mathfrak w_i) > 0$ to
show that $\varphi(\text{\rm Clos}(P(X) + SR_{\omega}(X))) = 0$.
Namely $\varphi$ induces a homomorphism
from $\overline{QH}^{\omega}(X;\Lambda)$.
It follows easily that the restriction of the projection
${QH}^{\omega}(X;\Lambda^{\C}) \to \overline{QH}^{\omega}(X;\Lambda^{\C})$
to $QH^{\omega}(X;\mathfrak w)$ is an isomorphism to its image.
\par
We finally show that for $u \in \partial P$, there is no critical
point of $\mathfrak{PO}_0$ on $(\Lambda_0\setminus \Lambda_+)^n$.
Let
$$
u \in \bigcup_{i \in I} \partial_i P \setminus \bigcup_{i\notin I} \partial_iP_i.
$$
Then
$$
\mathfrak{PO}_0^u \equiv \sum_{i\in I} y_1^{v_{i,1}}\cdots y_n^{v_{i,n}} \mod \Lambda_+.
$$
We remark that $v_i$ ($i\in I$) is a part of the $\Z$ basis of $\Z^n$, since
$X$ is nonsingular toric. Hence
by changing the variables to appropriate $y'_i$ it is easy to see that there is no nonzero critical point
of $\sum_{i\in I} y_1^{v_{i,1}}\cdots y_n^{v_{i,n}} = \sum_{i\in I'} y'_i$.
The proof of Proposition \ref{prop:closuerideal} is now complete.
\end{proof}
\par
To further discuss the relationship between the contents of sections
\ref{sec:milquan} and \ref{sec:exa2} and those in \cite{iritani2},
we compare the coefficient rings used here and in \cite{iritani2}.
In \cite{iritani2} (like many of the literatures on quantum
cohomology such as \cite{givental1}) the formal power series ring
$\Q[[q_1,\cdots,q_{m-n}]]$ is taken as the coefficient ring. ($m-n$
is the rank of $H^2(X;\Q)$ and we choose a basis of it.) The
superpotential in \cite{iritani2} (which is the same as the one used
in \cite{givental1}) is given as \footnote{We change the notation so
that it is consistent to ours. $m, n, v_{i,j}$ here corresponds to
$r+N$, $r$, $x_{i,b}$ in \cite{iritani2}, respectively.}
\begin{equation}\label{eq:superiritani}
F_q = \sum_{i=1}^m\left(\prod_{a=1}^{m-n}q_a^{l_{a,i}} \prod_{j=1}^n s_{j}^{v_{i,j}}\right).
\end{equation}
Here $l_{a,i}$ is a matrix element of a splitting of
$H_2(X;\Z) \to H_2(X,T^n;\Z)$.
We will show that (\ref{eq:superiritani}) pulls back to
our potential function $\mathfrak{PO}^u_0$ after a simple
change of variables.
Let $\alpha_a \in H_2(X;\Z)$ be the basis we have chosen.
(We choose it so that $[\omega] \cap \alpha_a$ is positive.)
\begin{lem}
There exists $f_j(u) \in \R$ ($j=1,\cdots,n$) such that
$$
\frac{1}{2\pi}\sum_a l_{a,i} [\omega] \cap \alpha_a = \ell_i(u) - \sum_{j=1}^n v_{i,j}
f_j(u).
$$
\end{lem}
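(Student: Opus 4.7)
The plan is to decompose $\beta_i$ using the splitting and then pair with $\omega$, invoking the area formula (\ref{eq:area}).

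I would begin with the short exact sequence (\ref{exseq:homtpyeq})
$$
0 \to H_2(X;\Z) \xrightarrow{\iota} H_2(X, L(u);\Z) \xrightarrow{\partial} H_1(L(u);\Z) \to 0,
$$
which splits integrally because all three groups are free abelian. Fixing a splitting amounts to choosing integral lifts $\tilde e_j \in H_2(X, L(u);\Z)$ of the basis elements $\text{\bf e}_j^* \in H_1(L(u);\Z)$ for $j = 1,\cdots,n$. For such a choice and any $\beta \in H_2(X, L(u);\Z)$, the element $\beta - \sum_j (\partial\beta)_j\,\tilde e_j$ has vanishing boundary and therefore lies in $\iota(H_2(X;\Z))$. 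Applying this decomposition to $\beta_i$ and using $\partial \beta_i = v_i = \sum_j v_{i,j}\,\text{\bf e}_j^*$ from (\ref{form:connecting}), one obtains
$$
\beta_i \;=\; \sum_{a=1}^{m-n} l_{a,i}\, \alpha_a \;+\; \sum_{j=1}^n v_{i,j}\, \tilde e_j
$$
in $H_2(X, L(u);\Z)$, and the integers $l_{a,i}$ thus defined are exactly the matrix entries of the splitting referenced in the statement: they are the coordinates of the retraction $\beta_i \mapsto \beta_i - \sum_j v_{i,j}\,\tilde e_j$ expressed in the chosen basis $\{\alpha_a\}$ of $H_2(X;\Z)$.

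Next I would pair both sides of this identity with $[\omega]$. The area formula (\ref{eq:area}) gives $\omega \cap \beta_i = 2\pi\,\ell_i(u)$, so
$$
2\pi\, \ell_i(u) \;=\; \sum_{a=1}^{m-n} l_{a,i}\,(\omega \cap \alpha_a) \;+\; \sum_{j=1}^n v_{i,j}\,(\omega \cap \tilde e_j).
$$
Defining
$$
f_j(u) \;:=\; \frac{1}{2\pi}\,\omega \cap \tilde e_j \;\in\; \R,
$$
dividing by $2\pi$, and rearranging yields exactly
$$
\frac{1}{2\pi}\sum_{a=1}^{m-n} l_{a,i}\,[\omega]\cap \alpha_a \;=\; \ell_i(u) - \sum_{j=1}^n v_{i,j}\,f_j(u),
$$
as required. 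Note that $\omega \cap \alpha_a$ is a constant independent of $u$ since $\alpha_a \in H_2(X;\Z)$ is an absolute class, whereas $f_j$ genuinely depends on $u$ because $\tilde e_j$ is a relative class in $H_2(X,L(u);\Z)$ whose $\omega$-pairing shifts as the boundary $L(u)$ sweeps through the fibers of the moment map.

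No serious obstacle is expected: the lemma is essentially the bookkeeping identity expressing that, once a splitting of (\ref{exseq:homtpyeq}) is fixed, the $\omega$-pairing of $\beta_i$ separates into an $\alpha_a$-piece (contributing the $u$-independent K\"ahler parameters) and a $\tilde e_j$-piece (contributing the $u$-dependent fiber coordinates). This is precisely the change of variables that converts the monomials $T^{\ell_i(u)}\prod_j y_j^{v_{i,j}}$ of $\mathfrak{PO}_0^u$ into the Givental--Iritani monomials $\prod_a q_a^{l_{a,i}}\prod_j s_j^{v_{i,j}}$ of (\ref{eq:superiritani}).
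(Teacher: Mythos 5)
Your proof is correct and follows essentially the same route as the paper: both arguments rest on the splitting of the exact sequence (\ref{exseq:homtpyeq}) together with the area formula $\omega(\beta_i)=2\pi\ell_i(u)$ from Lemma \ref{omegacap}. The only difference is one of presentation — the paper checks that the discrepancy $2\pi\ell_i(u)-\sum_a l_{a,i}\,[\omega]\cap\alpha_a$ vanishes on $\ker(pr)=H_2(X;\Z)$ and concludes that it must be a linear combination of the $v_{i,j}$, whereas you decompose $\beta_i$ directly and exhibit $f_j(u)=\frac{1}{2\pi}\,\omega\cap\tilde e_j$ explicitly from the chosen lifts.
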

\begin{proof}
We consider the exact sequence
$$
0 \longrightarrow H_2(X;\Z) \overset{i_*}{\longrightarrow} H_2(X,L(u);\Z) \longrightarrow H_1(L(u);\Z) \longrightarrow 0.
$$
$(c_1,\cdots,c_m) \in H_2(X,L(u);\Z)$ is in the image of $H_2(X;\Z)$
if and only if $\sum_ic_iv_i = 0$. (Here $v_i =
(v_{i,1},\cdots,v_{i,n}) \in \Z^n$.) For given $\alpha \in
H_2(X,\Z)$ denote $i_*(\alpha) = (c_1,\cdots,c_m)$. Then we have
$$
\sum_a [\omega]\cap c_i l_{a,i} \alpha_a = [\omega] \cap \alpha =
2\pi \sum c_i\ell_i(u).
$$
This implies the lemma.
\end{proof}

We now put
\begin{equation}
q_a = T^{[\omega] \cap \alpha_a/2\pi}, \qquad
s_j(u) = T^{f_j(u)} y_j. \label{eq;qandT}
\end{equation}
We obtain the identity
\begin{equation}\label{eq:FpulbacktoPO}
F_q(s_1(u),\cdots,s_n(u)) = \mathfrak{PO}_0^u(y_1,\cdots,y_n).
\end{equation}
\par
We remark that if we change the choice of K\"ahler form
then the identification (\ref{eq;qandT}) changes.
In other words, the story over $\Q[[q_1,\cdots,q_{m-n}]]$
corresponds to studying all the symplectic structures simultaneously,
while the story over $\Lambda$ focuses on one particular symplectic
structure.
\par
In \cite{iritani2} Corollary 5.12, Iritani proved semi-simplicity of
quantum cohomology ring of toric manifold with coefficient ring
$\Q[[q_1,\cdots,q_{m-n}]]$. It does not imply the semi-simplicity of
our $QH^{\omega}(X;\Lambda)$ since the semi-simplicity in general is
not preserved by the pull-back. (On the other way round,
semi-simplicity follows from semi-simplicity of the pull-back.)
However it is preserved by the pull-back at a generic point. Namely we
have:
\begin{prop}
The set of $T^n$-invariant symplectic structures on $X$ for which
$Jac(\mathfrak{PO}_0^u)$ is semi-simple is open and dense.
\end{prop}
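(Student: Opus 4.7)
The plan is to reformulate semi-simplicity as the non-vanishing of a single discriminant element $\Delta(\omega) \in \Lambda^\C$, then obtain openness by tracking the leading $T$-monomial of $\Delta(\omega)$ under perturbation, and obtain density by pulling back Iritani's generic semi-simplicity result for $Jac(F_q)$ via the substitution \eqref{eq;qandT}. Concretely, Proposition \ref{nonfanoadded} gives an isomorphism of finite-dimensional $\Lambda^\C$-algebras $Jac(\mathfrak{PO}_0^u) \cong QH^{\omega}(X;\Lambda^\C)$, and for a finite-dimensional commutative algebra over a field, semi-simplicity is equivalent to the non-vanishing of its discriminant (equivalently, to reducedness, equivalently to non-degeneracy of the trace form). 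So it suffices to prove that the set of $\omega$ with $\Delta(\omega) \neq 0$ in $\Lambda^\C$ is open and dense.

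For openness, I would fix a $T^n$-invariant K\"ahler form $\omega_0$ with $\Delta(\omega_0) \neq 0$ and consider its leading monomial $c_0 T^{\lambda_0}$ with $c_0 \in \C \setminus \{0\}$. By Theorem \ref{potential} and the formulae for $\overline z_i$, the structure constants of $Jac(\mathfrak{PO}_0^u)$ are finite $\Q$-linear combinations of monomials $T^{e(\omega)}$ whose exponents $e(\omega)$ are affine in the K\"ahler parameters $\lambda_i$ of \eqref{eq:elli}; hence the same holds for the finitely many monomials of $\Delta(\omega)$ that can attain any given leading exponent on a compact neighborhood of $\omega_0$. Since these $\Q$-coefficients do not depend on $\omega$ and only the $T$-exponents move continuously, the leading coefficient $c_0$ of $\Delta(\omega)$ persists under small perturbations of $\omega_0$, so $\Delta(\omega) \neq 0$ on an open neighborhood of $\omega_0$.

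For density, I would fix the combinatorial type of the moment polytope (i.e.\ the fan $\Sigma$), parameterize the K\"ahler cone by $\vec t = (t_1,\ldots,t_{m-n})$ with $t_a = [\omega]\cap\alpha_a/2\pi > 0$, and combine \eqref{eq:FpulbacktoPO} with $q_a = T^{t_a}$ to identify $\Delta(\omega)$ with the image of Iritani's discriminant $\Delta_q$ under the substitution $q_a \mapsto T^{t_a}$. By \cite{iritani2} Corollary~5.12 we have $\Delta_q \neq 0$, so writing $\Delta_q = \sum_{\vec k} a_{\vec k}\, q^{\vec k}$ there exist indices with $a_{\vec k} \neq 0$. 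After substitution $\Delta(\omega) = \sum_{\vec k} a_{\vec k} T^{\langle \vec k, \vec t\rangle}$, and a complete cancellation can only occur when $\vec t$ lies on the countable union of hyperplanes $\{\vec t : \langle \vec k - \vec k', \vec t\rangle = 0\}$ indexed by pairs with $a_{\vec k}, a_{\vec k'} \neq 0$; the complement of this countable union is dense in the K\"ahler cone. The main obstacle I foresee is the convergence issue for the substitution: $\Delta_q$ is a priori a formal Laurent series, while $\Delta(\omega)$ must belong to $\Lambda^\C$, so one has to verify that $\langle \vec k, \vec t\rangle \to \infty$ along the support of $\Delta_q$ (which should follow from positivity of $t_a$ on effective classes $\alpha_a$ chosen inside the Mori cone) and that no accumulation of tail terms produces a cancellation beyond the finitely many pairwise hyperplane cancellations used above. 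Modulo this technical point, the two arguments above yield openness and density simultaneously.
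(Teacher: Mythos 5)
Your guiding idea---recast semi-simplicity as non-vanishing of a discriminant, observe that after substitution it becomes a sum of $T$-monomials whose exponents are linear in the K\"ahler parameters, and conclude that vanishing confines $\omega$ to a union of hyperplanes---is the right one and is close in spirit to the paper. But the paper implements it with a crucially different input. Following the argument of Proposition 5.11 of \cite{iritani2}, it considers the Laurent polynomial $F_{w_1,\cdots,w_m}=\sum_i w_i\, y_1^{v_{i,1}}\cdots y_n^{v_{i,n}}$ in which all $m$ coefficients $w_i$ are free parameters, and invokes Kushnirenko's theorem \cite{kushnire}: the locus of coefficients where the Jacobian ring fails to be semi-simple is a proper Zariski-closed subset, i.e.\ the zero set of a single \emph{polynomial} in $w_1,\cdots,w_m$. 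Since the assignment $(\omega,u)\mapsto(w_i)$, with $w_i$ the exponential of an expression affine in $[\omega]\cap\alpha_a$ and $u$ (this is exactly the change of variables \eqref{eq;qandT}--\eqref{eq:FpulbacktoPO}), sweeps out the coefficients arbitrarily as the $m=(m-n)+n$ parameters vary, genericity in $w$ pulls back to an open dense set of $(\omega,u)$, and independence of $Jac(\mathfrak{PO}_0^u)$ of $u$ finishes the proof. Because the discriminant here has \emph{finitely many} terms, openness and density come out together and there is no convergence question.

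Your two deviations from this are where the genuine gaps lie. For density you route through Iritani's Corollary 5.12 over $\Q[[q_1,\cdots,q_{m-n}]]$, so your $\Delta_q$ is an infinite formal series; you then must (a) justify that the discriminant commutes with the specialization $q_a\mapsto T^{t_a}$, which presupposes that a $\Q[[q]]$-basis of $Jac(F_q)$ specializes to a $\Lambda$-basis of $Jac(\mathfrak{PO}_0^u)$ with matching structure constants, (b) verify convergence of the substituted series in $\Lambda$, and (c) exclude cancellation among infinitely many tail terms, which forces you onto a countable union of hyperplanes and is precisely the "technical point" you leave open. None of this arises if one works with the finite polynomial in the $w_i$ as the paper does. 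For openness, the assertion that the structure constants of $Jac(\mathfrak{PO}_0^u)$ in some fixed basis are finite $\Q$-linear combinations of monomials $T^{e(\omega)}$ with $\omega$-independent coefficients is not justified: computing them requires reduction modulo the quantum Stanley--Reisner and linear relations, and neither the persistence of a fixed monomial basis nor the finiteness and $\omega$-independence of the resulting expansions is clear as $\omega$ varies (indeed the dimension of the Jacobian ring itself could a priori jump off the generic locus). So as written, both halves of your argument need repair; replacing the $\Q[[q]]$-discriminant by the Kushnirenko-type discriminant in the coefficients $w_i$ repairs both at once.
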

\begin{proof}
We give a proof for completeness, following the argument in the
proof of Proposition 5.11 \cite{iritani2}. Consider the polynomial
$$
F_{w_1,\cdots,w_m} = \sum_{i=1}^m w_i y_{1}^{v_{i,1}} \cdots y_{n}^{v_{i,n}}
$$
where $w_i \in \C\setminus \{0\}$. By Kushnirenko's theorem
\cite{kushnire} the Jacobian ring of $F_{w_1,\cdots,w_m}$ is
semi-simple for a generic $w_1,\cdots,w_m$. We put
$$
w_i = \exp\left(\frac{1}{2\pi}\sum_a l_{a,i} [\omega]\cap \alpha + \sum_j v_{i,j}f_j(u)\right).
$$
It is easy to see that when we move $[\omega] \cap \alpha_a$ and $u$ (there
are $m-n$, $n$ parameters respectively) then $w_i$ moves in an arbitrary way.
Therefore for generic choice of $\omega$ and $u$, the Jacobian ring
$Jac(\mathfrak{PO}_0^u)$ is
semi-simple.
Since $Jac(\mathfrak{PO}_0^u)$ is independent of $u$ up to isomorphism,
the proposition follows.
\end{proof}
\begin{rem} Combined with Theorem \ref{QHequalMilnor}, this
proposition gives a partial answer to Question in section 3
\cite{entov-pol07I}.
\end{rem}

\par

\section{Variational analysis of potential function}
\label{sec:var}

In this section, we prove Proposition \ref{existcrit}. Let
$\mathfrak{PO}$ be defined as in (\ref{eq:weakPO}).

We define
$$
s_1(u) = \inf\{ \ell_i(u) \mid i=1,\cdots,m\}.
$$
$s_1$ is a continuous, piecewise affine and convex function and
$s_1 \equiv 0$ on $\partial P$. Recall if $u \in \partial_iP$ then
$\ell_i(u) = 0$ by definition.
\par
We put
$$\aligned
S_1 &= \sup\{ s_1(u) \mid u \in P\}, \\
P_1 &= \{ u \in P \mid s_1(u) = S_1 \}.
\endaligned$$
\begin{prop}\label{prop:bsSdef}
There exist $s_k$, $S_k$, and $P_k$ with the following properties.
\begin{enumerate}
\item $P_{k+1}$ is a convex polyhedron in $M_\R$.
$\dim P_{k+1} \le \dim P_{k}$.
\item $s_{k+1} : P_k \to \R$ is a continuous,
convex piecewise affine function.
\item
$
s_{k+1}(u) = \inf\{\ell_i(u) \mid \ell_i(u) > S_k \}
$
for $u \in \text{\rm Int}P_k$.
\item
$s_{k+1}(u) = S_k$ for $u \in \partial P_k$.
\item
$
S_{k+1} = \sup\{ s_{k+1}(u) \mid u \in P_k\}.
$
\item
$
P_{k+1} = \{u \in P_k \mid s_{k+1}(u) = S_{k+1}\}.
$
\item
$P_{k+1} \subset \text{\rm Int} P_k$.
\item
$s_k$, $S_k$, $P_k$ are
defined for $k = 1,2,\cdots, K$ for some $K \in \Z_+$ and
$P_K$ consists of a single point.
\end{enumerate}
\end{prop}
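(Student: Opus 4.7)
The plan is to prove this by induction on $k$, constructing the sequence $(s_k, S_k, P_k)$ one stage at a time and verifying properties (1)--(7) at each step, with property (8) following from a strict dimension drop $\dim P_{k+1} < \dim P_k$. The data already given for $k=1$ serves as the base case: $s_1 = \inf_i \ell_i$ is continuous and piecewise affine (indeed concave, as the pointwise minimum of the affine functions $\ell_i$), attains its supremum $S_1$ on the compact polytope $P$, and the maximizing locus $P_1$ is a convex subpolyhedron cut out from $P$ by affine constraints. Since $\ell_i \equiv 0$ on $\partial_i P$ and the $\ell_i$ are positive on $\operatorname{Int} P$, we have $s_1 > 0$ on $\operatorname{Int} P$ while $s_1 = 0$ on $\partial P$, so $S_1 > 0$ and $P_1 \subset \operatorname{Int} P$.

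For the inductive step, suppose $(s_j, S_j, P_j)$ for $j \leq k$ have been constructed with (1)--(7) and that $P_k$ is not yet a single point. Define $s_{k+1}$ on $\operatorname{Int} P_k$ by formula (3); this is a finite infimum of affine functions, hence continuous, piecewise affine and concave on its domain. The set over which the infimum is taken is nonempty at each $u \in \operatorname{Int} P_k$: this is where the geometric content lies. Concretely, $P_k$ is cut out from $P_{k-1}$ by finitely many new equality constraints of the form $\ell_i = S_k$ (for indices $i$ in the active set at stage $k$), so a codimension-one face of $P_k$ arises exactly when some additional $\ell_{i}$, which was strictly greater than $S_k$ on $\operatorname{Int} P_k$, drops down to the value $S_k$. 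Provided $\dim P_k \geq 1$, such $\ell_i$ must exist, for otherwise every $\ell_i$ would either be constant equal to $S_k$ on $P_k$ or bounded above by $S_k$ there, which together with $P_k \subset \operatorname{Int} P_{k-1}$ and the defining inequalities of $P$ would force $P_k$ to consist of a single point.

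Property (4) is then immediate from this discussion: on $\partial P_k$, at least one of the $\ell_i$ entering the infimum becomes equal to $S_k$, so $s_{k+1} = S_k$ there, and the function extends continuously to $P_k$. Setting $S_{k+1} = \sup_{P_k} s_{k+1}$ and $P_{k+1} = \{s_{k+1} = S_{k+1}\}$, we have $S_{k+1} > S_k$ since $s_{k+1}(u) > S_k$ at every interior point by construction, which forces $P_{k+1} \cap \partial P_k = \emptyset$ and thus $P_{k+1} \subset \operatorname{Int} P_k$, giving (7). Convexity of $P_{k+1}$ as a polyhedron follows from $P_{k+1}$ being cut out of $P_k$ by the piecewise-affine concave function $s_{k+1}$: it equals the intersection of $P_k$ with the maximizing faces of the finitely many affine pieces of $s_{k+1}$, and because $s_{k+1}$ is concave this intersection is convex.

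Finally, since $P_{k+1} \subset \operatorname{Int} P_k$ and $P_{k+1}$ is a convex polytope, we must have $\dim P_{k+1} < \dim P_k$ (any polytope contained in the relative interior of another polytope is of strictly lower dimension, as otherwise their affine hulls coincide and $P_{k+1}$ would necessarily contain boundary points of $P_k$). Hence the chain $\dim P_0 > \dim P_1 > \cdots$ terminates after at most $n+1$ steps with $\dim P_K = 0$, proving (8). The main obstacle is the verification that (3) gives a nonempty infimum on $\operatorname{Int} P_k$ and that its extension by $S_k$ to $\partial P_k$ is continuous, i.e., property (4); this hinges on tracking precisely which affine functions $\ell_i$ become active on $\partial P_k$ versus remain strictly above $S_k$ on $\operatorname{Int} P_k$, a bookkeeping argument that ultimately reduces to the observation that each successive face of the nested family $P \supset P_1 \supset \cdots$ is carved out by the next batch of $\ell_i$'s exceeding the previous threshold.
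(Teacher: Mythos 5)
Your inductive scheme is the same as the paper's, but your proof of (8) contains a genuine error. You assert that $P_{k+1}\subset \operatorname{Int}P_k$ forces $\dim P_{k+1}<\dim P_k$ because ``any polytope contained in the relative interior of another polytope is of strictly lower dimension.'' That is false as a statement of convex geometry: $[1/4,3/4]$ lies in the relative interior of $[0,1]$, has the same dimension and the same affine hull, and contains no boundary point of $[0,1]$. Moreover the strict decrease can genuinely fail in this construction: if all the $\ell_i$ achieving the infimum defining $s_{k+1}$ near its maximum are constant on the affine hull $A_k$ of $P_k$ (i.e. $d\ell_i\in A_k^{\perp}$), then $s_{k+1}$ is locally constant near its maximum and $P_{k+1}$ may be a subpolytope of $P_k$ of the \emph{same} dimension. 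This is precisely the error McDuff pointed out in an earlier version of this proposition; the corrected statement (1) only claims $\dim P_{k+1}\le\dim P_k$. The paper's termination argument is correspondingly different: it introduces the pairwise disjoint index sets $I_k=\{\ell_i\mid \ell_i(\mathbf{u}_k)=S_k\}$ for $\mathbf{u}_k\in\operatorname{Int}P_k$, notes that each $\ell_i\in I_k$ is constant on $P_k$, that the dimension drops whenever some $\ell_i\in I_{k+1}$ is non-constant on $P_k$, and that as long as $\dim P_k>0$ there remains some unused $\ell_j$ non-constant on $P_k$; hence a dimension drop must occur at some later stage $k'>k$ (not necessarily at $k+1$), and finiteness of $\{\ell_1,\dots,\ell_m\}$ ends the process.

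A second, smaller gap: you treat property (4) --- continuity of $s_{k+1}$, defined by the infimum (3) on $\operatorname{Int}P_k$ and by the constant $S_k$ on $\partial P_k$ --- as bookkeeping, but this is the one step the paper proves in detail. Since the functions $\ell_i$ with $\ell_i(u_\infty)=S_k$ are excluded from the index set at $u_\infty$ itself, one must show that for $u_j\in\operatorname{Int}P_k$ approaching $u_\infty\in\partial P_k$, some such $\ell_i$ satisfies $\ell_i(u_j)>S_k$, so that it enters the infimum and drags $s_{k+1}(u_j)$ down to $S_k$. The paper does this by choosing $\vec u$ pointing out of $P_k$ but into $\operatorname{Int}P_{k-1}$, observing $s_k(u_\infty+\epsilon\vec u)<S_k$, and using linearity of the relevant $\ell_i$ to conclude it exceeds $S_k$ on the side containing $u_j$. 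Your proposal names this as ``where the geometric content lies'' but does not actually supply the argument; it should be written out, and (8) should be repaired along the lines above.
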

\begin{exm}
Let $P = [0,a] \times [0,b]$ ($a<b$.) Then
$
s_1(u_1,u_2) = \inf\{ u_1,u_2,a-u_1,b-u_2\}.
$
$S_1 = a/2$,
$
P_1 = \{ (a/2,u_2) \mid a/2 \le u_2 \le b-a/2\}$,
$s_2(1/2,u_2) = \inf \{u_2,b-u_2\}$,
$S_2 = b/2$, $P_2 = \{(a/2,b/2)\}$.
\end{exm}
\begin{proof}
We define $s_k$, $S_k$, $P_k$ inductively over $k$.
We assume that $s_k$, $S_k$, $P_k$ are defined for $k=1,\cdots,k_0$
so that (1) - (7) of Proposition \ref{prop:bsSdef} are satisfied for $k=1,\cdots,k_0-1$.
\par
We define $s_{k_0+1}$ by (3) and (4).
We will prove that it satisfies (2).
We use the following lemma for this purpose.
\begin{lem}\label{lem;bdrysmall}
Let $u_j \in \text{\rm Int} P_{k_0}$ and
$\lim_{j\to\infty}u_j = u_{\infty} \in \partial P_{k_0}$. Then
$$
\lim_{j\to\infty} s_{k_0+1}(u_j) = S_{k_0}.
$$
\end{lem}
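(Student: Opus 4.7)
My plan is to exploit the inductive property~(7) at stage $k_0-1$, namely $P_{k_0} \subset \text{\rm Int}\, P_{k_0-1}$, so that the relative boundary of $P_{k_0}$ near $u_\infty$ comes entirely from the new constraints $\ell_i \ge S_{k_0}$ that cut $P_{k_0}$ out of $P_{k_0-1}$, rather than from the boundary of $P_{k_0-1}$ itself.

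First I will identify a stable ``active'' index set near $u_\infty$. Since $u_\infty \in P_{k_0}$, the value $s_{k_0}(u_\infty) = S_{k_0}$ equals $\inf\{\ell_i(u_\infty) : \ell_i(u_\infty) > S_{k_0-1}\}$, and I note that no $\ell_i$ can take a value strictly between $S_{k_0-1}$ and $S_{k_0}$ at $u_\infty$: otherwise such an $\ell_i$ would force $s_{k_0}(u_\infty) < S_{k_0}$. Setting $I = \{i : \ell_i(u_\infty) \ge S_{k_0}\}$, I will then combine the continuity of the affine functions $\ell_i$ with this gap to show that for every $u \in P_{k_0}$ in some small neighborhood of $u_\infty$ the equality $\{i : \ell_i(u) > S_{k_0-1}\} = I$ holds, so that locally $P_{k_0}$ is given by $\bigcap_{i \in I} \{\ell_i \ge S_{k_0}\}$.

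Second, I will classify each $\ell_i$ with $i \in I$. Being affine and satisfying $\ell_i \ge S_{k_0}$ on $P_{k_0}$, each such $\ell_i$ either is constantly equal to $S_{k_0}$ on $P_{k_0}$ (so it helps cut out the affine hull of $P_{k_0}$), or is strictly greater than $S_{k_0}$ on the relative interior and attains $S_{k_0}$ exactly on a proper face. Thus $\partial P_{k_0}$ is precisely the union of those proper faces. Since $u_\infty \in \partial P_{k_0}$, there must exist some index $i_* \in I$ of the second type with $\ell_{i_*}(u_\infty) = S_{k_0}$; producing this $i_*$ is the key polyhedral input.

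With such an $i_*$ in hand, the proof concludes as follows: for every $u_j \in \text{\rm Int}\, P_{k_0}$ we have $\ell_{i_*}(u_j) > S_{k_0}$, so $i_*$ is among the indices entering the infimum defining $s_{k_0+1}(u_j)$, giving $s_{k_0+1}(u_j) \le \ell_{i_*}(u_j)$. Letting $j \to \infty$ yields $\limsup_j s_{k_0+1}(u_j) \le \ell_{i_*}(u_\infty) = S_{k_0}$, while the trivial lower bound $s_{k_0+1}(u_j) > S_{k_0}$ forces $\liminf_j s_{k_0+1}(u_j) \ge S_{k_0}$. The main obstacle I anticipate is the polyhedral step producing $i_*$, which requires carefully combining the local description of $P_{k_0}$ with the fact that $u_\infty$ sits on the relative (rather than absolute) boundary of $P_{k_0}$.
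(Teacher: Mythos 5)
Your proposal is correct and follows essentially the same route as the paper: both arguments use $P_{k_0}\subset \operatorname{Int}P_{k_0-1}$ (property (7)) together with the gap $\ell_i(u)\notin(S_{k_0-1},S_{k_0})$ for $u\in P_{k_0}$ to produce an affine function $\ell_{i_*}$ with $\ell_{i_*}(u_\infty)=S_{k_0}$ and $\ell_{i_*}>S_{k_0}$ on $\operatorname{Int}P_{k_0}$, and then sandwich $s_{k_0+1}(u_j)$ between $S_{k_0}$ and $\ell_{i_*}(u_j)$. The only difference is cosmetic: the paper extracts $i_*$ by a directional argument (an outward vector $\vec u\in T_{u_\infty}A_{k_0}$ along which $s_{k_0}<S_{k_0}$ just outside $P_{k_0}$), whereas you extract it from the local facial description of $P_{k_0}$; these encode the same polyhedral fact.
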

\begin{proof}
We put
\begin{equation}\label{form;defIk}
I_{k_0}' = \{ \ell_i | \ell_i(u_{\infty})=S_{k_0}\}.
\end{equation}
By (6) for $k = k_0 - 1$, we find that $s_{k_0}(u_{\infty}) =S_{k_0}$.
Then  (3) for $k=k_0-1$ implies that there is $\ell_i$ such
that $\ell_i(u_{\infty})=S_{k_0}$.
Thus $I_{k_0}'$ is non-empty.
We take the affine space $A_{k_0} \subset M_\R$ such that $\text{\rm
Int} P_{k_0}$ is relatively open in $A_{k_0}$.
\par
Now since $u_\infty \in \partial P_{k_0}$, we can take a $\vec u \in
T_{u_{\infty}}A_{k_0}$ such that $u_\infty + \epsilon \vec u \notin P_{k_0}$
for any sufficiently small $\epsilon > 0$. It follows from (7) for $k=k_0-1$ that $u_\infty + \epsilon \vec u  \in \operatorname{Int}P_{k_0-1}$, and hence
$u + \epsilon \vec u  \in \operatorname{Int}P_{k_0-1}\setminus P_{k_0}$.
\par
By definition, we also have $s_{k_0}(u) \leq S_{k_0}$ for all $u \in P_{k_0-1}$.
Therefore we have
$$
s_{k_0}(u_\infty + \epsilon \vec u) < S_{k_0}.
$$
It follows that there exists $\ell_i\in I'_{k_0}$ such that
\begin{equation}\label{form;elliineq}
\ell_i(u_\infty + \epsilon \vec u) < \ell_i(u_{\infty}) = S_{k_0}
< \ell_i(u_\infty - \epsilon \vec u).
\end{equation}
Since (\ref{form;elliineq}) holds for any $\vec u \in T_{u_{\infty}}A_{k_0}$ with
$u_\infty + \epsilon \vec u \notin P_{k_0}$, it holds for $\epsilon \vec u: = u_j - u_\infty$
for any sufficiently large $j$.
We note that since $u_j \in \operatorname{Int}P_{k_0} \subset A_{k_0}$,
$\vec u_j = u_\infty - u_j$ is an `outward' vector as a tangent vector in $T_{u_\infty}A_{k_0}$
at $u_\infty \in \partial P_{k_0}$. Therefore we have
$u_\infty + \vec u_j \notin P_{k_0}$. Because $u_j = u_\infty - \vec u_j$,
it follows from \eqref{form;elliineq} that
\begin{equation}
\ell_i(u_j) > S_{k_0}
\end{equation}
for any sufficiently large $j$. Therefore we have
\begin{equation}
s_{k_0+1}(u_j) = \inf\{ \ell_i(u_j) \mid \ell_i \in I'_{k_0}, \,\, \ell_i(u_j) > S_{k_0}\}
\end{equation}
and $\lim_{j \to \infty} s_{k_0+1}(u_j) = \lim_{j\to \infty}\ell_i(u_j) = \ell_i(u_\infty) = S_{k_0}$.
This finishes the proof of the lemma.
\end{proof}
Lemma \ref{lem;bdrysmall} implies that $s_{k_0+1}$ is continuous and
piecewise linear in a neighborhood of $\partial P_{k_0}$. We can
then check (2) easily.
\par
We define $S_{k_0+1}$ by (5).
Then we can define $P_{k_0+1}$ by (6).
(In other words the right hand side of (6) is nonempty.)
(7) is a consequence of Lemma \ref{lem;bdrysmall}.
We can easily check that $P_{k_0+1}$ satisfies (1).
\par
We finally prove that $P_K$ becomes a point
for some $K$.
Let $\text{\bf u}_k \in \text{\rm Int}\,P_k$ and put
\begin{equation}\label{form;defIk1.5}
I_k = \{\ell_i\mid \ell_i(\text{\bf u}_k) = S_k\}.
\end{equation}
Here $k = 1,\cdots,K$.  We remark that $I_k$ is independent of the
choice of  $\text{\bf u}_k \in \text{\rm Int}\,P_k$.
\par
Note we defined $I'_{k_0}$ by the formula (\ref{form;defIk}).
We have $I_{k_0} \subseteq I'_{k_0}$.  But the equality may not
hold in general.
In fact $u_{\infty}$ in the boundary of $P_{k_0}$
but $\text{\bf u}_{k_0}$ is an interior point  of $P_{k_0}$.
Therefore if $\ell_i \in I_{k_0}$ then $\ell_i$ is constant on $P_{k_0}$.
But element of $I'_{k_0}$ may not have this property.
\par
In case some $\ell_i \in I_{k_0+1}$  is not constant on $P_{k_0}$
it is easy to see that $\dim P_{k_0+1} < \dim P_{k_0}$.
We remark that there exists some $\ell_j \notin \bigcup_{k\le k_0}I_{k_0}$
which is not constant on $S_{k_0}$ unless $S_{k_0}$ is a point.
Therefore if $\dim S_{k_0} \ne 0$, there exists $k' > k_0$
such that  $\dim P_{k'} < \dim P_{k_0}$.
Therefore there exists $K$ such that $P_{K}$ becomes $0$-dimensional
(namely a point). Hence we have achieved (8). The proof of
Proposition \ref{prop:bsSdef} is now complete.
\end{proof}
\begin{rem}
In a recent preprint \cite{mcd}, McDuff pointed out an
error in the statement (1) of Proposition \ref{prop:bsSdef} in the previous version
of this paper. We have corrected the statement and modified
the last paragraph of its proof, following the corresponding
argument in section 2.2 of \cite{mcd}. We thank her for pointing out
this error.
\end{rem}
The next lemma easily follows from construction.
\begin{lem}\label{lem;rationality}
If all the vertices of $P$ lie in $\Q^n$ then $u_0 \in\Q^n$. Here
$\{u_0\} = P_K$.
\end{lem}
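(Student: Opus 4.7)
The plan is to show by induction on $k$ that all the data produced by the construction in Proposition \ref{prop:bsSdef} remain rational, so that in particular the final singleton $P_K = \{u_0\}$ consists of a rational point.

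First I would verify that the hypothesis forces $\lambda_i \in \Q$ for every $i = 1,\dots,m$. Recall $\ell_i(u) = \langle u, v_i\rangle - \lambda_i$ with $v_i \in \Z^n$, and each facet $\partial_i P$ of the moment polytope contains at least one vertex of $P$. If $v$ is a vertex of $P$ lying on $\partial_i P$, then $\lambda_i = \langle v, v_i\rangle \in \Q$ since $v \in \Q^n$ and $v_i \in \Z^n$. Hence all affine functions $\ell_i$ have rational coefficients, and $P$ itself is a rational polytope.

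Next I would carry out the inductive step. Suppose inductively that $P_k$ is a rational polytope, $S_k \in \Q$, and the set $I_k = \{\ell_i \mid \ell_i(\text{\bf u}_k) = S_k\}$ is a well-defined subset of rational affine functions (this is automatic from the first step when $k=1$, since $S_1$ arises as the optimum of the rational linear program $\max\{t : \ell_i(u) \geq t \text{ for all } i,\ u \in P\}$, whose optimal value and an optimal solution lie in $\Q$ by standard LP theory). For the inductive step, by Proposition \ref{prop:bsSdef}(3) the function $s_{k+1}$ on $\operatorname{Int} P_k$ is $\min\{\ell_i : i \notin I_k\}$, and $S_{k+1}$ is the optimal value of the linear program
\begin{equation*}
\text{maximize } t \quad \text{subject to} \quad u \in P_k,\quad \ell_i(u) \ge t \text{ for all } i \notin I_k.
\end{equation*}
All constraints have rational coefficients, so $S_{k+1} \in \Q$ and a rational optimizer $\text{\bf u}_{k+1}$ exists. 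Then
$$
P_{k+1} = \{u \in P_k \mid \ell_i(u) \geq S_{k+1} \text{ for all } i \notin I_k\}
$$
is cut out of $P_k$ by rational affine inequalities, so it is again a rational polytope, completing the induction.

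Finally, by Proposition \ref{prop:bsSdef}(8) there is a $K$ with $P_K = \{u_0\}$, and by the induction $P_K$ is a rational polytope, hence $u_0 \in \Q^n$. The only real content beyond bookkeeping is the linear programming fact that a rational LP attains its optimum at a rational vertex; this is the step where one has to be slightly careful, but it is entirely standard, so I do not anticipate a genuine obstacle.
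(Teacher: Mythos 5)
Your argument is correct in substance and is the natural way to fill in the paper's proof, which consists only of the remark that the lemma ``easily follows from construction''; there is no alternative route in the paper to compare against. One bookkeeping slip should be repaired before your LP step is right as stated: on $\operatorname{Int} P_k$ the infimum defining $s_{k+1}$ runs over those $\ell_i$ with $\ell_i(u) > S_k$, and by Lemma \ref{lem;145} these are exactly the $\ell_i \notin \bigcup_{l\le k} I_l$, not merely the $\ell_i \notin I_k$. The sets $I_1,\dots,I_k$ are pairwise disjoint (an element of $I_l$ is constant equal to $S_l$ on $P_l \supseteq P_k$, and the $S_l$ are strictly increasing), so the linear program as you wrote it retains the constraints $\ell_j(u) \ge t$ for $\ell_j \in I_l$ with $l<k$; since such $\ell_j \equiv S_l \le S_k$ on $P_k$, your program would return a value at most $S_1$ rather than $S_{k+1}$. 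After excluding all of $\bigcup_{l\le k} I_l$ the step is correct; one should also observe, via the proof of Lemma \ref{lem;bdrysmall}, that the rational polytope $\{u \in P_k \mid \ell_i(u) \ge S_{k+1} \text{ for all } \ell_i \notin \bigcup_{l\le k} I_l\}$ is disjoint from $\partial P_k$ (at each boundary point some such $\ell_i$ takes the value $S_k < S_{k+1}$), so it genuinely coincides with $P_{k+1}$ as defined in Proposition \ref{prop:bsSdef}(6). With these adjustments the induction closes: all $\lambda_i$ are rational because each facet contains a rational vertex, each $S_k$ is the optimal value of a rational linear program, each $P_k$ is a rational polytope, and hence $u_0 \in \Q^n$.
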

By parallelly translating the polytope, we may assume, without loss of generality, that $u_0 = \text{\bf 0}$,
the origin. In the rest of this subsection, we will prove that $\mathfrak{PO}^{\text{\bf 0}}$ has a critical
point on $(\Lambda_{0}\setminus \Lambda_{+})^n$.
More precisely we prove Proposition \ref{existcrit} for $u_0=\text{\bf 0}$.
(We remark that if $P$ and $\ell_i$ are given we can easily locate $u_0$.)
\begin{exm}\label{leadingexample}
Let us consider Example \ref{exa:onepointblow} in the case $\alpha > 1/3$.
At $u_0 = ((1+\alpha)/4,(1-\alpha)/2)$ we have
$$
\mathfrak{PO}^{u_0} = (y_2+y_2^{-1}) T^{(1-\alpha)/2} + (y_1+(y_1y_2)^{-1})T^{(1+\alpha)/4}.
$$
Therefore the constant term $\mathfrak y_{i;0}$ of the coordinate
$y_i$ of the critical point is given by
\begin{equation}\label{ex:leading}
1-\mathfrak y_{2;0}^{-2} = 0, \quad 1- \mathfrak y_{1;0}^{-2}\mathfrak y_{2;0}^{-1} = 0.
\end{equation}
Note the first equation comes from the term of the smallest exponent and
contains only $\mathfrak y_{2;0}$. The second equation comes from the term which has
second smallest exponent and contains both $\mathfrak y_{1;0}$ and $\mathfrak y_{2;0}$. So we need to
solve the equation inductively according to the order of the exponent.
This is the situation we want to work out in general.
\end{exm}
\par
We remark that the affine space $A_i$ defined above in the proof of Lemma \ref{lem;bdrysmall}
$$
M_\R = A_0 \supseteq A_1 \supseteq \cdots \supseteq A_{K-1} \supseteq A_K =
\{\text{\bf 0}\}
$$
is a nonincreasing sequence of linear subspaces such that
$\text{\rm Int } P_k$ is an open subset of $A_k$.
Let
$$
A_{l}^\perp \subset (M_\R)^* \cong N_\R
$$
be the annihilator of $A_l \subset M_\R$. Then we have
$$
\{\text{\bf 0}\} = A_0^\perp \subseteq A_1^\perp \subseteq \cdots
\subseteq A_{K-1}^\perp \subseteq A_K^\perp = N_\R.
$$
\par
We recall:
\begin{equation}\label{form;defIk2}
I_k = \{\ell_i\mid \ell_i(\text{\bf 0}) = S_k\},
\end{equation}
for $k = 1,\cdots,K$.  In fact $\text{\bf 0} \in P_{k+1}
\subseteq \text{\rm Int}\,P_k$ for $k<K$.
\par
We renumber each of $I_k$ in (\ref{form;defIk2}) so that
\begin{equation}\label{formula;Iandlambda}
\{\ell_{k,j} \mid j=1,\cdots,a(k)\} = I_k.
\end{equation}
By construction
\begin{equation}\label{formula:144}
s_k(u) = \inf_j \ell_{k,j}(u)
\end{equation}
in a neighborhood of $\text{\bf 0}$ in $P_{k-1}$. In fact $s_{k-1}(\text{\bf 0}) = S_{k-1} < S_k = s_k(\text{\bf 0})$
and
$$
\{\ell_i(\text{\bf 0}) \mid i =1,\cdots, m\} \cap (S_{k-1},S_k) = \emptyset.
$$
\begin{lem}\label{lem;145}
If $u \in A_k$ then $\ell_{k,j}(u) = S_k$.
\end{lem}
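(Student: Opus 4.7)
The plan is to exploit that $\mathbf{0}$ sits in the relative interior of $P_k$ as a subset of the affine subspace $A_k$, together with the affineness of each $\ell_{k,j}$, to force $\ell_{k,j}-S_k$ to vanish identically on $A_k$.

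First I would verify that $\mathbf{0}\in\operatorname{Int} P_k$ for every $k\le K$. By Proposition \ref{prop:bsSdef}(7) applied repeatedly, $\{\mathbf{0}\}=P_K\subset\operatorname{Int} P_{K-1}\subset\cdots\subset\operatorname{Int} P_k$. Since $\operatorname{Int} P_k$ is relatively open in $A_k$ by the very definition of $A_k$, some $A_k$-neighborhood $U$ of $\mathbf{0}$ is contained in $\operatorname{Int} P_k$ (and hence in $P_{k-1}$).

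Next I would combine two identities on $U$: on the one hand, formula (\ref{formula:144}) is valid on a neighborhood of $\mathbf{0}$ in $P_{k-1}$, so $s_k(u)=\inf_j \ell_{k,j}(u)$ for $u\in U$ (after shrinking $U$ if necessary); on the other hand, since $U\subset\operatorname{Int} P_k$, Proposition \ref{prop:bsSdef}(6) gives $s_k(u)=S_k$ on $U$. Hence
\begin{equation*}
\inf_{j}\ell_{k,j}(u)=S_k\qquad\text{for all }u\in U,
\end{equation*}
which in particular implies $\ell_{k,j}(u)\ge S_k$ on $U$ for every $j$.

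Finally I would invoke (\ref{form;defIk2}) to note $\ell_{k,j}(\mathbf{0})=S_k$. Thus for each fixed $j$ the affine function $\ell_{k,j}-S_k$ is nonnegative on the $A_k$-open set $U$ and vanishes at the interior point $\mathbf{0}\in U$. An affine function attaining its minimum at an interior point of an open subset of an affine space must be constant on the whole ambient affine space, so $\ell_{k,j}\equiv S_k$ on $A_k$. There is no real obstacle here; the only subtle point is locating $\mathbf{0}$ as a genuine $A_k$-interior point so that the affine-function argument applies, which is why the identification $P_K\subset\operatorname{Int} P_k$ from Proposition \ref{prop:bsSdef}(7) is used at the outset.
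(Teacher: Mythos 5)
Your proof is correct and follows essentially the same route as the paper: the paper evaluates $s_k(\epsilon u)=\inf_j\ell_{k,j}(\epsilon u)=S_k$ for small $\epsilon$ of both signs along the line through $\text{\bf 0}$ in direction $u\in A_k=T_{\text{\bf 0}}A_k$, which is exactly your observation that the affine function $\ell_{k,j}-S_k$ is nonnegative near the $A_k$-interior point $\text{\bf 0}$ where it vanishes, hence has vanishing linear part on $A_k$. The only cosmetic difference is that the paper disposes of $k=K$ separately (where $A_K=\{\text{\bf 0}\}$ and the claim is the definition of $I_K$), while your argument degenerates harmlessly in that case.
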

\begin{proof}
We may assume $k < K$. Hence $\text{\bf 0} \in \text{\rm Int} P_k$.
We regard $u \in A_k = T_{\text{\bf 0}}A_k$. By (\ref{formula:144}), we have
$$
s_k(\e u) = \inf\{\ell_{k,j}(\epsilon u) \mid j=1,\cdots,a(k)\}.
$$
Since $s_k(\e u) = S_k$ for $\e u \in P_k$ it follows that
$\ell_{k,j}(u) = S_k$.
\end{proof}
Lemma \ref{lem;145} implies that the linear part $d\ell_{k,j}$ of $\ell_{k,j}$
is an element of $A_k^\perp \subset \mathfrak t =N_\R$. In fact if $\ell_{k,j} = \ell_i$,
we have $d\ell_{k,j} = v_i$ from the definition of $\ell_i$,
$\ell_i(u) = \langle u,v_i \rangle - \lambda_i$ given in Theorem
\ref{gullemin}.

\begin{lem}\label{lem;146}
For any $v \in A_k^\perp$, there exist nonnegative real numbers $c_j \ge 0$, $j=1,\cdots,a(k)$
such that
$$
v - \sum_{j=1}^{a(k)} c_j d\ell_{k,j} \in A_{k-1}^\perp.
$$
\end{lem}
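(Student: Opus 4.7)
The plan is to reduce the assertion to a convex-geometric statement about the images of the $d\ell_{k,j}$ in the finite-dimensional quotient space $A_k^\perp/A_{k-1}^\perp$. First I would use Lemma \ref{lem;145}, which asserts that each $\ell_{k,j}$ is identically equal to $S_k$ on $A_k$: this shows $d\ell_{k,j}\in A_k^\perp$ and that $\ell_{k,j}|_{A_{k-1}}-S_k$ descends to a linear functional $\alpha_j$ on $A_{k-1}/A_k$. Under the canonical isomorphism $A_k^\perp/A_{k-1}^\perp\cong (A_{k-1}/A_k)^*$, the class $[d\ell_{k,j}]$ corresponds to $\alpha_j$, so the conclusion of the lemma is equivalent to the statement that the $\alpha_j$'s generate $(A_{k-1}/A_k)^*$ as a convex cone.

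The central analytic input is the following strict inequality: for every nonzero $w\in A_{k-1}/A_k$,
$$
\inf_{1\le j\le a(k)}\alpha_j(w)<0.
$$
To derive it I would combine formula (\ref{formula:144}) with Proposition \ref{prop:bsSdef} (6)--(7). Near $\text{\bf 0}$ one has $s_k(u)=S_k+\inf_j\alpha_j(\bar u)$ for $u\in A_{k-1}$, where $\bar u$ is the image of $u$ in $A_{k-1}/A_k$. Since $\text{\bf 0}\in P_K\subseteq P_k\subseteq\operatorname{Int}P_{k-1}$, any sufficiently small lift $u$ of $w$ lies in $\operatorname{Int}P_{k-1}$, where $s_k\le S_k$ with equality exactly on $P_k\subseteq A_k$. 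Because $\bar u=w\ne 0$ forces $u\notin A_k$ and hence $u\notin P_k$, we get $s_k(u)<S_k$; after rescaling by the linear factor, this gives $\inf_j\alpha_j(w)<0$.

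Finally, I would invoke Farkas' lemma to convert this inequality into the desired conic generation. If the cone $C=\{\sum_j c_j\alpha_j:c_j\ge 0\}\subseteq (A_{k-1}/A_k)^*$ were a proper subcone, then there would exist a nonzero $w\in A_{k-1}/A_k$ in its polar, i.e.\ with $\alpha_j(w)\le 0$ for every $j$. Then $\alpha_j(-w)\ge 0$ for every $j$, so $\inf_j\alpha_j(-w)\ge 0$, contradicting the strict inequality of the previous paragraph applied to the nonzero vector $-w$. Hence $C=(A_{k-1}/A_k)^*$, which is exactly the content of Lemma \ref{lem;146} after undoing the identification of the first paragraph.

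The only non-routine point is the strict inequality $\inf_j\alpha_j(w)<0$, which I expect to be the main obstacle; it encodes the whole information built up in the inductive construction of Proposition \ref{prop:bsSdef}, namely that $P_k$ is \emph{exactly} the locus where $s_k$ attains its maximum on $P_{k-1}$ (so departing from $A_k$ strictly decreases $s_k$) together with $\text{\bf 0}\in\operatorname{Int}P_{k-1}$ (so that one may vary freely in every direction of $A_{k-1}$ without leaving $P_{k-1}$). Once this is in place, the remainder is a standard application of finite-dimensional linear programming duality.
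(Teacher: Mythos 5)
Your proof is correct and is essentially the paper's argument: the paper also combines linear-programming duality (to produce, from a failure of the conclusion, a direction $u\in A_{k-1}\setminus A_k$ with $d\ell_{k,j}(u)\ge 0$ for all $j$) with the strict inequality $s_k(\e u)<S_k$ coming from $\text{\bf 0}\in\operatorname{Int}P_{k-1}$ and formula (\ref{formula:144}). The only difference is organizational — you prove the stronger statement that the $[d\ell_{k,j}]$ conically generate $(A_{k-1}/A_k)^*$ and then specialize, whereas the paper argues by contradiction for the given $v$ — but the key inputs are identical.
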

\begin{proof} Suppose to the contrary that
$$
\left\{v - \sum_{j=1}^{a(k)} c_j d\ell_{k,j} \, \Big | \, c_j \geq
0,\,\, j =1, \cdots, a(k) \right \} \cap A_{k-1}^\perp = \emptyset.
$$
Then we can find $u \in A_{k-1} \setminus A_k$ such that
\begin{equation}\label{form;37.147}
d\ell_{k,j}(u) \geq 0
\end{equation}
for all $j = 1, \cdots, a(k)$.

Since $\e u \in A_{k-1} \setminus A_{k}$
it follows that
$$
s_k(\e u) < S_k
$$
for a sufficiently small $\e$. On the other hand,
(\ref{form;37.147}) implies $d\ell_{k,j}(\e u) \geq 0$ for all $\e >
0$ and so $\ell_{k,j}(\e u) \geq \ell_{k,j}(\text{\bf 0}) = S_k$.
Therefore by definition of $s_k$ in Proposition \ref{prop:bsSdef} we have
\beastar
s_k(\e u) & \geq & \inf \{\ell_{k,j}(\e u) \mid j = 1,\cdots, a(k)\} \\
& \ge & \inf \{\ell_{k,j}(\text{\bf 0}) \mid j = 1,\cdots, a(k)\} = S_k.
\eeastar
This is a contradiction.
\end{proof}
Applying Lemma \ref{lem;146} inductively downwards starting from
$\ell = k$ ending at $\ell = 1$, we immediately obtain the following
\begin{cor}\label{Col:37.148}
For any $v \in A_k^\perp$, there exist $c_{l,j} \ge 0$
for $l = 1,\cdots, k$, $j = 1,\cdots,a(l)$ such that
$$
v = \sum_{l=1}^k\sum_{j=1}^{a(l)} c_{l,j} d\ell_{l,j}.
$$
\end{cor}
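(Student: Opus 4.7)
The proof will be a direct downward induction on the index of the filtration $A_0^\perp \subseteq A_1^\perp \subseteq \cdots \subseteq A_K^\perp$, with Lemma \ref{lem;146} serving as the one-step reduction.

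The plan is as follows. Given $v \in A_k^\perp$, I would first apply Lemma \ref{lem;146} to produce nonnegative coefficients $c_{k,1}, \dots, c_{k,a(k)}$ such that
$$
v^{(k-1)} := v - \sum_{j=1}^{a(k)} c_{k,j}\, d\ell_{k,j} \in A_{k-1}^\perp.
$$
Since $v^{(k-1)}$ again lies in an annihilator of the same form (with index lowered by one), I can invoke Lemma \ref{lem;146} once more to extract $c_{k-1,j} \geq 0$ with
$$
v^{(k-2)} := v^{(k-1)} - \sum_{j=1}^{a(k-1)} c_{k-1,j}\, d\ell_{k-1,j} \in A_{k-2}^\perp.
$$
Iterating this procedure $k$ times produces a sequence $v^{(k-1)}, v^{(k-2)}, \dots, v^{(0)}$ together with nonnegative coefficients $c_{l,j}$ for $l = k, k-1, \dots, 1$ and $j = 1, \dots, a(l)$.

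The induction terminates because $A_0^\perp = \{\text{\bf 0}\}$ (recall $A_0 = M_\R$), so the final remainder $v^{(0)}$ must vanish. Summing up the telescoping relations gives
$$
v = \sum_{l=1}^{k}\sum_{j=1}^{a(l)} c_{l,j}\, d\ell_{l,j},
$$
which is exactly the claimed decomposition. No new geometric input is required beyond the single-step statement already established; the only verification needed is that at each stage the hypothesis of Lemma \ref{lem;146} is met, which is immediate since $v^{(l-1)} \in A_{l-1}^\perp$ by construction. Thus the corollary follows with essentially no additional obstacle.
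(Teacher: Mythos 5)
Your proof is correct and is exactly the argument the paper uses: the paper's proof of Corollary \ref{Col:37.148} is precisely the downward iteration of Lemma \ref{lem;146} from $l=k$ to $l=1$, terminating because $A_0^\perp = \{\text{\bf 0}\}$. Nothing further is needed.
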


We denote
\begin{equation}
\mathfrak I = \{\ell_i \mid i=1,\cdots,m\} \setminus \bigcup_{k=1}^K I_k.
\label{37.149}\end{equation}
It is easy to see that
\begin{equation}
\ell \in \mathfrak I \Rightarrow \ell(\text{\bf 0}) > S_K.
\label{37.150}\end{equation}
\par

Now we go back to the situation of (\ref{eq:weakPO}). We use the
notation of (\ref{eq:weakPO}). In this case, for each
$k=1,\cdots,K$, we also associate, in Definition \ref{defn;151}, a set $\mathfrak I_{k}$ consisting
of pairs $(\ell,\rho)$ with an affine map $\ell : M_\R \to \R$ and
$\rho \in \R_+$.

\begin{defn}\label{defn;151}
We say that a pair $(\ell,\rho) = (\ell'_j,\rho_j)$
is an element of $\mathfrak I_{k}$ if the following holds :
\begin{enumerate}
\item If $e_j^i \ne 0$ then
$\ell_i \in \bigcup_{l=1}^k I_{l}$.
(Note $\ell'_j = \sum_i e_j^i \ell_i$.)
\par
\item (1) does not
hold for some $i,j$ if we replace $k$ by $k-1$.
\end{enumerate}
A pair $(\ell,\rho) = (\ell'_j,\rho_j)$ as in (\ref{eq:weakPO}) is, by definition,
an element of $\mathfrak I_{K+1}$ if it is not contained in any of
$\mathfrak I_{k}$, $k=1,\cdots,K$.
\end{defn}
\begin{lem}\label{37.152}
\begin{enumerate}
\item If $(\ell,\rho) \in \mathfrak I_{k}$ then
$d\ell \in A_k^\perp$.
\par
\item If $(\ell,\rho) \in \mathfrak I_{k}$ then
$\ell(\text{\bf 0})+ \rho > S_k$.
\par
\item
If $(\ell,\rho) \in \mathfrak I_{K+1}$ then
$\ell(\text{\bf 0})+ \rho > S_K$.
\end{enumerate}
\end{lem}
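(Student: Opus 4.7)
\textbf{Proof proposal for Lemma \ref{37.152}.} My plan is to unwind the definition of $\mathfrak I_k$ (Definition \ref{defn;151}) and combine it with the structural properties of the $\ell_i$'s, $I_l$'s, and $A_l$'s established earlier, in particular Lemma \ref{lem;145} and the nesting $A_0^\perp \subseteq A_1^\perp \subseteq \cdots \subseteq A_K^\perp = N_\R$ together with the strict increase $S_1 < S_2 < \cdots < S_K$. Throughout I will fix $(\ell,\rho) = (\ell'_j,\rho_j)$ with $\ell'_j = \sum_i e_j^i \ell_i$ and $e_j^i \in \Z_{\ge 0}$, $\sum_i e_j^i > 0$.

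For part (1), I note that $d\ell'_j = \sum_i e_j^i\,d\ell_i$. By the definition of $\mathfrak I_k$, whenever $e_j^i \ne 0$ we have $\ell_i \in \bigcup_{l=1}^k I_l$, so $\ell_i = \ell_{l_i, j_i}$ for some $l_i \le k$. Lemma \ref{lem;145} shows $\ell_{l_i,j_i}$ is constant on $A_{l_i}$, hence $d\ell_i \in A_{l_i}^\perp$. Since $A_{l_i}^\perp \subseteq A_k^\perp$ for $l_i \le k$, each summand lies in $A_k^\perp$, and so does $d\ell'_j$.

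For part (2), the crucial observation comes from condition (2) of Definition \ref{defn;151}: there exists at least one index $i_0$ with $e_j^{i_0} \ne 0$ such that $\ell_{i_0} \notin \bigcup_{l=1}^{k-1} I_l$; combined with condition (1), this forces $\ell_{i_0} \in I_k$, so $\ell_{i_0}(\text{\bf 0}) = S_k$. For all other indices $i$ with $e_j^i \ne 0$, we have $\ell_i \in I_{l_i}$ for some $l_i \le k$, so $\ell_i(\text{\bf 0}) = S_{l_i} \ge S_1 > 0$ (the positivity following because $\text{\bf 0} = u_0 \in \text{Int}\,P$ implies $\ell_i(\text{\bf 0}) > 0$ for every $i$). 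Therefore
\[
\ell'_j(\text{\bf 0}) = \sum_i e_j^i\,\ell_i(\text{\bf 0}) \ge e_j^{i_0} S_k + \sum_{i \ne i_0} e_j^i\,\ell_i(\text{\bf 0}) \ge S_k,
\]
with strict inequality unless $e_j^{i_0}=1$ and all other $e_j^i = 0$. In either case, adding $\rho_j > 0$ gives $\ell'_j(\text{\bf 0}) + \rho_j > S_k$.

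Part (3) follows by an entirely parallel argument: since $(\ell'_j, \rho_j)$ belongs to no $\mathfrak I_k$ for $k \le K$, there exists some $i_0$ with $e_j^{i_0} \ne 0$ for which $\ell_{i_0} \notin \bigcup_{l=1}^K I_l$, i.e., $\ell_{i_0} \in \mathfrak I$. Then (\ref{37.150}) gives $\ell_{i_0}(\text{\bf 0}) > S_K$, while the remaining terms $e_j^i \ell_i(\text{\bf 0})$ are non-negative, and $\rho_j > 0$ is added on top. I expect the whole lemma to be essentially a bookkeeping exercise once the definitions are in place; the only mild subtlety is ensuring the existence of the index $i_0$ realizing the ``largest level'' in each case, which is handled cleanly by the defining minimality clause (2) in Definition \ref{defn;151}.
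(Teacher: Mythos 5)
Your proof is correct and follows essentially the same route as the paper: part (1) via Lemma \ref{lem;145} and the nesting of the $A_l^\perp$, and parts (2) and (3) by isolating, via the minimality clause in Definition \ref{defn;151}, an index $i_0$ with $e_j^{i_0}\ne 0$ and $\ell_{i_0}\in I_k$ (resp. $\ell_{i_0}\in\mathfrak I$), then bounding the remaining nonnegative terms and using $\rho_j>0$. The paper's own argument for (2) is exactly the one-line estimate $\ell(\text{\bf 0})+\rho \ge e_j^{i_0}\ell_{i_0}(\text{\bf 0})+\rho_j > \ell_{i_0}(\text{\bf 0}) = S_k$, which you reproduce with slightly more bookkeeping.
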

\begin{proof}
(1) follows from Definition \ref{defn;151} (1) and Lemma \ref{lem;145}.
\par
If $(\ell,\rho) = (\ell_j',\rho_j) \in \mathfrak I_{k}$ then there exists $e_j^i \ne 0$,
$\ell_i = \ell_{k,j'}$. Then
$$
\ell(\text{\bf 0})+\rho \ge e^i_j\ell_i(\text{\bf 0}) + \rho_j >
\ell_i(\text{\bf 0}) = S_{k}.
$$
(2) follows. The proof of (3) is the same.
\end{proof}
\begin{lem}\label{37.154}
The vector space $A_k$ is defined over $\Q$.
\end{lem}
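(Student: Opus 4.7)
The plan is to prove the lemma by induction on $k$, exhibiting $A_k$ as the intersection of $M_\R$ with finitely many affine hyperplanes whose equations have rational coefficients. The base case $A_0=M_\R$ is trivial. Note that by Lemma \ref{lem;rationality} the translation vector $u_0$ we used to place $P_K$ at $\text{\bf 0}$ is rational, so the translated affine functions $\ell_i(u)=\langle u,v_i\rangle-\tilde\lambda_i$ still have $v_i\in N\cong\Z^n$ (integral) and $\tilde\lambda_i\in\Q$.

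The first main step is to establish the identity
$$
A_k \;=\; \bigl\{\, u \in A_{k-1} \,\bigm|\, \ell_{k,j}(u)=S_k \text{ for all } j=1,\dots,a(k)\,\bigr\}.
$$
The inclusion ``$\subseteq$'' is immediate from Lemma \ref{lem;145}. For the reverse inclusion, let $B_k$ denote the right-hand side. Then $B_k$ is an affine subspace of $A_{k-1}$ passing through the origin, and for $u\in B_k$ sufficiently close to $\text{\bf 0}$ we have, by (\ref{formula:144}), $s_k(u)=\inf_j\ell_{k,j}(u)=S_k$, so $u\in P_k\subseteq A_k$. Thus $B_k$ contains an open neighborhood of $\text{\bf 0}$ inside $A_k$; since $B_k$ is affine through the origin, this forces $B_k\subseteq A_k$.

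Next I would verify rationality of the defining data of $B_k$. Each $\ell_{k,j}$ coincides with some $\ell_i$, whose linear part is $v_i\in\Z^n$. For the constants: since $\text{\bf 0}\in P_K\subseteq P_k\subseteq A_k$, Lemma \ref{lem;145} gives
$$
S_k \;=\; \ell_{k,j}(\text{\bf 0}) \;=\; -\tilde\lambda_i \;\in\;\Q.
$$
Hence the hyperplane $\{\ell_{k,j}=S_k\}$ is defined over $\Q$. Combining with the induction hypothesis that $A_{k-1}$ is defined over $\Q$, we conclude that $A_k$, as the intersection of $A_{k-1}$ with finitely many $\Q$-hyperplanes, is also defined over $\Q$.

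The main point requiring care is the equality $A_k=B_k$: a priori $A_k$ could be a proper subspace of $B_k$, but the local-neighborhood argument using (\ref{formula:144}) rules this out. Everything else is a straightforward bookkeeping of rational coefficients, leaning on the fact that toric normal vectors $v_i$ lie in the lattice $N$ and that the rationality of the vertices of $P$ (together with rationality of $u_0$) makes all the $\tilde\lambda_i$ rational.
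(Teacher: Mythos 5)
Your proof is correct and follows the same induction as the paper's, which simply asserts that $A_k$ is cut out of $A_{k-1}$ by the equalities $\ell_{k,j}=S_k$ and notes that the linear parts $v_i$ are integral; your verification of the identity $A_k=B_k$ via (\ref{formula:144}) just fills in a step the paper leaves implicit. One small remark: since $\text{\bf 0}\in A_k$, the equations $\ell_{k,j}(u)=S_k$ are equivalent to the homogeneous integral equations $\langle u,v_i\rangle=0$, so the rationality of $u_0$ and of the constants $\tilde\lambda_i$ is not actually needed for this lemma.
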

\begin{proof}
$A_k$ is defined by equalities
of the type $\ell_i = S_k$ on $A_{k-1}$.
Since the linear part of $\ell_i$ has integer coefficients,
the lemma follows by induction on $k$.
\end{proof}
We put $d(k) = \dim A_{k-1} - \dim A_{k} = \dim A^\perp_{k} - \dim A^\perp_{k-1}$.
We choose $\text{\bf e}_{i,j}^* \in Hom(M_\Q,\Q) \cong N_\Q$
($i=1,\cdots,K$, $j=1,\cdots,d(k)$) such that the following condition holds. Here
$M_\Q = M \otimes \Q$ and $N_\Q = N \otimes \Q$

\begin{conds}\label{cond:estar}
\begin{enumerate}
\item $\text{\bf e}_{1,1}^*, \cdots, \text{\bf e}_{k,d(k)}^*$ is a $\Q$ basis
of $A^\perp_k \cap N_{\Q}$.
\par
\item $d\ell_{k,j} = \sum_{k',j'} v_{(k,j),(k',j')}\text{\bf e}_{k',j'}^*$ with
$v_{(k,j),(k',j')} \in \Z$.
\par
\item If $(\ell,\rho) \in \mathfrak I_k$ or $\ell \in \mathfrak I$, then
$d\ell = \sum_{k',j'} v_{\ell,(k',j')}\text{\bf e}_{k',j'}^*$ with $v_{\ell,(k',j')} \in \Z$.
\end{enumerate}
\end{conds}
Note $d(k) = 0$ if $A_k = A_{k-1}$.
\par
We identify $\R^n$ with $H^1(L(u);\R^n)$ in the same way as Lemma
\ref{37.90} and let $x_{k,j} \in Hom(H^1(L(u);\R),\R)$ be the
element corresponding to $\text{\bf e}_{k,j}^*$ by this
identification. In other words, if
$$
\text{\bf e}_{k,j}^* = \sum_{i} a_{(k,j);i} \text{\bf e}_{i}^*,
$$
where $\text{\bf e}_{i}^*$ is as in Lemma \ref{37.90},
then we have
$$
x_{k,j} = \sum_{i} a_{(k,j);i} x_i.
$$
We put
$
y_{k,j} = e^{x_{k,j}}.
$
We define
\begin{equation}
Y(k,j) =\prod_{k'=1}^{K}\prod_{j'=1}^{d(k')} y_{k',j'}^{v_{(k,j),(k',j')}}. \label{37.156.1}
\end{equation}
And for $(\ell,\rho) \in \mathfrak I_k$ or $\ell \in \mathfrak I$, we define
\begin{equation}
Y(\ell) = \prod_{k=1}^{K}\prod_{j=1}^{d(k)} y_{k,j}^{v_{\ell,(k,j)}}.
\label{37.156.2}
\end{equation}
By Theorem \ref{weakpotential} there exists $c_{(\ell,\rho)} \in \Q$
such that :
\begin{equation}
\aligned
\mathfrak{PO}^{\text{\bf 0}}
= &\sum_{k=1}^{K}\left(\sum_{j=1}^{a(k)} Y(k,j)\right) T^{S_{k}}
+\sum_{\ell \in \mathfrak I}Y(\ell)T^{\ell(\text{\bf 0})} \\
&+ \sum_{k=1}^{K+1}\sum_{(\ell,\rho) \in \mathfrak I_{k}}c_{(\ell,\rho)}Y(\ell) T^{\ell(\text{\bf
0})+\rho}
\endaligned\label{37.157}
\end{equation}
where $\mathfrak{PO}^{\text{\bf 0}}$ is $\mathfrak{PO}^u$ with $u = \text{\bf 0}$.
\begin{lem}\label{37.158}
\begin{enumerate}
\item If $k' < k$ then
\begin{equation}
\frac{\partial Y(k',j')}{\partial y_{k,j}} = 0.
\label{37.159.1}\end{equation}
\item If $(\ell,\rho) \in \mathfrak I_{k'}$, $k' < k$ then
\begin{equation}
\frac{\partial Y(\ell)}{\partial y_{k,j}} = 0.
\label{37.159.2}\end{equation}
\par
\item If $(\ell,\rho) \in \mathfrak I_{k}$ then $\ell(\text{\bf 0})+\rho > S_k$.
\par
\item
If $(\ell,\rho) \in \mathfrak I_{K+1}$ then $\ell(\text{\bf 0})+\rho > S_{K}$.
\par
\item If $\ell \in \mathfrak I$ then $\ell(\text{\bf 0}) > S_K$.
\end{enumerate}
\end{lem}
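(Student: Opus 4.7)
The plan is to observe that Lemma \ref{37.158} essentially assembles three basic facts: the triangular structure of the basis $\{\mathbf{e}_{k,j}^*\}$ with respect to the filtration $A_k^\perp$, the containment $d\ell \in A_k^\perp$ for the various affine functions $\ell$ appearing in the expansion, and the positivity statements already recorded in Lemma \ref{37.152} and (\ref{37.150}). So most of the work has already been done; the lemma is really just a bookkeeping step.

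For (1), I would unpack the definition (\ref{37.156.1}) and note that $\partial Y(k',j')/\partial y_{k,j} = 0$ is equivalent to $v_{(k',j'),(k,j)} = 0$. By Lemma \ref{lem;145} applied to $\ell_{k',j'}$ we have $d\ell_{k',j'} \in A_{k'}^\perp$, and by Condition \ref{cond:estar}(1), $A_{k'}^\perp \cap N_{\mathbb Q}$ has basis $\mathbf{e}_{1,1}^*, \dots, \mathbf{e}_{k',d(k')}^*$. Expanding $d\ell_{k',j'}$ in the full basis $\{\mathbf{e}_{k'',j''}^*\}$ as in Condition \ref{cond:estar}(2), the coefficients $v_{(k',j'),(k'',j'')}$ must vanish whenever $k'' > k'$, which gives exactly what we want. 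Part (2) is the same argument with Lemma \ref{37.152}(1) in place of Lemma \ref{lem;145}: since $(\ell,\rho)\in \mathfrak I_{k'}$ forces $d\ell \in A_{k'}^\perp$, the expansion (\ref{37.156.2}) cannot involve any $y_{k,j}$ with $k > k'$.

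For (3) and (4), these are literal restatements of Lemma \ref{37.152}(2) and Lemma \ref{37.152}(3), so nothing remains to be proved. For (5), the statement $\ell(\mathbf{0}) > S_K$ for $\ell \in \mathfrak I$ is exactly the content of (\ref{37.150}): elements of $\mathfrak I$ are the $\ell_i$ not belonging to any $I_k$, so $\ell_i(\mathbf{0})$ avoids every value $S_1, \dots, S_K$, and since these thresholds are chosen as successive infima reached at $\mathbf{0}$, the only remaining possibility is $\ell_i(\mathbf{0}) > S_K$.

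There is no serious obstacle here; the only point that deserves care is making sure the integrality and triangularity in Condition \ref{cond:estar} give exactly the vanishing of the relevant coefficients. Once the basis $\{\mathbf{e}_{k,j}^*\}$ is organized compatibly with the flag $A_0^\perp \subseteq A_1^\perp \subseteq \cdots \subseteq A_K^\perp$, parts (1) and (2) become immediate, and (3)–(5) are direct citations. The real work in this section is in Propositions \ref{prop:bsSdef} and \ref{37.152}, which have already been completed; Lemma \ref{37.158} just packages their consequences in the form needed for the subsequent variational analysis of the critical-point equation at $u = \mathbf 0$.
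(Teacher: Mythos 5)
Your proof is correct and follows the paper's argument exactly: part (1) via $d\ell_{k',j'}\in A_{k'}^{\perp}$ (Lemma \ref{lem;145}) together with Condition \ref{cond:estar}, part (2) via Lemma \ref{37.152} (1), and parts (3)--(5) as direct citations of Lemma \ref{37.152} (2), (3) and (\ref{37.150}). No gaps.
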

\begin{proof}
Since $d\ell_{k',j'} \in A^\perp_{k'}$ by Lemma \ref{lem;145} it
follows that $v_{(k',j'),(k,j)} = 0$ for $k>k'$. (1) follows. (2)
follows from Lemma \ref{37.152} (1) in the same way. (3) follows
from Lemma \ref{37.152} (2).
(4) follows from Lemma \ref{37.152} (3).
(5) follows from (\ref{37.150}).
\end{proof}
Now equation (\ref{formula:critical}) becomes
$$
0 = \frac{\partial\mathfrak{PO}^{\text{\bf 0}}}{\partial y_{k,j}}.
$$
We calculate this equation using Lemma \ref{37.158} to find that it is equivalent to :

\begin{equation}\aligned
0 = &\sum_{j'=1}^{a(k)}\frac{\partial Y(k,j')}{\partial y_{k,j}}
+ \sum_{k'>k}\sum_{j'=1}^{a(k')}\frac{\partial Y(k',j')}{\partial y_{k,j}} T^{S_{k'} - S_k} \\
&+ \sum_{k'=k}^{K+1}\sum_{(\ell,\rho) \in \mathfrak I_{k'}}
c_{(\ell,\rho)}\frac{\partial Y(\ell)}{\partial y_{k,j}}T^{\ell(\text{\bf 0})+\rho-S_k} + \sum_{\ell \in \mathfrak I}\frac{\partial Y(\ell)}{\partial y_{k,j}}T^{\ell(\text{\bf
0})-S_k}.
\endaligned\label{37.160}\end{equation}
Note the exponents of $T$ in the second, third, and fourth terms of (\ref{37.160}) are all strictly positive.
So after putting $T=0$ we have
\begin{equation}
0 = \sum_{j'=1}^{a(k)}\frac{\partial Y(k,j')}{\partial y_{k,j}}.
\label{37.161}\end{equation}
Note that the equation (\ref{37.161}) does not involve $T$
but becomes a numerical equation.
We call (\ref{37.161}) the {\it leading term equation}.
\begin{lem}\label{37.162}
There exist positive real numbers $\mathfrak y_{k,j;0}$, $k=1,\cdots,K$, $j=1,\cdots,d(k)$,
solving the leading term equations for $k=1,\cdots,K$.
\end{lem}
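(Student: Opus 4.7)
The plan is to solve the leading term equations (\ref{37.161}) inductively in $k=1,2,\ldots,K$, reducing each step to the existence of a critical point of a strictly convex, coercive sum of exponentials on $\R^{d(k)}$. A direct computation gives $\partial Y(k,j')/\partial y_{k,j} = v_{(k,j'),(k,j)}\,Y(k,j')/y_{k,j}$, so (\ref{37.161}) is equivalent, after clearing the positive $y_{k,j}$, to
\begin{equation*}
\sum_{j'=1}^{a(k)} v_{(k,j'),(k,j)}\,Y(k,j') = 0, \qquad j = 1, \ldots, d(k).
\end{equation*}
By Lemma \ref{37.158} (1), $v_{(k,j'),(k'',j'')} = 0$ whenever $k'' > k$, so $Y(k,j')$ depends only on the variables $y_{k'',j''}$ with $k'' \leq k$. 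Assuming inductively that positive reals $\mathfrak y_{k',j';0}$ for $k'<k$ have been produced, I set
$$
C_{j'} := \prod_{k'<k}\prod_{j''=1}^{d(k')} \mathfrak y_{k',j'';0}^{\,v_{(k,j'),(k',j'')}} \;>\; 0,
$$
so that $Y(k,j') = C_{j'}\prod_{j''=1}^{d(k)} y_{k,j''}^{v_{(k,j'),(k,j'')}}$, and substitute $x_j=\log y_{k,j}$. The level-$k$ system then becomes precisely the critical point equation $\partial F_k/\partial x_j = 0$ for the function
$$
F_k(x) = \sum_{j'=1}^{a(k)} C_{j'}\exp\bigl(\langle \overline v_{k,j'}, x\rangle\bigr), \qquad \overline v_{k,j'} := (v_{(k,j'),(k,1)},\ldots,v_{(k,j'),(k,d(k))}) \in \R^{d(k)}.
$$

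The key step will be to verify the geometric claim that the origin lies in the \emph{interior} of the convex hull of $\{\overline v_{k,j'}:j'=1,\ldots,a(k)\}$ in $\R^{d(k)}$. Granting it, the $\overline v_{k,j'}$ span $\R^{d(k)}$ and hence $F_k$ is strictly convex; moreover along any ray $x=t\bar u$ with $\bar u\neq 0$, the support-function characterization of the interior of a convex hull guarantees some index $j'$ with $\langle \overline v_{k,j'},\bar u\rangle>0$, forcing $F_k(t\bar u)\to +\infty$ as $t\to+\infty$, so $F_k$ is coercive. A strictly convex, coercive smooth function on $\R^{d(k)}$ attains its unique global minimum at some $x^*\in \R^{d(k)}$; setting $\mathfrak y_{k,j;0}:=e^{x^*_j}>0$ then closes the induction and produces the desired positive solutions.

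To verify the convex hull claim I will use the basis $\text{\bf e}_{k,1}^*,\ldots,\text{\bf e}_{k,d(k)}^*$ from Condition \ref{cond:estar} to identify $\R^{d(k)}\cong A_k^\perp/A_{k-1}^\perp$, together with the canonical duality between this quotient and $A_{k-1}/A_k$; under these identifications $\overline v_{k,j'}$ is the image of $d\ell_{k,j'}$, which is well-defined in the quotient because $\ell_{k,j'}\equiv S_k$ on $A_k$ by Lemma \ref{lem;145}. By the dual characterization of the interior of a convex hull, it suffices to produce, for every nonzero $\bar u\in A_{k-1}/A_k$ with lift $u\in A_{k-1}\setminus A_k$, some index $j'$ with $d\ell_{k,j'}(u)<0$. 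Iterating Proposition \ref{prop:bsSdef} (7) gives $\text{\bf 0}\in\operatorname{Int}P_{k-1}$, so $\epsilon u\in P_{k-1}$ for small $\epsilon>0$; on the other hand $\epsilon u\notin A_k\supset P_k$ forces $s_k(\epsilon u)<S_k$, and combining this with (\ref{formula:144}) and $\ell_{k,j'}(\text{\bf 0})=S_k$ yields $\min_{j'}d\ell_{k,j'}(u)<0$, as required. The main obstacle lies precisely in this convex hull condition: it is the point where the geometric content of the construction of $P_k$ as the strict maximum locus of $s_k$ inside $\operatorname{Int}P_{k-1}$ enters; once it is established, the rest of the argument reduces to standard convex analysis.
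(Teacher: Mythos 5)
Your proof is correct and follows essentially the same route as the paper's: induction on $k$, reduction of the level-$k$ leading term equation to the critical point equation of a positive sum of exponentials in the variables $x_{k,j}$ with the lower-level variables frozen, and coercivity deduced from the fact that along every direction $u \in A_{k-1}\setminus A_k$ some $d\ell_{k,j'}$ is negative — which the paper packages as Lemma \ref{lem;146} and Corollary \ref{Col:37.148} (the dual-cone form of your convex-hull condition) and proves by the identical $s_k(\epsilon u)<S_k$ argument. The strict convexity and uniqueness you obtain are a harmless extra not needed for the statement.
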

\begin{proof}
We remark the leading term equation for $k,j$ contain the monomials
involving only $y_{k',j}$ for $k'\le k$.
We first solve the leading term equation for $k= 1$. Denote
$$
f_{1}(x_{1,1},\cdots,x_{1,d(1)}) = \sum_{j=1}^{a(1)}Y(1,j).
$$
It follows from Corollary \ref{Col:37.148} that for any $(x_{1,1},\cdots,x_{1,d(1)}) \ne 0$,
there exists $j$ such that
$$
d\ell_{1,j}(x_{1,1},\cdots,x_{1,d(1)}) > 0.
$$
Therefore, we have
$$
\lim_{t\to\infty} f_{1}(tx_{1,1},\cdots,tx_{1,d(1)})
\ge \lim_{t\to\infty}C\exp(td\ell_{1,j}(x_{1,1},\cdots,x_{1,d(1)})) = +\infty.
$$
Hence $f_{1}(x_{1,1},\cdots,x_{1,d(1)})$ attains its minimum at some point
of $\R^{d(1)}$. Taking its exponential, We obtain $\mathfrak y_{1,j;0} \in \R_+$.
\par
Suppose we have already found $\mathfrak y_{k',j;0}$ for $k'<k$. Then we put
$$
F_k(x_{1,1},\cdots,x_{k,1},\cdots,x_{k,d(k)}) = \sum_{j=1}^{a(k)}Y(k,j)
$$
and
$$
f_k(x_{k,1},\cdots,x_{k,d(k)})
= F_k(\mathfrak x_{1,1;0},\cdots,\mathfrak x_{k-1,d(k-1);0},x_{k,1},\cdots,x_{k,d(k)})
$$
where $\mathfrak x_{k',j;0} = \log \mathfrak y_{k',j;0}$.
Again using Corollary \ref{Col:37.148}, we find
$$
\lim_{t\to\infty} f_k(tx_{k,1},\cdots,tx_{k,d(k)}) = +\infty.
$$
for any $(x_{k,1},\cdots,x_{k,d(k)}) \ne 0$.
Hence $f_k(x_{k,1},\cdots,x_{k,d(k)})$ attains a minimum and we obtain
$\mathfrak y_{k,j;0}$.
Lemma \ref{37.162} now follows by induction.
\end{proof}

We next find the solution of our equation
(\ref{formula:criticalweak}) or (\ref{formula:critical0}).
We take a sufficiently large $\CN$ and put

\begin{equation}\aligned
\mathfrak{PO}^{\text{\bf 0}}_{k,\CN} = &\sum_{j=1}^{a(k)} Y(k,j) +
\sum_{k'>k}\sum_{j'=1}^{a(k')}\frac{\partial Y(k',j')}{\partial y_{k,j}} T^{S_{k'} - S_k}
\\
&+ \sum_{\ell \in \mathfrak I, \,\, \ell(\text{\bf 0}) \le
\CN}Y(\ell) T^{\ell(\text{\bf 0}) -S_k}
\\
&+ \sum_{k'=k+1}^{K+1}\sum_{(\ell,\rho) \in \mathfrak I_{k'}, \ell(\text{\bf 0})
+ \rho \le \CN}
c_{(\ell,\rho)}Y(\ell) T^{\ell(\text{\bf
0})+\rho -S_k}.
\endaligned\label{37.167}\end{equation}
We remark that (\ref{formula:criticalweak}) is equivalent to
\begin{equation}
\frac{\partial \mathfrak{PO}_{k,\CN}^{\text{\bf
0}}}{\partial y_{k,j}}(\mathfrak y_1,\cdots,\mathfrak y_n) \equiv 0
\mod T^{\CN - S_k} \qquad k=1,\cdots,K,\, j=1,\cdots,a(k).
\label{37.168}\end{equation}
We also put
$$
\overline{\mathfrak{PO}}^{\text{\bf 0}}_{k} = \sum_{j=1}^{a(k)}Y(k,j).
$$
It satisfies
\begin{equation}
\overline{\mathfrak{PO}}^{\text{\bf 0}}_{k} \equiv \mathfrak{PO}^{\text{\bf 0}}_{k,\CN} \mod \Lambda_{+}.
\label{37.169}\end{equation}
\par
For given positive numbers $R(1), \cdots, R(K)$ we define the discs
$$
D(R(k)) = \{(x_{k,1}\cdots,x_{k,d(k)}) \mid x_{k,1}^2 + \cdots + x_{k,d(k)}^2 \le
R(k)\} \subset \R^{d(k)}
$$
and the poly-discs
\begin{eqnarray}
D(R(\cdot)) &=& \prod_{k=1}^{K} D(R(k)) \nonumber\\
&=&
\{(x_{1,1}
\cdots,
x_{K,d(K)}) \mid x_{k,1}^2 + \cdots + x_{k,d(k)}^2 \le
R(k),\, k=1,\cdots,K\}.\nonumber
\end{eqnarray}
We factorize
$$
\R^n = \prod_{k=1}^{K} \R^{d(k)}.
$$
Then we consider the Jacobian of $\overline{\mathfrak{PO}}^{\text{\bf 0}}_{k}$
$$
\nabla \overline{\mathfrak{PO}}^{\text{\bf 0}}_{k} : \R^n \to \R^{d(k)}
$$
i.e., the map
\begin{equation}
(\mathfrak x_{1,1},\cdots,\mathfrak x_{K,d(K)}) \mapsto
\left(
\frac{\partial \overline{\mathfrak{PO}}^{\text{\bf 0}}_{k}}{\partial x_{k,j}}(\mathfrak x_{1,1},\cdots,\mathfrak x_{K,d(K)})
\right)_{j=1,\cdots,d(k)}.
\label{37.170}\end{equation}
We remark that $\nabla \overline{\mathfrak{PO}}^{\text{\bf 0}}_{k}$ depends only
on $\R^{d(1)} \times \cdots \times \R^{d(k)}$ components.
\par
Combining all $\nabla \overline{\mathfrak{PO}}^{\text{\bf 0}}_{k}$, $k = 1, \cdots, K$
(\ref{37.170}) induces a map
$$
\nabla \overline{\mathfrak{PO}}^{\text{\bf 0}} : \R^n \to \R^{n}
$$
defined by
$$
\nabla \overline{\mathfrak{PO}}^{\text{\bf 0}}
= (\nabla \overline{\mathfrak{PO}}^{\text{\bf 0}}_{1},\cdots,\nabla \overline{\mathfrak{PO}}^{\text{\bf 0}}_{K}).
$$
The next lemma is closely related to Lemma \ref{37.162}.
\begin{lem}\label{37.171}
We may choose the positive numbers $R(k)$ for $k = 1, \cdots, K$ such that the following holds :
\begin{enumerate}
\item $\nabla \overline{\mathfrak{PO}}^{\text{\bf 0}}$
is nonzero on $\partial(D(R(\cdot)))$.
\par
\item The map $: \partial(D(R(\cdot))) \to S^{n-1}$
$$
\mathfrak x \mapsto \frac{\nabla \overline{\mathfrak{PO}}^{\text{\bf 0}}}
{\Vert\nabla \overline{\mathfrak{PO}}^{\text{\bf 0}}\Vert}
$$
has degree $1$.
\end{enumerate}
\end{lem}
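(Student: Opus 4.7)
My plan is to choose $R(1),\dots,R(K)$ inductively so that on the portion of $\partial D(R(\cdot))$ where $|x_k|=R(k)$, the $k$-th block $\nabla_k\overline{\mathfrak{PO}}^{\text{\bf 0}}_k$ of the gradient is strictly outward-pointing (in the $x_k$ direction), and then to deduce both statements from this. The key algebraic input is a positive spanning property extracted from Corollary \ref{Col:37.148}: let $P_k:A_k^\perp\to A_k^\perp/A_{k-1}^\perp\cong\R^{d(k)}$ be the natural projection, and note that $d\ell_{l,j}\in A_l^\perp\subseteq A_{k-1}^\perp$ for $l<k$, so $P_k(d\ell_{l,j})=0$. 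Corollary \ref{Col:37.148} then says that every $w\in\R^{d(k)}$ can be written as $\sum_{j}c_j\,P_k(d\ell_{k,j})$ with $c_j\ge 0$. Consequently, for every nonzero $v\in\R^{d(k)}$ there exists $j$ with $\alpha_j(v):=\langle P_k(d\ell_{k,j}),v\rangle>0$, and by compactness of $S^{d(k)-1}$ the maximum $\alpha^*(v):=\max_j\alpha_j(v)$ is bounded below by some $\alpha_0>0$ uniformly in $v$.

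Using Condition \ref{cond:estar}(2), a direct computation starting from the definitions \eqref{37.156.1} and \eqref{37.170} gives, for $v=x_k/|x_k|$,
\begin{equation*}
\nabla_k\overline{\mathfrak{PO}}^{\text{\bf 0}}_k\cdot x_k
=|x_k|\sum_{j=1}^{a(k)}\alpha_j(v)\exp\bigl(|x_k|\alpha_j(v)+C_{k,j}(x_{<k})\bigr),
\end{equation*}
where $C_{k,j}(x_{<k})$ collects the contributions from the coordinates $x_{k'}$ with $k'<k$ (recall by Lemma \ref{37.158}(1) that $\overline{\mathfrak{PO}}^{\text{\bf 0}}_k$ does not depend on $x_{k'}$ for $k'>k$). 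The $C_{k,j}$ are uniformly bounded on the compact set $\prod_{k'<k}D(R(k'))$. Since $\alpha^*(v)\ge\alpha_0>0$ uniformly, for $R(k)$ sufficiently large the terms with $\alpha_j(v)=\alpha^*(v)$ dominate the sum exponentially and force it to be strictly positive. I would choose $R(k)$ inductively (after $R(1),\dots,R(k-1)$ are fixed) large enough that $\nabla_k\overline{\mathfrak{PO}}^{\text{\bf 0}}_k\cdot x_k>0$ on $\{|x_k|=R(k)\}\times\prod_{k'<k}D(R(k'))$.

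Assertion (1) then follows immediately: at any $x\in\partial D(R(\cdot))$ the set $K(x):=\{k:|x_k|=R(k)\}$ is nonempty, and for each $k\in K(x)$ the inner product of the $k$-th block of $\nabla\overline{\mathfrak{PO}}^{\text{\bf 0}}(x)$ with the nonzero vector $x_k$ is strictly positive, so in particular that block is nonzero. For (2) I would compute the degree via the straight-line homotopy $H(x,s)=(1-s)\nabla\overline{\mathfrak{PO}}^{\text{\bf 0}}(x)+sx$, $s\in[0,1]$. For $x\in\partial D(R(\cdot))$ and $k\in K(x)$, the $k$-th block of $H(x,s)$ dotted with $x_k$ equals
$(1-s)\,\nabla_k\overline{\mathfrak{PO}}^{\text{\bf 0}}_k\cdot x_k+s|x_k|^2$,
which is strictly positive for every $s\in[0,1]$ (both terms nonnegative, the first strictly positive at $s=0$, the second at $s>0$). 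Thus $H(x,s)\ne 0$ throughout the homotopy, so the normalized gradient on $\partial D(R(\cdot))\cong S^{n-1}$ is homotopic to the restriction of $x\mapsto x/|x|$, which has degree $1$.

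The main obstacle is the uniformity of the outward-pointing estimate: once the positive spanning property is in hand, the estimate itself is a routine asymptotic analysis of sums of exponentials, but care is needed to obtain uniformity simultaneously in $v\in S^{d(k)-1}$ and in the lower-index variables $x_{<k}\in\prod_{k'<k}D(R(k'))$, which is why the inductive choice of the $R(k)$'s matters. The remaining parts—extracting (1) from the outward-pointing property and computing the degree by the linear homotopy—are standard once this estimate is established.
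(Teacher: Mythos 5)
Your proposal is correct and follows essentially the same route as the paper: the paper's proof consists exactly of the blockwise outward-pointing estimate (its Sublemma, proved inductively in $k$ from Corollary \ref{Col:37.148} by taking a supremum over the compact polydisc of lower-index variables), from which the lemma is deduced. Your explicit exponential-sum estimate and the linear homotopy to the radial map merely spell out the steps the paper leaves as "a consequence of Corollary \ref{Col:37.148}" and "it is easy to see."
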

\begin{proof}
We first prove the following sublemma by an upward induction on
$k_0$.
\begin{sublem}\label{37.173}
There exist $R(k)$'s for $1 \leq k \leq K$ such that for any given
$1 \leq k_0 \leq K$ we have
\begin{equation}
\sum_{j=1}^{d(k_0)} x_{k_0,j} \frac{\partial
\overline{\mathfrak{PO}}^{\text{\bf 0}}_{k_0}}{\partial x_{k_0,j}}
(x_{1,1},\cdots,x_{k_0,d(k_0)}) > 0 \label{37.174}\end{equation}if
$(x_{k,1},\cdots,x_{k,d(k)}) \in D(R(k))$ for all $1 \leq k \leq
k_0-1$ and $(x_{k_0,1},\cdots,x_{k_0,d(k_0)}) \in \partial
D(R(k_0))$.
\end{sublem}
\begin{proof}
In case $k_0 = 1$ the existence of $R(1)$ satisfying (\ref{37.174})
is a consequence of Corollary \ref{Col:37.148}.
We assume that the sublemma is proved for $1,\cdots,k_0-1$.
\par
For each fixed $\text{\bf x} = (x_{1,1},\cdots,x_{k_0-1,d(k_0-1)})$
we can find $R(k_0)_{\text{\bf x}}$ such that (\ref{37.174}) holds
for $(x_{k_0,1},\cdots,x_{k_0,d(k_0)}) \in \R^{d(k_0)} \setminus
D(R(k_0)_{\text{\bf x}}/2)$. This is also a consequence of Corollary
\ref{Col:37.148}.
\par
We take supremum of $R(k_0)_{\text{\bf x}}$ over the compact set
$\text{\bf x} \in \prod_{k=1}^{k_0-1}
D(R(k))$ and obtain $R(k_0)$.
The proof of Sublemma \ref{37.173} is complete.
\end{proof}
It is easy to see that Lemma \ref{37.171} follows from
Sublemma \ref{37.173}.
\end{proof}
We now use our assumption that the vertices of $P$ lies in $M_\Q= \Q^n$ and that
$\rho_j \in \Q$.
Replacing $T$ by $T^{1/\mathcal C}$ if necessary, we may assume that
all the exponents of $y_{k,j}$ and $T$ appearing in (\ref{37.167}) are integers.
Then
$$
\mathfrak{PO}^{\text{\bf 0}}_{k,\CN}
= \mathfrak{PO}^{\text{\bf 0}}_{k,\CN}(y_{1,1},\cdots,y_{K,d(K)};T)
$$
are polynomials of $y_{k,j}$, $y_{k,j}^{-1}$ and $T$.
Define the set $\mathfrak X$ by the set consisting of
$$
(\mathfrak y_{1,1},\cdots,\mathfrak y_{K,d(K)};q) \in (\R_+)^{n}\times \R
$$
that satisfy
\begin{equation}
\frac{\partial\mathfrak{PO}^{\text{\bf 0}}_{k,\CN}}{\partial y_{k,j}}(\mathfrak y_{1,1},\cdots,\mathfrak y_{K,d(K)};q)
= 0,
\label{eq:*}\end{equation}
for $k=1,\cdots,K$, $j=1,\cdots,d(k)$.
Clearly $\mathfrak X$ is a real affine algebraic variety.
(Note the equation for $y_i$ are polynomials. So we need to regard $y_i$ (not $x_i$) as
variables to regard $\mathfrak X$ as a real affine algebraic variety. )
\par
Consider the projection
$$
\pi : \mathfrak X \to \R, \quad \pi(\mathfrak y_{1,1},\cdots,\mathfrak y_{K,d(K)};q) = q
$$
which is a morphism of algebraic varieties.
\begin{lem}\label{37.175}
There exists a sufficiently small $\epsilon > 0$ such that if $\vert q\vert < \epsilon$ then
$$
\pi^{-1}(q) \cap \{(e^{x_1},\cdots,e^{x_n}) \mid (x_1,\cdots,x_n) \in D(R(\cdot))\} \ne \emptyset.
$$
\end{lem}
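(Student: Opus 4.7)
The plan is to reduce this to a Brouwer degree argument using Lemma \ref{37.171} combined with continuous dependence on the parameter $q$. First, I would rewrite the system (\ref{eq:*}) in the $x$-coordinates via $y_{k,j}=e^{x_{k,j}}$: on the positive orthant $y_{k,j}>0$ we have $\partial/\partial x_{k,j}=y_{k,j}\,\partial/\partial y_{k,j}$, so the vanishing of $\partial\mathfrak{PO}^{\text{\bf 0}}_{k,\CN}/\partial y_{k,j}$ is equivalent to the vanishing of the polynomial
$$
G_{k,j}(x_{1,1},\dots,x_{K,d(K)};q)\;:=\;\frac{\partial}{\partial x_{k,j}}\Bigl(\mathfrak{PO}^{\text{\bf 0}}_{k,\CN}(e^{x_{1,1}},\dots,e^{x_{K,d(K)}};q)\Bigr).
$$
Assembling these in the natural way produces a continuous map $G:\R^{n}\times\R\to\R^{n}$ with $G(x,q)=(G_{k,j}(x;q))_{k,j}$. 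By (\ref{37.169}) every term of $\mathfrak{PO}^{\text{\bf 0}}_{k,\CN}$ other than $\overline{\mathfrak{PO}}^{\text{\bf 0}}_{k}=\sum_{j=1}^{a(k)}Y(k,j)$ carries a strictly positive power of $T$; substituting $q=0$ kills those terms and yields $G(x,0)=\nabla\overline{\mathfrak{PO}}^{\text{\bf 0}}(x)$ in the notation of Lemma \ref{37.171}.

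Next I would invoke Lemma \ref{37.171}: the radii $R(k)$ can be chosen so that $G(\cdot,0)$ does not vanish on $\partial D(R(\cdot))$ and the induced map $\partial D(R(\cdot))\to S^{n-1}$ has degree one. By Brouwer degree theory, $G(\cdot,0)$ then has a zero in the interior of $D(R(\cdot))$. Since the boundary $\partial D(R(\cdot))$ is compact and $G$ is (jointly) continuous, the positive quantity $\inf_{x\in\partial D(R(\cdot))}\|G(x,0)\|$ persists, up to halving, for all $|q|$ smaller than some $\epsilon>0$: $G(\cdot,q)$ remains nonvanishing on $\partial D(R(\cdot))$, and the straight-line homotopy $tG(\cdot,q)+(1-t)G(\cdot,0)$ stays nonzero on the boundary for those $q$. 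Hence by homotopy invariance of the Brouwer degree, the degree of the normalized boundary map is still one for each such $q$, so $G(\cdot,q)$ has a zero $x(q)\in\operatorname{Int}D(R(\cdot))$.

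Setting $\mathfrak y_{k,j}:=e^{x_{k,j}(q)}>0$ gives a point satisfying (\ref{eq:*}), i.e.\ a point of $\pi^{-1}(q)\cap\{(e^{x_{1,1}},\dots,e^{x_{K,d(K)}})\mid x\in D(R(\cdot))\}$, which is exactly the required intersection. The only delicate step is the uniform nonvanishing on $\partial D(R(\cdot))$ for small $q$, but this is entirely routine once Lemma \ref{37.171} is granted: it is the standard stability statement for Brouwer degree under continuous perturbation, together with the fact that on the compact boundary the polynomial $G(x,q)$ depends continuously on $q$. The principal substantive input is therefore the qualitative topological information packaged in Lemma \ref{37.171} (which in turn rests on Corollary \ref{Col:37.148}), not any further estimate at the level of this lemma.
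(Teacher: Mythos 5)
Your proposal is correct and follows essentially the same route as the paper: the paper also specializes $T$ to the real parameter $q$, observes via (\ref{37.176}) that at $q=0$ the gradient map reduces to $\nabla\overline{\mathfrak{PO}}^{\text{\bf 0}}$, notes that the conclusion of Lemma \ref{37.171} persists for small $|q|$, and then concludes by "elementary algebraic topology." Your write-up merely makes explicit the Brouwer-degree and homotopy-invariance details that the paper leaves implicit.
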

\begin{proof}
We consider the real analytic $q$-family of polynomials
$$
\mathfrak{PO}^{\text{\bf 0}}_{k,\CN,q} (y_{1,1},\cdots,y_{K,d(K)})
= \mathfrak{PO}^{\text{\bf 0}}_{k,\CN}(y_{1,1},\cdots,y_{K,d(K)};q).
$$
Then
\begin{equation}
\mathfrak{PO}^{\text{\bf 0}}_{k,\CN,0} = \overline{\mathfrak{PO}}^{\text{\bf 0}}_{k,\CN}.
\label{37.176}
\end{equation}
Replacing $\overline{\mathfrak{PO}}^{\text{\bf 0}}_{k,\CN}$
by $\mathfrak{PO}^{\text{\bf 0}}_{k,\CN,q}$, we can repeat construction of the map
$$
\nabla \mathfrak{PO}^{\text{\bf 0}}_{\CN,q} : \R^n \to \R^n
$$
for each fixed $q \in \R$ in the same way as we defined
$\nabla \overline{\mathfrak{PO}}^{\text{\bf 0}}$.
Then the conclusion of Lemma \ref{37.171} holds
for $\nabla \mathfrak{PO}^{\text{\bf 0}}_{\CN,q}$
if $|q|$ is sufficiently small.
(This is a consequence of Lemma \ref{37.171} and (\ref{37.176}).)
Lemma \ref{37.175} follows from elementary algebraic topology.
\end{proof}
Lemma \ref{37.175} implies that we can find
$$
\mathfrak y_0 = (\mathfrak y_{1,1;0},\cdots,\mathfrak y_{K,d(K);0}) \in \R_+^n
$$
and a sequence
$$
(\mathfrak y_h,q_h) = (\mathfrak y^h_{1,1;0},\cdots,\mathfrak y^h_{K,d(K);0};q_h) \in \mathfrak X \subset \R^{n+1}
$$
$h=1,2,\cdots$ such that $q_h>0$ and
$
\lim_{h\to \infty} (\mathfrak y_h,q_h) = (\mathfrak y_0,0).
$
Therefore by the curve selection lemma (Lemma 3.1 \cite{Mil68}) there
exists a real analytic map
$$
\gamma : [0,\epsilon) \to \mathfrak X
$$
such that $\gamma(0) = (\mathfrak y_0,0)$
and $\pi(\gamma(t)) > 0$ for $t>0$.
We reparameterize $\gamma(t)$, so that its $q$-component is
$
t^{a/b}
$,
where $a$ and $b$ are relatively prime integers.
We put $T=t^{a/b}$ i.e., $t = T^{b/a}$ and denote the $y_{k,j}$-components of $\gamma(t)$ by
$$
\mathfrak y_{k,j} = \mathfrak y_{k,j;0} + \sum_{\ell=1}^{\infty} \mathfrak y_{k,j;\ell} T^{b\ell/a}.
$$
Since $\gamma(t) \in \mathfrak X$, the element $(\mathfrak y_{k,j})_{k,j} \in (\Lambda^{\R}_{0} \setminus \Lambda^{\R}_{+})^n$ is
the required solution of (\ref{formula:criticalweak}).
\par
Since $\mathfrak{PO}_0$ contains only a finite number of summands,
we can take $\mathfrak{PO}_{0,\CN} = \mathfrak{PO}_{0}$.
Therefore we can find a solution of (\ref{formula:critical0}) for $\mathfrak{PO}_0$.
\par
The proof of Proposition \ref{existcrit} is now complete.
\qed

\section{Elimination of higher order term in nondegenerate cases}
\label{sec:ellim}

In this section, we prove a rather technical (but useful) result, which
shows that solutions of the leading term equation (\ref{37.161})
correspond to actual critical points under certain non-degeneracy condition. For this purpose, we slightly modify
the argument of the last part of section \ref{sec:var}.
This result will be useful to determine
$u \in \text{\rm Int}\, P$
such that $HF((L(u);\frak x),(L(u);\frak x);\Lambda_0) \ne 0$ for some $\frak x$  in
non-Fano cases.
(In other words we study the image of $\mathfrak M(\mathfrak{Lag}(X)) \to \text{\rm Int}\,P$
by the map $(\frak x,u) \mapsto u$.)
In fact it shows that we can use $\mathfrak{PO}_0$
in place of $\mathfrak{PO}$ in most practical cases.
We remark that we explicitly calculate $\mathfrak{PO}_0$ but do not know
the precise form of $\mathfrak{PO}$ in non-Fano cases.
\par
In order to state the result in a general form, we prepare some notations.
Let $u_0 \in \text{\rm Int} \,P$.
(In section \ref{sec:var}, $u_0$ is determined as the unique element of
$P_K$ defined in Proposition \ref{prop:bsSdef}. The present
situation is more general.)
\par
We define positive real numbers $S_1 < S_2 < \cdots$ by
\begin{equation}\label{eq:defSk}
\{\ell_i(u_0) \mid i = 1,\cdots,m\} = \{S_1,S_2,\cdots , S_{m'}\}
\end{equation}
and the sets
\begin{equation}\label{form;defIk29}
I_k = \{\ell_i\mid \ell_i(u_0) = S_k\},
\end{equation}
for $k = 1,\cdots$. We renumber each of $I_k$
so that
\begin{equation}\label{formula;Iandlambda9}
\{\ell_{k,j} \mid j=1,\cdots,a(k)\} = I_k.
\end{equation}
\begin{defn}\label{def:genAl}
Let $A^\perp_l$ be the linear subspace of $N_\R$ spanned by
$d\ell_{k,j}$ $k\le l$, $j \le a(k)$. We define $K$ to be the
smallest number such that $A^\perp_K = N_\R$.
\end{defn}
Note our notations here are consistent with one in section \ref{sec:var} in case
$\{u_0\} = P_K$.
We define $\mathfrak I$ and $\mathfrak I_{k}$ by (\ref{37.149}) and Definition \ref{defn;151}.
Then Lemma \ref{37.152} and (\ref{37.150}) hold.
We choose $\text{\bf e}_{i,j}^* \in Hom(M_\Q,\Q)$ such that Condition \ref{cond:estar}
is satisfied. (Note $A^\perp_l$ is defined over $\Q$.)
$x_{i,j}$ and $y_{i,j}$ then are defined in the same way as section \ref{sec:var}.
We define $Y(k,j)$ by (\ref{37.156.1}) and $Y(\ell)$ by (\ref{37.156.2}).
Then (\ref{37.157}) and Lemma \ref{37.158} hold.
\par
We remark that Corollary \ref{Col:37.148} does {\it not} hold in general in
the current situation.
In fact we can write
$$
v = \sum_{l=1}^k\sum_{j=1}^{a(l)} c_{l,j} d\ell_{l,j}
$$
under the assumption of Corollary \ref{Col:37.148} but we may not be
able to ensure $c_{l,j}\ge 0$.

\begin{defn}\label{def:lteq}
\begin{enumerate}
\item We call
$$
0 = \sum_{j'=1}^{a(k)}\frac{\partial Y(k,j')}{\partial y_{k,j}},
\quad k=1,\cdots,K,\,\, j=1,\cdots,d(k)
$$
the {\it leading term equation} at $u_0$. We regard it as a polynomial equation
for $\mathfrak y_{k,j} \in \C\setminus \{0\}$, $k=1,\cdots,K$, $j=1,\cdots,d(k)$.
\par
\item
A solution $\mathfrak y^0 = (\mathfrak y_{k,j;0})_{k=1,\cdots,K,\,
j=1,\cdots,d(k)}$ of leading term equation is said to be {\it weakly
nondegenerate} if it is isolated in the set of solutions.
\par
\item
A solution $\mathfrak y^0 = (\mathfrak y_{k,j;0})_{k=1,\cdots,K,\,
j=1,\cdots,d(k)}$ of leading term equation is said to be {\it
strongly nondegenerate} if the matrices
$$
\left(\sum_{j'=1}^{a(k)}\frac{\partial^2 Y(k,j')}{\partial y_{k,j_1}\partial y_{k,j_2}}\right)_{j_1,j_2 = 1,\cdots,a(k)}
$$
are invertible for $k=1,\cdots,K$, at $\mathfrak y^0$.
\par
\item
We define the multiplicity of leading term equation in the standard
way of algebraic geometry, in the weakly nondegenerate case.
\end{enumerate}
\end{defn}
\begin{exm}
In Example \ref{leadingexample}, the equation (\ref{ex:leading}) is the leading term equation.
\end{exm}
Let $\mathfrak{PO}^{u_0}_*$ be either $\mathfrak{PO}^{u_0}_0$
or $\mathfrak{PO}^{u_0}$.
\begin{thm}\label{thm:elliminate}
For any strongly nondegenerate solution $\mathfrak y^0 =(\mathfrak
y_{k,j;0})$ of leading term equation, there exists a solution
$\mathfrak y =(\mathfrak y_{k,j}) \in (\Lambda^{\C}_0\setminus
\Lambda^{\C}_+)^n$ of
\begin{equation}\label{eq:cri*}
\frac{\partial \mathfrak{PO}^{u_0}_*}{\partial y_{k,j}} (\mathfrak y)= 0
\end{equation}
such that
$
\mathfrak y_{k,j} \equiv \mathfrak y_{k,j;0} \mod \Lambda^{\C}_+.
$
\par
If all the vertices of $P$ and $u_0$ are rational, the same
conclusion holds for weakly nondegenerate $\mathfrak y^0$.
\end{thm}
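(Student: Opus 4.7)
The plan is to reuse, with small modifications, the machinery set up in the last part of section \ref{sec:var}. First I would expand the critical point equation $\partial\mathfrak{PO}^{u_0}_{*}/\partial y_{k,j}=0$ in the form analogous to (\ref{37.160}), so that it decomposes as the leading term $\sum_{j'=1}^{a(k)}\partial Y(k,j')/\partial y_{k,j}$ plus a sum of monomials each of which carries a strictly positive power of $T$. The key structural fact, provided by Lemma \ref{37.158}, is that the leading-order block for index $k$ depends only on the variables $y_{k',j'}$ with $k'\le k$; the $T$-valuation of every correction term in that block is positive, controlled by $S_{k'}-S_k>0$ for $k'>k$, by $\ell(u_0)+\rho-S_k>0$ for $(\ell,\rho)\in\mathfrak I_{k'}$ with $k'\ge k$, or by $\ell(u_0)-S_k>0$ for $\ell\in\mathfrak I$.

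For the strongly nondegenerate case, I would solve the full system inductively on $k=1,2,\dots,K$ using the non-Archimedean implicit function theorem (Hensel's lemma, cf.\ Proposition~3 of p.~144 \cite{BGR}). Suppose we have constructed $\mathfrak y_{k',j'}\in\Lambda^{\C}_0$ lifting $\mathfrak y_{k',j';0}$ for all $k'<k$ so that the corresponding equations are solved modulo arbitrarily high powers of $T$. Substituting these into the block-$k$ equation turns it into a system in $\mathfrak y_{k,1},\dots,\mathfrak y_{k,d(k)}$ whose reduction modulo $\Lambda^{\C}_+$ is precisely the block-$k$ leading term equation, with Jacobian exactly the matrix $\bigl(\sum_{j'}\partial^2 Y(k,j')/\partial y_{k,j_1}\partial y_{k,j_2}\bigr)$ at $\mathfrak y^0$. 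Strong nondegeneracy makes this Jacobian invertible, so Hensel's lemma produces the unique lift in $\Lambda^{\C}_0\setminus\Lambda^{\C}_+$. In the non-Fano case $\mathfrak{PO}$ is only a formal series in $T$, but the $T$-exponents appearing in (\ref{eq:weakPO}) tend to $+\infty$, which guarantees convergence of the successive approximation in the non-Archimedean topology and hence the validity of Hensel's lemma.

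For the weakly nondegenerate case, under the rationality assumption I would adapt the curve-selection argument used for Proposition \ref{existcrit}. Truncate each critical-point equation at $T$-order $\CN$ to obtain a polynomial equation in $(y_{k,j};T^{1/\mathcal C})$ over $\R$; this defines a real affine algebraic variety $\mathfrak X_{\CN}$ equipped with the projection $\pi$ to the $T$-coordinate. The weak nondegeneracy hypothesis means that $\mathfrak y^0$ is an \emph{isolated} zero of the leading term equation, so on a small polydisc around $\mathfrak y^0$ the map $\nabla\overline{\mathfrak{PO}}^{u_0}$ has a well-defined nonzero local degree. An argument parallel to Lemma~\ref{37.171}--Lemma~\ref{37.175}, now localized to a neighborhood of $\mathfrak y^0$ (using the isolation to keep $\nabla\overline{\mathfrak{PO}}^{u_0}$ nonvanishing on the boundary of the polydisc), produces a sequence of genuine solutions in $\mathfrak X_{\CN}$ converging to $(\mathfrak y^0,0)$. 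The curve selection lemma (Lemma~3.1 of \cite{Mil68}) then supplies a real-analytic arc $\gamma(t)$ in $\mathfrak X_{\CN}$ with $\pi(\gamma(t))=t^{a/b}$; its $y_{k,j}$-components are power series in $T^{b/a}$ and hence elements of $\Lambda^{\C}_0\setminus\Lambda^{\C}_+$.

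The main obstacle will be the last step in the weakly nondegenerate case: passing from truncated solutions modulo $T^{\CN}$ to an actual critical point of $\mathfrak{PO}^{u_0}_{*}$. One needs the arcs produced for different $\CN$ to be compatible, or equivalently to argue once and for all on an inverse limit. The clean way is to work directly with the full (infinite) series $\mathfrak{PO}^{u_0}_{*}$, interpreted as a formal power series in $T^{1/\mathcal C}$, and note that $\mathfrak X:=\varprojlim_{\CN}\mathfrak X_{\CN}$ is a non-Archimedean analytic space to which the non-Archimedean analogue of curve selection applies; alternatively, one can extract a single arc by a diagonal argument from the family of real-analytic arcs. The Fano case of $\mathfrak{PO}_0$ has only finitely many monomials, so no such issue arises and the argument becomes literally the same as in section~\ref{sec:var}.
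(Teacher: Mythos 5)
Your overall strategy --- a Newton/Hensel iteration for the strongly nondegenerate case and a degree-plus-curve-selection argument for the weakly nondegenerate case --- is the same in spirit as the paper's, but both halves have a problem as written. In the strongly nondegenerate case your induction runs in the wrong order. After substituting the blocks $k'<k$, the block-$k$ equation $\partial\mathfrak{PO}^{u_0}_*/\partial y_{k,j}=0$ is \emph{not} a system in $y_{k,1},\dots,y_{k,d(k)}$ alone: by (\ref{37.160}) it contains the terms $\sum_{k'>k}\sum_{j'}\frac{\partial Y(k',j')}{\partial y_{k,j}}T^{S_{k'}-S_k}$, and $Y(k',j')$ for $k'>k$ involves the later variables $y_{k'',j''}$ with $k<k''\le k'$; Lemma \ref{37.158} only kills the derivatives of $Y(k',j')$ for $k'<k$. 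So you cannot solve block $k$ to all orders in $T$ before touching block $k+1$, and the induction does not close. The fix is the one in Lemma \ref{37.164}: induct on the $T$-exponent $\lambda_{\CN}$ (running over the monoid generated by the exponents in (\ref{37.160})) and at each order solve for the corrections of \emph{all} blocks simultaneously; the linear system to be inverted reduces mod $\Lambda_+$ to a block-triangular matrix whose diagonal blocks are exactly the Hessians of Definition \ref{def:lteq} (3), so strong nondegeneracy suffices. Your identification of the relevant Jacobian is correct; only the order of the two inductions is wrong.

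In the weakly nondegenerate case you have correctly located the hard point but not resolved it. Your argument (the paper reaches the same stage via an \'etale-covering/Puiseux-section argument where you use real curve selection; either route works for this stage) produces, for each $\CN$, a solution of the truncated equation $\mathfrak{PO}^{u_0}_{*,k,\CN}$ modulo a high power of $T$ with the prescribed leading term. But the truncations change with $\CN$, so these solutions need not be truncations of one another, and neither a ``diagonal argument'' nor an appeal to an unproved non-Archimedean curve selection lemma extracts from them a single element of $(\Lambda_0^{\C}\setminus\Lambda_+^{\C})^n$ solving the full equation. The paper closes this gap algebraically: it forms, for each $\CN$, the affine variety $\widetilde{\mathfrak M}((\mathfrak y_{k,j;0});\CN)$ of truncated solutions, uses Chevalley's theorem to see that the images $\text{\rm Im}\,I_{\CN,1}$ are nonempty constructible sets whose dimensions and numbers of top-dimensional irreducible components eventually stabilize, and then runs a Baire-category argument (Lemmas \ref{constructible} and \ref{projlimitinduction}) to show the inverse limit is nonempty. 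Without some such argument your proof only yields the mod-$T^{\CN}$ statement of Lemma \ref{lem:modTNexist}, which is enough for the applications to balancedness but not for the theorem as stated.
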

We prove the following at the end of section \ref{sec:floerhom}.
\begin{lem}\label{lem:autorat}
We assume $[\omega] \in H^2(X;\Q)$ and choose the moment polytope $P$ such that
its vertices are all rational. Let $u_0 \in \text{\rm Int} P$ such that
$\frak{PO}_0^{u_0}$ has weakly nondegenerate critical point in
$(\Lambda_0 \setminus \Lambda_+)^n$. Then $u_0$ is rational.
\end{lem}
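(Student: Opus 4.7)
The plan is to argue by contradiction. Assume $u_0 \in \mbox{Int}\,P$ is not rational, and manufacture a one-parameter family of solutions of the leading term equation at $u_0$, which contradicts weak nondegeneracy (isolatedness in the sense of Definition \ref{def:lteq}(2)). First I would observe that rationality of the vertices of $P$ implies $v_i \in \Z^n$ and $\lambda_i \in \Q$, and that $v_1,\ldots,v_m$ span $N_\R$ because $X$ is compact. Therefore, if every value $\ell_i(u_0) = \langle u_0,v_i\rangle - \lambda_i$ were rational, $u_0$ itself would be rational (just invert a maximal rational subset of the $v_i$'s). Hence some $\ell_i(u_0)$ is irrational, and the $\Q$-vector subspace $V \subset \R$ generated by $\{1\} \cup \{\ell_i(u_0)\}_{i=1}^m$ has $\Q$-dimension $d \ge 2$.

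Choose a $\Q$-basis $\tau_1 = 1, \tau_2, \ldots, \tau_d$ of $V$ and expand each coordinate of $u_0$ in this basis to obtain $u_0 = \sum_{j=1}^d a_j\tau_j$ with $a_j \in \Q^n$. Irrationality of $u_0$ forces $a_{j_0} \ne 0$ for some $j_0 \ge 2$. Writing $\ell_i(u_0) = \sum_j c_{ij}\tau_j$, one finds $c_{ij} = \langle v_i, a_j\rangle$ for $j \ge 2$ and $c_{i,1} = \langle v_i, a_1\rangle - \lambda_i$, all rational. Here is the crucial observation: if $S_k = \sum_j s_{k,j}\tau_j$ is the $\tau$-expansion of the level value $S_k$ from (\ref{eq:defSk}), then $\Q$-linear independence of $\tau_1,\ldots,\tau_d$ forces $c_{ij} = s_{k,j}$ for every $i \in I_k$ and every $j$. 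In particular $\langle v_i, a_{j_0}\rangle$ is constant on each level set $I_k$.

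Now define, for $s > 0$, the Laurent monomial change of variables $\phi_s(y)_k = y_k s^{a_{k,j_0}}$ and compute
$$
\mathfrak{PO}_0^{u_0}(\phi_s(y)) = \sum_{i=1}^m y^{v_i} s^{\langle v_i, a_{j_0}\rangle} T^{\ell_i(u_0)} =: \tilde F^{(s)}(y).
$$
By the constancy observation, inside each packet $i \in I_k$ all monomials of $\tilde F^{(s)}$ are multiplied by the same scalar $s^{s_{k,j_0}}$, so the leading term equation of $\tilde F^{(s)}$ at level $k$ is just $s^{s_{k,j_0}}$ times the leading term equation of $\mathfrak{PO}_0^{u_0}$ at level $k$; they define the same zero locus in $(\C^*)^n$. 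Meanwhile, since $\mathfrak{y} \in (\Lambda_0 \setminus \Lambda_+)^n$ is a critical point of $\mathfrak{PO}_0^{u_0}$, the element $\phi_s^{-1}(\mathfrak{y}) = (\mathfrak{y}_k s^{-a_{k,j_0}})_k$ is a critical point of $\tilde F^{(s)}$ in $(\Lambda_0 \setminus \Lambda_+)^n$, and its reduction modulo $\Lambda_+$ is $\mathfrak{y}_0^{(s)} := (\mathfrak{y}_{0,k} s^{-a_{k,j_0}})_k$. Consequently $\mathfrak{y}_0^{(s)}$ satisfies the leading term equation of $\tilde F^{(s)}$, hence also the original leading term equation of $\mathfrak{PO}_0^{u_0}$. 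As $s$ varies in $\R_{>0}$ and $a_{j_0} \ne 0$, the curve $s \mapsto \mathfrak{y}_0^{(s)}$ is a non-constant continuous family of solutions through $\mathfrak{y}_0$, contradicting weak nondegeneracy.

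The main obstacle I anticipate is a bookkeeping one rather than a conceptual one: the leading term equation in Definition \ref{def:lteq} is stated in the level-adapted coordinates $y_{k,j}$, whereas the scaling argument above is cleanest in the original coordinates $y_k$. Since the two coordinate systems are related by a multiplicative monomial change coming from the basis $\{\text{\bf e}^*_{k,j}\}$ of Condition \ref{cond:estar}, the scaling $y_k \mapsto y_k s^{a_{k,j_0}}$ pulls back to a corresponding multiplicative scaling on each $y_{k,j}$ whose weight is the $\Q$-linear combination $\langle \text{\bf e}^*_{k,j}, a_{j_0}\rangle$ of the $s_{k',j_0}$'s; verifying carefully that this pulled-back scaling again multiplies the $(k,j)$-th level leading term equation by a single nonzero scalar (so that solutions are preserved), and that non-vanishing of $a_{j_0}$ really produces a non-constant family after this change of variables, is the step requiring most care. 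Once this is in place, the contradiction with weak nondegeneracy is immediate and yields $u_0 \in \Q^n$.
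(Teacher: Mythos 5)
Your argument is correct, and it takes a genuinely different route from the paper's. The paper proves the lemma by rational approximation: Sublemma \ref{sublemu0} produces rational points $u_i \to u_0$ at which the leading term equation is unchanged; Theorem \ref{thm:elliminate} (whose rational, weakly nondegenerate case is already proved) then yields a critical point of $\mathfrak{PO}_0^{u_i}$ in $(\Lambda_0\setminus\Lambda_+)^n$ for each $i$; and since $Jac(\mathfrak{PO}_0)$ is finite dimensional, only finitely many $u$ can carry such critical points, so the sequence $u_i$ must stabilize at its limit $u_0$, forcing $u_0 \in \Q^n$. You instead extract from the irrationality of $u_0$ a nontrivial one-parameter group of monomial scalings under which each level packet $\sum_{i\in I_k} y^{v_i}$ is homogeneous --- the key point being that $\langle v_i, a_{j_0}\rangle$ is constant on $I_k$ by uniqueness of the expansion of $S_k$ in the $\Q$-basis $\tau_1,\dots,\tau_d$ --- so the zero locus of the leading term equation is invariant under the scaling and every solution lies on a nonconstant orbit, contradicting isolatedness. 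The bookkeeping step you flag does close as you expect: in the adapted variables the scaling is $y_{k,j}\mapsto y_{k,j}s^{b_{k,j}}$ with $b_{k,j}=-\langle \text{\bf e}^*_{k,j}, a_{j_0}\rangle$, each equation $\sum_{j'}\partial Y(k,j')/\partial y_{k,j}$ is multiplied by the single nonzero scalar $s^{-s_{k,j_0}-b_{k,j}}$, and the induced scaling is nontrivial because $\{\text{\bf e}^*_{k,j}\}$ is a $\Q$-basis of $N_{\Q}$ while $a_{j_0}\neq 0$. Your approach is more elementary and self-contained (no appeal to Theorem \ref{thm:elliminate} or to finite-dimensionality of the Jacobian ring) and it explains the mechanism --- irrational directions of $u_0$ act as continuous symmetries of the leading term equation; in fact it proves the slightly stronger statement that the leading term equation at an irrational $u_0$ admits no weakly nondegenerate solution at all. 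The paper's route, by contrast, simply reuses machinery it has already built and needs elsewhere.
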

The following corollary is an immediate consequence
of Theorem \ref{thm:elliminate} and Lemma \ref{lem:autorat}.
\begin{cor}\label{cor:POPO0}
Let $(\mathfrak x,u) \in \mathfrak M_0(\mathfrak{Lag}(X))$ and $u \in
\text{\rm Int}\,P$. Assume one of the following conditions :
\begin{enumerate}
\item The corresponding solution of the leading term equation is
strongly nondegenerate.
\par
\item $[\omega] \in H^2(X;\R)$ is rational and the corresponding solution of leading term equation is
weakly nondegenerate.
\end{enumerate}
Then there exists $\mathfrak x'$ such that
$(\mathfrak x',u) \in \mathfrak M(\mathfrak{Lag}(X))$ and $\mathfrak x' \equiv \mathfrak x \mod \Lambda^{\C}_+$.
\end{cor}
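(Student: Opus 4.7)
The plan is to view the corollary as a direct packaging of Theorem \ref{thm:elliminate} together with Lemma \ref{lem:autorat}. Given $(\mathfrak x, u) \in \mathfrak M_0(\mathfrak{Lag}(X))$ with $u \in \text{\rm Int}\,P$, I would first extract the associated solution of the leading term equation at $u$, then invoke Theorem \ref{thm:elliminate} with $\mathfrak{PO}^{u_0}_* = \mathfrak{PO}^u$ to upgrade it to a critical point of the full potential function $\mathfrak{PO}^u$, and finally translate the result back via Theorem \ref{lageqgeopt}(2) to produce an element of $\mathfrak M(\mathfrak{Lag}(X))$.

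For the extraction step: starting from the adapted coordinates $y_{k,j}$ of section \ref{sec:var} (defined relative to the filtration of $\{\ell_i(u)\}$ by the values $S_1 < S_2 < \cdots$ from (\ref{eq:defSk})), the critical point equation $\partial \mathfrak{PO}_0^u / \partial y_{k,j} = 0$ reduces modulo $\Lambda_+^{\mathbb C}$ exactly to the leading term equation of Definition \ref{def:lteq}; this follows from the same computation that led from (\ref{37.160}) to (\ref{37.161}). Hence the reduction $\mathfrak y^0_{k,j} \in \mathbb C \setminus \{0\}$ of $\mathfrak y_{k,j} := e^{\mathfrak x_{k,j}}$ is a solution of the leading term equation.

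Next I verify the hypotheses of Theorem \ref{thm:elliminate}. In case (1), strong nondegeneracy of $\mathfrak y^0$ is given, so Theorem \ref{thm:elliminate} applies directly. In case (2), weak nondegeneracy of $\mathfrak y^0$ is given, and we need in addition that $u$ itself be rational so that the second clause of Theorem \ref{thm:elliminate} is available. This is precisely the content of Lemma \ref{lem:autorat} (with rationality of $[\omega]$ ensuring $P$ can be chosen with rational vertices). Applying Theorem \ref{thm:elliminate} to $\mathfrak{PO}^u$ then yields $\mathfrak y' = (\mathfrak y'_{k,j}) \in (\Lambda_0^{\mathbb C} \setminus \Lambda_+^{\mathbb C})^n$ with $\partial \mathfrak{PO}^u / \partial y_{k,j}(\mathfrak y') = 0$ and $\mathfrak y'_{k,j} \equiv \mathfrak y^0_{k,j} \mod \Lambda_+^{\mathbb C}$.

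Finally, setting $\mathfrak x' := \sum_i (\log \mathfrak y'_i) \text{\bf e}_i$, the congruence $\mathfrak y' \equiv \mathfrak y \mod \Lambda_+^{\mathbb C}$ and continuity of $\log$ on $\Lambda_0^{\mathbb C} \setminus \Lambda_+^{\mathbb C}$ give $\mathfrak x' \equiv \mathfrak x \mod \Lambda_+^{\mathbb C}$, and $(\mathfrak x', u) \in \text{\rm Crit}(\mathfrak{PO})$ by construction. Theorem \ref{lageqgeopt}(2) identifies $\text{\rm Crit}(\mathfrak{PO})$ with $\mathfrak M(\mathfrak{Lag}(X))$, completing the argument. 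There is no real obstacle here: all the technical work has been absorbed into Theorem \ref{thm:elliminate} and Lemma \ref{lem:autorat}, and the corollary is genuinely a corollary whose only content is matching the hypotheses to those two results.
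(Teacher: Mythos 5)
Your proposal is correct and follows exactly the route the paper intends: the paper states the corollary as "an immediate consequence of Theorem \ref{thm:elliminate} and Lemma \ref{lem:autorat}," and your argument simply fills in that derivation — reducing the given critical point of $\mathfrak{PO}_0^u$ modulo $\Lambda_+^{\C}$ to a solution of the leading term equation, supplying rationality of $u$ via Lemma \ref{lem:autorat} in case (2), lifting via Theorem \ref{thm:elliminate} applied to $\mathfrak{PO}^u$, and identifying $\text{\rm Crit}(\mathfrak{PO})$ with $\mathfrak M(\mathfrak{Lag}(X))$ through Theorem \ref{lageqgeopt}(2). No gaps; the approach is the same as the paper's.
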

\begin{rem}
\begin{enumerate}
\item
Using Proposition \ref{prop:approx1} below, we can also apply Theorem
\ref{thm:elliminate} and Corollary \ref{cor:POPO0} to study
non-displacement of Lagrangian fibers for the weakly
nondegenerate case, without assuming rationality. See the last step of the proof of
Theorem \ref{toric-intersect} given at the end of section
\ref{sec:floerhom}.
\item
The conclusion of Theorem \ref{thm:elliminate} does not hold in general
without weakly nondegenerate assumption. We give an example (Example
\ref{counterexamples}) where both the assumption of weak
nondegeneracy and the conclusion of Theorem \ref{thm:elliminate}
fail to hold.
\par
\item
In this section we work with $\Lambda^{\C}$ coefficients, while in the last section
we work with $\Lambda^{\R}$ coefficients.
We also remark that in the last section, we did {\it not}
assume the weak nondegeneracy condition.
\item
If we define the multiplicity of the element of $\mathfrak M_0(\mathfrak{Lag}(X))$
as the dimension of the Jacobian ring $Jac(\mathfrak{PO}_0 ;
\mathfrak x,u_0)$ in Definition
\ref{def:locjac} (namely as the Milnor number) then the sum of the multiplicities of the solutions of (\ref{eq:cri*})
converging to $\mathfrak y^0$ as $T\to 0$,
is equal to the multiplicity of $\mathfrak y^0$.
\item
In the strongly nondegenerate case, the solution of (\ref{eq:cri*})
with the given leading term is unique.
\end{enumerate}\end{rem}
\begin{prop}\label{prop:approx1}
Let $(X,\omega)$ be a compact toric manifold with moment
polytope $P$ and $u_0 \in \text{\rm Int}\, P$. Then
there exist $(X,\omega^h)$ with moment polytope
$P^h$ and $u_0^h \in \text{\rm Int}\,P^h$ such that the following
holds:
\begin{enumerate}
\item $\lim_{h\to\infty}\omega^h = \omega$. $\lim_{h\to\infty}u^h_0 = u_0$.
\item The vertices of $P^h$ and $u_0^h$ are rational.
\item The leading term equation at $u_0^h$ is the same as the leading
term equation at $u_0$.
\end{enumerate}
\end{prop}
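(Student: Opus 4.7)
The plan is to approximate the pair $(\omega, u_0)$ by a sequence of rational pairs $(\omega^h, u_0^h)$ while preserving the combinatorial skeleton at $u_0$ that determines the leading term equation. The key observation is that the leading term equation, as defined via the filtration $A_k^\perp$ and the integer coefficients $v_{(k,j),(k',j')}$ from Condition \ref{cond:estar}, depends on only two pieces of data: first, the primitive vectors $v_i = d\ell_i$, which are determined by the fan $\Sigma$ and hence do not change when we replace $\omega$ by another $T^n$-invariant K\"ahler form on the same $X$; and second, the partition $\{I_1, \ldots, I_{m'}\}$ of $\{1,\ldots,m\}$ defined by $i \in I_k$ iff $\ell_i(u_0) = S_k$, together with its linear ordering $S_1 < \cdots < S_{m'}$. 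Consequently, condition $(3)$ reduces to reproducing this partition-with-ordering at the new base point $u_0^h$.

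Next, I parametrize the $T^n$-invariant K\"ahler forms on $X$ by $\lambda = (\lambda_1,\ldots,\lambda_m) \in \R^m$ via $\ell_i(u) = \langle u, v_i \rangle - \lambda_i$. The subset $U \subset \R^m$ of those $\lambda$ yielding a moment polytope of the same combinatorial type as $P$ is an open convex cone, and the original $\lambda$ lies in $U$. Inside $\R^m \times M_\R$, consider the affine subspace
$$
W = \{(\lambda', u') \mid \langle u', v_i - v_j \rangle = \lambda_i' - \lambda_j' \text{ whenever } i, j \in I_k \text{ for some } k\}.
$$
Because every $v_i$ lies in $N \cong \Z^n$, $W$ is cut out by linear equations with integer coefficients, so its rational points $W(\Q) = W \cap (\Q^m \times M_\Q)$ form a dense subset of $W$. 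The given pair $(\lambda, u_0)$ lies in $W$ by the definition of $I_k$, so one can choose a sequence $(\lambda^h, u_0^h) \in W(\Q)$ converging to $(\lambda, u_0)$, and set $\omega^h$ to be the $T^n$-invariant K\"ahler form corresponding to $\lambda^h$.

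It remains to verify the three conclusions. Condition $(1)$ is immediate, and $(2)$ holds by rationality of $\lambda^h$ and $u_0^h$. For condition $(3)$, I need to know for $h$ large that: (a) $\lambda^h \in U$, so $(X, \omega^h)$ with moment polytope $P^h = P_{\lambda^h}$ is well-defined and of the same combinatorial type as $P$; (b) $u_0^h \in \text{Int}\, P^h$; and (c) the values $\ell_i^h(u_0^h) = \langle u_0^h, v_i \rangle - \lambda_i^h$ partition $\{1,\ldots,m\}$ according to the same $\{I_k\}$ with the same ordering. The equalities $\ell_i^h(u_0^h) = \ell_j^h(u_0^h)$ for $i,j \in I_k$ are built into membership in $W$. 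The remaining content of (a), (b), and the strict inequalities $\ell_i^h(u_0^h) < \ell_j^h(u_0^h)$ required in (c) for $i \in I_k$, $j \in I_{k+1}$ are open conditions on $(\lambda', u')$ that hold strictly at $(\lambda, u_0)$, hence persist under sufficiently small perturbation. With $(3)$ thus reduced to the combinatorial matching already arranged, the leading term equations at $u_0^h$ and $u_0$ are literally the same polynomial system in the same variables $y_{k,j}$.

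Rather than a serious obstacle, the content of the argument is recognizing that the leading term equation depends on $(\lambda, u_0)$ only through discrete data (the fan and the partition), so that preserving it becomes a rational-approximation problem inside an integrally defined linear subspace. The one point that deserves care is the openness of $U$, which is the standard fact that the set of $\lambda$'s realizing a given Delzant fan is open in $\R^m$; once this is granted, all perturbative arguments are elementary.
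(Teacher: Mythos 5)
Your proof is correct and follows essentially the same route as the paper: both arguments observe that the leading term equation depends only on the fan and the ordered partition $\{I_k\}$, realize the locus of pairs (moment polytope, base point) preserving this data as an open subset of an affine subspace cut out by integer linear equations, and conclude by density of rational points together with openness of the strict inequalities. The only (cosmetic) difference is that the paper organizes the verification inductively over the filtration levels $k$, mirroring Lemma \ref{inductiongeneric}, whereas you impose all the defining equalities of $W$ in one step.
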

We prove Proposition \ref{prop:approx1} at the end of section \ref{sec:floerhom}.
\par
We first derive Theorem \ref{thm:kblowup} and Lemma \ref{lem:hilellim} from Theorem
\ref{thm:elliminate} before proving Theorem \ref{thm:elliminate}.

\begin{proof}[Proof of Theorem \ref{thm:kblowup}]
We start with $\C P^2$ and blow up a $T^2$ fixed point to obtain $\C P^2 \# (-\C P^2)$.
We take a K\"ahler form so that the volume of the exceptional $\C P^1$ is
$2\pi \epsilon_1$ which is small. We next blow up again at one of the fixed points
so that the volume of the exceptional $\C P^1$ is
$\epsilon_2$ and is small compared with $\epsilon_1$. We continue $k$ times to obtain
$X(k)$, whose K\"ahler structure depends on $\epsilon_1,\cdots,\epsilon_k$.
Note $X(k)$ is non-Fano for $k>3$.

Let $P(k)$ be the moment polytope of $X(k)$ and
$\mathfrak{PO}_{0,k}$ be the leading order potential function of $X(k)$.
We remark that $P(k)$ is obtained by cutting out a vertex of $P(k-1)$.
(See \cite{fulton}.)
\begin{lem}\label{lem:bupproj}
We may choose $\epsilon_i$ ($i=1,\cdots,k$) so that the following holds
for $l\le k$.
\begin{enumerate}
\item The number of balanced fibers of $P(l)$ is $l+1$. We write them as
$L(u^{(l,i)})$ $i=0,\cdots,l$.\par
\item $u^{(l-1,i)} = u^{(l,i)}$ for $i\le l-1$. $u^{(l,0)} = (1/3,1/3)$.
\par
\item $u^{(l,l)}$ is in an $o(\epsilon_l)$ neighborhood of the
vertices corresponding to the point we blow up.
\par
\item The leading term equation of $\mathfrak{PO}_{0,l-1}$ at $u^{(l-1,i)}$ is the
same as the leading term equation of $\mathfrak{PO}_{0,l}$ at $u^{(l,i)}$ for $i \le l-1$.
\par
\item The leading term equations are all strongly nondegenerate.
\end{enumerate}
\end{lem}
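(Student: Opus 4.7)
The plan is to prove the lemma by induction on $l$, choosing $\epsilon_l$ successively small enough depending on $\epsilon_1,\ldots,\epsilon_{l-1}$ and on the previously constructed balanced fibers. The base case $l=0$ is $X(0)=\C P^2$ with its monotone Clifford torus at $u^{(0,0)}=(1/3,1/3)$; by the calculation in section \ref{sec:example}, the leading term equation there is $1-y_1^{-2}y_2^{-1}=0$, $1-y_1^{-1}y_2^{-2}=0$, with solution $y_1=y_2=1$, and a direct computation shows the Hessian at this solution is invertible. Hence (1)--(5) hold at $l=0$.

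For the inductive step, assume the lemma at level $l-1$ and blow up $X(l-1)$ at a $T^2$-fixed point $p$ corresponding to a vertex $v\in P(l-1)$, with exceptional divisor of symplectic area $2\pi\epsilon_l$. The polytope $P(l)$ is obtained from $P(l-1)$ by truncating a neighborhood of $v$, introducing one new facet with affine function $\ell_{\text{new}}$ satisfying $\ell_{\text{new}}(v)=-\epsilon_l$. By Theorem \ref{weakpotential} applied to this toric blowup, the leading order potential function is
$$
\mathfrak{PO}_{0,l}^u \;=\; \mathfrak{PO}_{0,l-1}^u \;+\; y_1^{v_{\text{new},1}}y_2^{v_{\text{new},2}}T^{\ell_{\text{new}}(u)}.
$$
For each $i\le l-1$ the distance from $u^{(l-1,i)}$ to $v$ is bounded below independently of $\epsilon_l$, so $\ell_{\text{new}}(u^{(l-1,i)})\ge c$ for some fixed $c>0$, strictly larger than each $S_k$ governing the leading term equation at level $l-1$. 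Thus the new monomial contributes only at strictly higher $T$-valuation, the stratification and the leading term equation at $u^{(l-1,i)}$ are unchanged, and the inductive strong nondegeneracy combined with Theorem \ref{thm:elliminate} produces a critical point of $\mathfrak{PO}^{u^{(l-1,i)}}$ on $X(l)$. Setting $u^{(l,i)}:=u^{(l-1,i)}$ establishes (2), (4), and the relevant part of (5).

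Next I would construct the new balanced fiber $u^{(l,l)}$ near $v$. Let $\ell_{i_1},\ell_{i_2}$ be the affine functions of the two facets of $P(l-1)$ meeting at $v$; then $\ell_{i_1}(v)=\ell_{i_2}(v)=0$ and the primitive generators satisfy $v_{\text{new}}=-(v_{i_1}+v_{i_2})$ since the blowup replaces the cone $\R_{\ge 0}\langle v_{i_1},v_{i_2}\rangle$ by the star of $v_{\text{new}}$. I would seek $u^{(l,l)}=v+\epsilon_l\eta$ for some $\eta\in\R^2$ to be determined, so that $\ell_{i_1},\ell_{i_2},\ell_{\text{new}}$ all take comparable values of order $\epsilon_l$ at $u^{(l,l)}$ while every other $\ell_j$ remains bounded below by some positive constant independent of $\epsilon_l$ (this is where $\epsilon_l$ must be chosen small against $\epsilon_1,\ldots,\epsilon_{l-1}$). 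The leading term equation at $u^{(l,l)}$ then reduces to the three-term system arising from these monomials alone, which is structurally identical to the local leading term equation at the new edge of a one-point blowup of $\C P^2$ analyzed in Example \ref{exa:onepointblow}. That analysis yields a strongly nondegenerate solution at a specific $\eta$, and Theorem \ref{thm:elliminate} upgrades it to a critical point of $\mathfrak{PO}^{u^{(l,l)}}$ on $X(l)$, giving (3) and the remaining part of (5).

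The exactness statement in (1) requires ruling out other critical points of $\mathfrak{PO}_{0,l}$ in $\text{Int}\,P(l)$. For $\epsilon_l$ sufficiently small, any such critical fiber lies either within $O(\epsilon_l)$ of some $u^{(l-1,i)}$, within $O(\epsilon_l)$ of $v$, or in a compact region bounded away from both. The first region contains exactly one critical point by strong nondegeneracy and the implicit function theorem applied to the $T$-adic perturbation; the second contains exactly one by the analysis above; and the third contains none, since there $\mathfrak{PO}_{0,l}$ is a small $T$-adic perturbation of $\mathfrak{PO}_{0,l-1}$, which has no critical points there by induction. The main obstacle of the argument will be the local analysis near $v$: verifying that the rescaled leading term equation at $u=v+\epsilon_l\eta$ genuinely admits a strongly nondegenerate solution in a way that is compatible with the facets produced in all earlier blowups, and quantifying how small $\epsilon_l$ must be chosen to prevent unexpected interactions with those facets.
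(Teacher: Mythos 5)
Your overall strategy is the same as the paper's: induct on the number of blow-ups, observe that the blow-up adds exactly one new monomial to $\mathfrak{PO}_0$ whose $T$-valuation at each old balanced point exceeds the levels entering the leading term equation there (so (2), (4) and part of (5) follow for $\epsilon_l$ small), locate the new balanced point where the two old facets through the blown-up vertex and the new facet all take the value $\epsilon_l$, check that the resulting local leading term equation has a unique strongly nondegenerate solution, and invoke Theorem \ref{thm:elliminate}. This is exactly how the paper argues.

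There is, however, one concrete error. For the blow-up of a smooth toric surface at the fixed point of the cone $\R_{\ge0}\langle v_{i_1},v_{i_2}\rangle$, the star subdivision adds the ray through $v_{i_1}+v_{i_2}$, not $-(v_{i_1}+v_{i_2})$; equivalently $P(l)=\{u\in P(l-1)\mid \ell_{i_1}(u)+\ell_{i_2}(u)\ge\epsilon_l\}$ and $\ell_{\mathrm{new}}=\ell_{i_1}+\ell_{i_2}-\epsilon_l$, whose linear part is $v_{i_1}+v_{i_2}$. Your formula $v_{\mathrm{new}}=-(v_{i_1}+v_{i_2})$ is incompatible with your own statement $\ell_{\mathrm{new}}(v)=-\epsilon_l$ (with that normal the half-plane $\ell_{\mathrm{new}}\ge0$ would exclude all of $P(l-1)$ rather than truncate a corner), and the sign is load-bearing: in the coordinates $y_1'=y^{v_{i_1}}$, $y_2'=y^{v_{i_2}}$ the correct new monomial is $y_1'y_2'$, so at $u^{(l,l)}$ the leading term equation is $1+y_2'=0$, $1+y_1'=0$, with the single strongly nondegenerate solution $(-1,-1)$ — this is what makes the count in (1) come out to exactly $l+1$. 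With $(y_1'y_2')^{-1}$ instead, the local model would be the $\C P^2$-vertex model $1-(y_1')^{-2}(y_2')^{-1}=1-(y_1')^{-1}(y_2')^{-2}=0$, which behaves quite differently. Your appeal to the one-point blow-up computation of Example \ref{exa:onepointblow} does invoke the correct local model, so the conclusion is recoverable, but you should fix the sign so that the formula you differentiate is the one you actually analyze.
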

\begin{proof}
The proof is by induction on $k$. There is nothing to show for $k=0$.
Suppose that we have proved Lemma \ref{lem:bupproj} up to $k-1$. Let
$w$ be the vertex of the polytope we cut out which
corresponds to the blow up of $X(k-1)$. Let $\ell_i$, $\ell_{i'}$ be the
affine functions associated to the two edges containing $w$. It is easy to see that
$$
P(k) = \{u \in P(k-1) \mid \ell_i(u) + \ell_{i'}(u) \ge \epsilon_k\}.
$$
We also have :
$$
\mathfrak{PO}_{0,k} = \mathfrak{PO}_{0,k-1} + T^{\ell_i(u) + \ell_{i'}(u)-\epsilon_k}
y_1^{v_{i,1}+v_{i',1}} y_2^{v_{i,2}+v_{i',2}}.
$$
Therefore if we choose $\epsilon_k$ sufficiently small, the leading term
equation at $u^{(k-1,i)}$ does not change.
\par
We take $u^{(k,k)}$ such that
$$
\ell_i(u^{(k,k)}) = \ell_{i'}(u^{(k,k)}) = \epsilon_k.
$$
It is easy to see that there exists such $u^{(k,k)}$ uniquely if
$\epsilon_k$ is sufficiently small. We put
$$
y'_1 = y_1^{v_{i,1}} y_2^{v_{i,2}}, \quad y'_2 = y_1^{v_{i',1}} y_2^{v_{i',2}}.
$$
(We remark that $v_i$ and $v_{i'}$ are $\Z$ basis of $\Z^2$, since $X(k-1)$ is
smooth toric.) Then we have
$$
\mathfrak{PO}^{u^{(k,k)}}_{0,k} \equiv (y_1' + y'_2 + y'_1y'_2) T^{\epsilon_k} \mod T^{\epsilon_k}\Lambda_+.
$$
Therefore the leading term equation is
$$
1+y'_1 = 1+y'_2 = 0
$$
and hence has a unique solution $(-1,-1)$. In particular it is
strongly nondegenerate. We can also easily check that there is no
other solution of leading term equation. The proof of Lemma
\ref{lem:bupproj} now follows by Theorem \ref{thm:elliminate}.
\end{proof}
Theorem \ref{thm:kblowup} immediately follows from Lemma \ref{lem:bupproj}.
\end{proof}
Note that Theorem \ref{thm:kblowup} can be generalized to $\C P^n$ by the same proof.

\begin{proof}[Proof of Lemma \ref{lem:hilellim}]
Let $u_0 = (n(\alpha+1)/4,(1-\alpha)/2)$. We calculate
$$
\frak{PO}_0^{u_0} = (y_2+ y_2^{-1})T^{(1-\alpha)/2} + (y_1 + y_1^{-1}y_2^{-1})T^{n(\alpha+1)/4}.
$$
The leading equation is
$$
1 - y^{-2}_2 = 0, \quad 1 - y_1^{-2}y_2^{-1} = 0.
$$
Its solution are $(1,1), (-1,1), (\sqrt{-1},-1), (-\sqrt{-1},-1)$,
all of which are strongly nondegenerate. The lemma them follows from Corollary \ref{cor:POPO0}.
\end{proof}
We give another example which demonstrates the way how one can use
the leading term equation and Theorem \ref{thm:elliminate} to locate
balanced fibers.
\begin{exm}\label{hidimexa}
Let us consider $\C P^n$ with moment polytope
$P = \{(u_1,\cdots,u_n) \mid u_i \ge 0, \sum u_i \le 1\}$.
We take $\C P^{n-\ell} \subset \C P^n$ corresponding to
$u_1 = \cdots = u_\ell = 0$. ($\ell \ge 2$.) We blow up $\C P^n$ along the
center $\C P^{n-\ell}$ and denote the blow-up by $X$.
(The case $\ell=n=2$ is Example \ref{exa:onepointblow}.)
We take $\alpha \in (0,1)$ so that the moment polytope of $X$ is
$$
P_{\alpha} = \left\{(u_1,\cdots,u_n) \in P ~\Big | ~ \sum_{i=1}^\ell u_i \ge \alpha\right \}.
$$
Below we use  $\frak{PO}_0$ in place of the potential function  $\frak{PO}$. 
Since the all the critical points of $\frak{PO}_0$ are weakly nondegenerate, 
they correspond  to 
the critical points of $\frak{PO}$. 
(We thank D. McDuff for pointing out that this example is not Fano.)
The function $\frak{PO}_0$ is given by
$$
\frak{PO}_0 = \sum_{i=1}^n T^{u_i}y_i + T^{1-\sum_{i=1}^n u_i} (y_1\cdots y_n)^{-1}
+ T^{\sum_{i=1}^\ell u_i -\alpha} y_1\cdots y_\ell.
$$
We denote
$$
z_i = T^{u_i}y_i, \quad z_0 = T^{1-\sum_{i=1}^n u_i}(y_1\cdots y_n)^{-1},
\quad z = T^{\sum_{i=1}^\ell u_i -\alpha} y_1\cdots y_\ell.
$$
Then the quantum Stanley-Reisner relations are :
$$
z_1\cdots z_n z_0 = T, \,\,\, z_1\cdots z_\ell = z T^{\alpha}.
$$
By computing the derivatives $y_i\frac{\partial\frak{PO}_0}{\partial y_i}$,
we obtain the linear relations which can be written as
\beastar
z_i - z_0 + z & = & 0 \, \quad \mbox{for }\, i \le \ell \\
z_i - z_0 & = & 0 \,\quad \mbox{for } \, i >\ell.
\eeastar
Putting $X = z_0 - z$, $Y = z$, we obtain
$$
z_i = \begin{cases} X & \quad \mbox{for $1 \le i\le \ell$}\\
X + Y & \quad \mbox{for $\, i > \ell$ or $ i =0$}.
\end{cases}
$$
We also have $X^\ell = Y T^{\alpha}$ and
\begin{equation}\label{hdblowequ}
X^{n+1}(X^{\ell-1}T^{-\alpha} + 1)^{n-\ell+1} = T.
\end{equation}
\par\medskip
\noindent{\bf Case 1;} $(\ell-1)\frak v_T(X) < \alpha$ :
In this case, we obtain
$$
\frak v_T(X) = \frac{1+\alpha(n-\ell+1)}{n+1+(n-\ell+1)(\ell-1)}.
$$
The condition $(\ell-1)\frak v_T(X) < \alpha$ then is equivalent to
\begin{equation}
\alpha > \frac{\ell-1}{n+1}.
\nonumber\end{equation}
And we have
$$
\frak v_T(X+Y) = \frak v_T(Y) = \ell\frak v_T(X) - \alpha < \frak v_T(X)
$$
for $i > \ell$. At the point $\text{\bf u} = (u_1,\cdots,u_n)$
with $u_i = \frak v_T(X)$ for $1 \le i \leq \ell$ and $u_i = \frak v_T(X+Y)$
for $i \ge \ell$, we have
$$
\frak{PO}_0^{\text{\bf u}}
= T^{\frak v_T(Y)} (y_{\ell+1} + \cdots + y_n + (y_1\cdots y_n)^{-1}
+ y_1\cdots y_\ell)
+ T^{\frak v_T(X)}(y_1 + \cdots + y_\ell).
$$
To obtain the leading term equation, it is useful to make a change of
variables from $y_1,\cdots,y_n$ to $y_2,\cdots,y_n,y$ with $y = y_1\cdots y_\ell$.
\par
In fact, using the notation which we introduced at the beginning of this section,
we have
$$
S_1 = \frak v_T(Y), \quad S_2 = \frak v_T(X),
$$
and $A^\perp_1$ is the vector space generated by $\partial/\partial u_i$,
$(i=\ell+1,\cdots,n)$ and $\partial/\partial u_1+\cdots + \partial/\partial u_{\ell}$.
We also have
$A^\perp_2 = \R^n$.
Therefore a basis satisfying Condition \ref{cond:estar} is
$$
\frac{\partial}{\partial u_{2}},\quad \cdots, \quad \frac{\partial}{\partial u_{n}},
\quad
\frac{\partial}{\partial u_{1}} + \cdots +\frac{\partial}{\partial u_{\ell}}.
$$
The variables corresponding to this basis is $y_2,\cdots,y_n,y$.
In these variables, $\frak{PO}_0^{\text{\bf u}}$
has the form
\beastar
\frak{PO}_0^{\text{\bf u}} & = & T^{\frak v_T(Y)}(y_{\ell+1} + \cdots + y_n
+ (y y_{\ell+1}\cdots y_n)^{-1} + y) \\
&{}& \quad +T^{\frak v_T(X)}(y(y_2\cdots y_\ell)^{-1} +
y_2 + \cdots + y_\ell).
\eeastar
Then the leading term equation becomes :
$$
\begin{cases}
0 = 1 - y^{-1}(y y_{\ell+1}\cdots y_n)^{-1} \\
0 = 1 - y_i^{-1}(y y_{\ell+1}\cdots y_n)^{-1} & \mbox{for } i>\ell \\
0 = 1 - y_i^{-1}y(y_2\cdots y_\ell)^{-1} \, & \mbox{for }\, 2 \le i \le \ell.
\end{cases}
$$
Its solutions are
\beastar
y_{\ell+1} = \cdots = y_n = y = \theta, & \qquad \theta^{n-\ell+2} = 1\\
y_2 = \cdots = y_\ell = \rho, & \qquad \rho^\ell = \theta.
\eeastar
It follows that this leading term equation has $\ell(n-\ell+2)$ solutions,
all of which are strongly nondegenerate.
\par\medskip
\noindent{\bf Case 2;} $(\ell-1)\frak v_T(X) > \alpha$ : We have
$$
\frak v_T(X) = \frac{1}{n+1}.
$$
We also have
$\frak v_T(Y) = \ell\frak v_T(X) - \alpha > \frak v_T(X)$ and hence
$$
\frak v_T(X+Y) = \frak v_T(X) = \frac{1}{n+1}.
$$
The condition $(\ell-1)\frak v_T(X) > \alpha$ becomes
$\alpha < \frac{\ell-1}{n+1}.$
If we consider the point $\text{\bf u} = (u_1, \cdots, u_n)$ with
$u_i = \frak v_T(X) = \frac{1}{n+1}$ for $i = 1,\cdots n$, $\frak{PO}_0^{\text{\bf u}}$
has the form
$$
\frak{PO}_0^{\text{\bf u}} = T^{1/n+1}(y_1 + \cdots + y_n + (y_1\cdots y_n)^{-1})
+ T^{\frac{\ell}{n+1} - \alpha}y_1\cdots y_{\ell}
$$
and so the leading term equation is obtained by differentiating
$$
y_1 + \cdots + y_n + (y_1\cdots y_n)^{-1}.
$$
Its solutions are
$$
y_1 = \cdots = y_n = \theta, \qquad \theta^{n+1} =1.
$$
There are $n+1$ solutions all of which are strongly nondegenerate.
\par\medskip
\noindent{\bf Case 3;} $(\ell-1)\frak v_T(X) = \alpha$ :
\par\smallskip
\noindent(Case 3-1; $- X^{\ell-1} \not\equiv T^{\alpha} \mod T^{\alpha}\Lambda_+$.)
We have
$u_i = \frak v_T(X) = \frak v_T(Y) = 1/(n+1)$ ($i=1,\cdots,n$), $\alpha = (\ell-1)/(n+1)$.
In this case
$$
\frak{PO}_0^{\text{\bf u}}
= T^{1/(n+1)}(y_1 + \cdots + y_n + (y_1\cdots y_n)^{-1} + y_1\cdots y_\ell).
$$
\par
This formula implies that the symplectic area of all discs with Maslov index $2$ are
$2\pi/(n+1)$. 
\par
The leading term equation is
$$
\begin{cases}
1 - \frac{1}{y_i}((y_1\cdots y_n)^{-1} - y_1\cdots y_\ell)= 0, & i=1,\cdots,\ell \nonumber\\
1 - \frac{1}{y_i}((y_1\cdots y_n)^{-1} )= 0, & i=\ell+1,\cdots,n. \nonumber
\end{cases}
$$
Its solutions are
$y_1 = \cdots = y_\ell = \rho$, $y_{\ell+1} = \cdots = y_n = \theta$
with
\begin{equation}\label{mntakusanj}
\rho^\ell(\rho + \rho^\ell)^{n-\ell+1} = 1,
\end{equation}
and $\theta = \rho + \rho^\ell$. It looks rather cumbersome to check whether (\ref{mntakusanj}) has multiple root.
Certainly all the solutions are weakly nondegenerate.
The number of solutions counted with multiplicity
is $\ell(n-\ell+2)$.
\par\smallskip
\noindent(Case 3-2; $- X^{\ell-1} \equiv T^{\alpha} \mod T^{\alpha}\Lambda_+$.)
We have
$$
z_0 = X + Y = T^{-\alpha} X ( T^{\alpha} + X^{\ell-1}).
$$
We put
$$
\frak v_T(z_0) = \lambda > \frak v_T(X) = \frac{\alpha}{\ell-1}.
$$
Using (\ref{hdblowequ}) we obtain
$$
\lambda = \frac{\ell-1 - \ell\alpha}{(\ell-1)(n-\ell+1)}.
$$
The condition $\lambda > \frac{\alpha}{\ell-1}$ becomes $\alpha < \frac{\ell-1}{n+1}$.
We have $u_i = \frak v_T(X) = \alpha/(\ell-1)$, $i\le \ell$ and
$u_i = \frak v_T(Y) = \lambda$, $i > \ell$.
We have
$$
\frak{PO}_0^{\text{\bf u}}
= T^{\frak v_T(X)}(y_1 + \cdots + y_\ell
+ y_1\cdots y_\ell)
+ T^{\frak v_T(Y)} (y_{\ell+1} + \cdots + y_n + (y_1\cdots y_n)^{-1}).
$$
The first term gives the leading term equation :
$$
1 + y_1 \cdots \widehat{y}_i \cdots y_\ell = 0, \qquad i=1,\cdots,\ell.
$$
It solution is
$y_1 = \cdots = y_\ell = \rho$
and
$ \rho^{\ell-1} = -1$. The second term of $\frak{PO}_0^{\text{\bf u}} $ gives the
leading term equation
$$
1 - y_i^{-1} \rho^{-\ell} (y_{\ell+1}\cdots y_n)^{-1}= 0, \qquad i=\ell+1,\cdots,n.
$$
Its solutions are
$
y_{\ell+1} = \cdots y_n = \theta,
$ with
$$
\rho^\ell \theta^{n-\ell+1} = 1.
$$
Thus the leading term equation has $(\ell-1)(n+1-\ell)$ solutions all of which are
strongly nondegenerate.
\par
We remark that
$(\ell-1)(n+1-\ell) + (n+1) = \ell(n-\ell+2)$. Hence the number of solutions are
always $\ell(n-\ell+2)$ which coincides with the Betti number of $X$.
There are $2$ balanced fibers for $\alpha < (\ell-1)/(n+1)$ and
$1$ balanced fiber for $\alpha \ge (\ell-1)/(n+1)$.
\par\medskip
By the above discussion and Theorem \ref{QHequalMilnor2}
(see also Remark \ref{5.7rational}),
we can calculate $QH(X;\Lambda^{\Q})$ as follows :
\begin{equation}\label{eq:field}
QH(X;\Lambda^{\Q})
=
\begin{cases}
\Lambda^{R_1} & \alpha > (\ell-1)/(n+1), \\
\Lambda^{R_2} & \alpha = (\ell-1)/(n+1), \\
\Lambda^{R_3} \times \Lambda^{R_4} & \alpha < (\ell-1)/(n+1),
\end{cases}
\end{equation}
where
\begin{eqnarray*}
&&
R_1 = \Q[Z]/(Z^{\ell(n-\ell+2)} - 1), \nonumber\\
&& R_2 = \Q[Z]/(Z^\ell(Z + Z^\ell)^{n-\ell+1} - 1), \nonumber\\
&& R_3 = \Q[Z]/(Z^{n+1} - 1), \\
&& R_4 = \Q[Z_1,Z_2]/(Z_1^{\ell-1} + 1, Z_1^\ell Z_2^{n-\ell+1} - 1).
\nonumber
\end{eqnarray*}
Here we assume  (\ref{mntakusanj}) 
has only a simple root in case $\alpha = (\ell-1)/(n+1)$.
We use the next lemma to show (\ref{eq:field}).
\begin{lem}\label{lemfield}
Let $\mathfrak x = \sum \frak x_i\text{\bf e}_i$ be a critical point of $\frak{PO}^{\text{\bf u}}_*$. We assume
$\frak y_{i;0}= e^{\frak x_{i;0}}$ (where $\frak x_i = \frak x_{i;0} \mod \Lambda_+$) is a
strongly nondegenerate solution of leading term equation.
We also assume that $\frak y_{i;0} \in F$ where $F \subset \C$ is a field.
\par
Then $\frak y_i =e^{\frak x_i}$ is an element of $\Lambda_0^F$.
\end{lem}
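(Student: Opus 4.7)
The plan is to recognize Lemma \ref{lemfield} as a Hensel-type lifting statement, and run a Newton iteration that respects the block-triangular structure coming from the valuation filtration.

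First I would observe that by Theorems \ref{potential} and \ref{weakpotential} the potential function $\frak{PO}^{\text{\bf u}}_*$ (where $*$ is blank or $0$) is a $\Q$-linear combination of monomials $y_1^{a_1}\cdots y_n^{a_n} T^{\lambda}$; in particular it is defined over $\Q \subseteq F$. Hence the critical point equations $y_i \frac{\partial \frak{PO}^{\text{\bf u}}_*}{\partial y_i} = 0$ ($i=1,\dots,n$) form a system of Laurent polynomial equations with coefficients in $\Lambda_0^{\Q} \subseteq \Lambda_0^F$. What must be shown is that the particular critical point whose reduction mod $\Lambda_+^\C$ is $(\frak y_{1;0},\dots,\frak y_{n;0})\in F^n$ actually has all its higher-order corrections in $F$.

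Next I would pass to the variables $y_{k,j}$ and the rescaled equations of section \ref{sec:var} (generalized as in Definition \ref{def:lteq}). By Lemma \ref{37.158}, the quantity $T^{-S_k}\,y_{k,j}\frac{\partial \frak{PO}^{\text{\bf u}}_*}{\partial y_{k,j}}$ reduces modulo $\Lambda_+$ to the $k$-th block of the leading term equation, which only involves $y_{k',j'}$ with $k'\le k$. Thus, after this rescaling, the full Jacobian of the rescaled system with respect to $(y_{k,j})$, evaluated at $\frak y^0$ and reduced mod $\Lambda_+$, is block lower-triangular, and strong nondegeneracy is exactly the statement that each diagonal block (of size $d(k)\times d(k)$) is invertible over $F$. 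Hence this reduced Jacobian is invertible over $F$.

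Now I would perform Newton iteration (equivalently, apply Hensel's lemma for the Henselian local ring $\Lambda_0^F$, whose residue field is $F$). Write $y_{k,j}=\frak y_{k,j;0}+w_{k,j}$ and expand each rescaled equation as a power series in $(w_{k,j})$ over the maximal ideal $\Lambda_+^F$: the constant term vanishes (leading equation), the linear term is governed by the invertible reduced Jacobian above, and the higher-order corrections can be solved successively by inverting this Jacobian over $\Lambda_0^F$ (using the non-Archimedean contraction mapping / triangular sweep from smallest to largest valuation in the $T$-expansion). Because every inversion happens over $F$ and every coefficient being inverted lies in $F$ modulo $\Lambda_+$, each successive coefficient of $w_{k,j}$ lies in $F$. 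Thus $y_{k,j}\in \Lambda_0^F$, and by the (invertible, integer-matrix) change of variables from section \ref{sec:var} this gives $\frak y_i = e^{\frak x_i}\in \Lambda_0^F$ as required.

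The only delicate point I anticipate is bookkeeping the exponents: the set of $T$-exponents appearing in the expansion is not a priori well-ordered, so to run the non-Archimedean contraction I would first fix a finitely generated monoid $G\subseteq \R_{\ge 0}$ containing the finitely many exponents that appear in $\frak{PO}_0^{\text{\bf u}}$ (or, in the non-Fano case of Theorem \ref{weakpotential}, use the fact that only finitely many exponents lie below any given threshold) and perform the induction over $G$. This is purely formal and does not affect the conclusion that the corrections stay in $F$; the substantive input is simply the invertibility of the reduced Jacobian over $F$ provided by strong nondegeneracy.
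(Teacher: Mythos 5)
Your proposal is correct and is essentially the paper's own argument: the paper proves this lemma by pointing back to the Newton-type induction of Lemma \ref{37.164} (formula (\ref{indformula})), observing that at each step the coefficient $\mathfrak y_{k,j;l}$ is determined by a linear equation whose matrix is the leading-term Hessian — invertible over $F$ by strong nondegeneracy — and whose data lie in $F$ by induction. Your additional remarks (coefficients of $\mathfrak{PO}$ in $\Q\subseteq F$, the block-triangular structure in $k$, and indexing the iteration by the monoid $G$ of exponents) are exactly the ingredients already built into that induction, so no new content is needed.
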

We prove Lemma \ref{lemfield} later this section.
\end{exm}

We are now ready to give the proof of Theorem \ref{thm:elliminate}.

\begin{proof}[Proof of Theorem \ref{thm:elliminate}]
We first consider the weakly nondegenerate case. Let $\mathfrak m$
be the multiplicity of $\mathfrak y^0$. We choose $\delta$ such that
the ball $B_{\delta}(\mathfrak y^0)$ centered at $\mathfrak y^0$ and
of radius $\delta$ does not contain a solution of the leading term
equation other than $\mathfrak y^0$. For $y \in \partial
B_{\delta}(\mathfrak y^0)$ we define
$$
\nabla \overline{\mathfrak{PO}}(y) = \left(\sum_{j'=1}^{a(k)}\frac{\partial
Y(k,j')}{\partial y_{k,j}}(y)\right)_{k=1,\cdots,K,\, j = 1,\cdots,d(k)}
\in \C^n.
$$
The map
$$
y \mapsto \frac{\nabla \overline{\mathfrak{PO}}(y)}{\Vert\nabla \overline{\mathfrak{PO}}(y)\Vert} \in S^{2n-1}
$$
is well defined and of degree $\mathfrak m \ne 0$ by the definition of multiplicity.
\par
We define $\mathfrak{PO}^{u_0}_{*,k,\CN}$ in the same way as
(\ref{37.167}). For $q\in \C$, we define
$\mathfrak{PO}^{u_0}_{*,k,\CN}(\cdots;q)$ by substituting $q$ to $T$.
Then in the same way as the proof of Lemma \ref{37.175} we can prove
the following.
\begin{lem}
There exists $\epsilon> 0 $ such that if $\vert q\vert < \epsilon$, the equation
\begin{equation}\label{eq:orderN}
0 = \frac{\partial }{\partial y_{k,j}}\mathfrak{PO}^{u_0}_{*,k,\CN}(\cdots;q)
\end{equation}
has a solution
in $B_{\delta}(\mathfrak y^0)$. The sum of multiplicities of the solutions of
$(\ref{eq:orderN})$
converging to $\mathfrak y_{k,j;0}$ is $\mathfrak m$.
\end{lem}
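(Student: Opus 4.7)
The plan is to mimic the strategy used in the proof of Lemma \ref{37.175}, replacing the real-variable argument with a complex-variable degree argument suited to the weakly nondegenerate (but possibly higher-multiplicity) setting. First I would replace the variables $y_{k,j}$ by the homogeneity-adjusted variables as in section \ref{sec:var}, so that $\mathfrak{PO}^{u_0}_{*,k,\mathcal N}$ and its derivatives are honest Laurent polynomials in $(y_{k,j})$ with coefficients in $\mathbb{C}[T^{1/\mathcal C},T^{-1/\mathcal C}]$. Substituting $T=q$ then yields a holomorphic family of maps
\[
F_q : (\mathbb{C}^\times)^n \longrightarrow \mathbb{C}^n, \qquad
F_q(y) = \left(\sum_{j'=1}^{a(k)} \frac{\partial Y(k,j')}{\partial y_{k,j}}(y) + O(q^{\rho_k})\right)_{k,j},
\]
with $\rho_k > 0$ the minimum of the positive $T$-exponents appearing in $\partial \mathfrak{PO}^{u_0}_{*,k,\mathcal N}/\partial y_{k,j}$ after normalization by $T^{-S_k}$, so that $F_0 = \nabla \overline{\mathfrak{PO}}$.

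Next I would exploit the hypothesis that $\mathfrak y^0$ is isolated with multiplicity $\mathfrak m$ in the solution set of the leading term equation. By definition of multiplicity in algebraic geometry, $\mathfrak m$ coincides with the local degree of $F_0$ at $\mathfrak y^0$; equivalently, the map
\[
\partial B_\delta(\mathfrak y^0) \longrightarrow S^{2n-1}, \qquad
y \longmapsto \frac{F_0(y)}{\|F_0(y)\|}
\]
is well-defined and of degree $\mathfrak m$ once $\delta$ is chosen so that $B_\delta(\mathfrak y^0)$ contains no other solution of $F_0 = 0$. Since $F_q \to F_0$ uniformly on the compact set $\partial B_\delta(\mathfrak y^0)$ as $q \to 0$, there exists $\epsilon > 0$ so that $F_q$ is nonvanishing on $\partial B_\delta(\mathfrak y^0)$ and homotopic to $F_0$ through nonvanishing maps whenever $|q| < \epsilon$. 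By homotopy invariance of degree, $F_q$ then has zeros in $B_\delta(\mathfrak y^0)$, and the sum of their local degrees equals $\mathfrak m$.

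Finally, to conclude that the sum of algebraic multiplicities, in the sense of the Jacobian ring of $F_q$, also equals $\mathfrak m$, I would note that since $F_q$ is the derivative of a polynomial (the relevant truncation of $\mathfrak{PO}^{u_0}_{*}$ after $T = q$), each isolated zero of $F_q$ inside $B_\delta(\mathfrak y^0)$ is a critical point of a holomorphic function, so the local topological degree of $F_q$ at such a zero coincides with its Milnor number, which is by definition the multiplicity counted in the Jacobian ring. Summing over the zeros inside $B_\delta(\mathfrak y^0)$ and invoking the preserved total degree $\mathfrak m$ then gives the required count.

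The main obstacle in this plan is checking carefully the two identifications of multiplicity: first, that the multiplicity of $\mathfrak y^0$ as used in Definition \ref{def:lteq}(4) agrees with the topological local degree of $F_0$ (this needs $F_0$ to be holomorphic and $\mathfrak y^0$ to be isolated, both of which hold); and second, that the perturbation $F_q - F_0$ is genuinely of order $q^{\rho_k}$ uniformly on the compact annulus around $\mathfrak y^0$, so that the homotopy through nonvanishing maps is legitimate. Once these two technical points are settled, the rest is a standard continuity and degree-theoretic argument parallel to the one given for Lemma \ref{37.175}.
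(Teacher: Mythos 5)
Your proposal is correct and follows essentially the same route as the paper: the authors set up the boundary map $y \mapsto \nabla\overline{\mathfrak{PO}}(y)/\Vert\nabla\overline{\mathfrak{PO}}(y)\Vert : \partial B_{\delta}(\mathfrak y^0) \to S^{2n-1}$ of degree $\mathfrak m$ and then invoke the same continuity/homotopy-invariance-of-degree argument as in Lemma \ref{37.175}, together with the identification of local degree with algebraic multiplicity for the gradient of a holomorphic function. Your write-up merely makes explicit the two identifications of multiplicity that the paper leaves implicit, which is a reasonable expansion rather than a different proof.
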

(\ref{eq:orderN}) is a polynomial equation. Hence multiplicity of its solution
is well-defined in the standard sense of algebraic geometry.
\par
Now we assume that all the vertices of $P$ and $u_0$ are contained
in $\Q^n$. (\ref{eq:orderN}) also depends polynomially on $q' = T'$,
where $T' = T^{1/\mathcal C!}$ for a sufficiently large integer $\mathcal C$. (We remark that
$\mathcal C$ is determined by the denominators of the coordinates of the
vertices of $P$ and of $u_0$. In particular it can be taken to be
independent of $\CN$.)
\par
We denote $y = (y_1,\cdots, y_n)$ and put
$$
\mathfrak X = \{(y,q') \mid y \in
B_{\delta}(\mathfrak y^0), \, q' \, \text{with $\vert q'\vert < \epsilon$ and
$q = (q')^{\mathcal C!}$ satisfying (\ref{eq:orderN})} \}.
$$
We consider the projection
\begin{equation}\label{eq;piqdash}
\pi_{q'} : \mathfrak X \to \{q' \in \C \mid \vert q'\vert < \epsilon\}.
\end{equation}
By choosing a sufficiently small $\epsilon > 0$, we may assume that
(\ref{eq;piqdash}) is a local isomorphism on the punctured disc
$\{ q' \mid 0 < \vert q'\vert < \epsilon\}$. Namely, $\pi_{q'}$
is an \'etale covering over the punctured disc.
\par
We remark that for each $q'$ the fiber consists of at most
$\mathfrak m$ points, since the multiplicity of the leading term
equation is $\mathfrak m$. We put $q'' = (q')^{1/\mathfrak m!}$.
Then the pull-back
\begin{equation}\label{eq;piqdashdash}
\pi_{q''} : \mathfrak X' \to \{q'' \in \C \mid 0 < \vert q''\vert <
\epsilon\}
\end{equation}
of (\ref{eq;piqdash}) becomes a trivial covering space. Namely there
exists a single valued section of $\pi_{q''}$ on $\{ q'' \mid 0 <
\vert q''\vert < \epsilon\}$. It extends to a holomorphic section of
$\{ q'' \mid \vert q''\vert < \epsilon\}$.
\par
In other words there exists a holomorphic family of solutions of (\ref{eq:orderN})
which is parameterized
by $q'' \in \{ q'' \mid \vert q''\vert < \epsilon\}$. We put $T'' = (T')^{1/\mathfrak m!}$.
Then by taking the Taylor series of the $q''$-parameterized family of solutions
at $0$,
we obtain the following :
\begin{lem}\label{lem:modTNexist}
If all the vertices of $P$ and $u_0$ are rational, then for each $\CN$ there exists
$\mathfrak y^{(\CN)} =(\mathfrak y^{(\CN)}_{k,j})$
$$
\mathfrak y^{(\CN)}_{k,j} = \sum_{l=0}^{\CN} \mathfrak y^{(\CN)}_{k,j;l} (T'')^{l}
$$
$(\mathfrak y^{(\CN)}_{k,j;l} \in \C)$
such that
\begin{equation}\label{eq:cri*N}
\frac{\partial \mathfrak{PO}^{u_0}_*}{\partial y_{k,j}}(\mathfrak y^{(\CN)}_{k,j}) \equiv 0
\mod (T'')^{\CN+1}
\end{equation}
and that
$
\mathfrak y^{(\CN)}_{k,j;0} \equiv \mathfrak y_{k,j;0}.
$
\end{lem}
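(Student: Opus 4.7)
The plan is to formalize the construction sketched in the paragraphs immediately preceding the statement. That discussion exhibits a finite morphism $\pi_{q'}: \mathfrak X \to \{|q'|<\epsilon\}$ which, after shrinking $\epsilon$, is \'etale of some degree $\le \mathfrak m$ over the punctured disk, where $\mathfrak m$ is the multiplicity of $\mathfrak y^0$ as a root of the leading term equation. Weak nondegeneracy keeps $\mathfrak m$ finite and, together with the choice of $\delta$, prevents other branches from colliding with $\mathfrak y^0$. Pulling back along $q'' = (q')^{1/\mathfrak m!}$ trivializes the covering and yields a single-valued holomorphic section $s: \{0<|q''|<\epsilon^{1/\mathfrak m!}\} \to \mathfrak X'$.

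First I would extend $s$ holomorphically across $q''=0$. Each coordinate of $s$ is a bounded holomorphic function on the punctured disk since $s(q'') \in B_\delta(\mathfrak y^0)$; Riemann's removable singularity theorem therefore provides a holomorphic extension to the full disk $\{|q''|<\epsilon^{1/\mathfrak m!}\}$. The limit $s(0) \in \overline{B_\delta(\mathfrak y^0)}$ must satisfy the leading term equation obtained by setting $T=0$ in (\ref{eq:orderN}), and by the choice of $\delta$ the only such solution in that closed ball is $\mathfrak y^0$ itself. The Taylor expansion $s(q'') = \mathfrak y^0 + \sum_{l\ge 1} \mathfrak y^{(\CN)}_{k,j;l}\,(q'')^l$ is then absolutely convergent in a neighborhood of $0$; setting $q'' = T''$ and truncating at order $\CN$ yields the required $\mathfrak y^{(\CN)}_{k,j}$, with $\mathfrak y^{(\CN)}_{k,j;0} = \mathfrak y_{k,j;0}$ automatic.

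Finally I would verify (\ref{eq:cri*N}). The relation $\partial \mathfrak{PO}^{u_0}_{*,k,\CN}/\partial y_{k,j}\bigl(s(q''); (q'')^{\mathcal C! \mathfrak m!}\bigr) = 0$ holds identically on the disk, hence as an identity of convergent power series in $q''$. Truncating the coordinates of $s$ at order $\CN$ perturbs each $y$-variable by an amount of $T''$-order at least $\CN+1$, and since $\partial \mathfrak{PO}^{u_0}_{*,k,\CN}/\partial y_{k,j}$ is polynomial in the $y$'s with $T''$-bounded coefficients, the resulting error in the derivative is again of $T''$-order at least $\CN+1$. The remaining tail $\mathfrak{PO}^{u_0}_* - \mathfrak{PO}^{u_0}_{*,k,\CN}$ contributes only in $T''$-order exceeding $\CN$ by the very construction (\ref{37.167}). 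Combining these establishes (\ref{eq:cri*N}). The main subtle point throughout is the Riemann extension step across $q''=0$: it rests on the weak nondegeneracy of $\mathfrak y^0$ (which isolates it in a ball avoided by the other branches of $\mathfrak X$) and on rationality of the vertices of $P$ and $u_0$ (which makes (\ref{eq:orderN}) polynomial in an integer power of $T'$, so that passage to the finite covers $q' \mapsto q''$ is analytic).
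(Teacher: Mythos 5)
Your proposal is correct and follows essentially the same route as the paper: both construct the affine variety $\mathfrak X$, observe that $\pi_{q'}$ is \'etale over the punctured disc, trivialize the covering by pulling back along $q''=(q')^{1/\mathfrak m!}$, extend the resulting single-valued section holomorphically across $q''=0$, and truncate its Taylor series. The only difference is that you supply details the paper leaves implicit (the Riemann removable-singularity step, the identification $s(0)=\mathfrak y^0$ via the choice of $\delta$, and the bookkeeping showing truncation preserves the congruence mod $(T'')^{\CN+1}$).
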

We remark that Lemma \ref{lem:modTNexist} is sufficient for
most of the applications. In fact it implies that $L(u_0)$ is balanced if
there exists a weakly nondegenerate solution of leading term equation at $u_0$.
Hence we can apply Lemma \ref{prof:bal2}.
\par
For completeness we prove the slightly stronger statement made for
the weakly nondegenerate case in Theorem \ref{thm:elliminate}. The
argument is similar to one in subsection 7.2.11 \cite{fooo06} (= subsection 30.11 \cite{fooo06pre}).
\par
For each $\CN$, we denote by
$\widetilde{\mathfrak M}((\mathfrak y_{k,j;0});\CN)$
the set of all
$(\mathfrak y_{k,j;l}^{(\CN)})_{k,j;l}
\in \C^{n\CN}$,
where $k=1,\cdots,K$, $j=1,\cdots,a(k)$, $l=1,\cdots,\CN$, such that
$$
\mathfrak y^{(\CN)}_{k,j} = \mathfrak y_{k,j;0}
+ \sum_{l=1}^{\CN} \mathfrak y^{(\CN)}_{k,j;l} (T'')^{l}
$$
satisfies (\ref{eq:cri*N}).
\par
By definition, $\widetilde{\mathfrak M}((\mathfrak y_{k,j;0});\CN)$
is the set of $\C$-valued points of certain complex
affine algebraic variety (of finite dimension).
Lemma \ref{lem:modTNexist} implies that
$\widetilde{\mathfrak M}((\mathfrak y_{k,j;0});\CN)$
is nonempty.
For $\CN_1>\CN_2$ there exists an obvious morphism
$$
I_{\CN_1,\CN_2} :
\widetilde{\mathfrak M}((\mathfrak y_{k,j;0});\CN_1)
\to \widetilde{\mathfrak M}((\mathfrak y_{k,j;0});\CN_2)
$$
of complex algebraic variety.
\par
To complete the proof of Theorem \ref{thm:elliminate} in the weakly
nondegenerate case, it suffices to show that the projective limit
\begin{equation}\label{eq;prolimit}
\lim_{\longleftarrow} (\widetilde{\mathfrak M}(\mathfrak y_{k,j;0};\CN))
\end{equation}
is nonempty.
\begin{lem}\label{constructible}
$$
\bigcap_{\CN>1} \text{\rm Im}I_{\CN,1} \ne \emptyset.
$$
\end{lem}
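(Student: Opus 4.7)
The plan is to exploit the fact that each $\widetilde{\mathfrak M}((\mathfrak y_{k,j;0});\CN)$ is the $\C$-points of an affine algebraic variety of finite type over $\C$, and each $I_{\CN_1,\CN_2}$ is a morphism of such varieties. The obstacle to the naive argument is that a decreasing sequence of nonempty constructible subsets of a Noetherian scheme need not have nonempty intersection (e.g. $\{1/n, 1/(n+1), \dots\} \subset \A^1$). The way around this is to combine Chevalley's theorem with a Baire category argument in the classical (Euclidean) topology on a finite-dimensional complex variety.

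First, I would observe that since the defining equation (\ref{eq:cri*N}) only involves the truncation of $\frak{PO}^{u_0}_*$ modulo $(T'')^{\CN+1}$, which has only finitely many monomial summands, the set $\widetilde{\mathfrak M}((\mathfrak y_{k,j;0});\CN)$ is cut out by polynomials in the variables $\mathfrak y_{k,j;l}^{(\CN)}$ and the forgetful maps $I_{\CN_1,\CN_2}$ are morphisms of affine varieties. By Chevalley's theorem, each image $C_\CN : = \text{\rm Im}\, I_{\CN,1}$ is a constructible subset of the finite-type complex variety $\widetilde{\mathfrak M}((\mathfrak y_{k,j;0});1)$, and by Lemma \ref{lem:modTNexist} each $C_\CN$ is nonempty. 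The Zariski closures $\overline{C_\CN}$ form a decreasing chain of closed subvarieties of a Noetherian space, so they stabilize: there exists $\CN_0$ and a nonempty closed subvariety $Z$ with $\overline{C_\CN}=Z$ for all $\CN \geq \CN_0$.

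Next, I would note that any constructible subset of $Z$ whose Zariski closure equals $Z$ must contain a Zariski-open dense subset of $Z$ (apply this to each irreducible component of $Z$). Thus for each $\CN \geq \CN_0$ there exists a Zariski-open dense $U_\CN \subset Z$ with $U_\CN \subset C_\CN$. Now switch to the classical (Euclidean) topology on $Z$: a complex algebraic variety of finite type is locally compact Hausdorff, hence a Baire space, and a Zariski-open dense subset of each irreducible component is open and dense in the classical topology as well. The countable intersection $\bigcap_{\CN \geq \CN_0} U_\CN$ is therefore dense, and in particular nonempty. Since $C_\CN \supseteq C_{\CN_0}$ for $\CN < \CN_0$, this yields $\bigcap_{\CN>1} C_\CN \neq \emptyset$, which is the content of Lemma \ref{constructible}.

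Finally, to deduce from this that the projective limit $\lim_{\longleftarrow} \widetilde{\mathfrak M}((\mathfrak y_{k,j;0});\CN)$ itself is nonempty (and hence obtain the desired element of $\Lambda_0^\C$ finishing Theorem \ref{thm:elliminate}), I would iterate the above argument. Pick $x_1 \in \bigcap_\CN C_\CN$; then the fibers $I_{\CN,1}^{-1}(x_1)$, intersected with $\widetilde{\mathfrak M}((\mathfrak y_{k,j;0});\CN)$, form a compatible system of nonempty algebraic varieties connected by morphisms, to which the same Chevalley--Noether--Baire argument applies, producing $x_2$ at level $2$ lying over $x_1$ and lifting to every higher level. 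Inductive repetition yields a compatible sequence $(x_\CN) \in \lim_{\longleftarrow} \widetilde{\mathfrak M}((\mathfrak y_{k,j;0});\CN)$; assembling its coordinates into a formal Puiseux series in $T''$ produces the required element $\mathfrak y \in (\Lambda_0^\C \setminus \Lambda_+^\C)^n$ solving (\ref{eq:cri*}) with the prescribed leading term. The main conceptual point, which I expect to be the crux, is the passage from Zariski-generic to classically-generic, i.e., the observation that Noetherianity gives us only a stable Zariski closure while Baire category in the analytic topology is what actually produces a point in the countable intersection.
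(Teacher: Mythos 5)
Your proof is correct and follows essentially the same route as the paper's: Chevalley's theorem to get constructibility of $\text{\rm Im}\,I_{\CN,1}$, Noetherian stabilization of the decreasing chain, and then Baire category in the classical topology to produce a point of the countable intersection. The only (immaterial) difference is that you stabilize the Zariski closures and intersect dense open subsets, whereas the paper stabilizes the dimension and the number of top-dimensional irreducible components and intersects a nested sequence of such components, using that the successive differences have strictly smaller dimension; your final paragraph on iterating the argument to get a point of the projective limit is precisely the paper's Lemma \ref{projlimitinduction}.
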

\begin{proof}
By a theorem of Chevalley (see Chapter 6 \cite{Ma70}), each
$\text{\rm Im}I_{\CN,1}$ is a constructible set. It is nonempty and
its dimension $\dim\text{\rm Im}I_{\CN,1}$ is nonincreasing as $\CN \to
\infty$. Therefore we may assume $\dim\text{\rm Im}I_{\CN,1} = d$ for
$\CN\ge \CN_1$.
\par
We consider the number of $d$ dimensional irreducible components of
$\text{\rm Im}I_{\CN,1}$. This number is nonincreasing for $\CN\ge \CN_1$.
Therefore, there exists $\CN_2$ such that for $\CN\ge \CN_2$ the number of
$d$ dimensional irreducible components of $\text{\rm Im}I_{\CN,1}$ is
independent of $\CN$. It follows that there exists $X_{\CN}$ a sequence of
$d$ dimensional irreducible components of $\text{\rm Im}I_{\CN,1}$
such that $X_{\CN+1} \subset X_{\CN}$. Since $\dim (X_{\CN} \setminus
X_{\CN+1}) < d$, it follows from Baire's category theorem that
$\cap_{\CN} X_{\CN} \ne \emptyset$. Hence the lemma.
\end{proof}
\begin{lem}\label{projlimitinduction}
There exists a sequence $(\mathfrak y_{k,j;l}^{(n)})_{k,j;l}$
$n=1,2,3,\cdots \, m$
such that
$$
I_{n,n-1}((\mathfrak y_{k,j;l}^{(n)})_{k,j;l}
= (\mathfrak y_{k,j;l}^{(n-1)})_{k,j;l})
$$
for $n=2,\cdots,m$
and that
$$
(\mathfrak y_{k,j;l}^{(m)})_{k,j;l}
\in \bigcap_{\CN>m} \text{\rm Im}I_{\CN,m}.
$$
\end{lem}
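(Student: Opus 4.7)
The plan is to prove the lemma by induction on $m$, with the base case $m=1$ being exactly Lemma \ref{constructible}. For the inductive step, assume we have constructed a compatible sequence $(\mathfrak y^{(n)})_{1\le n\le m-1}$ satisfying $I_{n,n-1}(\mathfrak y^{(n)})=\mathfrak y^{(n-1)}$ for $2\le n\le m-1$ and
$$
\mathfrak y^{(m-1)}\in\bigcap_{\CN>m-1}\text{\rm Im}\,I_{\CN,m-1}.
$$
The goal is to produce $\mathfrak y^{(m)}\in\bigcap_{\CN>m}\text{\rm Im}\,I_{\CN,m}$ with $I_{m,m-1}(\mathfrak y^{(m)})=\mathfrak y^{(m-1)}$.

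First I would fix $\mathfrak y^{(m-1)}$ and, for each $\CN>m$, consider the set
$$
Y_{\CN}\;:=\;\text{\rm Im}\,I_{\CN,m}\;\cap\; I_{m,m-1}^{-1}(\{\mathfrak y^{(m-1)}\})\;\subset\;\widetilde{\mathfrak M}(\mathfrak y_{k,j;0};m).
$$
Each $Y_{\CN}$ is constructible, since $\text{\rm Im}\,I_{\CN,m}$ is constructible by Chevalley's theorem and $I_{m,m-1}^{-1}(\{\mathfrak y^{(m-1)}\})$ is a Zariski closed fiber of a morphism of complex affine algebraic varieties. Since the inductive hypothesis gives $\mathfrak y^{(m-1)}\in\text{\rm Im}\,I_{\CN,m-1}=\text{\rm Im}(I_{m,m-1}\circ I_{\CN,m})$ for every $\CN>m$, the set $Y_{\CN}$ is nonempty, and the family $\{Y_{\CN}\}_{\CN>m}$ is decreasing in $\CN$.

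Next I would rerun the Chevalley/Baire argument used in the proof of Lemma \ref{constructible}, but applied to the sequence $\{Y_{\CN}\}_{\CN>m}$ in place of $\{\text{\rm Im}\,I_{\CN,1}\}$. Since $\dim Y_{\CN}$ is a nonincreasing sequence of nonnegative integers it eventually stabilizes at some value $d$, and so does the (finite) number of $d$-dimensional irreducible components of $Y_{\CN}$. This lets one extract a decreasing chain $W_{\CN}\subset Y_{\CN}$ of $d$-dimensional irreducible components; since $\dim(W_{\CN}\setminus W_{\CN+1})<d$, Baire's theorem applied in the Zariski topology of some $W_{\CN_0}$ yields $\bigcap_{\CN>m}W_{\CN}\ne\emptyset$. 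Any point of this intersection is then a valid choice of $\mathfrak y^{(m)}$, completing the inductive step.

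The only delicate issue in carrying out this plan is verifying that the Chevalley/Baire argument of Lemma \ref{constructible} transfers verbatim to the relative setting of fibers over $\mathfrak y^{(m-1)}$; this is not really a substantive obstacle because constructibility, irreducible decomposition, and the dimension stabilization step are all preserved under intersection with a Zariski closed subvariety. Iterating this construction over $m$ produces the compatible tower $(\mathfrak y^{(n)})_{n\ge 1}$, which populates the projective limit \eqref{eq;prolimit} and completes the argument for Theorem \ref{thm:elliminate} in the weakly nondegenerate case.
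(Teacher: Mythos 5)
Your proof is correct and follows exactly the route the paper intends: the paper's own proof is the same induction on $m$ with base case Lemma \ref{constructible}, and it omits the inductive step as ``similar.'' Your filling-in of that step --- intersecting $\text{\rm Im}\,I_{\CN,m}$ with the fiber $I_{m,m-1}^{-1}(\{\mathfrak y^{(m-1)}\})$, checking nonemptiness via $I_{\CN,m-1}=I_{m,m-1}\circ I_{\CN,m}$, and rerunning the Chevalley/Baire dimension-stabilization argument --- is precisely the intended completion.
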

\begin{proof}
The proof is by induction on $m$. The case $m=1$ is
Lemma \ref{constructible}. Each of the inductive step
is similar to the proof of Lemma \ref{constructible} and
so it omitted.
\end{proof}
Lemma \ref{projlimitinduction} implies that the projective limit
(\ref{eq;prolimit}) is nonempty. The proof of weakly nondegenerate
case of Theorem \ref{thm:elliminate} is complete.
\par\medskip
We next consider the strongly nondegenerate case. We prove the
following lemma by induction on $\CN$. Let $G$ be a submonoid of
$(\R_{\ge 0},+)$ generated by the numbers appearing in the exponent
of (\ref{37.160}). Namely it is generated by
\begin{equation}\label{eq:genG}
\aligned
S_{k'} - S_k \,\, (k' > k),
\quad
&\ell(u_0) + \rho - S_k\,\, ((\ell,\rho) \in \mathfrak I_{k'}, \,\, k' \ge k),
\\ &\ell(u_0) - S_k \,\,(\ell \in \mathfrak I).
\endaligned
\end{equation}
We define $0< \lambda_1 < \lambda_2 < \cdots$
by
$$
\{ \lambda_i \mid i=1,2,\dots \} = G.
$$
\begin{lem}\label{37.164}
We assume that $\mathfrak y^0 = (\mathfrak y_{k,j;0})_{k=1,\cdots,K,\, j=1,\cdots,d(k)}$ is a
strongly nondegenerate solution of the leading term equation. Then,
there exists
$$
\mathfrak y^{(\CN)}_{k,j} = \mathfrak y_{k,j;0} + \sum_{l=1}^{\CN} \mathfrak y_{k,j;l}T^{\lambda_{l}}
$$
such that
\begin{equation}
\sum_{j'=1}^{a(k)}\frac{\partial
Y(k,j')}{\partial y_{k,j}}(\mathfrak y^{(\CN)}_{k,1},\cdots,\mathfrak y^{(\CN)}_{K,d(K)}) \equiv 0
\mod T^{\lambda_{\CN+1}}.
\end{equation}
Moreover we may choose $\mathfrak y^{(\CN)}_{k,j}$ so that
$$
\mathfrak y^{(\CN)}_{k,j} \equiv \mathfrak y^{(\CN+1)}_{k,j} \mod T^{\lambda_{\CN+1}}.
$$
\label{37.165}\end{lem}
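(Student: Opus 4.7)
\textbf{Proof proposal for Lemma \ref{37.165}.} The plan is to prove the lemma by induction on $\CN$, constructing one coefficient $\mathfrak y_{k,j;\CN+1} \in \C$ at a time via a Newton-type iteration whose linearized step is uniquely solvable thanks to strong nondegeneracy. The base case $\CN = 0$ is precisely the hypothesis that $(\mathfrak y_{k,j;0})$ solves the leading term equation.

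For the inductive step, I assume that $\mathfrak y^{(\CN)}_{k,j}$ already makes the full derivative expression (\ref{37.160}) vanish modulo $T^{\lambda_{\CN+1}}$. I look for the next increment in the form
\[
\mathfrak y^{(\CN+1)}_{k,j} = \mathfrak y^{(\CN)}_{k,j} + c_{k,j}\, T^{\lambda_{\CN+1}},
\qquad c_{k,j}\in\C,
\]
substitute into (\ref{37.160}), and extract the coefficient of $T^{\lambda_{\CN+1}}$. Since all terms of (\ref{37.160}) beyond the leading one $\sum_{j'}\partial Y(k,j')/\partial y_{k,j}$ already carry positive $T$-powers, their linearization contributes only at evaluation on $\mathfrak y^0$ (or on known lower-order data determined by the induction hypothesis). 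The coefficient of $T^{\lambda_{\CN+1}}$ therefore has the form $H\,\vec c - \vec b$, where $\vec b \in \C^n$ is determined by the residual of $\mathfrak y^{(\CN)}$ and
\[
H_{(k,j),(k',j'')} \;=\; \sum_{j'=1}^{a(k)}\frac{\partial^{2} Y(k,j')}{\partial y_{k,j}\,\partial y_{k',j''}}(\mathfrak y^{0}).
\]
The decisive structural fact is that by Lemma \ref{37.158}(1) the monomial $Y(k,j')$ depends only on the variables $y_{k'',j''}$ with $k''\le k$. Consequently $H$ is block upper-triangular with respect to the ordering $k = 1,\ldots,K$, and its $k$-th diagonal block is exactly the Hessian
$\bigl(\sum_{j'}\partial^{2}Y(k,j')/\partial y_{k,j_1}\partial y_{k,j_2}\bigr)_{j_1,j_2}$
from Definition \ref{def:lteq}(3). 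Strong nondegeneracy makes each diagonal block — and hence $H$ — invertible, so $\vec c = H^{-1}\vec b$ is uniquely determined. The compatibility $\mathfrak y^{(\CN+1)} \equiv \mathfrak y^{(\CN)} \mod T^{\lambda_{\CN+1}}$ is automatic, and induction completes the construction.

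Two technical points deserve care. First, every $T$-exponent that can appear in (\ref{37.160}) must lie in the submonoid $G$ defined before the lemma (via the generators listed in (\ref{eq:genG})), so the induction really proceeds through the enumeration $\lambda_1<\lambda_2<\cdots$ without missing intermediate valuations. Second, in the non-Fano case $\mathfrak{PO}$ contains infinitely many summands by Theorem \ref{weakpotential}, but the divergence $\ell'_j(u_0)+\rho_j\to\infty$ ensures that at each step only finitely many of them contribute modulo $T^{\lambda_{\CN+1}}$, keeping the linear system finite-dimensional.

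The main obstacle — and the reason strong nondegeneracy is hypothesized — is precisely the invertibility of $H$ at every step; without it a pointwise Newton step cannot cancel the residual. This is exactly what fails in the merely weakly nondegenerate setting, where one instead has to invoke Chevalley's theorem and Baire category to pass to the projective limit (Lemmas \ref{constructible} and \ref{projlimitinduction} above). In the strongly nondegenerate case, the induction closes on its own and produces the convergent expansion $\mathfrak y_{k,j} = \mathfrak y_{k,j;0} + \sum_{l\ge 1}\mathfrak y_{k,j;l}T^{\lambda_l} \in \Lambda_0^{\C}\setminus\Lambda_+^{\C}$ asserted in Theorem \ref{thm:elliminate}.
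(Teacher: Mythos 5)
Your proposal is correct and follows essentially the same route as the paper: an induction on $\CN$ in which the increment at order $T^{\lambda_{\CN+1}}$ is found by solving a linear system whose solvability is exactly the strong nondegeneracy hypothesis, with Lemma \ref{37.158}(1) guaranteeing that only the leading block matters. The paper's formula (\ref{indformula}) records only the diagonal ($k''=k$) block of the Hessian and solves block by block, whereas you assemble the full block-triangular matrix $H$ and invert it at once; these are the same argument presented in slightly different bookkeeping.
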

\begin{proof}
The proof is by induction on $\CN$. There is nothing to show in the
case $\CN=0$. Assume we have proved the lemma up to $\CN-1$. Then we
have
$$
\sum_{j'=1}^{a(k)}\frac{\partial
Y(k,j')}{\partial y_{k,j}}(\mathfrak y^{(\CN-1)}_{k,1},\cdots,\mathfrak y^{(\CN-1)}_{K,d(K)}) \equiv c_{k,j,M}T^{\CN}
\mod T^{\lambda_{\CN+1}}.
$$
Consider $\mathfrak y^{(\CN)}_{k,j}$ of the form
$$
\mathfrak y^{(\CN)}_{k,j} = \mathfrak y^{(\CN-1)}_{k,j} + \Delta_{k,j,\CN}
T^{\lambda_{\CN}}
$$
for some $\Delta_{k,j,\CN}$. Then we can write
\begin{equation}\label{indformula}\aligned
\sum_{j'=1}^{a(k)}&\frac{\partial
Y(k,j')}{\partial y_{k,j}}(\mathfrak y^{(M-1)}_{k,1} + \Delta_{k,1,\CN} T^{\lambda_{\CN}},\cdots,
\mathfrak y^{(\CN-1)}_{K,d(K)} + \Delta_{K,d(K),\CN} T^{\lambda_{\CN}})
\\
&\equiv \left(c_{k,j,\CN} +
\sum_{j',j''=1}^{a(k)}\frac{\partial^2 Y(k,j')}{\partial y_{k,j}\partial y_{k,j''}}\Delta_{k,j'',\CN}
\right)T^{\lambda_\CN}
\mod T^{\lambda_{\CN+1}}.
\endaligned\end{equation}
Since $\mathfrak y^0 = (\mathfrak y_{k,j;0})_{k=1,\cdots,K,\, j=1,\cdots,d(k)}$ is
strongly nondegenerate,
we can find $\Delta_{k,j'',\CN} \in \C$ so that
the right hand side become zero module $T^{\lambda_{\CN+1}}$.
The proof of Lemma \ref{37.164} is complete.
\end{proof}
By Lemma \ref{37.164}, the limit $\lim_{\CN\to\infty}\mathfrak
y^{(\CN)}_{k,j}$ exists. We set
$$
\mathfrak y_{k,j}: = \lim_{\CN \to\infty}\mathfrak y^{(\CN)}_{k,j}.
$$
This is the required solution of (\ref{eq:cri*}). The proof of
Theorem \ref{thm:elliminate} is complete
\end{proof}
\begin{proof}[Proof of Lemma \ref{lemfield}]
We put
$$
\mathfrak y_{k,j} = \mathfrak y_{k,j;0} + \sum_{l=1}^{\infty} \mathfrak y_{k,j;l}T^{\lambda_{l}}.
$$
By assumption $\mathfrak y_{k,j;0} \in F$. We remark that (\ref{indformula})
gives a {\it linear} equation which determines $\mathfrak y_{k,j;l}$ inductively on $l$.
We use it to show $\mathfrak y_{k,j;l} \in F$ inductively on $l$.
\end{proof}
We next give an example where weakly nondegeneracy condition
is not satisfied.

\begin{exm}\label{counterexamples}
Consider the 2-point blow up $X(\alpha,\beta)$ of $\C P^2$ with its moment
polytope given by
$$
P = \{(u_1,u_2) \mid
0 \le u_1 \le 1, \,\, 0 \le u_2 \le 1-\alpha, \,\, \beta \le u_1 + u_2 \le 1\}.
$$
We consider the case when $1-\alpha$ is sufficiently small.
The potential function is
$$
\mathfrak{PO} = T^{u_1}y_1 + T^{u_2}y_2 + T^{1-\alpha-u_2}y_2^{-1} + T^{1-u_1-u_2}y_1^{-1}y_2^{-1}
+T^{u_1+u_2-\beta}y_1y_2.
$$
(We remark that $X$ is Fano.) We fix $\alpha$ and move $\beta$ starting from zero.
When $\beta$ is small compared to $1-\alpha$,
there are two balanced fibers.
\par
One is located at $((1+\alpha)/4,(1-\alpha)/2)$. This corresponds to
the location of the balanced fiber of
the one point blow up, which is nothing but the case $\beta=0$.
The other appears near the origin and is $(\beta,\beta)$.
The leading term equation at the first point is
$$
1-y_2^{-2} = 0, \quad 1 - y_1^{-2}y_2^{-1} = 0.
$$
The solutions are $(y_1,y_2) = (\pm 1,1), (\pm\sqrt{-1},-1)$,
all of which are strongly nondegenerate.
The leading term equation at the second point is
$$
1+y_1 = 1+y_2 = 0.
$$
$(-1,-1)$ is the nondegenerate solution.
Thus we have $5$ solutions.
\par\medskip
The situation jumps when $\beta = (1-\alpha)/2$. Denote $\beta_0 =(1-\alpha)/2$
for the simplicity of notation.
In that case the potential function at $(\beta_0,\beta_0)$ becomes
$$
T^{\beta_0}(y_1+y_2+y_1y_2+y_2^{-1}) + T^{1-2\beta_0}y_1^{-1}y_2^{-1}.
$$
The leading term equation is
$$
1+ y_2 = 0,\quad 1+y_1-y_2^{-2}=0.
$$
Its solution is $(0,-1)$. Since $y_1=0$ it follows that there is
no solution in $(\Lambda_0\setminus \Lambda_+)^2$.
Hence there is no weak bounding cochain $\mathfrak x$ for which the
Floer cohomology $HF((L(\beta_0,\beta_0),\mathfrak x),(L(\beta_0,\beta_0),\mathfrak x)
;\Lambda)$ is nontrivial.
In other words, the fiber $L(\beta_0,\beta_0)$ in $X(\alpha,\beta_0)$ is not
strongly balanced.
\par
On the other hand, this fiber $L(\beta_0,\beta_0)$ in $X(\alpha,\beta_0)$
is balanced because by choosing $\beta$ arbitrarily close to $\beta_0$ and
$\beta < \beta_0$,
we can approximate it by the fibers
$$
L(\beta,\beta) \subset X(\alpha,\beta)
$$
for which the Floer cohomology
$HF((L(\beta,\beta),\mathfrak x),(L(\beta,\beta),\mathfrak x))$ is non-trivial.
In particular $L(\beta_0,\beta_0)$ in $X(\alpha,\beta_0)$ is not displaceable.
\par
We can also verify that
\begin{equation}\label{Ecounterexa}
\overline{\mathfrak E}(\beta_0,\beta_0) = \infty \quad \mbox{in } \, X(\alpha,\beta_0)
\end{equation}
while $\mathfrak E(\beta_0,\beta_0) = \beta_0$ in $X(\alpha_0,\beta_0)$.

Now we examine where the missing solutions at $\beta = \beta_0$ have gone.
We consider $(u_1,(1-\alpha)/2)$ where $\beta_0 = (1-\alpha)/2<u_1<(1+\alpha)/4$.
The potential function is
\begin{equation}\label{degpofunc}
T^{\beta_0}(y_2 + y_2^{-1}) + T^{\beta_0+\lambda_1}(y_1 + y_1y_2)
+ T^{\beta_0+\lambda_2}y_1^{-1}y_2^{-1}.
\end{equation}
Here
$$
\lambda_1 = u_1 - \beta_0 < \lambda_2 = (1+\alpha)/2 - u_1-\beta_0.
$$
The leading term equation is
\begin{equation}\label{degenerate}
1- y_2^{-2} = 0, \quad 1 + y_2 =0.
\end{equation}
The solution is $y_2 =-1$ and $y_1$ is arbitrary.
Thus there are infinitely many solutions of the leading term equation. Therefore
these solutions of (\ref{degenerate}) are not weakly nondegenerate.
\par
So we need to study the critical point of (\ref{degpofunc})
more carefully. The condition that
$(y_1,y_2)$ is a critical point of (\ref{degpofunc}) is written as
\begin{equation}\label{criticaleqexam}
\left\{\aligned
&1 - y_2^{-2} + T^{\lambda_1} y_1 - T^{\lambda_2} y_1^{-1}y_2^{-2} = 0 \\
&1+y_2 - T^{\lambda_2-\lambda_1}y_1^{-2}y_2^{-1} = 0.
\endaligned\right.
\end{equation}
The leading order term of $y_2$ should be $-1$. We need to
study also the second order term. We can write
$$
y_2 = -1 + c T^{\mu}, \quad y_1 = d,
$$
where $c,d \in \Lambda_{0} \setminus \Lambda_+$.
Then we have
\begin{eqnarray}
&-2cT^{\mu} + d T^{\lambda_1}\equiv 0 \mod T^{\min\{\mu,\lambda_1\}}
\Lambda_+,
\label{mulambda1}\\
&c T^{\mu} + d^{-2}T^{\lambda_2-\lambda_1}
\equiv 0 \mod T^{\min\{\mu,\lambda_2-\lambda_1\}}\Lambda_+.
\label{mulambda2}\end{eqnarray}
(\ref{mulambda1}) implies $\mu = \lambda_1$.
(\ref{mulambda2}) then implies $\lambda_2-\lambda_1 = \lambda_1$.
It implies $u_1 = 1/3$.
Furthermore
\begin{equation}\label{LTE}
c^3 \equiv -1/4 \mod \Lambda_+, \quad
d \equiv 2c \mod \Lambda_+.
\end{equation}
Since the three solutions of the $\C$-reduction of (\ref{LTE}) are all simple,
we can show, by the same way as that of the proof of Theorem \ref{thm:elliminate},
that all solutions correspond to solutions of the equation
(\ref{criticaleqexam}). Therefore, $L(1/3,\beta_0)$ is a strongly balanced fiber.
\par
We remark that solutions of the
leading term equation (\ref{degenerate}) do not
lift to solutions of (\ref{criticaleqexam})
unless $u_1 = 1/3$ and $y_1 = -1$. This shows that
the weakly nondegeneracy assumption in Theorem \ref{thm:elliminate}
is essential.
\par
We remark that at $((1+\alpha)/4,(1-\alpha)/2)$ the leading term equation
becomes
$$
1-y_2^{-2} = 0, \quad 1 + y_2 - y_1^{-2}y_2^{-1} = 0.
$$
Its solutions in $(\C \setminus \{0\})^2$ are $(\pm 1/\sqrt{2},1)$.
The number of solutions jumps from 4 to 2 here.
$2+ 3 = 5$. So this is consistent with Theorem \ref{lageqgeopt0}.
\par
In summary for the case of $(\alpha,\beta_0)$ with $\beta_0 = (1-\alpha)/2$,
there are 3 balanced fibers $(1/3,\beta_0)$, $((1+\alpha)/4,\beta_0)$
and $(\beta_0,\beta_0)$. The first
two of them are strongly balanced and the last is not strongly balanced.
\par
The balanced fiber $L(1/3,\beta_0) \subset X(\alpha,\beta_0)$ disappears as
we deform $X(\alpha,\beta_0)$ to $X(\alpha,\beta)$ as $\beta_0$
moves to nearby $\beta$.
To see this let us take $\beta$
which is slightly bigger than $\beta_0 = (1-\alpha)/2$. Then $((\alpha+\beta)/2,(1-\alpha)/2)$,
and $(1-\alpha-\beta,\beta)$
are the balanced fibers. The leading term equation at the first point is
$$
1 - y_2^{-2} = 0, \quad y_2 - y_1^{-2}y_2^{-1} = 0.
$$
Hence there are 4 solutions $(\pm 1,\pm 1)$.
The leading term equation at the second point is
$$
1+ y_2 = 0, \quad y_1 - y_2^{-2} = 0.
$$
Hence the solution is $(1,-1)$. Total number is again 5.
\par
We remark that the $\Q$-structure of quantum cohomology also jumps at
$\beta = (1-\alpha)/2$. Namely
$$
QH(X(\alpha,\beta);\Lambda^{\Q})
=
\begin{cases}
\Lambda^{\Q(\sqrt{-1})} \times (\Lambda^{\Q})^3 &\text{$\beta$ is slightly smaller than $(1-\alpha)/2$}, \\
\Lambda^{\Q(\sqrt{2})} \times \Lambda^{ \Q((-2)^{1/3})} &\text{$\beta=(1-\alpha)/2$}, \\
(\Lambda^{\Q})^5 &\text{$\beta$ is slightly larger than $(1-\alpha)/2$}.
\end{cases}
$$
\end{exm}
\begin{rem}\label{rembulk}
In a sequel of this series of papers, we will prove that
$L(u_1,(1-\alpha)/2))$ is not displacable for any $u_1 \in
((1-\alpha)/2,1/3) \cup (1/3,(1+\alpha)/4)$, in the case
$\beta = \beta_0$. We will
use the bulk deformation introduced by 
\cite{fooo06} section 3.8 (= \cite{fooo06pre} section 13)
to prove it.
\end{rem}
The next example shows that Theorems \ref{lageqgeopt0}, \ref{exitnonvani} and
\ref{thm:elliminate} can not be generalized
to the case of a positive characteristic.

\begin{exm}\label{torsioncount}
Consider the $2$-point blow up $X$ of $\C P^2$ with moment
polytope
$$
P = \{(u_1,u_2) \mid
0 \le u_i \le 1-\epsilon, \sum u_i \le 1\}.
$$
Since $X$ is monotone for $\epsilon = 1/3$, it follows that $X$ is Fano.
We will assume $\epsilon > 0$ is sufficiently small.
Then the fiber at $u_0= (1/3,1/3)$ is balanced.

Now we consider the Novikov ring $\Lambda^F$ with $F=\mathbb{F}_3$
a field of characteristic $3$. We will prove that
there exists {\it no} element $\mathfrak x \in H(L(u);\Lambda_{0}^{F})$
such that $HF((L(u_0),\mathfrak x),(L(u_0),\mathfrak x);\Lambda^F) \ne 0$.
\par
The potential function at $u_0$ is
$$
\frak{PO}^{u_0} = T^{1/3}(y_1+y_2+ 1/(y_1y_2))+
T^{2/3-\epsilon}(y_1^{-1}+y_2^{-1}).
$$
Therefore the critical point equation is given by
\begin{equation}\label{eqcrit}
1-1/(y_iy_1y_2)-ty_i^{-2} = 0 \quad i = 1, 2
\end{equation}
where $t = T^{1/3-\epsilon}$. From this it follows that
$y_i\equiv 1 \mod \Lambda_+$.
In fact the leading term equation is $a_ia_1a_2 = 1$ for $i = 1, 2$
which is reduced to
$$
a_1 = a_2 = a, \quad a^3 = 1.
$$
Obviously this equation has the unique solution $a_1 = a_2 =1$ in $\mathbb{F}_3$.

Going back to the study of solutions of the critical point equation (\ref{eqcrit}),
we first prove $y_1 = y_2$. We put $z_i = y_i^{-1}$.
We assume $z_i - z_j \ne 0$ and put $z_i - z_j \equiv t^{\lambda}c
\mod t^{\lambda}\Lambda_+$
with $c\in \mathbb{F}_3\setminus\{ 0\}$. Then by (\ref{eqcrit}) we have
$$
(z_i - z_j) z_1z_2 + t(z_i-z_j)(z_i+z_j) = 0.
$$
This is a contradiction since the left hand side is congruent to
$ct^{\lambda}$ modulo $t^{\lambda}\Lambda_+^{\mathbb{F}_3}$.
\par
We now put $x = y_i$ and obtain
\begin{equation}\label{eq:x}
x^3 - t x - 1=0.
\end{equation}
We prove:
\begin{lem}\label{nosolutionF}
$(\ref{eq:x})$ has no solution in $\Lambda_0^{\mathbb{F}_3}$.
\end{lem}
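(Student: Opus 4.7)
The plan is to suppose $x \in \Lambda_0^{\mathbb{F}_3}$ solves $x^3 - tx - 1 = 0$ and derive a contradiction by iteratively peeling off the leading term using the characteristic-$3$ Frobenius identity. Reducing modulo $\Lambda_+^{\mathbb{F}_3}$ forces $x \equiv 1 \bmod \Lambda_+^{\mathbb{F}_3}$ (since $1$ is the only cube root of $1$ in $\mathbb{F}_3$), so $x = 1+y$ with $y \in \Lambda_+^{\mathbb{F}_3}$. The identity $(1+y)^3 = 1 + y^3$, valid in characteristic $3$, rewrites the equation as $y^3 = t(1+y)$.

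I would then iterate. Taking valuations in $y^3 = t(1+y)$ yields $\mathfrak v_T(y) = \mathfrak v_T(t)/3$, so $\alpha_1 := t^{1/3}$ divides $y$ and $z_1 := y/\alpha_1$ lies in $\Lambda_0 \setminus \Lambda_+$ and satisfies $z_1^3 - \alpha_1 z_1 - 1 = 0$, an equation of exactly the same shape as the original but with $\alpha_1$ in place of $t$. Repeating produces $\alpha_n := t^{1/3^n}$ and $z_n \in \Lambda_0 \setminus \Lambda_+$ satisfying $z_n^3 - \alpha_n z_n - 1 = 0$ together with $z_{n-1} = 1 + \alpha_n z_n$. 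Unwinding these relations yields, for every $n \ge 1$,
\begin{equation*}
x \;=\; \sum_{k=0}^{n-1} \alpha_1 \alpha_2 \cdots \alpha_k \;+\; \alpha_1 \cdots \alpha_n \cdot z_n,
\end{equation*}
where $\alpha_1 \cdots \alpha_k = t^{(1-3^{-k})/2}$ by the finite geometric sum $\sum_{j=1}^k 3^{-j} = (1-3^{-k})/2$, with the convention that the $k=0$ term is $1$.

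The contradiction with the Novikov condition is then immediate. Set $\lambda_k := (\tfrac{1}{3}-\epsilon)(1-3^{-k})/2 = \mathfrak v_T(\alpha_1 \cdots \alpha_k)$. Since $z_n$ is a unit in $\Lambda_0^{\mathbb{F}_3}$, the remainder in the displayed identity has valuation exactly $\lambda_n$, so whenever $n > k$ its contribution to the $T^{\lambda_k}$-coefficient of $x$ vanishes; hence the $T^{\lambda_k}$-coefficient of $x$ equals $1$ for every $k \ge 1$. But $(\lambda_k)_{k \ge 1}$ is an infinite strictly increasing sequence bounded above by $(\tfrac{1}{3}-\epsilon)/2$, so $x$ would have infinitely many nonzero coefficients with $T$-exponents confined to a bounded interval, violating the requirement that the exponents in the support of an element of $\Lambda_0^{\mathbb{F}_3}$ tend to $+\infty$.

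The main obstacle is the inductive bookkeeping that keeps the iteration going: at each stage one must verify $\mathfrak v_T(z_{n-1}-1) = \mathfrak v_T(\alpha_n)$ so that $z_n$ remains a unit in $\Lambda_0^{\mathbb{F}_3}$, but this is precisely the same valuation count that launched the iteration, now applied to $z_{n-1}^3 - \alpha_{n-1} z_{n-1} - 1 = 0$. The key conceptual input, and the reason this phenomenon is special to characteristic $3$, is the self-similarity of the equation under $(t,x) \mapsto (t^{1/3},(x-1)/t^{1/3})$ supplied by Frobenius, which forces infinitely many distinct monomials to be packed into a bounded range of $T$-exponents.
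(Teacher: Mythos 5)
Your proof is correct and follows essentially the same route as the paper's: the substitution $x = 1 + t^{1/3}x'$ exploiting $(1+y)^3 = 1+y^3$ to reproduce the equation with $t$ replaced by $t^{1/3}$, leading to the forced partial sums $1 + \sum_k t^{(1-3^{-k})/2}$ whose exponents accumulate at a finite value, contradicting the Novikov condition. Your iterative valuation bookkeeping is just a more explicit rendering of the paper's step ``it is easy to see that there are no other solutions modulo $t^{1+\sum_{i=1}^N 3^{-i}}$,'' so the two arguments are the same in substance.
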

\begin{proof}
We put $x = 1+ t^{1/3}x'$ and obtain
$$
(x')^3 - t^{1/3}x' - 1 = 0.
$$
This equation resembles (\ref{eq:x}) except that
$t$ is replaced by $t^{1/3}$. We now put
$$
x_N \equiv 1 + \sum_{k=1}^N t^{\sum_{i=1}^k3^{-i}}.
$$
Then
$
x_N^3 = 1 + t x_{N-1}.
$
Therefore
$$
(x_N)^3 - t x_N - 1 = - t^{1+\sum_{i=1}^N3^{-i}}.
$$
Namely $x_N$ is a solution of (\ref{eq:x}) modulo $t^{1+\sum_{i=1}^N3^{-i}}$.
It is easy to see that there are no other solution of (\ref{eq:x}) modulo
$t^{1+\sum_{i=1}^N3^{-i}}$.
\par
However since
$$
1+\sum_{i=1}^{\infty}3^{-i} = \frac {3}{2} < \infty,
$$
it follows that
$$
\lim_{N\to \infty} x_N
$$
does {\it not} converge in $\Lambda_{0}^{\mathbb{F}_3}$.
Thus there is no solution of (\ref{eq:x}) over a field of characteristic $3$.
\end{proof}
\end{exm}
Lemma \ref{nosolutionF} implies that the field of fraction of the Puiseux series ring
with coefficients in an algebraically closed field of
\emph{positive} characteristic
is {\it not} algebraically closed.
It is well known that this phenomenon does not occur in the case of characteristic
zero. See, for example, Corollary 13.15 \cite{Eisen}.
Since we could not find a proof of a similar result for universal Novikov ring in the
literature, we will provide its proof in the appendix for completeness.
(We used it in the proof of Theorem \ref{lageqgeopt} in section \ref{sec:exa2}.)

\section{Calculation of potential function}
\label{sec:calcpot}

In this section, we prove Theorems \ref{potential} and
\ref{weakpotential}. We begin with a review of \cite{cho-oh}. Let
$\pi : X \to P$ be the moment map and $\partial P = \bigcup_{i=1}^m
\partial_i P$ be the decomposition of the boundary of $P$ into $n-1$
dimensional faces. Let $\beta_i \in H_2(X,L(u);\Z)$ be the elements
such that
$$
\beta_i \cap [\pi^{-1}(\partial P_j)] =
\begin{cases}
1 &\text{if $i=j$,}\\
0 &\text{if $i\ne j$.}
\end{cases}
$$
The Maslov index $\mu(\beta_i)$ is 2. (Theorem 5.1 \cite{cho-oh}.)
\par
Let $\beta \in \pi_2(X,L(u))$ and
$\mathcal M_{k+1}^{\text{\rm main}}(L(u),\beta)$ be the moduli space
of stable maps from bordered Riemann surfaces of genus zero with $k+1$
boundary marked points in homology class $\beta$.
(See \cite{fooo00} section 3 
= \cite{fooo06} subsection 2.1.2. We require the boundary marked points
to respect the cyclic order of $S^1 = \partial D^2$. (In other words we consider
the main component in the sense of \cite{fooo00} section 3.))
Let $\mathcal M^{\text{\rm main},\text{\rm reg}}_{k+1}(L(u),\beta)$ be its
subset consisting of all maps from a disc. (Namely the stable
map without disc or sphere bubble.)
The next theorem easily follows from the results of \cite{cho-oh}.
In (3) Theorem \ref{37.177} we use the spin structure
of $L(u)$ which is induced by the diffeomorphism of $L(u) \cong T^n$
by the $T^n$ action and the standard trivialization of the tangent bundle of $T^n$.
\begin{thm}\label{37.177}
\par
\begin{enumerate}
\item If $\mu(\beta) < 0$, or $\mu(\beta) = 0$, $\beta \ne 0$, then $\mathcal M^{\text{\rm main},\text{\rm
reg}}_{k+1}(L(u),\beta)$ is empty.
\par
\item If $\mu(\beta) = 2$, $\beta \ne \beta_1,\cdots,\beta_m$, then $\mathcal M^{{\text{\rm main}},\text{\rm
reg}}_{k+1}(L(u),\beta)$ is empty.
\par
\item For $i=1,\cdots,m$, we have
\begin{equation}\label{178}
\mathcal M^{{\text{\rm main}},\text{\rm reg}}_{1}(L(u),\beta_i) =
\mathcal M^{\text{\rm main}}_{1}(L(u),\beta_i).
\end{equation}
Moreover $\mathcal M_{1}^{\text{\rm main}}(L(u),\beta_i)$ is Fredholm regular.
Furthermore the evaluation map
$$
ev : \mathcal M^{\text{\rm main}}_{1}(L(u),\beta_i) \to L(u)
$$
is an orientation preserving diffeomorphism.
\par
\item For any $\beta$, the moduli space $\mathcal M^{{\text{\rm main}},\text{\rm reg}}_{1}(L(u),\beta)$ is
Fredholm regular. Moreover
$$
ev : \mathcal M^{\text{\rm main,reg}}_{1}(L(u),\beta) \to L(u)
$$
is a submersion.
\par
\item If $\mathcal M^{\text{\rm main}}_{1}(L(u),\beta)$ is not empty then
there exists $k_i \in \Z_{\ge 0}$ and $\alpha_j \in H_2(X;\Z)$ such that
$$
\beta = \sum_i k_i\beta_i + \sum_j \alpha_j
$$
and $\alpha_j$ is realized by holomorphic sphere.
There is at least one nonzero $k_i$.
\end{enumerate}
\end{thm}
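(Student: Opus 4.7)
The plan is to deduce all five statements from the explicit description of holomorphic discs with boundary on Lagrangian torus fibers of toric manifolds established by Cho--Oh \cite{cho-oh}, together with transversality arguments using the $T^n$-action. The starting point is the Maslov index formula of Cho--Oh: any homotopy class $\beta \in \pi_2(X,L(u))$ decomposes uniquely as $\beta = \sum_{i=1}^m k_i \beta_i$ with $k_i \in \Z$, and
\[
\mu(\beta) = 2\sum_{i=1}^m k_i, \qquad \omega(\beta) = 2\pi \sum_{i=1}^m k_i \ell_i(u).
\]
Combined with the fact that any non-constant holomorphic disc has positive symplectic area and positive intersection number with each toric divisor $\pi^{-1}(\partial_iP)$ (which is effective as a complex subvariety), the classification of Cho--Oh says that if $\mathcal M_1^{\text{\rm main},\text{\rm reg}}(L(u),\beta)$ is non-empty and $\beta \ne 0$, then all $k_i \ge 0$ and at least one is strictly positive. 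This immediately gives (1) and (2): negative Maslov index forces some $k_i < 0$, Maslov index $0$ with $\beta \ne 0$ contradicts positivity of area, and $\mu(\beta) = 2$ forces $\sum k_i = 1$, i.e.\ $\beta = \beta_i$ for some $i$.

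For (3) I will quote the explicit description of the moduli space of basic discs from Cho--Oh: the holomorphic discs in class $\beta_i$ are the standard Blaschke-type discs, which are entirely transverse to $\pi^{-1}(\partial_jP)$ for $j \ne i$, so there can be no disc or sphere bubble in a stable map representative. This gives the equality (\ref{178}). Fredholm regularity at each such disc is proved in \cite{cho-oh} by an explicit computation of the linearized Cauchy--Riemann operator using the splitting of the pull-back of $TX$ as a sum of line bundles along the disc (coming from the $T^n$-action). The $T^n$-action acts freely and transitively on the moduli space modulo reparametrization of the pointed disc, and an elementary computation, again carried out in \cite{cho-oh}, identifies $ev: \mathcal M_1^{\text{\rm main}}(L(u),\beta_i) \to L(u)$ with this free $T^n$-orbit, hence with a diffeomorphism. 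The orientation statement follows from the choice of spin structure on $L(u)$ inherited from the $T^n$-trivialization of $TL(u)$, together with the recipe of section 8 of \cite{fooo00} for orienting moduli spaces of discs.

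Part (4) will be obtained by a standard $T^n$-equivariance argument. Any element $[w] \in \mathcal M_1^{\text{\rm main},\text{\rm reg}}(L(u),\beta)$ sits in a $T^n$-orbit of discs, and differentiating this orbit produces $n$ holomorphic sections of $w^*TX$ satisfying totally real boundary conditions along $L(u)$. These sections span the tangent space $T_{w(z)}X$ at every point because the $T^n$-action on $\pi^{-1}(\operatorname{Int}P)$ is free. From this it follows both that the linearized $\bar\partial$-operator is surjective (hence Fredholm regularity) and that $ev$ is a submersion at $[w]$.

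Finally, for (5) I will use compactness of $\mathcal M^{\text{\rm main}}_1(L(u),\beta)$ and the standard decomposition of a stable map into a principal tree of discs joined to bubble trees of spheres. Each disc component lies in some class $\beta'$ which, by (1)--(2) applied to $\mathcal M^{\text{\rm main},\text{\rm reg}}$ combined with Gromov compactness, must be a sum of basic classes $\beta_i$ with non-negative coefficients, while each sphere bubble represents a class $\alpha_j \in H_2(X;\Z)$. Summing gives $\beta = \sum_i k_i \beta_i + \sum_j \alpha_j$ with $k_i \ge 0$. At least one $k_i$ is non-zero: otherwise the map would consist entirely of sphere bubbles with a constant disc, but then the unique boundary marked point would give $\partial\beta = 0$ while also forcing the map to be constant on the disc component, contradicting the fact that the entire stable map represents a class with some non-trivial boundary data if there is any disc contribution, or else reducing to a sphere tree whose class lies in $H_2(X)$, not producing a disc class. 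The main technical step here is to rule out the purely-sphere case; this amounts to observing that $\mathcal M^{\text{\rm main}}_1(L(u),\beta)$ always contains at least one disc component, since the boundary marked point lies on a disc component by definition of the main-component moduli space.
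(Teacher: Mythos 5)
Your treatment of (1), (2) and (5) is essentially sound and close to the paper's (the paper derives (1) by dimension counting from the submersion in (4) rather than from positivity of intersection with the toric divisors, but both work). The genuine gaps are in (3) and (4). For (3), the equality $\mathcal M_1^{\text{main,reg}}(L(u),\beta_i)=\mathcal M_1^{\text{main}}(L(u),\beta_i)$ asserts that the \emph{compactified} moduli space contains no stable maps with several components. Your justification --- that the smooth Blaschke discs in class $\beta_i$ are transverse to the other divisors --- only describes the smooth elements and does not exclude a configuration such as a disc in class $\beta'$ carrying a sphere bubble in class $\alpha$ with $\beta'+\alpha=\beta_i$, or several disc components whose classes sum to $\beta_i$; in the non-Fano case there are spheres of non-positive Chern number, so no index argument rules these out. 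The paper's actual argument is an area estimate: writing $\beta_{i_0}=\sum_i k_i\beta_i+\sum_j\alpha_j$ by (5), one has $\ell_{i_0}=\sum_i k_i\ell_i+c$ as affine functions; evaluating on $\partial_{i_0}P$, where $\ell_{i_0}=0$ and every $\ell_i\ge 0$, gives $c\le 0$, hence $\omega(\beta_{i_0})\le \sum_i k_i\,\omega(\beta_i)$ with equality only for $k_i=\delta_{i i_0}$, while positivity of the sphere areas $\omega(\alpha_j)$ forces the reverse inequality. This convexity step is the real content of (3) and is absent from your proposal.

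For (4), the regularity argument is not correct as stated: the $n$ holomorphic sections of $w^*TX$ obtained by differentiating the $T^n$-orbit span at most an $n$-dimensional subspace of the $2n$-dimensional fibre, and only where the action is free --- which fails along the toric divisors that any disc of positive Maslov index must meet. So they do not span $T_{w(z)}X$, and even if they did this would not by itself give surjectivity of the linearized $\bar\partial$-operator. What the equivariance argument legitimately yields is the submersion property of $ev$ (equivariance plus transitivity of $T^n$ on $L(u)$), which is exactly how the paper uses it; Fredholm regularity of $\mathcal M_1^{\text{main,reg}}(L(u),\beta)$ for arbitrary $\beta$ must instead be quoted from Cho--Oh's explicit computation via the splitting of $w^*TX$ into holomorphic line bundles (their Theorems 5.5 and 6.1), as the paper does.
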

\begin{proof}
For reader's convenience and completeness, we explain how to deduce
Theorem \ref{37.177} from the results in \cite{cho-oh}.
\par
By Theorems 5.5 and 6.1 \cite{cho-oh}, $\mathcal M^{{\text{\rm main}},\text{\rm reg}}_{k+1}(L(u),\beta)$
is Fredholm regular for any $\beta$. Since the complex structure is invariant under the $T^n$ action
and $L(u)$ is $T^n$ invariant, it follows that $T^n$ acts on
$\mathcal M^{{\text{\rm main}},\text{\rm reg}}_{k+1}(L(u),\beta)$ and
$$
ev : \mathcal M^{{\text{\rm main}},\text{\rm reg}}_{k+1}(L(u),\beta) \to L(u)
$$
is $T^n$ equivariant. Since the $T^n$ action on $L(u)$ is free and
transitive, it follows that $ev$ is a submersion
if $\mathcal M^{{\text{\rm main}},\text{\rm reg}}_{k+1}(L(u),\beta)$ is nonempty.
(4) follows.
\par
We assume $\mathcal M^{{\text{\rm main}},\text{\rm reg}}_{k+1}(L(u),\beta)$ is nonempty.
Since $ev$ is a submersion it follows that
$$
n = \dim L(u) \le \dim \mathcal M^{{\text{\rm main}},\text{\rm reg}}_{k+1}(L(u),\beta) =
n + \mu(\beta) -2
$$
if $\beta \ne 0$. Therefore $\mu(\beta) \ge 2$. (1) follows.
\par
We next assume $\mu(\beta) = 2$, and $\mathcal M^{{\text{\rm main}},\text{\rm reg}}_{k+1}(L(u),\beta)$ is nonempty.
Then by Theorem 5.3 \cite{cho-oh}, we find $\beta = \beta_i$ for some $i$.
(2) follows.
\par
We next prove (5). It suffices to consider
$$[f] \in \mathcal M^{\text{\rm main}}_{1}(L(u),\beta) \setminus \mathcal M^{{\text{\rm main}},\text{\rm reg}}_{1}(L(u),\beta).$$
We decompose the domain of $u$ into irreducible components and
restrict $f$ there. Let $f_j : D^2 \to M$ and $g_k : S^2 \to M$ be
the restriction of $f$ to disc or sphere components respectively.
We have
$$
\beta = \sum [f_j] + \sum [g_k].
$$
Theorem 5.3 \cite{cho-oh} implies that each of $f_j$ is homologous to the sum of the element of
$\beta_i$.
It implies (5).
\par
We finally prove (3). The fact that $ev$ is a
diffeomorphism for $\beta = \beta_i$, follows dirctly from Theorem 5.3 \cite{cho-oh}.
We next prove that $ev$ is orientation preserving.
Since $L(u), u \in \text{\rm Int}\, P,$ is a principal homogeneous space of
$T^n$, the tangent bundle $TL(u)$ is trivialized once we fix
an isomorphism $T^n \equiv S^1 \times \dots \times S^1$.
Using the orientation and
the spin structure on $L(u)$ induced by such a trivialization,
we orient the moduli space $\mathcal M_1(\beta)$ of holomorprhic discs.
If we change the identification $T^n \equiv S^1 \times \dots
\times S^1$ by an orientation preserving, (resp. reversing), isomorphism,
the corresponding orientations on $L(u)$ and $\mathcal M_1(\beta)$
is preserved, (resp. reversed). Therefore whether
$ev :\mathcal M(\beta_i) \to L(u)$ is orientation preserving or not
does not depend on the choice of the identification $T^n$ and $S^1 \times \dots
\times S^1$.
\par
For each $i=1, \dots, m$, we can find an automorphism $\phi$ of
$(\C^*)^n$ and a biholomorphic map
$f:X \setminus \cup_{j \neq i} \pi^{-1}(\partial_jP) \to
\C \times (\C^*)^{n-1}$ such that
\begin{enumerate}
\item $f$ is $\phi$-equivariant,
\item $f(L(u))=L_{\text{\rm std}}$, where $L_{\text{\rm std}}=\{(w_1, \dots, w_n) \in (\C)^n
\vert \vert w_1 \vert = \dots = \vert w_n \vert = 1 \}.$
\end{enumerate}
Under this identification, $\mathcal M_1(\beta_i)$ is identified with
the space of holomorphic discs
$$
z \in D^2 \mapsto (\zeta \cdot z, w_2, \dots, w_n) \in \C \times
(\C^*)^{n-1}, \quad \zeta \in S^1 \subset \C^*,
$$
where $w_k \in \C, k=2, \dots, n$ with $\vert w_k \vert =1$.
Therefore it is enough to check the statement that $ev$ is orientation
preserving in a single example.
Cho [Cho] proved it in the case of the Clifford torus in $\C P^n$,
hence the proof.

\par
To prove (\ref{178}) and complete the proof of Theorem \ref{37.177}, it remains to prove
$\mathcal M^{{\text{\rm main}},\text{\rm reg}}_{1}(L(u),\beta_{i_0}) =
\mathcal M^{\text{\rm main}}_{1}(L(u),\beta_{i_0})$.
(Here $i_0 \in \{1,\cdots,m\}$.) Let
$[f] \in
\mathcal M^{\text{\rm main}}_{1}(L(u),\beta_{i_0})$.
We take $k_i$ and $\alpha_j$ as in (5). (Here $\beta = \beta_{i_0}$).
We have
$$
\partial \beta_{i_0} = \sum_{i} k_i \partial \beta_i.
\label{37.179}$$
Using the convexity of $P$, (5) and $k_i\ge 0$, we show the inequality
\begin{equation}
\beta_{i_0}\cap \omega \le \sum_i k_i \beta_i\cap \omega
\label{37.180}\end{equation} holds and that the equality holds only
if $k_i = 0$ ($i\ne i_0$), $k_{i_0} = 1$, as follows : By (5) we
have
\begin{equation}
\ell_{i_0} = \sum_{i=1}^m k_i \ell_i + c
\nonumber\end{equation}
where $c$ is a constant. Since $k_i \ge 0$ and $\ell_{i_0}(u') = 0$
for $u' \in \partial_{i_0}P$, it follows that $c\le 0$. (Note
$\ell_i \ge 0$ on $P$.) Since $\beta_i\cap \omega = \ell_i(u)$, we
have the inequality (\ref{37.180}). Let us assume that the equality holds. If there
exists $i\ne j$ with $k_i,k_j > 0$ then
$$
\partial_{i_0}\,P = \{u' \in P \mid \ell_{i_0}(u') = 0\} \subseteq \{u' \in P \mid \ell_i(u')
= \ell_j(u') =
0\}
\subseteq \partial_iP \cap \partial_jP.
$$
This is a contradiction since $\partial_{i_0}P$ is codimension 1. Therefore
there is only one nonzero $k_i$. It is easy to see that $i=i_0$ and
$k_{i_0} = 1$.
\par
On the other hand since $\alpha_j \cap \omega > 0$ it follows that
\begin{equation}
\beta_{i_0}\cap \omega \ge \sum_i k_i \beta_i\cap \omega.
\nonumber
\end{equation}
Therefore there is no sphere bubble (that is $\alpha_j$).
Moreover the equality holds in (\ref{37.180}). Hence the domain of our
stable map is
irreducible. Namely
$$
\mathcal M^{\text{\rm main},\text{\rm reg}}_{1}(L(u),\beta_{i_0}) =
\mathcal M^{\text{\rm main}}_{1}(L(u),\beta_{i_0}).
$$
The proof of Theorem \ref{37.177} is now complete.
\end{proof}
Next we discuss one delicate point to apply Theorem \ref{37.177} to the proofs of
Theorems \ref{potential} and \ref{weakpotential}. (This point was already mentioned in section 16
\cite{cho-oh}.)
Let us consider the case where there exists a holomorphic sphere
$
g : S^2 \to X
$
with
$$
c_1(X) \cap g_*[S^2] < 0.
$$
We assume moreover that there exists a holomorphic disc
$f : (D^2,\partial D^2) \to (X,L(u))$ such that
$$
f(0) = g(1).
$$
We glue $D^2$ and $S^2$ at $0\in D^2$ and $1\in S^2$ to obtain $\Sigma$.
$f$ and $g$ induce a stable map $h : (\Sigma,\partial \Sigma) \to (X,L(u))$.
\par
In general $h$ will {\it not} be Fredholm regular since $g$ may not be Fredholm regular
or the evaluation is not transversal at the interior nodes.
In other words, elements of $\mathcal M^{\text{\rm main}}_1(L(u),\beta) \setminus \mathcal M^{{\text{\rm main}},\text{\rm reg}}_{1}(L(u),\beta)$
may not be Fredholm regular in general.
Moreover replacing $g$ by its multiple cover, we obtain an element
of $\mathcal M^{\text{\rm main}}_1(L(u),\beta) \setminus
\mathcal M^{{\text{\rm main}},\text{\rm reg}}_{1}(L(u),\beta)$ such that $\mu(\beta)$ is negative.
Theorem \ref{37.177} says that all the holomorphic disc without any bubble are
Fredholm regular. However we can not expect that all stable maps in
$\mathcal M^{{\text{\rm main}}}_1(L(u),\beta)$ are Fredholm regular.
\par
In order to prove Theorem \ref{weakpotential}, we need to find appropriate
perturbations of those stable maps.
For this purpose we use the $T^n$ action and proceed as follows.
(We remark that many of the arguments below are much simplified in the Fano case, where
there exists no holomorphic sphere $g$ with
$
c_1(M) \cap g_*[S^2] \le 0.
$
)
\par
We equip each of $\mathcal M_1(L(u),\beta)$ with Kuranishi structure.
(See \cite{FO} for the general theory of Kuranishi structure and
section 17-18 \cite{fooo00} (or 
section 7.1 \cite{fooo06} = section 29 \cite{fooo06pre}) 
for its construction in the context we
currently deal with.)
We may construct Kuranishi neighborhoods and obstruction bundles that
carry $T^n$ actions induced by the $T^n$ action on $X$,
and choose $T^n$-equivariant Kuranishi maps.
(See Definition \ref{eqkurastr}.)
We note that the evaluation map
$$
ev : \mathcal M_1(L(u),\beta) \to L(u)
$$
is $T^n$-equivariant. We use the fact that the complex structure of $X$
is $T^n$-invariant and $L(u)$ is a free $T^n$-orbit to
find such a Kuranishi structure.
(See Proposition \ref{equikurast} for the detail.)
\par
We remark that the $T^n$ action on the Kuranishi neighborhood is
free since the $T^n$ action on $L(u)$ is free and $ev$ is $T^n$ equivariant.
We take a perturbation (that is, a multisection) of the Kuranishi map
that is $T^n$ equivariant.
We can find such a multisection which is also transversal to $0$ as follows :
Since the $T^n$ action is free, we can take the quotient of Kuranishi neighborhood, obstruction bundle
etc. to obtain a space with Kuranishi structure.
Then we take a transversal multisection of the quotient
Kuranishi structure and lift it to a multisection of the Kuranishi
neighborhood of $\mathcal M_1(L(u),\beta)$.
(See Corollary \ref{equiperturb} for detail.)
Let $\mathfrak s_{\beta}$ be such a multisection and let
$\mathcal M_1(L(u),\beta)^{\mathfrak s_{\beta}}$ be its zero set.
We remark that the evaluation map
\begin{equation}
ev : \mathcal M_1(L(u),\beta)^{\mathfrak s_{\beta}} \to L(u)
\label{37.182}\end{equation}
is a submersion. This follows from the $T^n$ equivariance.
This makes our construction of system of multisections much simpler
than the general one in 
section 7.2 \cite{fooo06} (= section 30 \cite{fooo06pre}) 
since the fiber product appearing in the
inductive construction is automatically transversal.
(See subsection 7.2.2 \cite{fooo06} = section 30.2 \cite{fooo06pre}) for the reason why this is crucial.)
More precisely we prove the following Lemma \ref{37.184}.
Let
\begin{equation}
\mathfrak{forget}_0 : \mathcal M^{\text{\rm
main}}_{k+1}(L(u),\beta) \to \mathcal M^{\text{\rm
main}}_1(L(u),\beta) \label{37.183}\end{equation} be the forgetful
map which forgets the first, \dots, $k$-th marked points. (In other
words, only the $0$-th marked point remains.) We can construct our
Kuranishi structure so that it is compatible with
$\mathfrak{forget}_0$ in the same sense as 
Lemma 7.3.8 \cite{fooo06} (= Lemma 31.8 \cite{fooo06pre}).
\par
\begin{lem}\label{37.184}
For each given $E > 0$, there exists a system of multisections
$\mathfrak s_{\beta,k+1}$ on $\mathcal M^{\text{\rm main}}_{k+1}(L(u),\beta)$
for $\beta \cap \omega < E$ with the following
properties :
\begin{enumerate}
\item They are transversal to $0$.
\par
\item They are invariant under the $T^n$ action.
\par
\item The multisection $\mathfrak s_{\beta,k+1}$
is the pull-back of the multisection $\mathfrak s_{\beta,1}$
by the forgetful map $(\ref{37.183})$.
\par
\item The restriction of $\mathfrak s_{\beta,1}$
to the boundary of $\mathcal M^{\text{\rm main}}_1(L(u),\beta)$ is the fiber product
of the multisections $\mathfrak s_{\beta',k'}$ with respect to the
identification of the boundary
$$
\partial{\mathcal M}_1^{\text{\rm main}}(L(u),\beta)
= \bigcup_{\beta_1+\beta_2=\beta}
{\mathcal M}_1^{\text{\rm main}}(L(u),\beta_1) {}_{ev_0}\times_{ev_1}
{\mathcal M}_2^{\text{\rm main}}(L(u),\beta_2).
$$
\item We do not perturb $\mathcal M^{\text{\rm main}}_1(L(u),\beta_i)$
for $i=1,\cdots,m$.
\end{enumerate}
\end{lem}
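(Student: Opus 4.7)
The plan is to construct the system of multisections by induction on the symplectic energy $\beta \cap \omega$, exploiting the freeness of the $T^n$-action on $L(u)$ to produce transverse perturbations in each inductive step. Since there are only finitely many classes $\beta$ with $\beta \cap \omega < E$ that can be represented by stable maps, the induction terminates in finitely many steps.

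For the base case, I would begin with the classes $\beta_i$, $i=1,\dots,m$. By Theorem \ref{37.177}(3) the moduli spaces $\mathcal M^{\text{\rm main}}_1(L(u),\beta_i)$ are already Fredholm regular and $ev$ is an orientation-preserving diffeomorphism onto $L(u)$, so we simply take $\mathfrak s_{\beta_i,1}$ to be the zero section of the (zero) obstruction bundle, realising (5). Then we define $\mathfrak s_{\beta_i,k+1}$ on $\mathcal M^{\text{\rm main}}_{k+1}(L(u),\beta_i)$ as the pullback under $\mathfrak{forget}_0$, which is automatically $T^n$-invariant and transverse because the target is unperturbed and smooth. For any other class $\beta$ with $\beta \cap \omega$ equal to the minimal positive value among realized classes (so that $\partial \mathcal M^{\text{\rm main}}_1(L(u),\beta) = \emptyset$), there is no boundary condition to satisfy, and we construct a $T^n$-invariant transverse multisection directly as described below.

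For the inductive step, assume the multisections $\mathfrak s_{\beta',k+1}$ have been constructed for all $\beta'$ with $\beta' \cap \omega < \beta \cap \omega$. On $\partial \mathcal M^{\text{\rm main}}_1(L(u),\beta)$ the fiber-product formula in (4) defines a multisection $\mathfrak s_{\beta,1}^{\partial}$ using the previously constructed data; by the induction hypothesis all these pieces are $T^n$-invariant and transverse, and because $ev$ on each factor is a submersion onto $L(u)$ (as explained around \eqref{37.182}), the fiber products are automatically transverse, so $\mathfrak s_{\beta,1}^{\partial}$ is transverse to $0$. To extend across the interior, I would pass to the quotient Kuranishi structure under the free $T^n$-action (this is legitimate by Proposition \ref{equikurast} and the freeness of the action on Kuranishi neighborhoods that dominate $L(u)$ via the $T^n$-equivariant submersion $ev$). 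On the quotient, the standard relative transversality argument for multisections (Theorem 3.11 of \cite{FO}) produces a transverse extension of the boundary data; pulling this back gives a $T^n$-equivariant transverse multisection $\mathfrak s_{\beta,1}$ on the full Kuranishi neighborhood satisfying (1), (2), and (4). Finally I define $\mathfrak s_{\beta,k+1}$ as the pullback under the forgetful map \eqref{37.183}, securing (3); one checks that compatibility of the Kuranishi structures with $\mathfrak{forget}_0$ (as in Lemma 7.3.8 \cite{fooo06}) implies this pullback is transverse and still compatible with the boundary decomposition in (4) for $k\geq 1$.

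The principal difficulty is the simultaneous compatibility required by (3) and (4): the fiber-product description of the boundary of $\mathcal M^{\text{\rm main}}_{k+1}(L(u),\beta)$ involves moduli spaces with various numbers of marked points, and one must verify that the pullback under $\mathfrak{forget}_0$ of a multisection constructed only on the $k=0$ moduli spaces is consistent with the fiber product of multisections on factors with more marked points. This compatibility hinges on the fact that $\mathfrak{forget}_0$ intertwines the boundary stratifications, and that pullback commutes with fiber product over $L(u)$ because the relevant evaluation map is the forgotten one — a property guaranteed by the $T^n$-equivariance and freeness, which forces all the fiber products to be clean. Once that compatibility is established, the transversality and $T^n$-invariance are routine consequences of working on the quotient.
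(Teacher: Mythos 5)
Your proposal is correct and follows essentially the same route as the paper: induction over the energy $\omega\cap\beta$, with $T^n$-equivariance forcing the evaluation maps to be submersions so that the boundary fiber products are automatically transversal, extension over the interior via the quotient Kuranishi structure under the free $T^n$-action (Corollary \ref{equiperturb}), pullback by $\mathfrak{forget}_0$ for (3), and the observation that $\mathcal M^{\text{\rm main}}_1(L(u),\beta_i)$ is already regular with no boundary so it can be left unperturbed for (5). The paper's own proof is just a terser version of exactly this argument.
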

\begin{proof}
We construct multisections inductively over $\omega \cap \beta$.
Since (2) implies that fiber products of the perturbed moduli spaces
which we have already constructed
in the earlier stage of induction are automatically transversal, we can extend
them so that (1), (2), (3), (4) are satisfied
by the method we already explained above.
We recall from Theorem \ref{37.177} (3) that
$$
\mathcal M^{\text{\rm main}}_1(L(u),\beta_i)
= \mathcal M_1^{{\text{\rm main}},\text{\rm reg}}(L(u),\beta_i)
$$
and it is Fredholm regular and its evaluation map is surjective
to $L(u)$. Therefore when we
perturb the multisection we do not need to worry about compatibility of it
with other multisections we have already constructed in the earlier stage
of induction. This enable us to leave
the moduli space $\mathcal M^{\text{\rm main}}_1(L(u),\beta_i)$ unperturbed
for all $\beta_i$. The proof of Lemma \ref{37.184} is complete.
\end{proof}
\begin{rem}\label{37.186}
We need to fix $E$ and stop the inductive construction of multisections at some finite stage.
Namely we define $s_{\beta,k+1}$ only for $\beta$ with $\beta \cap \omega < E$.
The reason is explained in subsection 7.2.3 \cite{fooo06} (= section 30.3 \cite{fooo06pre}).
We can go around this trouble in the same
way as explained in section 7.2 \cite{fooo06} (= section 30 \cite{fooo06pre}).
See Remark \ref{runningoutresolve}.
\end{rem}
\begin{rem}\label{sphcompatible}
We explain one delicate point of the proof of Lemma \ref{37.184}.
Let $\alpha \in \pi_2(X)$ be represented by
a holomorphic sphere with $c_1(X) \cap \alpha < 0$. We consider the
moduli space $\mathcal M_1(\alpha)$ of holomorphic sphere with one
marked point and in homology class $\alpha$.
Let us consider $\beta \in \pi_2(X;L(u))$ and
the moduli space $\mathcal M_{k+1,1}^{\text{\rm main}}(\beta)$ of
holomorphic discs with one interior and $k+1$ boundary marked
points and of homotopy class $\beta$. The fiber product
$$
\mathcal M_1(\alpha) \times_X \mathcal M_{k+1,1}^{\text{\rm main}}(\beta)
$$
taken by the evaluation maps at interior marked points are
contained in $\mathcal M_{k+1,1}^{\text{\rm main}}(\beta+\alpha)$.
If we want to define a multisection compatible with the
embedding
\begin{equation}\label{eq:interiorbubble}
\mathcal M_1(\alpha) \times_X \mathcal M_{k+1,1}^{\text{\rm main}}(\beta) \subset \mathcal M_{k+1}^{\text{\rm main}}(\beta+\alpha)
\end{equation}
then it is impossible to make it both transversal and $T^n$ equivariant
in general : This is because the
nodal point of such a singular curve could be
contained in the part of $X$ with non-trivial isotropy group.
\par
Our perturbation constructed above satisfies (1) and (2) of Lemma \ref{37.184} and so
may {\it not} be compatible with the embedding (\ref{eq:interiorbubble}).
Our construction of the perturbation given in Lemma \ref{37.184} exploits
the fact that the $T^n$ action acts freely on the Lagrangian fiber $L(u)$
and carried out by induction on the number of {\it disc} components (and of energy)
only, regardless of the number of sphere components.
\end{rem}
The following corollary is an immediate consequence of Lemma \ref{37.184}.
\begin{cor}\label{37.187}
If $\mu(\beta) < 0$ or $\mu(\beta) = 0$, $\beta\ne 0$, then
$\mathcal M^{\text{\rm main}}_1(L(u),\beta)^{s_{\beta}}$ is empty.
\end{cor}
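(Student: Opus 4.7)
The plan is to exploit the $T^n$-equivariance of the perturbation provided by Lemma \ref{37.184} together with a straightforward virtual dimension count; indeed the word ``immediate'' in the statement of the corollary reflects that no deep input beyond the existence of the perturbation $s_\beta$ is required.

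First I would recall that $\mathcal M^{\text{\rm main}}_1(L(u),\beta)$ has virtual dimension $n+\mu(\beta)-2$. By the transversality clause (1) of Lemma \ref{37.184}, the perturbed zero set $\mathcal M^{\text{\rm main}}_1(L(u),\beta)^{s_\beta}$, whenever nonempty, is a branched smooth manifold of exactly this dimension. Next I would invoke the $T^n$-invariance of $s_\beta$ guaranteed by clause (2) of Lemma \ref{37.184}: the $T^n$-action on $\mathcal M^{\text{\rm main}}_1(L(u),\beta)$ restricts to an action on the perturbed zero locus, and the evaluation map $ev$ remains $T^n$-equivariant. Since $T^n$ acts freely (indeed transitively) on $L(u)$, the equivariance of $ev$ forces the $T^n$-action on $\mathcal M^{\text{\rm main}}_1(L(u),\beta)^{s_\beta}$ to itself be free.

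To conclude, if $\mathcal M^{\text{\rm main}}_1(L(u),\beta)^{s_\beta}$ were nonempty, then it would carry a free action of the $n$-dimensional compact Lie group $T^n$, which forces
$$n+\mu(\beta)-2 \;=\; \dim \mathcal M^{\text{\rm main}}_1(L(u),\beta)^{s_\beta} \;\geq\; n,$$
i.e., $\mu(\beta)\geq 2$. (Since $L(u)$ is orientable, $\mu(\beta)$ is even, so there is no ambiguous intermediate case to worry about.) Under the hypothesis $\mu(\beta)<0$, or $\mu(\beta)=0$ with $\beta\neq 0$, the inequality $\mu(\beta)\geq 2$ fails, so the perturbed moduli space must be empty. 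The only subtle point—worth recording but not a real obstacle—is that the case $\beta=0$ is genuinely excluded here because the constant maps form a component of dimension $n$ not governed by the virtual dimension formula; this is precisely why the second alternative in the hypothesis requires $\beta\neq 0$.
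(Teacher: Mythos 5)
Your proof is correct and is exactly the argument the paper intends: the corollary is stated as an immediate consequence of Lemma \ref{37.184}, and the implicit reasoning is precisely your dimension count using the free $T^n$-action on the perturbed zero set (equivalently, the submersivity of $ev$ onto the $n$-dimensional free orbit $L(u)$), which forces $n+\mu(\beta)-2\ge n$.
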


Now we consider $\beta \in \pi_2(X;L)$ with $\mu(\beta) = 2$ and
$\beta \cap \omega < E$, where $E$ is as in Lemma \ref{37.184}. One
immediate consequence of Corollary \ref{37.187} is that the virtual
fundamental chain of $\mathcal M^{\text{\rm main}}_1(L(u),\beta)$
becomes a \emph{cycle}. More precisely, we introduce

\begin{defn}\label{cbeta} Let $\beta \in \pi_2(X;L)$ with $\mu(\beta) = 2$ and
$\beta \cap \omega < E$, where $E$ is as in Lemma \ref{37.184}. We
define a homology class $c_\beta \in H_n(L(u);\Q) \cong \Q$ by the
pushforward
$$
c_{\beta} = ev_*([\mathcal M^{\text{\rm
main}}_1(L(u),\beta)^{s_{\beta}}]).
$$
\end{defn}

\begin{lem}\label{cbetawell}
The number $c_{\beta}$ is independent of the choice of the system of
multisections $\mathfrak s_{\beta,k+1}$ satisfying $(1)$ - $(5)$ of
Proposition $\ref{37.184}$.
\end{lem}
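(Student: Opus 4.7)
The plan is to show that any two systems of multisections satisfying conditions (1)--(5) of Lemma \ref{37.184} can be joined by a $T^n$-equivariant one-parameter family, and that this family induces a cobordism between the corresponding zero sets whose image under $ev$ bounds a chain in $L(u)$. Since $H_n(L(u);\Q) \cong \Q$, this forces the two push-forwards to define the same number $c_\beta$.

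First, I would check that for each single choice of $\mathfrak{s}_\beta$ the perturbed moduli space $\mathcal M^{\text{\rm main}}_1(L(u),\beta)^{\mathfrak{s}_\beta}$ has \emph{empty} boundary in the sense of Kuranishi structures, so that its push-forward is a genuine cycle. The codimension-one stratum of $\mathcal M^{\text{\rm main}}_1(L(u),\beta)$ is a union of fiber products indexed by splittings $\beta = \beta_1 + \beta_2$ with $\beta_1,\beta_2 \ne 0$, and condition (4) identifies $\mathfrak{s}_\beta$ there with the fiber product of the $\mathfrak{s}_{\beta_i}$. Since $\mu(\beta_1) + \mu(\beta_2) = \mu(\beta) = 2$, at least one $\mu(\beta_i)$ is $\le 0$, and Corollary \ref{37.187} says the zero set on that factor is empty. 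So the zero locus downstairs has no boundary, and $ev_*[\mathcal M^{\text{\rm main}}_1(L(u),\beta)^{\mathfrak{s}_\beta}] \in H_n(L(u);\Q)$ is a well-defined cycle class.

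Next, given two systems $\mathfrak{s}^{(0)}$ and $\mathfrak{s}^{(1)}$, I would construct a $T^n$-equivariant family $\{\mathfrak{s}^{(t)}_{\beta',k+1}\}_{t \in [0,1]}$ of multisections on $\mathcal M^{\text{\rm main}}_{k+1}(L(u),\beta') \times [0,1]$ satisfying the natural parametric analogues of (1)--(5) and restricting to $\mathfrak{s}^{(i)}$ at $t=i$. The construction mirrors the proof of Lemma \ref{37.184}: proceed inductively on $\beta' \cap \omega$, and at each step use the fact that the $T^n$-action on every Kuranishi neighborhood is \emph{free} (because $ev$ is $T^n$-equivariant and $T^n$ acts freely on $L(u)$) to descend to the quotient, apply standard relative transversality on $(\mathcal M/T^n) \times [0,1]$ with prescribed values at the endpoints, and lift. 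The boundary compatibility (4) is preserved because fiber products of $T^n$-equivariant submersive strata are automatically transverse; condition (5) is preserved because $\mathcal M^{\text{\rm main}}_1(L(u),\beta_i)$ is already Fredholm regular and we simply take $\mathfrak{s}^{(t)}_{\beta_i,1}$ to be the trivial (zero) perturbation for all $t$.

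The zero locus of $\mathfrak{s}^{(t)}$ on $\mathcal M^{\text{\rm main}}_1(L(u),\beta) \times [0,1]$ is then an $(n+1)$-dimensional perturbed moduli space whose boundary, by the parametric version of the argument in the first paragraph, equals the formal difference $\mathcal M^{\text{\rm main}}_1(L(u),\beta)^{\mathfrak{s}^{(1)}} - \mathcal M^{\text{\rm main}}_1(L(u),\beta)^{\mathfrak{s}^{(0)}}$. Pushing this chain forward under $ev$ (composed with projection to the first factor) gives a singular chain in $L(u)$ whose boundary realizes the two cycles as homologous. Hence they define the same class in $H_n(L(u);\Q)$, and $c_\beta$ depends only on the choices allowed in Lemma \ref{37.184}. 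The main technical obstacle is the inductive parametric transversality step with the fiber-product boundary constraint; this is precisely where $T^n$-freeness is essential, exactly as in the original construction, and no new analytic input is required.
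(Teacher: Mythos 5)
Your proposal is correct and follows essentially the same route as the paper: the paper's proof likewise takes a $T^n$-invariant homotopy between the two systems that is transversal to $0$, invokes the parameterized version of Corollary \ref{37.187} by dimension counting to kill all boundary strata except the two ends, and concludes that the parameterized zero set is a compact cobordism between the two perturbed moduli spaces. Your write-up merely spells out the details (the evenness of Maslov indices forcing one factor of any splitting to have $\mu\le 0$, and the inductive equivariant construction of the homotopy) that the paper leaves implicit.
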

\begin{proof}
If there are two such systems, we can find a $T^n$ invariant
homotopy between them which is also transversal to $0$. By a
dimension counting argument applied to the parameterized version of
$\mathcal M_1^{\text{\rm main}}(L(u),\beta)$ and its perturbation, we
will have the parameterized version of Corollary \ref{37.187}. This
in turn implies that the perturbed (parameterized) moduli space
defines a compact cobordism between the perturbed moduli spaces of
$\mathcal M_1^{\text{\rm main}}(\beta)$ associated to the two such
systems. The lemma follows.
\end{proof}

We remark $c_{\beta_i} = 1$ where $\beta_i$ ($i=1, \cdots, m$) are the classes corresponding
to each of the irreducible components of the divisor $\pi^{-1}(\partial P)$.
If $X$ is Fano, then $c_{\beta} = 0$, for $\beta \ne \beta_i$. But this may not
be the case if $X$ is not Fano.

We now will use our perturbed moduli space to define
a structure of filtered $A_{\infty}$ algebra on
the de Rham cohomology $
H(L(u);\Lambda_0^{\R}) \cong (H(L(u),\R)) \otimes \Lambda_0$.
We will write it $\frak m^{can}$.
\par
We take a $T^n$ equivariant Riemannian metric on $L(u)$.
We observe that a differential form $\rho$ on $L(u)$
is harmonic if and only if $\rho$ is $T^n$ equivariant.
So we identify $H(L(u),\R)$ with the set of $T^n$ equivariant
forms from now on.
\par
We consider the evaluation map
$$
ev = (ev_1,\cdots,ev_k,ev_0) : \mathcal M^{\text{\rm main}}_{k+1}(L(u),\beta)^{\mathfrak s_{\beta}} \to L(u)^{k+1}.
$$
Let $\rho_1, \cdots, \rho_k$ be
$T^n$ equivariant differential forms on $L(u)$.
We define
\begin{equation}
\mathfrak m^{can}_{k,\beta}(\rho_1,\cdots,\rho_k)
= (ev_0)_! (ev_1,\cdots,ev_k)^*(\rho_1 \wedge \cdots \wedge \rho_k).
\label{37.188}\end{equation}
We remark that integration along fiber $(ev_0)_!$ is well defined and
gives a smooth form, since $ev_0$ is a submersion. (It is a consequence of $T^n$
equivariance.)
More precisely, we apply
Definition \ref{cordefinitionss} in section \ref{sec:integration} as follows.
We put $\mathcal M = \mathcal M^{\text{\rm main}}_{k+1}(L(u),\beta)$,
$L_s = L^k$, $L_t = L$.
$ev_s = (ev_1,\cdots,ev_k) : \mathcal M \to L_s$,
$ev_t = ev_0 : \mathcal M \to L_t$.
Thus we are in the situation we formulate at the beginning of
section \ref{sec:integration}.
Then using Lemma \ref{correspondfinalformula}
and Remark \ref{detasdenote} (1),
we put
\begin{equation}\label{mcandef}
\mathfrak m^{can}_{k,\beta}(\rho_1,\cdots,\rho_k)
=(\mathcal M;ev_s,ev_t,\frak s_{\beta})_* (\rho_1\times \cdots \times \rho_k).
\end{equation}
We remark that the right hand side of (\ref{mcandef})
is again $T^n$ equivariant since $\frak s_{\beta}$ etc.
are $T^n$ equivariant.
\par
Lemma \ref{37.184} (4)
implies
$$\aligned
&\partial  \mathcal M^{\text{\rm main}}_{k+1}(L(u),\beta)
\\
&= \bigcup_{k_1+k_2=k+1}\bigcup_{\beta_1+\beta_2 = \beta}\bigcup_{l=1}^{k_2}
\mathcal M^{\text{\rm main}}_{k_1+1}(L(u),\beta_1)
{}_{ev_0} \times_{ev_l}\mathcal M^{\text{\rm main}}_{k_2+1}(L(u),\beta_2).
\endaligned$$
Therefore
using Lemmata \ref{stokes}, \ref{compformula},
we have the following formula.
\begin{equation}\label{Ainftyformula}
\sum_{\beta_1+\beta_2=\beta}\sum_{k_1+k_2=k+1}\sum_{l=1}^{k_1}
(-1)^*\mathfrak m^{can}_{k_1,\beta_1}(\rho_1,\cdots,\mathfrak m^{can}_{k_2,\beta_2}(\rho_l,\cdots),\cdots,\rho_k).
\end{equation}
Here $* = \sum_{i=1}^l (\deg\rho_i + 1)$.
(See the end of section
\ref{sec:integration} for sign.) We now put
\begin{equation}\label{Ainftystru}
\frak m_k^{can}(\rho_1, \cdots, \rho_k) =
\sum_{\beta} T^{\beta\cap\omega/(2\pi)}\frak m_{k,\beta}^{can}(\rho_1, \cdots, \rho_k).
\end{equation}
We extend (\ref{Ainftystru})  to $\rho \in H(L(u);\Lambda^{\R}_0)$
such that it is $\Lambda_0$ multilinear. Then
(\ref{Ainftyformula}) implies that it
defines a structure of a filtered $A_{\infty}$ structure
on $H(L(u);\Lambda_0^{\R})$ in the sense of section \ref{sec:floerreview}.
\par
We also remark that our filtered $A_{\infty}$ algebra is
unital and the constant $0$ form $1 \in H^0(L;\R)$ is a
unit. This is a consequence of Lemma \ref{37.184} (3).
\par
We next calculate our filtered $A_{\infty}$
structure in the case when $\rho_i$ are degree $1$ forms.
\begin{lem}\label{37.189}
For $\mathfrak x \in H^1(L(u),\Lambda_{0})$ and $\beta \in
\pi_2(X,L)$ with $\mu(\beta) = 2$, we have
$$
\mathfrak m_{k,\beta}^{can}(\mathfrak x,\cdots,\mathfrak x) = \frac{c_{\beta}}{k!}
(\partial \beta\cap \mathfrak x)^k \cdot PD([L(u)]).
$$
Here $PD([L(u)])$ is the Poincar\'e dual to the fundamental class.
In other words it is the $n$ form with $\int_{L(u)}PD([L(u)]) = 1$.
\end{lem}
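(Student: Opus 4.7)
The plan is to exploit two features of the setup: the moduli space is pulled back from the one-marked-point moduli via the forgetful map (Lemma \ref{37.184} (3)), and we may take $\mathfrak x$ to be a harmonic ($T^n$-invariant) $1$-form on $L(u)$. First I would observe that since $\mu(\beta)=2$, the virtual dimension of $\mathcal M^{\text{main}}_{k+1}(L(u),\beta)^{\mathfrak s_\beta}$ is $n+k$, so after pairing with $k$ one-forms and pushing forward under $ev_0$ the output is an $n$-form on $L(u)$, hence a scalar multiple of $PD([L(u)])$. Therefore it is enough to compute $\int_{L(u)} \mathfrak m_{k,\beta}^{can}(\mathfrak x,\ldots,\mathfrak x)$ and identify the constant.

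Next I would apply the forgetful-map compatibility. By Lemma \ref{37.184} (3) the perturbed moduli space $\mathcal M^{\text{main}}_{k+1}(L(u),\beta)^{\mathfrak s_{\beta,k+1}}$ is the pullback of $\mathcal M^{\text{main}}_1(L(u),\beta)^{\mathfrak s_{\beta,1}}$ under $\mathfrak{forget}_0$, and its fiber over a disc $[f]$ with a single boundary marked point $z_0$ is the configuration space
\[
\Delta_k([f]) \;=\; \{(z_1,\dots,z_k)\in (\partial D^2\setminus\{z_0\})^k \mid z_1,\dots,z_k \text{ in cyclic order}\}.
\]
Using Fubini for integration along fibers (Lemma \ref{compformula}), the push-forward defining $\mathfrak m_{k,\beta}^{can}(\mathfrak x,\dots,\mathfrak x)$ decomposes as the pushforward under $ev_0:\mathcal M^{\text{main}}_1(L(u),\beta)^{\mathfrak s_{\beta,1}}\to L(u)$ of the scalar function on $\mathcal M^{\text{main}}_1(L(u),\beta)^{\mathfrak s_{\beta,1}}$ whose value at $[f]$ equals $\int_{\Delta_k([f])} (f\circ e_1)^*\mathfrak x\wedge\cdots\wedge (f\circ e_k)^*\mathfrak x$, where $e_i$ evaluates at $z_i$.

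Next I would compute the inner integral. Because $\mathfrak x\in H^1(L(u);\Lambda_0)$ is represented by a $T^n$-invariant (hence harmonic) $1$-form, the pull-back $(f|_{\partial D^2})^*\mathfrak x$ is a closed $1$-form on $\partial D^2\cong S^1$, whose total integral depends only on the homotopy class $[\partial f]=\partial\beta\in\pi_1(L(u))$. Thus $\int_{\partial D^2}(f|_{\partial D^2})^*\mathfrak x = \langle\mathfrak x,\partial\beta\rangle = \partial\beta\cap\mathfrak x$. Integrating the $k$-fold exterior product over the cyclically ordered simplex $\Delta_k([f])\subset (S^1)^k$ (equivalently: over $(S^1)^k$ and dividing by the cyclic factor $k!$) yields the constant $\tfrac{1}{k!}(\partial\beta\cap\mathfrak x)^k$, independent of $[f]$.

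Finally, pulling this constant outside the pushforward and using the definition $c_\beta = ev_*[\mathcal M^{\text{main}}_1(L(u),\beta)^{\mathfrak s_\beta}]$ from Definition \ref{cbeta}, I obtain
\[
\mathfrak m_{k,\beta}^{can}(\mathfrak x,\dots,\mathfrak x) \;=\; \frac{(\partial\beta\cap\mathfrak x)^k}{k!}\,(ev_0)_!(1) \;=\; \frac{c_\beta}{k!}\,(\partial\beta\cap\mathfrak x)^k\cdot PD([L(u)]),
\]
which is the claimed formula. The main technical obstacle is justifying the Fubini decomposition at the level of Kuranishi multisections -- this is precisely where compatibility with $\mathfrak{forget}_0$ together with $T^n$-equivariance (ensuring that $ev_0$ is a submersion and all perturbations are pulled back) is used; both were arranged in Lemma \ref{37.184}. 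The computation of the inner integral is then elementary because $\mathfrak x$ may be chosen harmonic.
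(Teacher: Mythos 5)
Your proof is correct and follows essentially the same route as the paper's: both reduce to the scalar $\int_{L(u)}\mathfrak m^{can}_{k,\beta}(\mathfrak x,\cdots,\mathfrak x)$, factor the perturbed moduli space via the forgetful map as $\mathcal M^{\text{\rm main}}_{1}(L(u),\beta)^{\frak s}\times \widehat C_k$ (using Lemma \ref{37.184} (3) and Corollary \ref{37.187}), and identify the fiber integral with $\frac{1}{k!}(\partial\beta\cap\mathfrak x)^k$ before invoking Definition \ref{cbeta}. The only cosmetic difference is in evaluating that fiber integral: the paper uses the explicit $T^n$-equivariant parametrization $ev_i(\mathfrak u;t_1,\cdots,t_k)=[t_i\partial\beta]\cdot ev(\mathfrak u)$ of the disc boundaries together with $\operatorname{Vol}(C_k)=1/k!$, whereas you use closedness of $\mathfrak x$ plus the iterated-integral symmetrization identity (note your ``cyclic factor'' $k!$ is really the order of the symmetric group).
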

\begin{proof}
It suffices to consider the case $\mathfrak x = \rho \in H^1(L(u);\R)$,
and show
\begin{equation}\label{eq:mkonharmonic}
\int_{L(u)} \mathfrak m^{can}_{k,\beta}(\rho,\cdots,\rho)
=
\frac{c_{\beta}}{k!}(\partial \beta\cap \mathfrak x)^k.
\end{equation}
Let
\begin{equation}\label{eq:configS1}
C_k = \{ (t_1,\cdots,t_k) \mid 0 \le t_1 \le \cdots \le t_k \le 1\}.
\end{equation}
We define an iterated blow-up, denoted by $\widehat C_k$, of $C_k$
in the following way. Let  $S = \del D$ be the boundary of the
unit disc $D=D^2 \subset \C$ and $\beta_D \in H_2(\C,S)$ be the
homology class of the unit disc. We consider the moduli
space $\mathcal M_{k+1}(\C,S;\beta_D)$ and the evaluation
map
$
\vec{ev} = (ev_0,\cdots,ev_k) : \mathcal M_{k+1}(\C,S;\beta_D)
\to (S^1)^{k+1}.
$
We fix a point $p_0 \in S \subset \C$ and put
$$
\widehat C_k := ev_0^{-1}(p_0) \subset \mathcal
M_{k+1}(\C,S;\beta_D).
$$
We make the identification $S^1 \setminus \{p_0\}\cong (0,1)$. Then $\vec{ev}$
induces a diffeomorphism
$$
\widehat C_k \cap \mathcal M_{k+1}^{\rm reg}(\C,S;\beta_D) \to \text{\rm Int }C_k
$$
given by
$$
[w,z_0, \cdots, z_k] \mapsto (w(z_1) - w(z_0), \cdots, w(z_k) - w(z_0))
$$
where
$$
\text{\rm Int }C_k= \{ (t_1,\cdots,t_k) \mid 0 < t_1 < \cdots < t_k < 1\} \subset C_k.
$$
In this sense $\widehat C_k$ is regarded as an iterated blow up of $C_k$
along the diagonal (that is the set of points where $t_i = t_{i+1}$
for some $i$). We identify $\del D= S \cong \R/\Z \cong S^1$.
We have
\begin{equation}\label{eq:Mk+1}
\mathcal M^{\text{\rm main}}_{k+1}(L(u),\beta)^{\frak s} \cong \mathcal
M^{\text{\rm main}}_{1}(L(u),\beta)^{\frak s} \times \widehat C_k.
\end{equation}
In fact,
Corollary \ref{37.187} implies
$\mathcal M^{\text{\rm main}}_{1}(L(u),\beta)^{\frak s}$ consists of finitely many
free $T^n$ orbits
(with multiplicity $\in \Q$) and $\mathcal M^{\text{\rm main}}_{1}(L(u),\beta)^{\frak s}
= \mathcal M^{\text{\rm main,reg}}_{1}(L(u),\beta)^{\frak s}$.
By Lemma \ref{37.184} (3) we have a map
$\mathcal M^{\text{\rm main,reg}}_{k + 1}(L(u),\beta)^{\frak s}
\to \mathcal M^{\text{\rm main,reg}}_{1}(L(u),\beta)^{\frak s}$.
It is easy to see that the fiber can be identified with $\widehat C_k$.
\par
Under this identification, the evaluation map $\vec{ev}$ is induced by
\begin{equation}\label{eq:evi}
ev_i(\mathfrak u;t_1,\cdots,t_k) = [t_i\partial \beta] \cdot
ev(\mathfrak u)
\end{equation}
for $(\frak u;t_1, \cdots, t_k) \in \mathcal
M^{\text{\rm main}}_{1}(L(u),\beta) \times \text{\rm Int }C_k \subset \mathcal
M^{\text{\rm main}}_{1}(L(u),\beta) \times \widehat C_k$.

Here $\partial\beta \in H_1(L(u);\Z)$ is identified to an element of
the universal cover $\widetilde L(u) \cong \R^n$ of $L(u)$ and
$[t_i\partial \beta] \in L(u)$ acts as a multiplication on the
torus. $ev(\mathfrak u)$ is defined by the evaluation map $ev :
\mathcal M^{\text{\rm main}}_{1}(L(u),\beta) \to L(u)$. We also have
:
\begin{equation}\label{eq:evi0}
ev_0(\mathfrak u;t_1,\cdots,t_k) = ev(\mathfrak u).
\end{equation}
We remark that $ev : \mathcal M^{\text{\rm main}}_{1}(L(u),\beta_i)
\to L(u)$ is a diffeomorphism (See Theorem \ref{37.177} (3)). Now we
have
$$
\int_{L(u)} \mathfrak m^{dR}_{k,\beta}(\rho,\cdots,\rho) =
c_{\beta}\text{\rm Vol}(C_k) \left(\int_{\partial \beta}
\rho\right)^k = \frac{c_{\beta}}{k!}(\partial \beta\cap \mathfrak
x)^k.
$$
The proof of Lemma \ref{37.189} is now complete.
\end{proof}
\begin{rem}\label{37.190}
We can prove that our filtered $A_{\infty}$ algebra
$(H(L(u);\Lambda_0^{\R}),\mathfrak m_{*}^{can})$
is homotopy equivalent to the one in
\cite{fooo06} Theorem {\rm A} (= \cite{fooo06pre} Theorem A).
The proof is a straight forward generalization of 
section 7.5 \cite{fooo06} (= section 33 of \cite{fooo06pre}) and is omitted here.
In fact we do not need to use this fact to prove Theorem \ref{toric-intersect} if
we use de Rham version in all the steps of the proof of Theorem \ref{toric-intersect}
without involving the singular homology version.
\end{rem}
\begin{rem}\label{deramversion}
We constructed our filtered $A_{\infty}$ structure directly
on de Rham cohomology group $H(L(u);\Lambda_0^{\R})$.
The above construction uses the fact that wedge product of
harmonic forms are again harmonic. This is a special feature of
our situation where our Lagrangian submanifold $L$ is a torus.
(In other words we use the fact that rational homotopy type
of $L$ is formal.)
\par
Alternatively we can construct filtered $A_{\infty}$ structure
on the de Rham complex $\Omega(L(u)) \widehat{\otimes}_{\R}
\Lambda_0^{\R}$ and reduce it to the de Rham cohomology
by homological algebra.
Namely we consider smooth forms $\rho_i$ which are not necessarily
harmonic and use (\ref{37.188}) and (\ref{Ainftystru})
to define $\frak m_k^{dR}(\rho_1,\cdots,\rho_k)$.
(The proof of $A_{\infty}$ formula is the same.)
Using the formality of $T^n$, we can show that
the canonical model of $(\Omega(L(u)) \widehat{\otimes}_{\R} \Lambda_0^{\R},\frak m^{dR}_*)$ is the same as $(H(L(u);\Lambda_0^{\R}),\frak m^{can})$.
We omit its proof since we do not use it.
\par
Using the continuous family of perturbations,
this construction can be generalized to the case of arbitrary
relatively spin Lagrangian submanifold in a symplectic manifold.
See \cite{fukaya;operad}.
\end{rem}
\par
\begin{proof}[Proof of Proposition \ref{unobstruct}]
Proposition \ref{unobstruct} immediately follows from Corollary \ref{37.187},
Lemma \ref{37.189} and Lemma \ref{37.190} : We just take the sum
\bea\label{eq:summcan}
\sum_{k=0}^\infty \mathfrak m^{can}_k(b,\cdots,b) & = &
\sum_{k=0}^\infty \sum_{\beta \in \pi_2(X,L(u))} T^{\omega\cap \beta/2\pi}\mathfrak m^{can}_{k,\beta}(b,\cdots,b)
\nonumber\\
& = & \sum_{k=0}^\infty \sum_{\beta} T^{\omega\cap \beta/2\pi}
\mathfrak m^{can}_{k,\beta}(b,\cdots,b)\nonumber\\
& = & \sum_{\beta} \sum_{k=0}^\infty \frac{c_{\beta}}{k!}(\partial \beta
\cap b)^k T^{\beta \cap \omega/2\pi} \cdot PD([L(u)]). \eea
Note, by the degree reason, we need to take sum over $\beta$ with $\mu(\beta) =2$.
\par
Since $b$ is assumed
to lie in $H^1(L(u),\Lambda_+)$ not just in $H^1(L(u),\Lambda_0)$,
the series appearing as the scalar factor in (\ref{eq:summcan})
converges in non-Archimedean topology of $\Lambda_0$ and so the sum
$\sum_{k=0}^\infty \mathfrak m^{can}_k(b,\cdots,b)$ is a multiple of
$PD([L(u)])$. Hence $b \in \widehat \CM_{\text{\rm weak}}(L(u))$ by
definition (\ref{eq:MC}). We remark that the gauge equivalence
relation in Chapter 4 \cite{fooo06} is trivial on
$H^1(L(u);\Lambda_0)$ and so $H^1(L(u);\Lambda_+) \hookrightarrow
\CM_{\text{\rm weak}}(L(u))$. We omit the proof of this fact since we do not
use it in this paper.
\end{proof}
\begin{proof}[Proof of Theorem \ref{potential}]
Suppose that there is no nontrivial holomorphic sphere whose Maslov index is nonpositive.
Then Theorem \ref{37.177} (5) implies that
if $\mu(\beta) \le 2$, $\beta \ne \beta_i$, $\beta \ne 0$ then $\mathcal M^{\text{\rm main}}_{1}(L(u),\beta)$ is empty.
Therefore again by dimension counting as in Corollary \ref{37.187}, we obtain
$$
\sum_{k=0}^\infty\mathfrak m_{k}^{dR,can}(x,\cdots,x) = \sum_{i=1}^m \sum_{k=0}^{\infty}T^{\omega\cap \beta_i/2\pi}\mathfrak m_{k,\beta_i}^{dR,can}(x,\cdots,x)
$$
for $x \in H^1(L(u),\Lambda_+)$.
On the other hand, we obtain
\beastar
\mathfrak{PO}(x;u) & = &
\sum_{i=1}^m \sum_{k=0}^\infty \frac{1}{k!}(\partial \beta_i \cap x)^k
T^{\ell_i(u)} \\
& = & \sum_{i=1}^m \sum_{k=0}^\infty \frac{1}{k!}\langle v_i, x\rangle^k
T^{\ell_i(u)}
= \sum_{i=1}^m e^{\langle v_i, x \rangle} T^{\ell_i(u)}
\eeastar
from (\ref{form:connecting}), (\ref{eq:summcan}) and the definition of $\mathfrak{PO}$.
Writing $x = \sum_{i=1}^n x_i \text{\bf e}_i$ and
recalling $y_i = e^{x_i}$, we obtain $e^{\langle v_i, x \rangle} = y_1^{v_{i,1}}\cdots y_n^{v_{i,n}}$
and hence the proof of Theorem \ref{potential}.
\end{proof}
\begin{proof}[Proof of Theorem \ref{weakpotential}]
Let $\beta \in \pi_2(X)$, $\mu(\beta) =2$ and $\mathcal M_{\text{\rm weak}}(\beta) \ne
\emptyset$.
Theorem \ref{37.177} (5) implies that
\begin{equation}
\partial \beta = \sum k_i \partial \beta_i,
\quad \beta = \sum_i k_i \beta_i + \sum_j \alpha_j.
\nonumber\end{equation}
Hence
$$
\sum_k T^{\beta\cap \omega/2\pi} \mathfrak m_{k,\beta}^{can}(b,\cdots,b)
$$
becomes one of the terms of the right hand side of (\ref{eq:weakPO}).
We remark that class $\beta$ with $\mu(\beta) \ge 4$ does not
contribute to $\mathfrak m_{k}^{can}(b,\cdots,b)$ by the degree reason.
\par
When all the vertices of $P$ lie in $\Q^n$, then
the symplectic volume of all $\alpha_j$ are in $2\pi\Q$.
Moreover $\omega \cap \beta_i \in 2\pi\Q$.
Therefore the exponents $\beta\cap \omega/2\pi$ are rational.
\par
The proof of Theorem \ref{weakpotential} is complete.
\end{proof}
\begin{rem}\label{runningoutresolve}
We remark that in Lemma \ref{37.184} we constructed a system of
multisections only for $\mathcal M_{k+1}^{\text{main}}(L(u),\beta)$
with $\beta \cap \omega < E$. So we obtain only an $A_{n,K}$
structure instead of a filtered $A_{\infty}$ structure.
Here $(n,K) = (n(E),K(E))$ depends on $E$ and
$\lim_{E \to \infty} (n(E),K(E)) = (\infty,\infty)$.
It induces an $A_{n,K}$
structure $\frak m^{(E)}$ on
$H(L;\Lambda_0)$. (See subsection 7.2.7 \cite{fooo06} =
subsection 30.7 \cite{fooo06pre}.)
In the same way as section 7.2 \cite{fooo06} (= section 30 \cite{fooo06pre}), we can find
$(n'(E),K'(E))$ such that $(n'(E),K'(n)) \to (\infty,\infty)$ as
$E \to \infty$ and the following holds.

If $E_1 < E_2$ then $A_{n(E_1),K(E_1)}$ structure
$\frak m^{(E_1)}$ is $(n'(E),K'(E))$-homotopy equivalent
to $\frak m^{(E_2)}$.

This implies that we can extend $\frak m^{(E_1)}$
(regarded as $A_{n'(E_1),K'(E_1)}$ structure) to a
filtered $A_{\infty}$ structure by 
Theorem 7.2.72 \cite{fooo06} (= Theorem 30.72 \cite{fooo06pre}).
(We also remark that for all the applications in this paper
we can use filtered $A_{n,K}$ structure for sufficiently large $n,K$,
in place of filtered $A_{\infty}$ structure.)

Moreover we
can use Lemma \ref{cbetawell} to show the following.
If $\frak x_i \in H^1(L(u);\Q)$ then
$
\frak m_{k,\beta}^{(E)}(\frak x_1,\cdots,\frak x_k)
$
is independent of $E$. So in particular it coincides to one
of filtered $A_{\infty}$ structure we define as above.
\par
In other words,
since the number $c_{\beta}$ is independent of
the choice of the system of $T^n$ invariant multisections it follows
that the potential function in Theorem \ref{weakpotential}
is independent of it.
However we do not know how to calculate it.
\end{rem}

\begin{rem}\label{37.191}
We used de Rham cohomology to go around the problem of
transversality among chains in the classical cup product. One
drawback of this approach is that we lose control of the rational
homotopy type. Namely we do {\it not} prove here that the filtered
$A_{\infty}$ algebra (partially) calculated above is homotopy
equivalent to the one in Theorem A \cite{fooo06} 
(= Theorem A \cite{fooo06pre}) over $\Q$. (Note
all the operations we obtain is defined over $\Q$, however.) We
however believe that they are indeed homotopy equivalent over $\Q$.
There may be several possible ways to prove this statement, one of
which is to use the rational de Rham forms used by Sullivan.
\par
Moreover since the number $c_{\beta}$ is independent of the choices
we made, the structure of filtered $A_{\infty}$ algebra on
$H(L(u),\Lambda^{\Q})$ is well defined, that is independent of the
choices involved. The $\Q$-structure is actually interesting in our
situation. See for example Proposition \ref{prop:galois}. However
homotopy equivalence of the $\Q$-version of Lemma \ref{37.190} is
not used in the statement of Proposition \ref{prop:galois} or in its
proof.
\end{rem}

\section{Non-unitary flat connection on $L(u)$}
\label{sec:flat}

In this section we explain how we can include (not necessarily unitary)
flat bundles on Lagrangian submanifolds in Lagrangian Floer theory
following \cite{fukaya:family}, \cite{cho07}.
\begin{rem}\label{37.192}
We need to use flat complex line bundle for our purpose
by the following reason. In \cite{fooo06} we assumed that our
bounding cochain $b$ is an element of $H(L;\Lambda_{+})$
since we want the series
$$
\mathfrak m_1^{b}(x) = \sum_{k,\ell} \mathfrak m_{k+\ell+1}(b^{\otimes k}, x ,b^{\otimes \ell})
$$
to converge. There we used convergence with respect to the non-Archimedean norm.
For the case of Lagrangian fibers in toric manifold, the above series
converges for $b \in H^1(L;\Lambda_{0})$. The convergence is the
usual (classical Archimedean) topology on $\C$ on each coefficient of
$T^{\lambda}$.
\par
This is not an accident and was expected to happen in general.
(See 
Conjecture 3.6.46 \cite{fooo06} = Conjecture 11.46 \cite{fooo06pre}.) However for this convergence to occur, we need to
choose the perturbations on $\mathcal M_{k+1}^{\text{\rm main}}(L,\beta)$
so that it is consistent with $\mathcal M_{k'+1}^{\text{\rm main}}(L,\beta)$
($k' \ne k$)
via the forgetful map. We can make this choice for the current
toric situation by Lemma \ref{37.184} (3). In a more general situation,
we need to regard $\mathcal M_{1}^{\text{\rm main}}(L,\beta)$ as
a chain in the free loop space. (See \cite{fukaya:loop}.)
\par
On the other hand, if we use the complex structure other than the
standard one, we do not know whether Lemma \ref{37.184} (3)
holds or not. So in the proof of independence of Floer cohomology
under the various choices made, there is a trouble to use a bounding
cochain $b$ lying in $H^1(L;\Lambda_{0})$.
The idea, which is originally due to Cho \cite{cho07},
is to change the leading order term of $\mathfrak x$ by twisting the construction
using \emph{non-unitary} flat bundles on $L$.
\end{rem}

Let $X$ be a symplectic manifold and $L$ be its relatively spin
Lagrangian submanifold. Let $\rho : H_1(L;\Z) \to \C\setminus\{0\}$
be a representation and $\mathfrak L_{\rho}$ be the flat $\C$ bundle
induced by $\rho$.
\par
We replace the formula (\ref{Ainftystru}) by
$$
\mathfrak m_{k}^{\rho,can} = \sum_{\beta \in H_2(M,L)} \rho(\partial \beta)\,\, \mathfrak m^{can}_{k,\beta}
\otimes T^{\omega(\beta)/2\pi}.
$$
(Compare this with (\ref{eq:newmk}) in section \ref{sec:statement}.)
\begin{prop}\label{37.193}
$(H(L(u);\Lambda_0^{\R}),\mathfrak m_{k}^{\rho,can})$ is a filtered $A_{\infty}$ algebra.
\end{prop}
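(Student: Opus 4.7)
The plan is to reduce this proposition directly to the untwisted $A_\infty$ relation (\ref{Ainftyformula}) already established for $(H(L(u);\Lambda_0^{\R}),\mathfrak m^{can})$ by exploiting the multiplicativity of the character $\rho$. The only new ingredient in $\mathfrak m^{\rho,can}_{k,\beta}$ compared with $\mathfrak m^{can}_{k,\beta}$ is the scalar weight $\rho(\partial\beta)$, so the filtered, unital, and grading properties will follow line-by-line from those of $\mathfrak m^{can}$; the one genuine thing to check is compatibility of the weights with the splitting of the boundary of $\mathcal M^{\text{main}}_{k+1}(L(u),\beta)$.

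First I would verify the structural hypotheses: (i) the energy filtration is preserved because $\mathfrak m^{\rho,can}_k$ differs from $\mathfrak m^{can}_k$ only by a scalar weight on each $\beta$-summand, and the $T^{\omega(\beta)/2\pi}$-weights are unchanged; (ii) $\mathfrak m^{\rho,can}_0(1) \in \Lambda_+\cdot H$ since only classes $\beta$ with $\omega(\beta)>0$ contribute; and (iii) the unit remains a strict unit, because the class $\beta=0$ contributes only through $\mathfrak m^{can}_{2,0}$ with weight $\rho(0)=1$, and for higher disc classes the unit axiom for $\mathfrak m^{can}$ ensures the insertion of the unit kills the operator regardless of the scalar factor.

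The heart of the argument is the $A_\infty$ relation. The boundary decomposition
\[
\partial\mathcal M^{\text{main}}_{k+1}(L(u),\beta) = \bigcup_{\beta_1+\beta_2=\beta}\bigcup_{k_1+k_2=k+1}\bigcup_{l=1}^{k_2} \mathcal M^{\text{main}}_{k_1+1}(L(u),\beta_1) {}_{ev_0}\times_{ev_l} \mathcal M^{\text{main}}_{k_2+1}(L(u),\beta_2)
\]
combined with Stokes' formula (Lemmata \ref{stokes}, \ref{compformula}) gave (\ref{Ainftyformula}) for each fixed $\beta$. After multiplying by $T^{\omega(\beta)/2\pi}\rho(\partial\beta)$ and summing over $\beta$, the critical observation is that on each piece $\beta=\beta_1+\beta_2$ of the boundary one has $\partial\beta=\partial\beta_1+\partial\beta_2$ in $H_1(L(u);\Z)$, and since $\rho$ is a group homomorphism
\[
\rho(\partial\beta)\,T^{\omega(\beta)/2\pi} = \bigl(\rho(\partial\beta_1)\,T^{\omega(\beta_1)/2\pi}\bigr)\bigl(\rho(\partial\beta_2)\,T^{\omega(\beta_2)/2\pi}\bigr).
\]
Thus the scalar weight factorizes exactly according to the decomposition in (\ref{Ainftyformula}), and the twisted $A_\infty$ relation
\[
\sum_{k_1+k_2=k+1}\sum_{l=1}^{k_1}(-1)^{*}\,\mathfrak m^{\rho,can}_{k_1}(\rho_1,\dots,\mathfrak m^{\rho,can}_{k_2}(\rho_l,\dots),\dots,\rho_k) = 0
\]
follows by term-wise comparison. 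The only mild obstacle is bookkeeping: one must make sure the signs $(-1)^{*}$ are not altered by the scalar twist (they are not, since $\rho$ takes values in $\C^{\times}$, which is even degree and commutes past everything) and that the $\Lambda_0$-multilinear extension is still well-defined, both of which are formal.
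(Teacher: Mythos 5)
Your proposal is correct and follows exactly the paper's argument: the proof there consists precisely of observing that on each boundary stratum with $\beta=\beta_1+\beta_2$ one has $\rho(\partial\beta)=\rho(\partial\beta_1)\rho(\partial\beta_2)$, and combining this multiplicativity with the untwisted relation (\ref{Ainftyformula}). The additional checks you record (filtration, $\mathfrak m_0$ in positive energy, unitality, signs) are the routine formalities the paper leaves implicit.
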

\begin{proof}
Suppose that $[f] \in \mathcal M_{k+1}^{\text{\rm main}}(L,\beta)$ is a
fiber product of
$[f_1] \in \mathcal M_{\ell+1}^{\text{\rm main}}(L,\beta_1)$
and
$[f_2] \in \mathcal M_{k-\ell}^{\text{\rm main}}(L,\beta_2)$.
Namely $\beta_1 + \beta_2 = \beta$ and
$ev_0(f_2) = ev_i(f_1)$ for some $i$. Then it is easy to see that
\begin{equation}\label{rhosum}
\rho(\partial\beta) = \rho(\partial\beta_1)\rho(\partial\beta_2).
\end{equation}
(\ref{rhosum}) and
(\ref{Ainftyformula}) imply the filtered $A_{\infty}$ relation.
\end{proof}
\par
The unitality can also be proved in the same way. The well
definedness (that is, independence of various choices up to homotopy
equivalence) can also be proved in the same way.
\begin{rem}\label{37.194}
We have obtained our twisted filtered $A_{\infty}$ structure on the
(untwisted) cohomology group $H^*(L;\Lambda_{0})$. This is because
the flat bundle $Hom(\mathfrak L_{\rho},\mathfrak L_{\rho})$ is
trivial. In more general situation where we consider a flat bundle
$\mathfrak L$ of higher rank, we obtain a filtered $A_{\infty}$
structure on cohomology group with local coefficients with values in
$Hom(\mathfrak L,\mathfrak L)$.
\par
The filtered $A_{\infty}$ structure $\mathfrak m_{k}^{\rho,can}$ is
different from $\mathfrak m_{k}^{can}$ in general as we can see from
the expression of the potential function given in Lemma \ref{37.104}.
\end{rem}
In the rest of this section, we explain the way how the Floer cohomology detects
the Lagrangian intersection. Namely we sketch the proof of
Theorem \ref{displace1} in our case and its generalization
to the case where we include the nonunital flat connection $\rho$.

\par
Let $\psi_t : X \to X$ be a Hamiltonian isotopy with $\psi_0 =$ identity.
We put $\psi_1 = \psi$. We consider the pair
$$
L^{(0)}=L(u), \quad L^{(1)} = \psi(L(u))
$$
such that $L^{(1)}$ is transversal to $L^{(0)}$.
We then take a one-parameter family $\{J_t\}_{t\in[0,1]}$ of
compatible almost complex structures such that
$J_0 = J$ which is the standard complex structure of $X$ and
$J_1 = \psi_*(J)$.

Let $p,q \in L^{(0)}\cap L^{(1)}$. We consider the
homotopy class of a maps
\begin{equation}\label{conncorbitmap}
\varphi : \R \times [0,1] \to X
\end{equation}
such that
\begin{enumerate}
\item
$\lim_{\tau \to -\infty} \varphi(\tau,t) = p$,
$\lim_{\tau \to +\infty} \varphi(\tau,t) = q$.
\item
$\varphi(\tau,0) \in L^{(0)}$, $\varphi(\tau,1) \in L^{(1)}$.
\end{enumerate}
We denote by $\pi_2(L^{(1)},L^{(0)};p,q)$ the set of all such homotopy
classes. There are obvious maps
\begin{equation}
\aligned
& \pi_2(L^{(1)},L^{(0)};p,r) \times \pi_2(L^{(1)},L^{(0)};r,q)
\to \pi_2(L^{(1)},L^{(0)};p,q), \\
& \pi_2(X;L^{(1)}) \times \pi_2(L^{(1)},L^{(0)};p,q)
\to \pi_2(L^{(1)},L^{(0)};p,q), \\
&  \pi_2(L^{(1)},L^{(0)};p,q)  \times \pi_2(X;L^{(0)})
\to \pi_2(L^{(1)},L^{(0)};p,q).
\endaligned
\end{equation}
We denote them by $\#$.
\begin{defn}
We consider the moduli space of maps (\ref{conncorbitmap})
satisfying (1), (2) above, in homotopy class
$B \in \pi_2(L^{(1)},L^{(0)};p,q)$, and satisfies the following
equation :
\begin{equation}\label{CReqn-Jt}
\frac{\partial\varphi}{\partial \tau}
+ J_t \left(\frac{\partial\varphi}{\partial t}\right) = 0.
\end{equation}
We denote it by
$
\widehat{\mathcal M}^{\text{reg}}(L^{(1)},L^{(0)};p,q;B).
$
We take its stable map compactification and denote it by
$
\widehat{\mathcal M}(L^{(1)},L^{(0)};p,q;B).
$
We divide this space by the $\R$ action induced by the translation of
$\tau$ direction to obtain
$
{\mathcal M}(L^{(1)},L^{(0)};p,q;B).
$
We define evaluation maps
$
ev_{L^{(i)}} : \widehat{\mathcal M}(L^{(1)},L^{(0)};p,q;B)
\to L^{(i)}
$
by
$
ev_{L^{(i)}}(\varphi) = \varphi(0,i),
$
for $i=0,1$.
\end{defn}

\begin{lem}\label{boundaryN}
${\mathcal M}(L^{(1)},L^{(0)};p,q;B)$ has an oriented
Kuranishi structure with corners.
Its boundary is isomorphic to the
union of the following three kinds of fiber products as spaces with
Kuranishi structure.
\begin{enumerate}
\item
$${\mathcal M}(L^{(1)},L^{(0)};p,r;B')
\times
{\mathcal M}(L^{(1)},L^{(0)};r,q;B'').$$
Here $B' \in \pi_2(L^{(1)},L^{(0)};p,r)$,
$B'' \in \pi_2(L^{(1)},L^{(0)};r,q)$.
\item
$${\mathcal M}_{1}(L(u);\beta')
\,\,{}_{ev_0}\times_{ev_{L^{(1)}}} \,\,
\widehat{\mathcal M}(L^{(1)},L^{(0)};p,q;B'').$$
Here $\beta' \in \pi_2(X;L^{(1)}) \cong \pi_2(X;L(u))$,
$\beta' \# B'' = B$. The fiber product is taken over
$L^{(1)} \cong L(u)$ by using $ev_0 : {\mathcal M}(L(u);\beta')
\to L(u)$ and
$ev_{L^{(1)}} : \widehat{\mathcal M}(L^{(1)},L^{(0)};p,q;B'')
\to L^{(1)}$.
\item
$$
\widehat{\mathcal M}(L^{(1)},L^{(0)};p,q;B')
\,\,{}_{ev_{L^{(0)}}}\times_{ev_0} \,\,
{\mathcal M}(L(u);\beta'') .$$
Here $\beta' \in \pi_2(X;L^{(1)}) \cong \pi_2(X;L(u))$,
$B' \# \beta'' = B$. The fiber product is taken over
$L^{(0)} \cong L(u)$ by using $ev_0 : {\mathcal M}(L(u);\beta'')
\to L(u)$ and
$ev_{L^{(0)}} : \widehat{\mathcal M}(L^{(1)},L^{(0)};p,q;B')
\to L^{(0)}$.
\end{enumerate}
We have
$$
\dim{\mathcal M}(L^{(1)},L^{(0)};p,q;B) =  \mu(B) - 1
$$
where
$$\aligned
\mu(B_1\#B_2) &= \mu(B_1) + \mu(B_2),
\\
\mu(\beta'\# B'') &= \mu(\beta') + \mu(B''),
\quad \mu(B'\# \beta'') = \mu(B') + \mu(\beta'').
\endaligned$$
Here the notations are as in $(1)$, $(2)$, $(3)$ above.
\end{lem}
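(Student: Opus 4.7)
The plan is to follow the by-now-standard construction of Kuranishi structures for Floer trajectory moduli spaces as developed in \cite{fooo00,fooo06}, adapted to the present toric setup with time-dependent almost complex structure $\{J_t\}$. First I would construct local Kuranishi charts on $\widehat{\mathcal M}^{\text{reg}}(L^{(1)},L^{(0)};p,q;B)$ using the linearization of the Cauchy--Riemann operator $\bar\partial_{J_t}$ with moving Lagrangian boundary conditions, taking an obstruction bundle from a finite-dimensional subspace of $C^\infty$-sections supported away from the boundary of the strip and from nodal/breaking loci. Extending over the Gromov--Floer compactification $\widehat{\mathcal M}(L^{(1)},L^{(0)};p,q;B)$ is carried out by gluing charts across strata in the same way as in Chapter 7 of \cite{fooo06}. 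The $\R$-action by $\tau$-translation is free on the top stratum, and the quotient yields the Kuranishi structure on ${\mathcal M}(L^{(1)},L^{(0)};p,q;B)$.

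Next I would determine the codimension-one boundary via the Gromov--Floer compactness theorem. Three types of bubbling/breaking can occur along a sequence of solutions of \eqref{CReqn-Jt}: (a) the strip breaks at some intermediate intersection point $r\in L^{(0)}\cap L^{(1)}$, giving case (1); (b) a $J_1$-holomorphic disc with boundary on $L^{(1)}=\psi(L(u))$ bubbles off at a boundary point on the $t=1$ edge, giving case (2) — here the fiber product is over $L^{(1)}$ and $\beta'\in\pi_2(X,L^{(1)})$ is identified with an element of $\pi_2(X,L(u))$ via $\psi$; (c) a $J_0$-holomorphic disc with boundary on $L^{(0)}=L(u)$ bubbles off at the $t=0$ edge, giving case (3). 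That no sphere bubble can contribute to the codimension-one stratum follows by the standard dimension count since sphere bubbling is codimension at least two in the moduli space of stable maps from genus zero surfaces to $X$.

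The dimension formula $\dim{\mathcal M}(L^{(1)},L^{(0)};p,q;B)=\mu(B)-1$ follows from the index theorem for the linearized operator on the strip with moving Lagrangian boundary: the Fredholm index of $\bar\partial_{J_t}$ with these boundary conditions is precisely $\mu(B)$ (the Viterbo--Maslov index of the path of Lagrangian subspaces obtained by concatenating the boundary values of $B$), and dividing by the $\R$-action drops the dimension by one. The additivity of $\mu$ under the three concatenation operations is a standard property of the Maslov index for paths of Lagrangian subspaces, together with the fact that $\mu(\beta)$ for $\beta\in\pi_2(X,L)$ equals the Maslov index of the corresponding loop of Lagrangian subspaces. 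Orientations on the Kuranishi structures are induced from the relative spin structures of $L^{(0)}$ and $L^{(1)}$ (the latter inherited from $L(u)$ via $\psi$) as in Chapter 8 of \cite{fooo06}.

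The main obstacle will be checking that the gluing and orientations at the boundary strata match the isomorphisms stated in (1)--(3), including the consistency of the $T^n$-equivariant perturbations constructed in Lemma \ref{37.184} for the disc moduli spaces with the perturbations chosen on ${\mathcal M}(L^{(1)},L^{(0)};p,q;B)$. This requires that the restriction of the multisection on ${\mathcal M}(L^{(1)},L^{(0)};p,q;B)$ to its boundary agrees with the fiber product of the multisection on ${\mathcal M}(L^{(1)},L^{(0)};\cdot,\cdot;\cdot)$ with the multisection $\mathfrak s_{\beta',1}$ (respectively $\mathfrak s_{\beta'',1}$) from Lemma \ref{37.184}; the compatibility of the latter under the evaluation map $ev_0$ at the $0$-th boundary marked point is already built into Lemma \ref{37.184} (and Lemma \ref{37.184}(3) ensures the evaluation map is a submersion), so the fiber products are automatically transversal. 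The remaining sign verifications proceed exactly as in Chapter 8 of \cite{fooo06}.
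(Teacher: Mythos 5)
Your proposal follows essentially the same route as the paper, which simply defers this lemma to the standard construction in subsection 7.1.4 of \cite{fooo06}: Kuranishi charts from the linearized $\bar\partial_{J_t}$ operator on the Gromov--Floer compactification, the three codimension-one degenerations (strip breaking and disc bubbling on either Lagrangian) giving the boundary strata, the index theorem for the dimension formula, and orientations from the relative spin structures. Your final paragraph on matching the $T^n$-equivariant multisections of Lemma \ref{37.184} with a perturbation on the strip moduli space is actually the content of the subsequent Lemma \ref{multisectionpair} rather than of this lemma, which concerns only the unperturbed Kuranishi structure, but that is harmless.
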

Lemma \ref{boundaryN} is proved in 
subsection 7.1.4 \cite{fooo06} (= section 29.4 \cite{fooo06pre}).
\begin{lem}\label{multisectionpair}
There exists a system of multisections on ${\mathcal M}(L^{(1)},L^{(0)};p,q;B)$
such that
\begin{enumerate}
\item It is transversal to $0$.
\item It is compatible at the boundaries described in
Lemma $\ref{boundaryN}$.
Here we pull back multisection of ${\mathcal M}(L^{(1)},L^{(0)};p,q;B)$
to one on $\widehat{\mathcal M}(L^{(1)},L^{(0)};p,q;B)$ and use the
multisection in Lemma $\ref{37.184}$ on ${\mathcal M}(L(u);\beta)$.
\end{enumerate}
\end{lem}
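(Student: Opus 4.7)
The plan is to construct the multisections by induction on the symplectic energy $\omega(B)$, similarly to the inductive scheme used in Lemma \ref{37.184}, but now with one essential simplification: the multisections on $\mathcal{M}_1(L(u);\beta)$ that appear on the bubble components of the boundary have already been constructed in Lemma \ref{37.184}, and the crucial point is that their evaluation map $ev_0$ is a \emph{submersion} onto $L(u)$ (a consequence of $T^n$-equivariance and freeness of the $T^n$-action on $L(u)$, cf.\ \eqref{37.182}). Since the Floer moduli spaces $\mathcal{M}(L^{(1)},L^{(0)};p,q;B)$ themselves carry no torus symmetry, we will not demand equivariance on them; instead, we exploit the submersivity only where it is needed, namely at the fiber products of the boundary strata of types (2) and (3) in Lemma \ref{boundaryN}.

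First, I would fix an energy cutoff $E>0$ (as in Remark \ref{runningoutresolve}) and enumerate the finitely many homotopy classes $B \in \pi_2(L^{(1)},L^{(0)};p,q)$ with $\omega(B)\le E$ in increasing order of $\omega(B)$. For each such $B$, one works by a secondary induction on the virtual dimension $\mu(B)-1$ of $\mathcal{M}(L^{(1)},L^{(0)};p,q;B)$. The base step handles those $B$ whose Kuranishi boundary is empty, where a standard transversal multisection on a compact Kuranishi space with corners is produced by the general theory of \cite{FO}. For the inductive step, assume that compatible transversal multisections have already been defined on all moduli spaces of type (1), (2), (3) of lower energy. On the boundary stratum of type (1) one takes the fiber product of the two previously constructed multisections on $\mathcal{M}(L^{(1)},L^{(0)};p,r;B')$ and $\mathcal{M}(L^{(1)},L^{(0)};r,q;B'')$; on strata of types (2) and (3) one takes the fiber product of the pull-back of the Lemma \ref{37.184} multisection on $\mathcal{M}_1(L(u);\beta')$ with the previously constructed multisection on $\widehat{\mathcal{M}}(L^{(1)},L^{(0)};p,q;B'')$ (and analogously for (3)).

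The critical observation is that the fiber products on strata (2) and (3) are \emph{automatically} transversal, because the multisection on $\mathcal{M}_1(L(u);\beta')$ was chosen so that $ev_0$ is a submersion onto $L(u) \cong L^{(i)}$ (as noted in \eqref{37.182}), and fiber products over a submersion with any multisection remain transversal. Consequently, the induced multisection on $\partial\mathcal{M}(L^{(1)},L^{(0)};p,q;B)$ is transversal and agrees on overlaps of boundary strata by the associativity of fiber products and the inductive hypothesis. Having specified a transversal multisection on the collar of the boundary, one extends it to a transversal multisection on the whole Kuranishi space by the standard extension argument for multisections (cf.\ \cite{FO}; the obstruction to extending transversally from a closed subset vanishes for generic extensions). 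Finally, one lets $E\to\infty$ and invokes the same $A_{n,K}$-to-$A_\infty$ promotion argument as in Remark \ref{runningoutresolve}.

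The main obstacle I anticipate is the bookkeeping at the corner strata: a single corner point of $\mathcal{M}(L^{(1)},L^{(0)};p,q;B)$ can lie in the intersection of several boundary strata of mixed types (1), (2), and (3), and one must check that the various prescriptions agree there. This is, however, a purely combinatorial compatibility in the spirit of subsection 7.2.3 of \cite{fooo06} (= section 30.3 of \cite{fooo06pre}), and it is handled by the same nested induction on the number of disc bubbles and on $\omega(B)$; the submersion property of $ev_0$ on $\mathcal{M}_1(L(u);\beta)$ trivializes all transversality issues arising from the nodes, exactly as in the proof of Lemma \ref{37.184}. No new analytic ingredient beyond what is already present there is required.
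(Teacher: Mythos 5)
Your proposal is correct and follows essentially the same route as the paper: induction over the energy $\int_B\omega$, with transversality at the type (2) and (3) boundary strata coming for free from the submersivity of $ev_0$ on the $T^n$-equivariantly perturbed disc moduli spaces, and the type (1) stratum causing no trouble because it is a direct product. The extra layers you add (energy cutoff, secondary induction on virtual dimension, explicit corner bookkeeping) are consistent elaborations of the paper's one-line argument.
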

\begin{proof}
We can find such system of multisections inductively over $\int_B \omega$
by using the fact that $ev_0 : {\mathcal M}(L(u);\beta)^{\frak s} \to L(u)$
is a submersion and Lemma \ref{boundaryN} (1) is a direct product.
\end{proof}
In the case when $\mu(B) = 1$, we define
$$
n(B) = \# {\mathcal M}(L^{(1)},L^{(0)};p,q;B)^{\frak s} \in \Q
$$
that is the number of zeros of our multisection counted with
sign and multiplicity. We use it to define Floer cohomology.
\par
Let
$
CF(L^{(1)},L^{(0)})
$
be the free $\Lambda_{0,nov}$ module generated by $L^{(0)} \cap L^{(1)}$.
We will define boundary operator on it.
\par
Let $\rho_i : \pi_1(L^{(i)}) \to \C^*$ be the representation.
We take harmonic $1$ form $h_{i} \in H^1(L^{(i)};\C)$ such that
$$
\rho_i(\gamma) = \exp\left( \int_{\gamma} h_{i}\right).
$$
Let $b_{i,+} \in H^1(L^{(0)};\Lambda_+) \subset \mathcal M(L^{(0)})$.
An element $B \in \pi_2(L^{(1)},L^{(0)};p,q)$
induces a path $\partial_{i} : [0,1] \to L^{(i)}$
joining $p$ to $q$ in $L^{(i)}$ for $i=0,1$. We define
$$
C(B;(h_{0},h_{1}),(b_{0,+},b_{1,+})) = \exp\left( \int_{\partial_{0}B} (h_{0}+b_{0,+}) \right)
\exp\left( -\int_{\partial_{1}B} (h_{1}+b_{1,+}) \right).
$$
This is an element of $\Lambda_0 \setminus \Lambda_+$.
It is easy to see that
\begin{equation}\label{Chomo1}
\aligned
&C(B_1\#B_2;(h_{0},h_{1}),(b_{0,+},b_{1,+})) \\
&= C(B_1;(h_{0},h_{1}),(b_{0,+},b_{1,+}))\,
C(B_2;(h_{0},h_{1}),(b_{0,+},b_{1,+})),
\endaligned
\end{equation}
and
\begin{equation}\label{Chomo2}
\aligned
&C(\beta'\# B'';(h_{0},h_{1}),(b_{0,+},b_{1,+})) \\
&= C(B'';(h_{0},h_{1}),(b_{0,+},b_{1,+})) \exp\left(\int_{\partial \beta'}h_{1}+b_{0,+}\right).
\endaligned\end{equation}
(Here $B'$ and $\beta''$ are as in Lemma \ref{boundaryN} (2).)
Now we define
\begin{equation}
\aligned
&\langle  \delta_{(h_{0},h_{1}),(b_{0,+},b_{1,+})}(p),q  \rangle \\
&= \sum_{B \in \pi_2(L^{(1)},L^{(0)};p,q); \mu(B) = 1}
n(B)C(B;(h_{0},h_{1}),(b_{0,+},b_{1,+})).
\endaligned
\end{equation}
For the case $h_{1} = \psi_*(h_{0})$,
$b_{1,+} = \psi_*(b_{+})$, $b_{0,+} = b_+$, we write
$C(B;h_{0},b_{+})$ and
$\delta^{h_{0},b_{+}}$, in place of
$C(B;(h_{0},h_{1}),(b_{0,+},b_{1,+}))$
and
$\delta_{(h_{0},h_{1}),(b_{0,+},b_{1,+})}$, respectively.
\begin{lem}\label{sqarzero}
$$
\delta_{(h_{0},h_{1}),(b_{0,+},b_{1,+})} \circ
\delta_{(h_{0},h_{1}),(b_{0,+},b_{1,+})}
= (\frak{PO}(b(1)) - \frak{PO}(b(0))) \cdot id
$$
where $b(i) = h_{i}+b_{i,+} \in H^1(L(u);\Lambda_0)$.
\end{lem}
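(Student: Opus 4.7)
The plan is to derive the identity by a standard bubbling-and-gluing analysis of the one-dimensional moduli spaces $\mathcal{M}(L^{(1)},L^{(0)};p,q;B)$ with $\mu(B)=2$, weighted by the scalar factors $C(B;(h_0,h_1),(b_{0,+},b_{1,+}))$. Since a compact oriented $1$-manifold with corners has vanishing signed boundary count, the total weighted count of $\partial\mathcal{M}(L^{(1)},L^{(0)};p,q;B)$ summed over all such $B$ is zero. By Lemma \ref{boundaryN} this boundary splits into three classes of contributions, which I will identify with (I) the matrix coefficient $\langle(\delta\circ\delta)(p),q\rangle$, (II) an $L^{(1)}$-side disk-bubble contribution, and (III) an $L^{(0)}$-side disk-bubble contribution.

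First I would treat Type (I): a breaking $B=B_1\#B_2$ at some intermediate $r\in L^{(0)}\cap L^{(1)}$ with $\mu(B_1)=\mu(B_2)=1$. The multiplicativity relation \eqref{Chomo1} gives $C(B_1\#B_2;\cdots)=C(B_1;\cdots)C(B_2;\cdots)$, so summing $n(B_1)n(B_2)C(B_1;\cdots)C(B_2;\cdots)$ over $B_1,B_2$ and $r$ reproduces $\langle\delta_{(h_0,h_1),(b_{0,+},b_{1,+})}\circ\delta_{(h_0,h_1),(b_{0,+},b_{1,+})}(p),q\rangle$ exactly.

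Next I would analyze Type (II): a disk $\beta'\in\pi_2(X;L^{(1)})$ of Maslov index $2$ bubbles off from a strip $B''$ with $\mu(B'')=0$. The crucial input is formula \eqref{Chomo2}, which produces a factor $\exp\!\bigl(\int_{\partial\beta'}(h_1+b_{1,+})\bigr)T^{\omega(\beta')/2\pi}$ separated from $C(B'';\cdots)$. After incorporating the fiber-product structure $\mathcal{M}_{k+1}(L;\beta')\cong\mathcal{M}_1(L;\beta')\times\widehat{C}_k$ of Lemma \ref{37.189}, summing this factor over all Maslov-$2$ classes $\beta'$ precisely reproduces, via the computation \eqref{eq:summcan} used in Proposition \ref{unobstruct}, the value $\mathfrak{PO}(h_1+b_{1,+})=\mathfrak{PO}(b(1))$ times the identity operator; here the holonomy $\rho_1$ on the bubble is encoded by the harmonic representative $h_1$ and the forgetful compatibility of Lemma \ref{37.184}(3) guarantees that the infinitely many possible insertion points of $b_{1,+}$ on the disc resum to the exponential $e^{\langle v_i,x\rangle}$ as in Theorem \ref{potential}. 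An identical analysis handles Type (III) with $L^{(0)}$ and $b(0)$ in place of $L^{(1)}$ and $b(1)$.

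The main obstacle is organizing the signs so that Types (II) and (III) contribute with opposite signs, yielding $\mathfrak{PO}(b(1))-\mathfrak{PO}(b(0))$ rather than a sum. This is the standard sign asymmetry between the two Lagrangian boundaries in Floer's Kuranishi-boundary orientation convention (the outward normals at bubbling strata on opposite sides of the strip point oppositely), and is the same phenomenon that in the unperturbed setting of \cite{fooo06} gives $\delta_{b,b'}\circ\delta_{b,b'}=(\mathfrak{PO}(b)-\mathfrak{PO}(b'))\cdot\mathrm{id}$. Apart from this bookkeeping, the convergence of the infinite sums over $\beta$ in the non-Archimedean topology uses that $b_{i,+}\in H^1(L;\Lambda_+)$, while the non-Archimedean unit factors $\exp(\int_{\partial\beta}h_i)=\rho_i(\partial\beta)$ only rescale each term without affecting convergence.
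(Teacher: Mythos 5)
Your proof is correct and follows essentially the same route as the paper: both arguments count the boundary of the one-dimensional moduli spaces $\mathcal M(L^{(1)},L^{(0)};p,q;B)$ with $\mu(B)=2$ via Lemma \ref{boundaryN}, weight by $C(B;\cdot)$ using the multiplicativity relations \eqref{Chomo1}--\eqref{Chomo2}, and identify the broken-strip stratum with $\delta\circ\delta$ and the two disc-bubbling strata (which contribute only diagonally, i.e.\ on constant strips with $p=q$) with $\mathfrak{PO}(b(1))$ and $\mathfrak{PO}(b(0))$ via Lemma \ref{37.189}.
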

\begin{proof}
Let $p,q \in L^{(1)} \cap L^{(0)}$. We consider $B
\in \pi_2(L^{(1)},L^{(0)};p,q)$ with
$\mu(B) = 2$. We consider
the boundary of ${\mathcal M}(L^{(1)},L^{(0)};p,q;B)$.
We put
$$
\delta_B =
\begin{cases}
1  &\text{$B \in \pi_2(L^{(1)},L^{(0)};p,q)$, $p=q$, and $B$ is the class of
constant map,} \\
0  &\text{otherwise.}
\end{cases}
$$
Then using the classification of the boundary of
${\mathcal M}(L^{(1)},L^{(0)};p,q;B)$ in Lemma \ref{boundaryN}
we have the following equality :
\begin{equation}\label{boundary0}
0 = \sum_{r,B,B''} n(B')n(B'')
+ \sum_{\beta',B''} c_{\beta'} \delta_{B''}
- \sum_{\beta'',B'} c_{\beta''} \delta_{B'},
\end{equation}
where the summention in the first, second, third terms of right hand side is
taken over the set described in Lemma \ref{boundaryN} (1), (2), (3), respectively,
and $c_{\beta}$ is defined in Definition \ref{cbeta}.

We multiply (\ref{boundary0}) by $C(B;(h_{0},h_{1}),(b_{0,+},b_{1,+}))$ and calculate the right hand side by
using Formulae (\ref{Chomo1}), (\ref{Chomo2}) and Lemma \ref{37.189}.
It is easy to see that the first term gives
$$
\langle\delta_{(h_{0},h_{1}),(h_{0,+},b_{1,+})} \circ
\delta_{(h_{0},h_{1}),(b_{0,+},b_{1,+})}(p),q\rangle.
$$
The second term is $0$ if $p \ne q$, and is
$$
\sum_{\beta} c_{\beta} \exp\left(\int_{\partial \beta}h_{0}+b_{0,+}\right)
= \frak{PO}(b(0))
$$
if $p=q$. The third term gives $\frak{PO}(b(1)) \cdot id$ in a similar way.
\end{proof}
\begin{defn}
Let $b(0) = h_{0}+b_{+}$ and $b(1) = \psi_*(b(0))$. We define
$$
HF((L^{(0)},b(0)),(L^{(1)},b(1));\Lambda_0) \cong
\frac{\text{Ker} \,\,\delta_{h_{0},h_{+}}}{\text{Im}\,\, \delta_{h_{0},h_{+}}}.
$$
This is well defined by Lemma \ref{sqarzero} and
by the identity $\frak{PO}(\psi_*(b(0))) =
\frak{PO}((b(0)))$.
\end{defn}
Now we have
\begin{lem}\label{invariance}
$$
HF((L^{(0)},b(0)),(L^{(1)},b(1));\Lambda_0) \otimes_{\Lambda_0} \Lambda
\cong
\frac{\text{\rm Ker} \,\,\frak m_1^{\rho,can,b_+}}{\text{\rm Im} \,\, \frak m_1^{\rho,can,b_+}}
\otimes_{\Lambda_0} \Lambda.
$$
Here $
\rho(\gamma) = \exp\left( \int_{\gamma} h_{0}\right)
$
and
$$
\frak m_1^{\rho,can,b_+}(x) = \sum_{k_1,k_2=0}^{\infty}
\frak m^{\rho,can}_{k_1+k_2+1} (b_+^{\otimes k_1},x,b_+^{\otimes k_2}).
$$
\end{lem}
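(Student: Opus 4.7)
The plan is to establish this equivalence via a standard degeneration-to-Morse-Bott argument, reducing the Floer complex of the transverse pair $(L^{(0)},L^{(1)})$ to a model built on $H^*(L(u))$ whose differential is manifestly $\frak m_1^{\rho,can,b_+}$.

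First, I would invoke the Hamiltonian invariance of $HF((L^{(0)},b(0)),(L^{(1)},b(1));\Lambda)$ (already established in \cite{fooo06}, see also the discussion preceding Theorem \ref{displace1}) to replace $\psi$ by an arbitrary Hamiltonian diffeomorphism in the same isotopy class. In particular, choose a Morse function $f$ on $L(u)$ and take $\psi_\epsilon$ to be the time-one map of a compactly supported Hamiltonian whose restriction to a Weinstein neighborhood $T^*L(u) \hookrightarrow X$ agrees with $\epsilon f \circ \pi_{T^*L}$. For $\epsilon > 0$ sufficiently small, $\psi_\epsilon(L(u))$ is the graph of $\epsilon\, df$, so $L^{(0)} \cap L^{(1)} \cong \mathrm{Crit}(f)$, and this correspondence pulls back $\pi_2(L^{(1)},L^{(0)};p,q)$ to the disjoint union over $\beta \in \pi_2(X,L(u))$ of homotopy classes of half-strips with a gradient-flow-like boundary on $L(u)$.

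Next, I would carry out the adiabatic limit $\epsilon \to 0$ for the moduli spaces ${\mathcal M}(L^{(1)},L^{(0)};p,q;B)$ of Floer strips. This is a standard analysis (Floer, Oh, and in a form adapted to the $A_\infty$ setting that we need here, section 7.5 of \cite{fooo06} and \cite{fukaya:family}) which identifies, after generic perturbation, each such moduli space (for $\mu(B)=1$) with a fiber product of the form
\begin{equation*}
{\mathcal M}_{k+1}^{\text{main}}(L(u);\beta)^{\frak s_\beta}
\,\,{}_{(ev_1,\cdots,ev_k)}\times_{(L(u))^k}\,
\bigl(\text{strings of gradient segments of }f\bigr),
\end{equation*}
with the $k+1$-st evaluation connecting the last segment to the other intersection. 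The weight of such a configuration involves $T^{\omega(\beta)/2\pi}$ (area of the disc), the factor $\rho(\partial\beta)$ arising from $\exp\int_{\partial_0 B}h_0 - \exp\int_{\partial_1 B}\psi_* h_0$ by (\ref{Chomo1})--(\ref{Chomo2}), and the $b_+$-insertions at the remaining boundary marked points arising from exponentiating $\exp\int_{\partial B}b_+$. Passing to the Morse-theoretic limit, these fiber products organize precisely into the operator
\begin{equation*}
\frak m_1^{\rho,can,b_+}(x) \;=\; \sum_{k_1,k_2 \geq 0} \frak m^{\rho,can}_{k_1+k_2+1}(b_+^{\otimes k_1}, x, b_+^{\otimes k_2})
\end{equation*}
acting on the Morse complex $CM^*(f;\Lambda_0)$, which in turn is quasi-isomorphic to $(H^*(L(u);\Lambda_0),\frak m_1^{\rho,can,b_+})$ by the standard model-independence of the canonical model of the $A_\infty$ algebra (Remark \ref{37.190}).

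Finally, to extract the assertion about $HF(\cdots;\Lambda_0)\otimes_{\Lambda_0}\Lambda$, I would observe that the chain map constructed above, from the Floer complex to the Morse-Bott model, respects the energy filtration and induces an isomorphism on the associated graded after inverting $T$, and hence an isomorphism after tensoring with $\Lambda$ (this last step is the reason we must pass to $\Lambda$ rather than work over $\Lambda_0$; torsion $T$-exponents may appear in the comparison morphism). The main obstacle is the adiabatic/degeneration analysis: we must ensure that the $T^n$-equivariant multisections $\frak s_\beta$ chosen in Lemma \ref{37.184} are compatible with the multisections $\frak s$ on ${\mathcal M}(L^{(1)},L^{(0)};p,q;B)$ from Lemma \ref{multisectionpair} in such a way that the limit moduli spaces genuinely factor through ${\mathcal M}_{k+1}^{\text{main}}(L(u);\beta)^{\frak s_\beta}$. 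Since the $T^n$-action is broken by $\psi_\epsilon$ for $\epsilon>0$, compatibility must be arranged in a one-parameter family, paralleling the construction of \emph{pseudo-isotopies} of filtered $A_\infty$ algebras in section 4.2 of \cite{fooo06}; this is where the bulk of the work lies and is the analog, in the present toric setting, of the comparison argument sketched in \cite{fooo08}.
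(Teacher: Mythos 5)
The paper does not actually prove this lemma: it is stated and then the proof is omitted with a reference to section 5.3 of \cite{fooo06} (= section 22 of \cite{fooo06pre}) and section 8 of \cite{fooo08}. The argument there is not an adiabatic limit. One constructs, from the Hamiltonian isotopy $\psi_t$ itself, moduli spaces of pseudo-holomorphic strips with \emph{moving boundary conditions} on $\bigcup_t \psi_t(L(u))$; counting these (with the same weights $C(B;\cdot)$ and $\rho$-twist) produces filtered chain maps between the transversal-pair complex $(CF(L^{(0)},L^{(1)}),\delta_{b(0),b(1)})$ and the Bott--Morse complex $(H(L(u);\Lambda_0),\frak m_1^{\rho,can,b_+})$, together with chain homotopies showing the two compositions are congruent to the identity modulo $\Lambda_+$; a filtered complex argument then gives the isomorphism after $\otimes_{\Lambda_0}\Lambda$. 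Your closing filtration argument is the same as theirs, and your bookkeeping of the weights ($T^{\omega(\beta)/2\pi}$, $\rho(\partial\beta)$ via (\ref{Chomo1})--(\ref{Chomo2}), and the $\exp\int b_+$ insertions) is correct.

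The genuine gap in your route is the middle step. The identification, as $\epsilon\to 0$, of the strip moduli spaces for $(L(u),\mathrm{graph}(\epsilon\,df))$ with fiber products of ${\mathcal M}^{\text{main}}_{k+1}(L(u);\beta)^{\frak s_\beta}$ with gradient segments is a Fukaya--Oh type degeneration theorem. In the present setting the discs are nonconstant, the moduli spaces carry nontrivial obstruction bundles and $T^n$-equivariant multisections, and disc/sphere bubbling occurs in the limit; no such theorem is available in the paper or in the sources it relies on at this level of generality, and establishing it (including the compatibility of the family of perturbations you mention) is a substantially harder analytical problem than the lemma itself. Moreover, your first step already invokes invariance of $HF(\,\cdot\,;\Lambda)$ among transversal pairs to pass from $\psi$ to $\psi_\epsilon$; the $\rho$-twisted version of that invariance is proved by exactly the moving-boundary bimodule machinery, and once that machinery is in hand it compares $(L^{(0)},L^{(1)})$ directly with the canonical model, so the adiabatic analysis is both the hardest and the least necessary part of your plan. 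A secondary, smaller point: your limit complex is the Morse complex of $f$, not the de Rham canonical model used in the statement, so you also need the homotopy equivalence of these two filtered $A_\infty$ structures, which the paper itself only asserts (Remark \ref{37.190}) rather than proves.
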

Lemma \ref{invariance} implies (the $\rho$ twisted version of) Theorem \ref{displace1}
in our case. We omit the proof of Lemma \ref{invariance} and refer
section 5.3 \cite{fooo06} (= section 22 \cite{fooo06pre}) or section 8 \cite{fooo08}.

\section{Floer cohomology at a critical point of potential function}
\label{sec:floerhom}

In this section we prove Theorem \ref{homologynonzero} etc. and complete the proof of
Theorem \ref{toric-intersect}.

\begin{proof}[Proof of Lemma \ref{37.104}] Let $\beta \in H_2(X,L(u_0))$
with $\mu(\beta) =2$ and $\mathcal M_1^{\text{\rm
main}}(L(u_0),\beta)$ be nonempty. We have $\beta = \sum_{i=1}^m c_i
\beta_i + \sum_j\alpha_j$ by Theorem \ref{37.177} (5). Let $\rho$ be
as in (\ref{repdef}). We have $\rho(\partial \beta) = \prod
\rho(\partial \beta_i)^{c_i}$. Note $\partial \beta_i = \sum_j
v_{i,j}\text{\bf e}_j^*$. Thus we have
\begin{equation}
\left\{
\aligned
\rho(\partial \beta_i) &= \mathfrak y_{1,0}^{v_{i,1}} \cdots \mathfrak y_{n,0}^{v_{i,n}}, \\
\rho(\partial \beta) &= \prod_i\prod_j\mathfrak y_{j,0}^{c_iv_{i,j}}.
\endaligned
\right.
\label{37.197}\end{equation}
Therefore for $b \in H^1(L(u_0);\Lambda_{+}^{\C})$
we have
$$\aligned
\sum_{k=0}^\infty\mathfrak m_{k,\beta}^{\rho,can}(b,\cdots,b) &=
\sum_{k=0}^\infty \mathfrak y_{1,0}^{v_{i,1}} \cdots \mathfrak
y_{n,0}^{v_{i,n}}
\mathfrak m_{k,\beta}^{can}(b,\cdots,b) \\
& = \sum_{k=0}^\infty e^{\mathfrak x_{1,0}v_{i,1}} \cdots e^{\mathfrak
x_{n,0}v_{i,n}} \frac{c_{\beta}}{k!} (b\cap \partial \beta)^k \cdot[PD(L)]\\
&= \sum_{k=0}^\infty\mathfrak m^{can}_{k,\beta}\left(b+ \sum_{j=1}^n \mathfrak
x_{j,0}\text{\bf e}_j,\cdots,b+ \sum_{j=1}^n \mathfrak x_{j,0}\text{\bf
e}_j\right).
\endaligned$$

On the other hand, it follows from Theorems \ref{potential} and
\ref{weakpotential} that the left and the right sides of this
identity correspond to those in Lemma \ref{37.104} respectively.
This finishes the proof of Lemma \ref{37.104}. \end{proof}
\begin{proof}[Proof of Theorem \ref{homologynonzero}]
Let $x^+=(x^+_1,\cdots,x^+_n)$, $x^+_1,\cdots,x^+_n \in \Lambda_{+}$. We put
$$
x(x^+) = \sum_{i} (\mathfrak x_{i,0} + x^+_i)\text{\bf e}_i,
\quad
b(x^+) = \sum_{i} x^+_i\text{\bf e}_i.
$$
From Lemma \ref{37.104} we derive
$$
\aligned
\mathfrak{PO}^{u_0}_{\rho}(b(x^+)) &=
\sum \mathfrak m_k^{\rho,can}(b(x^+),\cdots,b(x^+)) \cap [L(u_0)] \\
&= \sum \mathfrak m_k^{can}(x(x^+),\cdots,x(x^+)) \cap [L(u_0)] =
\mathfrak{PO}^{u_0}(x(x^+)).
\endaligned$$
Let $\frak x$ be as in (\ref{frakxdef}).
Then we have
$$
\frac{\partial}{\partial x^+_i} \mathfrak{PO}^{u_0}_{\rho}(b(x^+)) \Big
\vert_{x(x^+) = \mathfrak x} = \frac{\partial}{\partial x^+_i}
\mathfrak{PO}^{u_0}(x(x^+)) \Big\vert_{x(x^+) = \mathfrak x} = \frac{\partial
\mathfrak{PO}^{u_0}}{\partial x_i}(\mathfrak x) = 0,
$$
where the last equality follows from the assumption (\ref{formula:critical0}).
(In case when (\ref{formula:criticalweak}) is assumed we have
`$\equiv 0 \mod T^{\mathcal N}$' in place of `$=0$'.)
On the other hand, we have
\begin{equation}
\aligned \frac{\partial}{\partial x^+_i}
&\mathfrak{PO}^{u_0}_{\rho}(b(x^+))\Big \vert_{x(x^+) = \mathfrak x} \\
& = \sum_k\sum_{\ell}
\mathfrak m^{\rho,can}_{k}
(b^{\otimes \ell} ,\text{\bf e}_i, b^{\otimes (k-\ell-1)}) \cap [L(u_0)] \\
&= \mathfrak m_1^{\rho,can,b}(\text{\bf e}_i) \cap [L(u_0)].
\endaligned
\label{calcm1b}
\end{equation}
Note here and hereafter we write $b$ in place of
$b(x^+)$ with $x(x^+) = \mathfrak x$.
Namely $b = \mathfrak x - \sum \mathfrak x_{i,0}\text{\bf e}_i$.
\par
Hence we obtain
\begin{equation}
\mathfrak m_1^{\rho,can,b}(\text{\bf e}_i)
\begin{cases}
= 0 &\text{if (\ref{formula:critical0}) is satisfied} \\
\equiv 0 \mod T^{\CN} &\text{if (\ref{formula:criticalweak}) is satisfied}.
\end{cases}
\label{37.198}\end{equation} We remark that by the degree reason
$\mathfrak m^{\rho,can,b}_1(\text{\bf e}_i)$ is proportional to
$PD[L(u_0)]$.
\par
We next prove the vanishing of $\mathfrak m_1^{\rho,can,b}(\text{\bf f})$
for the classes $\text{\bf f}$ of higher degree.
Namely we prove
\begin{lem}\label{37.199}
For $\text{\bf f} \in H^*(L(u_0);\Lambda_{0}^{\C})$ we have :
$$
\mathfrak m_{1,\beta}^{\rho,can,b}(\text{\bf f})
\begin{cases}
= 0 &\text{if $(\ref{formula:critical0})$ is satisfied} \\
\equiv 0 \mod T^{\CN} &\text{if $(\ref{formula:criticalweak})$ is satisfied}.
\end{cases}
$$
\end{lem}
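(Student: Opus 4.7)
The plan is to reduce Lemma~\ref{37.199} to the already-established degree one case (\ref{37.198}) by proving a Leibniz-type formula for the action of $\mathfrak m_1^{\rho,can,b}$ on higher degree cohomology classes. By $\Lambda_0^{\C}$-linearity and the fact that $H^*(L(u_0);\Q)$ is an exterior algebra on $\text{\bf e}_1,\dots,\text{\bf e}_n$, it suffices to treat basis elements $\text{\bf e}_I = \text{\bf e}_{i_1}\wedge\cdots\wedge\text{\bf e}_{i_p}$ with $I=\{i_1<\cdots<i_p\}\subset\{1,\dots,n\}$. The target formula is
$$
\mathfrak m_1^{\rho,can,b}(\text{\bf e}_I) \;=\; \sum_{j=1}^p (-1)^{j-1}\, \frac{\partial \mathfrak{PO}^{u_0}}{\partial x_{i_j}}(\mathfrak x)\, \text{\bf e}_{I\setminus\{i_j\}},
$$
from which the lemma follows immediately: under (\ref{formula:critical0}) every partial derivative vanishes, while under (\ref{formula:criticalweak}) each is $\equiv 0 \mod T^{\CN}$.

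To establish this formula I would use the $T^n$-equivariant factorization $\mathcal M_{k+1}^{\text{\rm main}}(L(u_0),\beta)^{\mathfrak s} \cong \mathcal M_1^{\text{\rm main}}(L(u_0),\beta)^{\mathfrak s}\times \widehat C_k$ (for $\mu(\beta)=2$) together with the explicit description (\ref{eq:evi}) of the evaluation maps $ev_i$ as translations by $t_i\partial\beta$ along the torus $L(u_0)\cong T^n$. Since $b$ and each $\text{\bf e}_{i_j}$ are $T^n$-invariant harmonic forms on $L(u_0)$, their pullbacks under these translations are the same forms. Consequently, for each insertion position $\ell$ of $\text{\bf e}_I$ among the $b$'s, the contribution $\mathfrak m_{k,\beta}^{can}(b,\dots,\text{\bf e}_I,\dots,b)$ reduces, after pushforward by $ev_0$, to integrating a wedge product of $T^n$-invariant forms over the simplex $\widehat C_k$. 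Summing over $k$, over $\ell$, and over the homotopy classes $\beta$ (weighted by $\rho(\partial\beta)T^{\omega(\beta)/2\pi}$ with the identity (\ref{37.197})), and collecting the combinatorial factors $1/(p+k)!$ as in the proof of Lemma~\ref{37.189}, the resulting expression is precisely the $x$-derivative of (\ref{eq:summcan}) followed by wedging with $\text{\bf e}_{I\setminus\{i_j\}}$, which is what the displayed formula asserts.

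The main obstacle is the sign bookkeeping in the Leibniz formula, which is the higher dimensional generalization of Proposition~\ref{eq:m1b} (stated only for $n=2$). These signs depend on the orientation on $\mathcal M_1^{\text{\rm main}}(L(u_0),\beta)^{\mathfrak s}$ fixed in Theorem~\ref{37.177}(3), the cyclic ordering of boundary marked points, the shifted degrees in the $A_\infty$ convention (recall the grading is only $\Z_2$ here, so $\text{\bf e}_{i_j}$ of odd degree anticommutes past odd-degree neighbors), and the Koszul rule for permuting $T^n$-invariant forms. I would control these signs by first verifying the Fano model case $\beta=\beta_i$ individually, exploiting Theorem~\ref{37.177}(3) which says $ev_0$ is an orientation-preserving diffeomorphism onto $L(u_0)$, and then extending to general $\beta$ via the decomposition in Theorem~\ref{37.177}(5) together with the multiplicativity (\ref{37.197}) of $\rho(\partial\beta)$. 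Once the formula is verified at the level of each individual $\beta$ (equivalently, at the level of $\mathfrak m_{1,\beta}^{\rho,can,b}$), summation reproduces the claim for $\mathfrak m_1^{\rho,can,b}$ and hence Lemma~\ref{37.199}.
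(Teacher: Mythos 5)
Your reduction to the degree-one case via a Leibniz formula works only for classes $\beta$ of Maslov index $2$, and that is where the gap lies. The product decomposition $\mathcal M_{k+1}^{\text{\rm main}}(L(u_0),\beta)^{\mathfrak s}\cong \mathcal M_1^{\text{\rm main}}(L(u_0),\beta)^{\mathfrak s}\times \widehat C_k$ together with the translation description of the evaluation maps is available exactly when $\mathcal M_1^{\text{\rm main}}(L(u_0),\beta)^{\mathfrak s}$ is a finite union of free $T^n$-orbits, i.e. when $\mu(\beta)=2$; in that range your computation does reproduce $\sum_j(-1)^{j-1}(\partial\mathfrak{PO}^{u_0}/\partial x_{i_j})\,\text{\bf e}_{I\setminus\{i_j\}}$ and is essentially the proof of Lemma \ref{37.189} with one insertion of higher degree. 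But for $\deg \text{\bf f}=p\ge 2$ the operator $\mathfrak m_{1,\beta}^{\rho,can,b}(\text{\bf f})$ also receives contributions from classes with $\mu(\beta)\ge 4$: the output then lands in degree $p+1-\mu(\beta)$, which is $\ge 0$ already for $\mu(\beta)=4$ once $p\ge 3$ (so for all $n\ge 3$). These contributions are not computed by your argument, and they are not expected to vanish term by term: Remark \ref{37.200} states explicitly that the proof of the lemma does \emph{not} imply $\mathfrak m_{k,\beta}=0$ for $\mu(\beta)\ge 4$, and this is precisely why Proposition \ref{eq:m1b} is stated only for $n=2$, where such terms die by dimension counting. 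The Leibniz formula you assert for the full operator is therefore strictly stronger than the lemma and is not established by the outlined computation.

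What is needed instead (and what the paper does) is a soft induction that never evaluates the higher Maslov contributions individually. Since $H^*(T^n)$ is generated in degree one, write $\text{\bf f}=\text{\bf f}_1\cup\text{\bf f}_2$ with $\deg\text{\bf f}_i\ge 1$, and order the pairs $(d,\ell)$, $d=\deg\text{\bf f}$, $2\ell=\mu(\beta)$, lexicographically with $\ell$ dominant. The $A_\infty$ relation expresses $\mathfrak m^{\rho,can,b}_{1,\beta}(\text{\bf f}_1\cup\text{\bf f}_2)$ as a sum of terms $\mathfrak m^{\rho,can,b}_{2,\beta_1}(\mathfrak m^{\rho,can,b}_{1,\beta_2}(\text{\bf f}_1),\text{\bf f}_2)$, the symmetric ones, and $\mathfrak m^{\rho,can,b}_{1,\beta_1}(\mathfrak m_{2,\beta_2}(\text{\bf f}_1,\text{\bf f}_2))$ with $\beta_2\ne 0$; the first two vanish by the induction hypothesis applied to $\text{\bf f}_i$ of strictly smaller degree, and the last because $\mu(\beta_1)<\mu(\beta)$. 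The base of the induction is the degree-one identity (\ref{37.198}), which is where the critical point equation actually enters, and where only $\mu(\beta)=2$ contributes for degree reasons. If you want to keep your direct computation, restrict it to $\mu(\beta)=2$ and then supply this $A_\infty$ induction to dispose of $\mu(\beta)\ge 4$.
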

\begin{proof}
Let $d = \deg \text{\bf f}$ and
$2\ell =\mu(\beta)$. We say $(d,\ell) < (d',\ell')$ if $\ell<\ell'$
or $\ell=\ell'$, $d<d'$. We prove the lemma by upward induction on
$(d,\ell)$. The case $d=1$ is (\ref{37.198}). We remark that
$\mathfrak m_{k,\beta} = 0$ if $\mu(\beta) \le 0$.
\par
We assume that the lemma is proved for $(d',\ell')$ smaller than
$(d,\ell)$ and will prove the case of $(d,\ell)$. Since the case
$d=1$ is already proved, we may assume that $d\ge 2$. Let $\text{\bf
f} = \text{\bf f}_1 \cup \text{\bf f}_2$ where $\deg\text{\bf
f}_i\ge 1$. By the $A_{\infty}$-relation we have
$$\aligned
\mathfrak m_{1,\beta}^{\rho,can,b}(\text{\bf f}_1 \cup
\text{\bf f}_2) = &\sum_{\beta_1+\beta_2=\beta} \pm\mathfrak
m^{\rho,can,b}_{2,\beta_1}(\mathfrak m^{\rho,can,b}_{1,\beta_2}(\text{\bf f}_1),
\text{\bf f}_2)\\
&+
\sum_{\beta_1+\beta_2=\beta} \pm\mathfrak m^{\rho,can,b}_{2,\beta_1}(
\text{\bf f}_1, \mathfrak m_{1,\beta_2}(\text{\bf f}_2)) \\
&+
\sum_{\beta_1+\beta_2=\beta, \beta_2\ne 0} \pm\mathfrak m^{\rho,can,b}_{1,\beta_1}(\mathfrak m_{2,\beta_2}(\text{\bf f}_1,
\text{\bf f}_2)).
\endaligned$$
We remark that $\mathfrak m^{\rho,can,b}_{1,\beta_0} =
0$ since we are working on a canonical model.
\par
The first two terms of the right hand side vanishes by the induction
hypothesis since $\deg \text{\bf f}_i < \deg \text{\bf f}$ and
$\mu(\beta_i) \le \mu(\beta)$. The third term also vanishes since
$\mu(\beta_1) < \mu(\beta)$. The proof of Lemma \ref{37.199} is
complete.
\end{proof}
Lemma \ref{37.199} immediately implies Theorem
\ref{homologynonzero}. \end{proof}
\begin{proof}[Proof of Proposition \ref{eq:m1b}]
Let us specialize to the case of 2 dimension. In case $\dim L(u_0) =
2$, we can prove $\mathfrak m_{1,\beta}^{\rho,can,b} = 0$ for
$\mu(\beta) \ge 4$ also by dimension counting. We can use that to
prove Proposition \ref{eq:m1b} in the same way as above.
\end{proof}
\par\medskip

\begin{proof}[Proof of Proposition \ref{prof:bal2}]
Let $\omega_i$, $u_i$, $\mathfrak x_{i,\mathcal N}$ be as in Definition
\ref{def:balanced}. We assume $\psi : X \to X$ does not satisfy
(\ref{eq:Lu_02}) or (\ref{eq:Lu_12}) and will deduce a
contradiction. We use the same (time dependent) Hamiltonian as
$\psi$ to obtain $\psi_{i} : (X,\omega_{i}) \to (X,\omega_{i})$.
Take an integer $\CN$ such that $\Vert\psi_i\Vert < 2\pi \CN$ for large
$i$. Then for sufficiently large $i$, $L(u_0^{i})$ and $\psi_{i}$
does not satisfy (\ref{eq:Lu_02}) or (\ref{eq:Lu_12}). In fact if
$\psi(L(u_0)) \cap L(u_0) = \emptyset$ then for sufficiently large
$i$, we have $\psi_{i}(L(u^{i}_0)) \cap L(u^{i}_0) = \emptyset$. If
$\psi(L(u_0))$ is transversal to $L(u_0)$ and if (\ref{eq:Lu_12}) is
not satisfied, then
$$
\#(\psi(L(u_0)) \cap L(u_0)) \ge \#(\psi_{\e_i}(L(u^{i}_0)) \cap L(u^{i}_0)).
$$
On the other hand, by
Theorem \ref{homologynonzero} we have
$$
HF((L(u_i),\mathfrak x_{i,k}),(L(u_i),\mathfrak x_{i,k});\Lambda_{0}^{\C}/(T^{\CN}))
\cong H(T^n;\Lambda_{0}^{\C}/(T^{\CN})).
$$
It follows from Universal Coefficient
Theorem that
\begin{equation}
HF((L(u_i),\mathfrak x_{i,k}),(L(u_i),\mathfrak x_{i,k});\Lambda_{0}^{\C})
\cong \Lambda_{0}^{\oplus a} \oplus \bigoplus_{i=1}^b \Lambda_{0}/(T^{c(i)})
\label{HFmodN}\end{equation}
such that $c(i) \ge \CN$ and $a+2b \ge 2^n$.
This contradicts to Theorem J
\cite{fooo06}.
(In fact Theorem J
\cite{fooo06} (= Theorem \ref{displace2}) and (\ref{HFmodN}) imply that (\ref{eq:Lu_02}) and (\ref{eq:Lu_12}) hold for $L(u_i)$
and $\psi_i$ with $\Vert\psi_i\Vert <2\pi \CN$.)
Proposition \ref{prof:bal2} is proved.
\end{proof}
\begin{proof}[Proof of Theorem \ref{thm:eLuEu}]
The proof is the same as the proof of Proposition \ref{prof:bal2} above.
\end{proof}
\par
Now we are ready to complete the proof of Theorem
\ref{toric-intersect}.
\begin{proof}[Proof of Theorem \ref{toric-intersect}]
In the case where the vertices of $P$ are contained in
$\Q^n$,
Proposition \ref{existcrit} and Theorem
\ref{homologynonzero} imply that $L(u_0)$ is
balanced in the sense of Definition \ref{def:balanced}.
Therefore Proposition \ref{prof:bal2} implies Theorem \ref{toric-intersect}
in this case.
If the leading term equation is strongly nondegenerate, Theorem
\ref{toric-intersect} also follows from Theorem
\ref{homologynonzero}, Theorem \ref{thm:elliminate} and Proposition
\ref{prof:bal2}.
\par
We finally present an argument to remove the rationality assumption.
In view of Lemma \ref{prof:bal2}, it suffices to prove the
following Proposition \ref{prof:bal}.
\begin{prop}\label{prof:bal}
In the situation of Theorem $\ref{toric-intersect}$, there exists $u_0$ such that
$L(u_0)$ is a balanced Lagrangian fiber.
\end{prop}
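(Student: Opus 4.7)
The plan is to bootstrap from the rational case (already established by Proposition \ref{existcrit} together with Theorem \ref{homologynonzero}) to the general case using the rational approximation provided by Proposition \ref{prop:approx1}, and then verify the conditions in Definition \ref{def:balanced}. The candidate point $u_0$ will be the one produced by the variational analysis of Section \ref{sec:var}, namely the unique point in $P_K$ determined by the iterated minimax construction of Proposition \ref{prop:bsSdef}. This construction does not require rationality of the vertices of $P$, so $u_0 \in \text{\rm Int}\, P$ is defined for any toric $(X,\omega)$, and the leading term equation at $u_0$ (in the sense of Definition \ref{def:lteq}) is a polynomial system over $\mathbb{Q}$ that, by Lemma \ref{37.162}, admits a positive real solution.

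Next I would apply Proposition \ref{prop:approx1} to produce a sequence $(X,\omega^h)$ of $T^n$-invariant K\"ahler structures with rational moment polytopes $P^h$ and rational points $u_0^h \in \text{\rm Int}\, P^h$ such that $\omega^h \to \omega$, $u_0^h \to u_0$, and the leading term equation at $u_0^h$ is identical to the one at $u_0$. Because each $(X,\omega^h)$ has rational vertices, Proposition \ref{existcrit} applies directly to $(X,\omega^h,u_0^h)$: for every $\mathcal N$ there exists $\mathfrak{x}_{h,\mathcal N} \in H^1(L(u_0^h);\Lambda_0^{\mathbb C})$ satisfying
\begin{equation*}
\frac{\partial \mathfrak{PO}_0^{u_0^h}}{\partial y_k}(e^{\mathfrak{x}_{h,\mathcal N}}) \equiv 0 \mod T^{\mathcal N}, \qquad k=1,\dots,n.
\end{equation*}
Indeed, the proof of Proposition \ref{existcrit} given in Section \ref{sec:var} produces $\mathfrak x_{h,\mathcal N}$ from a solution of the leading term equation at $u_0^h$ by Hensel-type lifting using the curve selection lemma, and since the leading term equation at $u_0^h$ coincides with the one at $u_0$, the real positive solution provided by Lemma \ref{37.162} serves as the seed for this lifting uniformly in $h$.

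Now I invoke Theorem \ref{homologynonzero} (in its mod $T^{\mathcal N}$ form) applied to $(L(u_0^h),\mathfrak x_{h,\mathcal N})$, which gives
\begin{equation*}
HF((L(u_0^h),\mathfrak x_{h,\mathcal N}),(L(u_0^h),\mathfrak x_{h,\mathcal N});\Lambda_0^{\mathbb C}/(T^{\mathcal N})) \cong H(T^n;\Lambda_0^{\mathbb C}/(T^{\mathcal N})).
\end{equation*}
With $\omega^h \to \omega$, $u_0^h \to u_0$, and the above Floer cohomology isomorphism available at every truncation level $\mathcal N$, the three clauses of Definition \ref{def:balanced} are directly verified for $u_0$. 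Hence $L(u_0)$ is balanced, proving Proposition \ref{prof:bal}, and combined with Proposition \ref{prof:bal2} this removes the rationality hypothesis from Theorem \ref{toric-intersect}.

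The main obstacle is to ensure that the rational approximation scheme of Proposition \ref{prop:approx1} really preserves the leading term equation verbatim. One has to choose the perturbations $(\omega^h, u_0^h)$ so that the same subdivision $P^h \supset P_1^h \supset \cdots \supset P_K^h = \{u_0^h\}$ arises with the same collections $I_k$, the same linear dependencies recorded in Condition \ref{cond:estar}, and the same combinatorial data $(v_{i,j})$; only the values $S_k$ and the exponents $\ell_i(u_0)$ are perturbed rationally. Once this is arranged, all remaining steps are direct consequences of results that have already been established in the preceding sections, and no new analytic input is required beyond the approximation itself, which will be handled in the companion proof of Proposition \ref{prop:approx1} at the end of this section.
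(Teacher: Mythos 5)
Your argument is correct and follows essentially the same route as the paper: both reduce to the rational case by a rational perturbation of $(\omega,u_0)$ inside the K\"ahler cone that preserves the collections $I_k$ (hence the leading term equation), and then invoke Proposition \ref{existcrit} and Theorem \ref{homologynonzero} to verify Definition \ref{def:balanced}. The only difference is cosmetic: the paper proves its own approximation statement (Proposition \ref{prop:approx}, which preserves the full variational data $s_k$, $S_k$, $P_k$, so that $u_0^h$ is automatically the minimax point of $P^h$ and Proposition \ref{existcrit} applies verbatim), whereas you invoke Proposition \ref{prop:approx1} and must therefore re-run the proof of Proposition \ref{existcrit} at $u_0^h$ --- which works, as you indicate, because Corollary \ref{Col:37.148} and Lemma \ref{37.171} depend only on the preserved data $I_k$ and the linear forms $d\ell_{k,j}$.
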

\begin{proof}
Let
$\pi : X \to P$ be as in Theorem \ref{toric-intersect}. Let us
consider $s_k$, $S_k$, $P_k$ as in section \ref{sec:var}. We obtain
$u_0 \in P$ such that $\{u_0\} = P_K$. We will prove that $L(u_0)$ is balanced.
\par
We perturb the K\"ahler form $\omega$ of $X$ a bit so that we
obtain $\omega'$.
Let $P'$ be the corresponding moment polytope and $s_k^{\omega'}$,
$S_k^{\omega'}$, $P_k^{\omega'}$, be the corresponding piecewise affine
function, number, subset of $P_{\omega'}$ obtained for $\omega'$,
$P_{\omega'}$ as in
section \ref{sec:var}.
\begin{prop}\label{prop:approx}
We can choose $\omega_h$ so that
$\omega_h$ is rational and
$\lim_{h\to\infty} s_k^{\omega_{h}} = s_k$,
$\lim_{h\to\infty}S_k^{\omega_{h}}=S_k$,
$\lim_{h\to\infty}P_k^{\omega_{h}}=P_k$,
$\dim P_k^{\omega_{h}}= \dim P_k$ for all $k$.
\end{prop}
\begin{proof}
We write $I_k^{\omega'}$ the set $I_k$ defined in (\ref{form;defIk2}) section \ref{sec:var}
for $\omega'$, $P'$. We will prove the following lemma.
We remark that the set $\mathfrak K$ of $T^n$ invariant K\"ahler structure $\omega'$ is regarded as an open set of an
affine space defined on $\Q$ (that is the K\"ahler cone).
We may regard $\mathfrak K$ as a moduli space of moment polytope as follows :
We consider a polyhedron $P'$ each of whose edges is parallel to a corresponding edge of
$P$. Translation defines an $\R^n$ action on the set of such $P'$.
The quotient space can be identified with $\mathfrak K$.
\begin{lem}\label{inductiongeneric}
There exists a subset $\mathfrak P_k$ of $\mathfrak K$ which is a
nonempty open subset of an affine subspace defined over $\Q$ such
that any element $\omega' \in \mathfrak P_k$ has the following
properties :
\par
\begin{enumerate}
\item $\dim P_l^{\omega'} = \dim P_l^{\omega}$ for $l\le k$.
\item $I_l^{\omega'} = I_l^{\omega}$ for $l\le k$.
\end{enumerate}
\end{lem}
\begin{rem}
In the case of Example \ref{exa:onepointblow}, the set $P^{\omega'}_k$ etc.
jumps at the point $\alpha = 1/3$ in the K\"ahler cone.
Hence the set $\mathfrak P_k$ may have strictly smaller dimension than
$\mathfrak K$.
\end{rem}
\begin{proof}
Let $A_l^{\omega'}$ be the affine space defined in section \ref{sec:var}.
(We put ${\omega'}$ to specify the symplectic form.)
We write $\ell_i^{\omega}$, $\ell_i^{\omega'}$ in place of $\ell_i$
to specify symplectic form and moment polytope.
We remark that the linear part of $\ell_i^{\omega}$ is
equal to the linear part of $\ell_i^{\omega'}$.
\par
The proof of Lemma \ref{inductiongeneric} is by induction on $k$.
Let us first consider the case $k=1$.
We put
$$
\widehat A_1^{\omega'} = \{ u \in M_\R \mid \ell_{1,1}^{\omega'}(u)
= \cdots = \ell_{1,a_1}^{\omega'}(u)\}.
$$
We remark that $\{\ell^{\omega}_{1,1},\cdots,\ell^{\omega}_{1,a_1}\}
= I_1^{\omega}$ and so $\widehat A_1^{\omega} = A_1^{\omega}$.
\par
We put
$$
\mathfrak P'_1 = \{ \omega' \mid \dim \widehat A_1^{\omega'} =
\dim A_1^{\omega} \}.
$$
It is easy to see that $\mathfrak P'_1$ is a nonempty affine subset of
$\mathfrak K$ and is defined over $\Q$.
\begin{sublem}\label{sublem;pphat}
If $\omega' \in \mathfrak P'_1$ and is sufficiently close to $\omega$,
then $P_1^{\omega'}$ is an equi-dimensional polyhedron in $\widehat
A_1^{\omega'}$. In particular $\widehat A_1^{\omega'} =
A_1^{\omega'}$.
\end{sublem}
\begin{proof}
The tangent space of $\widehat A_1^{\omega'}$ is parallel to
the tangent space of $A_1^{\omega}$. Therefore
$\ell^{\omega'}_{1,j}$ is constant on $\widehat A_1^{\omega'}$.
We put
$$
\widehat S_1^{\omega'} = \ell^{\omega'}_{1,1}(u)
$$
for some $u \in \widehat A_1^{\omega'}$.
\par
On the other hand, if $\ell_i^{\omega} \notin I_1^{\omega}$ then
$\ell_i^{\omega}(u) > S_1^{\omega}$ on $\text{Int}\,P_1^{\omega}$. Therefore if
$\omega'$ is sufficiently close to $\omega$ we have
$
\ell_i^{\omega'}(u) > \widehat S_1^{\omega'}
$
on a neighborhood of a compact subset of $\text{Int}\,\,P_1^{\omega}$,
which we identify with a subset of $P'$.
This implies the sublemma.
\end{proof}
The Condition (1), (2) of Lemma \ref{prop:approx} in the case
$k=1$ follows from Sublemma \ref{sublem;pphat} easily.
\par
Let us assume that Lemma \ref{inductiongeneric} is proved up to
$k-1$. We remark
$\{\ell^{\omega}_{k,1},\cdots,\ell^{\omega}_{k,a_k}\} =
I_k^{\omega}$. We put
$$
\widehat A_k^{\omega'} =
\{ u \in A_{k-1}^{\omega'} \mid \ell_{k,1}^{\omega'}(u) = \cdots = \ell_{k,a_k}^{\omega'}(u)\}
$$
and
$$
\mathfrak P'_k = \{ \omega' \in \mathfrak P'_{k-1} \mid \dim \widehat A_k^{\omega'} =
\dim A_k^{\omega} \}.
$$
$\mathfrak P'_k$ is a nonempty affine subset of $\mathfrak K$ and is
defined over $\Q$. We can show that a sufficiently small open
neighborhood $\mathfrak P_k$ of $\omega$ in $\mathfrak P'_k$ has the
required properties in the same way as the first step of the
induction. The proof of Lemma \ref{inductiongeneric} is complete.
\end{proof}
Proposition \ref{prop:approx} follows immediately from
Lemma \ref{inductiongeneric}. In fact the set of rational points
are dense in $\mathfrak P_K$.
\end{proof}
Proposition \ref{prof:bal} follows from Proposition
\ref{prop:approx}, Proposition \ref{existcrit} and Theorem
\ref{homologynonzero}.
\end{proof}
\par
The proof of Theorem \ref{toric-intersect} is now complete.
\end{proof}
\begin{proof}[Proof of Proposition \ref{prop:approx1}]
The proof is similar to the proof of Proposition \ref{prop:approx}.
Let $I_k$ be as in (\ref{form;defIk29}).
We write it as $I_k(P,u_0)$, where $P$ is the
moment polytope of $(X,\omega)$.
We define $I_k(P',u'_0)$ as follows.
\par
Let $P'$ be a polytope which is a
small perturbation of $P$ and such that each of the faces are parallel to the
corresponding face of $P$. Let $u'_0 \in \text{\rm Int}\,\, P'$.
Let us consider the set $\mathfrak K^+$ of all such pairs $(P',u'_0)$.
It is an open set of an affine space defined over $\Q$.
Each of such $P'$ is a moment
polytope of certain K\"ahler form on $X$. (We remark that K\"ahler
form on $X$ determine $P'$ only up to translation.)
\par
For each $P'$, we take corresponding K\"ahler form on $X$
and it determines a potential function. Therefore $I_k(P',u'_0)$
is determined by (\ref{form;defIk29}).
We define $A^\perp_l(P',u'_0)$ in the same way as
Definition \ref{def:genAl}.
\begin{lem}\label{inductiongeneric2}
There exists a subset $\mathfrak Q_k$ of $\mathfrak K^+$ which is a
nonempty open set of an affine subspace defined over $\Q$. All the
elements $(P',u'_0)$ of $\mathfrak Q_k$ have the following
properties.
\par
\begin{enumerate}
\item $\dim A^\perp_l(P',u'_0) = \dim A^\perp_l(P,u_0)$ for $l\le k$.
\item $I_l(P',u'_0) = I_l(P,u_0)$ for $l\le k$.
\end{enumerate}
\end{lem}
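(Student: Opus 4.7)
The plan is to prove Lemma \ref{inductiongeneric2} by induction on $k$, closely mirroring the argument for Lemma \ref{inductiongeneric} but now tracking both the polytope $P'$ and the interior point $u'_0$ simultaneously. The key point is that the affine functions $\ell_i^{P'}$ have the same linear parts $v_i$ as $\ell_i^{P}$ (they are fixed by the fan $\Sigma$); only the constants $\lambda_i^{P'}$ vary as $P'$ varies. Thus membership of $\ell_i$ in some $I_l(P',u'_0)$ is entirely determined by the values $\ell_i^{P'}(u'_0) = \langle u'_0, v_i\rangle - \lambda_i^{P'}$, which are affine functions of the pair $(u'_0, (\lambda_i^{P'})_i) \in \mathfrak K^+$ and are defined over $\Q$.

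For the base case $k=1$, I would first set
$$
\widehat I_1 = \{\ell_{1,1},\ldots,\ell_{1,a(1)}\} = I_1(P,u_0),
$$
and consider the affine subspace
$$
\mathfrak Q'_1 = \bigl\{(P',u'_0)\in \mathfrak K^+ \,\big|\, \ell^{P'}_{1,1}(u'_0) = \cdots = \ell^{P'}_{1,a(1)}(u'_0)\bigr\},
$$
which is defined over $\Q$ and nonempty since it contains $(P,u_0)$. The condition $\ell_i^{P'}(u'_0) > \ell^{P'}_{1,1}(u'_0)$ for every $\ell_i \notin \widehat I_1$ is an open condition that holds at $(P,u_0)$, so restricting $\mathfrak Q'_1$ to a sufficiently small neighborhood $\mathfrak Q_1$ of $(P,u_0)$ gives $I_1(P',u'_0) = I_1(P,u_0)$ for all $(P',u'_0)\in \mathfrak Q_1$; the dimension equality $\dim A^\perp_1(P',u'_0) = \dim A^\perp_1(P,u_0)$ then follows from the fact that $A^\perp_1$ is the linear span of $\{d\ell_{1,j}\} = \{v_{1,j}\}$, which is intrinsic to $\Sigma$ and independent of $(P',u'_0)$.

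For the inductive step, assume $\mathfrak Q_{k-1}$ has been constructed. Working inside $\mathfrak Q_{k-1}$, I would define
$$
\mathfrak Q'_k = \bigl\{(P',u'_0)\in \mathfrak Q_{k-1} \,\big|\, \ell^{P'}_{k,1}(u'_0) = \cdots = \ell^{P'}_{k,a(k)}(u'_0)\bigr\},
$$
again an affine subspace over $\Q$ containing $(P,u_0)$. The constant value $\widehat S_k^{P',u'_0}$ that all $\ell^{P'}_{k,j}$ share at $u'_0$ varies continuously in $(P',u'_0)$, and for any $\ell_i \notin \bigcup_{l\le k} I_l(P,u_0)$ we have the strict inequalities $\ell_i^{P'}(u'_0) > \widehat S_k^{P',u'_0}$ at $(P,u_0)$, which persist on an open neighborhood. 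Shrinking $\mathfrak Q'_k$ to such a neighborhood produces $\mathfrak Q_k$ with $I_l(P',u'_0) = I_l(P,u_0)$ for all $l\le k$. The dimension assertion $\dim A^\perp_l(P',u'_0) = \dim A^\perp_l(P,u_0)$ for $l\le k$ follows at once, since by Definition \ref{def:genAl} the space $A^\perp_l$ is spanned by the $d\ell_{l',j} = v_{l',j}$ for $l'\le l$, which depend only on the combinatorial data $I_1,\ldots,I_l$.

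The only genuine subtle point is checking that the strict inequalities $S_1 < S_2 < \cdots < S_k$ and $\ell_i(u_0) > S_k$ for $\ell_i \notin \bigcup_{l\le k}I_l$ are preserved under small perturbations; but since each $S_l^{P',u'_0}$ and each $\ell_i^{P'}(u'_0)$ is continuous in $(P',u'_0)$, this is automatic on a small enough open set. Density of rational points in any nonempty open set of an affine subspace defined over $\Q$ then yields, at $k=K$ and together with rationality of the vertices of $P'$, the pair $(P^h, u_0^h)$ with all properties claimed in Proposition \ref{prop:approx1}.
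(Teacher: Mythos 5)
Your proof is correct and follows essentially the same route as the paper, which simply states that the argument is identical to that of Lemma \ref{inductiongeneric}: the same inductive construction of nested affine subspaces cut out by the equalities $\ell^{P'}_{l,1}(u'_0)=\cdots=\ell^{P'}_{l,a(l)}(u'_0)$, followed by shrinking to an open neighborhood where the strict inequalities persist. You correctly observe in addition that condition (1) here is automatic from (2) since $A^\perp_l$ is spanned by the differentials $d\ell_{l',j}$, which is why this case is even slightly easier than Lemma \ref{inductiongeneric}.
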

The proof is the same as the proof of Lemma \ref{inductiongeneric} and
is omitted.
\par
Now we take a sequence of rational points $(P_h,u^h_0) \in \mathfrak Q_k$
converging to $(P,u_0)$.
Lemma \ref{inductiongeneric2} (2) implies that the leading term equation
at $u^h_0$ is the same as the leading term equation at
$u_0$. The proof of Proposition \ref{prop:approx1} is complete.
\end{proof}
\begin{proof}[Proof of Lemma \ref{lem:autorat}]
Let $[\omega] \in H^2(X;\Q)$. We may take the moment
polytope $P$ such that its vertices are rational.
(This time we do not change $P$ throughout the proof.)
Let $u_0 \in \text{\rm Int}\, P$ and we assume that
$\frak{PO}_0^{u_0}$ has a nondegenerate critical
point in $(\Lambda_0 \setminus \Lambda_+)^n$.
\par
We define $I_k(P,u)$ as above.
In the same way as the proof of Lemma \ref{inductiongeneric2}
we can prove the following.
\begin{sublem}\label{sublemu0}
The set $\mathcal P_k$ of all $u \in \text{\rm Int}\, P$
satisfying the following conditions $(1)$,$(2)$ contains
an open neighborhoods $u_0$ in certain affine
subspace $A$ of $\R^n$. $A$ is defined on $\Q$.
\begin{enumerate}
\item $\dim A^\perp_l(P,u) = \dim A^\perp_l(P,u_0)$ for all $l<k$.
\item $I_l(P,u) = I_l(P,u_0)$ for all $l<k$.
\end{enumerate}
\end{sublem}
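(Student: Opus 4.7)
The plan is to build the affine subspace $A$ explicitly from the partition of the $\ell_i$'s induced by $u_0$, and then verify the two conditions by continuity. First, I partition $\{\ell_1,\dots,\ell_m\}$ according to the value at $u_0$, i.e.\ into the subsets $I_1(P,u_0),\dots,I_{m'}(P,u_0)$ defined in (\ref{form;defIk29}). I then define
\[
A = \Bigl\{u \in \R^n \,\Bigm|\, \ell_i(u) = \ell_j(u)\ \text{whenever } \ell_i,\ell_j \in I_l(P,u_0)\ \text{for some } l<k\Bigr\}.
\]
Because $[\omega] \in H^2(X;\Q)$ and the vertices of $P$ are rational, each $\ell_i(u) = \langle u,v_i\rangle - \lambda_i$ has integral $v_i$ and rational $\lambda_i$, so $A$ is cut out by rational linear equations and is defined over $\Q$. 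By construction $u_0 \in A$.

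Next I verify conditions (1) and (2) for $u \in A$ sufficiently close to $u_0$. Observe that condition (1) is a formal consequence of condition (2), since $A_l^\perp(P,u)$ is the $\R$--span of $\{d\ell_i : \ell_i \in I_j(P,u),\ j \le l\}$ and the linear parts $d\ell_i$ do not depend on $u$. So it suffices to prove (2). For $u\in A$ near $u_0$ and each $l<k$, all the $\ell_i$'s in $I_l(P,u_0)$ take one common value $\widetilde S_l(u)$, which varies continuously in $u$ and equals $S_l(u_0)$ at $u=u_0$. Since $S_1(u_0) < S_2(u_0) < \cdots < S_{k-1}(u_0)$ strictly, continuity gives $\widetilde S_1(u) < \cdots < \widetilde S_{k-1}(u)$ on a neighborhood of $u_0$ in $A$. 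Moreover, for every $\ell_i \notin \bigcup_{l<k} I_l(P,u_0)$ we have $\ell_i(u_0) \ge S_k(u_0) > S_{k-1}(u_0)$, so by continuity $\ell_i(u) > \widetilde S_{k-1}(u)$ in a neighborhood of $u_0$. Consequently, on this neighborhood the $k-1$ smallest values of $\{\ell_i(u)\}$ are precisely $\widetilde S_1(u) < \cdots < \widetilde S_{k-1}(u)$, and the $\ell_i$'s attaining the $l$-th smallest value are exactly those in $I_l(P,u_0)$. This is condition (2), and the sublemma follows.

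The only mildly delicate point is ensuring that no ``new'' coincidences appear among the $\ell_i \in \bigcup_{l \ge k} I_l(P,u_0)$ that would merge with the first $k-1$ groups; but since they satisfy the strict inequality $\ell_i(u_0) > S_{k-1}(u_0)$, this is handled by a single application of continuity, and does not require restricting $A$ further. Finally, $A$ contains $u_0$ as an interior point of itself as an affine space, so the open neighborhood of $u_0$ in $A$ constructed above lies in $\mathcal P_k$, completing the proof.
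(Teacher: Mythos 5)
Your proof is correct and follows essentially the route the paper intends: the paper omits the proof of this sublemma, referring back to Lemma \ref{inductiongeneric}, whose mechanism is exactly yours --- cut out an affine subspace defined over $\Q$ by the equalities $\ell_i = \ell_j$ within each $I_l(P,u_0)$, $l<k$, and preserve the strict inequalities between distinct groups (and against the $\ell_i$ with $\ell_i(u_0)\ge S_k$) by continuity. Your direct, non-inductive presentation is legitimate here because, unlike in section \ref{sec:var}, the sets $I_l(P,u_0)$ of section \ref{sec:ellim} are defined non-recursively as level sets of the sorted values $\ell_i(u_0)$, so condition (1) is an immediate consequence of condition (2) (the linear parts $d\ell_i=v_i$ being independent of $u$) and a single continuity argument suffices.
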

We omit the proof.
We take $K$ such that
$\{ d \ell_i \mid \ell_i \in I_1(P,u_0) \cup \cdots \cup I_K(P,u_0)\}$
generates $N_{\R}$. (Note $P \subset N_{\R} = M_{\R}^*$.)
By Sublemma \ref{sublemu0} there exists a sequence $\{u_i\}_{i=1,2,\cdots}$ of rational points $u_i$
in $\mathcal P_K$ which converges to $u_0$.
\par
(1), (2) above implies $\frak{PO}_0^{u_0}$ and $\frak{PO}_0^{u_i}$
has the same leading term equation.
Therefore by assumption $\frak{PO}_0^{u_i}$ has a critical point
on $(\Lambda_0 \setminus \Lambda_+)^n$.
Since $Jac(\frak{PO}_0;\Lambda)$ is finite dimensional,
it follows that we may take a subsequence $u_{k_i}$ such that
$u_{k_1} = u_{k_2} = \cdots$. Hence
$u_0=u_{k_i}$, is rational as required.
\end{proof}

\begin{rem}\label{balancerem2}
We can replace Definition \ref{def:balanced} (3) by
$$
HF((L(u_i),\mathfrak x_{i,\CN}),(L(u_i),\mathfrak x_{i,\CN});\Lambda^{\C}/(T^{\CN})) \supseteq \Lambda^{\C}/(T^{\CN}).
$$
In fact the following three conditions are equivalent to one another
:
\begin{enumerate}
\item
$ HF((L(u),\mathfrak x),(L(u),\mathfrak x);\Lambda^{\C}/(T^{\CN})) \cong H(T^n;\C) \otimes
\Lambda^{\C}/(T^{\CN}). $
\item
$ HF((L(u),\mathfrak x),(L(u),\mathfrak x);\Lambda^{\C}/(T^{\CN})) \supseteq
\Lambda^{\C}/(T^{\CN}). $
\item
$
\frac{\partial \mathfrak{PO}^{u}}{\partial y_k}
\equiv 0, \mod T^{\CN} \qquad k=1,\cdots, n,
$
at $\mathfrak x$.
\end{enumerate}
(1) $\Rightarrow$ (2) is obvious. (3) $\Rightarrow$ (1) is Theorem
\ref{homologynonzero}. Let us prove (2) $\Rightarrow$ (3). Suppose
(3) does not hold. We put $\frac{\partial
\mathfrak{PO}^{u}}{\partial y_k} \equiv c T^{\lambda} \mod
T^{\lambda}\Lambda_+$, where $c \in \C \setminus \{0\}$ and $0 \le
\lambda < N$. Then (\ref{calcm1b}) implies $T^{\CN - \lambda}PD[L(u)]
= 0$ in $HF((L(u),\mathfrak x),(L(u),\mathfrak x);\Lambda^{\C}/(T^{\CN}))$. Since
$PD[L(u)]$ is a unit, (2) does not hold.
\end{rem}
\begin{rem}\label{37.200}
The proof of Theorem \ref{homologynonzero} (or of Lemma \ref{37.199}) does {\it not} imply
\begin{equation}
\mathfrak m_{k,\beta}(\rho_1,\cdots,\rho_k)= 0
\label{37.201}\end{equation} for $\mu(\beta) \ge 4$.
So it does not imply that the numbers $c_{\beta}$ (Definition \ref{cbeta})
determine the filtered $A_{\infty}$ algebra
$(H(L(u);\Lambda_0),\frak m)$ up to homotopy equivalence.
We however
believe that this is indeed the case. In fact the
homology group $H(\mathcal L(T^{n});\Q)$ of the free loop space
$\mathcal L(T^{n})$ is trivial of degree $> n$. On the other
hand, $\dim \mathcal M_{1}^{\text{\rm main}}(L(u_0);\beta) = n +
\mu(\beta) -2$. Hence if $\mu(\beta) \ge 4$ there is no nonzero homology
class on the corresponding degree in the free loop space. Using the
argument of \cite{fukaya:loop} it may imply that the contribution of those classes
to the homotopy type of filtered $A_{\infty}$ structure
is automatically determined from the contribution of the
classes of Maslov index $2$.
\par
On the other hand, pseudo-holomorphic disc with Maslov index $\ge 4$
certainly contributes to the operator $\mathfrak q_{\ell,k,\beta}$
introduced in 
section 3.8 \cite{fooo06} (=
section 13 \cite{fooo06pre}) : Namely $\mathfrak q_{\ell,k,\beta}$
is the operator that involves a cohomology class of the ambient
symplectic manifold $X$. (See Remark \ref{rem:qcanbeused})
It seems that tropical geometry
will play a role in this calculation.
\end{rem}
\section{Appendix 1 : Algebraically closedness of Novikov fields}
\label{sec:appendix}

\begin{lem}\label{algclos}
If $F$ is an algebraically closed field of characteristic $0$, then
$\Lambda^F$ is algebraically closed.
\end{lem}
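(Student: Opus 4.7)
The plan is to adapt the classical Newton polygon construction, combined with a Hensel-type lifting argument, to show every monic polynomial $P(X) = X^n + a_{n-1}X^{n-1} + \cdots + a_0 \in \Lambda^F[X]$ of degree $n \ge 1$ admits a root in $\Lambda^F$. One may assume $a_0 \ne 0$, else $X = 0$ is a root. The first step is to form the Newton polygon of $P$ with respect to $\mathfrak v_T$, namely the lower convex hull of the points $\{(i, \mathfrak v_T(a_i)) : a_i \ne 0\} \cup \{(n, 0)\}$. Its finitely many edges have slopes $-\mu_1 < \cdots < -\mu_r$, and by the standard Newton polygon dictionary $P$ should have, counted with multiplicity, $\ell_j$ roots of valuation exactly $\mu_j$, where $\ell_j$ is the horizontal length of the $j$-th edge. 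Fixing one slope $-\mu$ and substituting $X = T^\mu Y$ and dividing by $T^{n\mu}$, I reduce to the search for a root $Y \in \Lambda_0^F$ of valuation $0$ of a monic polynomial $\widetilde P(Y) \in \Lambda_0^F[Y]$ whose reduction $\overline{\widetilde P}(Y) \in F[Y]$ is nonzero of degree $\ell$.

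Since $F$ is algebraically closed, $\overline{\widetilde P}(Y)$ factors as $\prod_{i=1}^s (Y - c_i)^{m_i}$ with distinct $c_i \in F$. I next verify that $\Lambda^F$ is Henselian: given a monic $Q \in \Lambda_0^F[X]$ and a simple root $\bar c$ of $\bar Q$, the Newton iterates $c_{k+1} := c_k - Q(c_k)/Q'(c_k)$ satisfy $\mathfrak v_T(Q(c_{k+1})) \ge 2\,\mathfrak v_T(Q(c_k))$ since $Q'(c_k)$ is a unit in $\Lambda_0^F$, so $\{c_k\}$ converges in the non-Archimedean topology of $\Lambda_0^F$ to a root lifting $\bar c$. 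The Hensel property then lets me split $\widetilde P$ into pairwise coprime factors with reductions $(Y-c_i)^{m_i}$, so the problem reduces to the case $\overline{\widetilde P}(Y) = (Y - c)^m$ for a single $c \in F$. Shifting $Y \mapsto Y + c$, I may assume $c = 0$, so the constant term of $\widetilde P$ lies in $\Lambda_+^F$; applying the Newton polygon procedure to this $\widetilde P$ produces a next positive slope $-\mu_2$ and a next leading coefficient from $F^\times$, and the iteration continues.

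The main obstacle is convergence: I must show the successive cumulative exponents $\mu_1,\ \mu_1 + \mu_2,\ \ldots$ tend to $+\infty$ so that the formal series constructed genuinely lies in $\Lambda^F$. This is precisely where characteristic $0$ is essential; Example \ref{torsioncount} shows that over characteristic $p$ the analogous iteration can produce a series whose support accumulates and fails to represent an element of $\Lambda^F$. The clean form of the convergence argument is that, in characteristic $0$, after finitely many substitutions of the form $Y \mapsto c + T^\mu Y'$ in the equi-multiplicity case, the multiplicity $m$ of $0$ as a root of the reduction strictly decreases: if it did not, the Taylor expansion of $\widetilde P$ around the approximate root would have a vanishing coefficient at every order up to $m-1$, which in characteristic $0$ is equivalent to saying $\widetilde P$ has a factor of $(Y - \alpha)^m$ over $\Lambda^F$ already determined by the partial sum, forcing the remaining terms to form an honest Henselian lift with geometrically fast convergence. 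Since $m \le n$ is a positive integer and strictly decreases through the iteration (except within Hensel phases, where the exponents double), the process terminates in a Hensel phase whose quadratic convergence gives $\mu_k \to \infty$.

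Once convergence is established, the constructed series $\alpha = \sum_k c_k T^{\mu_1 + \cdots + \mu_k}$ has support a sequence of reals tending to $+\infty$, so $\alpha \in \Lambda^F$, and by construction $P(\alpha) = 0$.
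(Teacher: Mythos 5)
Your overall strategy (Newton--Puiseux: pick a slope of the Newton polygon, rescale, split off the part of the reduction with a given root, and iterate) is a genuinely different route from the paper's, and you correctly identify the one point where it is delicate over the Novikov field: since the value group is all of $\R$ rather than a discrete group, the exponents $\mu_1+\cdots+\mu_k$ produced by the iteration could a priori accumulate at a finite value, in which case the formal series you build is \emph{not} an element of $\Lambda^F$. But the argument you give for this convergence is not a proof, and this is a genuine gap. The key assertion --- that in characteristic $0$ the multiplicity $m$ of the chosen root of the reduction must strictly decrease after finitely many equi-multiplicity substitutions --- is exactly what needs to be established, and the justification offered is circular: you say that otherwise ``the Taylor expansion of $\widetilde P$ around the approximate root would have a vanishing coefficient at every order up to $m-1$,'' but the approximate root is a finite partial sum, so those Taylor coefficients do not vanish; they merely have growing valuation. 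To convert ``growing valuation'' into ``vanishing'' you would have to pass to the limit of the partial sums, i.e.\ you would already need to know that the limit exists in $\Lambda^F$, which is the statement under proof. (The assertion is also false without first reducing to a separable polynomial: for $P(Y)=(Y-\alpha)^2$ with $\alpha$ an infinite series in $\Lambda_+^F$, the multiplicity stays equal to $2$ at every stage.) A correct treatment of this step exists but requires real work --- e.g.\ reducing to the separable case and playing the resultant of $P$ and $P'$ against the fact that in characteristic $0$ every lattice point $(j,\mathfrak v_T(b_j))$, $0\le j\le m$, lies on the relevant edge when the initial form is an $m$-th power --- and none of that is in your sketch.

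For contrast, the paper avoids the infinite iteration entirely, which is why its proof is short: it inducts on the degree $n$. After the Tschirnhaus substitution $x\mapsto x-\frac{a_{n-1}}{n}$ (this is where characteristic $0$ enters) one may assume $a_{n-1}=0$; rescaling $x=T^c y$ with $c=\inf_k \mathfrak v_T(a_k)/(n-k)$ makes the reduction $\overline P$ a monic degree-$n$ polynomial over $F$ with vanishing subleading coefficient and some other nonzero coefficient below degree $n-1$. Such a polynomial cannot be an $n$-th power of a linear factor (that would force the root to be $0$ and hence $\overline P=y^n$), so it splits into two coprime monic factors of positive degree; a single application of Hensel's lemma for coprime factorizations (valid over the complete non-Archimedean ring $\Lambda_0^F$, cf.\ \cite{BGR}) lifts this to $P=QR$ with $\deg Q<n$, and the induction hypothesis finishes. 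Each step is one finite factorization rather than an infinite sequence of substitutions, so the question of whether the exponents tend to infinity never arises. If you want to keep your Newton--Puiseux approach, you must either supply the separability reduction plus a quantitative convergence estimate, or restructure along the lines of the degree induction above.
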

\begin{proof}
Let $ \sum_{k=0}^n a_k x^k = 0 $ be a polynomial equation with
$\Lambda^F$ coefficients. We will prove that it has a solution in
$\Lambda^F$ by induction on $n$. We may assume that $a_n = 1$.
Replacing $x$ by $x - \frac{a_{n-1}}{n}$ we may assume $a_{n-1} =
0$. (Here we use the fact that the characteristic of $F$ is $0$.) We
may assume furthermore $a_0 \ne 0$, since otherwise $0$ is a
solution. We put
$$
c = \inf_{k=0,\cdots,n-2} \frac{\frak v_T(a_k)}{n-k}.
$$
We put $x = T^cy$, $b_k = T^{c(k-n)}a_k$. Then our equation is equivalent to
$
P(y) = \sum_{k=0}^n b_k y^k = 0
$.
We remark that $b_n = 1$, $b_{n-1} = 0$, $b_0 \ne 0$. Moreover
\begin{equation}\label{polyeq3}
\frak v_T(b_k) = c(k-n) + \frak v_T(a_k) \ge 0.
\nonumber\end{equation} Namely $b_k \in \Lambda_0$. We define
$\overline b_k \in F$ to be the zero order term of $b_k$ i.e., to
satisfy $b_k \equiv \overline b_k \mod \Lambda_+$. We consider the
equation $ \overline P(\overline y) = \sum_{k=0}^n \overline b_k
\overline y^k = 0. $ By our choice of $c$ there exists $k<n-1$ such
that $\overline b_k \ne 0$ and $ \overline b_n = 1$, $\overline
b_{n-1}=0$. Therefore $\overline P$ has at least two roots. (We use
the fact that the characteristic of $F$ is 0 here.) Since $F$ is
algebraically closed we can decompose $ \overline P = \overline Q
\overline R $ where $\overline Q$ and $\overline R$ are monic, of
nonzero degree and coprime. Therefore by Hensel's lemma, there
exists $Q, R \in \Lambda_0[y]$ such that $P = QR$ and $\deg Q = \deg
\overline Q$, $Q \equiv \overline Q \mod \Lambda_+$, $R \equiv
\overline R \mod \Lambda_+$\footnote{A proof of Hensel's lemma, in
the case when valuation is not necessarily discrete, is given, for
example, in p 144 \cite{BGR}. See also the proof of Lemma \ref{convclo}
given below.}.
\par
Since the degree of $Q$ is smaller than the degree of $P$, it follows
from induction hypothesis that $Q$ has a root in
$\Lambda^{F}$. The proof of Lemma \ref{algclos}
is now complete.
\end{proof}
By a similar argument we can show that if $F$ is characteristic $0$
then a finite algebraic extension of $\Lambda^F$ is contained in
$\Lambda^K$ for some finite extension $K$ of $F$. (We used this fact
in section \ref{sec:exa2}.)
\par\medskip
\begin{proof}[Proof of Lemma \ref{convclo}]
In view of the proof of Lemma \ref{algclos}, it suffices to show that
$\Lambda_0^{conv}$ is Henselian. (Namely Hensel's lemma holds for it.)
Let
$$
P(X) = \sum_{i=0}^{n-1} a_i X^i + X^n \in \Lambda_0^{conv}[X].
$$
We assume that its $\C$-reduction $\overline P \in \C[X]$ is decomposed
as $\overline P = \overline Q \overline R$, where $\overline Q$ and $\overline R$ are monic and coprime. We put
$$
a_i = a_{i,0} + a_{i,+}, \qquad a_{i,0} \in \C, \quad a_{i,+} \in \Lambda_+^{conv},
$$
and
$$
\widetilde P(X) = \sum_{i=0}^{n-1} (a_{i,0} + Z_i) X^i + X^n \in \C[Z_0,\cdots,Z_{n-1}][X],
$$
where $Z_i$ are formal variables.
\par
The {\it convergent} power series ring $\C\{Z_0,\cdots,Z_{n-1}\}$ is
Henselian (see section 45 \cite{Na62}). Therefore there exist monic polynomials
$\widetilde Q, \widetilde R \in \C\{Z_0,\cdots,Z_{n-1}\}[X]$ such that
$$
\widetilde Q\widetilde R = \widetilde P
$$
and the $\C$-reduction of $\widetilde Q$, $\widetilde R$ are $\overline Q$, $\overline R$ respectively.
On the other hand, it is easy to see that
$Z_i \mapsto a_{i,+}$ induces a continuous ring homomorphism
$\C\{Z_0,\cdots,Z_{n-1}\} \to \Lambda_0^{conv}$.
Thus $\widetilde Q$, $\widetilde R$ induce $Q,R \in \Lambda_0^{conv}[X]$ such that
$QR = P$. Hence $\Lambda_0^{conv}$ is Henselian, as required.
\end{proof}
\section{Appendix 2 : $T^n$-equivariant Kuranishi structure}
\label{sec:equivariant}

In this section we define the notion of $T^n$ equivariant
Kuranishi structure and prove that our moduli space
$\mathcal M_{k+1}^{\text{\rm main}}(\beta)$ has one.
We also show the existence of $T^n$ equivariant perturbation
of the Kuranishi map.
\par
Let $\mathcal M$ be a compact space with Kuranishi structure. The
space ${\mathcal M}$ is covered by a finite number of Kuranishi
charts $(V_{\alpha},E_{\alpha},
\Gamma_{\alpha},\psi_{\alpha},s_{\alpha})$, $\alpha \in \mathfrak A$
which satisfy the following :
\begin{conds}
\begin{enumerate}
\item $V_{\alpha}$ is a smooth manifold (with boundaries or corners) and
$\Gamma_{\alpha}$ is a finite group acting effectively on $V_{\alpha}$.
\item $\text{pr}_{\alpha} : E_{\alpha} \to V_{\alpha}$ is a finite dimensional vector bundle on which
$\Gamma_{\alpha}$ acts so that $\text{pr}_{\alpha}$ is $\Gamma_{\alpha}$- equivariant.
\item $s_{\alpha}$ is a $\Gamma_{\alpha}$ equivariant section of $E_{\alpha}$.
\item $\psi_{\alpha} : s_{\alpha}^{-1}(0)/\Gamma_{\alpha} \to \mathcal M$ is a homeomorphism
to its image.
\item
The union of $\psi_{\alpha}(s_{\alpha}^{-1}(0)/\Gamma_{\alpha})$ for various $\alpha$ is $\mathcal M$.
\end{enumerate}
\end{conds}
\par
We assume that
$\{(V_{\alpha},E_{\alpha},\Gamma_{\alpha},
\psi_{\alpha},s_{\alpha})\mid \alpha \in \frak A\}$ is a good coordinate system,
in the sense of Definition 6.1 \cite{FO} or Lemma A1.11 \cite{fooo06}
(= Lemma A1.11 \cite{fooo06pre}).
This means the following :
The set $\mathfrak A$ has a partial order $<$, where
either $\alpha_1 \le \alpha_2$ or $\alpha_2 \le \alpha_1$ holds
for $\alpha_1, \alpha_2 \in \mathfrak A$
if
$$
\psi_{\alpha_1}(s_{\alpha_1}^{-1}(0)/\Gamma_{\alpha_1}) \cap
\psi_{\alpha_2}(s_{\alpha_2}^{-1}(0)/\Gamma_{\alpha_2}) \ne \emptyset.
$$
Let $\alpha_1, \alpha_2 \in \mathfrak A$ and $\alpha_1 \le
\alpha_2$. Then, there exists a $\Gamma_{\alpha_1}$-invariant open
subset $V_{\alpha_2,\alpha_1} \subset V_{\alpha_1}$, a smooth
embedding
$
\varphi_{\alpha_2,\alpha_1} : V_{\alpha_2,\alpha_1}
\to V_{\alpha_2}
$
and a bundle map
$
\widehat{\varphi}_{\alpha_2,\alpha_1} : E_{\alpha_1}\vert_{V_{\alpha_2,\alpha_1}}
\to E_{\alpha_2}.
$
which covers $\varphi_{\alpha_2,\alpha_1}$.
Moreover there exists an injective homomorphism
$
\widehat{\widehat{\varphi}}_{\alpha_2,\alpha_1}
: \Gamma_{\alpha_1} \to \Gamma_{\alpha_2}.
$
We require that they satisfy the following
\begin{conds}\label{coordinatechange}
\begin{enumerate}
\item
The maps $\varphi_{\alpha_2,\alpha_1}$,
${\widehat{\varphi}}_{\alpha_2,\alpha_1}$ are
$\widehat{\widehat{\varphi}}_{\alpha_2,\alpha_1}$-equivariant.
\item
$\varphi_{\alpha_2,\alpha_1}$ and $\widehat{\widehat{\varphi}}_{\alpha_2,\alpha_1}$ induce
an embedding of orbifold
\begin{equation}\label{orbemb}
\overline{\varphi}_{\alpha_2,\alpha_1} :
\frac{V_{\alpha_2,\alpha_1}}{\Gamma_{\alpha_1}}
\to \frac{V_{\alpha_2}}{\Gamma_{\alpha_2}}.
\end{equation}
\item We have
$
s_{\alpha_2} \circ \varphi_{\alpha_2,\alpha_1}
= {\widehat{\varphi}}_{\alpha_2,\alpha_1} \circ s_{\alpha_1}.
$
\item We have
$
\psi_{\alpha_2}\circ \overline{\varphi}_{\alpha_2,\alpha_1}
= \psi_{\alpha_1}
$
on
$
\frac{V_{\alpha_2,\alpha_1} \cap s_{\alpha_1}^{-1}(0)}{\Gamma_{\alpha_1}}.
$
\item If $\alpha_1 < \alpha_2< \alpha_3$ then
$
\varphi_{\alpha_3,\alpha_2}\circ \varphi_{\alpha_2,\alpha_1}
= \varphi_{\alpha_3,\alpha_1},
$
on $\varphi_{\alpha_2,\alpha_1}^{-1}(V_{\alpha_3,\alpha_2})$.
$
{\widehat{\varphi}}_{\alpha_3,\alpha_2}\circ {\widehat{\varphi}}_{\alpha_2,\alpha_1}
= {\widehat{\varphi}}_{\alpha_3,\alpha_1},
$
and
$$
\widehat{\widehat{\varphi}}_{\alpha_3,\alpha_2}\circ \widehat{\widehat{\varphi}}_{\alpha_2,\alpha_1}
= \widehat{\widehat{\varphi}}_{\alpha_3,\alpha_1},
$$
hold in the similar sense.
\item
$V_{\alpha_2,\alpha_1}/\Gamma_{\alpha_1}$ contains
$\psi_{\alpha_1}^{-1}(\psi_{\alpha_1}(s_{\alpha_1}^{-1}(0)/\Gamma_{\alpha_1}) \cap
\psi_{\alpha_2}(s_{\alpha_2}^{-1}(0)/\Gamma_{\alpha_2}))$.
\end{enumerate}
\end{conds}
\par
\begin{conds}\label{normalcompatibility}
$\mathcal M$ has a tangent bundle : i.e., the differential of
$s_{\alpha_2}$ in the direction of the normal bundle induces a
bundle isomorphism
\begin{equation}\label{tangent}
ds_{\alpha_2} : \frac{{\varphi}_{\alpha_2,\alpha_1}^*TV_{\alpha_2}}{TV_{\alpha_2,\alpha_1}}
\to \frac{\widehat{\varphi}_{\alpha_2,\alpha_1}^*E_{\alpha_2}}{E_{\alpha_1}}.
\nonumber\end{equation}
We say $\mathcal M$ is oriented if $V_{\alpha}$, $E_{\alpha}$ is oriented,
the $\Gamma_{\alpha}$ action is orientation preserving, and
$ds_{\alpha}$ is orientation preserving.
\end{conds}
\par
\begin{defn}\label{eqkurastr}
Suppose that $\mathcal M$ has a $T^n$ action. We say our Kuranishi
structure on $\mathcal M$ is {\it $T^n$ equivariant in the strong
sense} if the following holds :
\begin{enumerate}
\item $V_{\alpha}$ has a $T^n$ action which commutes
with the given $\Gamma_{\alpha}$ action.
\item $E_{\alpha}$
is a $T^n$ equivariant bundle.
\item
The Kuranishi map $s_{\alpha}$
is $T^n$ equivariant and $\psi_{\alpha}$ is a $T^n$ equivariant map.
\item
The coordinate changes $\varphi_{\alpha_2,\alpha_1}$ and
$\varphi_{\alpha_2,\alpha_1}$ are $T^n$ equivariant.
\end{enumerate}
\end{defn}
\begin{rem}
We remark that Condition (1) above is more restrictive than
the condition that the orbifold $V_{\alpha}/\Gamma_{\alpha}$
has a $T^n$ action. This is the reason why we use the
phrase {\it in the strong sense} in the above definition.
Hereafter we will say $T^n$ equivariant instead for simplicity.
\end{rem}
Let $L$ be a smooth manifold.
A strongly continuous smooth map $ev : \mathcal M \to L$ is a
family of $\Gamma_{\alpha}$ invariant smooth maps
\begin{equation}\label{evalpha}
ev_{\alpha} : V_{\alpha} \to L
\end{equation}
which induce
$
\overline{ev}_{\alpha} : V_{\alpha}/\Gamma_{\alpha} \to L
$
such that
$
\overline{ev}_{\alpha_2} \circ \overline{\varphi}_{\alpha_2,\alpha_1}
= \overline{ev}_{\alpha_1}
$
on $V_{\alpha_2,\alpha_1}/\Gamma_{\alpha}$. (Note $\Gamma_{\alpha}$
action on $L$ is trivial.)
\par
We say that $ev$ is weakly submersive
if each of ${ev}_{\alpha}$ in (\ref{evalpha}) is
a submersion.
\begin{defn}
Assume that there exists $T^n$ actions on $L$ and $\mathcal M$. We
say that $ev : \mathcal M \to L$ is $T^n$ equivariant if
${ev}_{\alpha}$ in (\ref{evalpha}) is $T^n$ equivariant.
\end{defn}
Now we show
\begin{prop}\label{equikurast}
The moduli space $\mathcal M_{k+1}(\beta)$ has a $T^n$
equivariant Kuranishi structure such that $ev_0 : \mathcal M_{k+1}(\beta)
\to L$ is $T^n$ equivariant strongly continuous weakly
submersive map.
\end{prop}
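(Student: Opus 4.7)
The plan is to adapt the standard inductive construction of Kuranishi charts on moduli of stable discs carried out in section 7.1 \cite{fooo06} (= section 29 \cite{fooo06pre}), while propagating $T^n$-equivariance at every step. The starting observation is that the standard complex structure $J$ on $X$ is $T^n$-invariant, $L(u)$ is a free $T^n$-orbit, and the boundary of any stable disc lies in $L(u)$. Consequently the action $g \cdot [w, z_0, \ldots, z_k] = [g \circ w, z_0, \ldots, z_k]$ defines a free continuous $T^n$ action on $\mathcal M_{k+1}(\beta)$, the linearized $\bar\partial$ operator $D_w\bar\partial$ is $T^n$-equivariant, and $ev_0$ is tautologically $T^n$-equivariant. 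Weak submersiveness of $ev_0$ then follows for free: since $T^n$ acts transitively on $L(u)$ and the action lifts to $V_\alpha$, the differential of $ev_{0,\alpha}$ is already surjective in the directions tangent to the $T^n$-orbit through any chosen base point.

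For the Kuranishi chart at a point $[w] \in \mathcal M_{k+1}(\beta)$, I would choose a finite-dimensional subspace $E_0 \subset C^\infty(w^*TX \otimes \Lambda^{0,1})$ supported in the interior of the irreducible components and away from marked and nodal points, so that $E_0$ complements $\operatorname{Im} D_w\bar\partial$. Because $J$ and the nodal/marked structure are preserved setwise by $T^n$, the span $E_{[w]}$ of the $T^n$-orbit of $E_0$ is still a finite-dimensional $T^n$-invariant subspace (here one uses that the $T^n$-action fixes the source Riemann surface and just rotates values in $X$; alternatively, one averages $T^n$-invariant cutoffs and pushes forward sections by the action). The isotropy group $\Gamma_{[w]}$ of the stable map commutes with $T^n$, and the Kuranishi neighborhood $V_{[w]}$ parametrizing maps $w'$ with $\bar\partial w' \in \text{(parallel transport of)}\, E_{[w]}$ inherits a $T^n$ action along with a $T^n$-equivariant Kuranishi map $s_{[w]}$ and a $T^n$-equivariant $\psi_{[w]}$. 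The evaluation map $ev_{0,\alpha}$ is $T^n$-equivariant on the nose.

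To assemble these charts into a good coordinate system satisfying Condition \ref{coordinatechange}, I would run the standard patching procedure of section 7.1 \cite{fooo06}: given two charts $\alpha_1 < \alpha_2$ that are $T^n$-equivariant, the coordinate change $\varphi_{\alpha_2,\alpha_1}$ is built by extending an $E_{\alpha_1}$-slice to an $(E_{\alpha_1} \oplus E_{\alpha_2})$-slice via an implicit-function-theorem argument; compactness of $T^n$ allows one to average the gluing parameters and the choice of extension and hence to choose $\varphi_{\alpha_2,\alpha_1}$, $\widehat\varphi_{\alpha_2,\alpha_1}$ and $\widehat{\widehat\varphi}_{\alpha_2,\alpha_1}$ in a $T^n$-equivariant way. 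All cocycle compatibilities of Condition \ref{coordinatechange} are preserved by this averaging, and the tangent bundle condition is automatic from the $T^n$-equivariance of $D\bar\partial$.

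The main technical point will be the construction of the $T^n$-invariant obstruction bundle near strata where sphere bubbles are attached at $T^n$-fixed points of $X$, since the $T^n$-action on sections supported at such a fixed point is highly nontrivial. This is handled by arranging that $E_{[w]}$ always consists of sections supported in a $T^n$-invariant neighborhood of \emph{smooth interior points} of the components, well separated from the $T^n$-fixed nodes; such neighborhoods exist because the locus of $T^n$-fixed points in $X$ is of real codimension at least two, and on the domain they occupy only isolated points. With this arrangement the $T^n$-orbit of $E_0$ lies in a genuinely finite-dimensional space and the rest of the construction is formally identical to the non-equivariant case, completing the proof.
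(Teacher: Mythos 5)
Your overall strategy (run the construction of section 7.1 \cite{fooo06} while keeping every choice $T^n$-equivariant, and get weak submersiveness of $ev_0$ for free from the transitive action on $L$) is the right one, and your observations about the freeness of the action and the equivariance of $D_w\overline\partial$ are correct. But the central step --- producing a $T^n$-equivariant obstruction bundle --- has a genuine gap. You propose to take ``the span $E_{[w]}$ of the $T^n$-orbit of $E_0$'' inside $C^\infty(w^*TX\otimes\Lambda^{0,1})$. This is not well defined: since the $T^n$-action on the moduli space is \emph{free}, $g\circ w\ne w$ for $g\ne e$, so $g_*E_0$ is a subspace of $C^\infty((g\circ w)^*TX\otimes\Lambda^{0,1})$, a different vector space sitting over a different point of the moduli space. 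There is no single ambient space in which to take the span, and the alternative you offer (``averaging $T^n$-invariant cutoffs and pushing forward sections by the action'') does not resolve this. At best your recipe defines the obstruction space fiberwise over the orbit $T^n\cdot[w]$ by $g\mapsto g_*E_0$; the real problem, which your write-up does not address, is how to extend this assignment to all maps $w'$ in a Kuranishi neighborhood in a way that is canonical enough to be equivariant.

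The paper's proof supplies exactly this missing mechanism: one restricts attention to the slice $ev_0^{-1}(p_0)$ for a fixed $p_0\in L$, and for any nearby map $w'$ uses the free transitive $T^n$-action on $L$ to extract the \emph{unique} $g\in T^n$ with $w'(z_0)=g(p_0)$. The obstruction space at $w'$ is then defined as the parallel transport of $g_*(\bigoplus_a E_{0,a})$, and the auxiliary constraints are likewise translated to $w'(z_{a,i})\in g(N_{a,i})$. Equivariance of the charts, of the Kuranishi map, and of the coordinate changes is then automatic from the canonicity of $g=g(w')$ --- no averaging of gluing data is needed. Finally, your last paragraph worries about the wrong issue: for the Kuranishi \emph{structure} the $T^n$-fixed points of $X$ cause no difficulty (the obstruction spaces are supported on the domain and transported along the map, and no transversality is required of them); the fixed-point problem arises only later, for equivariant transversal \emph{multisections} compatible with the sphere-bubble embeddings, which the paper deliberately does not impose (Remark \ref{sphcompatible}).
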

\begin{proof}[Proof]
Except the $T^n$ equivariance this is proved in 
section 7.1 \cite{fooo06} (= section 29
\cite{fooo06pre}). Below we explain how we choose our Kuranishi
structure so that it is $T^n$ equivariant. We also include the case
of the moduli space $\mathcal M_{k+1,\ell}(\beta)$ with interior
marked points.
\par
We first review the construction of the Kuranishi neighborhood from
section 7.1 \cite{fooo06} (= section 29
\cite{fooo06pre}).
\par
Let $\text{\bf x} =((\Sigma,\vec z),w) \in \mathcal M_{k+1,\ell}(\beta)$.
Let $\Sigma_a$ be an irreducible component of $\Sigma$. We consider
the operator
\begin{equation}\label{linearlized}
D_{w,a}\overline{\partial} : W^{1,p}(\Sigma_a;w^*(TX);L,\vec z_a) \to
W^{0,p}(\Sigma_a;w^*(TX) \otimes \Lambda^{0,1}).
\end{equation}
Here $W^{1,p}(\Sigma_a;w^*(TX);L,\vec z_a)$
is the space of section $v$ of $w^*(TX)$
of $W^{1,p}$ class with the following properties.
\begin{enumerate}
\item The restriction of $v$ to $\partial \Sigma_a$ is in $w^*(TL)$.
\item $\vec z_a$ is the set of $\Sigma$ which is either singular or marked.
We assume that $v$ is $0$ at those points.
\end{enumerate}
$\Lambda^{0,1}$ is the bundle of $(0,1)$ forms on $\Sigma_a$ and
$W^{0,p}(\Sigma_a;w^*(TX) \otimes \Lambda^{0,1})$ is the set of
sections of $W^{0,p}$ class of $w^*(TX) \otimes \Lambda^{0,1}$.
$D_{w,a}\overline{\partial}$ is the linearization of (nonlinear)
Cauchy-Riemann equation. (See \cite{floer:morse}.) The operator
(\ref{linearlized}) is Fredholm by ellipticity thereof.
\par
We choose open subsets $W_a$ of $\Sigma_a$ whose closure is disjoint
from the boundary of each of the irreducible component $\Sigma_a$ of
$\Sigma$ and from the singular points and marked points. By the unique continuation theorem, we can choose a finite
dimensional subset $E_{0,a}$ of $C_0^{\infty}(W_a;w^{*}TX)$ (the set
of smooth sections of compact support in $W_a$) such that
$$
\text{\rm Im}D_{w,a}\overline{\partial} + E_{0,a}
= W^{0,p}(\Sigma_a;w^*(TX) \otimes \Lambda^{0,1}).
$$
When $\text{\bf x}$ has nontrivial automorphisms, we choose
$\bigoplus_a E_{0,a}$ to be invariant under the automorphisms.
\par
We next associate a finite dimensional subspace
$E_{0,a}((\Sigma,\vec z),w')$ for $w'$ which is `$C^0$ close to $w$.' We need
some care to handle the case where some component $(\Sigma_a,\vec
z_a)$ is not stable, that is the case for which the automorphism
group $G_a$ of $(\Sigma_a,\vec z_a)$ is of positive dimension. (Note
$G_a$ is different from the automorphism group of $((\Sigma_a,\vec
z_a),w\vert_{\Sigma_a})$. The latter group is finite.) We explain
this choice of $E_{0,1}$ below following Appendix \cite{FO}.
\par
For each unstable components $\Sigma_a$,
pick points
$z_{a,i} \in \Sigma$, $(i=1,\cdots,d_a)$ with
the following properties.
\begin{conds}\label{addmarked}
\begin{enumerate}
\item
$w$ is immersed at $z_{a,i}$.
\item $z_{a,i}$
in the interior of $\Sigma_a$,
$z_{a,i} \ne z_{a,j}$ for $i\ne j$ and
$z_{a,i} \notin \vec z$.
\item
$(\Sigma_a;(\vec z\cap\Sigma_a)\cup (z_{a,1},
\cdots,z_{a,d_a}))$
is stable.
\item
Let
$\Gamma = \Gamma(\text{\bf x})$ be the group of automorphisms of
$\text{\bf x} =((\Sigma,\vec z),w)$.
If $\gamma \in \Gamma$, $\gamma(\Sigma_a) = \Sigma_{a'}$
then $d_a = d_{a'}$  and
$\{\gamma(z_{a,i})\mid i=1,\cdots,d_a\}
= \{z_{a',i'}\mid i'=1,\cdots,d_a\}$.
\end{enumerate}
\end{conds}
For each $a,i$ we choose a submanifold $N_{a,i}$ of
codimension $2$ in $X$ that transversely intersects with
$(\Sigma_a,w)$ at $w_a(z_{a,i})$.
In relation to Condition \ref{addmarked} (4) above we choose
$N_{a,i} = N_{a',i'}$ if $\gamma(z_{a,i}) = z_{a',i'}$.
\par
We add extra interior marked points
$\{z_{a,i} \mid a,i\}$ in addition to $\vec z$ on $(\Sigma,\vec z)$, and obtain a stable curve
$(\Sigma,\vec z^+)$. (Namely $\vec z^+ = \vec z \sqcup \{z_{a,i} \mid a,i\}$.) (We choose an order of the added marked points and fix it.)
\par
We consider a neighborhood $\frak U_0$ of $[\Sigma,\vec z^+]$ in
$\mathcal M_{k+1,\ell'}^{\text{\rm main}}$, that is the moduli space
of bordered stable  curve of genus $0$ with $k+1$ boundary and
$\ell'$ interior marked points. Let $\Gamma_0$ be the group
of automorphisms of $((\Sigma,\vec z^+),w)$. Now both
$\Gamma$ and $\Gamma_0$ are finite groups and $\Gamma \supseteq
\Gamma_0$.
\par
An element $\gamma \in \Gamma$ induces an
automorphism $\gamma : \Sigma \to \Sigma$
which fixes marked points in $\vec z$ and permutes
$\ell - \ell'$ marked points $\{z_{a,i}\mid a,i \}$,
by Condition \ref{addmarked} (4).
Therefore we obtain an element of
$[\gamma_*(\Sigma,\vec z^+)]$ that is different from
$[\Sigma,\vec z^+]$ only by reordering of marked points.
We take the union of neighborhoods of $[\gamma_*(\Sigma,\vec z^+)]$
for various $\gamma\in \Gamma$ in
$\mathcal M_{k+1,\ell'}^{\text{\rm main}}$  and denote it by
$\frak U$.
\par
$\frak U_0$ is written as $\frak V_0/\Gamma_0$ where $\frak V_0$ is a
manifold. Moreover
there exists a manifold $\frak V$ on which $\Gamma$ acts
such that $\frak V/\Gamma_0 = \frak U$, $\frak V/\Gamma \cong \frak U_0$.
\par
Furthermore there is a `universal family' $\frak M \to \frak V$
where the fiber $\Sigma(\text{\bf v})$ of $\text{\bf v} \in \frak V$
is identified with the bordered marked stable curve that represents
the element $[\text{\bf v}] \in \frak V/\Gamma \subset \mathcal
M_{k+1,\ell'}^{\text{\rm main}}$. There is a $\Gamma$ action on
$\frak M$ such that $\frak M \to \frak V$ is $\Gamma$ equivariant.
\par
By the construction, each member $\Sigma(\text{\bf v})$ of our
universal family is diffeomorphic to $\Sigma$ away from singularity.
More precisely we have the following :
\par
Let $\Sigma_0 = \Sigma \setminus S$ where $S$ is a small
neighborhood of the union of the singular point sets and the marked
point sets. Then for each $\text{\bf v}$ there exists a smooth
embedding $i_{\text{\bf v}} : \Sigma_0 \to \Sigma(\text{\bf v})$.
The error of this embedding for becoming a biholomorphic map goes to
$0$ as $\text{\bf v}$ goes to $0$. We may assume $\text{\bf v}
\mapsto i_{\text{\bf v}}$ is $\Gamma$ invariant in an obvious sense
and $i_{\text{\bf v}}$ depends smoothly on $\text{\bf v}$.
\par
We may choose $W_a$ so
that $W_a \subset \Sigma_0$ for each $a$. We moreover
assume that $\bigoplus_a E_{0,a}$ is invariant under the $\Gamma$ action in the following sense : if
$\gamma \in \Gamma$ and $\Sigma_{a'}= \gamma(\Sigma_a)$ then
the isomorphism induced by $\gamma$ sends $E_{0,a}$
to $E_{0,a'}$.
\par
Now we consider a pair $(w',\text{\bf v})$ where
$$
w' : (\Sigma(\text{\bf v}),\partial \Sigma(\text{\bf v})) \to (X,L).
$$
We assume :
\begin{conds}\label{condnbh}
There exists $\epsilon > 0$ depending only on $\text{\bf x}$ such that the
following holds.
\begin{enumerate}
\item $\sup_{x \in \Sigma_0} \text{\rm dist} (w'(i_{\text{\bf v}}(x)),w(x))
\le \epsilon$.
\item
For any connected component $D_c$ of $\Sigma(\text{\bf v}) \setminus \text{\rm Im}(
i_{\text{\bf v}})$, the diameter of $w'(D_c)$
in $X$ (with a fixed Riemannian metric on it)
is smaller than $\epsilon$.
\end{enumerate}
\end{conds}
For each point $x \in W_a$ we use the parallel transport to make the
identification
$$
T_{w(x)} X \otimes \Lambda^{0,1}_x(\Sigma)
\equiv
T_{w'(i_{\text{\bf v}}(x))} X \otimes \Lambda^{0,1}_{i_{\text{\bf v}}(x)}(\Sigma(\text{\bf v})).
$$
Using this identification we obtain an embedding
$$
I_{(\text{\bf v},w')} : \bigoplus_a E_{0,a}
\longrightarrow w^{\prime *}(TX) \otimes \Lambda^{0,1}
(\Sigma(\text{\bf v})).
$$
Via this embedding $I_{(\text{\bf v},w')}$, we consider the
equation
\begin{equation}\label{perturbedeq}
\overline{\partial} w' \equiv 0
\mod \bigoplus_a I_{(\text{\bf v},w')}(E_{0,a}),
\end{equation}
together with the additional conditions
\begin{equation}\label{markedadd}
w'(z_{a,i}) \in N_{a,i}.
\end{equation}
Let $V(\text{\bf x})$ be the set of solution of (\ref{perturbedeq}),
(\ref{markedadd}). This is a smooth manifold (with boundary and corners) by the implicit
function theorem and a gluing argument. (See \cite{fooo06} section A1.4 
= \cite{fooo06pre} section A1.4 for the smoothness
at boundary and corner.) Since we can make all the construction above
invariant under the $\Gamma(\text{\bf x})$ action, the space
$V(\text{\bf x})$ has a $\Gamma(\text{\bf x})$ action. (Note that we
may choose $N_{a,i}$ so that $\{N_{a,i} \mid a,i \}$ is invariant
under the action of $\Gamma(\text{\bf x})$.)
\par
The obstruction bundle $E$ is the space $\bigoplus_a E_{0,a}$
at $\text{\bf x}$ and $\bigoplus_a I_{(\text{\bf v},w')}(E_{0,a})$ at
in $(\text{\bf v},w')$.
\par
We omit the construction of coordinate changes. See 
 \cite{fooo06} section 7.1 (= \cite{fooo06pre}
section 29) which in turn is similar to section 15 \cite{FO}.
\par
The Kuranishi map is given by
\begin{equation}\label{kuramap}
((\Sigma',\vec z'),w') \mapsto
\overline{\partial} w' \in \bigoplus_a E_{0,a}.
\end{equation}
We have thus finished our review of the construction of Kuranishi
charts.
\par
Now we assume that $X$ has a $T^n$ action, which preserves both the
complex and the symplectic structures on $X$. We also assume that
$L$ is a $T^n$ orbit.
\par
We want to construct a family of vector spaces $(\text{\bf v},w')
\mapsto \bigoplus_a I_{(\text{\bf v},w')}(E_{0,a})$ so that it is
invariant under the $T^n$ action. We need to slightly modify the
above construction for this purpose. In fact it is not totally
obvious to make the condition (\ref{markedadd}) $T^n$-invariant.

For this purpose we proceed in the following way. We fix a point
$p_0 \in L$ and consider an element
$$
\text{\bf x} \in ev_0^{-1}(p_0) \cap \mathcal M_{k+1,\ell}(\beta).
$$
We are going to construct a $T^n$ equivariant Kuranishi neighborhood
of the $T^n$ orbit of $\text{\bf x}$.
Let $\text{\bf v} \in \frak V$ and $w' : (\Sigma(\text{\bf v}),
\partial \Sigma(\text{\bf v})) \to (X,L)$ be as before.
We replace Condition \ref{condnbh} by the following.

\begin{conds} Let $z_0$ be the 0-th boundary marked point and let $g \in T^n$ be
the unique element satisfying $w'(z_0) = g(p_0)$.
There exists $\epsilon > 0$ depending only on $\text{\bf x}$ such that the
following holds.
\begin{enumerate}
\item $\sup_{x \in \Sigma_0} \operatorname{dist} (w'(i_{\text{\bf v}}(x)),g(w(x)))
\le \epsilon$.
\item
For any connected component $D_c$ of $\Sigma(\text{\bf v}) \setminus \text{\rm Im}(
i_{\text{\bf v}})$, the diameter of $w'(D_c)$
in $X$ (with a fixed Riemannian metric on it)
is smaller than $\epsilon$.
\end{enumerate}
\end{conds}
Now we define an embedding
$$
I_{(\text{\bf v},w')} : \bigoplus_a E_{0,a}
\longrightarrow w^{\prime *}(TX) \otimes \Lambda^{0,1}
(\Sigma(\text{\bf v}))
$$
as follows : We first use the $g$ action to define an isomorphism
$$
g_* : T_{w(x)} X \otimes \Lambda^{0,1}_x(\Sigma)
\cong T_{g(w(x))} X \otimes \Lambda^{0,1}_x(\Sigma).
$$
Then we use the parallel transport in the same way as before to
define
$$
\bigoplus_a g_*(E_{0,a})
\longrightarrow w^{\prime *}(TX) \otimes \Lambda^{0,1}
(\Sigma(\text{\bf v})).
$$
By composing the two we obtain the embedding $I_{(\text{\bf
v},w')}$. Now we consider the equation
\begin{equation}\label{perturbedeqeq}
\overline{\partial} w' \equiv 0
\mod \bigoplus_a I_{(\text{\bf v},w')}(E_{0,a}),
\end{equation}
together with the additional conditions
\begin{equation}\label{markedaddeq}
w'(z_{a,i}) \in g(N_{a,i})
\end{equation}
as before. Clearly these equations are $T^n$ invariant.
It is also $\Gamma(\text{\bf x})$ invariant.
\par
Note the action of the automorphism group $\Gamma(\text{\bf x})$ of
$\text{\bf x}$ of our Kuranishi structure, which is a finite group,
acts on the source while $T^n$ acts on the target. Therefore it is
obvious that two actions commute.
\par
By definition the obstruction bundle has a $T^n$ action. Moreover
(\ref{kuramap}) is $T^n$ equivariant. It is fairly obvious from the
construction that coordinate changes of the constructed Kuranishi
structure is also $T^n$ equivariant.
\par
The proof of Proposition \ref{equikurast} is now complete.
\end{proof}
\begin{rem}
\begin{enumerate}
\item From the above construction, it is easy to see that our
$T^n$ equivariant Kuranishi structure is compatible with the gluing
at the boundary marked points. Namely under the embedding
$$
\mathcal M_{k_1+1}(\beta_1) {}_{ev_0}\times_{ev_i}
\mathcal M_{k_2+1}(\beta_2)
\subset
\mathcal M_{k_1+k_2}(\beta_1+\beta_2)
$$
the restriction of the Kuranishi structure of the right hand side
coincides with the fiber product of the Kuranishi structure of the
left hand side. More precisely speaking, we can construct the system
of our Kuranishi structures so that this statement holds : this
Kuranishi structure is constructed inductively over the number of
disc components and the energy of $\beta$.
\item
On the other hand, when we construct the $T^n$ invariant
Kuranishi structure in the way we described above it may not
be compatible with the gluing at the interior marked point. Namely
by the embedding
\begin{equation}\label{eq:interiorbubble2}
\mathcal M_1(\alpha) \times_X \mathcal M_{k+1,1}(\beta) \subset \mathcal M_{k+1}(\beta+\alpha)
\end{equation}
the restriction of the Kuranishi structure of the right hand sides
may not coincide with the fiber product of the Kuranishi structure
of the left hand side. This compatibility is not used in this paper
and hence not required. See Remark \ref{sphcompatible} where similar
point is discussed for the choice of multisections.
\par
However contrary to the choice of multisections mentioned in Remark
\ref{sphcompatible}, we remark that it is possible to construct
$T^n$ equivariant Kuranishi structure compatible with
(\ref{eq:interiorbubble2}). Since we do not use this point in the
paper, we will not elaborate it here.
\end{enumerate}
\end{rem}
We next review on the multisections. (See section 3 \cite{FO}.) Let
$(V_{\alpha},E_{\alpha},\Gamma_{\alpha},\psi_{\alpha},s_{\alpha})$
be a Kuranishi chart of $\mathcal M$. For $x \in V_{\alpha}$ we
consider the fiber $E_{\alpha,x}$ of the bundle $E_{\alpha}$ at $x$.
We take its $l$ copies and consider the direct product $
E_{\alpha,x}^{l} $. We take the quotient thereof by the action of
symmetric group of order $l!$ and let $\mathcal S^l(E_{\alpha,x})$
be the quotient space. There exists a map
$
tm_m : \mathcal S^l(E_{\alpha,x})
\to \mathcal S^{lm}(E_{\alpha,x}),
$
which sends $[a_1,\cdots,a_l]$ to
$
[\,\underbrace {a_1,\cdots,a_1}_{\text{$m$ copies}},\cdots,
\underbrace {a_l,\cdots,a_l}_{\text{$m$ copies}}].
$
A smooth {\it multisection} $s$ of the bundle
$
E_{\alpha} \to V_{\alpha}
$
consists of an open covering
$
\bigcup_iU_i = V_{\alpha}
$
and $s_i$ which maps $x\in U_i$ to $s_i(x) \in \mathcal
S^{l_i}(E_{\alpha,x})$. They are required to have the following
properties :
\begin{conds}
\begin{enumerate}
\item $U_i$ is $\Gamma_{\alpha}$-invariant. $s_i$ is $\Gamma_{\alpha}$-equivariant.
(We remark that there exists an obvious map
$
\gamma : \mathcal S^{l_i}(E_{\alpha,x}) \to \mathcal S^{l_i}(E_{\alpha,\gamma x})
$
for each $\gamma \in \Gamma_{\alpha}$.)
\item If $x \in U_i \cap U_j$ then we have
$$
tm_{l_j}(s_i(x)) = tm_{l_i}(s_j(x)) \in \mathcal S^{l_il_j}(E_{\alpha,\gamma x}).
$$
\item $s_i$ is {\it liftable and smooth} in the following sense.
For each $x$ there exists a smooth section $\tilde s_i$ of
$\underbrace{E_{\alpha} \oplus \cdots \oplus E_{\alpha}}_{\text{$l_i$ times}}$
in a neighborhood of $x$ such that
\begin{equation}\label{locallift}
\tilde s_i(y) = (s_{i,1}(y),\cdots,s_{i,l_i}(y)),
\quad
s_i(y) = [s_{i,1}(y),\cdots,s_{i,l_i}(y)].
\end{equation}
\end{enumerate}
\end{conds}
We identify two multisections $(\{U_i\},\{s_i\},\{l_i\})$, $(\{U'_i\},\{s'_i\},\{l'_i\})$
if
$$
tm_{l_i}(s_i(x)) = tm_{l'_j}(s'_j(x)) \in \mathcal S^{l_il'_j}(E_{\alpha,\gamma x})
$$
on $U_i \cap U'_j$.
We say $s_{i,j}$ to be a {\it branch} of $s_i$ in the situation of (\ref{locallift}).
\par

We next prove the following lemma which we used in section \ref{sec:calcpot}.
\begin{lem}\label{quotientkura}
Let $\mathcal M$ have a $T^n$ action and a $T^n$
equivariant Kuranishi structure. Suppose that the $T^n$ action on each
of the Kuranishi neighborhood is free. Then we can descend the
Kuranishi structure to $\mathcal M/T^n$ in a canonical way.
\end{lem}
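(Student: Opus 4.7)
\medskip

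The plan is to form the quotient of each Kuranishi datum by the free $T^n$ action and verify that the result assembles into a Kuranishi structure on $\mathcal M/T^n$. Concretely, given a chart $(V_\alpha, E_\alpha, \Gamma_\alpha, \psi_\alpha, s_\alpha)$, the freeness of the $T^n$ action on $V_\alpha$ guarantees that $\overline V_\alpha := V_\alpha/T^n$ is a smooth manifold (with boundary or corners, inherited from $V_\alpha$). Because the strong $T^n$-equivariance requires the $T^n$ and $\Gamma_\alpha$ actions to commute, the $\Gamma_\alpha$ action descends to an effective action on $\overline V_\alpha$. Similarly, since $E_\alpha \to V_\alpha$ is a $T^n$-equivariant vector bundle and the base action is free, $\overline E_\alpha := E_\alpha/T^n \to \overline V_\alpha$ is again a smooth finite-rank vector bundle, equipped with a descended $\Gamma_\alpha$ action covering the one on $\overline V_\alpha$.

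Next I would descend the Kuranishi map and the parametrization. Since $s_\alpha$ is $T^n$-equivariant, it induces a $\Gamma_\alpha$-equivariant section $\overline s_\alpha$ of $\overline E_\alpha$, whose zero set satisfies $\overline s_\alpha^{-1}(0) = s_\alpha^{-1}(0)/T^n$. The $T^n$-equivariance of $\psi_\alpha$ means it factors through a homeomorphism $\overline\psi_\alpha : \overline s_\alpha^{-1}(0)/\Gamma_\alpha \to \mathcal M/T^n$ onto its image. Thus $(\overline V_\alpha, \overline E_\alpha, \Gamma_\alpha, \overline\psi_\alpha, \overline s_\alpha)$ is a candidate chart, and the collection of these charts covers $\mathcal M/T^n$ because the original charts cover $\mathcal M$.

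For the coordinate changes, I would apply the same quotient procedure to the triples $(\varphi_{\alpha_2,\alpha_1}, \widehat\varphi_{\alpha_2,\alpha_1}, \widehat{\widehat\varphi}_{\alpha_2,\alpha_1})$. By Definition \ref{eqkurastr}(4) these are $T^n$-equivariant, so they descend to maps $\overline\varphi_{\alpha_2,\alpha_1} : \overline V_{\alpha_2,\alpha_1} \to \overline V_{\alpha_2}$ and $\overline{\widehat\varphi}_{\alpha_2,\alpha_1}: \overline E_{\alpha_1}\vert_{\overline V_{\alpha_2,\alpha_1}} \to \overline E_{\alpha_2}$, with the group homomorphism $\widehat{\widehat\varphi}_{\alpha_2,\alpha_1}$ unchanged. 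Freeness of the $T^n$-action implies that each descended $\overline\varphi_{\alpha_2,\alpha_1}$ remains a smooth embedding of the appropriate codimension, namely one less than $\dim V_{\alpha_2} - \dim V_{\alpha_1}$ by $n$ on both sides, so the dimension bookkeeping is consistent. All the cocycle-type identities in Conditions \ref{coordinatechange} (compatibility of Kuranishi maps, parametrizations, and composition of coordinate changes) are preserved by passage to quotients because each identity is $T^n$-invariant on the nose.

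Finally I would check the tangent bundle condition of Condition \ref{normalcompatibility} for the descended charts. The normal bundle of $\overline\varphi_{\alpha_2,\alpha_1}$ is canonically identified with the $T^n$-quotient of the normal bundle of $\varphi_{\alpha_2,\alpha_1}$ (since the $T^n$ orbits lie entirely inside $V_{\alpha_2,\alpha_1}$ and $V_{\alpha_2}$), and likewise for the obstruction bundle quotient $\overline E_{\alpha_2}/\overline E_{\alpha_1}$; the differential of $\overline s_{\alpha_2}$ in the normal direction is then the quotient of $ds_{\alpha_2}$, so it is again an isomorphism. The main (mild) obstacle is purely bookkeeping: ensuring that the local triviality of $V_\alpha \to \overline V_\alpha$ and of $E_\alpha \to \overline E_\alpha$ is used in each smoothness verification, and that orientability descends (which requires choosing a fixed orientation of $T^n$). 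Canonicity of the construction follows from the fact that every step is defined without auxiliary choices beyond the orientation of $T^n$.
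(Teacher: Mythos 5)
Your proposal is correct and follows essentially the same route as the paper: quotient each chart datum $(V_\alpha,E_\alpha,\Gamma_\alpha,\psi_\alpha,s_\alpha)$ by the free $T^n$ action, use the commutativity of the $T^n$ and $\Gamma_\alpha$ actions to descend the finite group action, and descend the section, parametrization, and coordinate changes by equivariance. The paper's proof is in fact terser than yours (it dismisses the coordinate changes and tangent-bundle condition as routine), so your extra verifications are consistent elaborations rather than a different argument.
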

\begin{proof}
Let $(V_{\alpha},E_{\alpha},\Gamma_{\alpha},
\psi_{\alpha},s_{\alpha})$ be a Kuranishi chart. Since $T^n$ action
on $V_{\alpha}$ is free $V_{\alpha}/T^n$ is a smooth manifold and
$E_{\alpha}/T^n \to V_{\alpha}/T^n$ is a vector bundle. Since
$\Gamma_{\alpha}$ action commutes with $T^n$ action, it follows that
it acts on this vector bundle. The $T^n$ equivariance of
$s_{\alpha}$ implies that we have a section $\overline s_{\alpha}$
of $E_{\alpha}/T^n \to V_{\alpha}/T^n$. Moreover since
$\psi_{\alpha} : s_{\alpha}^{-1}(0) \to \mathcal M$ is $T^n$
equivariant it induces a map $\overline s_{\alpha}^{-1}(0) \to
\mathcal M/T^n$. Thus we obtain a Kuranishi chart. It is easy to
define the coordinate changes.
\end{proof}
\begin{defn}\label{equisec}
In the situation of Proposition \ref{quotientkura}, we say that a
system of multisections of the Kuranishi structure of {\it $\mathcal
M$ is $T^n$ equivariant}, if it is induced from the multisection of
the Kuranishi structure on $\mathcal M/T^n$ in an obvious way.
\end{defn}
\begin{cor}\label{equiperturb}
In the situation of Proposition $\ref{quotientkura}$,
we assume that we have a $T^n$ equivariant multisection
at the boundary of $\mathcal M$, which is transversal to $0$.
Then it extends to a $T^n$ equivariant multisection of
$\mathcal M$.
\end{cor}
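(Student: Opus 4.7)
The plan is to exploit the correspondence, given by Lemma \ref{quotientkura} and Definition \ref{equisec}, between $T^n$-equivariant multisections of the Kuranishi structure on $\mathcal M$ and ordinary multisections of the quotient Kuranishi structure on $\mathcal M/T^n$, and then to invoke the standard transversal extension theorem for multisections on the quotient.

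First I would translate the hypothesis. Since the $T^n$-action on every Kuranishi neighborhood $V_\alpha$ is free and commutes with $\Gamma_\alpha$, the quotient chart $(V_\alpha/T^n, E_\alpha/T^n, \Gamma_\alpha, \overline\psi_\alpha, \overline s_\alpha)$ is a genuine Kuranishi chart on $\mathcal M/T^n$, and the coordinate changes descend verbatim. A $T^n$-equivariant multisection on $\partial\mathcal M$ therefore induces a multisection $\overline{\mathfrak s}^\partial$ on $\partial(\mathcal M/T^n) = (\partial\mathcal M)/T^n$; because the quotient map $V_\alpha\to V_\alpha/T^n$ is a surjective submersion with kernel tangent to the free $T^n$-orbits, transversality to $0$ passes down to $\overline{\mathfrak s}^\partial$.

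Next I would invoke the classical existence/extension theorem for transversal multisections on a space with Kuranishi structure with boundary (Theorem 3.11 of \cite{FO}, or equivalently Lemma A1.23 of \cite{fooo06}): given a transversal multisection on the boundary of a space with good coordinate system, it extends to a transversal multisection on the whole space. Applied to the quotient $\mathcal M/T^n$ (with the induced good coordinate system) and to $\overline{\mathfrak s}^\partial$, this yields a transversal multisection $\overline{\mathfrak s}$ on all of $\mathcal M/T^n$ restricting to $\overline{\mathfrak s}^\partial$ on the boundary.

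Finally I would lift. Pulling back each local branch of $\overline{\mathfrak s}$ via the $T^n$-quotient $V_\alpha\to V_\alpha/T^n$ produces a $T^n$-invariant multisection $\mathfrak s$ on $\mathcal M$ which is, by Definition \ref{equisec}, $T^n$-equivariant, and which restricts to the prescribed multisection on $\partial\mathcal M$. Transversality lifts because the quotient maps are submersions: at any zero $x$ of a local branch $\tilde{\mathfrak s}_{i,j}$, the differential of the corresponding branch of $\overline{\mathfrak s}$ at $[x]$ surjects onto the fiber of $E_\alpha/T^n$, and pulling back along the submersion $V_\alpha\to V_\alpha/T^n$ preserves surjectivity onto the fiber of $E_\alpha$. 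The main obstacle will be verifying that the quotient good coordinate system assembled from $\{(V_\alpha/T^n, E_\alpha/T^n,\Gamma_\alpha,\overline\psi_\alpha,\overline s_\alpha)\}$ satisfies Conditions \ref{coordinatechange} and \ref{normalcompatibility} so that the extension theorem from \cite{FO} really applies; this is essentially bookkeeping, since the original data is $T^n$-equivariant and the $T^n$-action is free, but some care is needed to see that the tangent bundle condition on $\mathcal M$ descends to a tangent bundle condition on $\mathcal M/T^n$.
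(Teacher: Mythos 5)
Your proof is correct and follows exactly the route the paper intends: since Definition \ref{equisec} defines a $T^n$-equivariant multisection as one induced from the quotient Kuranishi structure of Lemma \ref{quotientkura}, the paper simply cites the non-equivariant extension lemma (Lemma 3.14 of \cite{FO}) applied to $\mathcal M/T^n$, which is precisely your descend--extend--lift argument. Your additional remarks on how transversality passes down and lifts back along the free quotient submersions fill in the details the paper leaves implicit.
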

This is an immediate consequence of Lemma 3.14 \cite{FO}, that is
the non-equivariant version.
\section{Appendix 3 : Smooth correspondence via the zero set of
multisection}
\label{sec:integration}
In this section we explain the way how we use the zero
set of multisection to define smooth correspondence,
when appropriate submersive properties are satisfied.
\par
Such a construction is a special case of the techniques of using a continuous
family of multisections and integration along the fiber on their
zero sets so that smooth correspondence by spaces with Kuranishi
structure induces a map between de Rham complex.
\par
This (more general) technique is not new and is known to some
experts. In fact \cite{Rua99}, section 16 \cite{fukaya;abel} use a
similar technique and section 7.5  \cite{fooo06} (= section 33 \cite{fooo06pre}), \cite{fukaya:loop},
\cite{fukaya;operad} contain the details of this more general
technique.
\par
We explain the special case (namely the case we use a single
multisection) in our situation of toric manifold, for the sake of
completeness and of reader's convenience.
\par
Let $\mathcal M$ be a space with Kuranishi structure and $ev_s :
\mathcal M \to L_s$, $ev_t : \mathcal M \to L_t$ be strongly
continuous smooth maps. (Here $s$ and $t$ stand for source and target,
respectively.) We assume our smooth manifolds $L_s, L_t$ are compact
and oriented without boundary. We also assume $\mathcal M$ has a
tangent bundle and is oriented in the sense of Kuranishi structure.
\par
Suppose that $L_t$ has a free and transitive $T^n$ action, and $L_s$
and $\mathcal M$ have $T^n$ action. We assume that the Kuranishi
structure on $\mathcal M$ is $T^n$ equivariant and the maps $ev_s$,
$ev_t$ are $T^n$ equivariant.
\par
Let $\{(V_{\alpha},E_{\alpha},\Gamma_{\alpha},
\psi_{\alpha},s_{\alpha})\}$ be a $T^n$ equivariant Kuranishi coordinate system
(good coordinate system) of $\mathcal M$.
We use Corollary \ref{equiperturb} to find a
$T^n$ equivariant (system) of multisection $\frak s_{\alpha} :
V_{\alpha} \to E_{\alpha}$ which is transversal to $0$.
\par
Let $\theta_{\alpha}$ be a smooth differential form of compact
support on $V_{\alpha}$. We assume that $\theta_{\alpha}$ is
$\Gamma_{\alpha}$-invariant. Let $f_{\alpha} : V_{\alpha} \to L_s$ be
a $\Gamma_{\alpha}$ equivariant submersion. (The $\Gamma_{\alpha}$
action on $L_s$ is trivial.) We next define {\it integration along the fiber}
$$
((V_{\alpha},E_{\alpha},\Gamma_{\alpha},\psi_{\alpha},s_{\alpha}),
\frak s_{\alpha},f_{\alpha})_* (\theta_{\alpha})
\in \Omega^{\deg \theta_{\alpha} + \dim L_t - \dim \mathcal M}(L_t).
$$
We first fix $\alpha$.
Let $(U_i,\frak s_{\alpha,i})$ be a representative of $\frak s_{\alpha}$. Namely $\{U_i\mid i\in I\}$ is an open covering of
$V_{\alpha}$ and $\frak s_{\alpha}$ is represented by
$\frak s_{\alpha,i}$ on $U_i$. By the definition of the
multisection, $U_i$ is $\Gamma_{\alpha}$-invariant. We may shrink
$U_i$, if necessary, so that there exists a lifting $\tilde{\frak
s}_{\alpha,i} = (\tilde{\frak s}_{\alpha,i,1},\cdots,\tilde{\frak
s}_{\alpha,i,l_i})$ as in (\ref{locallift}).
\par
Let $\{\chi_i \mid i \in I\}$ be a partition of unity subordinate to
the covering $\{U_i\mid i\in I\}$. By replacing $\chi_i$ with its
average over $\Gamma_{\alpha}$ we may assume $\chi_i$ is
$\Gamma_{\alpha}$-invariant.
\par
We put
\begin{equation}
\tilde{\frak s}_{\alpha,i,j}^{-1}(0)
= \{x \in U_i \mid \tilde{\frak s}_{\alpha,i,j}(x) = 0\}.
\end{equation}
By assumption $\tilde{\frak s}_{\alpha,i,j}^{-1}(0)$ is a smooth manifold.
Since the $T^n$ action on $L_t$ is free and transitive it follows that
\begin{equation}\label{restrictsubmersion}
ev_{t,\alpha}\vert_{\tilde{\frak s}_{\alpha,i,j}^{-1}(0)} :
\tilde{\frak s}_{\alpha,i,j}^{-1}(0) \to L_t
\end{equation}
is a submersion.
\begin{defn}
We define a differential form on $L_t$ by
\begin{equation}\label{intfibermainformula}
\aligned
&((V_{\alpha},E_{\alpha},\Gamma_{\alpha},\psi_{\alpha},s_{\alpha}),
\frak s_{\alpha},ev_{t,\alpha})_* (\theta_{\alpha})\\
&=
\frac{1}{\#\Gamma_{\alpha}}\sum_{i=1}^I\sum_{j=1}^{l_i} \frac{1}{l_i}
(ev_{t,\alpha})_!\left(\chi_i \theta_{\alpha}\vert_{\tilde{\frak s}_{\alpha,i,j}^{-1}(0)}\right).
\endaligned
\end{equation}
Here $(ev_{t,\alpha})_!$ is the
integration along fiber of the smooth submersion (\ref{restrictsubmersion}).
\end{defn}
\begin{lem}
The right hand side of $(\ref{intfibermainformula})$ depends only on
$(V_{\alpha},E_{\alpha},\Gamma_{\alpha},\psi_{\alpha},s_{\alpha})$,
$\frak s_{\alpha}$, $ev_{t,\alpha}$,
and $\theta_{\alpha}$ but independent of the following choices :
\begin{enumerate}
\item The choice of representatives $(\{U_i\},\frak s_{\alpha,i})$ of $\frak s_{\alpha}$.
\item
The partition of unity $\chi_i$.
\end{enumerate}
\end{lem}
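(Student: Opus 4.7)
The plan is to verify the two independence statements separately, reducing each to a routine rearrangement of sums once the local liftings are made to match. Throughout, the integration along fiber $(ev_{t,\alpha})_!$ on each branch $\tilde{\frak s}_{\alpha,i,j}^{-1}(0)$ is well-defined as a smooth differential form on $L_t$ because the restriction of $ev_{t,\alpha}$ to that zero set is a submersion (as noted in the text, using the fact that the $T^n$-action on $L_t$ is free and transitive, and $ev_{t,\alpha}$ is $T^n$-equivariant). The $\Gamma_\alpha$-invariance of $\chi_i$ and $\theta_\alpha$ and the $\Gamma_\alpha$-equivariance of $\tilde{\frak s}_{\alpha,i}$ ensure each summand descends properly, and the factor $1/\#\Gamma_\alpha$ accounts for the orbit counting.

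For independence from the choice of partition of unity, I would compare two partitions $\{\chi_i\}$ and $\{\chi_i'\}$ subordinate to the same open cover $\{U_i\}$ (after passing to a common refinement of covers, if necessary, as handled below). Consider the refined partition $\{\chi_i \chi_j'\}$ subordinate to $\{U_i \cap U_j\}$, and expand both definitions as double sums using the identity $\sum_j \chi_j' = 1$ (respectively $\sum_i \chi_i = 1$) on each branch zero set. The two expansions agree termwise after one uses the consistency relation $tm_{l_j}(\frak s_{\alpha,i}(x)) = tm_{l_i}(\frak s_{\alpha,j}(x))$ on $U_i \cap U_j$, which identifies the multisets of branches on the overlap.

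For independence from the choice of representatives, the central observation is a ``stabilization lemma'': if $(\{U_i\},\tilde{\frak s}_{\alpha,i})$ is a representative with $l_i$ branches on $U_i$, then replacing $\tilde{\frak s}_{\alpha,i}$ by $tm_m\tilde{\frak s}_{\alpha,i}$ (with $ml_i$ branches, each branch of $\tilde{\frak s}_{\alpha,i}$ repeated $m$ times) does not change the right-hand side of \eqref{intfibermainformula}: each original summand $(1/l_i)(ev_{t,\alpha})_!(\chi_i\theta_\alpha|_{\tilde{\frak s}_{\alpha,i,j}^{-1}(0)})$ is replaced by $m$ identical copies each weighted by $1/(ml_i)$, which sum to the original. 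Given two representatives $(\{U_i\},\tilde{\frak s}_{\alpha,i})$ and $(\{U_k'\},\tilde{\frak s}_{\alpha,k}')$ of the same multisection $\frak s_\alpha$, I would pass to the common refinement $\{U_i \cap U_k'\}$ and apply $tm_{l_k'}$ to the first and $tm_{l_i}$ to the second. By the equivalence relation defining multisections, the two stabilized liftings have the same unordered branch multisets on each overlap $U_i \cap U_k'$, hence differ only by a (locally defined, measurable) permutation of branches, which leaves the symmetric sum over $j$ invariant.

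The routine but slightly delicate step, and the main place where care is needed, is combining these two reductions so that the partition of unity, the covering, and the liftings are all simultaneously refined in a compatible way; this is essentially a bookkeeping argument, and it is the one step whose verification requires patience. Once both representatives have been brought to the same open cover and the same stabilized branch list on overlaps, a direct termwise comparison, together with the elementary fact that integration along fiber commutes with finite sums and with multiplication by smooth functions pulled up from the base, concludes the proof.
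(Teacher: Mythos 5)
Your proposal is correct and follows exactly the route the paper intends: the paper itself gives no details, stating only that the proof is a "straightforward generalization of the well-definedness of integration on a manifold" and leaving it to the reader, and your common-refinement argument for the partition of unity together with the stabilization-by-$tm_m$ argument for the liftings is precisely that textbook argument adapted to multisections. The only point to phrase carefully is the last step for (1): rather than invoking a "measurable permutation" of branches, it is cleaner to note that the weighted zero set $\frac{1}{l_i}\sum_j \tilde{\frak s}_{\alpha,i,j}^{-1}(0)$, as a locally finite union of submanifolds counted with multiplicity, is determined pointwise by the unordered multiset $\frak s_{\alpha,i}(x)$, so the weighted sum of fiber integrals depends only on the multisection.
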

\begin{proof}
The proof is straightforward generalization of the proof of
well-definedness of integration on manifold, which can be found in
the text book of manifold theory, and is left to the leader.
\end{proof}
\par
So far we have been working on one Kuranishi chart
$(V_{\alpha},E_{\alpha},\Gamma_{\alpha},\psi_{\alpha},s_{\alpha})$.
We next describe the compatibility conditions of multisections for
various $\alpha$. During the construction we need to shrink
$V_{\alpha}$ a bit several times. We will not explicitly mention
this point henceforth.
\par
Let $\alpha_1 < \alpha_2$.
For $\alpha_1 < \alpha_2$, we take the normal
bundle $N_{V_{\alpha_2,\alpha_1}}V_{\alpha_2}$ of
$\varphi_{\alpha_2,\alpha_1}(V_{\alpha_2,\alpha_1})$ in
$V_{\alpha_2}$.
We use an appropriate $\Gamma_{\alpha_2}$ invariant Riemannian
metric on $V_{\alpha_2}$ to define
the exponential map
\begin{equation}\label{normalexp}
\text{Exp}_{\alpha_2,\alpha_1} :
B_{\e}N_{V_{\alpha_2,\alpha_1}}V_{\alpha_2} \to V_2.
\end{equation}
(Here $B_{\e}N_{V_{\alpha_2,\alpha_1}}V_{\alpha_2}$ is the $\epsilon$ neighborhood of the
zero section of $N_{V_{\alpha_2,\alpha_1}}V_{\alpha_2}$.)
\par
Using $\text{Exp}_{\alpha_2,\alpha_1}$, we identify
$B_{\e}N_{V_{\alpha_2,\alpha_1}}V_{\alpha_2}/{\Gamma_{\alpha_1}}
$  to an open subset of $V_{\alpha_1}/{\Gamma_{\alpha_1}}$
and denote it by $U_{\epsilon}({V_{\alpha_2,\alpha_1}}/{\Gamma_{\alpha_1}})$.
\par
Using the projection
$$
\text{\rm Pr}_{V_{\alpha_2,\alpha_1}} :
U_{\epsilon}({V_{\alpha_2,\alpha_1}}/{\Gamma_{\alpha_1}}) \to {V_{\alpha_2,\alpha_1}}/{\Gamma_{\alpha_1}}
$$
we extend the orbibundle $E_{\alpha_1}$ to $U_{\epsilon}({V_{\alpha_2,\alpha_1}}/{\Gamma_{\alpha_1}})$.
Also we extend the embedding $E_{\alpha_1} \to {\widehat{\varphi}_{\alpha_2,\alpha_1}^*E_{\alpha_2}}$,
(which is induced by $\widehat{\varphi}_{\alpha_2,\alpha_1}$) to
$U_{\epsilon}({V_{\alpha_2,\alpha_1}}/{\Gamma_{\alpha_1}})$.
\par
We fix a $\Gamma_{\alpha}$-invariant inner product of the bundles
$E_{\alpha}$. We then have a bundle isomorphism
\begin{equation}\label{bdliso}
E_{\alpha_2} \cong E_{\alpha_1} \oplus \frac{\widehat{\varphi}_{\alpha_2,\alpha_1}^*E_{\alpha_2}}{E_{\alpha_1}}
\end{equation}
on $U_\e(V_{\alpha_2,\alpha_1}/\Gamma_{\alpha_1})$. We can use
Condition \ref{normalcompatibility} to modify
$\text{Exp}_{\alpha_2,\alpha_1}$ in (\ref{normalexp}) so that the
following is satisfied.
\begin{conds}\label{normalcompcond}
If $y = \text{Exp}_{\alpha_2,\alpha_1}(\tilde y) \in U_{\epsilon}({V_{\alpha_2,\alpha_1}}/{\Gamma_{\alpha_1}})$ then
\begin{equation}\label{compatiformula}
ds_{\alpha_2}(\tilde y \mod TV_{\alpha_1}) \equiv s_{\alpha_2}(y) \mod E_{\alpha_1}.
\end{equation}
\end{conds}
Let us explain the notation of (\ref{compatiformula}).
We remark that $\tilde y \in T_{{\varphi}_{\alpha_2,\alpha_1}(x)}V_{\alpha_2}$ for $x =
\text{\rm Pr}(\tilde y) \in V_{\alpha_2,\alpha_1}$. Hence
$$
\tilde y \mod TV_{\alpha_1} \in \frac{T_{{\varphi}_{\alpha_2,\alpha_1}(x)}V_{\alpha_2}}{T_xV_{\alpha_1}}.
$$
Therefore
$$
ds_{\alpha_2}(\tilde y \mod TV_{\alpha_1})
\in \frac{(E_{\alpha_2})_{{\varphi}_{\alpha_2,\alpha_1}(x)}}{(E_{\alpha_1})_{x}}.
$$
(\ref{compatiformula}) claims that it coincides with $s_{\alpha_2}$ modulo $(E_{\alpha_1})_{x}$.
\par
We remark that Condition \ref{normalcompatibility} implies that
$$
[\tilde y] \mapsto ds_{\alpha_2}(\tilde y \mod TV_{\alpha_1}) :
\frac{T_{{\varphi}_{\alpha_2,\alpha_1}(x)}V_{\alpha_2}}{T_xV_{\alpha_1}}
\longrightarrow \frac{(E_{\alpha_2})_{{\varphi}_{\alpha_2,\alpha_1}(x)}}{(E_{\alpha_1})_{x}}
$$
is an isomorphism.
Therefore we can use the implicit function theorem to modify
$\text{Exp}_{\alpha_2,\alpha_1}$ so that Condition
\ref{normalcompcond} holds.
\begin{defn}\label{compdefini}
A multisection $\frak s_{\alpha_2}$ of $V_{\alpha_2}$
is said to be {\it compatible} with $\frak s_{\alpha_1}$ if
the following holds for each
$y = \text{Exp}_{\alpha_2,\alpha_1}(\tilde y) \in U_{\epsilon}({V_{\alpha_2,\alpha_1}}/{\Gamma_{\alpha_1}})$.
\begin{equation}\label{compatibilityforalpha}
\frak s_{\alpha_2}(y)
= \frak s_{\alpha_1}(\text{\rm Pr}(\tilde y)) \oplus ds_{\alpha_2}(\tilde y \mod TV_{\alpha_1}).
\end{equation}
\end{defn}
We remark that $\frak s_{\alpha_1}(w,\text{\rm Pr}(\tilde y))$ is a
multisection of $\pi_{\alpha_1}^*E_{\alpha_1}$ and
$ds_{\alpha_2}(\tilde y \mod TV_{\alpha_1})$ is a (single valued)
section. Therefore via the isomorphism (\ref{bdliso}) the right hand
side of (\ref{compatibilityforalpha}) defines an element of
$\mathcal S^{l_i}(E_{\alpha_2})_x$ ($x = \text{\rm Pr}(\tilde y)$),
and hence is regarded as a multisection of
$\pi_{\alpha_2}^*E_{\alpha_2}$. In other words, we omit
$\widehat{\varphi}_{\alpha_2,\alpha_1}$ in
(\ref{compatibilityforalpha}).
\par
We next choose a partition of unity $\chi_{\alpha}$ subordinate to
our Kuranishi charts. To define the notion of partition of unity, we
need some notation.  Let $\text{Pr}_{\alpha_2,\alpha_1} :
N_{V_{\alpha_2,\alpha_1}}V_{\alpha_2} \to V_{\alpha_2,\alpha_1}$ be
the projection. We fix a $\Gamma_{\alpha_1}$-invariant positive
definite metric of $N_{V_{\alpha_2,\alpha_1}}V_{\alpha_2}$ and let
$r_{\alpha_2,\alpha_1} : N_{V_{\alpha_2,\alpha_1}}V_{\alpha_2} \to
[0,\infty)$ be the norm with respect to this metric. We fix a
sufficiently small $\delta$ and let $\chi^{\delta} : \R \to [0,1]$
be a smooth function such that
$$
\chi^{\delta}(t)
=
\begin{cases}
0 &t \ge \delta \\
1 &t \le \delta/2.
\end{cases}
$$

Let $U_{\delta}(V_{\alpha_2,\alpha_1}/\Gamma_{\alpha_1})$
be the image of the exponential map. Namely
$$
U_{\delta}(V_{\alpha_2,\alpha_1}/\Gamma_{\alpha_1}) = \{\text{Exp}(v) \mid v \in
N_{V_{\alpha_2,\alpha_1}}V_{\alpha_2}/
\Gamma_{\alpha_1} \mid r_{\alpha_2,\alpha_1}(v) < \delta\}.
$$
We push out our function $r_{\alpha_2,\alpha_1}$ to
$U_{\delta}(V_{\alpha_2,\alpha_1}/\Gamma_{\alpha_1})$ and denote it
by the same symbol. We call $r_{\alpha_2,\alpha_1}$ a {\it tubular
distance function}. We require $r_{\alpha_2,\alpha_1}$ to satisfy
the compatibility conditions for various tubular
neighborhoods and tubular distance functions, which is
formulated in sections 5 and 6 in \cite{Math73}.
\par
Let $x \in V_{\alpha}$.
We put
$$\aligned
\frak A_{x,+} &= \{ \alpha_+ \mid x \in V_{\alpha_+,\alpha}, \, \alpha_+ > \alpha\} \\
\frak A_{x,-} &= \{ \alpha_- \mid [x \mod\Gamma_{\alpha}] \in
U_{\delta}(V_{\alpha,\alpha_-}/\Gamma_{\alpha_-}),\, \alpha_- <
\alpha\}.
\endaligned$$
For $\alpha_- \in \frak A_{x,-}$ we take $x_{\alpha_-}$ such that
$\text{Exp}(x_{\alpha_-}) = x$.
\begin{defn}\label{partitionofunity}
A system $\{\chi_{\alpha} \mid \alpha \in \frak A\}$ of
$\Gamma_{\alpha}$-equivariant smooth functions $\chi_{\alpha} :
V_{\alpha} \to [0,1]$ of compact support is said to be a partition
of unity subordinate to our Kuranishi chart if :
$$
\chi_{\alpha}(x) + \sum_{\alpha_- \in \frak A_{x,-}}
\chi^{\delta}(r_{\alpha,\alpha_-}(x_{\alpha_-}))
\chi_{\alpha_-}(\text{Pr}_{\alpha,\alpha_-}
(x_{\alpha_-}))
+ \sum_{\alpha_+ \in \frak A_{x,+}} \chi_{\alpha_+}(\varphi_{\alpha_+,\alpha}(x))
= 1.
$$
\end{defn}
\begin{lem}
There exists a partition of unity subordinate to our Kuranishi chart.
We may choose them so that they are $T^n$ equivariant.
\end{lem}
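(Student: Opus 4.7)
The plan is to reduce the statement to a standard construction of a partition of unity adapted to a finite good coordinate system, with the equivariance obtained at the end by averaging over $T^n$ and $\Gamma_\alpha$. Since $\mathcal M$ is compact, we may take $\mathfrak A$ to be finite; by a preliminary shrinking (which preserves all the compatibilities in Condition \ref{coordinatechange}) I would also arrange nested $\Gamma_\alpha$- and $T^n$-invariant open subsets $V_\alpha'' \subset V_\alpha' \subset V_\alpha$ such that the inner charts still cover $\mathcal M$ and the closure of each smaller set sits in the next.

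First I would produce, for each $\alpha$, a $\Gamma_\alpha$-invariant smooth bump function $\tilde\chi_\alpha$ on $V_\alpha$ that equals $1$ on $V_\alpha''$ and is supported in $V_\alpha'$, compatible with the tubular neighborhood structure in the following sense: on the collar $U_\delta(V_{\alpha,\alpha_-}/\Gamma_{\alpha_-})$ I would take $\tilde\chi_\alpha$ to be a product $\chi^\delta(r_{\alpha,\alpha_-})\cdot (\tilde\chi_{\alpha,\alpha_-}\circ \text{Pr}_{\alpha,\alpha_-})$ for suitable cutoffs $\tilde\chi_{\alpha,\alpha_-}$ on $V_{\alpha,\alpha_-}$. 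The Mather-type compatibility of the tubular distance functions along chains $\alpha_{--} < \alpha_- < \alpha$, cited in the text just before Definition \ref{partitionofunity}, makes these local prescriptions mutually consistent on overlaps; this bookkeeping across chains in $\mathfrak A$ is the most delicate part of the argument.

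The main step is then to pass from $\{\tilde\chi_\alpha\}$ to $\{\chi_\alpha\}$ satisfying the identity of Definition \ref{partitionofunity} by downward induction on the partial order of $\mathfrak A$. At a maximal $\alpha$, I would set $\chi_\alpha = \tilde\chi_\alpha/S(x)$ where $S(x)$ denotes the total sum on the left of Definition \ref{partitionofunity} computed with each undetermined $\chi$ replaced by $\tilde\chi$; at smaller $\alpha$, having already fixed the $\chi_{\alpha_+}$ for $\alpha_+ > \alpha$, I would subtract their already-prescribed contributions from $1$ and then normalize using $\tilde\chi_\alpha$. Positivity of the normalizing denominators follows from the covering property of $\{V_\alpha''\}$ together with the fact that, by construction, $\tilde\chi_\alpha$ dominates the collar contributions coming from the lower charts.

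Finally, to obtain $T^n$-equivariance I would average each $\chi_\alpha$ over $T^n$ with respect to Haar measure. Because $T^n$ commutes with $\Gamma_\alpha$, acts equivariantly on all coordinate changes $\varphi_{\alpha_+,\alpha}$, and preserves the tubular distance functions $r_{\alpha,\alpha_-}$ (which we may assume $T^n$-invariant by averaging the auxiliary metric at the start), the averaged $\chi_\alpha$ still satisfy the identity of Definition \ref{partitionofunity}. The main obstacle is therefore the compatibility step for $\tilde\chi_\alpha$ along chains of length $\geq 3$; once that is arranged, the downward induction and the final Haar averaging are routine and parallel the non-equivariant argument of \cite{FO}.
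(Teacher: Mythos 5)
Your setup — finiteness of $\mathfrak A$, shrinking to relatively compact invariant subcharts whose union still covers $\mathcal M$, $\Gamma_\alpha$-invariant bump functions, and equivariance obtained either by averaging or by invariant choices — matches the paper. The gap is in your central normalization step. The identity of Definition \ref{partitionofunity} at a point $x \in V_\alpha$ involves not only $\chi_\alpha(x)$ and the already-determined $\chi_{\alpha_+}$ for $\alpha_+ > \alpha$, but also the values $\chi_{\alpha_-}(\mathrm{Pr}_{\alpha,\alpha_-}(x_{\alpha_-}))$ for the smaller charts $\alpha_- < \alpha$, which in your downward induction are not yet defined (this already happens at the base case of a maximal $\alpha$). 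So at stage $\alpha$ you cannot solve for $\chi_\alpha$ by ``subtracting the already-prescribed contributions from $1$'': the remainder must be split between $\chi_\alpha$ and the unknown $\chi_{\alpha_-}$'s, and whatever splitting rule you adopt implicitly prescribes values of $\chi_{\alpha_-}$ on part of $V_{\alpha_-}$. Since the same function $\chi_{\alpha_-}$ is constrained by the identities on every chart $\alpha' > \alpha_-$ and again by the identity on $V_{\alpha_-}$ at its own stage, you must verify that all these prescriptions agree — and that consistency is exactly the nontrivial content of the lemma, which the induction order does not supply.

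The paper's proof avoids any induction: it sets
$h_\alpha(x) = \chi'_\alpha(x) + \sum_{\alpha_- \in \mathfrak A_{x,-}} \chi^\delta(r_{\alpha,\alpha_-}(x_{\alpha_-}))\,\chi'_{\alpha_-}(\mathrm{Pr}_{\alpha,\alpha_-}(x_{\alpha_-})) + \sum_{\alpha_+ \in \mathfrak A_{x,+}} \chi'_{\alpha_+}(\varphi_{\alpha_+,\alpha}(x))$,
the total unnormalized sum, shows via the Mather-type compatibility of tubular neighborhoods and tubular distance functions that $h_{\alpha_2} \circ \varphi_{\alpha_2,\alpha_1} = h_{\alpha_1}$ on $V_{\alpha_2,\alpha_1}$ (so all denominators appearing in the identity at $x$ agree), and then defines $\chi_\alpha = \chi'_\alpha / h_\alpha$ in one step; positivity of $h_\alpha$ follows from the covering by the shrunken charts. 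Your observation that the collar contributions must be arranged compatibly along chains is relevant precisely to this chart-independence of $h_\alpha$, and once that is established no ordering of $\mathfrak A$ is needed. For the equivariance it suffices to take the $\chi'_\alpha$ themselves $T^n$-invariant, which is possible since $T^n$ is compact and its action commutes with $\Gamma_\alpha$; then $h_\alpha$, and hence $\chi_\alpha$, is automatically invariant (your post hoc Haar averaging also works, since all structure maps are equivariant).
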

\begin{proof}
We may assume that $\frak A$ is a finite set since $\mathcal M$ is compact.
By shrinking $V_{\alpha}$ if necessary we may assume that there exists
$V^-_{\alpha}$ such that $V^-_{\alpha}$ is a relatively compact subset of
$V_{\alpha}$ and that $E_{\alpha}$, $\varphi_{\alpha_2,\alpha_1}$, $s_{\alpha}$,
etc restricted to $V^-_{\alpha}$ still defines a good coordinate system.
We take a $\Gamma_{\alpha}$ invariant smooth function $\chi'_{\alpha}$ on $V_{\alpha}$
which has compact support and satisfies $\chi'_{\alpha}=1$ on $V^-_{\alpha}$.
We define
$$
h_{\alpha}(x)
= \chi'_{\alpha}(x) + \sum_{\alpha_- \in \frak A_{x,-}}
\chi^{\delta}(r_{\alpha,\alpha_-}(x_{\alpha_-}))
\chi'_{\alpha_-}(\text{Pr}_{\alpha,\alpha_-}
(x_{\alpha_-}))
+ \sum_{\alpha_+ \in \frak A_{x,+}} \chi'_{\alpha_+}(\varphi_{\alpha_+,\alpha}(x)).
$$
Using compatibility of tubular neighborhoods and tubular
distance functions, we can show that $h_{\alpha}$ is $\Gamma_{\alpha}$ invariant and
$$
h_{\alpha_2}(\varphi_{\alpha_2,\alpha_1}(x)) = h_{\alpha_1}(x)
$$
if $x \in V_{\alpha_2,\alpha_1}$.
Therefore
$$
\chi_{\alpha}(x) = \chi'_{\alpha}(x)/h_{\alpha}(x)
$$
has the required properties.
\end{proof}
Now we consider the situation we start with. Namely we have two
strongly continuous $T^n$ equivariant smooth maps
$$
ev_{s} : \mathcal M \to L_s, \qquad ev_{t} : \mathcal M \to L_t
$$
and $ev_{t}$ is weakly submersive. (In fact $T^n$ action on $L_t$ is
transitive and free.)

Let a differential form $h$ on $L_s$ be given. We choose a $T^n$
equivariant good coordinate system
$\{(V_{\alpha},E_{\alpha},\Gamma_{\alpha},\psi_{\alpha},s_{\alpha})\}$
of $\mathcal M$ and a $T^n$ equivariant multisection represented by
$\{\frak s_{\alpha}\}$ in this Kuranishi chart. Assume that the
multisection is transversal to zero.

We also choose a partition of unity $\{\chi_{\alpha}\}$ subordinate
to our Kuranishi chart. Then we put
\begin{equation}\label{thetaalphadef}
\theta_{\alpha} = \chi_{\alpha}(ev_{s,\alpha})^* h
\end{equation}
which is a differential form on $V_{\alpha}$.
\begin{defn}\label{cordefinitionss}
Define
\begin{equation}\label{correspondfinalformula}
(\mathcal M;ev_s,ev_t)_* (h)
= \sum_{\alpha} ((V_{\alpha},\Gamma_{\alpha},E_{\alpha},\psi_{\alpha},s_{\alpha}),\frak s_{\alpha},ev_{t,\alpha})_* (\theta_{\alpha}).
\end{equation}
This is a smooth differential form on $L_t$.
It is $T^n$ equivariant if $h$ is $T^n$ equivariant.
\end{defn}
\begin{rem}\label{detasdenote}
\begin{enumerate}
\item
Actually the right hand side of (\ref{correspondfinalformula}) {\it depends}
on the choice of $(V_{\alpha},E_{\alpha},\Gamma_{\alpha},\psi_{\alpha},s_{\alpha})$,
$\frak s_{\alpha}$.
We write $\frak s$ to demonstrate this choice and write
$(\mathcal M;ev_s,ev_t,\frak s)_* (h)$.
\item The right hand side of (\ref{correspondfinalformula}) is independent of the choice of
partition of unity. The proof is similar to the
well-definiedness of integration on manifolds.
\end{enumerate}
\end{rem}
In case $\mathcal M$ has a boundary $\partial \mathcal M$, the choices
$(V_{\alpha},E_{\alpha},\Gamma_{\alpha},\psi_{\alpha},s_{\alpha})$,
$\frak s_{\alpha}$ on $\mathcal M$ induces one for
$\partial \mathcal M$. We then have the following :
\begin{lem}[Stokes' theorem]\label{stokes}
We have
\begin{equation}\label{stoformula}
d((\mathcal M;ev_s,ev_t,\frak s)_* (h))
= (\mathcal M;ev_s,ev_t,\frak s)_* (dh) +
(\partial\mathcal M;ev_s,ev_t,\frak s)_* (h).
\end{equation}
\end{lem}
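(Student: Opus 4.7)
The plan is to reduce the assertion to the classical Stokes formula for integration along fibers of a smooth submersion, by carefully unwinding the definition (\ref{correspondfinalformula}) and showing that all the auxiliary contributions coming from partitions of unity and from overlaps of Kuranishi charts cancel.

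First I would fix a $T^n$-equivariant good coordinate system $\{(V_\alpha,E_\alpha,\Gamma_\alpha,\psi_\alpha,s_\alpha)\}$, a transversal multisection $\{\frak s_\alpha\}$ and a partition of unity $\{\chi_\alpha\}$ as in Definition \ref{partitionofunity}, together with local representatives $(\{U_i^\alpha\},\frak s_{\alpha,i})$ and a subordinate partition of unity $\{\chi_i^\alpha\}$ as in the definition of integration along the fiber on a single chart. The form $(\mathcal M;ev_s,ev_t,\frak s)_*(h)$ is then a finite sum indexed by $(\alpha,i,j)$ of ordinary fiber integrals
$$
\tfrac{1}{\#\Gamma_\alpha}\tfrac{1}{l_i}\,(ev_{t,\alpha})_!\bigl(\chi_\alpha\chi_i^\alpha (ev_{s,\alpha})^*h \big|_{\tilde{\frak s}_{\alpha,i,j}^{-1}(0)}\bigr)
$$
over the smooth manifold $\tilde{\frak s}_{\alpha,i,j}^{-1}(0)$, on which $ev_{t,\alpha}$ is a submersion because the $T^n$ action on $L_t$ is free and transitive and all data are $T^n$-equivariant.

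Next I would apply the ordinary Stokes formula for integration along the fibers of a submersion to each summand. This produces three kinds of terms: (a) the term with $dh$ pulled back, which reassembles into $(\mathcal M;ev_s,ev_t,\frak s)_*(dh)$; (b) a term involving $d(\chi_\alpha\chi_i^\alpha)$; and (c) a boundary term coming from $\partial\tilde{\frak s}_{\alpha,i,j}^{-1}(0)$. The boundary $\partial\tilde{\frak s}_{\alpha,i,j}^{-1}(0)$ is itself a union of the locus where the branch meets $\partial V_\alpha$ (which reassembles into $(\partial\mathcal M;ev_s,ev_t,\frak s)_*(h)$ thanks to the induced Kuranishi data on $\partial\mathcal M$) and the locus sitting inside the interior where $\chi_i^\alpha$ or $\chi_\alpha$ is supported but the domain $U_i^\alpha$ or $V_\alpha$ has its own topological boundary; the latter contributes nothing because $\chi_i^\alpha$ and $\chi_\alpha$ have compact support in the open set on which they are defined.

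The step I expect to be the main obstacle is the cancellation of the $d\chi$-terms. For the inner partition of unity $\{\chi_i^\alpha\}$ on a single chart this is routine, since the branches $\tilde{\frak s}_{\alpha,i,j}$ are compatible on overlaps via the symmetrization defining the multisection, so $\sum_{i,j} \tfrac{1}{l_i} d\chi_i^\alpha\,(\,\cdot\,)|_{\tilde{\frak s}_{\alpha,i,j}^{-1}(0)}$ integrates to zero on each fiber by the usual partition-of-unity argument. The more delicate part is the outer sum over $\alpha$: here one must use Definition \ref{compdefini}, i.e.\ the normal-form compatibility $\frak s_{\alpha_2}(y)=\frak s_{\alpha_1}(\mathrm{Pr}(\tilde y))\oplus ds_{\alpha_2}(\tilde y\bmod TV_{\alpha_1})$, together with Condition \ref{normalcompcond}, to identify $\tilde{\frak s}_{\alpha_2,i,j}^{-1}(0)$ near $\varphi_{\alpha_2,\alpha_1}(V_{\alpha_2,\alpha_1})$ with $\tilde{\frak s}_{\alpha_1,i,j}^{-1}(0)\times\{0\}$ inside the normal bundle, so that the pushforwards from the two charts agree on the overlap region; once this identification is established, the defining relation of Definition \ref{partitionofunity} gives $\sum_\alpha d\chi_\alpha\equiv 0$ along each of these identifications, and the $d\chi_\alpha$-terms telescope away.

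Finally I would verify $T^n$-equivariance of the resulting boundary identity, which is automatic since every choice (multisection, partition of unity, exponential map, tubular distance function) has been taken $T^n$-equivariantly by Corollary \ref{equiperturb}, and $T^n$-equivariant averaging does not interfere with $d$ or with integration along the $T^n$-equivariant submersion $ev_{t,\alpha}$. This establishes (\ref{stoformula}).
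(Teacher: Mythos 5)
Your proposal is correct and follows essentially the same route as the paper: reduce via the partitions of unity (over charts $\alpha$ and over the local branches $U_i$) to the classical Stokes/fiber-integration formula on each smooth zero set $\tilde{\frak s}_{\alpha,i,j}^{-1}(0)$, with the $d\chi$-terms cancelling by the partition-of-unity identity and the compatibility of the multisections on overlaps. The paper's own proof states this reduction in three sentences and leaves the cancellations implicit; you have simply made them explicit.
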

We will discuss the sign at the end of this section.
\begin{proof}
Using the partition of unity $\chi_{\alpha}$ it suffices to consider
the case when $\mathcal M$ has only one Kuranishi chart
$V_{\alpha}$. We use the open covering $U_i$ of $V_{\alpha}$ and the
partition of unity again to see that we need only to study on one
$U_i$. In that case (\ref{stoformula}) is immediate from the usual
Stokes' formula.
\end{proof}

We next discuss composition of smooth correspondences.
We consider the following situation.
Let
$$
ev_{s;st} : \mathcal M_{st} \to L_s, \qquad ev_{t;st} : \mathcal M_{st} \to L_t
$$
be as before such that $T^n$ action
on $L_t$ is free and transitive.
Let
$$
ev_{r;rs} : \mathcal M_{rs} \to L_r, \qquad ev_{s;rs} : \mathcal M_{rs} \to L_s
$$
be a similar diagram such that $T^n$ on $L_s$ is free and
transitive. Using the fact that $ev_{s;rs}$ is weakly submersive, we
define the fiber product
$$
\mathcal M_{rs}\, {}_{ev_{s;rs}}\times_{ev_{s;st}} \mathcal M_{st}
$$
as a space with Kuranishi structure.
We write it as $\mathcal M_{rt}$. We have a diagram of strongly continuous
smooth maps
$$
ev_{r;rt} : \mathcal M_{rt} \to L_r, \qquad ev_{t;rt} : \mathcal M_{rt} \to L_t.
$$
\par
We next make choices $\frak s^{st}$, $\frak s^{rs}$ for $\mathcal M_{st}$ and
$\mathcal M_{rs}$. It is easy to see that it determines a
choice $\frak s^{rt}$ for $\mathcal M_{rt}$.
\par
Now we have :
\begin{lem}[Composition formula]\label{compformula}
We have the following formula for each differential form $h$ on
$L_r$.
\begin{equation}
\aligned
&(\mathcal M_{rt};ev_{r;rt},ev_{t;,rt},\frak s^{rt})_* (h)\\
&= ((\mathcal M_{st};ev_{s;st},ev_{t;,st},\frak s^{st})_* \circ
(\mathcal M_{rs};ev_{r;rs},ev_{s;,rs},\frak s^{rs})_*)(h).
\endaligned
\end{equation}
\end{lem}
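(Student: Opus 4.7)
The plan is to reduce the identity to a pointwise Fubini-type statement on a single Kuranishi chart of $\mathcal M_{rt}$, after which it is a classical computation. First I would fix good coordinate systems $\{(V^{rs}_{\alpha},E^{rs}_{\alpha},\Gamma^{rs}_{\alpha},\psi^{rs}_{\alpha},s^{rs}_{\alpha})\}$ on $\mathcal M_{rs}$ and $\{(V^{st}_{\beta},E^{st}_{\beta},\Gamma^{st}_{\beta},\psi^{st}_{\beta},s^{st}_{\beta})\}$ on $\mathcal M_{st}$ together with partitions of unity $\chi^{rs}_\alpha$, $\chi^{st}_\beta$ as in Definition \ref{partitionofunity}. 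Using that $ev_{s;rs}$ is weakly submersive and that $T^n$ acts freely and transitively on $L_s$, the fibered pair $V^{rt}_{(\alpha,\beta)}:=V^{rs}_{\alpha}{}_{ev_{s;rs,\alpha}}\!\times_{ev_{s;st,\beta}}V^{st}_{\beta}$ is a smooth manifold with a $\Gamma^{rs}_{\alpha}\times\Gamma^{st}_{\beta}$-action and an obstruction bundle $E^{rt}_{(\alpha,\beta)}=\text{pr}_1^{*}E^{rs}_{\alpha}\oplus \text{pr}_2^{*}E^{st}_{\beta}$; the induced multisection is $\frak s^{rt}_{(\alpha,\beta)}=\text{pr}_1^{*}\frak s^{rs}_{\alpha}\oplus \text{pr}_2^{*}\frak s^{st}_{\beta}$. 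This system is a good coordinate system on $\mathcal M_{rt}$, the partition of unity $\chi^{rt}_{(\alpha,\beta)}:=(\chi^{rs}_{\alpha}\circ\text{pr}_1)\cdot(\chi^{st}_{\beta}\circ\text{pr}_2)$ is subordinate to it up to a redistribution matching Definition \ref{partitionofunity}, and $ev_{r;rt}$, $ev_{t;rt}$ factor as $ev_{r;rs,\alpha}\circ\text{pr}_1$ and $ev_{t;st,\beta}\circ\text{pr}_2$ respectively.

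Next I would verify that the zero set of $\frak s^{rt}_{(\alpha,\beta)}$ is precisely the smooth fiber product
\[
(\frak s^{rt}_{(\alpha,\beta)})^{-1}(0)=(\frak s^{rs}_{\alpha})^{-1}(0)\,{}_{ev_{s;rs,\alpha}}\!\times_{ev_{s;st,\beta}}(\frak s^{st}_{\beta})^{-1}(0),
\]
and that $ev_{t;rt,(\alpha,\beta)}$ restricted to this zero set remains a submersion onto $L_t$. Transversality of the fiber product follows because $\frak s^{rs}_{\alpha}$ and $\frak s^{st}_{\beta}$ are each transversal to zero (Corollary \ref{equiperturb} applied in the $T^n$-equivariant category) and because the restriction of $ev_{s;rs,\alpha}$ to $(\frak s^{rs}_{\alpha})^{-1}(0)$ is automatically a submersion: the $T^n$-equivariance together with the free transitive action on $L_s$ forces this, as in the discussion around (\ref{restrictsubmersion}). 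Thus branches of $\frak s^{rt}_{(\alpha,\beta)}$ correspond bijectively to pairs of branches of $\frak s^{rs}_{\alpha}$ and $\frak s^{st}_{\beta}$, and the multiplicity counts $\ell^{rt}_{(i,j)}=\ell^{rs}_{i}\cdot\ell^{st}_{j}$.

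With these identifications in place, both sides of the composition formula unfold, chart by chart, to iterated integration along the fibers of
\[
(\frak s^{rt}_{(\alpha,\beta)})^{-1}(0)\xrightarrow{\text{pr}_2}(\frak s^{st}_{\beta})^{-1}(0)\xrightarrow{ev_{t;st,\beta}}L_t
\]
and of $\text{pr}_1$ followed by $ev_{s;rs,\alpha}$, respectively. The two iterated fiber integrations are equal by the classical Fubini theorem for integration along fibers of a composition of submersions between oriented manifolds, applied to the form $\chi^{rs}_{\alpha}\,\chi^{st}_{\beta}\,ev_{r;rt}^{*}h$. Summing over $\alpha$ and $\beta$ and using the compatibility of the $\chi^{rt}_{(\alpha,\beta)}$ with the partition-of-unity conditions of Definition \ref{partitionofunity} yields the claimed equality.

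The main obstacle will be the combinatorial and orientation bookkeeping: one has to check that the prefactors $\frac{1}{\#\Gamma^{rt}_{(\alpha,\beta)}}\cdot\frac{1}{\ell^{rt}_{i}}$ in (\ref{intfibermainformula}) split exactly as the product of the corresponding prefactors for $\mathcal M_{rs}$ and $\mathcal M_{st}$ under $\Gamma^{rt}_{(\alpha,\beta)}=\Gamma^{rs}_{\alpha}\times\Gamma^{st}_{\beta}$; that the tubular neighborhood data and normalized partitions of unity on fiber products are compatible with those on the factors, so that the sums over charts on the two sides of (\ref{compformula}) agree term by term; and that the induced orientation on the fiber product Kuranishi structure is consistent with the orientation conventions used to define $(\mathcal M;ev_s,ev_t,\frak s)_{*}$ in Definition \ref{cordefinitionss}. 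Once these points are settled, the identity follows immediately from Fubini.
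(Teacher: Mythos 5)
Your proposal is correct and follows essentially the same route as the paper: the paper's own proof simply localizes via the partition of unity to individual charts and then invokes the classical Fubini theorem for iterated integration along fibers of submersions, which is exactly the reduction you carry out (in more detail, by exhibiting the product chart structure, the fiber-product zero sets, and the submersivity coming from the free transitive $T^n$-action on $L_s$). The orientation and sign bookkeeping you flag as the remaining obstacle is likewise deferred in the paper to the discussion at the end of the section, where it is reduced to the singular-chain conventions of Chapter 8 of \cite{fooo06}.
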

\begin{proof}
Using a partition of unity it suffices to study locally on $\mathcal
M_{rs}$, $\mathcal M_{st}$. In that case it suffices to consider the
case of usual manifold, which is well-known.
\end{proof}
We finally discuss the signs in Lemmas \ref{stokes} and
\ref{compformula}. It is rather cumbersome to fix appropriate sign
convention and show those lemmata with sign. So, instead, we use the
trick of 
subsection 8.10.3 \cite{fooo06} (= section 53.3 \cite{fooo06pre}) (see also section 13
\cite{fukaya;operad}) to reduce the orientation problem to the case
which is already discussed in 
Chapter 8 \cite{fooo06} (= Chapter 9 \cite{fooo06pre}), as follows.
\par
The correspondence $h \mapsto (\mathcal M;ev_s,ev_t,\frak s)_* (h)$
extends to the currents $h$ that satisfy appropriate transversality
properties about its wave front set. (See \cite{Hor71}.) We can also
represent the smooth form $h$ by an appropriate average (with
respect to certain smooth measure) of a family of currents realized
by smooth singular chains. So, as far as sign concerns, it suffices
to consider a current realized by a smooth singular chain. Then the
right hand side of (\ref{intfibermainformula}) turn out to be a
current realized by a smooth singular chain which is obtained from a
smooth singular chain on $L_s$ by a transversal smooth
correspondence. In fact, we may assume that all the fiber products
appearing here are transversal, since it suffices to discuss the
sign in the generic case. Thus the problem reduces to find a sign
convention (and orientation) for the correspondence of the singular
chains by a smooth manifold. In the situation of our application,
such sign convention (singular homology version) was determined and
analyzed in detail in Chapter 8 \cite{fooo06} (= Chapter 9 \cite{fooo06pre}). Especially, existence
of an appropriate orientation that is consistent with the sign
appearing in $A_{\infty}$ formulae etc. was proved there. Therefore
we can prove that there is a sign (orientation) convention which
induces all the formulae we need with sign, in our de Rham version,
as well. See subsection 8.10.3 \cite{fooo06} (= section 53.3 \cite{fooo06pre}) or section 13
\cite{fukaya;operad} for the detail.

\bibliographystyle{amsalpha}

\end{document}